\documentclass{article}
\usepackage{amsmath}
\usepackage{amsthm}
\usepackage{amssymb}
\usepackage{hyperref}
\usepackage{verbatim}
\usepackage{graphicx}
	



\usepackage[hmargin=3cm,vmargin=3.5cm]{geometry}
\usepackage[all,cmtip]{xy}
\linespread{1.3}


\def\a{\alpha}
\def\b{\beta}
\def\ga{\gamma}

\def\de{\delta}
\def\De{\Delta}
\def\ep{\epsilon}

\def\la{\lambda}
\def\La{\Lambda}
\def\si{\sigma}

\def\om{\omega}
\def\Om{\Omega}

\def\th{\theta}

\def\nab{\nabla}
\def\varep{\varepsilon}


\def\NN{{\cal N}}
\def\II{{\cal I}}

\def\HH{{\cal H}}

\def\LL{{\cal L}}

\def\OO{{\cal O}}
\def\PP{{\cal P}}

\def\RR{{\cal R}}
\def\SS{{\cal S}}


\newcommand{\F}[0]{\mathbb{F}}
\newcommand{\R}[0]{\mathbb{R}}
\newcommand{\Z}[0]{\mathbb{Z}}

\newcommand{\Q}[0]{\mathbb{Q}}
\newcommand{\C}[0]{\mathbb{C}}

\newcommand{\T}[0]{\mathbb{T}}

\newcommand{\pd}[2]{\frac{\partial #1}{\partial #2}}
\newcommand{\fr}[2]{\frac{#1}{#2}}

\newcommand{\vect}[1]{\left[ \begin{array}{c} #1 \end{array} \right]}
\newcommand{\mat}[2]{\left[ \begin{array}{ #1} #2 \end{array} \right]}

\newcommand{\tx}[1]{\mbox{#1}}
\newcommand{\leqc}[0]{\stackrel{<}{\sim}}

\newcommand{\wed}[0]{\wedge}
\newcommand{\pr}[0]{\partial}

\newcommand{\co}[1]{\| #1 \|_{C^0}}

\newcommand{\Ddt}[0]{\fr{\bar D}{\partial t}}
\newcommand{\Ddtof}[1]{\fr{\bar D #1}{\partial t}}
\newcommand{\DDdt}[0]{\fr{\bar D^2}{\partial t^2}}
\newcommand{\DDdtof}[1]{\fr{\bar D^2 #1}{\partial t^2}}
\newcommand{\badFactor}[0]{ B_\la^{1/2} \left( \fr{e_v^{1/2}}{e_R^{1/2} N} \right)^{1/2} }
\newcommand{\badFact}[0]{ \left( \fr{e_v^{1/2}}{e_R^{1/2} N} \right)^{-1/2} }

\newcommand{\ali}[1]{ \begin{align} #1 \end{align} }


\def\XXint#1#2#3{{\setbox0=\hbox{$#1{#2#3}{\int}$}
     \vcenter{\hbox{$#2#3$}}\kern-.5\wd0}}

\newtheorem{thm}{Theorem}[section]
\newtheorem{lem}{Lemma}[section]

\newtheorem{prop}{Proposition}[section]
\newtheorem{cor}{Corollary}[section]
\newtheorem{conject}{Conjecture}[section]

\theoremstyle{definition}
\newtheorem{defn}{Definition}[section]
\newtheorem*{conj}{Conjecture}

\newtheorem{req}{Requirement}[section]
\newtheorem{samLem}{Sample Lemma}[section]

\theoremstyle{remark}

\title{ H\"{o}lder Continuous Euler Flows in Three Dimensions with Compact Support in Time }
\author{ Philip Isett\thanks{Department of Mathematics, Princeton University, Princeton, NJ 08544, USA. (\href{mailto:pisett@math.princeton.edu}{pisett@math.princeton.edu}). \newline Supported by NSF Graduate Research Fellowship Grant DGE-1148900. } }

\begin{document}
\maketitle
\begin{abstract}
Building on the recent work of C. De Lellis and L. Sz\'{e}kelyhidi, we construct global weak solutions to the three-dimensional incompressible Euler equations which are zero outside of a finite time interval and have velocity in the H\"{o}lder class $C_{t,x}^{1/5 - \ep}$.  By slightly modifying the proof, we show that every smooth solution to incompressible Euler on $(-2, 2) \times \T^3$ coincides on $(-1, 1) \times \T^3$ with some H\"{o}lder continuous solution that is constant outside $(-3/2, 3/2) \times \T^3$.  We also propose a conjecture related to our main result that would imply Onsager's conjecture that there exist energy dissipating solutions to Euler whose velocity fields have H\"{o}lder exponent $1/3 - \ep$.
\end{abstract}

\tableofcontents

\part{Introduction}
In the paper \cite{deLSzeCts}, De Lellis and Sz\'{e}kelyhidi introduce a method by which one can construct weak solutions to the incompressible Euler equations
\begin{align} \label{eq:euSystem}
\left\{
 \begin{aligned} \pr_t v + \tx{div } v \otimes v + \nab p = 0  \\
 \tx{div } v = 0
 \end{aligned}
\right.
\end{align}
in three spatial dimensions which are continuous but do not conserve energy.  The motivation for constructing such solutions comes from a conjecture of Lars Onsager \cite{onsag} on the theory of turbulence in an ideal fluid.  In the modern language of PDE, Onsager's conjecture can be translated as follows

\begin{conj}[Onsager (1949)]

$ $

\begin{enumerate}
\item \label{itm:first} Weak solutions to the incompressible Euler equations obeying a H\"{o}lder estimate \[ |v(t,x+y) - v(t,x)| \leq C |y|^\a \] for some $\a > 1/3$ must conserve energy.
\item \label{itm:second} Furthermore, for any $\a < 1/3$, there exist weak solutions to the Euler equations which belong to $C^\a$ and which do not conserve energy
\end{enumerate}
\end{conj}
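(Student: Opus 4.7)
The plan is to treat the two halves of the conjecture separately, since they demand entirely different techniques: part \ref{itm:first} is a rigidity statement to be proved by a quantitative energy estimate, while part \ref{itm:second} is a flexibility statement requiring an explicit construction of pathological solutions for every $\a < 1/3$.

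For part \ref{itm:first} I would use the mollification approach of Constantin--E--Titi. Let $\eta_\ep$ be a standard spatial mollifier and set $v_\ep = v * \eta_\ep$. Convolving \eqref{eq:euSystem} yields
\[
\pr_t v_\ep + \tx{div}(v_\ep \ot v_\ep) + \nab p_\ep = - \tx{div}\, R_\ep,
\]
where $R_\ep = (v \ot v)_\ep - v_\ep \ot v_\ep$ can be rewritten as a quadratic expression in spatial increments of $v$. For $v \in C^\a$ one then has $\|R_\ep\|_{C^0} \lesssim \ep^{2\a}$ while $\|\nab v_\ep\|_{C^0} \lesssim \ep^{\a-1}$, so the energy flux $\int R_\ep : \nab v_\ep\, dx$ is of order $\ep^{3\a-1}$. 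When $\a > 1/3$ this vanishes as $\ep \to 0$, and passing to the limit in the classical energy identity for the smooth field $v_\ep$ yields conservation of $\int |v|^2\, dx$.

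For part \ref{itm:second} I would construct the solutions via a convex integration iteration of the type introduced in \cite{deLSzeCts}. One inductively builds a sequence $(v_q, p_q, R_q)$ of smooth Euler--Reynolds flows
\[
\pr_t v_q + \tx{div}(v_q \ot v_q) + \nab p_q = \tx{div}\, R_q, \qquad \tx{div}\, v_q = 0,
\]
indexed by frequency parameters $\la_q \to \infty$ and amplitude parameters $\de_q \to 0$ chosen geometrically. At step $q \mapsto q+1$ one adds a highly oscillatory perturbation $w_{q+1}$, built as a localized superposition of Beltrami plane waves of frequency $\sim \la_{q+1}$ and amplitude $\sim \de_{q+1}^{1/2}$, designed so that the low-frequency part of $w_{q+1} \ot w_{q+1}$ cancels $R_q$ pointwise while the remainder is high-frequency and can be absorbed into the next Reynolds stress. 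Controlling $\|v_{q+1}-v_q\|_{C^0} \lesssim \de_{q+1}^{1/2}$ and $\|v_{q+1}-v_q\|_{C^1} \lesssim \de_{q+1}^{1/2} \la_{q+1}$ then forces $v = \lim_q v_q$ to lie in $C^\a$ for the $\a$ determined by the growth rates of $\de_q$ and $\la_q$, and the energy profile and temporal support of $v$ can be freely prescribed.

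The decisive obstacle is the bookkeeping of $R_{q+1}$, which splits into three qualitatively different error terms: a high-frequency interaction error from $w_{q+1}\ot w_{q+1}$, a transport--advective error from $(\pr_t + v_q \cdot \nab)w_{q+1}$, and a mollification error from replacing $v_q$ by a smoothed field before perturbing. With Beltrami building blocks the balance between these terms closes the iteration only up to $\a = 1/5 - \ep$, which is the exponent realized in the present paper. To reach the sharp Onsager exponent $1/3 - \ep$ one would have to suppress the transport error, for instance by replacing the Beltrami waves by building blocks that are stationary along straight lines so that $(\pr_t + v_q \cdot \nab)w_{q+1}$ nearly vanishes along the flow of $v_q$, and by inserting a gluing step that localizes $R_q$ in time to intervals on which $v_q$ is essentially a pure transport. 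Such refinements decouple the amplitude--frequency ratio from the Beltrami constraint and, in principle, let the iteration saturate the Onsager exponent; this is the direction indicated by the conjecture announced at the end of this introduction.
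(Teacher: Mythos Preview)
The statement you are addressing is not a theorem of the paper but a \emph{conjecture}; the paper does not claim to prove it and contains no proof to compare against. Part~\ref{itm:first} is indeed settled by the Constantin--E--Titi argument you sketch, and the paper says as much in the introduction (citing \cite{CET}); your outline of the mollification and commutator estimate is correct. Part~\ref{itm:second}, however, was open at the time of writing: the main result of the paper reaches only $\a < 1/5$, and the author explicitly formulates a separate ``Ideal Case'' conjecture (Conjecture~\ref{conj:idealCase}) whose validity would push the method to $1/3-\ep$.

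You clearly recognize this, since you write that the Beltrami scheme ``closes the iteration only up to $\a = 1/5 - \ep$.'' But then your proposal is not a proof of the stated conjecture at all; it is a description of the state of the art plus a forward-looking sketch. Your diagnosis of the bottleneck is also slightly off relative to the paper: the obstruction identified here is not primarily the transport error but the \emph{High--High interference term}, specifically the factor $|\nabla\xi_I|-1$ that measures the failure of the phase gradients to stay unit-length under transport (see Section~\ref{sec:accountForParams}). The transport term and the High--High term are in tension --- shortening the lifespan $\tau$ controls the latter but enlarges the former --- and optimizing this balance is exactly what yields $1/5$ rather than $1/3$. Your speculative remedies (building blocks concentrated along flow lines, temporal gluing) are prescient of later developments, but they are not in this paper and do not constitute a proof here.
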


Onsager's conjecture can be appreciated in the context of the theory of turbulence famously proposed by Kolmogorov \cite{K41} in 1941.  One key postulate of Kolmogorov's theory is an anomalous dissipation of energy for solutions of the three-dimensional Navier Stokes equations
\begin{align}\label{eqn:navStokes}
\left\{
 \begin{aligned} \pr_t v + \tx{div } v \otimes v + \nab p = \nu \De v  \\
 \tx{div } v = 0
 \end{aligned}
\right.
\end{align}
in the low viscosity regime $\nu \to 0$ (or, more precisely, at high Reynolds number).  One formulation of anomalous dissipation is that a sequence of solutions $v_\nu$ to the three-dimensional Navier Stokes equations with the same initial data $v_\nu(0,x) = v(0,x)$ may have energy functions $e_\nu(t) = \fr{1}{2}\int |v_\nu|^2 dx$ which do not converge to a constant function of time as $\nu \to 0$, but rather may possess some energy dissipation independent of the viscosity parameter.  Kolmogorov's theory proposes that the phenomenon of anomalous dissipation is generic in a statistical sense for ensembles of solutions to the Navier Stokes equations at low viscosity.  The limiting energy dissipation rate is one of the main quantities that are proposed to govern the statistical properties of turbulent flows in Kolmogorov's theory.


Onsager proposed that one might be able to study turbulence even in the absence of viscosity through the Euler equations (the case $\nu = 0$), and that only low regularity solutions to the Euler equations can exhibit turbulent behavior since smooth solutions must conserve energy.  By studying the interactions between different frequency components of the solution which arise from the nonlinearity, Onsager proposed that anomalous dissipation could be explained in terms of a transfer of energy from coarser to smaller scales, and deduced that the exponent $1/3$ should be critical for energy conservation.  His notion of solution was based on a Fourier series representation, but it can be shown to be equivalent to the modern notion of a weak solution.  A review of his computations can be found in the note \cite{deLSzeCtsSurv}.

It is known that solutions to the incompressible Euler equations with H\"{o}lder regularity greater than $1/3$ conserve energy in any dimension, so part (\ref{itm:second}) of Onsager's conjecture has been settled.  A short proof of this statement was presented in \cite{CET} after a slightly weaker result was established in a series of papers by Eyink \cite{eyink}.  More precise results, as well as a discussion of what ``Onsager critical'' function space could best be used to model ideal turbulence, can be found in \cite{shvOns}.  In recent years, substantial progress has also been made toward constructing dissipative solutions with H{\" o}lder regularity less than $1/3$.


The first proof that weak solutions to the Euler equation need not conserve energy came in a groundbreaking paper of Scheffer \cite{scheff}, in which he produced weak solutions to the Euler equations with compact support in space and time belonging to the class $L^2(\R^2 \times \R)$.  Following Sheffer's discovery, in \cite{shnNonUnq}, Shnirelman found a simpler construction of weak solutions with compact support in time on $L^2(\T^2 \times \R)$.  Shnirelman later in \cite{shnDiss} produced weak solutions in the class $L_t^\infty L_x^2(\R \times \T^3)$ which dissipate energy using the concept of a generalized flow introduced by Y. Brenier.

In the breakthrough paper \cite{deLSzeIncl}, De Lellis and Sz\'{e}kelyhidi were able to construct weak solutions in the class $L^\infty(\R^n\times \R)$ for any $n \geq 2$.  In a subsequent paper \cite{deLSzeAdmiss}, they were also able to produce solutions belonging to the energy space $C_tL^2_x$, and their main theorem demonstrates that the energy density $\fr{1}{2} |v|^2(t,x)$ of these weak solutions can be essentially any prescribed non-negative, continuous function $e(t,x)$.  These breakthroughs led to new results concerning weak solutions to several equations of fluid dynamics which are surveyed in \cite{deLSzeHFluid}, and also demonstrated that many entropy criteria one might propose are unable to recover uniqueness of solutions in the energy class.  The constructions are performed through a technique known as convex integration, which originated in the work of Nash on $C^1$ isometric embeddings \cite{nashC1} and was generalized by Gromov to establish the $h$-principle in many other applications to topology and geometry (see \cite{gromPdr}).

When solving differential equations, the essence of the convex integration procedure, beginning with the work of Nash, is to first formulate a notion of ``subsolution'' to the equation one is trying to solve, and then to show that any given subsolution can be perturbed by adding a sequence of highly oscillatory corrections in such a way that a solution is achieved in the limit.  For isometric embeddings of, say, $u : S^2 \to \R^3$, Nash's notion of a subsolution is that of a short map, that is, a smooth map $u_0 : S^2 \to \R^3$ such the pullback of the Euclidean metric $Du^T Du$ is pointwise less than or equal to the metric on the sphere as a quadratic form; equivalently, a short map is one for which the arclength of $u(\gamma)$ is less than the arclength of $\gamma$ for any curve $\gamma$ on $S^2$.  For example, rescaling the standard sphere into a smaller ball is a short map.  Clearly, any map which can be uniformly approximated by isometric embeddings will be a short map.  To obtain an isometric embedding approximating an initial short map $u_0$, one adds a sequence of oscillatory corrections to $u_0$ which can be chosen arbitrarily smaller and smaller in the $C^0$ norm, but which still make significant changes in the derivative $Du$.  Iteratively adding such oscillations, one obtains a sequence of short maps converging to an isometric embedding in the limit\footnote{Actually the ``spiral'' corrections used in Nash's original argument would require that the $u$ map into a higher dimensional space such as $\R^4$, but shortly after Nash's paper, Kuiper was able to design corrections which can achieve the same goal in the codimension $1$ case. }.  On the other hand, the $C^2$ norms of these corrections grow without bound, and the embedding obtained by this procedure cannot be $C^2$, as $C^2$ isometric embeddings of $S^2 \hookrightarrow \R^3$ are unique up to a rigid motion.  We refer to \cite{deLSzeC1iso} for more regarding the $h$-principle for low regularity isometric embeddings.

In fact, there is a very useful analogy between the isometric embedding problem and the Euler equations where the velocity field $v$ plays a role analogous to the derivative $Du$ of the embedding.  Most of the analogies extend from the fact that the nonlinearities $v \otimes v$ and $Du Du^T$ for the two equations are both symmetric, quadratic, and non-negative in the appropriate sense.  This analogy was first discussed by De Lellis, Sz{\' e}kelyhidi and Conti in their work \cite{deLSzeC1iso} on proving the $h$-principle for $C^{1,\a}$ isometric embeddings.  See \cite{deLSzeHFluid} for further discussion.

The version of convex integration employed in \cite{deLSzeIncl, deLSzeAdmiss} for the Euler equations is very different from the original version of convex integration applied to the isometric embedding problem by Nash and extended in \cite{deLSzeC1iso}.  It is based on an extension of convex integration used by M\"{u}ller and \v{S}ver\'{a}k \cite{muSve, kMuSve} to construct solutions to differential inclusions $\nab u \in \RR$ which are only Lipschitz (i.e. $\nab u \in L^\infty$), rather than $C^1$ (i.e. $\nab u \in C^0$).  As originally explained by Kirchheim \cite{kirch}, this ``weak'' version of convex integration for Lipschitz maps can be implemented in an elegant and simple manner through Baire category arguments, or by an ``explicit'' iteration which is basically equivalent to the proof of the Baire category theorem.

The weak version of convex integration is unable to produce continuous solutions to the Euler equations.  Rather, the solutions produced by De Lellis and Sz{\' e}kelyhidi in \cite{deLSzeAdmiss} generically have no better regularity than the generic vector field of prescribed energy density $\fr{|v|^2}{2}(t,x) = e(t,x)$ when the set of such vector fields is equipped with a weak topology (for instance, the $L^\infty$ weak-$*$ topology will work, and using the topology $C_t H^{-1}_x$ can ensure that solutions belong to the energy space $C_tL_x^2$).  The fundamental obstruction to achieving continuous weak solutions by this variant of convex integration is that even though one can choose the frequencies of the oscillatory corrections so large that the corrections may be arbitrarily small in a weak topology, these corrections are still required to have a certain size in a strong topology ($C^0$ for Euler) in order for any noticeable progress towards achieving a solution to be measured.  The many solutions obtained by this construction are connected to Gromov's $h$-principle in that one actually shows that the subsolutions used to perform the construction can be approximated (in a weak topology) by solutions, and that the solutions generated by the process exhibit a huge amount of flexibility despite solving the equation.

Recently in \cite{deLSzeCts}, De Lellis and Sz\'{e}kelyhidi have made another outstanding breakthrough by constructing continuous weak solutions to the Euler equation on a periodic domain $\R \times (\R / \Z)^n$ ($n = 3$)  whose energy $\int_{\T^n} \fr{|v|^2}{2}(t,x) dx$ can be any smooth function $e : \R \to \R_{> 0}$ that is bounded below by a strictly positive constant.  In particular, these solutions may dissipate energy.  They also achieved the same result in $n = 2$ spatial dimensions in the preprint \cite{deLSzeCts2d} with A. Choffrut.  

Following their construction of continuous solutions and building on the methods in \cite{deLSzeC1iso}, they extended their method in the paper \cite{deLSzeHoldCts} to construct weak solutions to the Euler equations on $\R \times (\R / \Z)^3$ with velocity in the H\"{o}lder space $C_{t,x}^{1/10 - \ep}$ and having any prescribed energy obeying the same restrictions as in the continuous case.  This result was generalized to the case of two spatial dimensions with the same H{\" o}lder exponent of $1/10 - \ep$ by A. Choffrut in \cite{choff}.  The paper \cite{choff} also contains more detailed results describing the flexibility of the family of solutions produced by the method.

  The convex integration scheme used in these recent results more directly resembles the original scheme used by Nash to construct $C^1$ isometric embeddings, and it also bears more resemblance to the argument of Shnirelman in \cite{shnNonUnq} than does the argument in \cite{deLSzeIncl}.  To achieve their results, De Lellis and Sz{\' e}kelyhidi have introduced several important, new ideas which represent dramatic changes in the point of view of the convex integration scheme.  Although these ideas cannot be summarized at this stage of the introduction, we will refer to them as the analogous aspects arise in the present paper, and we urge the reader to study their papers.   For now we mention two new aspects:
\begin{itemize}
\item They introduce an underdetermined system of PDEs called the ``Euler-Reynolds equations'' which form the correct space of ``subsolutions'' in which to perform the convex integration procedure.
\item Surprisingly, they use linear spaces of high frequency, stationary solutions of the Euler equations called ``Beltrami flows'' in order to construct the basic building blocks.
\end{itemize} 
The idea that turbulent Euler flows may be constructed from Beltrami solutions has appeared in the turbulence literature, and was suggested to De Lellis and Sz\'{e}kelyhidi by Peter Constantin \cite{deLSzeCts}.

In this paper, we build upon and rework the convex integration scheme of De Lellis and Sz\'{e}kelyhidi in order to achieve the following theorem.
\begin{thm} \label{mainThm}  
For every $\de > 0$, there exists a nontrivial weak solution 
\begin{align}
v(t,x) : \R \times \T^3 &\to \R^3 \\
p(t,x) : \R \times \T^3 &\to \R 
\end{align} 
to the Euler equations which belong to the H{\" o}lder class 
\begin{align}
 v &\in C_{t,x}^{1/5 - \de} \\
 p &\in C_{t,x}^{2(1/5 - \de)} 
\end{align}
such that the support of $(v,p)$ is contained in a compact time interval.

\end{thm}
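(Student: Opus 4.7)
The plan is to build on the convex integration scheme of De Lellis--Sz\'ekelyhidi \cite{deLSzeCts, deLSzeHoldCts}, sharpening the stress estimate so that the Hölder exponent improves from $1/10$ to $1/5$. The object manipulated at every stage is an \emph{Euler--Reynolds} triple $(v,p,R)$, that is, a smooth divergence-free $v$, pressure $p$, and symmetric trace-free tensor $R$ satisfying
\[
\pr_t v + \tx{div }(v\otimes v) + \nab p = \tx{div } R, \qquad \tx{div } v = 0.
\]
Fix a non-negative bump $e(t)$ supported in $[-1,1]$ and not identically zero. Starting from the trivial triple $(v_0,p_0,R_0)=(0,0,0)$ with prescribed energy profile $e(t)$ to be matched, the goal is to produce a sequence $(v_k,p_k,R_k)$ with $\|R_k\|_{C^0}\to 0$ and $v_k\to v$ in $C^{1/5-\de}$, all supported in time in a slightly enlarged interval, say $[-3/2,3/2]$. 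Nontriviality is enforced by the energy profile $e(t)$.

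The engine is an \emph{iteration lemma} of the following shape. Assume $(v,p,R)$ has frequency parameter $\Xi$ and amplitudes $e_v, e_R$ in the sense that, after mollification,
\[
\|Dv\|_{C^0}\lesssim \Xi e_v^{1/2},\qquad \|R\|_{C^0}\lesssim e_R,\qquad \|DR\|_{C^0}\lesssim \Xi e_R.
\]
Given a large integer $N$, one constructs a correction
\[
V(t,x) \;=\; \sum_I a_I(t,x)\, W_I\bigl(N\Xi\,\phi_I(t,x)\bigr),
\]
where $W_I$ are high-frequency Beltrami eigenfunctions, $\phi_I$ are phase functions advected by the coarse-scale flow of $v$, and the scalar amplitudes $a_I$ are chosen by the algebraic decomposition of the De Lellis--Sz\'ekelyhidi type so that the low-frequency part of $V\otimes V$ cancels $R$ up to the prescribed energy increment derived from $e(t)$. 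The new velocity is $v_1 = v+V$, with new stress $R_1$ built by applying an inverse-divergence operator to the remaining oscillatory error terms: (i) the transport error $\pr_t V + v\cdot\nab V$, which is small thanks to the transported phase, (ii) the high-high to high part of $V\otimes V$, and (iii) the cross term $V\otimes v + v\otimes V$. The target estimates are $\|V\|_{C^0}\lesssim e_R^{1/2}$, $\|DV\|_{C^0}\lesssim N\Xi e_R^{1/2}$, and the key stress bound
\[
\|R_1\|_{C^0} \;\lesssim\; \frac{e_v^{1/2}\,e_R^{1/2}}{N^{1-\ep}}.
\]

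Iterating with $(\Xi_{k+1},e_{R,k+1}) = (N_k\Xi_k,\, e_{v,k}^{1/2}e_{R,k}^{1/2}N_k^{-(1-\ep)})$ and a double-exponential choice $\Xi_k\sim a^{b^k}$, $e_{R,k}\sim\Xi_k^{-2\beta}$, the corrections obey $\|V_k\|_{C^\a}\lesssim (N_k\Xi_k)^\a\Xi_k^{-\beta}$, which is summable in $k$ precisely when $\a<\beta/(1+\beta)$; optimizing the recursion gives the exponent $\beta=2/3-\de'$ and the resulting $\a=1/5-\de$. The pressure estimate in $C^{2(1/5-\de)}$ follows by the standard quadratic improvement for $-\De p = \pr_i\pr_j(v^iv^j-R^{ij})$ using Schauder bounds. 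Since $e(t)$ is supported in $[-1,1]$ and each correction $V_k$ is supported in a slightly enlarged interval with additive increments summable to at most $1/2$, the final limit $v$ is supported in $[-3/2,3/2]$, which delivers the compact-support conclusion.

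The main obstacle is the improved stress bound $\|R_1\|_{C^0}\lesssim e_v^{1/2}e_R^{1/2}/N^{1-\ep}$. In \cite{deLSzeHoldCts} the loss in $N$ is essentially $N^{1/2}$ because the non-stationary phase gains in the inverse-divergence operator are compared against amplitudes of the wrong size, and the transport derivative $\pr_t+v\cdot\nab$ is bounded by $\|v\|_{C^0}\Xi$ rather than by a genuine advective derivative. The improvement requires: (a) phase functions $\phi_I$ that are transported by the mollified velocity so that $\pr_t\phi_I+v\cdot\nab\phi_I=0$ on the support of each cutoff, eliminating the large contribution from $\|v\|_{C^0}$; (b) a refined $C^0$-based stationary phase analysis of the inverse-divergence operator showing that every derivative on the amplitude costs only $\Xi$ while every derivative on the phase gains $(N\Xi)^{-1}$; and (c) a two-scale mollification of $(v,R)$ in space and along the flow in time, chosen so that the background stays at frequency $\Xi$ while the correction lives at $N\Xi$. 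These three ingredients together are what close the iteration at the $1/5-\de$ exponent and constitute the technical heart of the paper.
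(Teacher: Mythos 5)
Your overall architecture (Euler--Reynolds triples, Beltrami building blocks with phases transported by the coarse-scale flow, an inverse divergence operator, a frequency/energy-level iteration lemma, energy pumping via $e(t)$, and control of time supports) matches the paper's scheme, but the key quantitative claim on which your exponent rests is not established and is in fact the point where the paper stops short. You assert the one-step stress bound $\|R_1\|_{C^0}\lesssim e_v^{1/2}e_R^{1/2}/N^{1-\ep}$. Up to the $\ep$-loss this is the ``ideal case'' rate, which the paper explicitly cannot prove and states only as Conjecture (\ref{conj:idealCase}); if it were available, the very same iteration would produce solutions with exponents approaching $1/3$ (Theorem (\ref{thm:conditionalOnsag})), not $1/5$, so your numerology is also internally inconsistent: a loss of only $N^{1-\ep}$ combined with $e_{v,k+1}=e_{R,k}$ cannot output $\a^*=1/5$. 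What actually produces $1/5$ in the paper is a mechanism absent from your proposal: because the phases $\xi_I$ are nonlinear and merely advected, they remain usable (nondegenerate gradients, angle separation, and crucially $|\nab\xi_I|\approx 1$, which is what makes the microlocal Beltrami cancellation of the high-high interactions work) only for a short lifespan $\tau=b\,\Xi^{-1}e_v^{-1/2}$, so time cutoffs must be built into the amplitudes. Differentiating those cutoffs in the transport term costs $\tau^{-1}=b^{-1}\Xi e_v^{1/2}$, giving $\|Q_T\|_{C^0}\lesssim b^{-1}e_v^{1/2}e_R^{1/2}/N$, while the leading high-high error carries the factor $\||\nab\xi_I|-1\|_{C^0}\sim b$, giving $\|Q_H\|_{C^0}\lesssim b\,e_R$. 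Balancing forces $b=\bigl(e_v^{1/2}/(e_R^{1/2}N)\bigr)^{1/2}$ and hence the achievable rate $e_R'=e_v^{1/4}e_R^{3/4}/N^{1/2}$, which is precisely what the self-similar ansatz converts into $\a^*=\fr{1+\de}{5+9\de+4\de^2}\to 1/5$. Your items (a)--(c) (transported phases, refined nonstationary-phase gains, two-scale mollification in space and along the flow) are all present in the paper, but they do not remove this $b^{-1}$ loss; the obstruction sits in the high-high term where the stationary (Beltrami) structure is used, and you give no argument for why it would disappear.

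Two smaller points: the paper also needs, and proves, material-derivative bounds on $R_1$ (the new stress must again satisfy $\|\nab^k(\pr_t+v_1\cdot\nab)R_1\|_{C^0}\lesssim(\Xi')^{k+1}(e_v')^{1/2}e_R'$ to close the induction), which requires mollifying $R$ along the flow and solving the divergence equation by a transport-elliptic construction rather than a fixed Fourier-multiplier inverse; your proposal does not address how the inverse-divergence output acquires good advective-derivative bounds. And the pressure regularity in the paper is obtained by tracking the pressure corrections $P$ through the iteration (with $\|P\|_{C^0}\lesssim e_R$, $\|\nab P\|_{C^0}\lesssim N\Xi e_R$), including their time regularity, rather than by a one-shot Schauder argument on $-\De p=\pr_i\pr_j(v^iv^j-R^{ij})$, which by itself only controls spatial regularity at fixed time.
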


The framework we develop appears robust enough to obtain the regularity of $1/3 - \de$ conjectured by Onsager except for one term where stationary flows are used in a crucial way.  We discuss this difficulty as it arises in the argument.  We propose as a conjecture an ``Ideal Case'' scenario which summarizes what the method would yield if the $C^0$ norm of this term were suitably well-controlled.  This conjecture, if true, could be used to construct energy-dissipative solutions in the H{\" o}lder class $v \in C_{t,x}^{1/3 - \de}$, and in particular would imply Onsager's conjecture.

The proof of Theorem (\ref{mainThm}), which builds heavily upon the ideas of De Lellis and Sz\'{e}kelyhidi, also implements the method of convex integration.  The argument to be presented here is based on their approach in \cite{deLSzeCts}, but includes several novel features
\begin{itemize}
\item  We use nonlinear phase functions to form the basic building blocks of the construction, and adapt the method of De Lellis and Sz\'{e}kelyhidi for obtaining small solutions to the relevant elliptic equation.  We also develop a new method for constructing the amplitudes of these building blocks which is suitable for the use of nonlinear phase functions.
\item  Our main lemma, which summarizes the result of a single iteration of the scheme, is organized so that one easily controls the time interval supporting the solutions, allowing us to obtain solutions with support in a finite time interval, and to prove Theorem (\ref{thm:mainThm2}) below regarding the gluing of solutions.
\item  We define a notion of frequency and energy levels for measuring the size and derivatives of the error and approximate solutions during the iteration process.  This notion has the important feature that it distinguishes the bounds for the derivatives of the velocity, the pressure and the error.  The frequency energy levels also keep track of second derivative bounds which play an important role in some of the estimates and appear to be necessary for estimating one of the error terms in the conjectural ideal case scenario.
\item  We present a sharp, general framework for calculating the regularity achieved by the construction which is based on the above notion of frequency and energy levels.  This framework reduces regularity computations and bounds for other physical quantities to simple, linear algebra calculations.
\item  We isolate the material derivative $\pr_t + v \cdot \nab$ as a special derivative in the construction.  It appears that unless improved bounds for the material derivative are taken into account, the highest regularity one can achieve through this construction is $1/(3 + \sqrt{8}) - \de$.
\end{itemize}
To take advantage of the special role of the material derivative, we introduce several additional ideas into the scheme, for example:
\begin{itemize}
\item We incorporate improved bounds for $\pr_t + v \cdot \nab$ into the notion of frequency - energy levels.  In particular, the material derivative obeys better bounds than do the spatial derivatives or the time derivative. 
\item We use time averaging along the coarse scale flow of the fluid as a special form of mollification.
\item We introduce a ``transport-elliptic'' equation in order to eliminate the error in the parametrix.  
\item To bound material derivatives, we use estimates coming from the Euler-Reynolds equation itself and related commutator estimates to close the argument.
\end{itemize}
Considerations regarding the symmetries of the Euler equations, including scaling and Galilean transformations, also play an important role underlying the analysis.  In particular, thanks to the ideas listed above, the bounds for the iteration depend only on relative velocities (i.e. derivatives of the velocity) but not on absolute velocities (i.e. the $C^0$ norm of the velocity).

As an interesting observation, it turns out that the energy $\int \fr{|v|^2}{2}(t,x) dx$ for the solution obtained by the construction enjoys better regularity in time than what is proven for the solution itself (it is almost $C^{1/2}$ in $t$ for the $C_{t,x}^{1/5 - \de}$ solutions we construct).  In fact, in the conjectural ``ideal case'' scenario, the construction yields solutions whose energy functions are ``almost automatically'' in $C^1$ even though the velocity is only guaranteed to belong to $C_{t,x}^{1/3 - \de}$.

By slightly modifying the proof of Theorem (\ref{mainThm}), we also prove the following theorem regarding the gluing of solutions.
\begin{thm} \label{thm:mainThm2}
For every smooth solution $(v, p)$ to incompressible Euler on $(-2, 2) \times \T^3$, there exists a H\"{o}lder continuous solution to Euler $({\bar v}, {\bar p})$ which coincides with $(v, p)$ on $(-1,1) \times \T^3$ but is equal to a constant outside of $(-3/2, 3/2) \times \T^3$.
\end{thm}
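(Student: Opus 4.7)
The plan is to reduce Theorem~\ref{thm:mainThm2} to a single run of the same iterative convex-integration scheme that proves Theorem~\ref{mainThm}, starting from an initial Euler-Reynolds subsolution built by cutting off $(v,p)$ in time. Fix a smooth temporal cutoff $\chi(t)$ that equals $1$ on $[-5/4,\,5/4]$ and vanishes outside $(-3/2,\,3/2)$, leaving a buffer of width $1/4$ on each side between the region where the original flow must be preserved and the region where the final flow must be constant. Set
\begin{align*}
v_0(t,x) := \chi(t)\, v(t,x), \qquad p_0(t,x) := \chi(t)\, p(t,x).
\end{align*}
Using that $(v,p)$ solves Euler exactly on $(-2,2)\times \T^3$, a direct computation gives
\begin{align*}
\pr_t v_0 + \tx{div}(v_0\otimes v_0) + \nab p_0 = \chi'(t)\,v - \chi(t)\bigl(1-\chi(t)\bigr)\tx{div}(v\otimes v).
\end{align*}
The second term is already the divergence of the symmetric tensor $-\chi(1-\chi)\,v\otimes v$, and the first is divergence-free in $x$, so applying the symmetric divergence-inverse operator standard in this area yields a traceless stress $R^*$ with $\tx{div}_x\, R^* = \chi'(t)\,v$ defined pointwise in $t$. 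Absorbing traces into the pressure, we obtain a smooth Euler-Reynolds triple $(v_0,p_0,R_0)$ with $R_0 := -\chi(1-\chi)\,v\otimes v + R^*$, whose stress is supported in the time set $\{\chi'\neq 0\}\cup\{0<\chi<1\}\subset [-3/2,-5/4]\cup[5/4,3/2]$.

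Next, I would feed $(v_0,p_0,R_0)$ into the iterative main lemma that drives Theorem~\ref{mainThm}. That lemma, by construction, produces at each stage $n$ a correction $(V_n,P_n)$ and a new Reynolds stress $R_n$ whose time supports are each contained in a controlled enlargement of the time support of the preceding stress $R_{n-1}$. By choosing the initial frequency--energy levels sufficiently large and the enlargement parameters so that their cumulative sum is less than $1/4$ on each side, one guarantees $\tx{supp}_t\,(V_n,P_n,R_n)\subset(-3/2,-1)\cup(1,3/2)$ for every $n$. Hence every correction vanishes identically on a neighborhood of $[-1,1]\times\T^3$ and outside $(-3/2,3/2)\times\T^3$. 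Setting $\bar v:=v_0+\sum_{n\geq 1}V_n$ and $\bar p:=p_0+\sum_{n\geq 1}P_n$, the H\"older estimates from the proof of Theorem~\ref{mainThm} give $\bar v\in C_{t,x}^{1/5-\de}$, the residual stresses $R_n$ vanish in the limit so that $(\bar v,\bar p)$ solves Euler weakly, and the support control delivers $(\bar v,\bar p)\equiv(v,p)$ on $(-1,1)\times\T^3$ together with $\bar v\equiv 0$, $\bar p\equiv 0$ outside $(-3/2,3/2)\times\T^3$.

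The main obstacle, and the reason ``controllability of the time interval'' is listed among the iterative lemma's built-in features in the introduction, is the strictness of this time-support preservation. In a typical convex-integration scheme the localization of a correction is softened by mollifying amplitudes, phase functions, and coefficients of the stress, which \emph{a priori} can enlarge supports by the mollification radius. For the gluing theorem it is essential that such enlargements be sharp, disjoint from $[-1,1]$, and quantitatively summable across iterations. The bulk of the work therefore is to check that the sharp bounds produced by the frequency--energy-level framework, together with the improved material-derivative estimates and the transport-elliptic correction to the parametrix, are strong enough to confine the entire infinite iteration inside the $1/4$-buffer reserved on each side of $(-1,1)$.
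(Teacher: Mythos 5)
Your overall strategy is the same as the paper's (cut off $(v,p)$ in time to get an Euler--Reynolds subsolution supported in two time bands, then run the iteration of the Main Lemma with quantitative control of the time supports), but there is a genuine gap at the very first step: the divergence equation $\pr_j (R^*)^{jl} = \chi'(t)\,v^l$ is \emph{not} solvable merely because the data is divergence-free. On $\T^3$ the solvability condition is that the data have zero spatial mean (it must be orthogonal to the constant/Killing fields, exactly the point made in the paper's discussion of momentum conservation), and $\int_{\T^3}\chi'(t)\,v^l\,dx = \chi'(t)\int v^l\,dx$ is nonzero on the transition region whenever the Euler solution has nonzero total momentum. So your $R_0$ does not exist in general. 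This is not a removable technicality of the method: every Euler--Reynolds flow conserves momentum, so a weak solution agreeing with $v$ on $(-1,1)\times\T^3$ carries the momentum $\int v\,dx$ for all time and therefore cannot vanish identically outside $(-3/2,3/2)$ when that momentum is nonzero. Your claimed conclusion $\bar v\equiv 0$, $\bar p\equiv 0$ outside is thus too strong and false in general; the paper repairs exactly this by first applying a Galilean transformation to reduce to $\int v\,dx=0$, running the construction there, and transforming back, which is why the theorem asserts only that $(\bar v,\bar p)$ equals a \emph{constant} outside $(-3/2,3/2)\times\T^3$.

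A secondary, easily fixable quantitative issue: with $\chi$ equal to $1$ on $[-5/4,5/4]$ and merely vanishing outside $(-3/2,3/2)$, the initial stress is supported in the transition bands whose closure can reach $\pm 3/2$, while the corrections produced by the Main Lemma live on slight \emph{enlargements} (by $\th_{(k)}+\tau_{(k)}$) of the current stress support, expanding outward as well as inward. Your $1/4$ buffer only protects the inner side (keeping the corrections away from $[-1,1]$); on the outer side there is no margin, so the iteration would spill past $\pm 3/2$. The paper avoids this by cutting off already at $(-6/5,6/5)$, reserving an outer buffer $[6/5,3/2]$, and then making the cumulative support growth $\sum_k(\th_{(k)}+\tau_{(k)})$ as small as desired by taking $\Xi_{(0)}$ (equivalently $Z$) large; you should likewise take $\chi$ supported in an interval compactly contained in $(-3/2,3/2)$.
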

The Theorem (\ref{thm:mainThm2}) above was motivated by related discussions with P. Constantin, Y. Sinai, and T. Buckmaster.
\section{The Euler Reynolds System} \label{motivation}
We start by giving a motivation for the argument.  A similar discussion can be found in \cite{deLSzeCtsSurv}.

Consider any solution $(v_0, p_0)$ to the Euler equations on, say, $\R^n\times\R$, which we write in the form
\begin{equation} \label{eq:euEqns}
\left\{
 \begin{aligned} \pr_t v_0^l + \pr_j(v_0^j v_0^l) + \pr^l p_0 = 0  \\
 \pr_j v_0^j = 0
 \end{aligned}
\right.
\end{equation}
Here and in the rest of the paper, we use the Einstein summation convention, according to which we understand that there is a summation over the $j$ index because it is repeated.  

We imagine that $v_0$ could be very singular, such as one of the wild solutions that will be constructed in the argument.  One way to describe the ``coarse scale'' or ``low-frequency'' behavior of the velocity field $v_0$ is to consider a mollification $v_\ep = \eta_\ep \ast v_0$.  The mollifier $\eta_\ep$ in this argument could be a standard mollifier, so that the value of $v_\ep$ at each point is the average velocity in an $\ep$-neighborhood of the point, or the mollifier $\eta_\ep \ast$ could operate by projecting to frequencies less than $\fr{1}{\ep}$.  By mollifying the equations (\ref{eq:euEqns}), we see that the coarse scale velocity field $v = v_\ep$ and pressure $p = \eta_\ep \ast p_0$ satisfy the system of equations
\begin{equation} \label{eq:euReynMot}
\left\{
 \begin{aligned} \pr_t v^l + \pr_j(v^j v^l) + \pr^l p = \pr_j R^{jl}  \\
 \pr_j v^j = 0
 \end{aligned}
\right.
\end{equation}
with a symmetric, non-positive $(2,0)$ tensor $R^{jl} = v_\ep^j v_\ep^l - (v_0^j v_0^l)_\ep$ arising from the failure of the nonlinearity to commute with the averaging.  Since $v_0 \in L^2$, $R = R_\ep$ converges strongly to $0$ in $L^1$ as $\ep \to 0$, and if $v$ is continuous, $R_\ep$ converges to $0$ in $C^0$; in general $R_\ep$ converges to $0$ at a rate depending on the regularity of $v_0$.  The tensor $R^{jl}$ giving rise to the forcing term on the right hand side of the equation is an example of what is known in the theory of turbulence as a ``Reynolds stress'', and for this reason De Lellis and Sz{\' e}kelyhidi have named this system the ``Euler-Reynolds equations''.  Since the trace part of $R^{jl}$ can be regarded as part of the pressure, the trace free part of $R^{jl}$ measures the error for $v$ to be a solution to the Euler equations.  This system implicitly appears in Constantin, E, and Titi's proof \cite{CET} that $C^{0,\a}$ solutions to the Euler equations conserve energy if $\a > 1/3$.  In their proof, after mollifying the equation as above, one then integrates against the test function $v_\ep^l$, and proceeds to integrate by parts as in the usual proof of conservation of energy.  The key to their proof is a commutator estimate which establishes a quadratic bound $\| R_\ep \| \leq C \ep^{2 \a} \| v \|_{C_tC_x^\a}^2$ on the rate at which $R_\ep$ tends to $0$.

Actually, one can see that the average of {\bf any} family of solutions to Euler will be a solution of the Euler-Reynolds equations.  The most relevant type of averaging to convex integration arises during the operation of taking weak limits, which can be regarded as an averaging process when viewed in terms of Young measures.  For example, suppose that $v_n$ is a sequence of solutions to the Euler equations uniformly bounded by $|v_n| \leq M$.  Along some subsequence, which we also denote by $v_n$, the sequence of measures $\mu_n(t,x, \tilde{v}) = dt dx \de_{v_n}(\tilde{v})$ on $\R^n \times \R \times \R^n$ obtained by pushing forward the Lebesgue measure to the graph of $v_n$ will obtain a weak limit of the form $dt dx d\mu_{t,x}(v)$.  The parameterized family of measures $\mu_{t,x}(v)$ are probability measures which record the local in space-time oscillations of the subsequence $v_n$; measures which arise in this way are called ``Young measures'' since they were first introduced by Young to describe the behavior of minimizing sequences in the calculus of variations.  The center of mass $\bar{v}(t,x) = \int \tilde{v} d\mu_{t,x}(\tilde{v})$ of the limiting measure is the weak limit of the subsequence $v_n$, and obeys the Euler-Reynolds system with Reynolds stress $R^{jl}(t,x) = \bar{v}^j \bar{v}^l - \int (\tilde{v}^j \tilde{v}^l) d\mu(\tilde{v})$.  This statement follows from the general fact that for any continuous function $g(v)$ defined on $\{ |v| \leq M \}$ the weak limit of $g(v_n)$ along the subsequence $v_n$ will exist and be given by the expectation $\int g(\tilde{v}) d\mu_{t,x}(v)$.  While we do not use the theory of Young measures in this work, we find that their consideration is useful for visualizing and understanding the intuition behind the proofs in some of the previous literature and the one given here; see \cite{mulMic} for more on Young measures.  

Heuristically, the convex integration procedure begins with a solution $v_0$ to the Euler-Reynolds system, and obtains a solution to the Euler equations by ``reintroducing'' the oscillations responsible for the forces that are exerted on $v_0$ ``during'' the weak limiting process.  Namely, the method actually proves that essentially every solution to the Euler-Reynolds equations can be approximated in a weak topology by solutions to Euler.  Calculating what sort of velocity fields can be weakly approximated by solutions to Euler provides a candidate space of subsolutions (or approximate solutions) in which one can work while performing convex integration; however, it is only after the convex integration procedure is proven successful that we know the correct space has been found.  A priori, the class of solutions to the Euler-Reynolds system may seem too general, since any incompressible flow $v^l$ which conserves momentum can be regarded as a solution to Euler-Reynolds after solving $\pr_j R^{jl} = \pr_t v^l + \pr_j(v^jv^l)$.  Some more remarks about momentum conservation for Euler-Reynolds flows are included in Section (\ref{sec:momentumConservation}).

We now describe the convex integration scheme for Euler in some more technical detail.  Beginning with a solution $(v, p, R)$ of the Euler Reynolds system (\ref{eq:euReynMot}), we show that we can correct the velocity and pressure to obtain another velocity field $v_1 = v + V$, and pressure $p_1 = p + P$, which obey the Euler-Reynolds system with some new stress tensor $R_1^{jl}$ that is much smaller than the previous stress $R$ in an appropriate topology (for the present paper, $R$ will be measured in $C^0$).  We then iterate this procedure infinitely many times so that the stress tends to $0$; by ensuring that the sequence of velocity fields converges strongly in $L^2$ to some limit $\bar{v}$, we obtain a weak solution to the Euler equations in the limit by passing to the limit in the weak formulation of the Euler-Reynolds system.  At each stage of the iteration, the correction $V$ is chosen so that it oscillates very rapidly compared to the velocity field $v$ which results from the previous stage of the construction.  Therefore one can interpret the procedure as recovering some weak solution $\bar{v}$ to the Euler equations in a sequence, starting from the coarse scales and passing to fine scale oscillations, in a fashion similar to taking $\ep \to 0$ in the mollification procedure described above.  Thus, we think of $v$ as being basically a low frequency projection of the solution we will ultimately construct, and the correction $V$ is a higher frequency component of the solution to be constructed.  With this intuition in mind, we expect $R$ to behave analogously to the sequence $R^{jl}_\ep = (v_\ep^j v_\ep^l) - (v_0^j v_0^l)_\ep$ studied above.

In contrast to methods of solving differential equations which produce unique solutions, the choice of the correction $V$ at each stage is quite arbitrary, allowing one to obtain a huge family of solutions depending on these choices.  In particular, the frequency of the very first oscillation can be chosen arbitrarily large (without affecting at all the regularity of the solution obtained in the limit).  Therefore, since every correction to follow will also be of highly oscillatory nature, we can reason that if it is indeed possible to begin with an Euler Reynolds solution $v_0$ and then, by carrying out convex integration, arrive at a solution $\bar{v}$, it is necessary that $v_0$ can be approximated in a weak topology by solutions to the Euler equations.  By similar considerations, every velocity field $v_1, v_2, \ldots$ which arises after each iteration of the process can also be realized as such a weak limit.  These considerations explain how we are forced to calculate what weak limits can be reached by solutions to the Euler equations in order to construct a space of approximate solutions.  They also explain why the results obtained are connected to the $h$-principle; the method applied here always gives an approximation in a weak topology and relies on an abundance of solutions.  This feature of the method is demonstrated in \cite{choff}, which contains a characterization of the $H^{-1}$ closure of the space of $C_{t,x}^\a$ Euler flows for $\a < 1/10$ in $2$ and $3$ spatial dimensions.
\part{General Considerations of the Scheme}
We now highlight the main issues and the general philosophy underlying the proof of the main lemma.  A part of this philosophy can also be found in \cite{deLSzeCtsSurv}, since it also underlies the proof in \cite{deLSzeCts}.  We use this section as an opportunity to introduce some heuristics which will be usefully formalized in the proof.

Suppose that $(v, p, R)$ are a given solution of the Euler-Reynolds system.
\begin{equation} \label{eq:euReyn}
\left\{
 \begin{aligned} \pr_t v^l + \pr_j(v^j v^l) + \pr^l p = \pr_j R^{jl}  \\
 \pr_j v^j = 0
 \end{aligned}
\right.
\end{equation}

For the purpose of the present discussion, let us imagine that $v$ and $p$ are smooth functions of size $5$, and that $R$ is a smooth, symmetric $(2,0)$ tensor field with absolute value smaller than one.

We introduce highly oscillatory corrections $V$ and $P$ to the velocity field and pressure such that $\tx{ div } V = \pr_j V^j = 0$.  The corrected velocity field $v_1 = v + V$ and pressure $p_1 = p + P$ satisfy the system
\begin{align*}
 \pr_t v_1^l + \pr_j(v_1^j v_1^l) + \pr^l p_1 &= \pr_t V^l + \pr_j(v^j V^l) + \pr_j(V^j v^l) + \pr_j(V^j V^l) + \pr^l P + \pr_j R^{jl} \\
\pr_j v_1^j &= 0
\end{align*}

Our goal is to choose high frequency corrections $V$ and $P$ so that the forcing term in the equation can be represented as $\pr_j R_1^{jl}$ for a new Reynolds stress $R_1^{jl}$ much smaller than $R^{jl}$.\footnote{ In general the word ``smaller'' here should refer to some norm which controls the $L^1_{t,x}$ norm since we expect $R$ to behave like the stress $R_\ep = (v_\ep^j v_\ep^l) - (v^j v^l)_\ep $ arising from mollifying a solution $v \in L^2_{t,x}$.  For the present paper, we will measure $R$ in the norm $C^0_{t,x}$.}  Let us express the gradient $\pr^l P$ as a divergence $\pr_j(P \de^{jl})$, where $\de^{jl}$ is the (inverse) Euclidean inner product (or, if one prefers, the ``Kronecker delta'' or the ``identity matrix'').  We then collect terms as follows:
\begin{align*}
 \pr_t v_1^l + \pr_j(v_1^j v_1^l) + \pr^l p_1 &= \left[ \pr_t V^l + \pr_j(v^j V^l) \right] + \left [\pr_j(V^j v^l) \right] + \pr_j\left[ (V^j V^l) + P\de^{jl} + R^{jl} \right]\\
\pr_j v_1^j &= 0
\end{align*}

We wish to express each of these terms in the form $\pr_j Q^{jl}$ with $Q$ much smaller than $|R|$.  Let us first consider the term
\begin{align} \label{eqn:totalStress}
 Q^{jl} = (V^j V^l) + P\de^{jl} + R^{jl},
 \end{align}
We cannot immediately make this term small because $V^j V^l$ is a rank one tensor, whereas $P\de^{jl} + R^{jl}$ may be an arbitrary symmetric (2,0) tensor.  We can, however, ensure that the low frequency part of $Q^{jl}$ is small by choosing $V$ so that the low frequency part of $V^j V^l$ cancels with the low frequency part of $P\de^{jl} + R^{jl}$; to make sure this cancellation is possible, it is necessary to first choose $P$ large enough so that $P\de^{jl} + R^{jl}$ is negative definite, since the coarse scale part of $V^j V^l$ must be essentially positive definite.  The fact that $V^j V^l$ can have a nontrivial low frequency part even though $V$ itself has very high frequency demonstrates a lack of cancellation in the nonlinearity, which can ultimately be blamed for the fact that weak limits of Euler flows may fail to solve the Euler equation.\footnote{Regarding the analogous aspect of the construction in \cite{shnNonUnq}, Shnirelman suggested that the above idea could be thought of as emulating the frequency cascades predicted in the theory of turbulence.}  Note that the above choices of $V$ and $P$ imply that $V$ (or, rather, the absolute value of $V$) has the size of a ``square root'' of $R$, and that $P$ has the size of $R$.%
%
\[ |V| \approx |R|^{1/2}, \quad \quad |P| \approx |R| \]
Thus, after the construction is iterated $k$ times with $R_{(k)} \to 0$ quickly enough, we can ensure that the velocities and pressures $v_{(k+1)} = v_{(k)} + V_{(k)}$, and $p_{(k+1)} = p_{(k)} + P_{(k)}$ constructed through the iteration will converge to solutions $(v,p)$ of the Euler equations.

Although the low frequency part of the $Q^{jl}$ term in (\ref{eqn:totalStress}) will be small, there will be many high frequency interaction terms in $Q^{jl}$ which will not be small.  So far, the only method available to handle these terms is to construct $V$ out of stationary solutions to $3$D-Euler called Beltrami flows, which also requires adding additional components to the pressure.\footnote{For the isometric embedding problem, there is no known method to handle interference terms, and this difficulty actually limits the regularity of the solutions which can currently be obtained through convex integration.  See \cite{deLSzeC1iso}.}

Following \cite{deLSzeCts}, we refer to the error term $\pr_t V^l + \pr_j(v^j V^l)$ as the ``transport term'', since it can also be written as $(\pr_t + v^j \pr_j)V^l $.  We want to find a small, symmetric tensor $Q_T^{jl}$ solving the divergence equation \[\pr_j Q_T^{jl} = (\pr_t + v^j \pr_j)V^l.\]  We also need to do the same for the ``high-low frequency'' interaction term $\pr_j(V^j v^l)$, because $V^j v^l$ is of size ``$|R|^{1/2}$'', whereas we need $Q$ to be much smaller than $R$.  Achieving either of these goals requires us to find a small solution to the first-order elliptic equation $\pr_j Q^{jl} = U^l$, with high frequency data $U^l$.  

In general, we expect that if $U^l$ is a smooth vector field with amplitude of ``size $1$'' and with ``frequency $\la$'', solving the elliptic equation should allow us to achieve a solution of ``size $1/\la$''.  One way to give rigorous evidence to this heuristic expectation is to use the Fourier transform to solve the equation, and more evidence can be drawn by analogy with the ODE $\fr{dQ}{dx} = e^{i \la x} u(x)$, where repeated integration by parts yields the estimate,
\begin{align}
\fr{dQ}{dx} &= e^{i \la x} u(x) \\
\Rightarrow ||Q|| &\leqc \fr{||u||}{\la} + \fr{||\nab u||}{\la^2} + \ldots + \fr{||\nab^D u||}{\la^D} + \fr{||\nab^{D + 1} u||}{\la^D}
\end{align}  
Thus, with the appropriate ``integration by parts'' type argument, we expect to gain a smallness factor of $||Q|| \leqc \fr{|| u ||}{\la}$ by solving the divergence equation $\pr_j Q^{jl} = e^{i \la \xi \cdot x} u^l$, as long as the oscillations of $u$ are slower than $\la$ (i.e. $u$ cannot be of the form $e^{- i \la \xi \cdot x } v^l$ for some slowly varying $v$).

The transport term presents a more serious problem than does the high-low term $\pr_j(V^j v^l) = V^j \pr_j v^l$, because the transport term necessarily involves differentiating the oscillatory correction $V$ and then solving the equation
\begin{align}
\pr_j Q_T^{jl} = (\pr_t + v^j \pr_j)V^l
\end{align}
Now, $v$ has size $1$, and if $V$ has the expected size $|R|^{1/2}$ and frequency $\la$, its derivative $v^j \pr_j V$ on the right hand side will have size at least $\la |R|^{1/2}$ and will also have frequency $\la$.  On the other hand, solving the elliptic equation only gains one power of $\la^{-1}$, leaving no hope to find a solution with $|Q| < |R|$ when $|R|$ is small.  The equations therefore impose a requirement that $V$ be essentially transported by the coarse scale velocity field $v$.  

The philosophy described above was executed in \cite{deLSzeCts} to construct continuous, weak solutions of Euler which do not conserve energy (with the superficial difference that the stress $R^{jl}$ was required to have trace $R^{jl} \de_{jl} = 0$).  While the paper \cite{deLSzeCts} was the first to execute this philosophy by representing the forcing term as a divergence $\pr_j R^{jl}$, many of the above considerations implicitly underlie preceding constructions of badly behaved weak solutions to Euler (see \cite{shnNonUnq} and \cite{deLSzeIncl} for instructive examples).



\section{Structure of the Paper}
While the general considerations above pertain to both the present paper and to the contruction in \cite{deLSzeCts}, the manner in which the philosophy is executed in the present paper is different.  Our strategy for the exposition is as follows.

\begin{itemize}
\item In the following section we identify the error terms which need to be controlled.
\item In Part (\ref{basicConstruct}) we explain some notation of the paper and write down a basic construction of the correction.  In this part, we do not explain how the various parameters involved should be chosen to optimize the construction.  Rather, our goal is to give enough detail so that it is clear how the scheme can be used to construct weak solutions which are continuous in space and time.
\item In Part (\ref{obtainSolutions}) we explain how to iterate the construction of Part (\ref{basicConstruct}) to obtain continuous solutions to the Euler equations.  We then explain the concept of frequency and energy levels, and the Main Lemma which is iterated to give the full proof of the Theorems (\ref{mainThm}) and (\ref{thm:mainThm2}).  Through this discussion, we explain some additional difficulties which present themselves as one approaches the optimal regularity, and how these difficulties can be overcome by isolating the material derivative $\pr_t + v \cdot \nab$ as a special derivative.
\item In Parts (\ref{hardPart})-(\ref{part:thirdHardPart}), we verify all the estimates needed for the proof of the Main Lemma.
\end{itemize}

\section{Basic Technical Outline}
We now give a more technical outline of the construction.

Let $(v^l, p, R^{jl})$ be a solution to the Euler-Reynolds system, and consider a correction $v_1 = v + V$, $p_1 = p + P$.

The correction $V$ is a divergence free vector field which oscillates rapidly compared to $v$, and can be written as a sum of divergence free vector fields $V = \sum_I V_I$, which oscillate in various different directions, and which are supported in several different regions of the space-time $\T^3 \times \R$.

In our construction, there will always be a bounded number of waves $V_I$ (at most $192$) which are nonzero at any given time $t$.

Each individual wave $V_I$ composing $V$ is a complex-valued, divergence free vector field that oscillates rapidly in only one direction.  We introduce several ways in which we will represent each $V_I$
\begin{align*}
 V_I&= e^{i \la \xi_I}\tilde{v}_I \\
&= \fr{1}{\la}\nab \times ( e^{i \la \xi_I} w_I ) \\
&= e^{i \la \xi_I}(v_I + \fr{\nab \times w_I}{\la}) \\
v_I &= (i \nab \xi_I) \times w_I
\end{align*}

Here, $\la \in \R$ is a large number, independent of $(t,x)$, which will be chosen in the argument.  The phase functions $\xi_I(t,x)$ will be real-valued functions of $(t,x)$ whose gradients indicate the direction of oscillation of the components $V_I$.  Unlike the argument of \cite{deLSzeCts}, we will not require the functions $\xi_I(t,x)$ to be linear (that is, to have constant gradients).  

Because we use nonlinear phase functions, we must modify the ``nonstationary phase'' type arguments used in \cite{deLSzeCts} to gain cancellation when solving $\pr_j Q^{jl} = e^{i \la \xi(x)} u^l $.  We therefore must ensure, for one thing, that the gradients $\pr^l \xi_I $ of the phase functions $\xi_I$ remain bounded away from $0$.  If one is willing to restrict to $\la \in \Z$ (as is done in \cite{deLSzeCts}), one could obtain globally defined functions $\xi_I$ taking values in $\R/(2 \pi \Z)$ which satisfy this lower bound on their gradients.  Instead, we choose to define $\xi_I$ on only part of the space $\T^3$ at any given time, and the coefficient $\tilde{v}_I^l$ will be a vector field that is compactly supported in that region, and also compactly supported in time.  In particular, we will not be using the globally defined frequencies on the torus.

The $\tilde{v}_I = \tilde{v}_I^l$, and $w_I = w_I^l$ here are complex-valued vector fields.  Then $v_I = (i \nab \xi_I) \times w_I$ is a vector field which will be chosen later, but which necessarily takes values which point into the level sets of $\xi_I$.  Since $\la$ will be a large parameter, $\tilde{v}_I \cdot \nab \xi_I = \tilde{v}_I^l \pr_l \xi_I = \fr{\nab \times w_I}{ \la} \cdot ( \nab \xi_I )$ will be small.  The character of these building blocks reflects the fundamental principle that any high frequency, divergence free plane wave must point perpendicular to its direction of oscillation, causing the vector field to generate a shear type of flow.  This principle, which can be formalized on $\R^n$ by considering the Fourier transform, still holds approximately in our setting.  In our argument, we will first construct the vector field $v_I$, and then solve the linear equation $v_I = (i \nab \xi_I) \times w_I$ in order to obtain $w_I$.

We remark that, in order to ensure that $V$ is real-valued, each index $I$ will have a conjugate index $\bar{I}$ such that $V_{\bar{I}} = \bar{V_I}$.  Moreover, $\xi_{\bar{I}} = - \xi_I$, and $w_{\bar{I}} = \bar{w_I}$ so that $v_{\bar{I}} = \bar{v}_I$ as well.

Let us introduce the notation $( v w )^{jl}$ to represent the symmetric product of two vectors $v^l$ and $w^l$.  That is, $(v v)^{jl} = v^j v^l$ is the square $v \otimes v$ of $v$, and from the polarization identity $( v w )^{jl} = \fr{1}{2} ( v^j w^l + w^j v^l )$ in general. The symmetric product so defined is commutative.

Using the representation of $V$ as a sum of individual waves with distinct direction and location, we see that the corrected velocity field and pressure $v_1, p_1$ satisfy the system.
\begin{align} \label{euReynExp}
  \begin{split} \pr_t v_1^l + \pr_j(v_1^j v_1^l) + \pr^l p_1 &= \left[ \pr_t V^l + \pr_j(v^j V^l) \right] + \left [\pr_j(V^j v^l) \right] \\
   &+ \sum_{J \neq \bar{I}} \pr_j ( V_I V_J )^{jl} + \pr_j\left[ \sum_I (V_I^j \bar{V}_I^l) + P\de^{jl} + R^{jl} \right]
   \end{split} \\
\pr_j v_1^j &= 0
\end{align}

We have separated out the interaction terms between non-colliding frequencies in the hope that these interference terms can be shown to be negligible.  Indeed, if one measures errors in a weak topology, these interaction terms will be small because the products $(V_I V_J)^{jl}$ can easily be made high frequency for $J \neq \bar{I}$, and the term $\sum_I (V_I^j \bar{V}_I^l) + P\de^{jl} + R^{jl}$ will become small in a strong topology as long as the amplitudes $v_I$ are chosen appropriately\footnote{One can interpret this construction as requiring that $V^l = V^l_{(\la)}$ generates an appropriate Young measure as $\la \to \infty$.  The fact that this equation can be solved for arbitrary $R$, implying in a sense that high frequencies may emulate arbitrary forces on the lower frequency part of the solution, demonstrates a lack of cancellation characteristic of the nonlinearity in the Euler equations.}.  However, we cannot use a weak topology to measure the size of the remaining stress \footnote{Recall from Section \ref{motivation} that we expect $R_\ep^{jl} \approx (v^j v^l)_\ep - v_\ep^jv_\ep^l$ to vanish uniformly if $v$ is continuous, and at least to converge to $0$ in $L^1$ if $v$ is in $L^2$.}.

Ultimately, we will only be able to handle the interaction terms after imposing a ``Beltrami flow'' condition on the structure of the $V_I$.  This condition will allow us to show that after adding appropriate terms to the pressure, the interference terms $( V_I V_J )^{jl} - P_{IJ} \de^{jl}$ are small in $C^0$ modulo solutions of $\pr_j Q^{jl} = 0$.  

The pressure will therefore be of the form
\[ P = P_0 + \sum_{J \neq {\bar I}} P_{I,J} \]
where $P_0$ appears in the equation
\begin{align}
 \sum_{I} V_I^j {\bar V}_I^l + P_0 \de^{jl} + R^{jl} &\approx 0 \label{eq:approxStressPre}
\end{align}
We remark here that, by choosing $P_0$ appropriately, we will also be able to prescribe the energy increment 
\[  \int(| v + V|^2 - |v|^2) dx \approx e(t) \]
of the correction with great accuracy.

There are two reasons for the $\approx$ symbol in (\ref{eq:approxStressPre}).  One reason is that we cannot control the pointwise values of
\[ V_I^j {\bar V}_I^l = {\tilde v}_I^j {\overline {\tilde v}}_I^l \]
exactly; rather we are only able to determine these products approximately in the sense that
\[ V_I^j {\bar V}_I^l \approx v_I^j {\bar v}_I^l \]
and we have freedom to prescribe $v_I$ up to some constraints such as the fact that $v_I \in \langle \nab \xi_I \rangle^\perp$.

The other reason for the approximation symbol $\approx$ in (\ref{eq:approxStressPre}) is that we will also define a suitable mollification $R_\ep$ of $R$ before solving the equation
\begin{align}
\sum_I v_I^j {\bar v}_I^l + P_0 \de^{jl} + R_\ep^{jl} &= 0 
\end{align}
pointwise.  In this way, the amplitudes $v_I$ will only carry coarse scale information regarding $R$.  

Making this mollification introduces another error term of the form
\[ R - R_\ep \]
which will be carried as part of the new stress $R_1$.  For similar reasons, we will also introduce an appropriate mollification $v_\ep$ of $v$ at the beginning of the argument, giving rise to another term
\[ (v^j - v_\ep^j) V^l + V^j (v^l - v_\ep^l) \]
in the new stress.  While these mollifications are very important for the construction of H{\" o}lder continuous solutions, for the present discussion we will ignore them, and return to them later.

To summarize, we have seen five main error terms, each of which must be expressed in the form $\pr_j Q^{jl}$ for some $Q^{jl}(t,x)$ taking values in the space of symmetric, $(2,0)$ tensors $\SS$, and each $Q^{jl}$ is required to be smaller than $R^{jl}$ to ensure we have made visible progress towards a solution of the Euler equations.  The terms we must deal with are named:
\begin{itemize}
 \item {\bf The Transport term }
\[ \pr_j Q_T^{jl} =  \pr_t V^l + \pr_j(v_\ep^j V^l) \]
 \item {\bf The High-Low interaction term}
\[ \pr_j Q_L^{jl} = \pr_j(V^j v^l) = V^j \pr_j v_\ep^l \]
 \item {\bf The High-High interference terms}
\[ \pr_j Q_H^{jl} = \sum_{J \neq \bar{I}} \pr_j\left[ ( V_I V_J )^{jl} + P_{I,J} \de^{jl} \right] \]
 \item {\bf The Stress term}
\[ Q_S^{jl} = \left[ \sum_I (V_I^j \bar{V}_I^l) + P_0 \de^{jl} + R_\ep^{jl} \right] \]
\item {\bf The Mollification Terms}
\[ Q_M^{jl} = (v^j - v_\ep^j) V^l + V^j (v^l - v_\ep^l) + (R^{jl} - R_\ep^{jl}) \]
\end{itemize}
In \cite{deLSzeCts}, the High-High interference terms correspond to part of what is called the ``oscillation part'' of the error.  

Each of these terms has its own set of difficulties, and these difficulties are coupled together as the transport term and stress term both impose constraints on the correction $V$.  The one difficulty that is common to the first three of the terms is that they require one to find small solutions to the underdetermined, first order, elliptic equation
\[ \pr_j Q^{jl} = e^{i \la \xi(t,x)} u^l(t,x) \]
For example, the High-Low interaction term (which is the simplest of these three error terms), can be expanded in the form
\[ V^j \pr_j v^l = \sum_I e^{i \la \xi_I} \tilde{v}_I^j \pr_j v_\ep^l \]

As we previously remarked, either by considering the problem in frequency space or by drawing an analogy with the ODE $\fr{dQ}{dx} = e^{i \la \xi(x)} u(x)$, one expects to gain a smallness factor of $\fr{1}{\la}$ for the solution $Q$.  This gain is achieved by an ``oscillatory estimate'', whose proof we will describe shortly.

This gain of $\la^{-1}$ alone is not sufficient to handle every term in the argument; both the Transport term and the High-High interference terms require one to differentiate the oscillatory correction $V$, and thereby produce oscillatory data whose main terms are of the form $e^{i \la \xi} \la u^l$, meaning their amplitudes are too large for the oscillatory estimate to obtain a small solution.  For example, the right hand side of the transport term can be written
\begin{align*}
 \pr_t V^l + \pr_j(v_\ep^j V^l) &= (\pr_t + v_\ep^j \pr_j)V^l \\
&= \sum_I e^{i \la \xi_I} \left( i \la (\pr_t + v_\ep^j \pr_j) \xi_I \tilde{v}_I^l + (\pr_t + v_\ep^j \pr_j) \tilde{v}_I^l \right)
\end{align*}
For the transport term, we deal with this difficulty by allowing the phase functions $\xi_I$ to obey a transport equation; in particular, the phase functions will not be linear functions, in contrast to the argument in \cite{deLSzeCts}.  

We will also make sure the lifespan of each $V_I$ is a sufficiently small time interval, so that phase functions $\xi_I$ remain nonstationary and suitably regular for applying the main, oscillatory estimate.  The length of this time interval will depend on bounds for the derivatives of $v$, including the $L^\infty$ norm of $\nab v$.



\part{Basic Construction of the Correction} \label{basicConstruct}
\section{Notation} \label{notation}
We employ the Einstein summation convention, according to which there is an implied summation when a pair of indices is repeated (e.g. $\pr_j v^j$ is the divergence of a vector field, $(v \cdot \nab) f = v^j \pr_j f$, etc.).  We employ the conventions of abstract index notation, so that upper-indices and lower-indices distinguish contravariant and covariant tensors.  

We define the space $\SS = \tx{Sym}^2(\R^3)$ to be the $6$ dimensional space of symmetric, $(2,0)$ tensors on $\R^3$.  That is, the vectors in $\SS$ are symmetric bilinear maps $G^{jl} : (\R^3)^* \times (\R^3)^* \to \R$ on the dual of $\R^3$, whose action on a pair of covectors $u, v \in (\R^3)^*$ can be written as $G(u,v) = G^{jl} u_j v_l = G^{lj} u_j v_l = G(v,u)$.  We also define $\SS_+$ to be the cone of positive definite, symmetric, $(2,0)$ tensors.

We will use the notation
\[ (vu)^{jl} = \fr{1}{2}(v^j u^l + v^l u^j) \]
to refer to the symmetric product of two vectors $v$ and $u$.

At many points, we will write inequalities of the form
\[ X \unlhd Y \]
The symbol $\unlhd$ expresses that the above inequality is a goal, and that at some point later on in the proof we will have to show that the inequality
$X \leq Y$ does in fact hold.

The following notation concerning multi-indices will later be helpful for expressing higher order derivatives of a composition (c.f. Section (\ref{sec:estForCoarseScaleFlow}) below).
\begin{defn} \label{defn:orderedPartition} We say that a $K$-tuple of multi-indices $(a^1, a^2, \ldots, a^K)$ forms an ordered $K$-partition of a multi-index $a = (a_1, a_2, \ldots, a_N)$ if there is a partition $\{1, \ldots, N \} = \pi_1 \cup \ldots \cup \pi_K$, such that the subsets $\pi_j \subseteq \{ 1, \ldots, N \}$, $j = 1, \ldots, K$ are pairwise disjoint, and each $a^j$ has the form 
\[ (a^j_{\pi_j(1)}, a^j_{\pi_j(2)}, \ldots, a^j_{\pi_j(N_j)}) \] 
where $\pi_j = \{ \pi_j(1), \ldots, \pi_j(N_j) \}$ is written in increasing order, and the subsets are ``ordered'' in the sense that 
\[ \pi_1(N_1) < \pi_2(N_2) < \ldots < \pi_K(N_K) \]
\end{defn}



\section{A Main Lemma for Continuous Solutions}
In order to have a concrete goal for the construction, we state a Lemma which implies the existence of continuous solutions.  

\begin{lem}[Main Lemma for Continuous Solutions]\label{lem:mainLemCts}
There exist constants $K$ and $C$ such that the following holds.

Let $\ep > 0$, and suppose that $(v, p, R)$ are uniformly continuous solutions to the Euler-Reynolds equations on $\R \times \T^3$, with $v$ uniformly bounded\footnote{The assumption of uniform boundedness here may be somewhat unnatural and actually does not enter into the construction of H\"{o}lder continuous solutions, where all the bounds in the construction depend only on relative velocities. } and
\[ \mbox{ supp } R \subseteq I \times \T^3 \]
for some time interval, and
\[ \| R \|_{C_{t,x}^0} \leq e_R \]
Let 
\[e(t) : \R \to \R_{\geq 0} \]
be any function satisfying the lower bound
\ali{
 e(t) &\geq K e_R \label{eq:lowBdEtCts}
}
on a neighborhood of $I$ such that 
\ali{
 \fr{d}{dt} e^{1/2}(t) &\in C^0(\R) \label{eq:RegEtCts}
}
is continuous and uniformly bounded, and such that $e(t)$ is bounded by
\begin{align}
 e(t) &\leq 1000 K e_R  \label{eq:upBdet}
\end{align}

Then there exists a uniformly continuous solution $(v_1, p_1, R_1)$ to the Euler-Reynolds equations of the form
\begin{align}
v_1 &= v + V \\
p_1 &= p + P
\end{align}
such that the supports of the corrections and the new stress 
\begin{align}
\mbox{ supp } V \cup \mbox{ supp } P \cup \mbox{ supp } R_1 &\subseteq \mbox{ supp } e \times \T^3 
\end{align}
and so that $V$ and $P$ obey the bounds
\begin{align}
\co{ V } \leq C e_R^{1/2} \label{eq:ctsBoundForV} \\
\co{ P } \leq C e_R \label{eq:ctsBoundForP}
\end{align}
and 
\begin{align}
\| \int |v_1|^2(t,x) dx - \int (|v|^2 + e(t)) dx \|_{C_t^0} &\leq \ep \label{eq:wantEnergyIncreaseCts}\\
\co{ R_1 } &\leq \ep
\end{align}
\end{lem}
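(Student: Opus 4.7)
The plan is to construct the correction $V$ as a finite sum $V = \sum_I V_I$ of high-frequency ``Beltrami-type'' building blocks, each supported on a small time interval and on a coordinate patch where a nonlinear phase $\xi_I$ is well-defined. First I would introduce a partition of unity in time, of total length bounded by $\mbox{supp } e$, whose individual cutoffs $\chi_I(t)$ have length short enough (controlled by $\|\nabla v\|_{C^0}$) that the transport-defined phases $\xi_I$, initialized as linear functions $k_I \cdot x$ of distinct frequency vectors $k_I$ at the center of each time interval and evolved by $(\partial_t + v_\ep \cdot \nabla)\xi_I = 0$, remain nondegenerate with $|\nabla \xi_I|$ bounded away from zero. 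Each amplitude would have the form $V_I = e^{i \la \xi_I}(v_I + \la^{-1} \nabla \times w_I)$ with $v_I = (i \nabla \xi_I) \times w_I$, ensuring approximate divergence-freeness and capturing the fact that divergence-free high-frequency waves must be orthogonal to their direction of oscillation.

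Next I would choose the amplitudes $v_I$ together with the coarse pressure component $P_0$ to solve, pointwise, the stress equation
\begin{align*}
\sum_I v_I^j \bar v_I^l + P_0 \de^{jl} + R_\ep^{jl} = 0,
\end{align*}
where $R_\ep$ is a mollification of $R$. Since the left-hand side requires writing an arbitrary symmetric tensor as a sum of rank-one symmetric tensors with prescribed kernel directions $\nabla \xi_I$, I would invoke the geometric lemma of De Lellis--Sz\'ekelyhidi (adapted to the nonlinear phase setting): by choosing the lower bound $e(t) \geq K e_R$ large, $P_0 \de^{jl} + R_\ep^{jl}$ becomes strictly negative-definite, and finitely many Beltrami modes suffice to decompose its negative. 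The freedom in $P_0$ also lets me prescribe the energy increment so (\ref{eq:wantEnergyIncreaseCts}) holds, by solving a scalar ODE for the average of $P_0$ against $e^{1/2}(t)\tfrac{d}{dt}e^{1/2}(t)$.

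The heart of the proof is then estimating the five error terms $Q_T, Q_L, Q_H, Q_S, Q_M$ listed in the outline, and writing each as a divergence $\partial_j Q^{jl}$ with $\|Q\|_{C^0} \leq \epsilon$ so that $R_1 := Q_T + Q_L + Q_H + Q_S + Q_M$ is admissible. For $Q_S$ I use the stress equation directly, so $Q_S = 0$ pointwise. For $Q_M$ I choose the mollification parameter small enough relative to $\epsilon$, using uniform continuity of $v$ and $R$. For the oscillatory terms $Q_T, Q_L, Q_H$, each right-hand side is a sum of expressions $e^{i\la(\xi_I \pm \xi_J)} u^{jl}$ with nonstationary phase (here the cancellation $\xi_{\bar I} = -\xi_I$ is why the $I=\bar J$ terms were singled out into the stress term). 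I would solve $\partial_j Q^{jl} = e^{i\la \Phi} u^l$ by building a parametrix analogous to integrating by parts against $\Phi$: write the ansatz $Q^{jl} = e^{i\la \Phi} q^{jl}$, produce an algebraic equation $i\la \Phi^j q^{jl} = u^l + \mbox{lower order}$, invert using $|\nabla \Phi| \gtrsim 1$, and iterate. This gains the crucial factor $\la^{-1}$. For the transport term the extra derivative falling on $e^{i\la\xi_I}$ is cancelled because $\xi_I$ satisfies the transport equation along $v_\ep$, leaving only $(\partial_t + v_\ep \cdot \nabla)\tilde v_I$ of order zero in $\la$.

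The main obstacle will be the oscillatory divergence equation with nonlinear phase: the standard argument of \cite{deLSzeCts} relies on having constant $\nabla \xi_I$, and here the error terms from differentiating $\nabla \xi_I$ must be absorbed using bounds on the Hessian of $\xi_I$ along the coarse-scale flow, which in turn requires estimates on second derivatives of $v_\ep$. Once the parametrix is controlled, the frequency $\la$ is chosen last and large enough that every $C^0$-bound is below $\epsilon$; the support conditions on $V, P, R_1$ follow from the support of the cutoffs $\chi_I$, and the $C^0$-bounds (\ref{eq:ctsBoundForV})--(\ref{eq:ctsBoundForP}) come directly from $\|v_I\| \lesssim e_R^{1/2}$ and $\|P_0\| \lesssim e_R$, which in turn uses the upper bound (\ref{eq:upBdet}) on $e(t)$.
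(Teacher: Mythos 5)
Your overall architecture (transported nonlinear phases, time cutoffs, a pointwise stress equation for the amplitudes, a parametrix for the divergence equation, and choosing $\la$ last) matches the paper, but there is a genuine gap in your treatment of the high-high interference term $Q_H$. You propose to handle $Q_H$ on the same footing as $Q_T$ and $Q_L$: write the data as $e^{i\la(\xi_I+\xi_J)}u$ with nonstationary phase and gain the factor $\la^{-1}$ from inverting the divergence. But the data for $Q_H$ carries an extra factor of $\la$: using the divergence-free condition, $\pr_j(V_I^jV_J^l+V_J^jV_I^l)=V_I^j\pr_jV_J^l+V_J^j\pr_jV_I^l$, and when the derivative hits the oscillatory factor it produces $i\la\,(v_I\cdot\nab\xi_J)\,e^{i\la(\xi_I+\xi_J)}v_J^l$, which does not vanish (only $v_I\cdot\nab\xi_I=0$). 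After the $\la^{-1}$ gain this contribution has size $\sim e_R$, independent of $\la$, so it cannot be pushed below $\ep$ by taking $\la$ large. The paper's resolution uses three ingredients absent from your proposal: (i) the gradient part of each interaction is absorbed into extra pressure terms $P_{I,J}=-\tfrac12 V_I\cdot V_J$, so $P$ is not just $P_0$; (ii) the amplitudes satisfy the microlocal Beltrami condition $(i\nab\xi_I)\times v_I=|\nab\xi_I|\,v_I$, a genuine curl-eigenvector condition stronger than the orthogonality $v_I\perp\nab\xi_I$ you impose, after which the surviving leading term is proportional to $(|\nab\xi_I|-1)$ and $(|\nab\xi_J|-1)$; and (iii) the lifespan $\tau$ must be chosen small not merely to keep $|\nab\xi_I|$ bounded away from $0$, but small enough that $\|\,|\nab\xi_I|-1\,\|_{C^0}\lesssim b$ renders this residual term $\leq\ep$, since $\la$ cannot help with it. This is precisely Step 3 of the paper's proof, where $b$ is fixed before $\la$; your only constraint on $\tau$ (nondegeneracy of the phases) is insufficient.

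Two smaller points. Your claim that $Q_S=0$ pointwise is not correct: the stress equation is solved for the leading amplitudes $v_I$, not for $\tilde v_I=v_I+\la^{-1}\nab\times w_I$, so $Q_S$ consists of the cross terms $\la^{-1}(\nab\times w_I)^j\bar v_I^l+\ldots$; these are $O(\la^{-1})$ and harmless once $\la$ is large, but they are not zero. Also, solving the stress equation with transported, non-constant phase gradients is not a direct application of the De Lellis--Sz\'ekelyhidi geometric lemma: the paper explicitly notes that the Carath\'eodory-based decomposition does not adapt readily to nonlinear phases, and instead solves a $6\times 6$ linear system in the squared amplitudes relative to the dodecahedral directions, with positivity and smoothness of the solution coming from an implicit function theorem argument; this is also where the requirement that $\nab\xi_I$ stay close to its initial unit-length, angle-separated configuration (again a smallness condition on $\tau$) enters.
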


It is not difficult to check that the main lemma implies the following theorem:
\begin{thm} \label{thm:mainThmCts}
There exist continuous solutions $(v, p)$ to the Euler equations which are nontrivial and have compact support in time.
\end{thm}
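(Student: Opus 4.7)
The strategy is to iterate the Main Lemma starting from trivial data, choosing parameters so that the $C^0$ norms of the stresses decay geometrically, the velocity corrections are summable in $C^0$, the supports remain in a fixed compact time interval, and the initial energy kick is not washed out in the limit. Initialize
\begin{align*}
(v_0,\, p_0,\, R_0^{jl})(t, x) := (0,\, 0,\, \phi(t)\, \de^{jl})
\end{align*}
for a smooth nonnegative bump $\phi \in C_c^\infty([0,1])$ not identically zero. Since $\pr_j R_0^{jl} \equiv 0$, this is a uniformly continuous Euler-Reynolds solution with $\eta_0 := \|R_0\|_{C^0}$ and support in $[0,1] \times \T^3$. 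Fix a large constant $\mu > 1$, set $\eta_k := \mu^{-2k} \eta_0$, and let $J \supset [0,1]$ be a fixed bounded open neighborhood. Inductively, given $(v_k, p_k, R_k)$ supported in $J \times \T^3$ with $\|R_k\|_{C^0} \leq \eta_k$, choose $e_k(t) := f_k(t)^2$, where $f_k \geq 0$ is smooth, $\mbox{supp } f_k \subseteq J$, and $f_k \equiv (500 K \eta_k)^{1/2}$ on a neighborhood of $\mbox{supp } R_k$. Then $K \eta_k \leq e_k \leq 1000 K \eta_k$ on that neighborhood and $(e_k^{1/2})' = f_k'$ is continuous and bounded, so the hypotheses (\ref{eq:lowBdEtCts})--(\ref{eq:upBdet}) hold. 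Applying the Main Lemma with $\ep_k := \eta_{k+1}$ yields $(v_{k+1}, p_{k+1}, R_{k+1}) = (v_k + V_k,\, p_k + P_k,\, R_{k+1})$ with $\|V_k\|_{C^0} \leq C \eta_0^{1/2} \mu^{-k}$, $\|P_k\|_{C^0} \leq C \eta_0 \mu^{-2k}$, and all supports inside $J \times \T^3$.

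Both $\sum V_k$ and $\sum P_k$ converge uniformly, producing continuous limits $(v, p)$ supported in $J \times \T^3$. Since $v_k \to v$, $v_k \otimes v_k \to v \otimes v$, and $p_k \to p$ in $C^0$ while $\|R_k\|_{C^0} \to 0$, passing to the distributional limit in the weak form of (\ref{eq:euReyn}) produces the weak Euler equations, so $(v, p)$ is a weak Euler solution with compact time support. For nontriviality, (\ref{eq:wantEnergyIncreaseCts}) at $k = 0$ (with $v_0 \equiv 0$) gives
\begin{align*}
\int_{\T^3} |v_1(t_\ast, x)|^2\, dx \geq (500 K \eta_0 - \ep_0)\, |\T^3| \geq 400 K \eta_0 |\T^3|
\end{align*}
for any $t_\ast \in [0,1]$, once $\ep_0 = \mu^{-2} \eta_0$ is small enough. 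Meanwhile $\|v - v_1\|_{C^0_{t,x}} \leq \sum_{k \geq 1} C \eta_0^{1/2} \mu^{-k} = C \eta_0^{1/2}/(\mu - 1)$, so $\|v(t_\ast,\cdot) - v_1(t_\ast,\cdot)\|_{L^2_x}^2 \leq C^2 \eta_0 |\T^3|/(\mu - 1)^2$. Fixing $\mu$ large relative to the absolute constants $C$ and $K$ of the Main Lemma forces $\|v(t_\ast,\cdot)\|_{L^2_x} > 0$ by the triangle inequality, so $v \not\equiv 0$. The only genuine constraint in the argument is this single choice of $\mu$: once it dominates the absolute constants of the Main Lemma, every other requirement (summability of corrections, validity of (\ref{eq:lowBdEtCts})--(\ref{eq:upBdet}), uniform compact support, and persistence of the initial energy kick) is automatic, so there is no substantive obstacle beyond parametric bookkeeping once the Main Lemma is in hand.
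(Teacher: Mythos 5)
Your proof is correct and is essentially the paper's own argument: iterate Lemma (\ref{lem:mainLemCts}) with geometrically decaying stress levels and tolerances, sum the corrections $V_{(k)}$, $P_{(k)}$ in $C^0$, confine all supports to a fixed bounded time interval through the choice of the energy functions $e_{(k)}$, and pass to the limit in the weak formulation. The only difference is cosmetic: for nontriviality the paper arranges $e_{(k)}(0)>0$ at every stage so that the energy at $t=0$ increases monotonically and then invokes dominated convergence, whereas you make the first energy kick large and fix $\mu$ so that the $L^2$ mass of $\sum_{k\geq 1} V_k$ cannot cancel it --- both rest on the same energy-prescription estimate (\ref{eq:wantEnergyIncreaseCts}).
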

\begin{proof}[Proof that Lemma (\ref{lem:mainLemCts}) implies Theorem (\ref{lem:mainLemCts}) ]
To prove this implication, one iteratively applies the Lemma to produce a set of solutions $(v_{(k)}, p_{(k)}, R_{(k)})$ to the Euler-Reynolds equations for which the stress is bounded by
\ali{
 \co{R_{(k)}} &\leq e_{R,(k)}
}
with $e_{R,(k)} > 0$ a decreasing sequence of positive numbers chosen to tend to $0$ rapidly.  The corrections to the velocity $v_{(k+1)} = v_{(k)} + V_{(k)}$ and to the pressure $p_{(k+1)} = p_{(k)} + P_{(k)}$ can be summed in $C^0$
\ali{
\sum_k \co{V_{(k)} } &\leq C \sum_k e_{R,(k)}^{1/2} < \infty \\
\sum_k \co{P_{(k)} } &\leq C \sum_k e_{R,(k)} < \infty
}
by (\ref{eq:ctsBoundForV}) and (\ref{eq:ctsBoundForP}), which implies the uniform convergence of 
\[ v = \lim_{k} v_{(k)} \]
\[ p = \lim_k p_{(k)} \]
to uniformly continuous solutions $(v,p)$ of the Euler equations.

To make sure the solutions constructed in this way are nontrivial and compactly supported, we apply the lemma with any non-negative functions $e(t) = e_{(k)}(t)$ which satisfy (\ref{eq:lowBdEtCts}), (\ref{eq:RegEtCts}) and (\ref{eq:upBdet}) for $e_R = e_{R,(k)}$ and whose supports are all contained in some finite time interval.  We can also arrange that these functions are strictly positive at time $0$ so that
\[ e_{(k)}(0) > 0 \]
for all $k$.

If we choose $\ep_{(k)}$ small enough at each stage, the inequality (\ref{eq:wantEnergyIncreaseCts}) implies that the energy at $t = 0$
\ali{
\int |v_{(k+1)}|^2(0,x) dx & \geq \int (|v_{(k)}|^2(0,x) + e_{(k)}(0)) dx - \ep_{(k)}  \\
&>  \int ~|v_{(k)}|^2(0,x) ~ dx
}
increases with each stage.  Then, by the dominated convergence theorem,
\ali{
\int |v|^2(0,x) dx &= \lim_k \int ~ |v_{(k)}|^2(0,x) ~ dx \\
&\geq \int |v_{(1)}|^2(0,x) dx > 0
}
which ensures that the solution $v = \lim_k v_{(k)}$ is nonzero at $t = 0$.
\end{proof}

Let us now proceed with the construction.  During the construction, we will not be completely specific at certain points (such as how to mollify $v$ and $R$) because these aspects must be changed or handled more delicately in order to construct the H{\" o}lder continuous solutions of Theorem (\ref{mainThm}), and we will want to be able to refer to the same construction for both continuous and H{\" o}lder continuous solutions.
\section{The Divergence Equation} \label{sec:oscEstimate}
A key ingredient in the Proof of Lemma (\ref{lem:mainLemCts}) is to find special solutions to the divergence equation
\begin{align} \label{eq:theDivergence}
\pr_j Q^{jl} &= e^{i \la \xi(x)} u^l
\end{align}
where $\xi$ and $u^l$ are respectively a smooth function and a smooth vector field on $\T^3$, and $Q^{jl}$ is an unknown, symmetric $(2,0)$ tensor.  In our applications, $u^l$ will always be supported in a single coordinate chart, and $\xi$ will only be defined in a neighborhood of the support of $u^l$.  We wish to take advantage of the oscillatory nature of the data in order to gain a smallness factor of $\fr{1}{\la}$ for the solution $Q$.  This estimate is analogous to how one can integrate by parts to prove a bound
\[ \| Q \|_{C^0} \leq C ( \fr{\| u \|_{C^0}}{\la} + \fr{\| \nab u \|_{L^1}}{\la} ) \]
for certain periodic solutions to the ODE $\fr{dQ}{dx} = e^{i \la \xi(x)} u(x)$, provided bounds on $\| |\xi'(x)|^{-1} \|_{C^0}$ and $\| \xi''(x) \|_{L^1}$.

\subsection{A remark about momentum conservation} \label{sec:momentumConservation}

A first remark regards why one cannot solve the equation 
\[ \pr_j Q^{jl} = U^l \]
for arbitrary data on the right hand side.  Namely, the right hand side must have integral $0$ in order to be a divergence.  This condition is also sufficient, but let us briefly discuss how this condition relates to the conservation of momentum.

From a physical point of view, the right hand side of the Euler equation is a force, and the condition
\[ \int U^l dx = 0 \]
reflects Newton's law that every action must have an equal and opposite reaction.  This axiom, in turn, implies the conservation of momentum in classical mechanics.

When translated into the general Hilbert space framework for solving elliptic equations, the condition required on the data $U^l = e^{i\xi(x)} u^l$ is that $U^l$ be orthogonal to the kernel of the operator adjoint to the divergence operator $\PP(Q) = \pr_j Q^{jl}$.  Here the operator adjoint to $\PP$ is the operator $\PP^*(v) = -\fr{1}{2}( \pr^j v^l + \pr^l v^j )$, which is essentially the symmetric part of the derivative.  The tensor $\pr^j v^l + \pr^l v^j$ has the familiar geometric interpretation as a deformation tensor of the vector field $v$ -- that is, it expresses the Lie derivative $\LL_v \de^{jl}$ of the inner product $\de^{jl}$ when pulled back along the flow of $v$.  Those vector fields whose deformation tensor $\pr^j v^l + \pr^l v^j$ vanishes are called Killing vector fields, and they consist of those vector fields whose flows generate isometries of the space.  On the torus, these Killing vector fields consist of the translation vector fields; that is, vector fields whose component functions $v^l$ are constant on $\T^3$ -- the integral $0$ condition is the condition that $v$ be orthogonal to translations.

In view of Noether's theorem, the constant vector fields which act as Galilean symmetries of the Euler equation are responsible the conservation of momentum.  In this case, conservation of momentum for solutions to Euler is proven directly by integrating any component of the equation
\[ \pr_t v^l + \pr_j(v^j v^l) + \pr^l p = 0 \]
in space.  

As the proof reveals, {\bf all} solutions to the Euler equations, even those which only belong to $L^2$, conserve momentum.  In fact, by the same proof for the equations
\[ \pr_t v^l + \pr_j(v^j v^l) + \pr^l p = \pr_j R^{jl} \]
all solutions to the Euler-Reynolds equations also conserve momentum.  Conversely, any incompressible flow which conserves momentum can be realized as a solution to the Euler-Reynolds equations by solving the divergence equation $\pr_j R^{jl} = \pr_t v^l + \pr_j(v^j v^l)$.  The orthogonality condition implies that only those forces $U^l$ which do not inject any momentum into the system can be realized as the divergence of a stress $\pr_j Q^{jl}$, thereby guaranteeing the conservation of momentum for the solutions we construct.

This condition will not present any difficulty for the present argument, since all the terms $U^l$ which arise when we introduce the correction $v_1 = v + V$ satisfy the orthogonality condition.  For example, the data for High-Low interaction term
\[ \pr_j Q_L^{jl} = \pr_j(V^j v_\ep^l) \]
has integral $0$ since it is a divergence, and the data for the Transport term
\[ \pr_j Q_T^{jl} = \pr_t V + \pr_j(v_\ep^j V^l) \]
has integral $0$ because
\[ \pr_t V = \nab \times \pr_t W \]
and the curl of any vector field on $\T^3$ has integral $0$.



\subsection{The Parametrix} \label{sec:theParametrix}

Having discussed the compatibility condition for solving the equation, let us now prove the main oscillatory estimate on the solution.  To convey the idea, we first prove a simple ($C^0$) version.  To be consistent with the notation of the paper, we keep a scalar $\la$ in the statement of the theorem, although $\la$ can without loss of generality be absorbed into the phase function if one prefers.

\begin{prop} \label{prop:canSolveDivWithLa}
Let $U^l$ be a smooth vector field on $\T^3$ such that $\int_{\T^3} U^l dx = 0$, and suppose that $U^l$ can be represented as $e^{i \la \xi(x)} u^l$ for some smooth vector field $u^l$ and some smooth, real-valued phase function $\xi$ defined in a neighborhood of the support of $u^l$ whose gradient does not vanish at any point.  Then there exists a symmetric, $(2,0)$ tensor field $Q^{jl}$ on $\T^3$ solving the equation
\[ \pr_j Q^{jl} = U^l = e^{i \la \xi(x)}u^l \]
which depends linearly on $U^l$ and for any $p > 3$, $0 \leq s \leq 1$ satisfies the estimate
\[ || Q^{jl} ||_{C^0(\T^3)} \leq C \left( \fr{||~|\nab \xi|^{-1}~||_{L^\infty} ( 1 + ||~|\nab \xi|^{-1}~||_{L^\infty} || D^2 \xi ||_{L^p} )}{\la} \right)^s || u ||_{W^{s,p}} \]  
The implied constant depends only on $s$ and $p$.
\end{prop}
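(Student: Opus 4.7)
The strategy is to prove the two endpoint estimates $s=0$ and $s=1$ separately and then obtain the fractional case by interpolation on the resulting linear solution operator $u^l \mapsto Q^{jl}$. For the endpoint $s=0$, no oscillatory cancellation is used: I would invoke a standard right inverse of the divergence operator on $\T^3$ for symmetric $(2,0)$ tensors (for example, writing $Q^{jl} = \partial^j X^l + \partial^l X^j$ and solving two Poisson equations for $X$), which satisfies $\|Q\|_{W^{1,p}(\T^3)} \leq C_p \|U\|_{L^p(\T^3)}$. Since $p > 3$, Sobolev embedding $W^{1,p}\hookrightarrow C^0$ gives $\|Q\|_{C^0} \leq C_p \|u\|_{L^p}$. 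The hypothesis $\int U^l \, dx = 0$ is precisely the solvability condition needed here.

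For the endpoint $s=1$, I would extract the $\lambda^{-1}$ gain through an explicit parametrix modeled on the identity $e^{i\lambda\xi} = (i\lambda |\nabla\xi|^2)^{-1}\partial^j\xi\,\partial_j e^{i\lambda\xi}$, which is the PDE analog of integration by parts in the ODE heuristic. Concretely, I would take
\[ Q_{\mathrm{par}}^{jl} = \frac{e^{i\lambda\xi}}{i\lambda}\,A^{jl}, \qquad A^{jl} = \frac{u^l\partial^j\xi + u^j\partial^l\xi}{|\nabla\xi|^2} - \frac{(u^k\partial_k\xi)\,\partial^j\xi\,\partial^l\xi}{|\nabla\xi|^4}, \]
which is manifestly symmetric in $j,l$ and is designed so that the algebraic identity $(\partial_j\xi)A^{jl} = u^l$ holds pointwise (easily checked by splitting $u^l$ into its components parallel and perpendicular to $\nabla\xi$). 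This identity forces the leading-order part of $\partial_j Q_{\mathrm{par}}^{jl}$ to match $e^{i\lambda\xi}u^l$ exactly, leaving a remainder $E^l = (i\lambda)^{-1} e^{i\lambda\xi}\partial_j A^{jl}$ that is smaller by a factor of $\lambda^{-1}$. The direct $C^0$ bound on $Q_{\mathrm{par}}$ is immediate from $\|A\|_{L^\infty}\lesssim \||\nabla\xi|^{-1}\|_{L^\infty}\|u\|_{L^\infty}$ together with the Sobolev embedding $W^{1,p}\hookrightarrow L^\infty$. To handle $E$, I would observe that $\int E^l \, dx = 0$ (since both $\partial_j Q_{\mathrm{par}}^{jl}$ and $U^l$ have vanishing integral), and then apply the $s=0$ estimate to produce a symmetric corrector $\widetilde Q^{jl}$ with $\partial_j \widetilde Q^{jl} = E^l$ and $\|\widetilde Q\|_{C^0}\leq C\|E\|_{L^p}$. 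Differentiating $A^{jl}$ by the product rule produces terms of size $\||\nabla\xi|^{-1}\|_{L^\infty}\|\nabla u\|_{L^p}$ from hitting $u$, and terms of size $\||\nabla\xi|^{-1}\|_{L^\infty}^2\|D^2\xi\|_{L^p}\|u\|_{L^\infty}$ from hitting the denominators; summing these and combining with the $(i\lambda)^{-1}$ prefactor yields exactly the stated $s=1$ bound for $Q = Q_{\mathrm{par}} + \widetilde Q$.

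With both endpoint estimates in hand for the linear operator $u \mapsto Q$, I would conclude the general case $0 \leq s \leq 1$ by complex interpolation between $L^p$ and $W^{1,p}$, both mapping into $C^0$, producing the $s$-th power of the $\lambda^{-1}$-weighted factor as required. The genuine obstacle is \emph{not} the parametrix identity — the formula for $A^{jl}$ is essentially forced once one demands symmetry together with $(\partial_j\xi)A^{jl}=u^l$ — but rather the bookkeeping: verifying that $Q_{\mathrm{par}}$, defined only on the neighborhood of $\mathrm{supp}\,u$ where $\xi$ is given, extends smoothly by zero to all of $\T^3$; verifying the mean-zero condition on $E^l$ so that the corrector step is legal; and tracking all terms in $\partial_j A^{jl}$ so that the weight $(1+\||\nabla\xi|^{-1}\|_{L^\infty}\|D^2\xi\|_{L^p})$ appears with the correct power. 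These are technical rather than conceptual difficulties, but they are where care is required.
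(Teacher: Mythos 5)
Your two endpoint arguments are essentially the paper's own: the $s=0$ case is exactly the elliptic estimate of Lemma (\ref{lem:divL2}), and your $s=1$ construction (explicit symmetric $A^{jl}$ with $\pr_j\xi\, A^{jl}=u^l$, the $(i\la)^{-1}e^{i\la\xi}A^{jl}$ parametrix, a mean-zero error absorbed by the elliptic estimate) is the paper's proof with $\ep=0$, which the paper itself remarks suffices when $s=1$; your particular formula for $A^{jl}$ differs harmlessly from the paper's $q^{jl}(\nab\xi)[u]$. The genuine gap is the passage to $0<s<1$ by ``complex interpolation of the linear operator $u\mapsto Q$.'' Interpolation needs a \emph{single} linear operator bounded at both endpoints, and you have constructed two different ones: the $s=0$ solution is $\RR[e^{i\la\xi}u]$ (or your $\pr^jX^l+\pr^lX^j$ variant), while the $s=1$ solution is $Q_{\mathrm{par}}+\RR[E]$; these solve the same underdetermined equation but differ by a divergence-free symmetric field, so the two bounds do not refer to one map. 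Neither operator alone closes the argument as stated: the parametrix-based operator has no $\|u\|_{L^p}\to C^0$ bound of size $O(1)$ (it sees $\|u\|_{L^\infty}$ and $\|\nab u\|_{L^p}$ through $A$ and $\pr_jA^{jl}$), and proving the $\la^{-1}$-weighted $W^{1,p}\to C^0$ bound for the fixed operator $\RR[e^{i\la\xi}\,\cdot\,]$ is itself a nontrivial oscillatory estimate for a variable-phase modulation composed with an order $-1$ operator, which you have not supplied and which does not follow from the crude bound $\|\RR[e^{i\la\xi}u]\|_{C^0}\lesssim\|e^{i\la\xi}u\|_{L^p}$.

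The paper closes exactly this gap by doing the interpolation by hand (in effect, real interpolation via the K-method applied to the data rather than the operator): mollify $u\mapsto u_\ep=\eta_\ep\ast u$ at a scale $\ep$ depending only on $\la$, $\xi$ and $p$, run your parametrix on $u_\ep$, and send both the rough piece $u-u_\ep$ and the parametrix error through the $s=0$ elliptic estimate; the two contributions are $\ep^{s}\|u\|_{W^{s,p}}$ and $\la^{-1}\||\nab\xi|^{-1}\|(1+\||\nab\xi|^{-1}\|\|D^2\xi\|_{L^p})\,\ep^{s-1}\|u\|_{W^{s,p}}$, and optimizing $\ep$ gives precisely the stated fractional bound while keeping $Q$ linear in $U$ (since $\ep$ does not depend on $u$). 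If you prefer to keep your endpoint-interpolation framing, you must either adopt this splitting or prove the missing single-operator endpoint bound; as written the intermediate case does not follow.
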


The $s = 0$ case derives from the following, standard elliptic estimate, which is used as a lemma in the proof.
\begin{lem} \label{lem:divL2}
If $f^l$ be a smooth vector field on $\T^3$ such that $\int f^l dx = 0$, there exists a symmetric, $(2,0)$ tensor field $Q^{jl}$ depending linearly on $f^l$ which solves the equation
\[ \pr_j Q^{jl} = f^l \]
and satisfies the bound
\[ ||Q^{jl}||_{W^{1, p}(\T^3)} \leq C || f ||_{L^p(\T^3)} \]
so that, in particular when $p > 3$
\[ ||Q^{jl}||_{C^0(\T^3)} \leq C || f ||_{L^p(\T^3)} \]
\end{lem}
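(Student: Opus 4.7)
The plan is to reduce the problem to a vector-valued Poisson equation and then symmetrize by hand. First, since each component $f^l$ has mean zero on $\T^3$, standard elliptic theory on the torus provides a unique mean-zero vector field $w^l$ solving $\De w^l = f^l$, with the Calder\'on-Zygmund bound $\| w \|_{W^{2,p}(\T^3)} \leq C_p \| f \|_{L^p(\T^3)}$ for every $1 < p < \infty$. The map $f \mapsto w$ is linear.

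Next, I would define
\[
Q^{jl} = \pr^j w^l + \pr^l w^j - (\pr_k w^k)\, \de^{jl}.
\]
This is manifestly symmetric in $j, l$, and is linear in $f$ through the linearity of $f \mapsto w$. Taking the divergence and using $\De w^l = f^l$,
\[
\pr_j Q^{jl} = \De w^l + \pr^l(\pr_j w^j) - \pr^l(\pr_k w^k) = \De w^l = f^l,
\]
which is the desired equation. The role of the trace-correction term $(\pr_k w^k)\de^{jl}$ is precisely to cancel the spurious gradient coming from $\pr_j(\pr^l w^j)$ while preserving symmetry; without it, one would get $\pr_j Q^{jl} = f^l + \pr^l(\mathrm{div\,} w)$, and $\mathrm{div\,} w$ is not guaranteed to vanish for a general (not-necessarily-divergence-free) datum $f$.

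For the estimate, each entry of $Q^{jl}$ is a linear combination of first derivatives of components of $w$, so
\[
\| Q \|_{W^{1,p}(\T^3)} \leq C \| w \|_{W^{2,p}(\T^3)} \leq C \| f \|_{L^p(\T^3)}.
\]
When $p > 3$, the Sobolev embedding $W^{1,p}(\T^3) \hookrightarrow C^0(\T^3)$ then yields the claimed $C^0$ bound. The only nontrivial input is the Calder\'on-Zygmund theory for $\De^{-1}$ on the torus, which is classical, so I do not expect any serious obstacle here. This explains why Lemma \ref{lem:divL2} can serve as a textbook-level building block for the more delicate oscillatory estimate of Proposition \ref{prop:canSolveDivWithLa}, where the real work lies in exploiting the phase $e^{i \la \xi}$ to gain a factor of $\la^{-s}$.
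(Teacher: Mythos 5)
Your proposal is correct: the formula $Q^{jl} = \pr^j w^l + \pr^l w^j - (\pr_k w^k)\de^{jl}$ with $\De w^l = f^l$ is symmetric, linear in $f$, satisfies $\pr_j Q^{jl} = \De w^l = f^l$ exactly as you compute, and the $W^{1,p}$ bound follows from Calder\'on--Zygmund theory for the mean-zero Poisson problem on $\T^3$, with the $C^0$ bound then coming from Sobolev embedding for $p>3$. The construction is, however, genuinely different from the one the paper has in mind. The paper proves the lemma via the operator $\RR^{jl}$ of Proposition (\ref{eq:smoothingOp}): it first performs a Helmholtz decomposition $U^l = \pr^l \De^{-1}\pr_i U^i + \HH U^l$, absorbs the gradient part into a pure-trace tensor $\De^{-1}(\pr_i U^i)\de^{jl}$, and solves for the divergence-free part by the symmetrized gradient $\pr^j\De^{-1}\PP\HH U^l + \pr^l\De^{-1}\PP\HH U^j$, using that $\HH U$ is divergence free to kill the spurious gradient. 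Your single Poisson solve plus trace correction avoids the Leray projection entirely and is the more economical route to the lemma as stated; what the paper's choice buys is that the Fourier symbol of $\RR$ coincides exactly with the pointwise algebraic solution map $q^{jl}(\nab\xi)[u]$ of line (\ref{eq:theLinMapq}) used in the parametrix, and that $\RR$ automatically returns a mean-zero tensor and satisfies $\pr_j\RR^{jl}[U]=\PP U^l$ for arbitrary data, properties which are exploited later (e.g.\ in the transport-elliptic construction of Section (\ref{sec:transEllipt})) but are not needed for the lemma itself. Note your resulting tensor differs from the paper's: expanding both in terms of $\De^{-1}$, yours is $\pr^j\De^{-1}f^l + \pr^l\De^{-1}f^j - (\De^{-1}\pr_k f^k)\de^{jl}$, while the paper's carries $+(\De^{-1}\pr_k f^k)\de^{jl} - 2\pr^j\pr^l\De^{-2}\pr_k f^k$ instead of your trace term; both are valid right inverses of the divergence on symmetric tensors, reflecting the underdetermined nature of the equation.
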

The theorem itself provides an interpolation between the $C^0$ estimate in the lemma, and the $s = 1$ bound.  In fact, the $s = 1$ case of the following lemma is sufficient for the construction of continuous weak solutions, but we prove the proposition for fractional regularity in order to demonstrate the robustness of the method, and because the proof illustrates a main theme of the paper.

The idea of the proof, which generalizes the approach taken in \cite{deLSzeCts} to nonlinear phase functions, is as follows.  When using the Fourier transform to solve this equation, the key observation to be made is that whenever $u^l$ is constant, and $\xi(x)$ is linear (so that the derivatives $\pr_j\xi$ are constant), an exact solution can be obtained by the formula
\[ Q^{jl} = \fr{1}{\la} \left(e^{i \la \xi(x)}q^{jl} \right) \]
for any constant tensor $q^{jl}$ solving the linear equation 
\[ i \pr_j \xi q^{jl} = u^l \]
pointwise.  In fact, one can arrange that the solution $q^{jl}$ is given by a map
\begin{align}
q^{jl} &= q^{jl}(\nab \xi)[ u ] \label{eq:theqMap}
\end{align}
which is homogeneous of degree $-1$ in $\nab \xi$, and which is linear in $u$.

  If $\pr_j \xi$ and $u^l$ are not constant, but are still smooth, one can still obtain an approximate solution of this form, and eliminating the error requires one to solve $\pr_j Q^{jl} = f^l$, where the data $f^l$ is small in $L^{p}$, allowing us to apply the lemma (\ref{lem:divL2}).

We now proceed with the formal proof:
\begin{proof}
The preceding discussion suggests that we begin with an approximate solution $e^{i \la \xi(x)} q^{jl}$, where $q^{jl}$ solves the linear equation $i \pr_j \xi q^{jl} = u^l$ pointwise, but doing so is only a good idea when $u^l$ is smooth.  Therefore, we find an approximate solution to $\pr_j Q^{jl} = e^{i \la \xi} u_\ep^l$, where $u_\ep^l = \eta_\ep \ast u^l$ is a standard mollification of $u^l$. As is typical of stationary phase arguments, the parameter $\ep$ will be optimized later to ensure that the oscillations of the phase function are rapid compared to the coarser scale variations of $u_\ep^l$; a mollification in a similar spirit will be used at many other instances in the main body of the paper.  In any case, we do not expect to observe cancellation here unless the ``frequency of $u$'' is less than that of $\la \xi$; otherwise $u^l$ could be of the form $e^{- i \la \xi(x)} v^l$.

To solve the linear equation $i \pr_j \xi q_\ep^{jl} = u_\ep^l$, we first decompose $u_\ep^l = u_\perp^l + \fr{(u_\ep \cdot \nab \xi)}{|\nab \xi|^2} \pr^l \xi = u_\perp^l + u^l_\parallel$, where $(u_\perp) \cdot \nab \xi = 0$, and $u_\parallel$ points in the direction of $\nab \xi$.  We then set $q_\ep^{jl}$ to be the sum
\begin{align}
q_\ep^{jl} &= i^{-1}( q_\perp^{jl} + q_\parallel^{jl} ) \\
&= q^{jl}(\nab \xi)[u_\ep] \label{eq:theLinMapq}
\end{align}
where
\begin{align*}
q_\perp^{jl} &= \fr{1}{|\nab \xi|^2}( \pr^j \xi u_\perp^l + \pr^l \xi u_\perp^j)
\end{align*}
so that $\pr_j\xi q_\perp^{jl} = u_\perp^l$, and
\begin{align*}
 q_\parallel^{jl} &=  \fr{(u_\ep \cdot \nab \xi)}{|\nab \xi|^2} \de^{jl}
\end{align*}
so that $\pr_j\xi q_\parallel^{jl} = u_\parallel^l$.

We now seek a solution of the form $Q^{jl} = \fr{1}{\la} e^{i \la \xi} q_\ep^{jl} + Q_1^{jl}$ where $Q_1^{jl}$ must satisfy the equation:
\begin{align*}
\pr_j Q_1^{jl} &= f^l
\end{align*}
with the data $f^l = -e^{i \la \xi(x)}( (u^l - u_\ep^l) + \fr{1}{\la} \pr_j q_\ep^{jl} )$.

Since the error term on the right hand side is still of the form $e^{i \la \xi} \bar{u}^l$, it is possible to iterate the preceding method in order to produce a higher order approximate solution at the price of taking more derivatives of $\xi$ and $u^l$.  Later in the paper, we will use a higher-order parametrix.

For now, we will simply observe that the remaining data can already be regarded as bounded in $L^{p}$.  The right hand side also has integral zero because it is of the form $U^l + \pr_j {\tilde Q}^{jl}$, and the original data $U^l$ was assumed to have integral $0$.  Therefore, by the lemma (\ref{lem:divL2}), there exists a smooth solution $Q_1^{jl}$ on $\T^3$ obeying the estimate
\begin{align*}
 ||Q_1^{jl}||_{C^0(\T^3)} &\leqc || e^{i \la \xi(x)}( (u^l - u_\ep^l) + \fr{1}{\la} \pr_j q_\ep^{jl} ) ||_{L^p(\T^3)} \\
&\leqc || (u - u_\ep) ||_{L^p} + \fr{1}{\la} ||\pr_j q_\ep^{jl}||_{L^p} \\
&\leqc \ep^s || u ||_{\dot{W}^{s,p}} + \fr{1}{\la} || \pr_j q_\ep^{jl} ||_{L^p}
\end{align*}

We will not be using any special structure involving the divergence, so one might as well take the entire norm $||\nab q_\ep||_{L^{p}}$.  Let us estimate the size of $\pr_j q_\ep^{jl}$ as follows.  By differentiating the system of equations
\begin{align*}
 |\nab \xi|^2 q_\ep^{jl} &= ( \pr^j \xi u_\perp^l + \pr^l \xi u_\perp^j)  + (u_\ep \cdot \nab \xi) \de^{jl} \\
|\nab \xi|^2 u_\ep^l &= |\nab \xi|^2 u_\perp^l + (u_\ep \cdot \nab \xi) \pr^l \xi
\end{align*}
one computes that $\pr_j q_\ep^{jl}$ as a linear combination of many terms of only two types.
\begin{itemize}
 \item {\bf Type 1}:  Terms whose $L^p$ norm can be controlled by $||~|\nab \xi|^{-1}~||_{L^\infty}^2 ||\nab^2 \xi||_{L^p} || u_\ep ||_{C^0}$
 \item {\bf Type 2}:  Terms whose $L^p$ norm can be controlled by $||~|\nab \xi|^{-1}~||_{L^\infty} || \nab u_\ep ||_{L^p}$
\end{itemize}

One can check this estimate by enumerating the terms, but we point out that these estimates are exactly what one expects from a solution to $i \pr_j \xi q^{jl} = u^l$ if one thinks of $q^{jl}$ as schematically given by ``$q \approx \fr{u}{\nab \xi}$'' and applies the quotient rule formally.  Note that, by the Sobolev imbedding theorem, we can also control $||u_\ep||_{C^0}$ with $||u_\ep||_{W^{1,p}}$, so that only the $W^{1,p}$ norm of $u_\ep$ needs to appear in the estimate.  

The upshot of these estimates is that we can now bound
\begin{align*}
 ||Q_1^{jl}||_{C^0(\T^3)} &\leqc \ep^s || u ||_{W^{s,p}} + \fr{1}{\la} || \pr_j q^{jl} ||_{L^{p}} \\
&\leqc  \ep^s || u ||_{W^{s,p}} + \fr{||~|\nab \xi|^{-1}~||_{L^\infty}}{\la} \left( 1 + ||~|\nab \xi|^{-1}~||_{L^\infty} ||\nab^2 \xi||_{L^p} \right) || u_\ep||_{W^{1,p}} \\
&\leqc \ep^s || u ||_{W^{s,p}} + \fr{||~|\nab \xi|^{-1}~||_{L^\infty}}{\la} \left( 1 + ||~|\nab \xi|^{-1}~||_{L^\infty} ||\nab^2 \xi||_{L^p} \right) \ep^{s-1} ||u||_{W^{s,p}}
\end{align*}
It is now clear that, regardless of $s$, the optimal choice of $\ep$, which balances the two terms, is
\[ \ep = \fr{||~|\nab \xi|^{-1}~||_{L^\infty}}{\la} \left( 1 + ||~|\nab \xi|^{-1}~||_{L^\infty} ||\nab^2 \xi||_{L^p} \right)\]
As expected, this choice of $\ep$ ensures that $u_\ep$ can only oscillate at a scale coarser than the scale at which the phase function $\la \xi$ oscillates.\footnote{We remark that when $s = 1$, the result can be obtained without any mollification ($\ep = 0$).}  With the above choice of $\ep$, the bounds stated in the theorem apply to $Q_1^{jl}$.  By the same considerations, the parametrix $\fr{1}{i \la} e^{i \la \xi} q_\ep^{jl}$ can also be controlled in $C^0$ by $\fr{||~|\nab \xi|^{-1}~||_{L^\infty}}{\la} || u_\ep ||_{W^{1,p}}$, and therefore obeys the estimates stated in the theorem, which concludes the proof.
\end{proof}

\subsection{Higher order parametrix expansion}\label{sec:paramExpand}

Later on in the proof, we will have to modify the argument used in the proof of Proposition (\ref{prop:canSolveDivWithLa}) for solving the equation (\ref{eq:theDivergence}).  Namely, we will construct a solution to 
\begin{align} \label{eq:theDivergence2}
\pr_j Q^{jl} &= e^{i \la \xi(x)} u^l
\end{align}
by taking a higher order expansion of the parametrix
\begin{align}
Q^{jl} &= {\tilde Q}_{(D)}^{jl} + Q_{(D)}^{jl} \\
{\tilde Q}_{(D)}^{jl} &= \sum_{(k) = 1}^D e^{i \la \xi} \fr{q_{(k)}^{jl}}{\la^{k}}
\end{align}
where each $q_{(k)}$ solves a linear equation
\begin{align}
i \pr_j \xi q_{(1)}^{jl} &= u^l \\
i \pr_j \xi q_{(k)}^{jl} &= u_{(k)}^l \\
u_{(k)}^l &= -\pr_j q_{(k - 1)}^{jl} \quad \quad 1 < k \leq D +1
\end{align}
using the linear map 
\begin{align}
q^{jl} &= q^{jl}(\nab \xi)[u]
\end{align}
defined in (\ref{eq:theLinMapq}).  Then $Q_D$ will be used to eliminate the error
\begin{align}
\pr_j Q_D^{jl} &= e^{i \la \xi} \fr{u_{(D+1)}^l}{\la^D}
\end{align}

The reason we will need a higher order expansion for constructing H{\" o}lder continuous solutions is that the parameter $\la$ will be large but limited in size.  In fact $u$ will have ``frequency'' $\Xi$ where $\Xi$ is basically the frequency $\la$ chosen in the previous stage of the iteration, while we will have $\la \sim \Xi^{1 + \eta}$ for some small $\eta > 0$.  In this case, one can only obtain an accurate approximate solution from the parametrix after a high order expansion in $\la$.

\subsection{An Inverse for Divergence}

One way to prove Lemma (\ref{lem:divL2}) is to define the following operator, whose symbol in frequency space is exactly the map $q^{jl}(\nab \xi)[u]$ defined in Line (\ref{eq:theLinMapq}) of the proof of Proposition (\ref{prop:canSolveDivWithLa}).

\begin{prop}[Elliptic Estimates]\label{eq:smoothingOp}
There exists a linear operator $\RR^{jl}$ such that for all vector fields $U \in C^\infty(\T^3)$ we have
\[ \pr_j \RR^{jl}[U] = \PP[U]^l \]
where $\PP$ denotes the projection to integral $0$ vector fields.  

Furthermore, $\RR$ satisfies the bounds
\begin{align}
 ||\nab^{k + 1} \RR[U]||_{L^4(\T^3)} &\leq C_k || \nab^k U ||_{L^4(\T^3)} \quad \quad k \geq 0 \label{bound:stdElliptic}
\end{align}
and gives a solution with integral $0$
\begin{align}
\int \RR^{jl}[U] dx &= 0
\end{align}

\end{prop}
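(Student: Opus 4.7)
The plan is to write $\RR$ down explicitly on the Fourier side (or, equivalently, in terms of the inverse Laplacian) and to verify each of the three properties by direct computation together with the standard Calder\'on--Zygmund theory on the torus. A convenient explicit formula is
\[ \RR^{jl}[U] = \pr^j \De^{-1} \PP[U]^l + \pr^l \De^{-1} \PP[U]^j - \pr_k \De^{-1} \PP[U]^k \de^{jl}, \]
where $\De^{-1}$ denotes the inverse Laplacian on the mean-zero subspace of $C^\infty(\T^3)$. Read in Fourier space, this reproduces (up to an overall factor of $2\pi i$) the linear map $q^{jl}(2\pi \eta)[\,\cdot\,]$ from (\ref{eq:theLinMapq}) at each nonzero frequency $\eta \in \Z^3$, so $\RR$ is precisely the Fourier-space analogue of the symmetric parametrix already used in the proof of Proposition (\ref{prop:canSolveDivWithLa}).

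First I would check the three algebraic properties. Symmetry of $\RR^{jl}$ in $j$ and $l$ is manifest from the formula. The divergence identity follows from the computation
\[ \pr_j \RR^{jl}[U] = \De \De^{-1} \PP[U]^l + \pr^l \pr_j \De^{-1} \PP[U]^j - \pr^l \pr_k \De^{-1} \PP[U]^k = \PP[U]^l, \]
where the last two terms are identical (after relabeling the dummy index) and hence cancel. The integral-zero property is immediate either because every term in $\RR^{jl}[U]$ is a spatial derivative of a periodic function, or from the Fourier representation vanishing at the zero frequency.

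Second I would establish the $L^4$ bound (\ref{bound:stdElliptic}). Each factor $\pr^i \De^{-1}$ is a Fourier multiplier with symbol proportional to $\eta^i / |\eta|^2$; consequently, each component of $\pr^m \RR^{jl}$ is a Fourier multiplier with symbol of the form $\eta^i \eta^m / |\eta|^2$, which is homogeneous of degree zero and smooth on $\R^3 \setminus \{0\}$. These symbols satisfy the Mikhlin condition $|\pr^\gamma m(\eta)| \leq C_\gamma |\eta|^{-|\gamma|}$ for every multi-index $\gamma$, so by the periodic Mikhlin multiplier theorem the associated operators are bounded on $L^p(\T^3)$ for every $1 < p < \infty$. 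Applying the case $p = 4$ to $\nab^k U$ and using that $\pr^m \RR$ commutes with $\nab^k$ (both being Fourier multipliers) yields the desired bound for every $k \geq 0$.

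The only genuinely nonroutine point is that one must use the torus version of the Mikhlin theorem rather than the Euclidean one. This is entirely standard but worth flagging as the main technical step: the cleanest route is to verify the Mikhlin symbol condition directly on $\Z^3 \setminus \{0\}$ and invoke the periodic multiplier theorem, or alternatively to transfer the $L^p$ boundedness from the analogous Euclidean operator by periodization of the kernel.
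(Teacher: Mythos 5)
Your proposal is correct, and its verification of the algebraic properties (symmetry, the divergence identity by cancellation of the two gradient-of-potential terms, zero mean) is sound; but it is worth comparing it with the paper's route, which differs in two respects. First, the paper builds $\RR = \RR_1 + \RR_2$ from the Helmholtz decomposition: $\RR_1^{jl}[U] = \pr_i(\De^{-1}\PP U^i)\de^{jl}$ absorbs the gradient part of $U$ into a pure trace term, and $\RR_2^{jl}[U] = \pr^j\De^{-1}\PP\HH U^l + \pr^l\De^{-1}\PP\HH U^j$ handles the divergence-free part $\HH U$; the resulting symbol is exactly the map $q^{jl}(\nab\xi)[u]$ of (\ref{eq:theLinMapq}). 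Your one-line formula is \emph{not} that operator: on the Fourier side it is $\fr{\eta^j \hat u^l + \eta^l \hat u^j}{|\eta|^2} - \fr{(\eta\cdot \hat u)}{|\eta|^2}\de^{jl}$ (up to $i$), whereas $q^{jl}(\eta)[\hat u]$ carries the trace term with the opposite sign together with an extra $-2\,\eta^j\eta^l(\eta\cdot\hat u)/|\eta|^4$; the two differ by a symmetric multiplier annihilated by $i\eta_j$, so your operator is a different, equally valid right inverse of the divergence with the same order $-1$, and the misidentification with (\ref{eq:theLinMapq}) is harmless but should be dropped or corrected. Second, for the bound (\ref{bound:stdElliptic}) the paper works at the level $k=0$ with the local Calder\'on--Zygmund inequality $\|D^2 f\|_{L^4(B)} \leq C(\|f\|_{L^4(2B)} + \|\De f\|_{L^4(2B)})$ applied to $f = \De^{-1}\PP U$ on a ball containing a fundamental domain, plus a Littlewood--Paley verification of $\|\De^{-1}\PP U\|_{L^4(\T^3)} \leq C\|U\|_{L^4(\T^3)}$, and gets $k>0$ by commuting with derivatives; you instead observe that $\nab\RR$ is a degree-zero homogeneous multiplier satisfying the Mikhlin condition and invoke the periodic multiplier theorem (or transference), again commuting with $\nab^k$. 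Your route is shorter and yields boundedness on every $L^p$, $1<p<\infty$, at the price of citing the periodic Mikhlin/transference machinery, while the paper's localization argument is more self-contained, resting only on the interior Calder\'on--Zygmund estimate.
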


{\bf Remark:}  Here we only state estimates for $L^4$ norms rather than general $L^p$ norms, since the $L^4$ estimate suffices for our proof.

\begin{proof}

We define $\RR$ as a sum
\begin{align}
\RR[U] &= \RR_1[U] + \RR_2[U]  
\end{align}
using the Helmholtz decomposition of $U$ into its divergence free and curl free parts.  
\begin{align}
U^l &= \pr^l \De^{-1} \pr_i U^i +  (U^l - \pr^l \De^{-1} \pr_i U^i) \\
&= \pr_j[ \De^{-1} \pr_i U^i \de^{jl} ] + (U^l - \pr^l \De^{-1} \pr_i U^i) \\
&= \pr_j[ \pr_i(\De^{-1} \PP U^i)\de^{jl} ] + (U^l - \pr^l \De^{-1} \pr_i U^i) \\
&= \pr_j[ \RR_1^{jl}[U] ] + \HH U^l
\end{align}
Observe that $\HH U^l$ is divergence free, so to define $\RR_2$, it suffices to choose
\begin{align}
\RR_2^{jl}[U] &= \pr^j [ \De^{-1} \PP \HH U^l ] + \pr^l [ \De^{-1} \PP \HH U^j ]  
\end{align}
so that
\begin{align}
\pr_j \RR_2^{jl}[U] &= \PP \HH U^l
\end{align}
and
\begin{align}
\pr_j [ \RR_1^{jl}[U] + \RR_2^{jl}[U] ] &= \pr_j[ \De^{-1} \pr_i U^i \de^{jl} ] + \PP (U^l - \pr^l \De^{-1} \pr_i U^i) \\
&= \PP U^l
\end{align}

For $k = 0$, the estimate (\ref{bound:stdElliptic}) reads
\begin{align}
 ||\nab \RR[U]||_{L^4(\T^3)} &\leq C || U ||_{L^4(\T^3)}
\end{align}
This inequality is a consequence of the local Calder{\' o}n Zygmund estimate
\begin{align}
||D^2 f||_{L^4(B)} &\leq C(||f||_{L^4(2B)} + ||\De f||_{L^4(2B)})
\end{align}
applied to $f^l = \De^{-1} \PP U^l$ and taking any ball $B \subseteq \R^3$ containing an entire periodic box $\T^3 \subseteq (B / \Z^3)$.  In this case, because $f^l = \De^{-1} \PP U^l$ has integral $0$, we are able to replace the term
\[ ||f^l||_{L^4} = ||\De^{-1} \PP U||_{L^4} \]
by
\begin{align}
|| \De^{-1} \PP U ||_{L^4(2 B)} &\leq C || \De^{-1} \PP U ||_{L^4(\T^3)} \\
&\leq C || U ||_{L^4(\T^3)}
\end{align}
which can be quickly verified using, for example, Littlewood-Paley theory.

This estimate finishes the proof of (\ref{bound:stdElliptic}) for $k = 0$.  For $k > 0$, the bound follows from the fact that $\RR$ commutes with spatial derivatives.
\end{proof}

\section{Constructing the Correction}
\subsection{Transportation of the Phase Functions} \label{transport}
We begin the main construction by considering the Transport term.  Let us fix a solution $(v, p, R)$ to the Euler Reynolds equations and consider a correction $v_1 = v + V$, $p_1 = p + P$.  We will think of $v$ as an approximation to the ``coarse scale velocity'' since the solution ultimately achieved by the process will resemble $v$ at a sufficiently coarse scale.

We must eliminate the Transport term by solving the equation
\[ \pr_j Q_T^{jl} =  \pr_t V^l + \pr_j(v^j V^l) \]
with a stress $Q_T^{jl}$ of size $|Q| < |R|$.  Ideally, the stress $Q$ can be made arbitrarily small by choosing $\la$ sufficiently large.

Recall that the correction $V^l$ takes the form of a superposition of waves $V^l = \sum_I V_I^l$, where each individual wave $V_I^l$ is an oscillatory, divergence free vector field of the form
\[ V_I^l = e^{i \la \xi_I} \tilde{v}_I^l \]
Before we discuss how to handle the stress term, we must leave the amplitude $\tilde{v}_I^l$ unspecified, however we will insist that its space-time support be such that, among other things, $|\nab \xi_I|$ remains bounded from $0$ so that the oscillatory estimate can be applied.  Substituting for $V$ and differentiating naively gives
\begin{align*}
 \pr_t V^l + \pr_j(v^j V^l) &= (\pr_t+ v^j\pr_j)V^l) \\
&= \sum_I e^{i \la \xi_I} [ (i \la) (\pr_t+ v^j\pr_j) \xi_I \tilde{v}_I^l + (\pr_t+ v^j\pr_j) \tilde{v}_I^l ]
\end{align*}

As we are able to gain at most a power of $\la$ from solving the divergence equation, it seems that unless $\xi_I$ obeys the transport equation $(\pr_t+ v^j\pr_j) \xi_I = 0$, we will not be able to adequately control the size of the solution $Q_T^{jl}$ -- formally, the data is at least of size $\la |\tilde{v}_I| \sim \la |R|^{1/2}$ giving a solution $Q$ which is at best size $|R|^{1/2}$.  This impression is almost correct, however, one cannot afford to allow $\xi_I$ to inherit the fine scale oscillations of the coarse scale velocity $v^j$.  Instead, it is natural to limit the frequency of the transport term, by using a mollifier $v_\ep^j = \eta_{\ep_v} \ast v^j$, and letting $\ep_v$ be a parameter that must be chosen sufficiently small.\footnote{To produce continuous solutions the mollifier $\eta_{\ep_v}$ may be in both space and time variables.  For H{\" o}lder continuous solutions, we will only mollify $v$ in the spatial variables.}

We will refer to $v_\ep$ as the {\bf coarse scale velocity}.

So in fact we treat the transport term by first decomposing
\begin{align*}
 \pr_t V^l + \pr_j(v^j V^l) &=  \pr_t V^l + v_\ep^j \pr_j V^l + \pr_j( (v^j - v_\ep^j) V^l )
\end{align*}

This mollification introduces a term 
\[ (v^j - v_\ep^j) V^l + V^j ( v^l - v_\ep^l) \]
which will constitute part of the new stress $R_1^{jl}$, and the parameters $\ep_v$ and $\la$ will later be chosen sufficiently small and large respectively so that this term is acceptably small.

The transport term can now be written
\begin{align*}
\pr_t V^l + v_\ep^j \pr_j V^l &= \sum_I e^{i \la \xi_I} [ (i \la) (\pr_t+ v_\ep^j\pr_j) \xi_I \tilde{v}_I^l + (\pr_t+ v_\ep^j\pr_j) \tilde{v}_I^l ]
\end{align*}
We will require that all the phase functions obey the same transport equation
\begin{align}
 (\pr_t+ v_\ep^j\pr_j) \xi_I &= 0 \\
 \xi_I(t(I), x) &= {\hat \xi}_I \label{eq:theDataWeNeed}
\end{align}

Therefore, although we will choose linear initial data ${\hat \xi}_I$ for the phase functions $\xi_I$, they will immediately cease to be linear, and will only remain useful in the oscillatory estimates for a short period of time $| t - t(I) | \leq \tau$ depending on $v$.  We are therefore forced to embed appropriate time cutoff functions into the definition of the amplitudes $\tilde{v}_I^l$.  These cutoff functions must restrict the lifespan of $V_I$ so that the following requirements remain fulfilled:
\begin{itemize}
\item {\bf Nondegeneracy}: We require  $| \nab \xi_I |$ to remain bounded from $0$
\item {\bf Higher derivative bounds}: We will also require that $D^2 \xi_I$ remains bounded (uniformly in $I$), because the oscillatory estimate requires us to differentiate the amplitude of the data.
\end{itemize}
The price we pay for introducing a time cutoff function is that the time cutoff itself will be differentiated in the transport term, and the shorter the lifespan, the larger the induced stress.  Therefore, the time parameter $\tau$ governing the lifespan of $V_I$ must be chosen to achieve the above objectives before the parameter $\la$ is chosen to ensure that the resulting stress is small.

It is interesting to reflect on how some approximation to this phase transport requirement can be seen in previous constructions of solutions to the Euler equation which fail to conserve energy, including Schnirelman's use of ``modulated Kolmogorov flow'' in \cite{shnNonUnq}, the late stages of the solutions to Tartar's wave cone used in the argument \cite{deLSzeIncl}, and the phase modulations in the argument \cite{deLSzeCts}.

We have now explained the constraint that the transport term imposes on the phase functions.  We cannot begin to make estimates on the solution until we have specified the amplitudes $v_I$ of the correction, and these cannot be specified until we describe how to eliminate the Stress term $Q_S^{jl} = \sum_I (V_I^j \bar{V}_I^l) + P_0 \de^{jl} + \pr_j R^{jl} $.  However, before we can resolve the stress term, we must first explain how the High-High interference term is dealt with, since eliminating these terms will involve imposing an additional constraint on the amplitudes $v_I$.
\subsection{The High-High Interference Problem and Beltrami Flows} \label{highHighSection}
The High-High interference term is given by
\begin{align} \label{eq:highHighOrigin}
 \pr_j Q_H^{jl} = \sum_{J \neq \bar{I}} \pr_j ( V_I V_J )^{jl}
\end{align}
where each $V_I$ is an oscillatory wave of the form $V_I = e^{i \la \xi_I} \tilde{v}_I$, and the condition that $I \neq \bar{J}$ will ensure that $\nab \xi_J$ is separated from $- \nab \xi_I$, so that the symmetric product $( V_I V_J )^{jl} = \fr{1}{2} e^{i \la (\xi_I + \xi_J) } ( \tilde{v}_I^j \tilde{v}_J^l + \tilde{v}_I^j \tilde{v}_J^l )$ genuinely has high frequency after we have proven an upper bound on $|\nab ( \xi_I + \xi_J)|^{-1}$.

Interference terms such as these are typical in even the simplest examples of convex integration and other related constructions such as the construction of nowhere differentiable functions.  There must be a way to ensure that the correction chosen to achieve a certain objective in a region $\Om$ does not interfere with the correction chosen in an overlapping region $\Om'$, otherwise there is a danger of ``stepping on your own foot'' during the construction.  
The device introduced in \cite{deLSzeCts} which overcomes this difficulty for the Euler equations is to look at Beltrami flows -- that is, eigenfunctions $\nab \times v = \mu v$ of the curl operator, which are automatically stationary solutions to the Euler equations.\footnote{More generally, a Beltrami flow can be of the form $\nab \times v = \mu(x) v$ for an arbitrary function $\mu(x)$.}  

Here we will impose a microlocal, Beltrami flow condition.  To be specific, recall that the individual waves take the form
\[ V_I = \fr{1}{\la} \nab \times ( e^{i \xi_I} w_I ) = e^{i \la \xi_I}( v_I + \fr{\nab \times w_I}{ \la} ) \]
where $w_I$ is a pointwise solution to the linear equation $(i\nab \xi_I) \times w_I = v_I$, so that the amplitudes $v_I$ are required to point in a direction perpendicular to the direction of oscillation $\nab \xi_I$, like any high-frequency, divergence free plane wave should.  The high frequency analogue of the Beltrami flow condition $\nab \times v = \mu v$, is the pointwise condition $(i \nab \xi_I) \times v_I = \mu v_I$, where the eigenvalue $\mu$ must be one of $ \mu = \pm |\nab \xi_I|$ since those are the eigenvalues of $(i \nab \xi_I) \times$ when viewed as an operator on $\langle \nab \xi_I \rangle^\perp$.

We will therefore require that
\begin{align*}
  \nab \xi_I \cdot v_I &= 0 \\
 (i \nab \xi_I) \times v_I &= |\nab \xi_I| v_I
\end{align*}
holds pointwise, and choose
\[  w_I = \fr{(i \nab \xi_I) }{|\nab \xi_I|^2} \times v_I = \fr{1}{|\nab \xi_I|} v_I \]

If we untangle this requirement in terms of the real and imaginary parts $a_I^l, b_I^l \in \R^3$ of $v_I^l = a_I^l + i b_I^l$, then the Beltrami flow condition can be phrased as:
\begin{align*}
 \nab \xi_I \cdot a_I &= 0 \\
 b_I &= \fr{( \nab \xi_I )}{|\nab \xi_I|} \times a_I
\end{align*}
or equivalently
\begin{align*}
 \nab \xi_I \cdot b_I &= 0 \\
 a_I &= -\fr{( \nab \xi_I )}{|\nab \xi_I|} \times b_I
\end{align*}

Therefore, our final remaining degree of freedom in the choice of $V_I$ is, without loss of generality, the imaginary part of its amplitude, $b_I$, which itself is required to be perpendicular to $\nab \xi_I$.  We will specify this amplitude, its support in space-time, along with the index set $I$ in the following section.  For now, we explain why the choice of approximate Beltrami flows helps to estimate the High-high interference term (essentially, we will go through the proof that Beltrami flows are stationary solutions to Euler).

We rewrite (\ref{eq:highHighOrigin}) as
\begin{align}
\pr_j Q_H^{jl} = \fr{1}{2} \sum_{J \neq \bar{I}} \pr_j (V_I^j V_J^l + V_J^j V_I^l) 
\end{align}
and use the divergence free condition to write
\begin{align}
\pr_j Q_H^{jl} = \fr{1}{2} \sum_{J \neq \bar{I}} V_I^j \pr_j V_J^l + V_J^j \pr_j V_I^l
\end{align}
Let us fix a pair $I, J$, and permute the indices
\begin{align}
V_I^j \pr_j V_J^l + V_J^j \pr_j V_I^l &= (V_I)_j [ \pr^j V_J^l - \pr^l V_J^j] + (V_J)_j [ \pr^j V_I^l - \pr^l V_I^j] \\
&+ (V_I)_j \pr^l V_J^j + (V_J)_j\pr^l V_I^j
\end{align}
The second of these terms is actually a gradient,
\begin{align}
(V_I)_j \pr^l V_J^j + (V_J)_j\pr^l V_I^j &= \pr^l ( V_I \cdot V_J )
\end{align}
and it will be absorbed into the pressure by a term
\begin{align}
P &= P_0 + \sum_{J \neq {\bar I} } P_{I,J} \\
P_{I, J} &= - \fr{ ( V_I \cdot V_J ) }{2}
\end{align}

We now are reduced to solving
\begin{align}
\pr_j Q_{H, (IJ)}^{jl} &= (V_I)_j [ \pr^j V_J^l - \pr^l V_J^j] + (V_J)_j [ \pr^j V_I^l - \pr^l V_I^j] \label{eq:almostHighTime}
\end{align}

In three dimensions, a special observation can be made using the alternating structure of the right hand side.  One way to see it is to recall some notation.  Let us recall the volume element $\ep^{abc}$ which is the fully antisymmetric tensor defined so that the trilinear form $\ep^{abc}u_a v_b w_c$ gives the signed volume of the parallelogram $u \wed v \wed w$.  We recall the basic definitions $(u \times v)^a = \ep^{abc} u_b v_c$ and $(\nab \times v)^a = \ep^{abc} \pr_b v_c$.  Using these definitions and the elementary identity\footnote{This identity gives two different expressions for the inner product on the second exterior power of $\La^2(\R^3)$.  It can be proven by testing against components of an orthonormal frame, and the proof can be accelerated by observing that these tensors are both antisymmetric in $(ab)$ and $(fg)$.}
\[ \ep^{abc}\ep_{cfg} = \de^a_f \de^b_g - \de^a_g \de^b_f \]
we establish the calculus identity
\begin{align}
(V_I)_j [ \pr^j V_J^l - \pr^l V_J^j] + (V_J)_j [ \pr^j V_I^l - \pr^l V_I^j] &= - V_I \times (\nab \times V_J) - V_J \times (\nab \times V_I) \label{eq:theSum}
\end{align}

At this stage, it is clear why curl eigenfunctions are helpful.  Namely, if we were using eigenfunctions of curl
\begin{align}
\nab \times V_I &= \la_I V_I \\
\nab \times V_J &= \la_J V_J \\
\la_I &= \la_J
\end{align}
with the same eigenvalue, then their sum would be a curl eigenfunction and (\ref{eq:theSum}) would be identically $0$.  The ``microlocal Beltrami-flow'' condition says that $v_I$ is an eigenfunction for the symbol of curl, namely
\begin{align}
 (i \nab \xi_I) \times v_I &= |\nab \xi_I| v_I \label{eq:microlocalBeltrami}
\end{align}
In this way, each $V_I$ looks like a curl eigenfunction to leading order in $\la$.

We now express (\ref{eq:almostHighTime}) more completely as
\begin{align}
\pr_j Q_{H, (IJ)}^{jl} &= - \la e^{i \la (\xi_I + \xi_J) } \left({\tilde v}_I \times [ (i \nab \xi_J) \times {\tilde v}_J ] + {\tilde v}_J \times [ (i \nab \xi_I) \times {\tilde v}_I   ]\right) \\
&- e^{i \la(\xi_I + \xi_J)} \left( \tilde{v}_I \times(\nab \times \tilde{v}_J) + \tilde{v}_J \times(\nab \times \tilde{v}_I) \right) \\
\begin{split}
&= - \la e^{i \la (\xi_I + \xi_J) } \left( v_I \times [ (i \nab \xi_J) \times  v_J ] + v_J \times [ (i \nab \xi_I) \times v_I   ]\right) \\
&- e^{i \la (\xi_I + \xi_J) }\left( (\nab \times w_I) \times [ (i \nab \xi_J) \times {\tilde v}_J ] + (\nab \times w_J) \times [ (i \nab \xi_I) \times {\tilde v}_I ] \right) \\
&- e^{i \la (\xi_I + \xi_J) } \left( v_I \times [ (i \nab \xi_J) \times  (\nab \times w_J) ] + v_J \times [ (i \nab \xi_I) \times (\nab \times w_I) ]\right) \\
&- e^{i \la(\xi_I + \xi_J)} \left( \tilde{v}_I \times(\nab \times \tilde{v}_J) + \tilde{v}_J \times(\nab \times \tilde{v}_I) \right)
\end{split} \\
&= - \la e^{i \la (\xi_I + \xi_J) } \left( v_I \times [ (i \nab \xi_J) \times  v_J ] + v_J \times [ (i \nab \xi_I) \times v_I ]\right) + \mbox{ Lower Order Terms} \label{eq:almostHighHigh}
\end{align}

Using the condition (\ref{eq:microlocalBeltrami}), we add
\[\la e^{i \la (\xi_I + \xi_J) } ( v_I \times v_J + v_J \times v_I ) = 0 \]
to rewrite the High-High interference term (\ref{eq:almostHighHigh}) as
\begin{align}
\pr_j Q_{H, (IJ)}^{jl} &= - \la e^{i \la (\xi_I + \xi_J) } \left( v_I \times [ (|\nab \xi_J| - 1)v_J ] + v_J \times [ (|\nab \xi_I| - 1) \times v_I ]\right) + \mbox{Lower Order Terms}
\end{align}

Our strategy for ensuring this term can be controlled involves imposing the following additional conditions on the phase functions:
\begin{itemize}
 \item {\bf Synchronized Periods}: We require that $|\nab \xi_I| \approx 1 \approx |\nab \xi_J|$ for all $I, J$.  
 \item {\bf Angle separation}:  We require  $| \nab \xi_I + \nab \xi_J|$ to remain bounded from $0$ for $J \neq \bar{I}$ so that the parametrix can be used. Since the $\nab \xi_I$ and $\nab \xi_J$ both have absolute value essentially one, this lower bound is equivalent to the angles between the gradients being well-separated.
\end{itemize}
These properties will be guaranteed by taking initial values 
\begin{align}
\xi_I(t(I), x) &= {\hat \xi}_I
\end{align}
for the phase functions which have angle-separated gradients, and such that $|\nab {\hat \xi}_I| = 1$ everywhere.  We then use the time cutoff function in the amplitude to make sure that the lifespan of $V_I$ are sufficiently short so that the equal periodicity condition holds is maintained.  This lifespan will depend on the given $v$, as the gradient of $v_\ep$ enters into the transport equations for the phase gradients.

From our preceding discussion, we have established that the final remaining degree of freedom in constructing the correction goes into choosing $b_I^l$, the imaginary part of the amplitude of $v_I^l$, as well as precisely specifying the phase functions.  We know that the amplitude $b_I$ will involve cutoff functions in space and time chosen so that the phase function $\xi_I$ can exist on the support of $b_I$ without any critical points and its derivatives can remain bounded, and also so that the the building blocks will still approximate Beltrami flows.  Up to the choice of some parameters, the construction will be completely specified in the following section, where the amplitudes $b_I$ are chosen to make the contribution of the stress term small.

\subsection{Eliminating the Stress} \label{stress}
Here we discuss how to construct the corrections $V_I$, $P_0$ in such a way that the Stress term
\begin{align} \label{eq:stressTermAgain}
Q_S^{jl} = \sum_I (V_I^j \bar{V}_I^l) + P_0\de^{jl} + R^{jl} 
\end{align}
can be reduced to a new stress $Q_S^{jl}$ much smaller than $R^{jl}$ in $C^0$ after appropriate choices of $V_I$ and $P_0$.  In the process, we will describe the initial data (\ref{eq:theDataWeNeed}) for the phase functions and also the set $\II$ by which the corrections $V_I$ are indexed.  A brief summary of the results of this section are summarized by the Proposition in Section (\ref{sec:Summary}).  

In this part of the argument, we will take a different approach than the one taken in \cite{deLSzeCts}.  Their approach is based on Carath{\' e}odory's theorem for convex hulls, and does not seem to generalize readily to the setting of nonlinear phase functions.

Unlike the Transport term, the High-Low interaction term and the High-High interference terms, the Stress term (\ref{eq:stressTermAgain}) does not require us to solve a divergence equation.  Rather, eliminating this term is essentially an algebraic problem which can be thought of as calculating an elaborate ``square root''.

We will choose $V = \sum_I V_I$ and $P_0$ to ensure that
\begin{align} \label{eq:stressError}
 \sum_I (V_I^j \bar{V}_I^l) + P_0 \de^{jl} + R^{jl} \approx 0
\end{align}
is sufficiently small pointwise.  

Roughly speaking, making the error (\ref{eq:stressError}) small pointwise will give the amplitudes of $V_I$ size $|V_I| \leqc |R|^{1/2}$.  We will need to measure the size of the derivatives of $V_I$, so we will attempt to ensure that the terms $V_I$ are balanced in size $|V_I| \sim |R|^{1/2}$ where they are supported, since a derivative $\pr R^{1/2}$ can be quite large when $|R|$ is small.

\subsubsection{The Approximate Stress Equation}

Before starting the construction, we observe that one only has a hope of solving this equation if $P_0 \de^{jl} +R^{jl}$ is negative definite.  Although we cannot fully eliminate $R^{jl}$ by adding a multiple of the metric $\de^{jl}$, we can at least ensure that the sum $P_0 \de^{jl} +R^{jl}$ is negative definite, and is also not significantly larger than $R^{jl}$ itself by choosing $P_0$ of size $|P_0| \sim |R|$.

The other issue we must take into account is that if we solve the equation $\sum_I (V_I^j \bar{V}_I^l) + P_0 \de^{jl} + R^{jl} = 0$ pointwise, then the individual waves $V_I$ inherit the fine scale behavior of $R^{jl}$ (which is inconsistent with the basic philosophy of the construction), and their higher derivatives will therefore not obey good estimates.  A similar issue arose while considering the Transport term.

To deal with the latter issue, we will first replace $R$ by a version $R_\ep = \eta_{\ep_R} \ast_{x,t} R$ which is mollified in both space and time variables.  The scale $\ep_R$ of this mollification will be chosen later in the argument so that the error $R - R_\ep$ is acceptably small.  Actually, in order to achieve sharp regularity in the main theorem, it will be necessary to do something more complicated than a simple-minded space-time mollification.

Finally, observe that, without loss of generality, we can assume that $R_\ep$ is trace free by absorbing the trace part $(R_\ep^{jl} \de_{jl})\fr{\de^{jl}}{n}$ into the pressure.  More specifically, we choose a correction of the form
\begin{align} \label{eq:p0def}
P_0 &= -  \fr{e(t)}{n} - \fr{(R_\ep^{jl} \de_{jl})}{n}, \quad \quad n = 3
\end{align}
and with this choice the equation
\begin{align} \label{eq:sqrt1}
 \sum_I (V_I^j \bar{V}_I^l) &= - P_0\de^{jl} - R_\ep^{jl}
\end{align}
takes the form
\begin{align} \label{eq:stressEq1}
 \sum_I (V_I^j \bar{V}_I^l) &= e(t) \fr{\de^{jl}}{n} - {\mathring R}_\ep^{jl}
\end{align}
where the tensor
\begin{align*}
{\mathring R}_\ep^{jl} &= R_\ep^{jl} - (R_\ep^{jl} \de_{jl})\fr{\de^{jl}}{n}
\end{align*}
is the trace-free part of $R_\ep$

Here $e(t)$ is a non-negative function which must satisfy certain restrictions such as those of Lemma (\ref{lem:mainLemCts}).  For example, we impose a lower bound (\ref{eq:lowBdEtCts}) on $e(t)$ to make sure that the right hand side of (\ref{eq:stressEq1}) is, among other things, non-negative definite.

Prescribing the trace $e(t)$ in equation (\ref{eq:stressEq1}) gives us precise control over the energy
\[ \int |V|^2 dx = \int V^j V^l \de_{jl} dx \]
added into the system by the correction $V$.  Namely, since the components $V_I$ and $V_J$ oscillate rapidly in different directions for $J \neq \bar{I}$, they are almost orthogonal
\[ \int V_I \cdot V_J dx \approx 0 \quad \quad J \neq \bar{I} \]
leading to an approximate energy increase of
\begin{align}
\int |V|^2 dx &= \sum_{I, J} \int  V_I \cdot V_J dx \\
&\approx \int \sum_I V_I \cdot {\bar V}_I dx \\
&\approx \int_{\T^3} e dx
\end{align}
if we can solve the stress equation equation (\ref{eq:stressEq1}).  Therefore, in terms of dimensional analysis, the function $e$, much like $p$ and $R$, should be regarded as having the dimensions of an energy density (or $\fr{length^2}{time^2}$).

Of course, we cannot solve equation (\ref{eq:stressEq1}) exactly, because we have also required that each $V_I$ be divergence free.  However, as we explain in the following section, equation (\ref{eq:stressEq1}) can be solved approximately when the oscillation parameter $\la$ is sufficiently large.

\subsubsection{The Stress Equation and the Initial Phase Directions} \label{subsec:stressEqn}

Let us recall that each $V_I$ can be represented as
\begin{align*}
 V_I&= e^{i \la \xi_I}\tilde{v}_I \\
&= \nab \times ( \fr{e^{i \la \xi_I}}{\la} w_I) \\
&= e^{i \la \xi_I}(v_I + \fr{\nab \times w_I}{\la})
\end{align*}
When $\la$ is large, the $v_I$ term dominates the $\fr{\nab \times w_I}{\la}$ term, and we have an approximation
\begin{align*}
V_I &\approx e^{i \la \xi_I} v_I
\end{align*}

We still have freedom to choose the amplitude $v_I$, so instead of solving equation (\ref{eq:stressEq1}) exactly, we will solve the equation
\begin{center}
{\bf The Stress Equation}
\end{center}
\begin{align} \label{eq:theStressEq}
\sum_I v_I^j {\bar v}_I^l &= e \fr{\de^{jl}}{n} - {\mathring R}_\ep^{jl}
\end{align}
and solving (\ref{eq:theStressEq}) will leave an error term
\begin{align}
\begin{split}
Q_S^{jl} &= \sum_I \fr{(\nab \times w_I)^j{\bar v}_I^l }{\la} + \sum_I \fr{v_I^j(\nab \times {\bar w}_I)^l }{\la} \\
&+  \sum_I \fr{(\nab \times w_I)^j(\nab \times {\bar w}_I)^l}{\la^2}
\end{split}
\end{align}
which composes part of the new stress $R_1^{jl}$.

In Section (\ref{highHighSection}), we introduced the requirement that the complex amplitude $v_I$ must satisfy the microlocal Beltrami flow condition $(i \nab \xi_I) \times v_I = |\nab \xi_I| v_I$, and assuming this requirement was satisfied, we defined $w_I = \fr{(i \nab \xi_I)}{|\nab \xi_I|^2}\times v_I = \fr{1}{|\nab \xi_I|} v_I$.  If $v_I = a_I + i b_I$ is expanded into its real and imaginary parts, then the Beltrami flow condition can be written $b_I = \fr{(i \nab \xi_I)}{|\nab \xi_I|} \times a_I$.

The imaginary part $b_I$ is our last remaining degree of freedom, but it is still constrained by the requirement that
\[
 \nab \xi_I \cdot b_I = \pr_l \xi_I b_I^l = 0
\]
expressing the condition that the waves be divergence free.  The real part $a_I$ defined by 
\ali{
 a_I = - \fr{(i \nab \xi_I)}{|\nab \xi_I|} \times b_I \label{eq:aIsbRotate}
 }
satisfies the same requirement, since it is a rotation of $b_I$ within the plane orthogonal to $\nab \xi_I$ by an angle of $\fr{\pi}{2}$.

By expanding each $v_I$ in terms of real and imaginary parts, the equation (\ref{eq:theStressEq}) becomes
\begin{align*}
 \sum_I (a_I^ja_I^l + b_I^j b_I^l) = e \fr{\de^{jl}}{n} - {\mathring R}_\ep^{jl}
\end{align*}
No matter how $b_I$ is chosen, as long as $b_I$ is indeed orthogonal to $\nab \xi_I$, the bilinear form which arises on the left hand side is given by
\begin{align*}
a_I^ja_I^l + b_I^j b_I^l = |b_I|^2 ( \de^{jl} - \fr{\pr^j \xi_I \pr^l \xi_I }{|\nab \xi_I|^2} )
\end{align*}
where $\de^{jl} - \fr{\pr^j \xi_I \pr^l \xi_I}{|\nab \xi_I|^2}$ is the orthogonal projection of the inner product $\de^{jl}$ to the plane $\langle \nab \xi_I \rangle^\perp$.  This fact follows from equation (\ref{eq:aIsbRotate}), which implies that $a_I$ is orthogonal to $b_I$ and has the same absolute value.  One can also check this identity by showing that the form $v_I^j {\bar v}_I^l$ restricted to the plane $\langle \nab \xi_I \rangle^\perp$ is invariant under any rotation within the plane, and is determined up to a constant by this property.

Using these observations we can write the equation (\ref{eq:theStressEq}) in the form
\begin{align} \label{eq:sqrt3}
 \sum_I |b_I|^2 ( \de^{jl} - \fr{\pr^j \xi_I \pr^l \xi_I }{|\nab \xi_I|^2} ) &= G^{jl},
\end{align}
for a tensor
\begin{align}
G^{jl} &= e \fr{\de^{jl}}{n} - {\mathring R}_\ep^{jl}
\end{align}
which is positive definite as long as.
\begin{align} \label{ineq:eAtLeast}
e &\geq K |R_\ep|
\end{align}
for a sufficiently large constant $K$.  The constant $K$ in (\ref{ineq:eAtLeast}) is an absolute constant which we will wait until later to specify.  Regarding the choice of $e$, the reader should keep in mind that we expect an estimate of the type $|V_I| \approx |b_I| \leqc |R|^{1/2}$.  Furthermore, obtaining bounds on the derivatives of $V_I$ will basically require us to differentiate the equation (\ref{eq:sqrt3}), which will require lower bounds on $|b_I|$ as well.

With the stress equation in the form (\ref{eq:sqrt3}), we can now explain how the phase functions $\xi_I$ and the amplitudes $b_I$ are constructed.

\subsubsection{The Index Set, the cutoffs and the phase functions} \label{subsec:indexPhase}
We have established already that the phase functions $\xi_I$ must satisfy a transport equation
\[ (\pr_t + v^j_\ep \pr_j) \xi_I = 0. \]
What remains is to specify their data.  Each phase function will be given an initial condition at time $t(I)$, and to assure that equation (\ref{eq:sqrt3}), one requirement we impose on the phase functions is that the tensors
\[ \de^{jl} - \fr{\pr^j \xi_I \pr^l \xi_I }{|\nab \xi_I|^2} \]
must span the space $\SS$ of symmetric, $(2,0)$ tensors at any point.  Therefore, at each point in $\R \times \T^3$ we will require at least $6$ phase functions to solve (\ref{eq:sqrt3}) -- or $12$, rather, since the negative of each phase function will also be included.  The phase functions will only be defined on some open set $\Om_I(t) \subseteq \T^3$, where $\Om_I$ is the image of a parameterization $(\Phi_I(x, t), t) : \Om_0 \times \R \to \T^3 \times \R$, which moves under the flow of $v_\ep$.

The amplitude function will be equipped with two cutoff functions:
\[ b_I^l = \eta((t - t(I))/\tau) \psi_I(x,t) b_{I, 1}^l = \eta_I \psi_I b_{I,1}^l \]
The cutoff $\psi_I$ localizes to $\Om_I$, so that we will not have to worry that the phase function fails to be defined globally.  In order to be compatible with the transport of $\xi_I$, we will also have
\[ (\pr_t + v^j_\ep \pr_j) \psi_I = 0, \]
so we only have to specify the initial values for both $\xi_I$ and $\psi_I$.  The cutoff $\eta((t - t(I))/\tau)$ limits the lifespan of $V_I$ to a time period less than or equal to $\tau$, where $\tau$ is a small parameter that remains to be chosen.

To continue, we will have to explain the set $\II$ to which the indices $I$ belong.  The purpose of the index $I$ is to record both the location of $V_I$ in space-time, and the direction $\nab \xi_I$ in which $V_I$ oscillates.  The strategy is the following: at time $0$, we cover the torus $\T^3$ with $8$ charts, and on each of these charts we place $12$ phase functions $\xi_I$ so that the six directions determined by the $\xi_I$ allow us to solve (\ref{eq:sqrt3}) for an arbitrary right hand side within that chart.  We must cut off these amplitudes after a very short time interval, at which point the phase functions must be replaced by another group of phase functions whose amplitudes will also be supported on $8$ charts.  To avoid interference in the High-high interactions, we also must make sure that any pair $V_I$ and $V_J$ who share support in space time oscillate in well-separated directions.  The cutoff functions are chosen as part of a partition of unity which allows us to patch together local solutions of the quadratic equation (\ref{eq:sqrt3}), and such a choice is consistent with the transport equation.

\paragraph{The index set as a graph and some notation}

With this rough description in place, we choose the index set $\II = ( \Z/(2\Z) )^3 \times \Z \times F$.  The $F$ coordinate is meant to specify the direction $d \xi_I$ in which $V_I$ oscillates, so $F$ will have $12$ elements; to be concrete, let $\varphi = (1 + \sqrt{5})/2$ be the golden ratio and set
\[ F = \left\{ \pm\fr{(0, 1, \pm \varphi)}{\sqrt{1 + \varphi^2}}, \pm\fr{(1, \pm \varphi, 0)}{\sqrt{1 + \varphi^2}}, \pm\fr{(\pm \varphi, 0, 1)}{\sqrt{1 + \varphi^2}} \right\} \]
to be the set of normal vectors to the faces of a dodecahedron in $\R^3$, which themselves form the vertices of an icosahedron.  It will also be useful to define the projective dodacehedron
\[ \F = F/\langle \pm 1 \rangle = \left\{ \left[\fr{(0, 1, \pm \varphi)}{\sqrt{1 + \varphi^2}}\right], \left[\fr{(1, \pm \varphi, 0)}{\sqrt{1 + \varphi^2}}\right], \left[\fr{(\pm \varphi, 0, 1)}{\sqrt{1 + \varphi^2}}\right] \right\}. \]
The advantage of doing so is that $\F$ inherits the action of the icosahedral group, but accounts for each of the $|\F| = 6$ directions in $F$ exactly once.

The discrete coordinate $k(I) = (\kappa, k_4) = ( \kappa_1, \kappa_2, \kappa_3, k_4 ) \in (\Z/(2\Z))^3 \times \Z$ determines which chart $\Om_I = \Om_{k}$ on $\T^3 \times \R$ contains the space-time support of $b_I$, and there is a function \[ (x(I), t(I)) : (\Z/(2\Z))^3 \times \Z \to \T^3 \times \R \] which will describe the ``center'' of $\Om_I$.  If we let $e^i$, $i = 1, 2, 3$ be the standard generators of the lattice $\Z^3$, then we set
\[ (x(I), t(I)) = ( \fr{1}{2} \sum_{i = 1}^3 \kappa_i e^i, \tau k_4 ) \]

Rather than view $\II$ as simply an index set, it is useful to think of $\II$ as the vertices of a graph, where we connect any two indices $I$ if the supports of $V_I$ overlap.  So let us also define the set of neighbors of $I$, including $I$ itself, to be the set
\[ \NN(I) = \NN(\kappa, k_4, f) = \bigcup_{k' \in \NN(k)} \{k'\} \times F \]
where the $2^3 \times 3$ neighboring discrete positions $\NN(k)$ are defined by
\[ \NN(\kappa, k_4) = \{ (\kappa, k_4) + u_1 e^1 + u_2 e^2 + u_3 e^3 + u_4 e^4  ~|~ u_i = -1, 0, 1 \} \]
Each vertex $I$ in the above graph has $|\NN(I)| = 2^3 \times 3 \times 12$ neighbors.

Whenever some object with an index, say $\Phi_I$ or $\Om_I$, only depends on only part of $I$, say $k = (\kappa, k_4)$, we will abuse notation and write both $\Phi_I = \Phi_{k}$ or $\Om_I = \Om_{k}$ to refer to the same object.  We will also introduce the notation
\[ \II(k) = k \times F \]
and
\[ \II(k_4) = (\Z /(2 \Z))^3 \times \{k_4\} \times F \]
to abbreviate important families of the indices.

\paragraph{The Cutoff Functions and Partitions of Unity}

The amplitude is equipped with two cutoff functions:
\[ b_I^l = \eta((t - t(I))/\tau) \psi_I(x,t) b_{I, 1}^l = \eta((t - \tau k_4)/\tau) \psi_I(x,t) b_{I, 1}^l.\]
The function $\eta(t)$ is a smooth, non-negative bump function with compact support in $C_c^\infty( |t| < 5/6 )$, such that 
\[ \sum_{k \in \Z} \eta^2( t - k ) = 1,\] and it is rescaled and reoriented to construct many of the other cutoff functions we use.  To construct such an $\eta$, first take a smooth bump function $\tilde{\eta}$ equal to $1$ for $|t| < 2/3$, and supported in $|t| < 5/6$, and set
\[ \eta(t) = \fr{\tilde{\eta}(t)}{\sqrt{\sum_{k \in \Z} \tilde{\eta}^2(t - k) } }, \]
which is smooth because the denominator is bounded away from $0$.  We use a partition of unity based on squares because this will allow us to patch together local solutions of
\begin{align*}
 \sum_I |b_{I, 1}|^2 ( \de^{jl} - \fr{\pr^j \xi_I \pr^l \xi_I }{|\nab \xi_I|^2} ) &= G^{jl}.
\end{align*}
which is homogeneous of degree two in the unknown absolute values $|b_{I,1}|$.

For each index $k_4$, the cutoff functions $\psi_I = \psi_{(\kappa, k_4)}$ will also be part of a similar partition of unity.  We start by periodizing $\eta$ to construct a partition of unity ${\bar \psi}_{\kappa}(x)$, $\kappa \in (\Z/2\Z)^3$ of the torus, where each ${\bar \psi}_{\kappa}(x)$ is supported on a single chart of the torus
\begin{align}
{\bar \psi}_{\kappa}(x_1 e^1 + x_2 e^2 + x_3 e^3) &= \eta_{2 \Z}(2 x_1 - \kappa_1 )\eta_{2\Z}(2x_2 - \kappa_2)\eta_{2\Z}(2 x_3 - \kappa_3 ) \\
x_i \in \R \\
\eta_{2 \Z}(y) &= \sum_{k \in \Z} \eta( y - 2 k) 
\end{align}
Observe that ${\bar \psi}_{\kappa}(x)$ is integer-periodic in each $x_i$ and is well-defined as a function of $\kappa_i \in \Z/2\Z$.  Also note that ${\bar \psi}_{\kappa}$ is supported in a single $\fr{5}{6} \times \fr{5}{6} \times \fr{5}{6}$ box on which our phase functions can be safely defined without critical points. 

Now define $\psi_I = \psi_{\kappa, k_4}$ to be the unique solution to the initial value problem
\begin{align}
 (\pr_t + v^j_\ep \pr_j) \psi_{k} &= 0 \\
 \psi_{\kappa, k_4}(x, t(I)) &= {\bar \psi}_{\kappa(I)}(x)
\end{align}

This way, at the initial time $t = t(I) = \tau k_4$, we have
\[ \sum_{ \kappa \in (\Z/(2\Z))^3} \psi_{(\kappa, k_4)}^2(x,t) =  \sum_{\kappa \in (\Z/(2\Z))^3} {\bar \psi}_\kappa^2(x) = 1 \]
and this identity will propagate for time $t \in \R$ by the transport equation.

\paragraph{The Phase Functions} \label{sec:thePhaseFunctions}

For an index $I = (\kappa, k_4, f)$, the phase function $\xi_I$ will only be defined around the support of $\psi_{{\vec k}}(x,t)$, allowing it to exist without critical points.  It is convenient to specify the domain of $\Om_I(t)$, which is transported by the flow of $v_\ep$.  Set $\Om_0 = (-7/8, 7/8) \times (-7/8, 7/8) \times (-7/8, 7/8)$, and define $\Phi_I : \Om_0 \times \R \to \T^3 \times \R$ to be the unique solution to the ODE
\begin{align}
 \fr{d\Phi_I^j}{dt} &= v_\ep^j(\Phi_I(x,t)) \\
 \Phi_I(x,t(I)) &= x + x(I)  \tx{ mod } \Z^3
\end{align}
From the well known theorems on transport equations and ODE, the solution $\Phi_I$ to this equation is unique, it exists for all time because $\T^3$ is compact, and it remains a bijection onto its image $\Om_I(t)$ because it can be inverted by reversing the flow.  The fact that $\Phi_I$ is defined for all time is unimportant for us since we will be restricting to a short time interval, but observe that $\Om_I$ covers the support of $\psi_{k}$, moreover $ \psi_{k}(\Phi_I(x,t)) = {\bar \psi}_0(x) $.

We can now define the phase functions as solutions to the transport equation
\begin{align}
 (\pr_t + v^j_\ep \pr_j) \xi_I &= 0 \quad \mbox{ on } \Om_I(t)\\
 \xi_I(t(I), x) &= {\hat \xi}_I(x) \quad \mbox{ on } \Om_I(t(I))
\end{align}
Our study of the High-high frequency term motivates us to require that the initial phases ${\hat \xi}_I$ have the following properties:
\begin{itemize}
 \item ${\hat \xi}_I$ is a linear function
 \item $|\nab {\hat \xi}_I| = 1$ for all $I$
 \item There exists a $c > 0$ such that $|\nab {\hat \xi}_I +  \nab{\hat \xi}_J| \geq c > 0$ for all $J \neq \bar{I}$
 \item For each $k = (\kappa, k_4)$, the tensors
\[ \left\{ \de^{jl} - \fr{\pr^l {\hat \xi}_{(k,f)} \pr^j {\hat \xi}_{(k,f)}}{|\nab {\hat \xi}_{(k,f)}|^2} ~|~ f \in \F \right\} \]
form a basis for the space $\SS$ of symmetric, $(2,0)$ tensors at every point on $\Om_{k}$.
\end{itemize}
Once these requirements are satisfied initially, the latter two will remain true for a short time, and the first two will only hold approximately.

For the chart $k = (\kappa, k_4) = 0$, it suffices to pick phases $\{ {\hat \xi}_{(0, f)} ~|~ f \in F \}$ with icosahedral symmetry
\begin{align}
 {\hat \xi}_{(0, f)}(\Phi_0(x,0)) &= <f,x>
\end{align}
All the above properties can be checked for this choice, the first two being immediate and the latter two we will prove shortly.

For the other coordinate charts, we need to make sure that every phase function is separated in angle from all of its neighboring phase functions so that we still have the bound $|\nab \xi_I +  \nab \xi_J| \geq c > 0$ for all $I \neq \bar{J}$, with a possible smaller constant $c$.  A simple way to achieve these requirements while ensuring uniformity in the construction is to construct the initial data by rotating the dodecahedron.  For this purpose,  the following fact, which is proven in the Appendix, is helpful.
\begin{lem}\label{lem:someRotations}
There exists a collection of $2^4$ rotations $O_m$, indexed by $m \in (\Z / (2\Z))^4$, and a positive number $c > 0$, with the property that
\[ |f \circ O_m + f' \circ O_{m'}| \geq c \quad f, f' \in F, m, m' \in (\Z / (2\Z))^4 \]
holds unless $f' = -f$ and $m' = m$.  Without loss of generality, one can choose $O_0 = \tx{Id}$.
\end{lem}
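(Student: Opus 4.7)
The plan is to prove the lemma by a soft genericity argument in $SO(3)^{16}$: the configurations of 16 rotations for which the desired separation fails form a nowhere-dense subset of the parameter space, so a generic choice avoids them, and finiteness of $F$ then yields a uniform lower bound $c > 0$.

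First I would normalize $O_0 = \text{Id}$, which is legitimate since replacing every $O_m$ by $R O_m$ for a fixed $R \in SO(3)$ preserves all the quantities $|f \circ O_m + f' \circ O_{m'}|$. It remains to choose the 15 nontrivial rotations $O_1, \ldots, O_{15} \in SO(3)$. For each quadruple $(m, m', f, f')$ with $m \neq m'$ in $(\Z/(2\Z))^4$ and $f, f' \in F$, consider the bad locus
\[ B_{m, m', f, f'} = \{(O_1, \ldots, O_{15}) \in SO(3)^{15} : f \circ O_m + f' \circ O_{m'} = 0\} \]
(where we use the convention $O_0 = \text{Id}$). Fixing all rotations except $O_m$, the equation $f \circ O_m = -f' \circ O_{m'}$ specifies the image of the unit vector $f$ under $O_m$, pinning $O_m$ down to a coset of its stabilizer $\text{Stab}(f) \cong SO(2)$ inside the three-dimensional group $SO(3)$; this is codimension one. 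The union of the finitely many sets $B_{m, m', f, f'}$ is therefore nowhere dense in $SO(3)^{15}$, and I would pick $(O_1, \ldots, O_{15})$ in its complement.

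For such a choice, $f \circ O_m + f' \circ O_{m'} \neq 0$ for every $m \neq m'$ and every pair $(f, f') \in F \times F$. Since the total set of such vectors is finite, we obtain
\[ c_0 := \min_{m \neq m',\ f, f' \in F} |f \circ O_m + f' \circ O_{m'}| > 0. \]
For $m = m'$, rotation invariance of the Euclidean norm reduces the desired estimate to $|f + f'| \geq c_1$ with $c_1 := \min_{f, f' \in F,\, f' \neq -f} |f + f'| > 0$; this lower bound holds by finiteness of $F$ together with the fact that $F$ consists of exactly six antipodal pairs with no coincidental antipodality. Setting $c := \min(c_0, c_1)$ completes the argument.

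The proof is essentially soft, and the only real substance is the codimension computation for each $B_{m,m',f,f'}$; nothing else should present an obstacle. An alternative explicit route — choosing the $O_m$ as $16$ distinct values of a generic one-parameter subgroup of $SO(3)$ whose axis is in no special position relative to the icosahedral directions — would also work, but the genericity argument avoids any case analysis and delivers the constant $c$ in a uniform, noncomputational way.
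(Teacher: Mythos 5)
Your argument is correct in substance, but it is a genuinely different route from the paper's. You prove existence by soft transversality: the bad configurations $\{f\circ O_m+f'\circ O_{m'}=0\}$ with $m\neq m'$ form finitely many closed, lower-dimensional (hence nowhere dense) subsets of $SO(3)^{15}$, a generic point avoids them, and finiteness of $F$ upgrades nonvanishing to a uniform $c>0$; the case $m=m'$ reduces to $|f+f'|>0$ for $f'\neq-f$. The paper instead proves a more general, more constructive statement: it fixes a single axis $u$ with $u\times f\neq 0$ for all $f\in F$, takes all rotations inside the one-parameter group $O_\th=e^{\th\, u\times}$, and adds rotations one at a time by induction on $n$. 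The key quantitative ingredient is that $\psi(\th)=\langle e^{(\th-\th_m)u\times}f,f'\rangle$ satisfies $\psi'''+|u|^2\psi'=0$, and the conserved quantity $(\psi'')^2+|u|^2(\psi')^2=E^2$ is strictly positive (this is exactly where $u\times f\neq 0$ enters), which yields a sublevel-set measure bound $|\{\th:|\psi(\th)+1|\le c\}|\le A c^{1/2}$; summing over the finitely many $(m,f,f')$ leaves room to choose the next angle. So the paper essentially carries out, quantitatively, the ``alternative explicit route'' you mention at the end (a generic one-parameter subgroup), trading your case-free genericity argument for explicit structure (all $O_m$ commute and share an axis) and an effective measure estimate; your version is shorter and avoids the ODE computation, while giving no control on how the $O_m$ or $c$ could be exhibited.

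Two small inaccuracies in your write-up, neither of which affects validity: (i) pinning $O_m^T f$ to a fixed unit vector confines $O_m$ to a coset of a one-dimensional stabilizer $SO(2)$ inside the three-dimensional $SO(3)$, so each bad locus has codimension two, not one — you only need positive codimension, so nothing breaks; (ii) the normalization $O_0=\tx{Id}$ is achieved by simultaneous \emph{right} composition $O_m\mapsto O_m R$ with $R=O_0^{-1}$, since $f\circ(O_m R)+f'\circ(O_{m'}R)$ is the image under the fixed rotation $R^{T}$ of $f\circ O_m+f'\circ O_{m'}$ and hence has the same norm, whereas replacing $O_m$ by $RO_m$ does not preserve these quantities in general (and $F$ is not invariant under arbitrary rotations).
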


With this fact in hand, we can then use the above rotations to assign initial values to all the other phase functions
\begin{align}
 {\hat \xi}_{(k, f)}(\Phi_I(x, t(I))) &= <f\circ O_{k},x>  \quad k = k(I)
\end{align}

\subsubsection{ Localizing the Stress Equation } \label{sec:localizeStress}

Recall that we intend to solve the equation
\begin{align*}
 \sum_I |b_I|^2 \left( \de^{jl} - \fr{\pr^j \xi_I \pr^l \xi_I }{|\nab \xi_I|^2} \right) &= G^{jl},
\end{align*}
where the right hand side
\begin{align}
G^{jl} = e \fr{\de^{jl}}{n} - {\mathring R}_\ep^{jl}
\end{align}
could be a fairly arbitrary, symmetric, positive $(2,0)$ tensor in $\SS_+$.

By representing the amplitude as \[ b_I^l = \eta((t - t(I))/\tau) \psi_I(x,t) b_{I, 1}^l = \eta_{k_4} \psi_{k} b_{I, 1}^l, \] we can use the partition of unity property of the cutoff functions to form a global solution $V$ by gluing together the locally defined solutions to the linear systems
\begin{align} \label{eq:sqrtAlmost}
 \sum_{I \in k \times F} |b_{I,1}|^2 ( \de^{jl} - \fr{\pr^j \xi_ I\pr^l \xi_I}{|\nab \xi_I|^2} ) &= G^{jl} \quad \mbox{ on } \Om_{k}
\end{align}

Indeed, once we have solved the local equation (\ref{eq:sqrtAlmost}) for all $k = (\kappa, k_4)$, then we can check that the stress equation (\ref{eq:sqrt3}) is satisfied as follows:
\begin{align*}
\sum_I |b_I|^2 ( \de^{jl} - \fr{\pr^j \xi_ I\pr^l \xi_I}{|\nab \xi_I|^2} ) &= \sum_I \eta_{k_4}^2 \psi_I^2 |b_{I,1}|^2 ( \de^{jl} - \fr{\pr^j \xi_ I\pr^l \xi_I}{|\nab \xi_I|^2} ) \\
&= \sum_{k_4} \eta_{k_4}^2 \left( \sum_{I \in \II(k_4)} \psi_{(\kappa, k_4)}^2 |b_{I,1}|^2 ( \de^{jl} - \fr{\pr^j \xi_ I\pr^l \xi_I}{|\nab \xi_I|^2} ) \right) \\
&= \sum_{k_4} \eta_{k_4}^2 \left( \sum_{\kappa \in (\Z/(2 \Z))^3} \psi_{(\kappa, k_4)}^2 \left( \sum_{I \in (\kappa, k_4) \times F} |b_{I,1}|^2 ( \de^{jl} - \fr{\pr^j \xi_ I\pr^l \xi_I}{|\nab \xi_I|^2} ) \right) \right) \\
&= \sum_{k_4} \eta_{k_4}^2 \left( \sum_{\kappa \in (\Z/(2 \Z))^3} \psi_{(\kappa, k_4)}^2  \right) G^{jl} \\
&= \sum_{k_4} \eta_{k_4}^2 G^{jl} = G^{jl}
\end{align*}
Thus it suffices to solve the local equation (\ref{eq:sqrtAlmost}) on each chart $\Om_k$ of $\R \times \T^3$.

\subsubsection{Solving the quadratic equation}\label{sec:solveQuadEqn}

Now observe that each term on the left hand side of (\ref{eq:sqrtAlmost}) is repeated twice because $b_{\bar{I}} = - b_I$ and $\nab \xi_{\bar{I}} = - \nab \xi_I$.  To take this repetition into account, let us divide by $2$, and rewrite the system (\ref{eq:sqrtAlmost}) as
\begin{align} \label{eq:sqrtAlmost2}
 \sum_{I \in k \times \F} |b_{I,1}|^2 ( \de^{jl} - \fr{\pr^j \xi_I \pr^l \xi_I }{|\nab \xi_I|^2} ) &= \fr{1}{2} G^{jl} \quad \mbox{ on } \Om_{k}
\end{align}
where $\F$ denotes the set of faces of the projective dodecahedron introduced in Section (\ref{subsec:indexPhase}).  In this form, the six unknown coefficients $|b_{I,1}|^2$, $I \in k \times \F$, of the linear system will be uniquely determined.

If the tensor $G^{jl}$ were completely arbitrary, we could not guarantee that the coefficients obtained by solving the linear equation would all be positive, but because of our choice of $P_0$, the trace part of the right hand side dominates the trace free part, and we can rewrite the right hand side in the form
\begin{align}
 \fr{G^{jl}}{2} &= \fr{1}{2} ( e \fr{\de^{jl}}{n} - {\mathring R}_\ep^{jl} ) \\
 &= e ( \fr{\de^{jl}}{2n} + \varepsilon^{jl} )  \label{eq:gIsSizeE}
\end{align}
and $\varepsilon^{jl}$ satisfies the bound
\begin{align}
\varepsilon(R_\ep)^{jl} &= - \fr{{\mathring R}_\ep^{jl}}{2 e} \\
|\varepsilon| &\leq \fr{100}{K}
\end{align}
where $K$ is the constant in the inequality (\ref{ineq:eAtLeast}).

With the expression (\ref{eq:gIsSizeE}) in mind, we write the amplitude in the form
\[ b_{I,1}^l = e^{1/2} b_{I,2}^l. \]
With this normalization, we can expect the absolute value of $b_{I,2}$ to be of size $\approx 1$ in order of magnitude.

Although we would like the factor $b_{I,2}$ to possess as much smoothness as possible, the pointwise constraint that $< \nab \xi_I, b_{I,2} > = 0$ forces $b_{I,2}$ to be develop some degree of irregularity.  That is, even though $b_{I,2}$ will be qualitatively smooth, its derivatives will grow as its corresponding phase function is transported.  Based on this observation, we first choose a vector field ${\mathring b}_I$ with size $\approx 1$ which satisfies $< \nab \xi_I, {\mathring b}_I > = 0$, and then write
\begin{align} \label{eq:ampExpand}
 b_{I,1}^l &= e^{1/2} b_{I,2}^l = e^{1/2} \ga_I {\mathring b}_I^l = \rho_I {\mathring b}_I^l
\end{align}
where the coefficients $\ga_I$ and $\rho_I = e^{1/2} \ga_I$ are determined by solving the linear system (\ref{eq:sqrtAlmost2}).

\paragraph{Constructing a section of $\langle \nab \xi \rangle^\perp$ and the gauge freedom}

We now wish to choose a vector field ${\mathring b}_I$ which satisfies $< \nab \xi_I, {\mathring b}_I > = 0$ pointwise.  We first remark that there is a huge amount of freedom in making such a choice.  One can think of $\langle \nab \xi_I \rangle^\perp$ as a bundle of oriented planes on which the linear map ${\bf J}_I = \fr{\nab \xi_I}{|\nab \xi_I|} \times$ acts like a complex structure, as it satisfies the equation ${\bf J}_I^2 = -1$ on $\langle \nab \xi \rangle^\perp$.  Then given any section ${\mathring b}_I$ of $\langle \nab \xi_I \rangle^\perp$ (that is, any vector field satisfying $< \nab \xi_I, {\mathring b}_I > = 0$ pointwise), and any function $\th(t,x)$, one can obtain another section ${\mathring b}_I'$ by rotating at each point ${\mathring b}_I' = e^{{\bf J}_I \th} {\mathring b}_I$ within $\langle \nab \xi_I \rangle^\perp$ by an oriented angle $\th$.  Furthermore, because the transormation $e^{{\bf J}_I \th}$ preserves absolute values, any solution to the pointwise quadratic equation (\ref{eq:sqrtAlmost2}) remains a solution to (\ref{eq:sqrtAlmost2}).  This type of freedom can be regarded as a ``gauge freedom'', and the group of operators
\[ \{ e^{{\bf J}_I \th} \} \]
would then be the considered the ``gauge group''.  In fact, the oscillations we produce while choosing $\la$ to be very large are just specific examples of this gauge freedom with the function $\th(t,x) = \la \xi_I(t,x)$.

Here, we do not want to use the gauge group to produce oscillations, but rather we want a specific section ${\mathring b}_I$ of $\langle \nab \xi_I \rangle^\perp$ which is as smooth as possible.  To be compatible with the transport of phases, we find it convenient to construct ${\mathring b}_I$ with an orthogonal projection
\begin{align}
 {\mathring b}_I^l &= \pr^l \xi_{\si I} - \fr{(\nab \xi_{\si I} \cdot \nab \xi_I)}{|\nab \xi_I|^2} \pr^l \xi_I \\
 &= \PP_I^\perp(\nab \xi_{\si I})^l
\end{align}

The phase function of index $\si I = (k, \si f)$ inhabits the same coordinate patch as that of $I = (k, f)$, but it points in a different direction, so that we can guarantee that, at times sufficiently close to $t(I)$, we have
\begin{align} \label{eq:projLowBound}
|{\mathring b}_I| &\geq c
\end{align}
satisfied for some constant $c > 0$, uniformly on $\T^3$.  In future formulas, we will express the absolute value of ${\mathring b}_I$ as
\begin{align} \label{eq:wedSq}
|{\mathring b}_I|^2 = \fr{|\nab \xi_I \wed \nab \xi_{\si I}|^2}{|\nab \xi_I|^2}
\end{align}
because $|\nab \xi_I|^2 |{\mathring b}_I|^2 = |\nab \xi_I|^2 |\nab \xi_{\si I}|^2 - ( \nab \xi_I \cdot \nab \xi_{\si I})^2 $ is actually the square norm of $\nab \xi_I \wed \nab \xi_{\si I}$ in $\La^2(\R^3)$.

One way to choose the index $\si I$ is to let $\si$ act as an element of order $3$ in the icosahedral group.  That is, let $\si$ act on a dodecahedron as a rotation by an angle $2 \pi / 3$ around an axis which goes through two opposite vertices of the dodecahedron.  It is easy to see that a rotation defined in this way satisfies $\si^3 = 1$.\footnote{The icosahedral group can be identified with $A_5 \times (\Z/(2 \Z))$ by considering its action on the compound of $5$ cubes (or the compound of 5 tetrahedra) inscribed in the dodecahedron.  The group generated by $\si$ is therefore a $3$ Sylow subgroup of the icosahedral group, which is unique up to conjugacy by Sylow's theorem, so in fact all elements of order three act as rotations in this way.}   With this choice of $\si$, we can then be assured that $\si f \neq f$ and $\si f \neq -f$, since the eigenvalue $1$ is only obtained on the axis of rotation $\langle f + \si f + \si^2 f \rangle$, and on its orthogonal complement $\si$ satisfies $\si^2 + \si + 1 = 0$, which has neither $1$ nor $-1$ as a root.  With this choice, we can guarantee the property (\ref{eq:projLowBound}) holds initially at time $t = t(I)$, and this property will remain true for a short time with a possibly smaller $c$.

Making the above choice ensures that $v_{\bar{I}} = {\overline v_I}$ and is ultimately why we chose to prescribe the imaginary part $b_I$ in the first place rather than the real part $a_I$.  Namely, since $\nab \xi_{\bar{I}} = - \nab \xi_I$, we have that $b_{\bar{I}} = - b_I$, and $a_{\bar{I}} = -\fr{( \nab \xi_{\bar{I}} )}{|\nab \xi_{\bar{I}}|} \times b_{\bar{I}} = a_I$.

\paragraph{The Stress Equation relative to a Frame}

The correction $V$ has now been written down completely except for the choice of several parameters, but we are not ready to proceed with the construction until it is clear how to estimate the derivatives of the correction.  In particular, the equation (\ref{eq:sqrtAlmost2}) involves taking a square root to solve for the unknown coefficients, so we must establish positive lower bounds on the solutions of the linear equation in order to proceed.  Using the expansion (\ref{eq:ampExpand}), we can rewrite (\ref{eq:sqrtAlmost2}) as
\begin{align*}
 \sum_{I \in k \times \F} \rho_I^2 |{\mathring b}_I|^2 ( \de^{jl} - \fr{\pr^j \xi_I \pr^l \xi_I }{|\nab \xi_I|^2} ) &= \fr{G^{jl}}{2}
\end{align*}
which can also be written
\begin{align} \label{eq:sqrtAlmost3}
 \sum_{I \in k \times \F} \rho_I^2 \fr{|\nab \xi_I \wed \nab \xi_{\si I}|^2}{|\nab \xi_I|^2} ( \de^{jl} - \fr{\pr^j \xi_I \pr^l \xi_I}{|\nab \xi_I|^2} ) &= \fr{G^{jl}}{2}
\end{align}
using the identity (\ref{eq:wedSq}).

To get more explicit control of $\rho_I$, it helps to express this tensorial equation relative to a basis.  The most natural basis available in this regard is the set of squares $\nab \xi_J \otimes \nab \xi_J$, because they are compatible the underlying transport.  In this spirit, we apply both sides of the equation (\ref{eq:sqrtAlmost3}) to the square $\pr_j \xi_J \pr_l \xi_J$ with $J \in k \times \F$, to obtain a $6 \times 6$ system of linear equations in matrix form
\begin{align} \label{eq:sqrtReal1}
 \sum_{I \in k \times \F} A(\nab \xi)_J^I \rho_I^2 &= \fr{G^{jl}\pr_j \xi_J \pr_l \xi_J}{2} \quad J \in k \times \F
\end{align}
Here the matrix $A(\nab \xi)_J^I = A(\nab \xi, \La^2(\nab \xi))_J^I$ is
\begin{align} \label{eq:theMatrx}
A(\nab \xi)_J^I &= \fr{|\nab \xi_I \wed \nab \xi_{\si I}|^2}{|\nab \xi_I|^2} \cdot \fr{|\nab \xi_I \wed \nab \xi_J|^2}{|\nab \xi_I|^2} \quad I, J \in k \times \F
\end{align}
We need to prove that this linear equation can be solved, and also that the coefficients which arise are manifestly positive.  At the initial time $t = t(I)$, we have the following lemma.
\begin{lem} \label{lem:canInvert}
 The matrix $A_J^I = A(\nab {\hat \xi})_J^I : \R^{\F} \to \R^{\F}$ at the initial time $t(I)$ does not depend on $k$ and is invertible.
\end{lem}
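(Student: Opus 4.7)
The plan is to first exploit the rotational structure of the initial data to see that $A(\nab {\hat \xi})_J^I$ depends only on the index in $\F$ rather than on $k$, and then to factor $A$ as a composition of two six-by-six linear maps whose invertibility reduces to the fact that the six face-normal directions of a dodecahedron yield six linearly independent symmetric rank-one tensors.

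At $t = t(I)$, the construction in Section \ref{sec:thePhaseFunctions} gives $\nab {\hat \xi}_{(k,f)} = f \circ O_k$ with $|f \circ O_k| = 1$, since $O_k$ is a rotation. Each factor appearing in (\ref{eq:theMatrx}) is a ratio of a squared wedge product of two such vectors by a squared norm, and since rotations preserve both norms and inner products they preserve $|u \wedge v|^2 = |u|^2 |v|^2 - \langle u, v \rangle^2$. Writing $n_f := f$ for each $f \in \F$ and $c_f := |n_f \wedge \sigma n_f|^2$, which is strictly positive because the order-three rotation $\sigma$ satisfies $\sigma f \neq \pm f$ (see Section \ref{subsec:indexPhase}), we obtain
\[ A(\nab {\hat \xi})^{(k,f)}_{(k,f')} = c_f \, |n_f \wedge n_{f'}|^2, \]
which is manifestly independent of $k$. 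Without loss of generality we may reduce to the case $k = 0$.

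Next, I would factor $A$ as a composition $\mathcal{L}_2 \circ \mathcal{L}_1$, where $\mathcal{L}_1 : \R^\F \to \SS$ is defined by $\mathcal{L}_1(x)^{jl} = \sum_f c_f x_f (\de^{jl} - n_f^j n_f^l)$ and $\mathcal{L}_2 : \SS \to \R^\F$ by $\mathcal{L}_2(T)_{f'} = T^{jl} (n_{f'})_j (n_{f'})_l$. A direct calculation using $|n_f \wedge n_{f'}|^2 = 1 - \langle n_f, n_{f'}\rangle^2$ and $\mathrm{tr}((\delta - q_f) q_{f'}) = 1 - \langle n_f, n_{f'}\rangle^2$ verifies the factorization. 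Hence the invertibility of $A$ reduces to invertibility of both factors, which is equivalent to each of the two sets $\{q_f := n_f \otimes n_f\}_{f \in \F}$ and $\{P_f := \de - q_f\}_{f \in \F}$ forming a basis of the six-dimensional space $\SS$.

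The main step, and the only one not amounting to bookkeeping, is to show that $\{q_f\}_{f \in \F}$ is linearly independent. By transitivity of the icosahedral action on $\F$ combined with a trace computation, one first establishes the identity $\sum_f q_f = 3 \de$, which already confirms nontriviality on the one-dimensional trivial summand of $\SS$. To see linear independence on the orthogonal five-dimensional traceless summand one may either invoke that it carries an irreducible representation of the icosahedral group, so an equivariant nonzero map from the corresponding isotypic component of $\R^\F$ into it must be injective, or simply check linear independence by explicit computation using the coordinates of $\F$ listed in Section \ref{subsec:indexPhase}. Granted that $\{q_f\}$ is a basis, the equivalence with $\{P_f\}$ being a basis follows from $\de = \fr{1}{2} \sum_f q_f$: any relation $\sum_f \alpha_f P_f = 0$ forces $(\sum_f \alpha_f) \de = \sum_f \alpha_f q_f$, and since $\{q_f\}$ is a basis this forces each $\alpha_f$ to equal the common value $\fr{1}{2} \sum_g \alpha_g$; summing this identity then collapses the common value to $0$. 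This yields invertibility of $A$. The main obstacle is the linear-independence step, for which the cleanest route is the representation-theoretic argument using icosahedral symmetry.
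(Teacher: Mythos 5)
Your proposal is correct in substance and its skeleton coincides with the paper's: factoring $A(\nab {\hat \xi})$ through $\SS$ as the pairing of the tensors $\de^{jl} - f^j f^l$ against the tensors $f^j f^l$ is precisely the paper's reduction of invertibility to the two basis statements of Lemma (\ref{lem:basisLem}), and deducing the second basis statement from the first via the identity (\ref{eq:icoMetric}) is also the paper's route (the independence of $k$ via the rotations $O_k$ is likewise the same observation). Where you genuinely differ is the proof of the key step, linear independence of $\{ f^j f^l \}_{f \in \F}$: the paper (in the appendix) averages a putative relation over the $5$-cycle stabilizing a face, which acts transitively on the remaining five projective faces, and then uses (\ref{eq:icoMetric}) together with a rank comparison to force all coefficients to vanish; you instead invoke Schur's lemma on the five-dimensional traceless summand (or explicit computation). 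That route works, but to run it you must also record that the permutation representation of the icosahedral group on the six face-pairs decomposes as trivial plus an irreducible five-dimensional piece isomorphic to the traceless part of $\SS$, and that your equivariant map is nonzero on that piece (e.g.\ because $f^jf^l - f'^jf'^l \neq 0$); neither is hard, but both are needed. You also omit the shortest argument, which the paper points out: at $t(I)$ every off-diagonal entry of $A$ equals $(4/5)^2$ and every diagonal entry is $0$, so $A$ is a positive multiple of $\mathbf{1}\mathbf{1}^{T} - \mathrm{Id}$, which is visibly invertible.

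Two small corrections. First, the constant in your identity is wrong: $\sum_{f \in \F} f^j f^l$ has trace $6$, so the identity reads $\sum_{f\in\F} f^jf^l = 2\de^{jl}$, not $3\de^{jl}$; your later use $\de^{jl} = \fr{1}{2}\sum_f f^jf^l$ is the correct form, and your derivation of the second basis statement from the first goes through with it. Second, transitivity of the icosahedral action on $\F$ plus a trace computation does not by itself give this identity: you need that the only icosahedrally invariant symmetric $(2,0)$ tensors are multiples of $\de^{jl}$, i.e.\ irreducibility of the three-dimensional representation, which is exactly what the paper's appendix establishes by character theory (alternatively, a direct computation with the golden-ratio coordinates of $F$ suffices). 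With those repairs your argument is complete.
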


It is clear by the construction of the data $\hat{\xi}_I$ by rotations and the formula (\ref{eq:theMatrx}) that the initial matrix $A_J^I = A(\nab {\hat \xi})_J^I$ is independent of $k$.  One can check invertibility by hand using the coefficients of the matrix $A(\nab {\hat \xi})_J^I$; in fact, one can show that all the entries in this matrix have the same value, except for the diagonal entries, which are all $0$.  Here we outline a slightly slower proof based on an analysis of the symmetries of the dodecahedron.  We include the details in the appendix Part (\ref{part:Appendix}).

Given that the numbers $\fr{|\nab \xi_I \wed \nab \xi_{\si I}|^2}{|\nab \xi_I|^2}$ are not zero, the matrix $A_J^I$ is invertible if and only if both of the following statements are true.
\begin{lem} \label{lem:basisLem}

$ $

\begin{enumerate}
 \item The tensors $\{ \pr^j \xi_I \pr^l \xi_I = f(I)^j f(I)^l ~|~ I \in \F \}$ form a basis for $\SS$ \label{en:basis1}
 \item The tensors $\{ \de^{jl} - \fr{\pr^j \xi_I \pr^l \xi_I}{|\nab \xi_I|^2} = \de^{jl} - f^j f^l~|~ I \in \F \}$ form a basis for $\SS$ \label{en:basis2}
\end{enumerate}
\end{lem}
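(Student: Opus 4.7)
The plan is to reduce both assertions to a single $6 \times 6$ Gram matrix computation in the Hilbert--Schmidt inner product $\langle A, B \rangle = A^{jl} B_{jl}$ on $\SS$. Since $\dim \SS = 6 = |\F|$, each assertion is equivalent to linear independence of the corresponding family of six tensors. I will first prove (\ref{en:basis1}) by computing the Gram matrix explicitly, and then deduce (\ref{en:basis2}) from (\ref{en:basis1}) in one step via a trace argument.

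For (\ref{en:basis1}), pick for each $I \in \F$ a representative $f(I) \in F$; the choice is irrelevant since
\[
M_{IJ} = \langle f(I) \otimes f(I), f(J) \otimes f(J) \rangle = (f(I) \cdot f(J))^2
\]
is invariant under $f \mapsto -f$. Using $\varphi^2 = \varphi + 1$, and hence $1 + \varphi^2 = 2 + \varphi = \sqrt{5}\,\varphi$, every $f \in F$ is a unit vector, so $M_{II} = 1$. A distinct pair of projective representatives has either its zero coordinate in the same slot, as in $(0,1,\varphi)\cdot(0,1,-\varphi) = 1 - \varphi^2 = -\varphi$, or in different slots, as in $(0,1,\varphi)\cdot(1,\varphi,0) = \varphi$; in both cases division by $1 + \varphi^2 = \sqrt{5}\,\varphi$ yields a normalized inner product of $\pm 1/\sqrt{5}$. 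The obvious symmetry of $F$ under cyclic permutation of coordinates and independent sign changes accounts for every remaining pair. Hence $M$ is the $6 \times 6$ matrix with $1$ on the diagonal and $1/5$ everywhere else; writing $M = \tfrac{4}{5} E + \tfrac{1}{5} J$ where $E$ is the identity and $J$ the all-ones matrix, the spectrum of $J$ being $\{6, 0, 0, 0, 0, 0\}$ gives $\mathrm{spec}(M) = \{2, \tfrac{4}{5}, \tfrac{4}{5}, \tfrac{4}{5}, \tfrac{4}{5}, \tfrac{4}{5}\}$. In particular $\det M \neq 0$, which proves (\ref{en:basis1}).

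For (\ref{en:basis2}), suppose $\sum_{I \in \F} c_I (\de^{jl} - f(I)^j f(I)^l) = 0$. Contracting with $\de_{jl}$ in dimension three and using $|f(I)|^2 = 1$ gives $3 \sum_I c_I = \sum_I c_I$, whence $\sum_I c_I = 0$. Substituting this back leaves $\sum_I c_I \, f(I)^j f(I)^l = 0$, and (\ref{en:basis1}) then forces every $c_I = 0$. The one genuinely computational step in the whole argument is the calculation of the off-diagonal entries of $M$; this reduces to routine manipulations with the golden ratio, and I do not foresee any substantive obstacle.
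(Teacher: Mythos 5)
Your proof is correct, but it follows a genuinely different route from the paper's. You prove claim (\ref{en:basis1}) by direct computation: using $\varphi^2 = \varphi + 1$ you evaluate every Hilbert--Schmidt inner product $\langle f \otimes f, f' \otimes f' \rangle = (f \cdot f')^2$, obtaining $1$ on the diagonal and $1/5$ off it, so the Gram matrix is $\fr{4}{5}E + \fr{1}{5}J$ with spectrum $\{2, \fr{4}{5}\}$ and is nonsingular; claim (\ref{en:basis2}) then follows by contracting a putative relation with $\de_{jl}$ to force $\sum_I c_I = 0$ and reducing to claim (\ref{en:basis1}). The paper explicitly declines this by-hand route and instead argues by symmetry: it first proves the invariant-form identity $\de^{jl} = \fr{1}{2}\sum_{f \in \F} f^j f^l$ via an $A_5$ character-theory computation, then obtains claim (\ref{en:basis1}) by averaging a hypothetical linear relation over the order-$5$ stabilizer of a face and extracting a rank contradiction from that identity, and it reduces claim (\ref{en:basis2}) to claim (\ref{en:basis1}) through the identity (an invertible transformation between the two families) rather than through your trace contraction. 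Your approach buys elementarity and self-containedness --- no representation theory, and the value $(f \cdot f')^2 = 1/5$, which is a separate lemma in the paper, drops out as a by-product. The paper's approach buys the identity $\de^{jl} = \fr{1}{2}\sum_{f} f^j f^l$ itself, which is reused later (e.g.\ to compute $|\nab {\hat \xi}_I \wed \nab {\hat \xi}_{\si I}|^2 = 4/5$ and the explicit value of ${\tilde \ga}_I$), so the symmetry argument does double duty, whereas your proof would leave that identity to be established separately (though it is implicit in your Gram matrix, since $\sum_f f \otimes f - 2\de$ is orthogonal to the spanning family). One small point to tidy: your appeal to cyclic permutations and sign changes to ``account for every remaining pair'' is true but deserves a line of justification, or can be replaced by the observation that two representatives with zeros in different slots share exactly one nonzero slot, where one entry is $\pm 1$ and the other $\pm \varphi$, so the unnormalized product is always $\pm\varphi$.
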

These conditions are equivalent to the invertibility of $A_J^I$ because the matrix coefficients of $A_J^I$ are computed by taking inner products of the latter set of vectors with the former.

It is possible to reduce Claim (\ref{en:basis2}) of Lemma (\ref{lem:basisLem}) to Claim (\ref{en:basis1}) of Lemma (\ref{lem:basisLem}) by using the following formula
\begin{lem}  
\begin{align} \label{eq:icoMetric}
\de^{jl} = \fr{1}{2} \sum_{f \in \F} f^j f^l
\end{align}
\end{lem}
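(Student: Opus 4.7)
The plan is to exploit the icosahedral symmetry of $\F$ together with Schur's lemma, reducing the identity to the computation of a single scalar (the trace).

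First, I would set $T^{jl} := \tfrac{1}{2} \sum_{f \in \F} f^j f^l$. Since the outer product $f^j f^l$ is invariant under $f \mapsto -f$, this is the same as $T^{jl} = \tfrac{1}{4}\sum_{f \in F} f^j f^l$, where $F$ is the full icosahedral vertex set (the twelve unit vectors listed just before the lemma). The tensor $T^{jl}$ is manifestly symmetric. The key observation is that the icosahedral group $G \subset O(3)$ permutes the set $F$, and hence preserves $T^{jl}$; that is, $O^j{}_a O^l{}_b T^{ab} = T^{jl}$ for every $O \in G$.

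Next, I would invoke the standard fact that the defining representation of the icosahedral group on $\R^3$ is irreducible (it has no nontrivial invariant subspace, since the only lines preserved by an order-$5$ rotation lie along its axis, and no such axis is preserved by an order-$3$ rotation through a vertex). By Schur's lemma, the space of $G$-invariant symmetric bilinear forms on $\R^3$ is one-dimensional, spanned by $\de^{jl}$. Therefore $T^{jl} = \la \de^{jl}$ for some constant $\la \in \R$.

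Finally, I would pin down $\la$ by taking the trace: $\la \cdot 3 = T^{jl}\de_{jl} = \tfrac{1}{2}\sum_{f \in \F} |f|^2 = \tfrac{1}{2} \cdot 6 = 3$, since $|\F| = 6$ and each $f$ is a unit vector. Hence $\la = 1$, which gives the desired identity $\de^{jl} = \tfrac{1}{2}\sum_{f \in \F} f^j f^l$. The only step that requires anything beyond trivial manipulation is the irreducibility claim; if one prefers to avoid representation theory entirely, a direct substitution of the six listed unit vectors (with $\varphi^2 = \varphi + 1$ used to simplify $1 + \varphi^2$) yields the diagonal entries $2$ and vanishing off-diagonal entries in a few lines, so this serves as an equally painless alternative.
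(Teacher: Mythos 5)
Your proof is correct, and the overall skeleton (invariance of $T^{jl}=\fr{1}{2}\sum_{f\in\F}f^jf^l$ under the icosahedral group, uniqueness of the invariant symmetric form up to scale, then fixing the constant by a trace) is the same as the paper's. Where you genuinely diverge is in how the one-dimensionality of the space of invariant forms is established: the paper computes $\dim \SS^{A_5}$ directly by character theory, averaging $\tx{tr Sym}^2 g$ over the conjugacy classes of $A_5$ with the plethysm formula and showing the result is strictly less than $2$; you instead invoke irreducibility of the standard $3$-dimensional representation together with Schur's lemma. Your route is shorter and avoids the character-table bookkeeping, at the cost of one small point worth making explicit: over $\R$, Schur's lemma by itself only says the commutant is a division algebra, so the clean way to finish your step is to note that $A^j{}_l=\de_{la}T^{ja}$ is a $G$-equivariant operator that is self-adjoint with respect to $\de$, hence diagonalizable over $\R$ with $G$-invariant eigenspaces, and real irreducibility then forces $A=\la\,\tx{Id}$ (your sketch of why there is no invariant line, plus orthogonal complements for planes, does give real irreducibility). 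The paper's character computation buys a statement that is independent of knowing the vector representation is irreducible and fits its broader symmetry discussion, while your argument (or the brute-force substitution you mention, which also works) is the more economical proof of this particular identity.
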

\noindent to write down an invertible transformation taking one basis to the other.  In fact, this form is, up to a constant, the unique bilinear form in $\SS$ invariant under the action of the icosahedral group.  This uniqueness property and Formula (\ref{eq:icoMetric}) both follow easily if we have already proven Claim (\ref{en:basis1}) of Lemma (\ref{lem:basisLem}).  Indeed, if $G^{jl}$ is any form invariant under the icosahedral group, $G^{jl}f_j f_l = C$ for just one $f \in \F$, then by rotating we have \[ G^{jl}f_j f_l = C \de^{jl}f_jf_l \] for all $f \in \F$.  Then we can determine $C = \fr{G^{jl}\de_{jl}}{3}$, which gives the identity (\ref{eq:icoMetric}) when we take $G^{jl}$ to be the form on the right hand side.

In our approach, we will use identity (\ref{eq:icoMetric}), which encodes the basic geometric properties of the dodecahedron, as a first step to prove the linear independence (\ref{en:basis1}).  This identity is certainly very well known, but for the benefit of the reader and for completeness of the argument, we include a proof in the appendix, where the Lemma (\ref{lem:basisLem}) is also proven.

\paragraph{Remark about exterior powers} 

It is interesting that the second exterior power appears in the formulas here for several reasons.  First, note that the second exterior power can be implicated in explaining why the approximation $V^j V^l + P_0 \de^{jl} + R^{jl} \approx 0$ can only be achieved in a weak topology.  Namely, it is not possible to approximate an arbitrary tensor field $G^{jl}$ by rank one tensors $V^j V^l$ in a strong topology because the form $\La^2 G : \La^2(\R^3)^* \times \La^2(\R^3)^*$ which acts according to the formula
\[ \La^2 G(u_1 \wed u_2, w_1\wed w_2) = \det \mat{cc}{ G(u_1, w_1) & G(u_1, w_2) \\ G(u_2, w_1) & G(u_2, w_2) } \]
cannot be approximated by $\La^2 V \otimes V = 0$, implying that almost everywhere convergence of $V^j V^l \to G^{jl}$ is not possible.  On the other hand, because the function $G \mapsto \La^2 G$ is nonlinear, it fails to be continuous with respect to weak convergence, and a weak approximation $V^j V^l \approx G^{jl}$ is possible when $G^{jl}$ is non-negative definite.    

The second exterior power also plays a role in our ability to control the High-High term with Beltrami flows.  There, the key identity is 
\[ \ep^{abc}\ep_{cfg} = \de^a_f \de^b_g - \de^a_g \de^b_f \]
which provides two different formulas for the inner product on $\La^2(\R^3)$.

Finally, the fact that a weak limit of solutions to Euler may fail to be an Euler flow is made possible by the fact that a sequence of rank one tensors $v^j v^l$ (which are characterized among non-negative, symmetric tensors by the equation $\La^2 G = 0$) may have a weak limit which fails to be rank one.  Thus, the success of our convex integration scheme, which implies that weak limits of Euler flows fail to be solutions, is tied in several ways to the second exterior power.

\subsubsection{The Renormalized Stress Equation in Scalar Form} \label{sec:lowBounds}

From the above discussion, we have established that it is possible to solve the linear system
\begin{align}
A^I_J x_I &= y_J \quad \quad I, J \in \F
\end{align}
when the matrix $A^I_J = A(\nab \xi)^I_J = A(\nab {\hat \xi})^I_J : \R^\F \to \R^\F$ is given by its initial value according to (\ref{eq:theMatrx}).

In general, we have to solve the nonlinear equation
\begin{align} \label{eq:solveForRho}
 \sum_{I \in k \times \F} A(\nab \xi)_J^I \rho_I^2 &= \fr{1}{2} (e \fr{\de^{jl}}{n} - {\mathring R}_\ep^{jl} ) \pr_j \xi_J \pr_l \xi_J \quad \quad J \in k \times \F
\end{align}

Moreover, in order to control the derivatives of $\rho_I$, we will also have to differentiate the equation (\ref{eq:solveForRho}), which implies that we must obtain lower bounds on the coefficients $\rho_I$ solving (\ref{eq:solveForRho}).

To see that (\ref{eq:solveForRho}) admits positive solutions $\rho_I^2$, we first rescale the unknowns $\rho_I = e^{1/2} \ga_I$ and the right hand side $G^{jl} = e ( \fr{\de^{jl}}{2n} + \varepsilon(R_\ep)^{jl} )$, to renormalize equation (\ref{eq:solveForRho}) into
\begin{align} \label{eq:solveForGa1}
 \sum_{I \in k \times \F} A(\nab \xi)_J^I \ga_I^2 &= (\fr{\de^{jl}}{2n} + \varepsilon(R_\ep)^{jl}) \pr_j \xi_J \pr_l \xi_J 
\end{align}
This equation is equivalent to the tensorial form
\begin{align} \label{eq:solveForGaTensor}
 \sum_{I \in k \times \F} \ga_I^2 \fr{|\nab \xi_I \wed \nab \xi_{\si I}|^2}{|\nab \xi_I|^2} ( \de^{jl} - \fr{\pr^j \xi_I \pr^l \xi_I}{|\nab \xi_I|^2} ) &= \fr{\de^{jl}}{2n} + \varepsilon(R_\ep)^{jl}
\end{align}

In order to control solutions to (\ref{eq:solveForGaTensor}), we view the equation as a perturbation of the system
\begin{align} \label{eq:approxGaTensor}
 \sum_{I \in k \times \F} {\tilde \ga}_I^2 \fr{|\nab {\hat \xi}_I \wed \nab {\hat \xi}_{\si I}|^2}{|\nab {\hat \xi}_I|^2} ( \de^{jl} - \fr{\pr^j {\hat \xi}_I \pr^l {\hat \xi}_I}{|\nab {\hat \xi}_I|^2} ) &= \fr{\de^{jl}}{2n}
\end{align}

The system (\ref{eq:approxGaTensor}) admits positive solutions ${\tilde \ga}_I^2$.  In fact, as we demonstrate in the appendix, the formula (\ref{eq:icoMetric}) together with a symmetry argument implies the identities
\begin{align}
|\nab {\hat \xi}_{(k,f)}|^2 &= |f|^2 = 1 \\
|\nab {\hat \xi}_{(k,f)} \wed \nab {\hat \xi}_{(k,\si f)} |^2 &= |f|^2 |\si f|^2 - (f \cdot \si f)^2 = \fr{4}{5}
\end{align}

Using these identities, we see that the equation (\ref{eq:approxGaTensor}), along with its equivalent scalar form
\begin{align} \label{eq:approxGaScalar}
\begin{split}
 \sum_{I \in k \times \F} A(\nab {\hat \xi})_J^I {\tilde \ga}_I^2 &= (\fr{\de^{jl}}{2n})\pr_j {\hat \xi}_J \pr_l {\hat \xi}_J \\
 &=  \fr{|\nab {\hat \xi}_J|^2}{6} = 1/6
\end{split}
\end{align}
is solved by any set of ${\tilde \ga}_I$ satisfying 
\begin{align}
{\tilde \ga}_I^2 &= \fr{5}{2^5 \cdot 3}
\end{align}
In particular, each component of the solution ${\tilde \ga}_I = \sqrt{\fr{5}{2^5 \cdot 3}}$ is nonzero.

We hope to treat the equation (\ref{eq:solveForGa1}) as a perturbation of the equation (\ref{eq:approxGaScalar}).  The first step in this direction is to show that equation (\ref{eq:approxGaScalar}) can be perturbed to one which still can be solved, and whose solutions remain bounded from $0$.  This preparatory step is accomplished by the following lemma:

\begin{lem}[The Implicit Function Lemma] \label{lem:impFuncLem}
There is a positive $c > 0$ and $a < 1$ such that the whenever
\begin{align} \label{eq:perturbDomain}
|| A^I_J - A(\nab {\hat \xi})^I_J || &\leq c \\
|| y_J - \fr{|\nab {\hat \xi}_J|^2}{6} || &\leq c
\end{align}
the equation
\begin{align} \label{eq:perturbEq}
\sum_I A^I_J \ga_I^2 &= y_J
\end{align}
has a unique solution in the ball $| \ga_I - {\tilde \ga}_I | \leq a |\tilde{\ga}_I|$.

The solution $\ga_I$ is represented by a function 
\begin{align}
\ga_I &= \ga_f(A, y) 
\end{align}
depends smoothly on $A^I_J$ and $y_J$ in the domain (\ref{eq:perturbDomain}) where the function $\ga_f$ is determined by the direction coordinate $f(I)$.  

Furthermore, the matrix $B^I_J = 2 A^I_J \ga_I$ obtained by linearizing the equation is invertible in this domain with uniform bounds
\begin{align}
|| A^{-1} || &\leq C \\
|| (\ga_I)^{-1} || &\leq C
\end{align}
\end{lem}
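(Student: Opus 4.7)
The plan is to deduce this from the classical implicit function theorem applied to the nonlinear map $G$ defined by
\[ G(\ga; A, y)_J = \sum_{I \in \F} A^I_J \ga_I^2 - y_J, \]
where $(A, y)$ plays the role of parameters and $\ga$ is the unknown. The distinguished base point is $A = A_0 := A(\nab \hat \xi)$, $y = y_0 := ( |\nab \hat \xi_J|^2 / 6 )_{J \in \F}$, and $\ga = \tilde \ga$, at which $G$ vanishes by the explicit computation in Section \ref{sec:lowBounds} with $\tilde \ga_I \equiv \sqrt{5/(2^5 \cdot 3)}$.

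The key step is to verify that the linearization of $G$ in $\ga$ at this base point is invertible. The Jacobian has matrix $M^I_J = 2 A(\nab \hat \xi)^I_J \tilde \ga_I$ in the natural basis. Since $\tilde \ga_I$ is a nonzero constant independent of $I$, this matrix differs from $A_0$ only by a global nonzero scalar factor, so its invertibility reduces to that of $A_0$, which is precisely the content of Lemma \ref{lem:canInvert}. This yields a quantitative bound $\| M^{-1} \| \leq C_0$ for some absolute constant. This is the sole nontrivial input of the proof; the dodecahedral algebra was already carried out in the preceding paragraphs, and everything else is a quantitative unpacking of a textbook theorem.

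Having this invertibility in hand, the classical implicit function theorem produces constants $c > 0$ and $a < 1$ such that for every $(A, y)$ with $\| A - A_0 \| \leq c$ and $\| y - y_0 \| \leq c$, the equation $G(\ga; A, y) = 0$ has a unique solution $\ga = \ga(A, y)$ in the ball $|\ga_I - \tilde \ga_I| \leq a |\tilde \ga_I|$ depending smoothly on $(A, y)$. Because the system is indexed only by the direction set $\F$ and the discrete chart index $k$ does not appear in $G$, each component of the solution is automatically expressible as a function $\ga_{f(I)}(A, y)$ of the direction coordinate alone, as required. Finally, after possibly shrinking $c$, a Neumann-series perturbation of $A_0$ gives $\| A^{-1} \| \leq 2 \| A_0^{-1} \|$ on this neighborhood; the ball constraint forces $|\ga_I| \geq (1-a) |\tilde \ga_I| > 0$ pointwise; and together these supply all the uniform bounds on inverses asserted in the statement.
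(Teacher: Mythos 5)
Your proposal is correct and follows essentially the same route as the paper: both apply the classical implicit function theorem to $F(\ga,A,y)_J=\sum_I A^I_J\ga_I^2-y_J$ at the base point $(\tilde\ga, A(\nab\hat\xi), |\nab\hat\xi_J|^2/6)$, with invertibility of the linearization reduced to Lemma (\ref{lem:canInvert}) together with the nonvanishing of $\tilde\ga_I$. The only cosmetic difference is that you exploit the constancy of $\tilde\ga_I$ to view the Jacobian as a scalar multiple of $A(\nab\hat\xi)$, whereas the paper argues via the null space using only that each $\tilde\ga_I\neq 0$; both are fine.
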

{\bf Remark.}  It is important to observe that the constant $c > 0$ is completely independent of the location $k$ of the phase directions.  This independence follows from the fact that the matrix $A(\nab {\hat \xi})_J^I$ and the right hand side $\fr{|{\hat \xi}_J|^2}{6}$ of the equation (\ref{eq:approxGaScalar}) which we perturb are independent of the location. 
\begin{proof}
By writing the equation (\ref{eq:perturbEq}) in the form
\begin{align}
F(\ga, A, y) &= 0 \\
F(\ga, A, y)_J &\equiv \sum_I A^I_J \ga_I^2 - y_J
\end{align}
we can deduce the lemma as an immediate consequence of the implicit function theorem once we verify that the matrix
\begin{align}
\pd{F_J}{\ga_I}(\ga, A, y) &= (A^I_J)(2 \ga_I) \\
\pd{F_J}{\ga_I} &: \R^\F \to \R^\F
\end{align}
is invertible at the point $(\ga_I, A^I_J, y_J) = ({\tilde \ga}_I, A(\nab {\hat \xi})^I_J, |\nab {\hat \xi}_J|^2/6)$.

To prove that the above matrix is one-to-one, it suffices to show that its null space is zero.  If $h_I$ is any element in the null space at this point, then using the fact that $A(\nab {\hat \xi})^I_J$ is invertible and that all the ${\tilde \ga}_I$ are nonzero, we deduce that
\begin{align}
\pd{F_J}{\ga_I}h_I = 2 A(\nab {\hat \xi})^I_J {\tilde \ga}_I h_I &= 0  \quad \quad \forall J \in \F\\
\Rightarrow {\tilde \ga}_I h_I &= 0 \quad \quad \forall I \in \F \\
\Rightarrow h_I &= 0 \quad \quad \forall I \in \F
\end{align}

The implicit function theorem then guarantees the existence of $c$ and $a$ as in the lemma, because it also guarantees that the matrix $\pd{F_J}{\ga_I}$ remains invertible in the domain (\ref{eq:perturbDomain}).
\end{proof}

The lemma (\ref{lem:impFuncLem}) applies to equation (\ref{eq:solveForGa1}) with
\begin{align}
 A_J^I &= A(\nab \xi)_J^I \\
 y_J &= (\fr{\de^{jl}}{2n} + \varepsilon(R_\ep)^{jl}) \pr_j \xi_J \pr_l \xi_J  \\
 &= \fr{|\nab {\hat \xi}|^2}{6} + \fr{1}{6} (|\nab \xi|^2 - |\nab {\hat \xi}|^2 ) + \varepsilon^{jl} \pr_j \xi_J \pr_l \xi_J 
\end{align}
provided that $K$ is sufficiently large and that $\tau$ is sufficiently small.

At this point, we can choose $K$ once and for all.  Let $c$ be the positive constant guaranteed by the Lemma (\ref{lem:impFuncLem}), and we pick \begin{align}
 K &= 7000 c^{-1} \label{eq:wePickedK}
 \end{align}
 so that
\begin{align}\label{eq:varepPerturb}
|\varepsilon| &\leq \fr{100}{K} \leq \fr{c}{70}
\end{align}

Having chosen $K$, we can prove the following theorem which restricts how small $\tau$ must be chosen
\begin{prop}\label{prop:needTauSmall}

  There exist constants $c > 0$ and $a > 0$ with $a < 1$ such that the equation (\ref{eq:solveForGa1}) admits a unique solution $\ga_I \in \R^\F$ in a neighborhood $\{ | \ga_I -  {\tilde \ga}_I | \leq a |{\tilde \ga}_I| \}$ provided that
\begin{align} \label{eq:tPerturbConditions}
|~|\nab \xi_I|^2 - |\nab {\hat \xi}_I|^2~| &\leq c \quad \quad \forall I \in F \\
|~|\nab(\xi_I + \xi_J)|^2 - |\nab ( {\hat \xi}_I + {\hat \xi}_J )|^2~| &\leq c \quad \quad \forall I, J \in F
\end{align}

The solution $\ga_I( \nab \xi_I, \varepsilon)$ depends smoothly on $\nab \xi_I \in \F$ and $\varepsilon$ for $\nab \xi_I$ in the domain (\ref{eq:tPerturbConditions}) and $\varepsilon$ in the domain (\ref{eq:varepPerturb}).
\end{prop}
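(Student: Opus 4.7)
The plan is to reduce the proposition to a direct application of the Implicit Function Lemma (\ref{lem:impFuncLem}). That is, I would take $A^I_J = A(\nab \xi)^I_J$ defined in (\ref{eq:theMatrx}) and the right-hand side
\[
y_J \;=\; \left( \fr{\de^{jl}}{2n} + \varepsilon(R_\ep)^{jl} \right) \pr_j \xi_J \pr_l \xi_J
\]
of the scalar form (\ref{eq:solveForGa1}), and show that both lie in the perturbation ball around $(A(\nab \hat{\xi})^I_J, |\nab \hat{\xi}_J|^2/6)$ required by that lemma, provided the constant $c$ in the hypotheses (\ref{eq:tPerturbConditions}) is chosen small enough. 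Once this is done, Lemma (\ref{lem:impFuncLem}) yields the unique solution $\ga_I$ in the ball $|\ga_I - \tilde{\ga}_I| \leq a|\tilde{\ga}_I|$ together with its smooth dependence on data.

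First I would verify continuity of $A(\nab \xi)^I_J$ in the gradients. Inspecting formula (\ref{eq:theMatrx}), the matrix entries are built from $|\nab \xi_I|^2$ and $|\nab \xi_I \wed \nab \xi_J|^2 = |\nab \xi_I|^2 |\nab \xi_J|^2 - (\nab \xi_I \cdot \nab \xi_J)^2$. The polarization identity
\[
\nab \xi_I \cdot \nab \xi_J \;=\; \tfrac{1}{2}\bigl( |\nab(\xi_I + \xi_J)|^2 - |\nab \xi_I|^2 - |\nab \xi_J|^2 \bigr)
\]
expresses every inner product (and every wedge-norm-squared, taking $J = \si I$ as well) as a smooth function of the two families of quantities controlled by (\ref{eq:tPerturbConditions}). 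Since $|\nab \hat{\xi}_I| = 1$ is bounded away from $0$, standard continuity of the rational expression (\ref{eq:theMatrx}) in these variables shows that for any prescribed $\eta > 0$ there is a $c > 0$ such that (\ref{eq:tPerturbConditions}) implies $\|A(\nab \xi)^I_J - A(\nab \hat{\xi})^I_J\| \leq \eta$. For the right-hand side I would split
\[
\left| y_J - \fr{|\nab \hat{\xi}_J|^2}{6} \right| \;\leq\; \fr{1}{6}\bigl| |\nab \xi_J|^2 - |\nab \hat{\xi}_J|^2 \bigr| \;+\; |\varepsilon|\, |\nab \xi_J|^2 \;\leq\; \fr{c}{6} + \fr{c}{70}(1 + c),
\]
using (\ref{eq:tPerturbConditions}) for the first term and the bound $|\varepsilon| \leq c/70$ from (\ref{eq:varepPerturb}), which is guaranteed by the earlier choice (\ref{eq:wePickedK}) of $K$. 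Thus both deviations can be made smaller than the tolerance appearing in Lemma (\ref{lem:impFuncLem}) by shrinking $c$ appropriately.

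Applying Lemma (\ref{lem:impFuncLem}) then produces the unique $\ga_I$ in the prescribed ball. The smooth dependence $\ga_I = \ga_f(A, y)$ supplied by that lemma, composed with the explicit smooth expressions for $A(\nab \xi)^I_J$ and $y_J$ in terms of $\{\nab \xi_I\}$ and $\varepsilon$, gives the asserted smooth dependence $\ga_I(\nab \xi_I, \varepsilon)$. The only real subtlety, and the main thing to keep in mind, is the matching of two distinct constants both called $c$: the threshold delivered by Lemma (\ref{lem:impFuncLem}) must be divided by the Lipschitz constant of the map $(\{|\nab \xi_I|^2\}, \{|\nab(\xi_I+\xi_J)|^2\}) \mapsto (A, y)$ in the relevant regime to produce the $c$ appearing in (\ref{eq:tPerturbConditions}). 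Crucially, the remark following Lemma (\ref{lem:impFuncLem}) ensures that the threshold there is independent of the chart $k$, and since the Lipschitz constant computed above is also $k$-independent (the formulas are intrinsic in the gradients), the resulting $c$ and $a$ can be chosen uniformly in $k$, as needed for the global construction.
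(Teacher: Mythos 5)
Your proposal is correct and follows essentially the same route as the paper: reduce to the Implicit Function Lemma (\ref{lem:impFuncLem}) by showing that the hypotheses (\ref{eq:tPerturbConditions}), via a polarization identity expressing the inner products $\nab \xi_I \cdot \nab \xi_J$ in terms of the controlled gradient energies, force $A(\nab \xi)^I_J$ and $y_J$ into the required perturbation ball, with the $\varepsilon$-contribution handled by the bound (\ref{eq:varepPerturb}) coming from the choice of $K$. The extra remarks on the Lipschitz constant and the $k$-independence only make explicit what the paper leaves implicit.
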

\begin{proof}
In the statement of the proposition, we have chosen to assume control over the deviation of the gradient energies 
\[ |~|\nab(\xi_I + \xi_J)|^2 - |\nab ( {\hat \xi}_I + {\hat \xi}_J )|^2~| \]
rather than the matrices $A(\nab \xi) - A(\nab {\hat \xi})$ because the entries of the matrix $A(\nab \xi)^I_J$ depend only on the inner products $\nab \xi_I \cdot \nab \xi_J$, which themselves can be calculated from the polarization identity 
\begin{align}
 |\nab (\xi_I + \xi_J)|^2 - |\nab (\xi_I - \xi_J)|^2~ &= |\nab (\xi_I + \xi_J)|^2 - |\nab (\xi_I + \xi_{\bar{J}})|^2 \\
 &= 2 \nab \xi_I \cdot \nab \xi_J 
\end{align}

The proposition then immediately follows from Lemma (\ref{lem:impFuncLem}) once we observe the bound
\begin{align*}
| y_J - \fr{| \nab {\hat \xi}_J|^2}{6} | &\leq \fr{|~ |\nab \xi_J|^2 - |\nab {\hat \xi}_J|^2 ~|}{6} + |\varepsilon| |\nab \xi_J|^2 \\
&\leq \fr{|~ |\nab \xi_J|^2 - |\nab {\hat \xi}_J|^2 ~|}{6} + \fr{c}{70} |\nab \xi_J|^2 \\
&\leq \fr{c}{70} |\nab {\hat \xi}_J|^2 + (\fr{1}{6} + \fr{c}{70}) |~ |\nab \xi_J|^2 - |\nab {\hat \xi}_J|^2 ~| \\
&= \fr{c}{70} + A |~ |\nab \xi_J|^2 - |\nab {\hat \xi}_J|^2 ~|
\end{align*}

\end{proof}
Thanks to the above proposition, we are allowed to differentiate the equation (\ref{eq:sqrtReal1}) in order to obtain upper bounds on the derivatives of $\rho_I$.

In carrying out the construction, we always require that $\tau$ is chosen small enough so that the Proposition (\ref{prop:needTauSmall}) applies.  Besides the requirement of Proposition (\ref{prop:needTauSmall}), we also require that the gradients $| \nab (\xi_I + \xi_J) | \geq c > 0$ remain bounded from $0$ by a constant.  

\subsubsection{Summary} \label{sec:Summary}

To summarize the results of the preceding sections, we have established the following:

\begin{prop}
There exist absolute constants $c > 0$ and $K > 0$ such that as long as 
\begin{align}
\co{ \nab \xi_I - \nab {\hat \xi}_I } &\leq c \quad \quad \forall I \in k \times F
\end{align}
and
\begin{align}
e(t) &\geq K |R_\ep(t,x)|
\end{align}
pointwise, then the local form (\ref{eq:sqrtAlmost}) of the Stress Equation (\ref{eq:theStressEq}) admits a unique solution of the form
\begin{align}
v_I &= a_I + i b_I \\
I &= (k,f) = (k_1, k_2, k_3, k_4, f) \\
b_I^l &= \eta\left(\fr{t - t(I)}{\tau} \right) \psi_k \ga_I \PP_I^\perp(\nab \xi_{\si I})^l \\
a_I &= - \fr{(\nab \xi_I)}{|\nab \xi_I|} \times b_I
\end{align}
where 
\begin{align}
\PP_I^\perp(\nab \xi_{\si I})^l &= \pr^l \xi_{\si I} - \fr{(\nab \xi_{\si I} \cdot \nab \xi_I)}{|\nab \xi_I|^2} \pr^l \xi_I
\end{align}
is the orthogonal projection of the phase gradient $\nab \xi_{\si I}$ into $\langle \nab \xi_I \rangle^\perp$ and the coefficients
\begin{align}
\ga_I &= \ga_I(\nab \xi_k, \varepsilon) \\
&= \ga_f(\nab \xi_k, \varepsilon ) \label{eq:gaIisgaFof}
\end{align}
depend on the phase gradients in region $k$ and the tensor
\begin{align}
\varepsilon^{jl} &= - \fr{{\mathring R}_\ep}{e(t)}
\end{align}
The smooth function $\ga_f$ in (\ref{eq:gaIisgaFof}) is one of $6$ smooth functions indexed by the direction coordinate $f(I) \in \F$ of $I = (k, f)$, which is independent of the location coordinate $k \in (\Z / 2 \Z)^3 \times \Z$.
\end{prop}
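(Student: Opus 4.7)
The plan is to show that this proposition is essentially a consolidation of Proposition \ref{prop:needTauSmall} together with the explicit gauge choice $\mathring{b}_I = \mathbb{P}_I^\perp(\nabla \xi_{\sigma I})$ made in Section \ref{sec:lowBounds}, so most of the work has already been done. First I would fix the constant $K$ as in equation (\ref{eq:wePickedK}), namely $K = 7000 c_0^{-1}$ where $c_0 > 0$ is the perturbation radius provided by Lemma \ref{lem:impFuncLem}; this choice guarantees that $|\varepsilon| = |\mathring R_\ep|/(2e) \leq 100/K \leq c_0/70$, placing us in the $\varepsilon$-domain of that lemma.

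Next I would translate the hypothesis $\|\nabla \xi_I - \nabla \hat{\xi}_I\|_{C^0} \leq c$ into the hypotheses (\ref{eq:tPerturbConditions}) of Proposition \ref{prop:needTauSmall}. Using $|\nabla \hat{\xi}_I| = 1$ and the polarization identity
\begin{align*}
|\nabla \xi_I|^2 - |\nabla \hat{\xi}_I|^2 &= (\nabla \xi_I + \nabla \hat{\xi}_I) \cdot (\nabla \xi_I - \nabla \hat{\xi}_I),
\end{align*}
one gets $||\nabla \xi_I|^2 - |\nabla \hat{\xi}_I|^2| \leq (2+c) c$, and the same argument applied to $\xi_I + \xi_J$ versus $\hat{\xi}_I + \hat{\xi}_J$ gives control of the second expression in (\ref{eq:tPerturbConditions}). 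Therefore choosing $c$ small enough (depending only on the constant in Proposition \ref{prop:needTauSmall}) reduces the current hypotheses to those of that proposition. This yields a unique $\gamma_I = \gamma_f(\nabla \xi_k, \varepsilon)$ in the perturbative neighborhood of $\tilde{\gamma}_I = \sqrt{5/(2^5 \cdot 3)}$ solving the renormalized stress equation (\ref{eq:solveForGa1}) on $\Om_k$, with $\gamma_I$ bounded uniformly away from $0$.

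With $\gamma_I$ in hand, I would define $\rho_I = e^{1/2} \gamma_I$ and set $\mathring{b}_I = \mathbb{P}_I^\perp(\nabla \xi_{\sigma I})$, then assemble the amplitudes in the form dictated by the proposition statement:
\begin{align*}
b_I^l &= \eta((t-t(I))/\tau)\,\psi_k\,\rho_I\,\mathring{b}_I^l, \qquad a_I = -\frac{\nabla \xi_I}{|\nabla \xi_I|} \times b_I.
\end{align*}
The calculation in Section \ref{sec:localizeStress} shows that gluing the local solutions via the partition of unity $\sum_{k_4} \eta_{k_4}^2 \sum_\kappa \psi_{(\kappa,k_4)}^2 = 1$ produces a solution to the full Stress Equation (\ref{eq:theStressEq}), and the identity (\ref{eq:wedSq}) plus the rescaling $\rho_I^2 = e \gamma_I^2$ confirms that the local scalar equation (\ref{eq:sqrtReal1}) is the same as (\ref{eq:solveForGa1}) after factoring out $e(t)$.

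The uniqueness claim then reduces to two observations. First, the gauge freedom in the choice of a section of $\langle \nabla \xi_I \rangle^\perp$ is eliminated by prescribing $\mathring{b}_I = \mathbb{P}_I^\perp(\nabla \xi_{\sigma I})$, so only the scalar coefficients $\rho_I$ remain undetermined; second, Lemma \ref{lem:impFuncLem} provides uniqueness of $\gamma_I$ in the stated ball. The only place where genuine verification is needed (as opposed to quoting earlier results) is the translation in the second paragraph from gradient-difference bounds to energy-difference bounds, and checking that the constant $c$ can be chosen independently of the chart index $k$; this latter uniformity is precisely what was noted in the remark after Lemma \ref{lem:impFuncLem}, since the reference matrix $A(\nabla \hat{\xi})^I_J$ and right-hand side $|\nabla \hat{\xi}_J|^2/6$ do not depend on $k$.
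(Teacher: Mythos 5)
Your proposal is correct and follows essentially the same route as the paper: the proposition is stated there precisely as a summary of the preceding sections, so its proof is exactly the assembly you describe --- the choice of $K$ from (\ref{eq:wePickedK}), the reduction of the gradient hypothesis to (\ref{eq:tPerturbConditions}) feeding into Lemma (\ref{lem:impFuncLem}) and Proposition (\ref{prop:needTauSmall}), the gauge choice $\mathring{b}_I = \PP_I^\perp(\nab \xi_{\si I})$, and the partition-of-unity gluing of Section (\ref{sec:localizeStress}). Your added polarization step making the passage from $\co{\nab\xi_I - \nab\hat\xi_I}\leq c$ to the energy-difference bounds explicit, and the observation that uniformity in $k$ comes from the $k$-independence of $A(\nab\hat\xi)^I_J$, are exactly the right points to verify.
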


For reference, the functions $\eta$ and $\psi_k$ are elements of partitions of unity which are explained in Section (\ref{sec:localizeStress}).  The phase functions $\xi_I$ are only defined on only a portion $\Om_I$ of $\R \times \T^3$ defined in (\ref{subsec:indexPhase}) on which they can live without critical points, and they satisfy the transport equation 
\begin{align*}
(\pr_t + v_\ep^j \pr_j) \xi_I &= 0 \\
\xi_I(t(I), x) &= {\hat \xi}_I(x)
\end{align*}
for initial data ${\hat \xi}_I$ at time $t(I) = \tau k_4$ specified in Section (\ref{sec:thePhaseFunctions}).


\part{Obtaining Solutions from the Construction} \label{obtainSolutions}
\section{Constructing Continuous Solutions}
We will now show how the preceding construction, combined with a few estimates from Part (\ref{hardPart}), can be used to prove Lemma (\ref{lem:mainLemCts}).

\begin{proof}[Proof of Lemma (\ref{lem:mainLemCts})]
Let $\ep > 0$ and $(v, p, R)$ be given, and let $e(t)$ be the function specified by the lemma.

We define $K$ to be the constant chosen in Line (\ref{eq:wePickedK}).

We describe the argument as a sequence of $6$ steps.

\subsection{Step 1: Mollifying the velocity} \label{sec:ctsMollV}

The error term generated by mollifying $v$ has the form
\begin{align}
Q_v^{jl} &= (v^j - v_\ep^j) V^l + V^j (v^l - v_\ep^l) \\
&= 2 [( v - v_\ep) V]^{jl}
\end{align}
where $V$ is a {\bf locally finite} sum
\begin{align}
V &= \sum_I V_I \\
V_I &= e^{i \la \xi_I} (v_I + \fr{\nab\times w_I}{\la} )
\end{align}
whose cardinality at each point $(t,x)$ is bounded by an absolute constant.  Namely, at each time $t$ there are at most two generations $k_4$ for which $V_I$ are nonzero, and for each generation $k_4$, there are at most $8$ families $(k_1, k_2, k_3, k_4)$, each having at most $|F| = 2 \cdot 6$ phase functions, which are nonzero.

We choose $\la$ at the end of the argument in order to shrink the term
\begin{align}
Q_{v, (ii)}^{jl} &= \sum_I 2 e^{i \la \xi_I} \fr{[(v - v_\ep)\nab\times w_I]^{jl}}{\la} \label{eq:waitingForLa}
\end{align}
since the bounds for $\nab \times w_I$ depend on $v_\ep$.

For $v_\ep$, we choose a space-time mollification $v_\ep = \eta_{\ep_{t,x}} \ast v$ close enough so that we can guarantee that for the main term
\begin{align}
Q_{v,(i)} &= \sum_I 2[(v - v_\ep)(e^{i \la \xi_I} v_I )]
\end{align}
we have
\begin{align}
\co{ Q_{v,(i)} } &\leq \co{ (v - v_\ep)} \co{\sum_I v_I } \\
&\unlhd \fr{\ep}{40} \label{eq:weCanDoWithVMoll}
\end{align}

Logically speaking, the term $v_I$ is not defined until $v_\ep$ has been constructed, however, we can give a priori bounds for its size since we know that it will have the form 
\[ v_I = a_I + i b_I, \] where
\begin{align}
b_I^l &= \eta\left(\fr{t - t(I)}{\tau}\right) \psi_k(t,x) e^{1/2}(t) \ga_I(\nab \xi_k, \fr{R_\ep}{e}) P_I^\perp(\nab \xi_{\si I})^l \label{eq:bIlooksLikeCts}
\end{align}
and $a_I = - \fr{(\nab \xi_I)\times}{|\nab \xi_I|} b_I $ is obtained by a $\pi/2$ rotation of $b_I$ in $\langle \nab \xi_I \rangle^\perp$.

Until $v_\ep$ is constructed, no bounds for derivatives of $v_I$ can be stated, but it is clear that each $v_I$ {\bf will} be bounded in absolute value by 
\begin{align}
\co{v_I } &\unlhd 2000 K^{1/2} e_R^{1/2} \label{eq:willHaveForvI}
\end{align}
after it is constructed, as we have assumed an upper bound (\ref{eq:upBdet}) for $e^{1/2}(t)$.

Therefore the locally finite sum
\begin{align}
\co{\sum_ I e^{i \la \xi_I} v_I } &\unlhd A e_R^{1/2}
\end{align}
will also be bounded, a priori, since the cardinality of the terms which are nonzero at any given $(t,x)$ is also bounded.

This estimate can only be confirmed when the lifespan parameter $\tau$ is chosen, but anticipating the above estimate, we can choose the mollifier $\eta_{\ep_{t,x}}$ for $v$ so that (\ref{eq:weCanDoWithVMoll}) will be a consequence of (\ref{eq:willHaveForvI}) once (\ref{eq:willHaveForvI}) has been proven.

\subsection{Step 2: Mollifying the Stress}

Let $R_\ep = \eta_{\ep_{t,x}} \ast R$ be a mollification of $R$ in the variables $(t, x)$ so that
\begin{itemize}
\item $\mbox{ supp } R_\ep \subseteq I' \times \T^3$ where $I'$ is an interval on which $e(t) \geq K e_R$.  
\item \[ \co{ R - R_\ep } \leq \fr{\ep}{100} \]
\end{itemize} 

\subsection{Step 3: Choosing the Lifespan}

Since we have assumed that the velocity $v$ is uniform bounded, we can set
\[\th^{-1} \geq \ep_v^{-1} \co{v} \] 
be an upper bound\footnote{If we remove the assumption of uniform boundedness, it is still possible to estimate the derivatives of $v_\ep$ in terms of only the modulus of continuity of $v$.  This task can be accomplished by using the expression
\begin{align*}
 \pr_i \eta_\ep \ast v^l &= \int v^l(x + h) \pr_i \eta_\ep(h) dh \\
&= \int ( v^l(x+h) - v^l(x)) \pr_i \eta_\ep(h) dh
\end{align*}
Later on during the construction of H{\" o}lder continuous solutions, the choice of $\tau$ will turn out to be completely independent of $\co{v}$.  }
 for $\co{ \nab v_\ep }$.  Then set
\begin{align}
\tau &= b \th
\end{align}
for some parameter $b$.  According to the bounds for transport equations proven in Section (\ref{transportEstimates}), we have that
\begin{align}
\co{ \nab \xi_I - \nab {\hat \xi}_I } &\leq A b
\end{align}
for some constant $A$.  We will therefore choose $b$ small enough, so that, first of all, the conditions in Line (\ref{eq:tPerturbConditions}) are guaranteed.  Our second goal for choosing a small parameter $\tau$ is to ensure that the first term in the parametrix for the High-High term is controlled.

Namely, we should be able to make the $C^0$ norm of
\begin{align}
{\tilde Q}_{H,(1)}^{jl} &= \sum_{J \neq {\bar I} } e^{i \la (\xi_I + \xi_J)} i^{-1} q_{H, IJ, (1)}^{jl}
\end{align}
smaller than
\begin{align}
\co{ {\tilde Q}_{H,(1)}^{jl} } &\unlhd \fr{\ep}{500}. \label{eq:getRidOfFirstHighTerm}
\end{align}
  Here, $q_{H, IJ, (1)}^{jl}$ is the solution to the linear equation
\begin{align}
\pr_j \xi_I q_{H, IJ, (1)}^{jl} &= \left( v_J \times ([ |\nab \xi_I| - 1 ] v_I )^l + v_I \times (  [ |\nab \xi_J| - 1] v_J )^l \right) \label{eq:firstPmtrxHigh}
\end{align}
described in Section (\ref{sec:theParametrix}).  Although the $v_I$ and $v_J$ are not defined until $\tau$ is chosen, as long as $\tau$ satisfies the conditions (\ref{eq:tPerturbConditions}), which we have already guaranteed, we know that 
\begin{align}
 \co{ v_J \times ([ |\nab \xi_I| - 1 ] v_I ) } &\leq A e_R \co{  |\nab \xi_I| - 1 }
\end{align}
which will be bounded by another constant
\begin{align}
\co{ v_J \times ([ |\nab \xi_I| - 1 ] v_I ) } &\leq A' e_R b 
\end{align}
according to the bounds in Section (\ref{transportEstimates}).

Therefore, it is possible to choose $b$ such that (\ref{eq:getRidOfFirstHighTerm}) will be guaranteed by the construction.  Note that $\tau$ only depends on the given $v$ and $e_R$.


\subsection{Step 4: Bounds for the new Stress}

All the remaining terms in the new stress $R_1$ are $O(1/\la)$ in $C^0$ once the above parameters have been chosen.

For example, the term (\ref{eq:waitingForLa}) and the Stress term
\begin{align}
Q_S^{jl} &= \sum_I V_I^j{\bar V}_I^l + P_0 \de^{jl} + R_\ep^{jl} \\
&= \sum_I \fr{(\nab \times w_I)^j{\bar v}_I^l }{\la} + \sum_I \fr{v_I^j(\nab \times {\bar w}_I)^l }{\la} \\
&+  \sum_I \fr{(\nab \times w_I)^j(\nab \times {\bar w}_I)^l }{\la^2}
\end{align}
is $O(\la^{-1})$ in $C^0$.

The remaining terms in the stress require us to solve the divergence equation.  They include, for example the High-Low term
\begin{align}
\pr_j Q_L^{jl} &= \sum_I e^{i \la \xi_I} {\tilde v}_I^j \pr_j v_\ep^l \\
&= e^{i \la \xi_I} u_L^l
\end{align}
Using the parametrix in Section (\ref{sec:theParametrix}), which requires taking spatial derivatives of the phase gradients.  In Section (\ref{transportEstimates}) we show that for $\tau$ as chosen above each derivative of $\nab \xi$ costs, essentially, a factor of $C \ep_v^{-1}$.  Therefore the vector field $u_L$, its derivatives, and the derivatives of the phase functions can be bounded uniformly in terms of $\ep_v$ and $v$.  We have also guaranteed bounds for $\co{ |\nab \xi_I|^{-1} }$ by the choice of $\tau$.

Applying the Lemma (\ref{prop:canSolveDivWithLa}) with sufficiently large $\la$, there exists a solution of size 
\begin{align}
\co{ Q_L } &\leq C_{v, R} \la^{-1}
\end{align}

The transport term 
\begin{align}
\pr_j Q_T^{jl} &= \sum_I e^{i \la \xi_I}[ (\pr_t + v_\ep^j \pr_j) {\tilde v}_I^l ]
\end{align}
likewise has a solution of size
\begin{align}
\co{ Q_T^{jl} } &\leq C_{v_\ep, R_\ep} \la^{-1} 
\end{align}
by Lemma (\ref{prop:canSolveDivWithLa}).

Finally, what remains of the High-High term after the first iteration of the parametrix must solve the equation
\begin{align}
\pr_j Q_{H, (2)}^{jl} &= \sum_{J \neq {\bar I} } -e^{i \la(\xi_I + \xi_J)} i^{-1} \pr_j q_{H, IJ, (1)}^{jl}
\end{align}
where $q_{H, IJ, (1)}^{jl}$ is defined as in (\ref{eq:firstPmtrxHigh}).  This equation also has a solution of size  
\begin{align}
\co{ Q_H } &\leq C_{v_\ep, R_\ep} \la^{-1}
\end{align}
by Lemma (\ref{prop:canSolveDivWithLa}), where here we apply our bounds on third derivatives $\nab^3 \xi$ from Section (\ref{transportEstimates}).

We demand that $\la$ be at least large enough so that the sum of the bounds for these solutions is smaller than $\fr{\ep}{3}$ in $C^0$.  The last task that remains is to show that the energy increment can be controlled precisely.

\subsection{Step 5: Bounds for the Corrections} 

Recall that 
\begin{align} 
 V_I &= e^{i \la \xi_I}( v_I + \fr{\nab \times w_I}{\la}) \label{eq:whatVIis} \\
 &= e^{i \la \xi_I}{\tilde v}_I
\end{align}
We have already observed that 
\begin{align}
\co{ v_I } &\leq A e_R^{1/2}
\end{align}
so to prove that $\co{V_I}$ also satisfies this bound (with a slightly larger constant), it suffices to choose $\la$ large enough.

Similarly, the correction to the pressure takes the form
\begin{align}
P &= P_0 + \sum_{J \neq {\bar I} } P_{I, J} \\
P_0 &= - \fr{e(t)}{3} + \fr{R_\ep^{jl}\de_{jl}}{3} \\
P_{I,J} &= - \fr{V_I \cdot V_J}{2} \label{eq:formForPIJcts} \\
&= - \fr{1}{2} e^{i \la (\xi_I + \xi_J) } {\tilde v}_I \cdot {\tilde v}_J \\
\end{align}
so that the when $\la$ is at least as large as above, we also have
\begin{align}
\co{ P } &\leq C e_R
\end{align}

Thus we have established the bounds for the corrections.

\subsection{Step 6: Control of the Energy Increment}

Consider the energy increment
\begin{align}
\int[ |v_1|^2(t,x) - |v|^2(t,x)]  dx &= \int |V|^2(t,x) dx + 2 \int V \cdot v dx 
\end{align}

For the second term, we expect that because $V$ is highly oscillatory, it can be made orthogonal to $v$.  To prove this fact, let $v_{S}$ by any smooth vector field such that
\begin{align}
C e_R^{1/2} \left\| \int | v - v_S|(t,x) dx \right\|_{C_t^0} &\leq \fr{\ep}{3} 
\end{align}
where $C e_R^{1/2}$ is the upper bound for $\| V \|_{C^0}$ established after equation (\ref{eq:whatVIis}).

Now we estimate the remaining term
\begin{align}
\int V \cdot v_S dx &= \int \nab \times W \cdot v_S dx \\
&= \int W \cdot \nab \times v_S dx \\
&= O(\la^{-1})
\end{align} 

It remains to bound
\begin{align}
\left\| \int |V|^2(t,x) dx - \int e(t) dx \right\|_{C^0_t} &\unlhd \fr{\ep}{3}
\end{align}

In order to achieve this approximation, we compute
\begin{align}
\int |V|^2(t,x) dx &= \int V^j V^l \de_{jl} dx \\
&= \int \sum_{I, J} V_I^j V_J^l \de_{jl} dx \\
&= \sum_I \int V_I^j {\bar V}_I^l \de_{jl} dx + \sum_{J \neq {\bar I} } \int V_I \cdot V_J dx \\
&=  \sum_I \int {\tilde v}_I^j {\overline {\tilde v}}_I^l \de_{jl} dx + \sum_{J \neq {\bar I} } \int e^{i \la (\xi_I + \xi_J)} {\tilde v}_I \cdot {\tilde v}_J dx \\
&= \int \left( \sum_I v_I^j {\bar v}_I^l \right) \de_{jl} dx + \sum_{J \neq {\bar I} } \int e^{i \la (\xi_I + \xi_J)} {\tilde v}_I \cdot {\tilde v}_J dx \\
\begin{split}
&+ \sum_I \int \fr{(\nab \times w_I)}{\la} \cdot{\bar v}_I dx  + \sum_I \int \fr{(\nab \times {\bar w}_I)}{\la} \cdot v_I dx \\
&+ \sum_I \int \fr{(\nab \times w_I)}{\la} \cdot \fr{(\nab \times w_I)}{\la} dx \end{split}\\
&= \int e(t) dx + \sum_{J \neq {\bar I} } \int e^{i \la (\xi_I + \xi_J)} {\tilde v}_I \cdot {\tilde v}_J dx + O(\la^{-1})
\end{align}
where the $O(\la^{-1})$ bound on the error term is uniform in time.

To bound the error term, we integrate by parts
\begin{align}
\int e^{i \la (\xi_I + \xi_J)} {\tilde v}_I \cdot {\tilde v}_J dx
&= \int \left[ \fr{\pr^a(\xi_I + \xi_J)}{\la |\nab(\xi_I + \xi_J)|^2} \pr_a[e^{i \la (\xi_I + \xi_J)}] \right] {\tilde v}_I \cdot {\tilde v}_J dx \\
&= \fr{-1}{\la} \int e^{i \la (\xi_I + \xi_J)} \pr_a \left[ \fr{\pr^a(\xi_I + \xi_J)}{|\nab(\xi_I + \xi_J)|^2} {\tilde v}_I \cdot {\tilde v}_J \right] dx 
\end{align}

Taking $\la$ large concludes the proof of Lemma (\ref{lem:mainLemCts}).
\end{proof}

\section{Frequency and Energy Levels}
Now that we have constructed continuous solutions, let us discuss how to measure the H{\" o}lder regularity of the solutions we construct when the scheme is executed more carefully.  For this aspect of the convex integration scheme, our biggest contribution is a notion of {\bf frequency and energy levels}.  This notion is meant to accurately record the bounds which apply to the $(v, p, R)$ coming from the previous stage of the construction.  This notion is based on the paper \cite{deLSzeC1iso}, but differs from that paper in that here there is a gap between the estimates for $\nab v$ and $\nab R$ which must be recorded, and the definition below also records higher derivatives.

Here is an example of a candidate definition for frequency and energy levels, although it is not the one that will be used in our paper.

\begin{defn}[Practice Frequency Energy Levels] \label{def:subSolDefPr}
Let $L \geq 1$ be a fixed integer.  Let $\Xi \geq 2$, and let $e_v$ and $e_R$ be positive numbers with $e_R \leq e_v$.  Let $(v, p, R)$ be a solution to the Euler-Reynolds system.  We say that the practice frequency and energy levels of $(v,p,R)$ are below $(\Xi, e_v, e_R)$ (to order $L$ in $C^0 = C^0_{t,x}(\T^3 \times \R)$) if the following estimates hold.
\begin{align}
|| \nab^k v ||_{C^0} &\leq \Xi^k e_v^{1/2} &k = 1, \ldots, L \label{bound:nabkvPr} \\
|| \nab^k p ||_{C^0} &\leq \Xi^k e_v &k = 1, \ldots, L \label{bound:nabkpPr} \\
|| \nab^k R ||_{C^0} &\leq \Xi^k e_R &k = 0, \ldots, L \label{bound:nabkRPr}
\end{align}
Here $\nab$ refers only to derivatives in the spatial variables.
\end{defn}

The idea of these bounds is that at stage $k$, $e_{v,(k)}$ is the size ($e_{R,(k-1)}$) of the stress $R_{(k-1)}$ coming from the previous stage, and that $\Xi_{(k)}$ is essentially the parameter $\la_{(k - 1)}$ chosen in the previous stage.  Then the bounds (\ref{bound:nabkvPr}) are consistent with the heuristic representation
\[V_{(k-1)} \approx e^{i \Xi_{(k)} x } |R_{(k-1)}|^{1/2} \]
since $V_{(k-1)}$ gives the dominant contribution to the derivatives of $v_{(k)} = v_{(k-1)} + V_{(k-1)}$.

Based on a definition of frequency and energy levels such as Definition (\ref{def:subSolDefPr}) we can summarize the effect of one iteration of the convex integration procedure in a single lemma, which has roughly the following form
\begin{samLem} \label{samLem:practice}
For every solution $(v, p, R)$ with frequency and energy levels below
\[ (\Xi, e_v, e_R) \]
and every 
\begin{align}
 N &\geq \left( \fr{e_v}{e_R} \right) \label{eq:practNCond}
\end{align}
there exists a solution $(v_1, p_1, R_1)$ to the Euler-Reynolds equations with new frequency and energy levels below 
\begin{align}
(\Xi', e_v', e_R') &= ( N \Xi, e_R, e_R' ) 
\end{align}
such that $v_1 = v + V$, $p_1 = p + P$, and
\begin{align}
\co{ V } &\leq C e_R^{1/2} \label{boundForVNeqPr} \\
\co{\nab  V } &\leq C N \Xi e_R^{1/2} \label{boundForVNeqPr2} \\
\co{ P } &\leq C e_R \\
\co{ \nab P } &\leq C N \Xi e_R
\end{align}

\end{samLem}

The idea of the above outline is that 
\[ N \Xi \approx \la \] 
will be essentially the chosen value for $\la$.  The bounds (\ref{boundForVNeqPr}), (\ref{boundForVNeqPr2}) for the new energy and frequency levels then follow from the heuristic representation
\[ V \approx e^{i (N \Xi) x } |R|^{1/2} \]

  The efficiency of the convex integration process is measured by how small one can make $e_R'$ if one is only allowed to grow the frequency by a factor $N$.

The condition (\ref{eq:practNCond}) ensures, for example, that the largest term in 
\begin{align*}
\co{ \nab v_1 } &= \co{ \nab v }  + \co{ \nab V } \\
&\leq C ( \Xi e_v^{1/2} + N \Xi e^{1/2}_R ) \\
&\leq C N \Xi e_R^{1/2} 
\end{align*}
is the one coming from the high-frequency $V$, and similarly for $P$
\begin{align*}
\co{ \nab p_1 } &= \co{ \nab p }  + \co{ \nab P } \\
&\leq C ( \Xi e_v + N \Xi e_R ) \\
&\leq C N \Xi e_R 
\end{align*}
Of course, to be useful, the definition (\ref{def:subSolDefPr}) should be modified to contain information about time derivatives as well, and at the moment it is not clear that bounds for the pressure would be relevant given the passive role played by the pressure so far.

In terms of this rough notion of frequency and energy levels, let us see what kind of rate $e_R'$ we can expect for the reduced stress in the present problem.

First consider the High-Low interaction term
\begin{align}
\pr_j Q_L^{jl} &=\sum_I e^{i \la \xi_I} {\tilde v}_I^j \pr_j v_\ep^l
\end{align}
According to Definition (\ref{def:subSolDefPr}) and the expected bound $\co{ {\tilde v}_I } \leq A e_R^{1/2}$, the right hand side has size
\begin{align}
\co{ {\tilde v}_I^j \pr_j v_\ep^l } &\leq e_R^{1/2} ( \Xi e_v^{1/2} )
\end{align}
On the other hand, the parametrix gains a factor $\la^{-1} \approx N^{-1} \Xi^{-1}$, so we expect a solution
\begin{align}
\co{ Q_L } &\leq \fr{e_v^{1/2} e_R^{1/2}}{N} 
\end{align}
If we could actually prove a theorem such as Sample Lemma (\ref{samLem:practice}) with 
\begin{align}
 e_R' &= \fr{e_v^{1/2} e_R^{1/2} }{N} \label{eq:idealEr} 
\end{align}
then our construction could achieve solutions in the class
\begin{align} 
v \in C^{1/3 - \ep} \\
p \in C^{2(1/3 - \ep)} 
\end{align} 
which have the regularity conjectured by Onsager.  To apply the lemma, we choose $N_{(k)}$ at each stage so that the energy levels decay just slightly super-exponentially with a self-similarity Ansatz
\begin{align}
e_{R,(k+1)} &= \fr{e_{R,(k)}^{1 + \de} }{Z} \label{eq:theAnsatz}\\
Z &= \mbox{ constant } 
\end{align}
Letting $\de \to 0$ causes the gaps in the frequency spectrum between consecutive stages of the construction to be small, and allows the regularity to increase up to $1/3$ if the rate (\ref{eq:idealEr}) were to hold.  This ideal case is explained further in Section (\ref{sec:onOnsag}).  The above considerations mirror the analysis in the introduction to \cite{deLSzeHoldCts}.

However, before we can come close to the ideal rate of (\ref{eq:idealEr}), we must significantly change the Definition (\ref{def:subSolDefPr}).  To see why a drastic change is necessary, consider the contribution to the new stress which arises from the transport term
\begin{align}
\pr_j Q_T^{jl} &\approx \sum_I e^{i \la \xi_I} (\pr_t + v_\ep^j \pr_j)v_I^l \label{eq:aTransportExample}
\end{align}

The $v_I$ itself has amplitude $e_R^{1/2}$ since it is constructed from solving the Stress equation.  Formally, we can think of $v_I \approx R^{1/2}$.  The derivative in $\pr_j v_I^l$ costs a factor of $\Xi$, and the factor
\[ v_\ep^j \approx 1 \]
is bounded by some constant, giving the second term in (\ref{eq:aTransportExample}) size $\Xi e_R^{1/2}$.  Then, solving the divergence equation gains a factor of $(N \Xi)^{-1}$, leading to an expected bound for the new stress
\begin{align}
\co{ Q_T } &\leq \fr{e_R^{1/2}}{N} \\
\Rightarrow e_R' \geq \fr{e_R^{1/2}}{N} \label{eq:uhOh}
\end{align}
which is a factor $e_v^{-1/2}$ worse than the ideal case (\ref{eq:idealEr}), and which is also inconsistent with dimensional analysis.

In order to calculate the regularity achieved from applying the Lemma (\ref{samLem:practice}) with a rate of (\ref{eq:uhOh}), one still imposes the Ansatz (\ref{eq:theAnsatz}), but in this case it is {\bf not optimal} to let $\de$ tend to $0$.  Doing so would result in a new frequency 
\begin{align}
\Xi_{(k+1)} &= e_{R,(k)}^{-1/2 - \de} \Xi_{(k)}
\end{align}
which is much larger than the previous frequency, but would result in only a very small reduction of stress.  On the other hand, letting $\de$ be very large would be wasteful since the frequencies $\Xi_{(k)}$ would grow very quickly.

In this case, iteration of (\ref{eq:theAnsatz}) at a rate of (\ref{eq:uhOh}) leads to a regularity up to
\begin{align}
v &\in C^{\a^* - \ep} \\
p &\in C^{2(\a^* - \ep)} 
\end{align}
with
\begin{align}
\a^* &= \fr{\de}{(1+\de)(1 + 2 \de)} 
\end{align}
The optimal $\de$ to choose is $\de^* = \fr{1}{\sqrt{2}}$, which would lead to solutions with regularity \footnote{Of course, to actually obtain this regularity would at least require a modification of the Definition (\ref{def:subSolDefPr}) to incorporate time derivatives, and the Lemma (\ref{samLem:practice}) must be stated precisely and proven.} up to
\begin{align}
\a^* &= \fr{1}{3 + \sqrt{8}}  \label{eq:giveUp}
\end{align}

These regularity exponents can be computed quickly using the machinery introduced in Section (\ref{sec:lemImpThm}).

In order to achieve a regularity better than (\ref{eq:giveUp}) we examine the size of the transport term more carefully, without separating the terms in the coarse scale material derivative.
\[\Ddt =  (\pr_t + v_\ep \cdot \nab) \]

In fact, when a coarse scale material derivative hits a phase gradient, which obeys the equation
\begin{align}
(\pr_t + v_\ep^a \pr_a) \pr_l \xi_I &= - \pr_l v_\ep^a \pr_a \xi_I 
\end{align}
the cost is 
\[ |\Ddt| \leq \Xi e_v^{1/2} \]
which is exactly what we desire for (\ref{eq:idealEr}).

The trouble is that the amplitude $v_I \approx R^{1/2}$ is also influenced by the stress $R$.  To deal with this difficulty, we must assume that the bound for the material derivative of $R$ is {\bf better} than the bound for the spatial derivative of $R$ in our definition of frequency and energy levels.  In order to make this assumption on the material derivative of $R$, we must also verify it in order to continue the iteration.   To obtain these improved bounds, we introduce the following changes to the scheme:
\begin{enumerate}
\item  We change the way in which $R_\ep$ is mollified so that the material derivative of the transport term 
\begin{align}
\pr_j Q_T^{jl} &\approx \sum_I e^{i \la \xi_I} (\pr_t + v_\ep^j \pr_j)v_I^l
\end{align}
can be estimated.  Namely, checking the bound for the material derivative of $Q_T$ requires taking a {\it second} material derivative of $R_\ep$, whereas we only assume a bound on the first material derivative of $R$.
\item  We also change the way we eliminate the error for the parametrix by solving the divergence equation
\begin{align}
\pr_j Q^{jl} &= U^l \label{eq:undetElliptEqn}
\end{align}
since it is not true that every solution to the underdetermined equation (\ref{eq:undetElliptEqn}) will have a good bound on its material derivative.  We find that the simplest way to obtain bounds on the material derivative of a solution to (\ref{eq:undetElliptEqn}) is to solve (\ref{eq:undetElliptEqn}) by constructing an appropriate transport equation.
\item  Furthermore, we cannot perform a standard mollification of $v$ in the time variable because bounds on time derivatives are not as good as bounds on material derivatives.  In fact, we only mollify $v \to v_\ep$ in space, and the regularity of $v_\ep$ in time follows from the Euler-Reynolds equations themselves.
\end{enumerate}

These modifications are presented in the remainder of the paper, which we begin by properly formalizing the template Lemma (\ref{samLem:practice}).
\section{The Main Iteration Lemma} \label{sec:mainLem}
\subsection{Frequency and Energy Levels for the Euler Reynolds Equations and Symmetries}

The main lemma in the paper which is responsible for the proof of Theorem (\ref{mainThm}) relies on the following definition.

\begin{defn} \label{def:subSolDef}
Let $L \geq 1$ be a fixed integer.  Let $\Xi \geq 2$, and let $e_v$ and $e_R$ be positive numbers with $e_R \leq e_v$.  Let $(v, p, R)$ be a solution to the Euler-Reynolds system.  We say that the frequency and energy levels of $(v,p,R)$ are below $(\Xi, e_v, e_R)$ (to order $L$ in $C^0 = C^0_{t,x}(\T^3 \times \R)$) if the following estimates hold.
\begin{align}
|| \nab^k v ||_{C^0} &\leq \Xi^k e_v^{1/2} &k = 1, \ldots, L \label{bound:nabkv} \\
|| \nab^k p ||_{C^0} &\leq \Xi^k e_v &k = 1, \ldots, L \label{bound:nabkp} \\
|| \nab^k R ||_{C^0} &\leq \Xi^k e_R &k = 0, \ldots, L \label{bound:nabkR} \\
|| \nab^k (\pr_t + v \cdot \nab) R ||_{C^0} &\leq \Xi^{k+1} e_v^{1/2} e_R  &k = 0, \ldots, L - 1 \label{bound:dtnabkR}
\end{align}
Here $\nab$ refers only to derivatives in the spatial variables.
\end{defn}

We first remark that these bounds are all consistent with the symmetries of the Euler equations.  The scaling symmetry is reflected by dimensional analysis.  Informally, if $X$ denotes a spatial unit (e.g. ``meters'') and $T$ denotes a unit of time (e.g. ``seconds''), then the dimensions of each term involved in the estimates are $e_R, e_V \sim p, R \sim X^2 / T^2$, $\Xi \sim \nab \sim X^{-1}$, $\pr_t \sim T^{-1}$ and $v \sim X/T$.  

We have also assumed {\bf no} bound on $|| v||_{C^0}$ or on $|| \pr_t R ||_{C^0}$.  These assumptions are consistent with the Galilean invariance of the Euler equations and the Euler Reynolds equations, which plays a special role underlying what follows.  Namely, if $(v, p, R)$ solve the Euler-Reynolds equations, then a new solution to Euler-Reynolds with the same frequency energy levels can be obtained by taking 
\ali{
 {\hat v}^j(t,x) &= C^j + v^j(t, x - t C) \\
  {\hat p}(t,x) &= p(t, x - t C) \\
{\hat R}^{jl}(t,x) &= R^{jl}(t, x - t C)
}
for any $C \in \R^3$.  This transformation corresponds to observing the fluid from a new frame of reference with relative velocity $C$.  Furthermore, the aspects of the construction we introduce to take the special role of the material derivative into account (namely, the mollifications of $v$ and $R$ and the new method used to solve the divergence equation) are also well-behaved under this group of transformations.  

Also note that we have not assumed a bound on $||p||_{C^0}$ and that only the pressure gradients are assumed to have bounds, which is consistent with the fact that the pressure is only determined from the velocity up to an arbitrary function of time.  It is interesting to remark that one can include bounds on the material derivative of the pressure gradient such as
 \[ \co{ (\pr_t + v \cdot \nab) \nab p } \leq \Xi^2 e_v \]
in the definition of frequency energy levels.  However, these bounds are not necessary for the results in the present paper.

\subsection{ Statement of the Main Lemma }

With this definition in hand, we can state the main lemma.  Note that we require the order $L \geq 2$, as we would not be able to prove a lemma as clean as the one below if we assume control over only $1$ derivative; this requirement is discussed in Section (\ref{coarseScaleVelocity}).  The requirement of at least two derivatives appears to be natural in view of the calculations of Section (\ref{sec:higherEnergyReg}), and may be necessary for controlling one of the error terms if the method could be pushed to regularity $1/3$, as is noted in Section (\ref{sec:mollVOK}).

\begin{lem}[The Main Lemma] \label{lem:iterateLem}
Suppose $L \geq 2$ and that $\eta > 0$.  Let $K$ be the constant in line (\ref{eq:wePickedK}).  There is a constant $C$ depending only on $\eta$ and $L$ such that the following holds:

Let $(v,p,R)$ be any solution of the Euler-Reynolds system whose frequency and energy levels are below $(\Xi, e_v, e_R)$
 to order $L$ in $C^0$ and let $I$ be a union of nonempty time intervals (possibly unbounded) such that
\begin{align}
\mbox{ supp } R \subseteq I \times \T^3
\end{align}
Define the time-scale $\th = \Xi^{-1} e_v^{-1/2}$, and let
\[ e(t) : \R \to \R_{\geq 0} \]
be any non-negative function satisfying the lower bound
\begin{align}
 e(t) \geq K e_R \quad \quad \mbox{ for all } t \in I \pm \th
\end{align}
and whose square root satisfies the estimates
\begin{align}
|| \fr{d^r e^{1/2}}{dt^r} ||_{C^0} &\leq 1000 (\Xi e_v^{1/2})^r (K e_R)^{1/2} &r = 0, 1, 2 \label{ineq:goodEnergy}
\end{align}

Now let $N$ be any positive number obeying the bounds
\begin{align} \label{eq:conditionsOnN}
 N &\geq \Xi^\eta \\
 N &\geq \left(\fr{e_v}{e_R} \right)^{3/2}\label{eq:conditionsOnN2}
\end{align}
and define the dimensionless parameter ${\bf b} = \left(\fr{e_v^{1/2}}{e_R^{1/2}N} \right)^{1/2}$.

Then there exists a solution $(v_1, p_1, R_1)$ of the Euler Reynolds system of the form $v_1 = v + V$, $p_1 = p + P$ whose frequency and energy levels are below
\begin{align}
 (\Xi', e_{v}', e_{R}') &= (C N \Xi, e_R, \fr{e_v^{1/4} e_R^{3/4}}{N^{1/2}}   ) \\
 &= (C N \Xi, e_R, \left(\fr{e_v^{1/2}}{e_R^{1/2}N} \right)^{1/2} e_R   ) \\
&= (C N \Xi, e_R, {\bf b}^{-1} \fr{e_v^{1/2} e_R^{1/2}}{N})
\end{align}
to order $L$ in $C^0$, and whose stress $R_1$ is supported in
\begin{align}
 \mbox{ supp } R_1 \subseteq ( \mbox{ supp } e \pm \th ) \times \T^3.
\end{align}

The correction $V = v_1 - v$ is of the form $V = \nab \times W$ and can be guaranteed to obey the bounds
\begin{align}
||V||_{C^0} &\leq C e_R^{1/2} \label{eq:Vco} \\
||\nab V ||_{C^0} &\leq C N \Xi e_R^{1/2} \label{eq:nabVco} \\
||(\pr_t + v^j \pr_j) V ||_{C^0} &\leq C {\bf b}^{-1} \Xi e_v^{1/2} e_R^{1/2} \label{eq:matVco} \\
||W||_{C^0} &\leq C \Xi^{-1} N^{-1} e_R^{1/2} \label{eq:Wco}\\
\co{ \nab W } &\leq C e_R^{1/2} \label{eq:nabWco} \\
||(\pr_t + v^j \pr_j) W ||_{C^0} &\leq C {\bf b}^{-1} N^{-1} e_v^{1/2} e_R^{1/2} \label{eq:matWco}
\end{align}
and its energy can be prescribed up to errors bounded by
\begin{align}
|| \int_{\T^3} |V|^2 dx - \int_{\T^3} e(t) dx ||_{C^0} &\leq C \fr{e_R}{N} \label{eq:energyPrescribed}\\
|| \fr{d}{dt} ( \int_{\T^3} |V|^2 dx - \int_{\T^3} e(t) dx ) ||_{C^0} &\leq C {\bf b}^{-1} \fr{\Xi e_v^{1/2} e_R}{N} \label{eq:DtenergyPrescribed}
\end{align}

The correction to the pressure $P = p_1 - p_0$ satisfies the estimates
\begin{align}
|| P ||_{C^0} &\leq C e_R \label{eq:Pco} \\
|| \nab P ||_{C^0} &\leq C N \Xi e_R \label{eq:nabPco} \\
|| (\pr_t + v\cdot \nab) P ||_{C^0} &\leq C {\bf b}^{-1} \Xi e_v^{1/2} e_R \label{eq:matPco}
\end{align}

Finally, the supports of both $V$ and $P$ are contained in $ \mbox{ supp } e \times \T^3$.
\end{lem}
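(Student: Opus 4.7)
The plan is to implement the convex integration scheme set up in Part III with careful bookkeeping of the frequency and energy levels, choosing the free parameters so that every error term either inherits a smallness factor from the oscillatory estimate or from the mollification scale. First I would fix $\la = CN\Xi$ (with $C$ absorbed into the final constant) and the lifespan $\tau = {\bf b}\,\th = {\bf b}\,\Xi^{-1}e_v^{-1/2}$, so that by the transport estimates referenced in Section 9 one has $\co{\nab\xi_I - \nab\hat\xi_I} \lesssim {\bf b}$, which is small enough to invoke Proposition \ref{prop:needTauSmall}. Next I would mollify $v$ only in space at a scale $\ep_v$ comparable to $\Xi^{-1}N^{-\eta'}$ for a small $\eta'$ chosen in terms of $\eta$; the temporal regularity of $v_\ep$ is then recovered from the Euler-Reynolds equation itself, which is crucial because bounds on $\pr_t v$ are not assumed and we must not lose the Galilean invariance. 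The stress $R$ must be mollified by a two-step procedure: a spatial convolution at scale $\ep_R$ followed by a short average along the coarse-scale flow of $v_\ep$, so that $\Ddtof{R_\ep}$ can be controlled using only the \emph{single} material derivative of $R$ available from the hypothesis \eqref{bound:dtnabkR}.

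With the mollifications in place, I would construct the phase functions $\xi_I$, the cutoffs, and the amplitudes exactly as in Sections 9.2--9.3, yielding \[ V_I = \tfrac{1}{\la}\nab\times(e^{i\la\xi_I}w_I), \qquad b_I = \eta_I\psi_I\,e^{1/2}\ga_I(\nab\xi_k,\varepsilon)\,\PP_I^\perp(\nab\xi_{\si I}). \] The spatial bounds \eqref{eq:Vco}--\eqref{eq:nabVco}, \eqref{eq:Wco}--\eqref{eq:nabWco} and \eqref{eq:Pco}--\eqref{eq:nabPco} then follow from counting factors of $\la$ and $\Xi$ after repeatedly applying the chain rule to $\ga_I(\nab\xi_k,\varepsilon)$, using the transport bounds for $\xi_I$ to control derivatives of $\nab\xi_k$ and using the mollification bounds to control derivatives of $\varepsilon = -\mathring R_\ep/e$. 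The material derivative bound \eqref{eq:matVco} is the payoff of the phase transport equation $(\pr_t + v_\ep^j\pr_j)\xi_I = 0$: a material derivative along $v_\ep$ (not $v$) annihilates the leading oscillatory factor, so $\Ddtof{V_I}$ only inherits the derivatives of the amplitude, whose $\Ddtof{}$ is essentially the derivative of $e^{1/2}$ (controlled by \eqref{ineq:goodEnergy}) plus the cost $\tau^{-1} = {\bf b}^{-1}\Xi e_v^{1/2}$ from the time cutoff $\eta_I$. The commutator $[\Ddtof{}, v^j\pr_j - v_\ep^j\pr_j]$ is controlled because $v - v_\ep$ is small by the mollification estimates.

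The heart of the proof is the estimate on $R_1$. Following Section 4 I would decompose the new stress into $Q_T$, $Q_L$, $Q_H$, $Q_S$, and $Q_M$ and treat each separately. For $Q_S$ and $Q_M$ the bounds follow directly from the construction and the choice of $\ep_v, \ep_R$. For $Q_L, Q_H, Q_T$ I would invert $\pr_j Q^{jl} = e^{i\la\xi}u^l$ using the higher-order parametrix of Section 7.3 together with the operator $\RR$ of Proposition 7.2 to kill the residual. The spatial bounds $\co{Q} \lesssim e_R'$ and $\co{\nab Q} \lesssim \Xi' e_R'$ follow from iterating the parametrix sufficiently many times so that the residual gain $\la^{-D}$ dominates the derivative losses. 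The main obstacle, and where the scheme really differs from \cite{deLSzeCts}, is the material derivative bound $\co{\Ddtof{R_1}} \lesssim \Xi' e_v'^{1/2} e_R'$: applying $\Ddtof{}$ to a solution of $\pr_j Q^{jl} = U^l$ produced by $\RR$ is not automatic because $\RR$ does not commute with $\Ddtof{}$. To handle this I would introduce the transport-elliptic equation advertised in the introduction: write $Q = Q_{\mathrm{par}} + Q_{\mathrm{tr}}$ where $Q_{\mathrm{par}}$ is the parametrix and $Q_{\mathrm{tr}}$ solves a transport equation $\Ddtof{Q_{\mathrm{tr}}} = F$ along $v_\ep$ with a small source, plus a final small elliptic correction via $\RR$ to enforce the divergence constraint exactly. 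Since transport solutions automatically satisfy $\co{\Ddtof{Q_{\mathrm{tr}}}} = \co{F}$, this produces the needed material derivative bound provided $F$ itself has already been reduced to size $\Xi' e_v'^{1/2} e_R'$ by enough iterations of the parametrix.

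Finally, the energy prescription \eqref{eq:energyPrescribed}--\eqref{eq:DtenergyPrescribed} would be verified by expanding $\int|V|^2\,dx = \sum_{I,J}\int V_I\cdot V_J\,dx$, noting that for $J \neq \bar I$ the integrand $e^{i\la(\xi_I+\xi_J)}{\tilde v}_I\cdot{\tilde v}_J$ has non-vanishing phase gradient bounded below by the angle-separation property of the initial data ${\hat\xi}_I$, so a single integration by parts in $x$ gains $\la^{-1} = (CN\Xi)^{-1}$, giving \eqref{eq:energyPrescribed}; the diagonal sum $\sum_I\int V_I\cdot\bar V_I\,dx$ reproduces $\int e(t)\,dx$ modulo the $O(\la^{-1})$ cross terms involving $\nab\times w_I$. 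The time derivative bound \eqref{eq:DtenergyPrescribed} follows by differentiating under the integral, rewriting $\pr_t = \Ddtof{} - v_\ep\cdot\nab$ and then integrating by parts in the $v_\ep\cdot\nab$ piece, using \eqref{eq:matVco} for the material derivative part and the hypothesis \eqref{ineq:goodEnergy} on $\tfrac{d}{dt}e^{1/2}$. The support condition on $R_1$ is automatic since $V$ vanishes off $\mathrm{supp}\,e \times \T^3$ and the transport-elliptic operators preserve time supports up to the scale $\th$.
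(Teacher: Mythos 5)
Your plan retraces the paper's own route (space-only mollification of $v$ with time regularity recovered from the Euler--Reynolds equation, mollification of $R$ in space and then along the coarse scale flow, lifespan $\tau \sim {\bf b}\,\Xi^{-1}e_v^{-1/2}$ balancing the High-High and Transport terms, the same five error terms, higher-order parametrix, stationary-phase energy prescription), but two steps are specified in a way that would fail. First, the mollification scale: you take $\ep_v \sim \Xi^{-1}N^{-\eta'}$ with ``a small $\eta'$ chosen in terms of $\eta$''. Since only $L$ spatial derivatives of $v$ are assumed, the best available bound is $\co{v - v_\ep} \lesssim \ep_v^{L}\Xi^{L}e_v^{1/2} = N^{-L\eta'}e_v^{1/2}$, so the mollification stress $2[(v-v_\ep)V]^{jl}$ has size $N^{-L\eta'}e_v^{1/2}e_R^{1/2}$; to bring this below $e_R' = {\bf b}^{-1}e_v^{1/2}e_R^{1/2}/N$ you need $L\eta' \geq 1$, i.e.\ $\ep_v \lesssim \Xi^{-1}N^{-1/L}$ (and likewise $\ep_x \sim \Xi^{-1}N^{-1/L}$ for $R$). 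The exponent is dictated by $L$, not by $\eta$; in the paper $\eta$ enters only through the hypothesis $N \geq \Xi^{\eta}$ in (\ref{eq:conditionsOnN}), which bounds the number $D$ of parametrix iterations needed so that $(\ep_v^{-1}/\la)^{D} \leq (N\Xi)^{-1}$. With an arbitrarily small $\eta'$ the mollification terms alone already exceed the target stress and the induction cannot close.

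Second, the transport-elliptic step. As you describe it -- transport a given small source along $v_\ep$ and then add ``a final small elliptic correction via $\RR$ to enforce the divergence constraint exactly'' -- the material derivative of that elliptic correction is precisely what cannot be estimated: $\Ddt\,\RR[f] = \RR[\Ddtof{f}] + [v_\ep\cdot\nab,\RR]f$, and since no bound on $\co{v_\ep}$ (or $\co{v}$) is assumed (this is the Galilean invariance the scheme is built around), controlling the nonlocal commutator would require separate input your plan does not supply. Moreover the correction is genuinely needed: pure transport does not propagate the divergence identity, the defect being generated at the rate $\pr_j v_\ep^i\,\pr_i Q^{jl}$, so the fix-up is of size comparable (up to a factor ${\bf b}$) to the allowed stress and its material derivative cannot simply be discarded. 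The paper avoids this circle by commuting the divergence with $\Ddt$ and solving the \emph{closed, coupled} equation $\Ddtof{Q^{jl}} = \RR^{jl}[\,\pr_i v_\ep^b\,\pr_b Q^{ik} + \Ddtof{U^k}\,]$ with data $Q(t(I),\cdot) = \RR[U(t(I),\cdot)]$ by Picard iteration in a weighted $C^0_t W^{1,4}_x$ space (Theorem \ref{thm:transportElliptic}): the divergence equation is then propagated exactly (using that $v_\ep$ is divergence free and $U$ has integral zero), the bounds on $\Ddtof{Q}$ and its spatial derivatives are read off directly from the equation, and compact time support is recovered afterwards by a cutoff $\bar\eta(t)$ equal to $1$ on the support of $U$. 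To make your argument complete you should either adopt this closed equation or prove a commutator estimate for $[v_\ep\cdot\nab,\RR]$ that depends only on $\nab v_\ep$; the rest of your outline then matches the paper's proof.
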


{\bf Remark about dimensional analysis and (\ref{eq:conditionsOnN}):} Although it is not consistent with inequality  (\ref{eq:conditionsOnN}), the parameter $N$ should be regarded as a dimensionless parameter.  When the lemma is applied, the parameter $\eta$ will tend to $0$, and the constant $C$ in the lemma will diverge to infinity.  The assumption (\ref{eq:conditionsOnN}) will be used to bound the number of terms taken in the parametrix before the approximate solution is sufficiently accurate.

\paragraph{A conjecture and a remark about energy regularity}
Having stated the Main Lemma, we propose a ``conjecture'' which is probably false as stated, but which summarizes what the proof of Lemma (\ref{lem:iterateLem}) would prove if the High-High interference terms could be sufficiently well-controlled.
\begin{conject}[The Ideal Case]\label{conj:idealCase}
The conclusion of the Main Lemma (\ref{lem:iterateLem}) holds with the factor ${\bf b}^{-1}$ replaced by $1$ in all time derivative and material derivative estimates, and with
\[ (\Xi', e_v', e_R') = (C N \Xi, e_R, \fr{e_v^{1/2} e_R^{1/2}}{N} ) \]
\end{conject}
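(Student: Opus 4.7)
The plan is to rerun the construction of Lemma \ref{lem:iterateLem} with the lifespan parameter ${\bf b}$ effectively replaced by $1$, so that each wave $V_I$ persists over the full natural time scale $\th = (\Xi e_v^{1/2})^{-1}$ rather than over the shorter interval $\tau = {\bf b}\th$ that the current proof is forced to use. The index set $\II$, the partitions of unity, the transported phase functions, and the amplitudes coming from the stress equation are kept in their present form, only reparametrized in time. One then traces through the five error contributions $Q_T$, $Q_L$, $Q_H$, $Q_S$, $Q_M$ and verifies that, save for the High--High interference term $Q_H$, each automatically obeys an estimate of size $\leqc e_v^{1/2} e_R^{1/2}/N$ in $C^0$ and a ${\bf b}$-free bound after application of $\pr_t + v\cdot \nab$.

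First I would verify the improved bounds for the manageable pieces. The Mollification term is a product of $(v - v_\ep)$, whose $C^0$ size is governed by $\ep_v^2 \cdot \Xi^2 e_v^{1/2}$ via a second-order accurate mollifier together with (\ref{bound:nabkv}) for $k = 2$, against the correction $V$; choosing $\ep_v \sim (N\Xi)^{-1}$ delivers the improved $e_v^{1/2} e_R^{1/2}/N$ with no ${\bf b}$ loss. The Stress term consists of lower-order parametrix remainders of order $\la^{-1}$, which become ideal once $\la \sim N\Xi$. For the Transport term one uses $(\pr_t + v_\ep^j \pr_j)\xi_I = 0$ and that $\rho_I = e^{1/2} \ga_I(\nab \xi_I, R_\ep/e)$ has advective derivative controlled by $\nab v_\ep$ and by the genuine material derivative $\Ddtof{R_\ep}$ appearing in assumption (\ref{bound:dtnabkR}); the data for $\pr_j Q_T^{jl}$ is then of size $\Xi e_v^{1/2} e_R^{1/2}$, and the $(N\Xi)^{-1}$ parametrix gain lands it exactly at $e_v^{1/2} e_R^{1/2}/N$. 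The High--Low term is the cleanest: $V^j \pr_j v_\ep^l$ has data of the same size and yields the ideal bound. In each case a further material derivative costs only $\Xi e_v^{1/2}$, not $\tau^{-1}$.

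The main obstacle is $Q_H$. Beltrami structure reduces its data to
\[
\pr_j Q_{H,(IJ)}^{jl} \approx - \la e^{i \la(\xi_I + \xi_J)}\bigl(v_I \times [(|\nab \xi_J| - 1) v_J] + v_J \times [(|\nab \xi_I| - 1) v_I]\bigr) + \tx{l.o.t.}
\]
so the $C^0$ gain depends entirely on the smallness of $|\nab \xi_I| - 1$. Because $\nab \xi_I$ is transported along $v_\ep$ with source $-\pr v_\ep \cdot \nab \xi_I$, over the natural scale $\th$ one has only $\bigl|~|\nab \xi_I| - 1~\bigr| = O(1)$ rather than $O({\bf b})$; the ${\bf b}^{-1}$ loss in Lemma \ref{lem:iterateLem} is precisely the price paid for shrinking the lifespan to $\tau = {\bf b}\th$ in order to force $\bigl|~|\nab \xi_I| - 1~\bigr| = O({\bf b})$, at the cost of $\tau^{-1}$ whenever the temporal cutoff is differentiated. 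To remove this loss one would need either a new cancellation mechanism exploiting the stationary-Euler identity more strongly---so that the High--High terms annihilate \emph{exactly}, not merely to leading order---or a reparametrization of phases that pins $|\nab \xi_I|$ to $1$ for the full duration $\th$, perhaps by evolving $\xi_I$ in a metric that is Lie-dragged by $v_\ep$. I would pursue both directions in parallel: redesigning the building blocks so that the first-order deviation $(|\nab \xi_I| - 1) v_I$ is absorbed by an auxiliary pressure-like correction, and searching for a transport-elliptic gauge that preserves the microlocal curl-eigenfunction identity $(i\nab \xi_I) \times v_I = |\nab \xi_I| v_I$ beyond the initial instant. This is the step I expect to be the genuine obstruction; it is also exactly the step that, if resolved, would yield $C^{1/3 - \ep}$ and hence Onsager's conjecture.
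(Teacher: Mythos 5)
You have not proved the statement, and in fact it cannot be "reviewed against the paper's proof" because the paper offers none: Conjecture (\ref{conj:idealCase}) is stated as an open (and, in the author's words, "probably false as stated") summary of what the construction would give if the High--High interference term could be controlled. Your reduction of the problem to that single term essentially reproduces the paper's own accounting in Sections (\ref{sec:mainLem}), (\ref{sec:accountForParams}), (\ref{sec:mollVOK}) and (\ref{sec:estimatesForStressTerm}): the mollification, stress, high--low and transport contributions are already shown there to meet (or to be capable of meeting) the ideal benchmarks, and the sole reason for the factor ${\bf b}^{-1}$ is the trade-off between $\co{Q_H}\leqc b\, e_R$ and $\co{Q_T}\leqc b^{-1} e_v^{1/2}e_R^{1/2}/N$, optimized by $b \sim {\bf b}$ in line (\ref{eq:iChoseb}). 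But your final paragraph is a research program, not an argument: to reach the ideal rate through the present mechanism you would need $\bigl|\,|\nab\xi_I|-1\,\bigr| \leqc e_v^{1/2}/(e_R^{1/2}N) = {\bf b}^2$ over the whole lifespan, whereas transport along $v_\ep$ over the natural time $\th$ gives only an $O(1)$ (constant) deviation, and shrinking the lifespan to force that smallness re-introduces exactly the $\tau^{-1}$ loss you are trying to avoid. Neither of your two proposed remedies (exact Beltrami cancellation beyond leading order, or a gauge/metric that pins $|\nab\xi_I|=1$ along the flow) is carried out, so the genuine gap is precisely the one the paper identifies, and the conjecture remains unproved.

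Two technical points in your sketch would also need repair even as a conditional argument. First, your choice $\ep_v \sim (N\Xi)^{-1}$ for the velocity mollification is too aggressive: the construction requires $\ep_v^{-1} \leqc \Xi N^{1/L} \ll \la$ (see (\ref{eq:epvdef}) and the discussion at the end of Section (\ref{coarseScaleVelocity})), since each iteration of the parametrix gains only the ratio $\ep_v^{-1}/\la$; with $\ep_v^{-1}\sim N\Xi$ the amplitudes oscillate at frequency comparable to $\la$, the expansion no longer converges in powers of $N^{-(1-1/L)}$, and the oscillatory gain is lost. The paper's $\ep_v = a_v\Xi^{-1}N^{-1/L}$ with an $L$-th order mollifier already achieves $\co{v-v_\ep}\leqc e_v^{1/2}/N$ without sacrificing scale separation. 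Second, your claim that the material derivative of the mollification term is automatically ${\bf b}$-free glosses over the need for the commutator bound $\co{\Ddt[v-v_\ep]}\leq C\,\Xi e_v^{1/2}e_R^{1/2}$ of line (\ref{ineq:complicatedCommutator}), which does not follow from bounding $\Ddtof{v}$ and $\Ddtof{v_\ep}$ separately and is exactly why the paper either invokes a further commutator argument (using $L\geq 2$) or retreats to the hypothesis $N \geq (e_v/e_R)^{3/2}$.
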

As we discuss in Section (\ref{sec:onOnsag}), the above Conjecture could be used to construct solutions of regularity up to $1/3$, and would therefore imply Onsager's conjecture.  In Section, (\ref{sec:accountForParams}), we discuss the term which prevents us from proving the above conjecture. 

We also remark that although the solutions produced by Lemma (\ref{lem:iterateLem}) go up to the H{\" o}lder exponent $1/5$, the energy function 
\[ E(t) = \int \fr{|v|^2}{2}(t,x) dx\]
turns out to possess better regularity in time, as it necessarily belongs to the class $C^{1/2 - \ep}(\R)$ from the way it is constructed.  The reason for this higher regularity will be discussed in Section (\ref{sec:onOnsag}).

We now show that the Lemma (\ref{lem:iterateLem}) above contains enough quantitative information regarding the construction to prove the main theorem.  For convenience, we state the form of the main theorem which is proven directly by iterating Lemma (\ref{lem:iterateLem}).

\begin{thm}\label{thm:theMainThm}
Let $\de > 0$ and set 
\begin{align}
\a^* &=\fr{1+ \de}{5 + 9 \de + 4 \de^2} 
\end{align}

Then there exists a nonzero solution to the Euler equations with compact support in time such that
\begin{align}
 ||v||_{C_{t,x}^{0,\a}} < \infty
\end{align}
for all $\a < \a^*$ and
\begin{align}
 ||p||_{C_{t,x}^{0,\a}} < \infty
\end{align}
for all $\a < 2 \a^*$.

Furthermore, the energy of $v$ can be made arbitrarily large, and the time interval containing the support of $(v, p)$ can be made arbitrarily small.
\end{thm}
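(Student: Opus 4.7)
The plan is to iterate the Main Lemma starting from the trivial solution $(v_{(0)}, p_{(0)}, R_{(0)}) = (0, 0, 0)$, producing a sequence $(v_{(k)}, p_{(k)}, R_{(k)})$ whose corrections $V_{(k)} = v_{(k+1)} - v_{(k)}$ and $P_{(k)} = p_{(k+1)} - p_{(k)}$ are Cauchy in the desired H\"older norms. First observe that $\alpha^* = (1+\de)/(5+9\de+4\de^2) = 1/(5+4\de)$, since the denominator factors as $(1+\de)(5+4\de)$. I would take $\eta = \de$ as the parameter in Lemma (\ref{lem:iterateLem}) and set $N_{(k)} = \Xi_{(k)}^\eta$ at each stage, so that $\Xi_{(k+1)} = C N_{(k)} \Xi_{(k)} = C\Xi_{(k)}^{1+\eta}$. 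The self-similar Ansatz
\[ e_{v,(k)} \leq \Xi_{(k)}^{-2a}, \qquad e_{R,(k)} \leq \Xi_{(k)}^{-2b} \]
with $a = b/(1+\eta)$ and $b = (1+\eta)/(5+4\eta)$ (so $a = 1/(5+4\eta)$) is then forced by the recursions $e_{v,(k+1)} = e_{R,(k)}$ and $e_{R,(k+1)} = e_{v,(k)}^{1/4} e_{R,(k)}^{3/4} N_{(k)}^{-1/2}$ output by the lemma; a short algebraic check reduces the side conditions (\ref{eq:conditionsOnN})--(\ref{eq:conditionsOnN2}) to $b < (1+\eta)/3$, which is automatic. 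The multiplicative constants $C$ in the lemma contribute at most a geometric factor $C^k$ at stage $k$, which is dwarfed by $\Xi_{(k)} = \Xi_{(0)}^{(1+\eta)^k}$ provided $\Xi_{(0)}$ is chosen sufficiently large, so these constants are absorbed as $\Xi_{(k)}^{o(1)}$ corrections and contribute only an arbitrarily small loss of exponent.

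The H\"older regularity follows from the standard interpolation inequality applied to the bounds (\ref{eq:Vco})--(\ref{eq:nabVco}):
\[
\|V_{(k)}\|_{C^\alpha} \leq C\|V_{(k)}\|_{C^0}^{1-\alpha}\|\nabla V_{(k)}\|_{C^0}^\alpha \leq C\, e_{R,(k)}^{1/2}(N_{(k)}\Xi_{(k)})^\alpha \leq C\, \Xi_{(k)}^{-b + \alpha(1+\eta)}.
\]
Since $\Xi_{(k)}$ grows super-exponentially, the series $\sum_k \|V_{(k)}\|_{C^\alpha}$ is geometrically summable whenever $\alpha(1+\eta) < b$, i.e.\ whenever $\alpha < b/(1+\eta) = 1/(5+4\eta) = \alpha^*$. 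Hence $v = \sum_k V_{(k)} \in C^\alpha$ for every $\alpha < \alpha^*$. The analogous interpolation with (\ref{eq:Pco})--(\ref{eq:nabPco}) gives $\|P_{(k)}\|_{C^\alpha} \leq C\, \Xi_{(k)}^{-2b+\alpha(1+\eta)}$, summable whenever $\alpha < 2b/(1+\eta) = 2\alpha^*$, so $p \in C^\alpha$ for every $\alpha < 2\alpha^*$.

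For compact support and non-triviality, at each stage I would choose a non-negative prescribed energy $e_{(k)}(t)$ supported in a fixed bounded time interval $J \subset \R$, satisfying $e_{(k)}(t) \geq K e_{R,(k-1)}$ on a neighborhood of $\mbox{supp } R_{(k-1)}$ together with the derivative bounds (\ref{ineq:goodEnergy}). Since $\mbox{supp } V_{(k)} \cup \mbox{supp } R_{(k)}$ is contained in a $\th_{(k)}$-neighborhood of $\mbox{supp } e_{(k)}$ and the lifespans $\th_{(k)} = \Xi_{(k)}^{-1} e_{v,(k)}^{-1/2} = \Xi_{(k)}^{-(1-a)}$ are summable in $k$, the total time support of the limit remains in an arbitrarily small neighborhood of $J$. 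Non-triviality is obtained exactly as in the argument that Lemma (\ref{lem:mainLemCts}) implies Theorem (\ref{thm:mainThmCts}): pick $e_{(k)}(t_0) > 0$ at a reference time $t_0 \in J$ and use (\ref{eq:energyPrescribed}) with errors sufficiently small at each stage to ensure $\int |v_{(k+1)}|^2(t_0, x)\,dx > \int |v_{(k)}|^2(t_0, x)\,dx$, so that the $L^2$ limit at $t_0$ is strictly positive. Rescaling $e_{(k)}$ and $J$ via the scaling symmetry of Euler produces arbitrarily large energy or arbitrarily small support. The main obstacle throughout is verifying that at each stage the hypotheses of Lemma (\ref{lem:iterateLem})---most notably the material-derivative bounds (\ref{bound:dtnabkR}) for $R_{(k)}$---are faithfully reproduced by its conclusions at the previous stage; this reproducibility is exactly what the carefully engineered Definition (\ref{def:subSolDef}) of frequency and energy levels is designed to guarantee, and is why the definition must track $(\pr_t + v\cdot \nab)R$ rather than $\pr_t R$.
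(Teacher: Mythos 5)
Your strategy is the same as the paper's (iterate Lemma (\ref{lem:iterateLem}) from the zero solution with a self-similar parameter choice and sum the increments), and your observation that $5+9\de+4\de^2=(1+\de)(5+4\de)$ together with the direct choice $N_{(k)}=\Xi_{(k)}^{\eta}$ is an asymptotically equivalent, tidier bookkeeping than the paper's $N_{(k)}=Z^2(e_v/e_R)_{(k)}^{1/2}e_{R,(k)}^{-2\de}$ and its eigenvector analysis of the evolution matrix $T_\de$. But the central inductive claim is not correct as stated: the exact power-law Ansatz is \emph{not} ``forced by the recursions.'' Since the lemma gives $e_{v,(k+1)}=e_{R,(k)}$ but $\Xi_{(k+1)}=CN_{(k)}\Xi_{(k)}$ with $C>1$, the inductive step for, say, $e_{v}$ requires $\Xi_{(k)}^{-2b}\le C^{-2b/(1+\eta)}\Xi_{(k)}^{-2b}$, which is false; the constant pushes the inequality the wrong way at every stage, and at the critical exponents there is no room to absorb it. To make your ``$\Xi_{(k)}^{o(1)}$ loss'' remark into a proof you must either run the Ansatz with strictly subcritical exponents $\beta_R<b$, $\beta_v<\beta_R/(1+\eta)$ (so each step has a strict power of $\Xi_{(k)}$ to spare against $C$, with $\Xi_{(0)}$ large), or solve the log-linear recursion outright — which is exactly what the paper's $Z$-prefactor, Base Case Lemma (\ref{lem:baseCase}) (in particular (\ref{ineq:continueAtLeastXi})), and the Lagrange-interpolation computation with $T_\de$ accomplish. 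The same applies to your ``short algebraic check'' of (\ref{eq:conditionsOnN2}): $b<(1+\eta)/3$ is only the asymptotic criterion; the condition must be verified stage by stage along the actual (constant-polluted) parameter sequence.

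Two further steps are missing. First, the interpolation $\|V_{(k)}\|_{C^\a}\le\|V_{(k)}\|_{C^0}^{1-\a}\|\nab V_{(k)}\|_{C^0}^{\a}$ yields only spatial H\"older continuity, while the theorem asserts $C^{0,\a}_{t,x}$; you also need $\co{\pr_t V_{(k)}}\lesssim N_{(k)}\Xi_{(k)}e_{R,(k)}^{1/2}$, which the paper gets by writing $\pr_t=(\pr_t+v_{(k)}\cdot\nab)-v_{(k)}\cdot\nab$, using the uniform $C^0$ bound on $v_{(k)}$ and checking that the ${\bf b}^{-1}$-degraded material-derivative bound (\ref{eq:matVco}) is still dominated by (\ref{eq:nabVco}); the same check is needed for $P_{(k)}$. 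Second, nontriviality does not transplant ``exactly as in'' Lemma (\ref{lem:mainLemCts}): there the error $\ep$ is a free parameter and the estimate already includes the cross term, whereas (\ref{eq:energyPrescribed}) controls only $\int|V_{(k)}|^2-\int e_{(k)}$ with a fixed error $Ce_{R,(k)}/N_{(k)}$, and the increment of $\int|v_{(k)}|^2$ also contains $2\int v_{(k)}\cdot V_{(k)}\,dx$. That correlation term must be estimated separately, as the paper does by writing $V_{(k)}=\nab\times W_{(k)}$, integrating by parts, and using (\ref{eq:Wco}) to get a bound of order $(e_v/e_R)_{(k)}^{1/2}N_{(k)}^{-1}e_{R,(k)}$, which is then beaten by the prescribed energy $\ge Ke_{R,(k)}$. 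With these repairs (all available from the lemma's conclusions), your variant does give the stated exponent.
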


\section{The Main Lemma implies the Main Theorem} \label{sec:lemImpThm}
The proof of Theorem (\ref{thm:theMainThm}) proceeds by inductively applying the Main Lemma (\ref{lem:iterateLem}) in order to construct a sequence of solutions $(v_{(k)}, p_{(k)}, R_{(k)})$ of the Euler Reynolds system such that $R_{(k)}$ converges to $0$ uniformly and such that $v_{(k)}$ converges in $C_{t,x}^{0,\a}$ for all $\a < \a^*$.  The parameters $\eta > 0$ and $L$ are fixed at the beginning of the argument -- in fact we can simply take $L = 2$.  At each stage $(k)$ of the induction, we choose an energy function $e_{(k)}(t)$ and a parameter $N_{(k)}$ whose choice determines the growth of the frequency parameter $\Xi_{(k)}$ and the decay of the energy level $e_{R,(k)}$.

In fact, we will iterate the Main Lemma  in such a way that
\begin{align}
 e_{v,(k+1)} &= e_{R, (k)} \\
 e_{R, (k+1)} &= \fr{e_{R,(k)}^{(1 + \de)}}{Z} \label{eq:selfSimAns}
\end{align}
Where $Z$ is a constant chosen at the beginning of the iteration in order to make sure that $e_{R,(1)} < e_{R,(0)}$, and so that other inequalities are satisfied.  The self-similarity Ansatz (\ref{eq:selfSimAns}) leads to super-exponential decay of the energy levels $e_{v,(k)}, e_{R,(k)}$ and super-exponential growth of the frequencies $\Xi_{(k)}$.

In order to be able to achieve this Ansatz, we must apply the lemma with
\begin{align}
N_{(k)} &= Z^2 \left( \fr{e_v}{e_R} \right)_{(k)}^{1/2} e_{R,(k)}^{- 2 \de}
\end{align}
which will imply a frequency growth of
\begin{align}
\Xi_{(k + 1)} &= C_{\eta, L} Z^2 \left( \fr{e_v}{e_R} \right)_{(k)}^{1/2} e_{R,(k)}^{- 2 \de} \Xi_{(k)} \label{eq:XiEvolves}
\end{align}

The exponent $\eta$ (which determines how many terms must be taken in the parametrix expansion for the elliptic equation in the main argument, and therefore comes into the constant in the above equality) is also fixed at the beginning of the iteration.

\subsection{The Base Case} \label{sec:baseCase}

We begin the proof by establishing a base case lemma, which will ensure that the choice of $N_{(k)}$ above will always satisfy the admissibility conditions (\ref{eq:conditionsOnN}),(\ref{eq:conditionsOnN2}).

\begin{lem}[Base Case] \label{lem:baseCase} Suppose that $\de > 0$ and $L \geq 2$.  Then there exists an $\eta_0 > 0$ such that for all positive numbers $e_{R,(0)} > 0, C > 0, \Xi_{(0)} \geq 2$ and $\eta > 0$ with $\eta \leq \eta_0$, and for every nonempty interval $I_{(0)}$ there exists an arbitrarily large number $Z$ and a solution $(v_0, p_0, R_0)$ to the Euler Reynolds system
\begin{align}
 \pr_t v^l + \pr_j(v^j v^l) + \pr^l p &= \pr_j R^{jl} \\
\pr_i v^i &= 0
\end{align}
with support in $I_{(0)} \times \T^3$ having frequency and energy levels below $(\Xi_{(0)}, e_{v,(0)}, e_{R,(0)})$ to order $L$ in $C^0$, where $(\Xi_{(0)}, e_{v,(0)}, e_{R,(0)})$ satisfy all of the following:

\begin{enumerate}
\item For
\[ \ga = 1/(1 + \de) \]
we have the relationship
\begin{align} \label{eq:vAndRinit}
 \begin{aligned}
 e_{v, (0)} &= Z^\ga e_{R,(0)}^\ga \\
 e_{R,(0)} &< e_{v,(0)}
 \end{aligned}
\end{align}
which can also be written
\begin{align}
 e_{R,(0)} &= \fr{e_{v,(0)}^{1 + \de}}{Z}
\end{align}
\item Condition (\ref{eq:conditionsOnN2}) is satisfied for
\begin{align}
 N_{(0)} &= Z^2  \left( \fr{e_v}{e_R} \right)_{(0)}^{1/2} e_{R,(0)}^{-2\de}
\end{align}
Namely,
\begin{align}
 \left( \fr{e_v}{e_R} \right)_{(0)}^{3/2} \leq Z^2  \left( \fr{e_v}{e_R} \right)_{(0)}^{1/2} e_{R,(0)}^{-2\de} \label{ineq:3halves}
\end{align}
\item Condition (\ref{eq:conditionsOnN}) is also satisfied for $N_{(0)}$, namely
\begin{align}
 \Xi_{(0)}^\eta &\leq Z^2 \left( \fr{e_v}{e_R} \right)_{(0)}^{1/2} e_{R,(0)}^{-2 \de} \label{ineq:atLeastXiEta}
\end{align}
\item We can also ensure that
\begin{align} \label{ineq:continueAtLeastXi}
\left[ Z^{-1} e_{R,(k)}^{ \de } \right]^{\de ( 2 + \fr{\ga}{2} )} \left[  C Z^{2 +\fr{\ga}{2}} e_{R,(k)}^{  - \de (2 + \fr{\ga}{2})  } \right]^\eta &< 1 
\end{align}
which will be used later in the proof in order to make sure that (\ref{eq:conditionsOnN}) continues to be verified for the choices of $N_{(k)}$ at every later stage despite the increase in $\Xi_{(k)}$.

\end{enumerate}
\end{lem}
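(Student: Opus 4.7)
The entire content of the Base Case Lemma is a purely algebraic verification: the solution can be taken to be trivial and the real task is producing parameters $(\Xi_{(0)}, e_{v,(0)}, e_{R,(0)})$, $Z$, and $\eta_0$ that satisfy the four numerical constraints simultaneously. I would therefore take
\[ (v_0, p_0, R_0) = (0, 0, 0). \]
This solves the Euler--Reynolds system trivially, has empty support (hence contained in $I_{(0)} \times \T^3$), and has frequency and energy levels below \emph{any} triple $(\Xi_{(0)}, e_{v,(0)}, e_{R,(0)})$ in the sense of Definition~\ref{def:subSolDef}, since every norm appearing in (\ref{bound:nabkv})--(\ref{bound:dtnabkR}) vanishes identically (so the hypothesis $L \geq 2$ plays no role at this stage).

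With the solution out of the way, I would fix $\ga = 1/(1+\de)$ and \emph{define}
\[ e_{v,(0)} = Z^\ga \, e_{R,(0)}^\ga, \]
so that the first equality in condition (\ref{eq:vAndRinit}) holds automatically; the inequality $e_{R,(0)} < e_{v,(0)}$ then reduces to $Z > e_{R,(0)}^\de$, which is satisfied for all $Z$ large. Substituting this expression for $e_{v,(0)}$ into (\ref{ineq:3halves}) and (\ref{ineq:atLeastXiEta}) turns both into power inequalities in $Z$ and $e_{R,(0)}$: condition (\ref{ineq:3halves}) simplifies to
\[ Z^{2-\ga} \geq e_{R,(0)}^{(\ga - 1) + 2\de}, \]
which holds for $Z$ large since $2 - \ga > 0$, and condition (\ref{ineq:atLeastXiEta}) becomes
\[ Z^{2+\ga/2} e_{R,(0)}^{(\ga - 1)/2 - 2\de} \geq \Xi_{(0)}^{\eta}, \]
whose left-hand side tends to $+\infty$ with $Z$ once $\eta$ is fixed.

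The one point requiring care is condition (\ref{ineq:continueAtLeastXi}), whose role is to propagate the requirement (\ref{eq:conditionsOnN}) through every later stage of the iteration. Rewriting its left-hand side as
\[ C^{\eta} \, Z^{-(\de - \eta)(2 + \ga/2)} \, e_{R,(k)}^{(\de - \eta)\de (2 + \ga/2)}, \]
one sees that the decisive requirement is $\eta < \de$, which simultaneously makes the exponent of $Z$ strictly negative and the exponent of $e_{R,(k)}$ strictly positive. I would therefore set $\eta_0 := \de/2$, a choice that depends only on $\de$ and hence respects the correct quantifier order. For any fixed $e_{R,(0)}$, $C$, and $\Xi_{(0)}$, taking $Z$ larger than an explicit threshold makes the left-hand side of (\ref{ineq:continueAtLeastXi}) less than one at $k = 0$, and the strictly positive exponent on $e_{R,(k)}$ (combined with the super-exponential decay imposed by the Ansatz (\ref{eq:selfSimAns})) ensures the inequality is preserved for all $k \geq 1$. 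The main obstacle -- if one can call it that -- is precisely this ordering of quantifiers: $\eta_0$ must be chosen from $\de$ alone, before $Z$ or any of the given parameters is used, and the threshold choice $\eta_0 = \de/2$ accomplishes exactly this.
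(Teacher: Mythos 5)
Your proposal is correct and follows essentially the same route as the paper: take the zero solution, define $e_{v,(0)} = Z^\ga e_{R,(0)}^\ga$, reduce conditions (\ref{ineq:3halves}) and (\ref{ineq:atLeastXiEta}) to power inequalities in $Z$ solved by taking $Z$ large, and fix $\eta_0 = \de/2$ so that the exponent of $Z$ in (\ref{ineq:continueAtLeastXi}) is strictly negative. The only cosmetic difference is that you also sketch the propagation of (\ref{ineq:continueAtLeastXi}) to later stages, which the paper defers to the proof of the main theorem.
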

\begin{proof}
We can take
\[ (v_0, p_0, R_0) = (0, 0, 0), \]
which can be regarded as a solution to Euler-Reynolds with frequency and energy bounded below any trio $(\Xi_{(0)}, e_{v,(0)}, e_{R,(0)})$ which are admissible according to the Definition (\ref{def:subSolDef}).  The lemma reduces to verifying that for every $e_{R,(0)}$ and $C > 0$, there exist numbers $\Xi_{(0)}$ and $Z$ so that for $e_{v,(0)} = Z^\ga e_{R,(0)}^\ga$, the trio $(\Xi_{(0)}, e_{v,(0)}, e_{R,(0)})$ satisfies all of the inequalities in the lemma simultaneously.  Thus, let $e_{R,(0)}$ and $C > 0$ be given.

\paragraph{Inequality (\ref{ineq:3halves})}

To see that inequality (\ref{ineq:3halves}) can be satisfied, we rewrite (\ref{ineq:3halves}) as a goal
\begin{align}\label{goal:threeHalves}
Z^{-2} \left( \fr{e_v}{e_R} \right)_{(0)} e_{R,(0)}^{2\de} &\unlhd 1
\end{align}

From the identity (\ref{eq:vAndRinit}), we have that
\begin{align}
\left(\fr{e_v}{e_R} \right)_{(0)} &= Z^\ga e_{R,(0)}^{ \ga - 1 } \\
&= Z^\ga e_{R,(0)}^{ - \de \ga } \label{eq:energyRatioIs}
\end{align}
since $\ga(1 + \de) = 1$.

Substituting into the left hand side of (\ref{goal:threeHalves}) gives
\begin{align} \label{goal:threeHalvesOk}
Z^{-2} \left( \fr{e_v}{e_R} \right)_{(0)} e_{R,(0)}^{2\de} &= Z^{\ga - 2} e_{R,(0)}^{- \de (\ga - 2)}
\end{align}
which will be less than $1$ for any $Z$ sufficiently large, depending on $e_{R,(0)}$.

\paragraph{Inequality (\ref{ineq:atLeastXiEta})}

In order to verify inequality (\ref{ineq:atLeastXiEta}), we rewrite it as a goal
\begin{align}
Z^{-2} \left( \fr{e_v}{e_R} \right)_{(0)}^{-1/2} e_{R,(0)}^{2 \de} \Xi_{(0)}^\eta &\unlhd 1 \\
Z^{-2} \left[ Z^\ga e_{R,(0)}^{ - \de \ga } \right]^{-1/2} e_{R,(0)}^{2 \de} \Xi_{(0)}^\eta &\unlhd 1\\
Z^{-( 2 + \fr{\ga}{2})} e_{R,(0)}^{ \fr{(\de \ga)}{2} + 2 \de } \Xi_{(0)}^\eta &\leq 1 \label{ineq:atLeastXiEta2}
\end{align}

For any given $\Xi_{(0)}$, $\eta$ and $e_{R,(0)}$, inequality (\ref{ineq:atLeastXiEta2}) will be satisfied if $Z$ is chosen large enough.

\paragraph{Motivation for the inequality (\ref{ineq:continueAtLeastXi})}

Let us discuss the importance of the inequality (\ref{ineq:continueAtLeastXi}).

In order to ensure that inequality (\ref{ineq:atLeastXiEta}), or the equivalent inequality (\ref{ineq:atLeastXiEta2}), are satisfied for choices of the parameter $N_{(k)}$ in later stages, we need to show that
\begin{align}
Z^{-( 2 + \fr{\ga}{2})} e_{R,(k)}^{ \fr{\de \ga}{2} + 2 \de } \Xi_{(k)}^\eta &\unlhd 1 \label{ineq:atLeastXiEtak}
\end{align}
for all $(k) \geq 0$.  This inequality has been established for $(k) = 0$, so in order to accomplish this goal for later $(k)$, it suffices to show that the sequence
\begin{align} 
x_{(k)} &\equiv  e_{R,(k)}^{ \fr{\de \ga}{2} + 2 \de } \Xi_{(k)}^\eta \\
&= e_{R,(k)}^{ \de ( 2 + \fr{\ga}{2} ) } \Xi_{(k)}^\eta \label{eq:xSubKtemp}
\end{align}
is decreasing in $(k)$.  (Here we have applied the identity $\ga(1 + \de) = 1$.)  

From (\ref{eq:selfSimAns}), (\ref{eq:XiEvolves}) and (\ref{eq:energyRatioIs}), we can deduce the following evolution rules for the parameters
\begin{align}
e_{R,(k+1)} &= \fr{e_{R,(k)}^\de}{Z} e_{R,(k)} \\
\Xi_{(k + 1)} &= C Z^2 \left( \fr{e_v}{e_R} \right)_{(k)}^{1/2} e_{R,(k)}^{- 2 \de} \Xi_{(k)} \\
&= C Z^2 \left[Z^\ga e_{R,(k)}^{ - \de \ga } \right]^{1/2} e_{R,(k)}^{- 2 \de} \Xi_{(k)} \\
&= C Z^{2 +\fr{\ga}{2}} e_{R,(k)}^{ \fr{- \de \ga}{2} - 2 \de } \Xi_{(k)}
\end{align}
implying that
\begin{align}
x_{(k+1)} &= \left[ Z^{-1} e_{R,(k)}^{ \de } \right]^{\de ( 2 + \fr{\ga}{2} )} \left[  C Z^{2 +\fr{\ga}{2}} e_{R,(k)}^{ - \de (2 + \fr{\ga}{2}) } \right]^\eta x_{(k)} \label{eq:doesXiEtaContinue}
\end{align}

\paragraph{Inequality (\ref{ineq:continueAtLeastXi})}

Observe that for $\eta < \eta_0$ sufficiently small (say, $\eta_0 = \fr{\de}{2}$), the exponent of $Z$ in inequality (\ref{ineq:continueAtLeastXi}) is strictly negative.  Fix such an $\eta$.  Then for any given $C$ and $e_{R,(0)}$, it is possible to choose $Z$ large enough so that (\ref{ineq:continueAtLeastXi}) is satisfied.
\end{proof}

\subsection{The Main Lemma implies the Main Theorem}

We now begin the proof that Lemma (\ref{lem:iterateLem}) implies Theorem (\ref{thm:theMainThm}).

\begin{proof}[Proof of the Main Theorem (\ref{thm:theMainThm})]

Let $\de > 0,$ $e_{R,(0)} > 0$ and $\Xi_{(0)} \geq 2$ be given numbers.  We will construct a sequence of solutions $(v_{(k)}, p_{(k)}, R_{(k)})$ of the Euler Reynolds system such that $R_{(k)}$ converges to $0$ uniformly and such that $v_{(k)}$ converges in $C^{0, \a}_{t,x}$ for all \[ \a < \a^* = \fr{1+ \de}{5 + 9 \de + 4 \de^2} \] We will also see that $p_{(k)}$ converges in $C^{0, \a}_{t,x}$ for all $\a < 2 \a^*$.

$ $

\noindent{\bf Step 0:}

Choose $\eta > 0$ small enough such that the total exponent 
\[ \de^2(2 + \fr{\ga}{2}) - \eta \de (2 + \fr{\ga}{2}) \]
to which $e_{R,(k)}$ is raised in equation (\ref{eq:doesXiEtaContinue}) is positive.  For example, $\eta = \fr{\de}{2}$ will suffice.  For this choice of $\eta$ and $L = 2$, let $C = C_\eta$ be the constant given by the Main Lemma (\ref{lem:iterateLem}).

To initialize the construction, apply the base case Lemma (\ref{lem:baseCase}) with the parameters \[ (\de, C_\eta, e_{R,(0)}, \Xi_{(0)}) \] to obtain a constant $Z > 0$ and a solution $(v_{(0)}, p_{(0)}, R_{(0)})$ to the Euler-Reynolds equations with frequency and energy levels below $(\Xi_{(0)}, e_{v,(0)}, e_{R,(0)})$ where
\begin{align}
e_{R,(0)} &= \fr{e_{v,(0)}^{1 + \de}}{Z}
\end{align}
We can require this solution to have support in the interval $I_{(0)} = \{ 0 \}$, since the solution guaranteed by the Lemma is identically $0$.

\subsubsection{Choosing the Parameters}

\noindent {\bf Step k:}

To continue the construction, we will iterate Lemma (\ref{lem:iterateLem}) to produce a sequence $(v_{(k)}, p_{(k)}, R_{(k)})$ with corresponding energy and frequency levels below $(\Xi_{(k)}, e_{v, (k)}, e_{R, (k)} )$, and we impose the ansatz
\begin{align} \label{eq:energyAnsatz}
e_{R, (k+1)} &= Z^{-1} e_{R,(k)}^{(1 + \de)}
\end{align}
The inequality (\ref{eq:vAndRinit}) implies that $e_{R,(1)} = Z^{-1} e_{R,(0)}^{(1 + \de)} < e_{R,(0)}$, so by induction the sequence $e_{R,(k)}$ defined recursively by (\ref{eq:energyAnsatz}) is a decreasing sequence.

In fact, the ansatz (\ref{eq:energyAnsatz}) together with the requirement $e_{R,(1)} < e_{R,(0)}$ guarantees that the sequence $e_{R,(k)}$ decreases to $0$ super-exponentially, as one can see by writing the relation as
\begin{align}
e_{R, (k+1)} &= (Z^{-1} e_{R,(k)}^{\de}) e_{R,(k)}
\end{align}
Then $Z^{-1} e_{R,(k)}^{\de} < 1$ for $k = 0$, so by induction even the ratio between consecutive terms 
\[ \fr{e_{R,(k+1)}}{e_{R,(k)}} = Z^{-1} e_{R,(k)}^{\de} \]
decreases to $0$.

Because $e_{v,(k+1)} = e_{R, (k)}$, the ansatz (\ref{eq:energyAnsatz}) also implies by induction a gap between the energy levels $e_v$ and $e_R$
\begin{align}
e_{v,(k)} &= Z^{\ga} e_{R, (k)}^\ga \quad k \geq 1 \\
\ga ( 1 + \de ) &= 1
\end{align}
implying that the ratio between the energy levels
\begin{align}
\left( \fr{e_v}{e_R} \right)_{(k)} &= Z^\ga e_{R,(k)}^{-\de \ga}
\end{align}
increases super-exponentially in $(k)$.

In order to achieve this Ansatz, we must verify that the choice of 
\begin{align}
N_{(k)} &= Z^2 \left( \fr{e_v}{e_R} \right)_{(k)}^{1/2} e_{R,(k)}^{-2\de}
\end{align}
always satisfies the admissibility conditions (\ref{eq:conditionsOnN}) and (\ref{eq:conditionsOnN2}).

In line (\ref{goal:threeHalvesOk}) in the proof of Lemma (\ref{lem:baseCase}), we showed that the condition (\ref{eq:conditionsOnN}) is equivalent to the inequality 
\begin{align} \label{eq:simplerConditionsOnN}
Z^{\ga - 2} e_{R,(k)}^{- \de \ga + 2 \de} &\leq 1
\end{align}

For $(k) = 0$, Lemma (\ref{lem:baseCase}) guarantees that this inequality is satisfied.  Since the power $\de( 2 - \ga)$ to which $e_{R,(k)}$ is raised is positive, and $e_{R,(k)}$ decreases, we see that (\ref{eq:simplerConditionsOnN}) and therefore (\ref{eq:conditionsOnN}) are always satisfied by induction.

Similarly, inequality (\ref{eq:conditionsOnN2}) is satisfied for $(k) = 0$ by Lemma (\ref{lem:baseCase}), and the proof showed that (\ref{eq:conditionsOnN2}) continues to be satisfied if we have that
\begin{align}
\left[ Z^{-1} e_{R,(k)}^{ \de } \right]^{\de ( 2 + \fr{\ga}{2} )} \left[  C_\eta Z^{2 +\fr{\ga}{2}} e_{R,(k)}^{ - \de (2 + \fr{\ga}{2}) } \right]^\eta \leq 1 \label{eq:easyConditionsOnN2}
\end{align}

According to the Lemma (\ref{lem:baseCase}), this inequality is satisfied for $(k) = 0$.  By the choice of $\eta < \de$, the power to which $e_{R,(k)}$ is raised on the left hand side of inequality (\ref{eq:easyConditionsOnN2}) is positive, so the inequality (\ref{eq:easyConditionsOnN2}) and therefore (\ref{eq:conditionsOnN2}) continues to hold for all $(k) \geq 0$.  This completes the proof that the choice of $N_{(k)}$ is always admissible.

\subsubsection{Choosing the Energies} \label{sec:energyChoice}

Along with these choices of $(\Xi_{(k)}, e_{v,(k)}, e_{R,(k)})$ and $N_{(k)}$,  the main lemma also grants us the ability to approximately prescribe the energy increment $e_{(k)}(t)$ of the correction.

The energy must satisfy the estimates:
\begin{align} \label{eq:energyRequired2}
|| \fr{d^l e_{(k)}^{1/2}}{dt^l} ||_{C^0} &\leq 1000 K^{1/2} (\Xi e_{v,(k)}^{-1/2})^l e_{R,(k)}^{1/2} &l = 0, 1, 2
\end{align}
and must satisfy the lower bound
\begin{align}
e_{(k)}(t) \geq K e_{R,(k)}
\end{align}
for $t$ belonging to the active region $t \in I_{(k)} \pm \th_{(k)}$, where the stress is supported $\mbox{ supp } R_{(k)} \subseteq I_{(k)} \times \T^3$.


For our construction, let $I_{(k)} = [a_{(k)}, b_{(k)}]$ be an interval such that
\[ \mbox{ supp } R_{(k)} \subseteq I_{(k)} \]
and let $\tau_{(k)}$ be a small number which we will specify.  Denote by  \[ I_{(k)} \pm (\th_{(k)} + \tau_{(k)}) = [a_{(k)} - (\th_{(k)} + \tau_{(k)}), b_{(k)} + (\th_{(k)} + \tau_{(k)})], \] and let 
\[ \chi_{(k)}(t) = \chi_{I_{(k)} \pm (\th_{(k)} + \tau_{(k)})}(t) \] be the characteristic function of this time interval.

Set
\begin{align}
 e_{(k)}^{1/2} (t) &= (20 K e_{R,(k)})^{1/2} \eta_{\tau_{(k)}} \ast \chi_{(k)}(t)
\end{align}
where 
\begin{align}
\eta_{\tau_{(k)}}(t) &= \tau_{(k)}^{-1} \eta(\fr{t}{\tau_{(k)}}) \label{eq:theTimeMoll}
\end{align} 
is a standard, smooth mollifier of integral $\int \eta_{\tau_{(k)}}(t) dt = 1$, supported in the ball $\{ |t| \leq |\tau_{(k)}| \}$.

Then, on the interval $I_{(k)} \pm \th_{(k)}$ containing the support of $(v_{(k)}, p_{(k)}, R_{(k)})$, we clearly have $e_{(k)}(t) = 20 K e_{R, (k)} \geq K e_{R,(k)}$, and we also have bounds of the form
\begin{align}
 || e_{(k)}^{1/2} (t) ||_{C^0} &\leq (20 K)^{1/2} e_{R,(k)}^{1/2} \\
\label{eq:energyVarBound} || \fr{d^l e_{(k)}^{1/2} }{dt^l} ||_{C^0} &\leq B \tau_{(k)}^{-l} K^{1/2} e_{R,(k)}^{1/2} &l = 1, 2
\end{align}

Therefore $e_{(k)}(t)$ is admissible as an approximate energy function if we choose 
\begin{align}
\tau_{(k)} = B_0 \Xi_{(k)}^{-1} e_{v,(k)}^{-1/2} = B_0 \th_{(k)} \label{eq:weChoseTauk}
\end{align}
where $B_0$ is a constant.  

\subsubsection{Regularity of the Velocity Field} \label{sec:regVelocity}

We now check that using the parameters chosen above, the sequence of solutions $(v_{(k)}, p_{(k)}, R_{(k)})$ in the above argument does in fact converge to a solution with the properties stated in the Theorem (\ref{thm:theMainThm}).

\paragraph{Continuity}

The $C^0$ convergence of \[ v_{(K)} = \sum_{k = 0}^K V_{(k)} \] and follows easily from the super-exponential decay of $e_{R,(k)}$, the estimate \[ ||V_{(k)}||_{C^0} \leq C e_{R,(k)}^{1/2} \] and the triangle inequality
\begin{align}
\sum_{k = 0}^\infty || V_{(k)} ||_{C^0} &\leq C \sum_{k = 0}^\infty e_{R,(k)}^{1/2}
\end{align}

Similarly, the $C^0$ convergence of \[ p_{(K)} = \sum_{k = 0}^K P_{(k)} \] follows from the estimate
\[ \co{ P_{(k)} } \leq C e_{R,(k)} \]

\paragraph{H{\" o}lder Norms of the Correction}

To measure what kind of higher regularity is achieved by this choice, observe that the perturbation $V_{(k)} = v_{(k+1)} - v_{(k)}$ obeys the estimates
\begin{align*}
||V_{(k)}||_{C^0} &\leq C e_{R,(k)}^{1/2} \\
||\nab V_{(k)} ||_{C^0} &\leq C N_{(k)} \Xi_{(k)} e_{R,(k)}^{1/2} \\
||\pr_t V_{(k)} ||_{C^0} &\leq \co{ (\pr_t + v_{(k)} \cdot \nab) V_{(k)} } + \co{v_{(k)} \cdot \nab V_{(k)} }
\end{align*}
The estimate for the time derivative is no worse than the bound on the spatial derivative for the following reasons.  We have already established that $\co{v_{(k)}}$ are uniformly bounded, so the term 
\[ \co{v_{(k)} \cdot \nab V_{(k)} } \leq C \co{ \nab V_{(k)} } \]
with a constant depending only on $e_{R,(0)}$ and other absolute constants from the lemmas.

Despite the loss of a factor 
\begin{align}
{\bf b}_{(k)}^{-1} &= \left( \fr{N_{(k)} e_{R,(k)}^{1/2}}{e_{v,(k)}^{1/2}} \right)^{1/2} \\
 &= \left( Z^2 e_{R,(k)}^{-2 \de} \right)^{1/2} = Z e_{R,(k)}^{- \de}
\end{align}
in the estimates for material derivatives in Lemma (\ref{lem:iterateLem}), the estimate for the material derivative is still better than the bounds on the spatial derivatives.  Namely
\begin{align}
\co{ (\pr_t + v_{(k)} \cdot \nab) V_{(k)} } &\leq C {\bf b}_{(k)}^{-1} \Xi_{(k)} e_{v,(k)}^{1/2} e_{R,(k)}^{1/2} \\
&\leq C Z e_{R,(k)}^{- \de} \Xi_{(k)} e_{v,(k)}^{1/2} e_{R,(k)}^{1/2} \\
&= C Z e_{v,(k)}^{1/2} \Xi_{(k)} e_{R,(k)}^{1/2 - \de}
\end{align}
whereas the spatial derivatives are bounded by
\begin{align}
||\nab V_{(k)} ||_{C^0} &\leq C N_{(k)} \Xi_{(k)} e_{R,(k)}^{1/2} \\
&\leq C Z^2 \left( \fr{e_v}{e_R} \right)_{(k)}^{1/2} e_{R,(k)}^{ - 2 \de} \Xi_{(k)} e_{R,(k)}^{1/2} \\
&= C \left( \fr{e_v}{e_R} \right)_{(k)}^{1/2} \Xi_{(k)} e_{R,(k)}^{1/2 - 2 \de}
\end{align}
which is worse than the estimate for the material derivative by a super-exponentially large factor of $e_{R,(k)}^{-1/2 - \de}$.

These observations lead to the bounds 
\begin{align}
||V_{(k)}||_{C^0} &\leq C e_{R,(k)}^{1/2} \\
\co{ \nab_{t,x} V_{(k)} } &\leq C \left( \fr{e_v}{e_R} \right)_{(k)}^{1/2} e_{R,(k)}^{- 2 \de} \Xi_{(k)} e_{R,(k)}^{1/2}
\end{align}

Which, by interpolation, imply a bound
\begin{align}
|| V_{(k)} ||_{C^\a_{t,x}} &\leq C \left( \fr{e_v}{e_R} \right)_{(k)}^{\a/2} \Xi_{(k)}^\a e_{R,(k)}^{1/2 - \de \a}
\end{align}
for $0 \leq \a \leq 1$.

Therefore, to determine whether or not the series
\begin{align}
 \sum_{(k)} || V_{(k)} ||_{C^\a_{t,x}} &< \infty  \label{eq:holdSeries}
\end{align}
converges, it suffices to study the growth or decay of the quantity
\begin{align}
E_{\a, (k)} &\equiv  \left( \fr{e_v}{e_R} \right)_{(k)}^{\a/2} \Xi_{(k)}^\a e_{R,(k)}^{1/2 - \de \a} 
\end{align}

At this point, one way we can proceed would be to follow the method of Section (\ref{sec:baseCase}) and establish an evolution law of the form
\begin{align}
E_{\a, (k+1)} &\leq C_\a e_{R,(k)}^{\de f(\a)} E_{\a,(k)} \label{eq:evLawErForm}
\end{align}
using parameter evolution laws (\ref{eq:selfSimAns}), (\ref{eq:XiEvolves}) and (\ref{eq:energyRatioIs}).  Once such a law has been established, we have a trichotomy:
\begin{itemize}
\item If $f(\a)$ is positive, then $E_{\a,(k)}$ decays super-exponentially 
\item If $f(\a)$ is negative, then $E_{\a,(k)}$ grows super-exponentially 
\item If $f(\a)$ is zero, then $E_{\a,(k)}$ grows exponentially
\end{itemize}

The exponent $f(\a)$ depends linearly on $\a$, and it turns out that there is a critical $\a^* \in (0,1)$ depending on $\de$ at which $f(\a^*) = 0$.  For all $\a < \a^*$, the series (\ref{eq:holdSeries}) converges, and the solution $v \in C_{t,x}^\a$.  In our case, $\a^* = \fr{1 + \de}{5 + 9 \de + 4 \de^2}$.

Rather than proceeding to derive a law such as (\ref{eq:evLawErForm}), we illustrate an equivalent method for determining $\a^*$ which gives conceptual insight into the iteration, and which tends to be simpler for computation.  In fact, the method we present here can be used to accelerate the computations of the preceding Section (\ref{sec:baseCase}).  Namely, the evolution laws take the form 
\begin{align}
E_{\a, (k+1)} &\leq C_\a Z^{-f(\a)} e_{R,(k)}^{\de f(\a)} E_{\a,(k)} \label{eq:evLawErFormWithZ}
\end{align}
where we observe that $Z^{-1}$ and $e_{R,(k)}^\de$ are raised to the same power $f(\a)$.  This dimensional observation is a direct consequence of the Ansatz (\ref{eq:selfSimAns}).  The method we present in the following Section allows us to determine whether these exponents are positive or negative, which allows us to determine whether choosing $Z$ large can make such an evolution factor small at stage $(k) = 0$.  Informally, the quantities whose evolution remains under control in the late stages of the iteration are also the quantities whose evolution we can control at the beginning of the iteration.

\subsubsection{Asymptotics for the Parameters } \label{sec:asympParams}

In order to determine whether or not quantities such as the bounds 
\begin{align}
E_{\a, (k)} &\equiv  \left( \fr{e_v}{e_R} \right)_{(k)}^{\a/2} \Xi_{(k)}^\a e_{R,(k)}^{1/2 - \de \a} \label{eq:Ealpha}
\end{align}
for the H{\" o}lder norms remain under control in the late stages of the iteration, it suffices to understand the asymptotic relationships
\begin{align}
e_{v,(k)} &\approx \Xi_{(k)}^{\b_v} \label{eq:evandXi}\\
e_{R,(k)} &\approx \Xi_{(k)}^{\b_R} \label{eq:eRandXi}
\end{align}
between the energy and frequency parameters which emerge during the late stages of the iteration.  The following method can be used to determine these relationships, and reduces the control of quantities such as $E_{\a,(k)}$ to simple linear algebra calculations.

\paragraph{ A remark about dimensional analysis}

The asymptotic relationships (\ref{eq:evandXi}) and (\ref{eq:eRandXi}) cannot be deduced from dimensional analysis, since the dimensions of time and space for the Euler equations are independent of each other.  Rather, these relationships depend on the efficiency of the convex integration procedure that we have constructed.

\paragraph{Heuristics for Parameter Evolution}

Let us recall the parameter evolution laws
\begin{align}
e_{v,(k + 1)} &= e_{R,(k)} \\
e_{R,(k + 1)} &= Z^{-1} e_{R,(k)}^{1 + \de} \label{eq:erLaw}\\
\Xi_{(k+1)} &= C Z^2 \left( \fr{e_v}{e_R} \right)_{(k)}^{1/2} e_{R,(k)}^{- 2 \de}  \Xi_{(k)} \label{eq:XiLaw}
\end{align}
In view of the formulas (\ref{eq:Ealpha}) and (\ref{eq:XiLaw}), it is more convenient to study the evolution of the energy ratio $\left( \fr{e_v}{e_R} \right)_{(k)}$, rather than the energy level $e_{v,(k)}$.  For the energy ratio, we have the law
\begin{align}
\left( \fr{e_v}{e_R} \right)_{(k+1)} &= Z e_{R,(k)}^{- \de}
\end{align}

We can summarize these laws compactly by taking the logarithms of the equations, and writing.
\begin{align}
\vect{ \log e_R \\ \log(e_v/e_R) \\ \log \Xi}_{(k+1)} &\approx \mat{ccc}{ 1 + \de & 0 & 0 \\ - \de & 0 & 0 \\ - 2 \de & 1/2 & 1} \vect{ \log e_R \\ \log(e_v/e_R) \\ \log \Xi}_{(k)} 
\end{align}
This evolution law is not completely precise because it forgets about the constants in the exact parameter evolution laws; however, the {\bf parameter evolution matrix}
\begin{align}
T_\de &= \mat{ccc}{ 1 + \de & 0 & 0 \\ - \de & 0 & 0 \\ - 2 \de & 1/2 & 1}
\end{align}
contains all the important information regarding the asymptotic relationships between the parameters.  Informally, we have an asymptotic
\begin{align}
\vect{ \log e_R \\ \log(e_v/e_R) \\ \log \Xi}_{(k)} &\approx \left[ T_\de^k \right] \vect{ \log e_R \\ \log(e_v/e_R) \\ \log \Xi}_{(0)} \label{eq:lateParams1}
\end{align}
The reason being that, regardless of the initial choice of parameters, the right hand side will point in the direction of the eigenvector of $T_\de$ corresponding to its largest eigenvalue $\la_+$.  In this case, $T_\de$ is a lower triangular matrix whose eigenvalues $0$, $1$ and $\la_+ = 1 + \de$ can be immediately read from the diagonal or by inspecting the form of the matrix.  For example, the top row shows that $[1,0,0]$ is a row eigenvector for $\la_+ = (1 + \de)$, the third column gives $1$ as an eigenvalue, and the second and third columns are linearly dependent, giving $0$ as another eigenvalue.  Because $\la_+ > 1$, the heuristic (\ref{eq:lateParams1}) suggests that
\begin{align}
\vect{ \log e_R \\ \log(e_v/e_R) \\ \log \Xi}_{(k)} &\approx c_{(0)} (1 + \de)^k \psi_+ \label{eq:lateParams}
\end{align}
where $\psi_+$ is an eigenvector corresponding to the dominant eigenvalue 
\[ T_\de \psi_+ = (1 + \de) \psi_+ \] 
which belongs to the {\bf stable octant} $\OO(-, +, +)$ defined by
\begin{align} 
\psi_+ &\in \OO(-, +, +) = \left\{ \vect{a \\ b \\ c} ~|~ a < 0, b > 0, c > 0 \right\} 
\end{align}
of $\R^3$ in which the parameter logarithms
\begin{align}
\psi_{(k)} &\equiv \vect{ \log e_R \\ \log(e_v/e_R) \\ \log \Xi}_{(k)}
\end{align}
eventually reside.   The coefficient $c_{(0)}$ is obtained by expressing $\psi_{(0)}$ relative to a basis which diagonalizes $T_\de$, and can be assumed to be positive.

In this example, the minimal polynomial satisfied by the parameter evolution matrix is 
\begin{align}
(T_\de - (1 + \de) ) T_\de (T_\de - 1) &= 0
\end{align}
Thus, one can take for $\psi_+$ any vector in $\tx{Im } T_\de(T_\de - 1)$ which also belongs to the stable octant $\OO(-, +, +)$, such as
\begin{align}
\psi_+ &= - \de^{-1} T_\de (T_\de - 1)\vect{1 \\ 0 \\ 0} \\
&= - T_\de \vect{1 \\ -1 \\ -2} \\
\psi_+ &= \vect{-1 \\ 0 \\ \fr{5}{2} } + \de \vect{-1 \\ 1 \\ 2}
\end{align}

Applying the heuristic (\ref{eq:lateParams}) to the approximation
\begin{align}
\log E_{\a, (k)} &\approx (1/2 - 2 \de \a) \log e_{R,(k)} + \fr{\a}{2} \log (e_v/e_R)_{(k)} + \a \log \Xi_{(k)} \\
\log E_{\a,(k)} &\approx [ 1/2 - 2 \de \a, \a/2, \a] \psi_{(k)} \\
&\approx c_{(0)} (1 + \de)^k [ 1/2 - 2 \de \a, \a/2, \a] \psi_+ \\
&= c_{(0)} (1 + \de)^k [ (1/2 - 2 \de \a)\cdot(-1 - \de) + (\a/2)\cdot(\de) + (\a)\cdot(5/2 + 2 \de) \\
&= c_{(0)} (1 + \de)^k \left[ -\fr{(1 + \de)}{2} + (\fr{5}{2} + \fr{9 \de}{2} + 2 \de^2)\a \right]
\end{align}

According to this approximation, the bounds on the H{\" o}lder norms decay at a rate of
\begin{align}
E_{\a,(k)} &\approx e^{- c_\a (1 + \de)^k }
\end{align}
with $c_\a > 0$ for all 
\[\a < \a^* = \fr{(1 + \de)}{5 + 9 \de + 4 \de^2}\]
so that the resulting solution can have any H{\" o}lder regularity less than $1/5$.

Let us discuss how to make this heuristic argument more precise.

\paragraph{Rigorous proof of H{\" o}lder regularity}

We can assume without loss of generality that $e_{R,(0)} < 1$ after reindexing the stages, and furthermore that 
\[ \psi_{(0)} = \vect{ \log e_R \\ \log(e_v/e_R) \\ \log \Xi }_{(0)} \]
belongs to the stable octant $\OO(-, +, +)$.

Define
\begin{align}
E_{\a, (k)} &\equiv  \left( \fr{e_v}{e_R} \right)_{(k)}^{\a/2} \Xi_{(k)}^\a e_{R,(k)}^{1/2 - \de \a} \label{eq:EalphaAgain}
\end{align}
  
Then 
\begin{align}
\log E_{\a, (0)} &= [ 1/2 - 2 \de \a, \a/2, \a] \psi_{(0)}
\end{align}
and upon the first iteration
\begin{align}
\log E_{\a,(1)} &= \log C_\a + [ 1/2 - 2 \de \a, \a/2, \a] \left[ T_\de \right] \psi_{(0)} 
\end{align}
for some constant $C_\a > 1$ that can be expressed in terms of $Z$ and the constant given by the Lemma (\ref{lem:iterateLem}).  In general, by induction, we can write
\begin{align}
\log (E_{\a,(k)} / C_\a^k ) &= [ 1/2 - 2 \de \a, \a/2, \a] \left[ T_\de^k \right] \psi_{(0)} 
\end{align}
where we have absorbed the exponential growth factor into the left hand side.

By the Lagrange interpolation formula, we can write $T_\de^k$ as a linear combination of the operators which project to the eigenspaces of $T_\de$
\begin{align}
\begin{split}
T_\de^k = (1 + \de)^k \left[ \fr{T_\de(T_\de - 1)}{(1 + \de)\cdot\de} \right] &+ 1^k \left[ \fr{(T_\de - (1 + \de))T_\de}{(-\de)\cdot 1} \right] \\
&+ 0^k \left[ \fr{(T_\de - (1 + \de))(T_\de - 1)}{-(1 + \de)\cdot (-1)} \right] 
\end{split} \\
= (1 + \de)^k \left[ \fr{T_\de(T_\de - 1)}{(1 + \de)\cdot\de} \right] &+ \left[ \fr{(T_\de - (1 + \de))T_\de}{(-\de)\cdot 1} \right]
\end{align}

This formula is close to proving our desired asymptotic, since it implies that
\begin{align}
\log (E_{\a,(k)} / C_\a^k ) &= (1 + \de)^k [ 1/2 - 2 \de \a, \a/2, \a] \left[ \fr{T_\de(T_\de - 1)}{(1 + \de)\cdot\de} \right] \psi_{(0)} + O(1) 
\end{align}

One way to conclude the proof at this point is to first observe that 
\begin{align}
\left[ \fr{T_\de(T_\de - 1)}{(1 + \de)\cdot\de} \right] \psi_{(0)} &= c_{(0)} \psi_+ \label{eq:findCo}
\end{align}
and to prove that $c_{(0)}$ is positive.  To conclude that the coefficient $c_{(0)}$ is positive, we can use the fact that $[1, 0, 0]$ is a row eigenvector for the dominant eigenvalue $\la_+$.  Namely, we can determine $c_{(0)}$ by applying $[1, 0, 0]$ to both sides of equation (\ref{eq:findCo}).  Then, using the fact that $[1, 0, 0]$ is invariant under projection to the $\la_+$ eigenspace, it suffices to observe that the first entry
\begin{align}
[1, 0, 0 ] \left[ \fr{T_\de(T_\de - 1)}{(1 + \de)\cdot\de} \right] \psi_{(0)} &= [1, 0, 0] \psi_{(0)} \\
&= \log e_{R,(0)}
\end{align} 
is negative, because
\begin{align}
[1, 0, 0] \psi_+ &= - (1 + \de)
\end{align}
is negative as well.

This calculation establishes a super-exponential decay estimate on the bounds of the H{\" o}lder norms
\[ E_{\a,(k)} = C_\a^k e^{- c_\a (1 + \de)^k + O(1)} \quad \quad \a < \a^* \]
for some constants $C_\a, c_\a > 0$, and therefore concludes the proof of the H{\" o}lder regularity.

\subsubsection{ Regularity of the Pressure }

Building on the calculations and methods of Sections (\ref{sec:regVelocity}) and (\ref{sec:asympParams}), we can quickly determine what other quantities remain under control during the iteration.

For example, the pressure increment satisfies the estimates
\begin{align}
||P_{(k)}||_{C^0} &\leq C e_{R,(k)} \\
\co{ \nab P_{(k)} } &\leq C N_{(k)} \Xi_{(k)} e_{R,(k)} \\
&\leq C \left( \fr{e_v}{e_R} \right)_{(k)}^{1/2} e_{R,(k)}^{- 2 \de} \Xi_{(k)} e_{R,(k)}
\end{align}
and just as in the computations at the beginning of Section (\ref{sec:regVelocity}), writing the time derivative as 
\[ \pr_t = (\pr_t + v \cdot \nab) - v \cdot \nab \]
reveals that the time derivative of $P$ obeys the same quality of estimates as the spatial derivatives, leading to the bounds
\begin{align}
||P_{(k)}||_{C^0} &\leq C e_{R,(k)} \\
\co{ \nab_{t,x} P_{(k)} } &\leq C \left( \fr{e_v}{e_R} \right)_{(k)}^{1/2} e_{R,(k)}^{- 2 \de} \Xi_{(k)} e_{R,(k)}
\end{align}
which by interpolation give H{\" o}lder estimates of the form
\begin{align}
|| P_{(k)} ||_{C^\a_{t,x}} &\leq C \left( \fr{e_v}{e_R} \right)_{(k)}^{\a/2} \Xi_{(k)}^\a e_{R,(k)}^{1 - 2 \de \a}
\end{align}
Following the methods of Section (\ref{sec:asympParams}), define
\begin{align}
E_{\a, (k)} &\equiv \left( \fr{e_v}{e_R} \right)_{(k)}^{\a/2} \Xi_{(k)}^\a e_{R,(k)}^{1 - 2 \de \a}
\end{align}
so that $|| P_{(k)} ||_{C^\a_{t,x}} \leq C E_{\a,(k)}$ and
\begin{align}
\log (E_{\a, (k)} / C_\a^k ) &= [1 - 2 \de \a, \fr{\a}{2}, \a ] T_\de^k \psi_{(0)}
\end{align}

As in Section (\ref{sec:asympParams}), we have that $E_{\a, (k)}$ decays super exponentially when
\begin{align}
[1 - 2 \de \a, \fr{\a}{2}, \a ] \psi_+ &= [1 - 2 \de \a, \fr{\a}{2}, \a ] \left(\vect{-1 \\ 0 \\ \fr{5}{2} } + \de \vect{-1 \\ 1 \\ 2} \right)
\end{align}
is negative.  In this case,
\begin{align}
[1 - 2 \de \a, \fr{\a}{2}, \a ] \psi_+ &= -(1 + \de) + \a ( \fr{5}{2} + (1 + \de) 2 \de + \fr{\de}{2} + 2\de )
\end{align}
which is negative when
\[ \a < 2 \a^* = \fr{1 + \de}{5 + 9 \de + 4 \de^2 } \]
Thus, for the solution $p$, $p \in C_{t,x}^\a$ when $\a < 2 \a^*$.

\subsubsection{ Compact Support in Time } \label{sec:cpctSuppInTime}

According to the choices made in section (\ref{sec:energyChoice}), the time interval $T_{(k)}$ containing the support of the solutions grows by at most a multiple of the time scale $\th_{(k)} = \Xi_{(k)}^{-1} e_{v,(k)}^{-1/2}$ after each iteration.  Therefore, the resulting solution has compact support in time if the series
\begin{align} \sum_{(k)} \th_{(k)} < \infty \label{eq:seriesTimes} \end{align} converges.  Note that the time scale $\th_{(k)}^{-1}$ is essentially the Lipschitz norm of the approximate solution $v_{(k)}$.  The fact that the natural time scale tends to $0$ as we add in the finer scales $\Xi_{(k)}^{-1}$ is ultimately tied to the fact that the solution we construct is far from being a Lipschitz vector field.

To determine whether the series (\ref{eq:seriesTimes}) is finite, it suffices to show that the logarithm of the time scale
\begin{align}
\log \th_{(k)} &= -\fr{1}{2} \log(e_{v,(k)}) - \log \Xi_{(k)} \\
&= - \fr{1}{2} \log e_{R,(k)} - \fr{1}{2} \log( e_v/e_R )_{(k)} - \log \Xi_{(k)} 
\end{align}
decreases super-exponentially, and for this purpose it is enough to check that
\begin{align}
[-1/2, -1/2, -1 ] \psi_+ &= [-1/2, -1/2, -1 ] \left(\vect{-1 \\ 0 \\ \fr{5}{2} } + \de \vect{-1 \\ 1 \\ 2} \right) \\
&= \fr{(1 + \de)}{2} - \fr{\de}{2} - (\fr{5}{2} + 2 \de) < - 2 
\end{align}
is negative, which is clearly the case.

Actually, one can even show that the time interval containing the solution can be made arbitrarily small by first observing that that
\begin{align}
\th_{(0)} &= \Xi_{(0)}^{-1} e_{v,(0)}^{-1/2} \\
&= \Xi_{(0)}^{-1}[ Z^{-\fr{1}{2(1 + \de)}} e_{R,(0)}^{-\fr{1}{2(1 + \de)}} ] 
\end{align}
can be made arbitrarily small by choosing $Z$ large, and then using the evolution rules for the above formula to observe that one can easily arrange
\begin{align}
\th_{(k+1)} &\leq \fr{\th_{(k)}}{2}
\end{align}
for all $(k) = 0, \ldots, \infty$ once $Z$ is chosen large enough.

\subsubsection{ Nontriviality of the Solution } \label{sec:nontriviality}

In order to show that the solutions produced by this process are nontrivial, we prove that the energy 
\[ \fr{1}{2} \int |v_{(k)}|^2 dx \]
of $v_{(k)} = \sum_{(j) = 0}^k V_{(j)}$ eventually increases in $k$ on the nonempty interval $I_{(0)}$.

Suppose that $t^* \in I_{(0)}$. To verify that $\fr{1}{2}\int |v_{(k)}|^2(t^*, x) dx$ eventually increases with $(k)$, we calculate
\begin{align}
\int \fr{|v_{(k+1)}|^2}{2}(t^*, x) dx - \int \fr{|v_{(k)}|^2}{2}(t^*, x) dx &= \int [ \fr{|v_{(k)} + V_{(k)}|^2}{2} - \fr{|v_{(k)}|^2}{2} ] dx \\
&= \int \fr{|V_{(k)}|^2}{2} dx +  \int v_{(k)} \cdot V_{(k)} dx \label{eq:energyInc}
\end{align}

Our choice of $e_{(k)}$ assures us that, we have a lower bound of
\begin{align}
\int \fr{|V_{(k)}|^2}{2}(t^*, x) dx &\geq A e_{R,(k)}
\end{align}
for some constant $A$, since Lemma (\ref{lem:iterateLem}) implies that the difference
\[
\int \fr{|V_{(k)}|^2}{2} dx - \int e_{(k)}(t) dx
\]
is bounded by
\begin{align}
| \int \fr{|V_{(k)}|^2}{2}(t,x) dx - \int e_{(k)}(t) dx | &\leq C \fr{e_{R,(k)}}{N_{(k)}}
\end{align}
and $N_{(k)}$ is an increasing sequence whose first term
\begin{align}
 N_{(0)} &= Z^2 \left( \fr{e_v}{e_R} \right)_{(0)}^{1/2} e_{R,(0)}^{- 2 \de}  \\
 &= Z^{2 + \fr{\de}{2(1 + \de)}} e_{R,(0)}^{-\de( 2 + \fr{1}{2(1 + \de)})}
\end{align}
can be made arbitrarily large by choosing $Z$ large.

Thus, in order to check that the energy increases for each stage $(k)$, it suffices to bound the inner product
\begin{align}
 {<} v_{(k)}, V_{(k)} {>}_{L^2(\T^3)} &= \int v_{(k)} \cdot V_{(k)} dx \label{eq:vVcorrel}
 \end{align}
which measures the correlation between the correction $V_{(k)}$ and the velocity field $v_{(k)}$.  

We expect the inner product (\ref{eq:vVcorrel}) to be fairly small because $V_{(k)}$ oscillates at a much higher frequency.  Thus, the first step to estimating the inner product (\ref{eq:vVcorrel}) is to write
\[ V_{(k)} = \nab \times W_{(k)} \]
and integrate by parts to see that
\begin{align}
 {<} v_{(k)}, V_{(k)} {>} &= \int v_{(k)} \cdot \nab \times W_{(k)} dx \\
 &= \int \nab \times v_{(k)} \cdot W_{(k)} dx
\end{align}
which, according to Lemma (\ref{lem:iterateLem}) and the bounds $\co{ \nab v_{(k)} } \leq \Xi_{(k)} e_{v,(k)}^{1/2}$ coming from the definition of frequency and energy levels, implies a bound
\begin{align}
|{<} v_{(k)}, V_{(k)} {>}| &\leq C [ \Xi_{(k)} e_{v,(k)}^{1/2} ] \cdot [\fr{e_{R,(k)}^{1/2}}{\Xi_{(k)} N_{(k)}} ] \\
&\leq C \fr{ e_{v,(k)}^{1/2} e_{R,(k)}^{1/2}}{N_{(k)}} \\
&= C \left(\fr{ e_v}{e_R} \right)_{(k)}^{1/2} N_{(k)}^{-1} e_{R,(k)} \\
&= C ( Z^{-2} e_{R,(k)}^{2 \de} )e_{R,(k)}
\end{align}

Applying this estimate to the formula (\ref{eq:energyInc}) for the energy increment, we can even ensure that the energy increment
\begin{align}
\int \fr{|v_{(k+1)}|^2}{2}(t^*, x) dx - \int \fr{|v_{(k)}|^2}{2}(t^*, x) dx &\geq A e_{R,(k)} - C ( Z^{-2} e_{R,(k)}^{2 \de} )e_{R,(k)}
\end{align}
is positive at the initial stage $(k) = 0$ by choosing $Z$ large.  Since $e_{R,(k)}^{2 \de}$ decreases, the energy increment will be positive at all future stages as well.

Thus, the solution obtained at the end of the process is nontrivial because the energy increases at $t^*$ after each iteration, and because $e_{R,(0)}$ was arbitrary, we can make the energy of the solutions arbitrarily large.

This completes the proof of Theorem (\ref{thm:theMainThm}).

\end{proof}
\section{Gluing Solutions}
Here we note that the method of the previous section can be extended to prove Theorem (\ref{thm:mainThm2}) on the gluing of solutions, which states the following.

\begin{thm} \label{thm:mainThm2repeat}
For every smooth solution $(v, p)$ to incompressible Euler on $(-2, 2) \times \T^3$, there exists a H\"{o}lder continuous solution $({\bar v}, {\bar p})$ which coincides with $(v, p)$ on $(-1,1) \times \T^3$ but is equal to a constant outside of $(-3/2, 3/2) \times \T^3$.
\end{thm}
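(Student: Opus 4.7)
The plan is to adapt the iteration from the proof of Theorem \ref{thm:theMainThm}, replacing the trivial initial data $(0,0,0)$ by a carefully constructed smooth Euler--Reynolds solution $(v_0, p_0, R_0)$ that already coincides with $(v,p)$ on $(-1,1)\times\T^3$ and equals a constant outside $(-3/2,3/2)\times\T^3$. To build it, let $c^l = |\T^3|^{-1}\int_{\T^3} v^l(t,x)\,dx$, which is independent of $t$ by conservation of momentum for smooth Euler. Fix a small $\varep \in (0,1/4)$ and a smooth cutoff $\chi(t)$ with $\chi \equiv 1$ on $[-1-\varep,1+\varep]$ and $\chi \equiv 0$ outside $[-3/2+\varep, 3/2-\varep]$, and set
\[ v_0^l = \chi(v-c)^l + c^l, \qquad p_0 = \chi^2 p. \]
Then $v_0$ is divergence free, $(v_0,p_0) = (v,p)$ where $\chi = 1$, and $(v_0, p_0) = (c,0)$ where $\chi = 0$. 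A direct calculation using the Euler equation for $v$ and $\pr_j v^j = 0$ yields
\[ \pr_t v_0^l + \pr_j(v_0^j v_0^l) + \pr^l p_0 = \chi'(v-c)^l + \chi(1-\chi)\bigl[\pr_t v^l + c^j \pr_j v^l\bigr] =: F^l. \]
At each $t$ the spatial mean $\int F^l\,dx$ vanishes (by the definition of $c$ and by $\int \pr_j v^l\,dx = 0$), so the inverse divergence operator of Proposition \ref{eq:smoothingOp} delivers a smooth symmetric $R_0^{jl}$ with $\pr_j R_0^{jl} = F^l$ and $\mbox{supp}\,R_0 \subseteq \{1+\varep \leq |t| \leq 3/2-\varep\}\times\T^3$.

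Because $(v_0, p_0, R_0)$ are smooth with compact $t$-support, they fit Definition \ref{def:subSolDef} for some $(\Xi_0, e_{v,0}, e_{R,0})$. I would then iterate the Main Lemma \ref{lem:iterateLem} exactly as in Section \ref{sec:lemImpThm}, but at each stage $(k)$ I would choose the energy function $e_{(k)}(t)\geq 0$ (constructed as in Section \ref{sec:energyChoice} as a mollified characteristic function of a slight enlargement of $\mbox{supp}\,R_{(k)}$) to be supported inside $\{1 + \varep/2 \leq |t| \leq 3/2 - \varep/2\}$. Since the Main Lemma forces
\[ \mbox{supp}\, V_{(k)} \cup \mbox{supp}\, P_{(k)} \cup \mbox{supp}\, R_{(k+1)} \subseteq \mbox{supp}\, e_{(k)} \times \T^3, \]
every correction vanishes on $(-1,1)\times\T^3$ and outside $(-3/2,3/2)\times\T^3$. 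The limits $\bar v = v_0 + \sum_k V_{(k)}$ and $\bar p = p_0 + \sum_k P_{(k)}$ therefore agree with $(v,p)$ on $(-1,1)\times\T^3$ and equal the constants $(c, 0)$ outside $(-3/2,3/2)\times\T^3$. Their H\"older continuity follows from the same summation and interpolation argument as in Section \ref{sec:regVelocity}, and $\|R_{(k)}\|_{C^0}\to 0$ ensures that $(\bar v, \bar p)$ is a weak solution of Euler.

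The main obstacle, and really the only non-routine point, is controlling the growth of $\mbox{supp}\,R_{(k)}$: the Main Lemma allows the support to expand by $\th_{(k)}$ on either side at each stage. By the analysis of Section \ref{sec:cpctSuppInTime}, $\sum_k \th_{(k)}$ converges and can be made arbitrarily small by taking the base-case parameter $Z$ large in Lemma \ref{lem:baseCase}; here, since $R_0$ is a fixed smooth object, we retain freedom to take $\Xi_0$ large and $e_{v,0} = Z^\ga e_{R,0}^\ga$ correspondingly large, so the base-case inequalities are satisfied exactly as before. Choosing $Z$ large enough that $\sum_k \th_{(k)} < \varep/2$ preserves a safety margin on both ends, so $\mbox{supp}\,R_{(k)} \subseteq \{1 \leq |t| \leq 3/2\}\times\T^3$ for every $k$. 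The remainder of the argument is a verbatim application of the machinery already developed for Theorem \ref{thm:theMainThm}.
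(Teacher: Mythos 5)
Your proposal is correct and follows essentially the same route as the paper: cut off the given smooth solution in time to produce a smooth Euler--Reynolds solution whose stress is supported in the transition region $\{1 < |t| < 3/2\}$, then iterate Lemma (\ref{lem:iterateLem}) with energy functions supported there, using the smallness of $\sum_k \th_{(k)}$ (Section (\ref{sec:cpctSuppInTime})) to keep the corrections away from $[-1,1]$ and $\{|t| \geq 3/2\}$. The only difference is cosmetic: you subtract the conserved mean $c$ before cutting off, whereas the paper removes the mean by a Galilean transformation and transforms back at the end.
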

\begin{proof}
By taking a Galilean transformation if necessary, we can assume without loss of generality that the solution has $0$ total momentum $\int v^l(t,x) dx = 0$.  Let ${\bar \eta}(t)$ be a smooth cutoff function equal to $1$ in a neighborhood of $[-1, 1]$, and equal to $0$, outside $(-6/5, 6/5)$.  Then the cut-off velocity and pressure $v_{(0)}(t,x) = {\bar \eta}(t) v(t,x)$, $p_{(0)}(t,x) = {\bar \eta}(t) p$ form a smooth solution to the Euler-Reynolds equations with compact support when coupled to a smooth stress tensor $R_{(0)}^{jl}$ which solves
\ali{
\pr_j R_{(0)}^{jl} &= {\bar \eta}'(t) v^l + \pr_j[ ({\bar \eta}^2(t) - {\bar \eta}(t))v^j v^l  ] \label{eq:needMyFirstR}
}
The existence of a smooth $R^{jl}$ solving (\ref{eq:needMyFirstR}) follows from the fact that the right hand side has integral $0$.

The proof of Theorem (\ref{thm:mainThm2repeat}) then proceeds by iterating Lemma (\ref{lem:iterateLem}) just as in the proof of Theorem (\ref{thm:theMainThm}).  Here the only difference is that the sets $I_{(k)}$ containing the support of each $R_{(k)}$ are unions of two intervals $I = I^1 \cup I^2$, with $I^1_{(k)} \subseteq (-\infty, 1)$ and $I^2_{(k)} \subseteq (1, \infty)$.  

We still choose energy functions of the form
\begin{align}
 e_{(k)}^{1/2} (t) &= (20 K e_{R,(k)})^{1/2} \eta_{\tau_{(k)}} \ast \chi_{(k)}(t)
\end{align}
with
\[ \chi_{(k)}(t) = \chi_{I_{(k)} \pm (\th_{(k)} + \tau_{(k)})}(t) \] 
being the characteristic function of a slight expansion of $I_{(k)}$.  According to the computations in Section (\ref{sec:cpctSuppInTime}), we can ensure that the corrections, which live on small neighborhoods of the supports of $R_{(k)}$, never touch the intervals $(-\infty, -3/2] \cup [-1,1] \cup [-3/2, \infty)$ by taking the initial frequency level $\Xi_{(0)}$ for $(v_{(0)}, p_{(0)}, R_{(0)} )$ to be sufficiently large.  Applying another Galilean transformation to return to the original frame of reference, we have the Theorem.
\end{proof}
\section{On Onsager's Conjecture} \label{sec:onOnsag}
As mentioned at the end of Section (\ref{sec:mainLem}), a stronger form of Lemma (\ref{lem:iterateLem}) would imply Onsager's conjecture.  Here we investigate what could be proven, given Conjecture (\ref{conj:idealCase}).  

\begin{thm} \label{thm:conditionalOnsag}
Assume conjecture (\ref{conj:idealCase}).  Then for every $\de > 0$, there exist nontrivial weak solutions $(v, p)$ to the Euler equations on $\R \times \T^3$ such that for
\[ \a^* = \fr{1}{3 + 2 \de} \]
we have
\begin{align}
v &\in C_{t,x}^\a(\R \times \R^3) \quad \quad \mbox{ for all } \a < \a^* \\
 p &\in C_{t,x}^{2\a}(\R \times \R^3)
\end{align}
whose support is contained in a bounded time interval.

Furthermore, one can arrange that the energy
\begin{align}
E(t) &= \fr{1}{2} \int |v|^2(t,x) dx 
\end{align}
is $C^1$ as a function of $t$.  In particular, the energy will increase or decrease in certain time intervals.
\end{thm}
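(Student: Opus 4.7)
The plan is to iterate Conjecture \ref{conj:idealCase} along the blueprint of Section \ref{sec:lemImpThm}, retuning parameters to exploit both the improved decay rate $e_R' = e_v^{1/2}e_R^{1/2}/N$ and the removal of the $\mathbf{b}^{-1}$ penalty in the material-derivative bounds. Starting from the trivial solution $(0,0,0)$ and imposing the self-similar Ansatz $e_{R,(k+1)} = Z^{-1}e_{R,(k)}^{1+\de}$ with $e_{v,(k+1)} = e_{R,(k)}$, matching this Ansatz against the new rate forces the choice $N_{(k)} = Z\,(e_v/e_R)_{(k)}^{1/2}\, e_{R,(k)}^{-\de}$, which is a factor $Z e_{R,(k)}^\de$ smaller than in the proof of Theorem \ref{thm:theMainThm}. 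A direct adaptation of Lemma \ref{lem:baseCase} then ensures that the admissibility conditions (\ref{eq:conditionsOnN}), (\ref{eq:conditionsOnN2}) persist at every stage once $Z$ is chosen sufficiently large and $\eta$ sufficiently small.

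Next I would read off the regularity exponents using the linear-algebra framework of Section \ref{sec:asympParams}. The parameter-evolution matrix in the ideal case becomes
\[ T_\de^{\mathrm{ideal}} \;=\; \begin{pmatrix} 1+\de & 0 & 0\\ -\de & 0 & 0\\ -\de & 1/2 & 1 \end{pmatrix}, \]
with spectrum $\{0,1,1+\de\}$ and dominant eigenvector in the stable octant $\OO(-,+,+)$ given, up to scale, by $\psi_+ = (-(1+\de),\ \de,\ (3+2\de)/2)^T$. Absence of the $\mathbf{b}^{-1}$ factor lets me conclude $\co{\nab_{t,x} V_{(k)}} \leq C N_{(k)}\Xi_{(k)} e_{R,(k)}^{1/2}$ via $\pr_t = (\pr_t + v\cdot\nab) - v\cdot\nab$ together with $\co{v_{(k)}}\leq C$ (and similarly for $P_{(k)}$), without picking up any extra $e_R^{-\de\a}$ penalty on interpolation. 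Testing the H\"older row vectors against $\psi_+$ gives
\[ [\tfrac{1}{2}-\de\a,\tfrac{\a}{2},\a]\psi_+ = \tfrac{1+\de}{2}\bigl(\a(3+2\de) - 1\bigr), \quad [1-\de\a,\tfrac{\a}{2},\a]\psi_+ = (1+\de)\bigl(\tfrac{\a(3+2\de)}{2} - 1\bigr), \]
which are negative exactly for $\a < \a^* = 1/(3+2\de)$ and $\a < 2\a^*$, respectively. Compact support in time with arbitrarily small supporting interval follows as in Section \ref{sec:cpctSuppInTime} from $[-\tfrac{1}{2},-\tfrac{1}{2},-1]\psi_+ = -(1+\de) < 0$, and nontriviality is obtained as in Section \ref{sec:nontriviality} using the sharper version of (\ref{eq:energyPrescribed}).

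The main obstacle, and the genuinely new ingredient beyond re-running the main theorem, is producing a $C^1$ energy. With $e_{(k)}$ chosen as in Section \ref{sec:energyChoice} (a mollified characteristic function), $\|\tfrac{d}{dt}e_{(k)}^{1/2}\|_{C^0}$ scales like $\Xi_{(k)}e_{v,(k)}^{1/2}e_{R,(k)}^{1/2}$, which is not summable in $k$, so the naive construction cannot give a $C^1$ energy. Instead I would fix in advance a compactly supported target profile $\mathcal{E}_\infty(t)\in C^1(\R)$ (strictly positive on a chosen interval) and prescribe the telescoping allocation $\int_{\T^3} e_{(k)}(t)\,dx = \mathcal{E}_{(k+1)}(t)-\mathcal{E}_{(k)}(t)$, where $\mathcal{E}_{(k)}\to\mathcal{E}_\infty$ in $C^1$ with the increments assigned a geometrically decreasing share of the $C^1$ budget so that $\sum_k \|\mathcal{E}_{(k+1)}-\mathcal{E}_{(k)}\|_{C^1}<\infty$. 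The ideal-case bound (\ref{eq:DtenergyPrescribed}), without the $\mathbf{b}^{-1}$ factor, combined with the representation $V_{(k)}=\nab\times W_{(k)}$ (used to treat the cross term $\tfrac{d}{dt}\int v_{(k)}\cdot V_{(k)}\,dx$ via integration by parts against $\nab\times v_{(k)}$ and the Euler--Reynolds equation for $\pr_t v_{(k)}$) should control the discrepancy between the true and prescribed energy derivatives by $O(N_{(k)}^{-1}\Xi_{(k)}e_{v,(k)}^{1/2}e_{R,(k)})$, whose series converges super-exponentially. The delicate point will be simultaneously enforcing (\ref{ineq:goodEnergy}), the lower bound $e_{(k)}(t)\geq K e_{R,(k)}$ on $I_{(k)}\pm\th_{(k)}$, and $C^1$ summability; the super-exponential decay of $e_{R,(k)}$ provides ample slack, so this is bookkeeping rather than an obstruction of principle.
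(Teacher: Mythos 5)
Your proposal follows the paper's proof essentially step for step: the same self-similar Ansatz and choice of $N_{(k)}$, the same ideal-case parameter-evolution matrix and dominant eigenvector $\psi_+=(-1-\de,\ \de,\ \tfrac{3}{2}+\de)$, the same row-vector tests yielding $\a^*=1/(3+2\de)$ for $v$ and $2\a^*$ for $p$, and the same treatment of the correlation term $\tfrac{d}{dt}\int v_{(k)}\cdot V_{(k)}\,dx$ via $V_{(k)}=\nab\times W_{(k)}$, the curl/material-derivative commutator, and the Euler--Reynolds equation, together with the ideal-case bound (\ref{eq:DtenergyPrescribedIdeal}). The only divergence is your $C^1$-energy bookkeeping: the paper achieves the same end more simply by mollifying $\chi_{(k)}$ over the lengthened time scales $\tau_{(k)}=B_0^k\,\Xi_{(k)}^{-1}e_{v,(k)}^{-1/2}$ so that $\|\tfrac{d}{dt}e_{(k)}\|_{C^0}$ decays exponentially, rather than prescribing telescoping increments of a pre-fixed profile, but this is a cosmetic variant of the same idea.
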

{\bf Remark}.  In fact, the proof shows that, at least for the solutions constructed by this method, the energy $E(t)$ will necessarily belong to $C^{0, \a}$ for all $\a < 1$.
\begin{proof}
We iterate the conjectured Lemma (\ref{sec:lemImpThm}) so that the energy levels
\begin{align}
\fr{e_{v,(k)}^{1/2} e_{R,(k)}^{1/2}}{N_{(k)}} = e_{R,(k+1)} &= \fr{e_R^{1 + \de}}{Z}
\end{align}
decay super-exponentially.  This Ansatz leads to a choice of
\begin{align}
N_{(k)} &= Z \left( \fr{e_v}{e_R} \right)^{1/2}_{(k)} e_{R,(k)}^{- \de} \\
\Xi_{(k + 1)} &= C Z \left( \fr{e_v}{e_R} \right)^{1/2}_{(k)} e_{R,(k)}^{- \de} \Xi_{(k)} \\
\left( \fr{e_v}{e_R} \right)_{(k+1)} &= Z e_{R,(k)}^{-\de}
\end{align}
resulting in a parameter evolution matrix
\begin{align}
T_\de &= \mat{ccc}{1 + \de & 0 & 0 \\ - \de & 0 & 0 \\ - \de & 1/2 & 1}
\end{align}
for the parameter logarithms
\begin{align}
\psi_{(k)} &= \vect{ \log e_R \\ \log(e_v/e_R) \\ \log \Xi}_{(k)} 
\end{align}
Since $T_\de$ satisfies
\begin{align}
( T_\de - (1 + \de) ) T_\de (T_\de - 1) &= 0 
\end{align}
the stable direction for iterating $T_\de$ corresponds to portion of the $1 + \de$ eigenspace of $T_\de$ in the stable octant
\begin{align} 
\OO(-, +, +) &= \left\{ \vect{a \\ b \\ c} ~|~ a < 0, b > 0, c > 0 \right\} 
\end{align}
in which the parameter logarithms $\psi_{(k)}$ eventually reside.

To find a vector in $\OO$ belonging to the $1 + \de$ eigenspace, we compute
\begin{align}
\psi_+ &= - \de^{-1} T_\de (T_\de - 1)\vect{1 \\ 0 \\ 0} \\
&= - T_\de \vect{1 \\ -1 \\ -1} \\
\psi_+ &= \vect{-1 \\ 0 \\ \fr{3}{2} } + \de \vect{-1 \\ 1 \\ 1}
\end{align}

In order to determine which H{\" o}lder norms stay under control during the iteration, we again observe that the bound for the spatial derivative of the corrections $V$ and $P$ also controls their full space-time derivative since
\[ \pr_t = (\pr_t + v \cdot \nab) - v \cdot \nab \]
and the material derivatives obeys even better estimates.  Therefore we have an estimate
\begin{align}
\| V_{(k)} \|_{C^\a_{t,x}} &\leq C (N_{(k)} \Xi_{(k)})^\a e_{R,(k)}^{1/2} \\
&\leq C' \Xi_{(k)}^\a\left( \fr{e_v}{e_R} \right)^{\a/2}_{(k)} e_{R,(k)}^{1/2 - \de\a} \label{eq:calphaIdealV}\\
\| P_{(k)} \|_{C^\a_{t,x}} &\leq C (N_{(k)} \Xi_{(k)})^\a e_{R,(k)} \\
&\leq C' \Xi_{(k)}^\a\left( \fr{e_v}{e_R} \right)^{\a/2}_{(k)} e_{R,(k)}^{1 - \de\a} \label{eq:calphaIdealP}
\end{align}

As in Section (\ref{sec:lemImpThm}), we take the logarithm of (\ref{eq:calphaIdealV}) to see that the solution 
\[ v \in C_{t,x}^\a \] 
if
\begin{align}
[(1/2 - \de \a), \a/2, \a]\left(\vect{-1 \\ 0 \\ \fr{3}{2} } + \de \vect{-1 \\ 1 \\ 1} \right) &= - \fr{(1 + \de)}{2} + \de (1 + \de) \a + \fr{\de \a}{2} + \left( \fr{3 \a}{2} + \de \a \right) \\
&= - \fr{(1 + \de)}{2} + \fr{\a}{2} (3 + 2 \de) (1 + \de)
\end{align}
is negative.  Therefore, the norm whose bounds grow exponentially corresponds to the regularity $\a^* = \fr{1}{3 + 2 \de}$.

Similarly, we see that $p \in C_{t,x}^\a$ for $\a < 2 \a^*$ by taking a logarithm of  computing
\begin{align}
[(1 - \de \a), \a/2, \a]\left(\vect{-1 \\ 0 \\ \fr{3}{2} } + \de \vect{-1 \\ 1 \\ 1} \right) &=  - (1 + \de) + \fr{\a}{2} (3 + 2 \de) (1 + \de)
\end{align}

\subsection{Higher Regularity for the Energy} \label{sec:higherEnergyReg}

Here we observe that the energy 
\begin{align}
E(t) &= \int |v|^2(t,x) dx
\end{align}
of the solution $v$ that we ultimately construct actually enjoys better regularity than the solution $v$ itself has in the time variable.  Namely, the construction necessarily results in a $v$ for which $E(t) \in C_t^\a$ for all $\a < 1$, and by choosing $e_{(k)}$ carefully at each stage, we can even arrange that $E(t) \in C^1(\R)$.  

Let us begin by writing the energy as a sum of energy increments
\begin{align}
E(t) &= \sum_{k = 0}^\infty E_{(k)} \\
E_{(k)}(t) &= \int ( \fr{|v_{(k+1)}|^2}{2}(t,x) - \fr{|v_{(k)}|^2}{2}(t,x) ) dx \\
&= \int \fr{|V_{(k)}|^2}{2} dx + \int v_{(k)} \cdot V_{(k)} dx \label{eq:energyIncrement1}
\end{align}

For the ideal case with ${\bf b}^{-1}$, the Main Lemma (\ref{lem:iterateLem}) implies that
\begin{align}
|| \int_{\T^3} |V_{(k)}|^2 dx - \int_{\T^3} e_{(k)}(t) dx ||_{C^0} &\leq C \fr{e_{R,(k)}}{N_{(k)}} \label{eq:energyPrescribedIdeal}\\
|| \fr{d}{dt} ( \int_{\T^3} |V_{(k)}|^2 dx - \int_{\T^3} e_{(k)}(t) dx ) ||_{C^0} &\leq C \fr{\Xi_{(k)} e_{v,(k)}^{1/2} e_{R,(k)} }{N_{(k)}} \label{eq:DtenergyPrescribedIdeal}
\end{align}

In fact, for the ideal case, the quantity
\[ \Xi_{(k)} e_{v,(k)}^{1/2} e_{R,(k)} \]
is exactly the critical quantity which grows exponentially, in the sense that
\begin{align}
\Xi_{(k + 1)} e_{v,(k + 1)}^{1/2} e_{R,(k + 1)} &= C (N_{(k)} \Xi_{(k)} e_{R,(k)}^{1/2} \fr{e_{v,(k)}^{1/2} e_{R,(k)}^{1/2}}{N_{(k)}} \\
&= C \Xi_{(k)} e_{v,(k)}^{1/2} e_{R,(k)}
\end{align}

Since $N_{(k)}$ grows super-exponentially, we see from (\ref{eq:DtenergyPrescribedIdeal}) that the error for prescribing the energy of the correction can actually be summed in $C^1(\R)$ for any admissible choices of energy functions.  Namely, from (\ref{eq:energyIncrement1}), we have
\begin{align}
\fr{d}{dt} E_{(k)}(t) &= \fr{d}{dt} \int e(t) dx + \fr{d}{dt} \int v_{(k)} \cdot V_{(k)} dx + o(1) \label{eq:energyIncrement2}
\end{align}
where $o(1)$ goes to $0$ in $C_t^0$ uniformly in $(k)$.

We can also sum the correlation term
\begin{align}
\| \fr{d}{dt} \int v_{(k)} \cdot V_{(k)} dx \|_{C_t^0} &\leq C \fr{ \Xi_{(k)} e_{v,(k)} e_{R,(k)}^{1/2}}{N_{(k)}} \label{eq:wantBoundEnergyVariation} \\
&\leq C \Xi_{(k)} e_{v,(k)}^{1/2} e_{R,(k)}^{1 + \de} \\
&= o(1)
\end{align}
which implies that 
\begin{align}
\| \fr{d}{dt} E_{(k)}(t) - \fr{d}{dt} \int e_{(k)}(t) dx \|_{C_t^0} &= o(1)
\end{align}
where $e_{(k)}(t)$ is the energy function chosen at stage $(k)$.

To establish the estimate (\ref{eq:wantBoundEnergyVariation}), we write
\begin{align}
\fr{d}{dt} \int v_{(k)} \cdot V_{(k)} dx &= \fr{d}{dt} \int {<} v_{(k)}, \nab \times W_{(k)} {>} dx \\
&= \fr{d}{dt} \int {<} \nab \times v_{(k)} , W_{(k)} {>} dx \\
&= \int (\pr_t  + v_{(k)}^j \pr_j) {<} \nab \times v_{(k)} , W_{(k)} {>} dx \\
\begin{split}
&= \int {<} (\pr_t  + v_{(k)}^j \pr_j) \nab \times v_{(k)}, W_{(k)} {>} dx \\
&+ \int {<} \nab \times v_{(k)}, (\pr_t  + v_{(k)}^j \pr_j) W_{(k)} {>} dx
\end{split}
\end{align}

The second of these terms can be estimated using the bounds for $W_{(k)}$ in Lemma (\ref{lem:iterateLem}).
\begin{align}
\| \int_{\T^3} {<} \nab \times v_{(k)}, (\pr_t  + v_{(k)}^j \pr_j) W_{(k)} {>} dx \|_{C^0_t} &\leq C || \nab v_{(k)} ||_{C^0} || (\pr_t  + v_{(k)}^j \pr_j) W_{(k)} ||_{C^0} \\
&\leq C \left(\Xi_{(k)} e_{v,(k)}^{1/2}\right) \left(\fr{e_{v,(k)}^{1/2} e_{R,(k)}^{1/2}}{N_{(k)}} \right) \\
&\leq C (\Xi_{(k)} e_{v,(k)}^{1/2}) e_{R,(k)}^{1 + \de} = o(1)
\end{align}

The first term can be estimated by commuting the curl $\nab \times$ and the material derivative $\pr_t + v \cdot \nab$.
\begin{align}
\int {<} (\pr_t  + v_{(k)}^j \pr_j) \nab \times v_{(k)}, W_{(k)} {>} dx &= \int {<} \nab \times (\pr_t  + v_{(k)} \cdot \nab )v_{(k)}, W_{(k)} {>} dx + \int {<} u_{(k)}, W_{(k)} {>} dx \notag \\
&= E_{1,(k)} + E_{2,(k)} \\
u_{(k)} &= (\pr_t  + v_{(k)} \cdot \nab) \nab \times v_{(k)} - \nab \times (\pr_t  + v_{(k)} \cdot \nab )v_{(k)}
\end{align}

Using the definition $(\nab \times w)^l = \ep^l{}_{ab} \pr^a w^b$, we can compute the commutator $u_{(k)}$
\begin{align}
( \nab \times (\pr_t + v_{(k)} \cdot \nab) v_{(k)} )^l &= \ep^l{}_{ab} \pr^a[(\pr_t + v_{(k)}^j \pr_j ) v_{(k)}^b ] \\
&= (\pr_t + v_{(k)}^j \pr_j) \ep^l{}_{ab} \pr^a v_{(k)}^b + \ep^l{}_{ab} \pr^a v_{(k)}^j \pr_j v_{(k)}^b \\
u_{(k)}^l &= - \ep^l{}_{ab} \pr^a v_{(k)}^j \pr_j v_{(k)}^b
\end{align}
allowing us to bound the term $E_{2,(k)}$ by comparing it to the exponential growth of $\Xi_{(k)} e_{v,(k)}^{1/2} e_{R,(k)}$
\begin{align*}
\| \int {<} u_{(k)}, W_{(k)} {>} dx \|_{C^0_t} &\leq || u_{(k)} ||_{C_t^0 L_x^{3/2}} || W_{(k)} ||_{C_t^0 L_x^3} \\
&\leq C || \nab v_{(k)} ||_{C_t^0 L_x^3}^2 || W_{(k)} ||_{C_t^0 L_x^3} \\
&\leq C \left(\Xi_{(k)} e_{v,(k)}^{1/2} \right)^2 \left(\Xi_{(k)}^{-1} N_{(k)}^{-1} e_{R, (k)}^{1/2} \right) \\
&\leq C \Xi_{(k)} e_{v,(k)}^{1/2} \fr{ e_{v,(k)}^{1/2} e_{R, (k)}^{1/2} }{N_{(k)}} \\
&\leq C \Xi_{(k)} e_{v,(k)}^{1/2} e_{R,(k)}^{1 + \de}
\end{align*}
Here we chose the $L^3$ norm in space to simply to draw an analogy to the trilinear terms appearing in the proof of energy conservation in \cite{CET}.

To bound the other term $E_{1,(k)}$, we can use the Euler-Reynolds equation
\begin{align}
(\pr_t + v_{(k)}\cdot \nab)v_{(k)}^l &= \pr^l p_{(k)} + \pr_j R_{(k)}^{jl} 
\end{align}
to estimate
\begin{align}
\| \int {<} \nab \times (\pr_t  + v_{(k)} \cdot \nab )v_{(k)}, W_{(k)} {>} dx \|_{C^0_t} &\leq ( || \nab^2 p ||_{C^0} + || \nab^2 R ||_{C^0} ) || W_{(k)} ||_{C^0} \\
&\leq C \Xi_{(k)}^2 e_{v,(k)} \fr{e_{R,(k)}^{1/2}}{\Xi_{(k)} N_{(k)} } \\
&\leq C \Xi_{(k)} e_{v,(k)}^{1/2} e_{R,(k)}^{1 + \de} = o(1)
\end{align}

We have therefore established that
\begin{align}
\| \fr{d}{dt} E_{(k)}(t) - \fr{d}{dt} \int e_{(k)}(t) dx \|_{C_t^0} &= o(1)
\end{align}

If we desire the energy of the resulting solution to be $C^1$ in time, we must be more careful than using the bounds
\begin{align}
\| \fr{d}{dt} e_{(k)}(t) \|_{C^0} &= 2  \| e^{1/2}_{(k)}(t) \fr{d}{dt} e^{1/2}_{(k)}(t) \|_{C^0} \\
&\leq C \Xi_{(k)} e_{v,(k)}^{1/2} e_{R,(k)} \\
&\leq C^k  \label{bound:badExpEnergyBound}
\end{align}
required by Lemma (\ref{lem:iterateLem}), since these bounds grow exponentially.

However, we can easily choose $e_{(k)}$ so that the bounds (\ref{bound:badExpEnergyBound}) are not sharp.

For example, averaging over longer time intervals
\begin{align}
\tau_{(k)} &= B_0^k  \Xi_{(k)}^{-1} e_{v,(k)}^{-1/2}
\end{align}
for the time mollifier in (\ref{eq:theTimeMoll}), (\ref{eq:weChoseTauk}) will lead to bounds which decay exponentially for
\begin{align}
\| \fr{d}{dt} e_{(k)}(t) \|_{C^0} &\leq C A^{-k}
\end{align}
if $B_0$ is chosen sufficiently large.
\end{proof}

\part{Construction of Weak Solutions in the H{\" o}lder Class : Preliminaries} \label{hardPart}
\section{Preparatory Lemmas}\label{prepareToMollify}
To prepare for the proof, we present the following, well-known method concerning the general rate of convergence of mollifiers.

\begin{lem} \label{lem:mollifierRate}
Suppose that $\eta \in L^1(\R^3) \cap L^N(\R^3)$ and that for any multi-index $\a$ with $1 \leq |\a| \leq N - 1$ the moment vanishing conditions
\begin{align} \label{eq:mollifierConditions}
\int \eta(y) dy &= 1 \\
\int y^\a \eta(y) dy &= 0 \quad \quad ~\forall \a, 1 \leq |\a| \leq N - 1
\end{align}
hold.  

Let $\ep > 0$ and define $\eta_\ep(y) = \ep^{-3} \eta(\fr{y}{\ep})$.  Then there exists a constant $C$ such that
\begin{align} \label{ineq:convergenceOfMollifiers}
|| v - \eta_\ep \ast v ||_{C^0(\T^3)} &\leq C \ep^N || \eta ||_{L^N(\R^3)}  || \nab^N v ||_{C^0(\T^3)} 
\end{align}
for all smooth functions $v$ on $\T^3$
\end{lem}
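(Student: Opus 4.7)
The plan is to follow the classical Taylor expansion argument for mollifier convergence rates. Since $\int \eta_\ep = 1$, I would first rewrite the difference as
\begin{align*}
v(x) - \eta_\ep \ast v(x) &= \int \eta_\ep(y)\,[v(x) - v(x - y)]\,dy.
\end{align*}
Then I would Taylor expand $v(x - y)$ about $x$ to order $N$ with the integral form of the remainder:
\begin{align*}
v(x - y) = v(x) + \sum_{1 \leq |\a| \leq N - 1} \frac{(-y)^\a}{\a!}\, \pr^\a v(x) + R_N(x, y),
\end{align*}
where $|R_N(x, y)| \leq C_N |y|^N \| \nab^N v \|_{C^0}$ by standard estimates on the remainder.

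Next I would substitute this expansion into the integrand. The constant term $v(x)$ cancels, and each intermediate polynomial term $y^\a \pr^\a v(x)$ with $1 \leq |\a| \leq N - 1$ integrates to zero against $\eta_\ep$ — the moment vanishing conditions in (\ref{eq:mollifierConditions}) are preserved under the scaling $\eta \mapsto \eta_\ep$ because $\int y^\a \eta_\ep(y)\,dy = \ep^{|\a|} \int z^\a \eta(z)\,dz = 0$. Only the remainder survives, and I would bound it pointwise to obtain
\begin{align*}
|v(x) - \eta_\ep \ast v(x)| &\leq C_N \| \nab^N v \|_{C^0} \int |y|^N |\eta_\ep(y)|\, dy.
\end{align*}
A change of variables $y = \ep z$ pulls out the expected factor of $\ep^N$, leaving $\ep^N \int |z|^N |\eta(z)|\, dz$.

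The final step — and the only mildly nontrivial one — is to bound $\int |z|^N |\eta(z)|\,dz$ by $\| \eta \|_{L^N}$. Under the implicit assumption that $\eta$ is supported in a ball of fixed radius (which is the only setting in which this lemma will actually be applied in the paper), the factor $|z|^N$ is bounded on the support, and one obtains the desired control via H\"{o}lder's inequality with conjugate exponents $N$ and $N/(N-1)$ against the indicator of the support. The main obstacle is really just this last step: without support restriction the integral $\int |z|^N |\eta(z)|\,dz$ cannot be controlled by $\| \eta \|_{L^N}$ alone on $\R^3$, so either one absorbs the support radius into the constant $C$ or interprets the $L^N$ norm as being paired with $|z|^N$ via H\"{o}lder on a fixed bounded set. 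Everything else is routine: Taylor expansion, moment cancellation, rescaling.
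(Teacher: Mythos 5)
Your proof is correct and is essentially the paper's own argument: the paper likewise Taylor expands to order $N$ (via repeated integration by parts in an auxiliary parameter after rescaling $y \mapsto \ep y$, writing out only the case $N=3$), uses the vanishing moment conditions to kill the intermediate terms, and is left with the $\ep^N$-weighted remainder involving $\nab^N v$. Your comment on the final step is also apt: the paper does not spell out the bound $\int |z|^N|\eta(z)|\,dz \leq C\,\|\eta\|_{L^N(\R^3)}$, which indeed requires the compact support (or comparable decay) of $\eta$ under which the lemma is actually applied, so absorbing the support radius into $C$ as you do is consistent with its use in the paper.
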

\begin{proof}
To reduce the number of minus signs appearing in the proof, let us assume $\eta_{\ep}$ is even, and let us restrict to the case $N = 3$ to convey the idea.  To verify (\ref{ineq:convergenceOfMollifiers}),  we can then Taylor expand by repeated integration by parts in the formula
\begin{align*}
\eta_{\ep} \ast v(x) - v(x) &= \int_{\R^3} ( v(x-y) - v(x) ) \eta_{\ep}(y) dy \\
&= \int ( v(x+y) - v(x) ) \eta_{\ep}(y) dy \\
&= \int (v(x + \ep y) - v(x) ) \eta(y) dy \\  
&= \int \left[ \int_0^1 \fr{d}{ds}v(x + s \ep y) ds \right] \eta(y) dy \\
&= \ep \int_0^1 \left[ \int \pr_i v(x+ s\ep y) y^i \eta(y) dy \right] ds \\
&= \ep \int \pr_i v(x) y^i \eta(y) dy + \int \left[ \int_0^1 \fr{d^2}{ds^2}v(x + s \ep y) (1 - s)ds \right] \eta(y) dy \\
&= \ep^2 \int_0^1 \left[ \int \pr_{i_2} \pr_{i_1} v(x + s \ep y) y^{i_1} y^{i_2} \eta(y) dy \right] (1-s) ds \\
&= \fr{\ep^2}{2!} \int \pr_{i_2} \pr_{i_1} v(x) y^{i_1} y^{i_2} \eta(y) dy  + \int \left[ \int_0^1 \fr{d^3}{ds^3}v(x + s \ep y) \fr{(1 - s)^2}{2!}ds \right] \eta(y) dy \\
&= \fr{\ep^3}{2!} \int_0^1 \left[ \int \pr_{i_1} \pr_{i_2} \pr_{i_3}v(x + s \ep y) y^{i_1} y^{i_2} y^{i_3} \eta(y) dy \right] (1 - s)^2 ds
\end{align*}
\end{proof}

{\bf Remark}. \quad The case $N = 2$ of the above lemma is already contained in \cite{deLSzeC1iso}.  In their argument, they take advantage of how the case $N = 2$ permits $\eta_\ep$ to be non-negative.  Actually, the case $N = 2$ also suffices for the proof of the main theorem in the present paper, but we include the more general statement (\ref{ineq:convergenceOfMollifiers}) to convey the flexibility available in this aspect of the argument.

Another way to work out the above details, which will be repeatedly used in the remainder of the proof, is to write
\begin{align}
\eta_\ep \ast v(x) - v(x) &= \int [v(x - y) - v(x)] \eta_\ep(y) dy \\
&= - \int_0^1 \left[ \int \pr_i v(x - sy) y^i \eta_\ep(y) dy \right] ds\\
&= \ep \int_0^1 \int  \pr_i v(x - sy) {\tilde \eta}_\ep^i(y) dy ds
\end{align}
where
\begin{align}
{\tilde \eta}_\ep^i(y) &= - \fr{1}{\ep^3} \fr{y^i}{\ep} \eta(\fr{y}{\ep})
\end{align}
are other functions whose integrals are not normalized to $1$, but which satisfy the same type of estimates as $\eta_\ep$.  

Namely, every function $\eta_\ep$ and $\tilde{\eta}_\ep$ which appears in the argument will satisfy bounds of the form
\begin{align}
\| \nab^k \eta_\ep \|_{L^1(\R^3)} &\leq C_\eta \ep^{-k}
\end{align}
and altogether through the course of the argument, only finitely many different $\eta$ and ${\tilde \eta}$ will be used.
\section{The Coarse Scale Velocity}\label{coarseScaleVelocity}
To begin the proof of Lemma (\ref{lem:iterateLem}), we choose a mollification $v_\ep^j$ for the velocity field.  For our purposes, it is convenient to take a double mollification
\begin{align}
\eta_{\ep + \ep} &= \eta_{\ep_v} \ast \eta_{\ep_v} \\
v_\ep^j &= \eta_{\ep + \ep} \ast v^j
\end{align}
for reasons which will be explained in the footnote of Section (\ref{coarseScaleFlow}) after we derive a transport equation for $v_\ep$.


We would like to take $\ep_v$ to be as large as possible so that higher derivatives of $v_\ep$ are less costly.  More specifically, the basic building blocks of the construction are of the form
\[ e^{i \la \xi_I} v_I^l \]
and we must ensure that each $v_I$ has frequency smaller than $\la$, so $\ep_v^{-1}$ must be smaller than $\la$ in order of magnitude.

To achieve a frequency of $\Xi' = C \Xi N$ as in the statement of the main lemma (\ref{lem:iterateLem}), we will take $\la$ to have the form 
\begin{align}
\la &= B_\la \Xi N
\end{align}
where $B_\la \geq 1$ is a very large constant which will be chosen at the very end of the proof.

The parameter $\ep_v$ will then have the form.
\begin{align}
\ep_v &= a_v \Xi^{-1} N^{-\ga}\\
0 < \ga &< 1
\end{align}
where $a_v \leq 1$ is a small constant that will also be chosen in the proof.  At this point we can discuss the exponent $\ga$.

Analogously to Section (\ref{sec:ctsMollV}) in the continuous case, we choose $\ep_v$ so that the main term in the error
\begin{align}
(v^j - v_\ep^j) V^l + V^j(v^l - v_\ep^l) &= \sum_I 2[(v - v_\ep)(e^{i \la \xi_I} v_I )]^{jl} + O(\la^{-1}) \\ 
&= Q_{v}^{jl} + O(\la^{-1})
\end{align}
will be small.  Namely, we know a priori, as in (\ref{sec:ctsMollV}), that each $v_I$ will be no larger than
\begin{align}
\co{ v_I } &\leq A e_R^{1/2}
\end{align}
for some absolute constant provided that we require 
\begin{align}
 \tau &\leq b_0 \Xi^{-1} e_v^{-1/2} \label{ineq:tauMustBe}
\end{align}
for a certain constant $b_0$ discussed in Section (\ref{sec:ctsMollV}).  Furthermore, there is a uniform bound on the number of $v_I$ which are nonzero at any given $(t,x)$, so we also have an a priori bound
\begin{align}
\co{ \sum_I |v_I| } &\leq A e_R^{1/2}
\end{align}

Therefore, we have an a-priori bound on 
\begin{align}
\co{  Q_{v} } &\leq A e_R^{1/2} \co{ v - v_\ep} \label{ineq:theVMollC0ineq}
\end{align}

In the ideal case for which there are no losses of ${\bf b}^{-1}$ in the Main Lemma (\ref{lem:iterateLem}), the target size for the new stress would be $\co{ R_1 } \unlhd \fr{e_v^{1/2} e_R^{1/2}}{N}$.  We will therefore choose $\ep_v$ to achieve the target
\begin{align}\label{target:R1v}
\co{  Q_{v} } &\unlhd \fr{1}{50} \fr{e_v^{1/2} e_R^{1/2}}{N}
\end{align}

To approach the target (\ref{target:R1v}), let $\eta_{\ep_v}$ be any smooth mollifier constructed from an even function of compact support which satisfies the vanishing moment conditions of Lemma (\ref{eq:mollifierConditions}) for $N = L$.  We then have an estimate
\begin{align}
|| \eta_{\ep + \ep} \ast v - v ||_{C^0} &= || (\eta_{\ep_v} \ast v - v) + \eta_{\ep_v} \ast ( \eta_{\ep_v} \ast v - v) ||_{C^0} \\
&\leq C \ep_v^L || \nab^L v ||_{C^0} \\
&\leq C \ep_v^L (\Xi^L e_v^{1/2}) \label{eq:boundMollErv}
\end{align}

In anticipation of proving (\ref{target:R1v}), we set $\ga = 1/L$, so that
\begin{align} \label{eq:epvdef}
\ep_v &= a_v \Xi^{-1} N^{-1/L}
\end{align}
and (\ref{eq:boundMollErv}) becomes
\begin{align} \label{ineq:mollifyVerror}
|| \eta_{\ep + \ep} \ast v - v ||_{C^0} &\leq C a_v^L \fr{e_v^{1/2}}{N}
\end{align}
Applying the above inequality to (\ref{ineq:theVMollC0ineq}) gives a bound of
\begin{align}
\co{  Q_{v} } &\leq A a_V^L e_R^{1/2} \fr{e_v^{1/2}}{N} \label{eq:whereWeChooseav}
\end{align}
for the leading term in the stress created by mollifying $v$.  

By comparing (\ref{eq:whereWeChooseav}) with (\ref{target:R1v}), we can choose the constant $a_v$ so that the goal (\ref{target:R1v}) is satisfied.

With the above choice, $\nab v_\ep$ has amplitude $\Xi e_v^{1/2}$ and frequency $\Xi$ to high order 
\begin{align}
|| \pr^\a v_\ep ||_{C^0} &\leq \Xi^{|\a|} e_v^{1/2} \quad \quad \a = 1 , \ldots, L \\
|| \pr^\a v_\ep ||_{C^0} &\leq C_{\a} N^{ (|\a| - L)/L } \Xi^{|\a|} e_v^{1/2} \quad \quad  |\a| > L
\end{align}
which we choose to abbreviate by
\begin{align} \label{bound:nabkvep}
|| \pr^\a v_\ep ||_{C^0} &\leq C_{\a} N^{ (|\a| - L)_+/L }\Xi^{|\a|} e_v^{1/2} \quad \quad \mbox{for all } |\a| \geq 1
\end{align}
We can summarize (\ref{bound:nabkvep}) informally by saying that each derivative of $v_\ep$ up to order $L$ costs a factor of $\Xi$, and then derivatives beyond order $L$ cost $N^{1/L} \Xi$.  The factor $N^{1/L}$ is like an ``interest payment'' which must be paid on top of the usual cost of $\Xi$ for the ``borrowed'' derivatives beyond order $L$.

Note that we do not assume any control over $|| v_\ep ||_{C^0}$; in fact the low frequency part of $v$, though bounded, can be very large when the lemma is used in the proof of Theorem (\ref{thm:theMainThm}).  There, the largest part of the $C^0$ norm comes from the very first step of the iteration.

At this point, we can already check to make sure that we have not fallen short of the goal
\begin{align} \label{target:Qvd1}
|| \nab Q_v||_{C^0} &\unlhd \Xi' e_{R'} \\
&= C (N \Xi) \fr{e_v^{1/2} e_R^{1/2}}{N} \\
&= C \Xi e_v^{1/2} e_R^{1/2}
\end{align}
by estimating
\begin{align} 
|| \nab ( v - v_\ep ) ||_{C^0} &\leq \co{ \nab v } + \co{ \nab v_\ep } \\
\label{crudeDerivV1} &\leq 2 \Xi e_v^{1/2}
\end{align}

Since we expect that $|V| \leqc e_R^{1/2}$, the crude estimate (\ref{crudeDerivV1}) will suffice to prove (\ref{target:Qvd1}).

\paragraph{The choice of $\ep_v$ and the parametrix Expansion} 

For the parametrix expansion at the end to be effective, it is important to observe that 
\[ \ep_v^{-1} \leqc \Xi N^{1/L} \]
is significantly smaller than
\[ \la \approx \Xi N \]
because $L \geq 2$.  The point here is that the parametrix expansion for the divergence equation proceeds by solving
\begin{align}
\pr_j Q^{jl} &= e^{i \la \xi_I} u_1^l \\
Q^{jl} &= {\tilde Q}_1^{jl} + Q_{(1)}^{jl} \\
{\tilde Q}_1^{jl} &= e^{i \la \xi_I} \fr{q^{jl}(\nab \xi)[u^l] }{\la} \\
\pr_j Q_{(1)}^{jl} &=-  e^{i \la \xi_I} \fr{1}{\la} \pr_j [ q^{jl}(\nab \xi)[u^l] ] \label{eq:theNextEllipticTerm}
\end{align}
The derivative $\pr_j$ in (\ref{eq:theNextEllipticTerm}) will turn out to cost a factor at most $\ep_v^{-1}$ (cf. Section (\ref{sec:oscStressTerms}) below.).  On the other hand, the parametrix gains a factor of $\la^{-1}$.  The way we have chosen $\ep_v$ ensures us that each iteration of the parametrix gains a factor
\begin{align}
\fr{\ep_v^{-1}}{\la} &\leq C \fr{\Xi N^{1/L}}{B_\la N \Xi} \\
&\leq \fr{C}{B_\la N^{(1- 1/L)}} \\
&\leq \fr{ C}{B_\la N^{1/2}}
\end{align}
since $L \geq 2$.

In the following section, we derive a transport equation for $v_\ep$, and derive some estimates that will be necessary for the proof.  At this point, we abbreviate $\eta_\ep$ for $\eta_{\ep_v}$.
\section{Commutator Estimates and the Coarse Scale Flow}\label{coarseScaleFlow}
In contrast to the argument in \cite{deLSzeHoldCts}, we do not mollify the velocity field in the time variable.  Instead, by using the fact that $v$ obeys the Euler-Reynolds equation
\begin{align} \label{eq:euReynPrep}
(\pr_t + v^a \pr_a) v^j &= \pr^j p + \pr_i R^{ij} 
\end{align}
we derive a transport equation for $v_\ep^j$ as follows.

By convolving (\ref{eq:euReynPrep}) with $\eta_{\ep + \ep}$, we can see that $v_\ep^j = \eta_\ep \ast \eta_\ep \ast v^j$ also obeys its own transport type equation
\begin{align} \label{eq:dtvep}
(\pr_t + v_\ep^a \pr_a) v_\ep^j &= \eta_{\ep + \ep} \ast (\pr^j p + \pr_i R^{ij} ) + Q^j(v,v) \\
&= f_\ep^j \\
Q^j(v,v) &= v_\ep^a \pr_a v_\ep^j - \eta_{\ep + \ep} \ast ( v^a \pr_a v^j )
\end{align}
The quadratic term arises from the failure of the nonlinearity to commute with the averaging, just as the Reynolds stress arises in Section (\ref{motivation}).  In what follows we will derive estimates for the commutator $Q^j(v,v)$ which should be compared to the commutator estimates used in the proof of energy conservation in \cite{CET} and the similar commutator estimates used in \cite{deLSzeC1iso}; we give some remarks about the comparison at the end of the section.  The method of proof involves integral formulas which are very standard, for example in Littlewood-Paley theory.

According to the Definition (\ref{def:subSolDef}), we expect that the cost of the derivative $(\pr_t + v_\ep^a \pr_a)$ should be a factor of $\Xi e_v^{1/2}$, so we expect an estimate of the type $| (\pr_t + v_\ep^a \pr_a) v_\ep^j| \leqc \Xi e_v$.  However, without observing the cancellation between the two parts of the quadratic term, the only estimate we could expect for $Q^j(v,v)$ would take the form\footnote{For example, in $v_\ep^a \pr_a v_\ep^j$, $v_\ep^a$ is bounded in size by a constant and $\pr_a v_\ep^j$ is bounded by $\Xi e_v^{1/2}$. } $|Q^j(v,v)| \leqc \Xi e_v^{1/2}$, and even this estimate would require control over $\co{v}$ which we have not assumed.

The key to observing cancellation in the quadratic term is to use the control over the higher-frequency part of $v$, and obtain cancellation from the lower-frequency parts.  We accomplish this task through the following commutator calculation:
\begin{align}
Q^j(v,v) &= v_\ep^a \pr_a v_\ep^j - \eta_{\ep + \ep} \ast ( v^a \pr_a v^j ) \\
&= v_\ep^a \pr_a ( \eta_\ep \ast \eta_\ep \ast v^j ) - \eta_\ep \ast \eta_\ep \ast ( v^a \pr_a v^j) \\
\begin{split}
&= \eta_\ep \ast ( v_\ep^a \pr_a(\eta_\ep \ast v^j) ) + [v_\ep^a \pr_a, \eta_\ep \ast](\eta_\ep \ast v^j) \\
&- \eta_\ep \ast ( v^a \pr_a(\eta_\ep \ast v^j) ) + \eta_\ep \ast([v^a \pr_a, \eta_\ep \ast]v^j)
\end{split}
\end{align}
where we can express the commutator terms as
\begin{align}
[v_\ep^a \pr_a, \eta_\ep \ast](\eta_\ep \ast v^j) &= \int_{\R^3} (v_\ep^a(x - y) - v_\ep^a(x) )\pr_a(\eta_\ep \ast v^j)(x - y) \eta_\ep(y) dy \\
&= \int_0^1 \int_{\R^3} \pr_i v_\ep^a(x - s y) \pr_a (\eta_\ep \ast v^j)(x - y) y^i \eta_\ep(y) dy ds \\
&= \ep_v \int_0^1 \int_{\R^3} \pr_i v_\ep^a(x - s y) \pr_a(\eta_\ep \ast v^j)(x - y) {\tilde \eta}_\ep^i(y) dy ds \label{eq:velCommut1}
\end{align}
and similarly 
\begin{align}
\eta_\ep \ast([v^a \pr_a, \eta_\ep \ast]v^j)(x)
&= \ep_v \int_0^1 \eta_\ep \ast \left[ \int_{\R^3} \pr_i v^a(x - s y_1) \pr_a v^j(x - y_1) {\tilde \eta}_\ep^i(y_1) dy_1 \right] ds \\
&= \ep_v \int_0^1 \left[ \int_{\R^3 \times \R^3} \pr_iv^a(x - y_2 - s y_1) \pr_a v^j(x - y_2 - y_1) {\tilde \eta}_\ep^i(y_1) dy_1 \eta_\ep(y_2) dy_2 \right] ds \label{eq:velCommut2}
\end{align}

It is now clear that the commutator terms can be estimated using our control of only the derivatives of $v$.  A similar expression for the remaining terms allows us to finally observe the cancellation between the low frequency contributions to $Q$
\begin{align}
\eta_\ep \ast ( v_\ep^a \pr_a(\eta_\ep \ast v^j) ) - \eta_\ep \ast ( v^a \pr_a(\eta_\ep \ast v^j) ) &= \eta_\ep \ast[ (v_\ep^a - v^a) \pr_a(\eta_\ep \ast v^j) ] \\
&= \ep_v \int_0^1 \eta_\ep \ast \left[ \int_{\R^3} \pr_iv^j(x - sy) \pr_a (\eta_\ep \ast v^j)(x - y) {\tilde \eta}_\ep^i(y) dy \right] ds \label{eq:velCommut3}
\end{align}

Each of the terms (\ref{eq:velCommut1}), (\ref{eq:velCommut2}) and (\ref{eq:velCommut3}) leads to a bound
\begin{align}
\co{ Q } &\leq C \ep_v \Xi^2 e_v \\
&\leq C N^{-1/L} \Xi e_v
\end{align}
Differentiating (\ref{eq:velCommut1}), (\ref{eq:velCommut2}) and (\ref{eq:velCommut3}) up to order $L - 1$ costs a factor of $\Xi$ according to the bounds (\ref{bound:nabkv}) and (\ref{bound:nabkvep}), and beyond order $L - 1$, each derivative costs $N^{1/L} \Xi$ as it hits the mollifiers or as we apply the bounds in (\ref{bound:nabkvep}).  These observations lead to the estimates
\begin{align} \label{eq:boundForQderivsHere}
\co{ \nab^k Q } &\leq N^{[(k + 1 - L)_+ - 1]/L} \Xi^{k + 1} e_v
\end{align}
for the commutator.

These bounds are better than the bounds for
\begin{align}
\co{ \nab^k \eta_{\ep + \ep} \ast \nab p } &\leq N^{[(k + 1 - L)_+]/L} \Xi^{k + 1} e_v
\end{align}
coming from (\ref{bound:nabkp}) by a factor of $N^{-1/L}$.  The bounds on 
\begin{align}
\co{ \nab^k \eta_{\ep + \ep} \ast \nab R } &\leq N^{[(k + 1 - L)_+]/L} \Xi^{k + 1} e_R
\end{align}
coming from (\ref{bound:nabkR}) are also better than those for the pressure because $e_R \leq e_v$.  

For $v_\ep^j$ defined as above, we therefore have the bounds
\begin{thm}[Coarse Scale Force Estimates]
\begin{align} \label{bound:lowFreqTransport}
\co{ \nab^\a (\pr_t + v_\ep^j \pr_j) v_\ep^j } &\leq C_\a N^{( 1 + |\a| - L)_+/L} \Xi^{1 + |\a|} e_v
\end{align}
for all $|\a| \geq 0$.\footnote{As the above analysis reveals, the double mollification of Section (\ref{coarseScaleVelocity}) is useful because it allows us to easily prove an estimate for derivatives of order higher than $L$.  Another way to make sure these estimates are available is to choose the mollifier to be a projection $P_{\leq k}$ to frequencies of wavelength $2^{-k}$ less than $\ep$.  With such a choice of mollifier, it is then easy to see that the quadratic term $Q(v,v)$ has bounded support in frequency space, and therefore obeys estimates for its higher derivatives as well.  Thus, the use of the double mollification plays the same role in the analysis as the formal identity ``$P_{\leq k}^2 = P_{\leq k}$'' in the standard Littlewood Paley theory.}
\end{thm}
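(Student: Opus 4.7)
The plan is to bound $\nab^\a f_\ep^j$ by handling separately the three summands on the right-hand side of (\ref{eq:dtvep}): the mollified pressure gradient $\eta_{\ep+\ep}\ast \pr^j p$, the mollified stress divergence $\eta_{\ep+\ep}\ast \pr_i R^{ij}$, and the commutator $Q^j(v,v)$. The target bound $C_\a N^{(1+|\a|-L)_+/L}\Xi^{1+|\a|}e_v$ is saturated (up to constants) by the pressure piece, so the strategy is to show that each of the other two summands obeys the same bound or better.

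First I would dispose of the mollified pressure. Writing $\eta_{\ep+\ep}=\eta_\ep\ast\eta_\ep$ and applying Young's convolution inequality, $\co{\nab^\a \eta_{\ep+\ep}\ast \nab p}$ is controlled by distributing derivatives between $\nab p$ and one copy of the mollifier. Using (\ref{bound:nabkp}) for up to $L-1$ derivatives on $\nab p$, any additional derivatives beyond the $L$-th must land on a mollifier, each at a cost of $\|\nab \eta_\ep\|_{L^1}\le C\ep_v^{-1}\le CN^{1/L}\Xi$, which is exactly the target growth. The mollified stress is handled identically using (\ref{bound:nabkR}) in place of (\ref{bound:nabkp}), and the resulting bound is actually better by a factor of $e_R/e_v\le 1$.

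The substantive work is the commutator estimate (\ref{eq:boundForQderivsHere}). Starting from the three integral representations (\ref{eq:velCommut1}), (\ref{eq:velCommut2}) and (\ref{eq:velCommut3}) already derived in the excerpt, each term carries an explicit prefactor of $\ep_v$ multiplying an integrand that is a product of two first derivatives of $v$ or $v_\ep$, weighted against kernels $\tilde\eta_\ep^i$ and $\eta_\ep$ against which the integration is performed in the $y$ variables. Applying the pointwise estimate $\co{\nab v}\le \Xi e_v^{1/2}$ to each of the two derivative factors directly yields $\co{Q}\le C\ep_v \Xi^2 e_v = CN^{-1/L}\Xi e_v$, establishing the $|\a|=0$ case. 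For $|\a|\ge 1$ one applies Leibniz in the outer $x$-variable, using translation invariance of the convolution to route each additional derivative optimally between the two derivative factors; with the total derivative count of the product equal to $|\a|+2$ and two factors available, each factor need only carry at most $\lceil(|\a|+2)/2\rceil$ derivatives. In particular, as long as $|\a|+1\le L$, no single factor exceeds order $L$, and by (\ref{bound:nabkv}) and (\ref{bound:nabkvep}) each derivative costs only $\Xi$, giving $\co{\nab^\a Q}\le C\ep_v\Xi^{|\a|+2}e_v = CN^{-1/L}\Xi^{|\a|+1}e_v$. Once $|\a|+1>L$ the higher-derivative bound in (\ref{bound:nabkvep}) takes over and each further derivative costs an additional $N^{1/L}$, producing the full bound (\ref{eq:boundForQderivsHere}). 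Taking the maximum over the three contributions yields the stated estimate.

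The main obstacle I anticipate is the careful bookkeeping in the commutator argument, specifically the verification that the first $L-1$ derivatives beyond $\nab^0$ can indeed be absorbed without incurring the $N^{1/L}$ interest. This exploits crucially the double-mollification structure introduced in Section \ref{coarseScaleVelocity}: the remaining copy of $\eta_\ep$ in each of (\ref{eq:velCommut1})--(\ref{eq:velCommut3}) lets one differentiate $\eta_\ep\ast v^j$ rather than the bare kernel $\tilde\eta_\ep^i$, so that the bounds (\ref{bound:nabkv}) (which are $\ep_v$-free up to order $L$) can be invoked instead of the $\ep_v^{-k}$ cost of differentiating the mollifier directly. This is precisely the point anticipated in the footnote after (\ref{bound:lowFreqTransport}) by analogy with Littlewood-Paley projections.
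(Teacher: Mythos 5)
Your proposal is correct and follows essentially the same route as the paper: decompose $f_\ep$ into the mollified pressure, the mollified stress, and the commutator $Q^j(v,v)$, bound the first two by letting derivatives beyond order $L$ fall on a mollifier at cost $\ep_v^{-1}\leq C N^{1/L}\Xi$, and estimate the commutator from the representations (\ref{eq:velCommut1})--(\ref{eq:velCommut3}), where the explicit prefactor $\ep_v$ supplies the extra gain of $N^{-1/L}$ recorded in (\ref{eq:boundForQderivsHere}). One small caveat: the claim that translation invariance lets you route derivatives so that each factor carries at most $\lceil(|\a|+2)/2\rceil$ of them is not actually available for these integrals, since the two factors are evaluated at different translates ($x-sy$ versus $x-y$) against the kernel, so any redistribution by parts in $y$ hits $\tilde\eta_\ep$ at cost $\ep_v^{-1}$ (or produces factors of $s$); but this is also unnecessary, because plain Leibniz places at most $|\a|+1$ derivatives on any single factor, so the threshold $|\a|+1\leq L$ you invoke, together with the superadditivity of $(\cdot-L)_+$ as in Lemma (\ref{lem:countingIneq}) for the regime beyond it, already yields the stated bound exactly as in the paper.
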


In particular, for $|\a| = 0$,
\begin{align} \label{bound:fepc0}
\co{ f_\ep^j } = \co{ (\pr_t + v_\ep^a \pr_a) v_\ep^j } &\leq C \Xi e_v
\end{align}

The equation (\ref{eq:dtvep}) has a physical interpretation in terms of the coarse scale flow, which we define here for future reference
\begin{defn} \label{def:csflow} Let $\Phi_s(t,x) = (\Phi^0, \Phi^i) : \R \times \R \times \T^3 \to \R \times \T^3$ be the unique solution to the ODE
\begin{align}
\fr{d}{ds}\Phi_s^0(t,x) &= 1 \\
\fr{d}{ds}\Phi_s^j(t,x) &=  v_\ep^j(\Phi_s(t,x)) \quad \quad j = 1, 2, 3 \\
\Phi_0(t,x) &= (t,x)
\end{align}
We call $\Phi_s(t,x) = (t + s, \Phi_s^j(t,x))$ the coarse scale flow.
\end{defn}

According to equation (\ref{eq:dtvep}), any particle which travels along the coarse scale flow experiences an acceleration
\begin{align}
\fr{d^2}{ds^2} \Phi_s^j(t,x) &= (\pr_t + v_\ep^a \pr_a) v^j(\Phi_s(t,x)) \\
&= f_\ep^j(\Phi_s(t,x)) 
\end{align}
where the acceleration $f_\ep^j$ is given by the right hand side of (\ref{eq:dtvep}) and is bounded by $|f_\ep^j| \leq C \Xi e_v$.


The method used here to obtain commutator estimates differs from the computations in \cite{CET} and \cite{deLSzeC1iso} in that our proof only involves commuting mollifiers with differential operators $v^a \pr_a$ and involves a double mollification trick in order to bound higher order derivatives.  
On the other hand, since both $v^a$ and $v_\ep^a$ are divergence free, the commutator can be expressed in the form 
\[ Q^j(v,v) = \pr_a [ v_\ep^a v_\ep^j - \eta_{\ep + \ep} \ast ( v^a v^j ) ] \]
as in \cite{CET} and \cite{deLSzeC1iso} so in particular the estimates (\ref{eq:boundForQderivsHere}) we have obtained for spatial derivatives of $Q^j$ can be compared to those stated in Lemma 2.1 of \cite{deLSzeC1iso} for commutators of the above form, which are similar in nature although they are not strong enough for the bounds (\ref{eq:boundForQderivsHere}).
\section{Transport Estimates}\label{transportEstimates}
In this section, we use the estimates of Section (\ref{coarseScaleFlow}) to derive estimates for quantities which are transported by the coarse scale flow and for their derivatives.

\subsection{Stability of the Phase Functions} \label{subsec:phaseStability}

We start with the phase functions $\xi_I$ which satisfy the transport equation
\begin{align}
 (\pr_t+ v_\ep^j\pr_j) \xi_I &= 0 \\
 \xi_I(t(I), x) &= {\hat \xi}_I
\end{align}
with initial data ${\hat \xi}_I$ at time $t(I)$ as described in Sections (\ref{transport}) and (\ref{stress}).

Our first objective is to choose the lifespan parameter $\tau$ sufficiently small so that all the phase functions which appear in the analysis can be guaranteed to remain nonstationary in the time interval $|t - t(I)| \leq \tau$, and so that the stress equation studied in Section (\ref{stress}) can be solved.  In order for these requirements to be met, we will choose $\tau$ small enough so that the gradients of the phase functions do not depart dramatically from their initial configurations.  

The {\bf lifespan parameter} $\tau$ will be of the form
\begin{align} \label{eq:tauWillBe}
\tau &= b \Xi^{-1} e_v^{-1/2} \\
b &\leq 1
\end{align}
where $b$ is a dimensionless parameter that must be sufficiently small.  

The following proposition bounds the separation of the phase gradients from their initial values in terms of $b$.

\begin{prop}\label{prop:phaseStability}  Let $\Phi_s$ be the coarse scale flow defined by (\ref{def:csflow}) and let ${\hat \xi_I}(x)$ be the initial conditions defined in Section (\ref{subsec:indexPhase}).

There exists a constant $A$ such that for all $|s| < b \Xi^{-1} e_v^{-1/2}$, and all $I \in \II$
\begin{align}
|\nab \xi_I(\Phi_s(t(I), x)) - \nab {\hat \xi_I}(x)| &\leq A b
\end{align}
As a consequence, 
\begin{align}
\co{~|\nab \xi_I(t, x)) - \nab {\hat \xi_I}|~} &\leq A b 
\end{align}
for all $t$ in the interval $|t - t(I)| \leq b \Xi e_v^{-1/2}$.
\end{prop}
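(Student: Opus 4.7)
The plan is to reduce the proposition to a Gronwall estimate for an ODE satisfied by $\nab\xi_I$ along the coarse scale flow. Differentiating the transport equation $(\pr_t + v_\ep^a \pr_a)\xi_I = 0$ with respect to $x^l$ yields the transport equation for the phase gradient
\begin{align*}
(\pr_t + v_\ep^a \pr_a)\pr_l \xi_I = -\pr_l v_\ep^a \, \pr_a \xi_I.
\end{align*}
Evaluating along the coarse scale flow and setting $F_I^l(s,x) := \pr_l \xi_I(\Phi_s(t(I),x))$ converts this PDE into the linear ODE
\begin{align*}
\fr{d}{ds} F_I^l(s,x) = -(\pr_l v_\ep^a)(\Phi_s(t(I),x))\, F_I^a(s,x), \qquad F_I^l(0,x) = \pr_l \hat\xi_I(x).
\end{align*}

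Next I would apply the bound $\co{\nab v_\ep} \leq \Xi e_v^{1/2}$ from (\ref{bound:nabkvep}) to control the coefficient of this linear system uniformly in $x$ and $s$. A direct integration combined with Gronwall's inequality gives
\begin{align*}
|F_I(s,x) - \nab \hat\xi_I(x)| \leq (e^{C \Xi e_v^{1/2}|s|} - 1)|\nab \hat\xi_I(x)|
\end{align*}
and since $|\nab \hat\xi_I| = 1$ and $|s| \leq b \Xi^{-1} e_v^{-1/2}$, the right hand side is bounded by a constant multiple of $b$ (taking $b \leq 1$ so that we remain in a regime where $e^{Cb} - 1 \leq A b$). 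This gives the first displayed inequality with an absolute constant $A$ depending only on the implicit constant in (\ref{bound:nabkvep}).

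For the consequence stated in the proposition, I would use that, for each fixed $s$ with $|s| \leq b\Xi^{-1}e_v^{-1/2}$, the map $x \mapsto \Phi_s(t(I),x)$ is a diffeomorphism from $\Om_I(t(I))$ onto $\Om_I(t(I)+s)$, as noted in Section (\ref{sec:thePhaseFunctions}). Given any $(t,y)$ with $|t - t(I)| \leq b\Xi^{-1}e_v^{-1/2}$ and $y \in \Om_I(t)$, I write $y = \Phi_s(t(I), x)$ with $s = t - t(I)$ and apply the pointwise bound just obtained. The only subtlety is that $\nab\hat\xi_I$ on the right hand side is evaluated at the pullback $x$ rather than $y$, but since $|\nab\hat\xi_I| \equiv 1$ is a constant (recall $\hat\xi_I$ is linear), this causes no difficulty.

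The argument is essentially routine, so I do not anticipate a genuine obstacle; the only point to be careful about is ensuring that the Gronwall constant in the exponent really does come from $\co{\nab v_\ep}$ rather than any norm involving $\co{v_\ep}$, which is consistent with the Galilean-invariant philosophy that all bounds in the construction depend only on relative velocities.
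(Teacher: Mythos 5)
Your proposal is correct and follows essentially the same route as the paper: differentiate the transport equation to get $(\pr_t + v_\ep\cdot\nab)\pr_l\xi_I = -\pr_l v_\ep^a\,\pr_a\xi_I$, restrict along the coarse scale flow, and run a Gronwall argument using only $\co{\nab v_\ep}\leq \Xi e_v^{1/2}$, so that the bound depends only on relative velocities. The only difference is cosmetic: the paper applies Gronwall to the squared difference $u(s)=|\nab\xi_I(\Phi_s)-\nab{\hat\xi}_I|^2$, which produces a $u^{1/2}$ term handled by a small substitution trick, whereas you use the standard exponential bound $|F(s)|\leq e^{C\Xi e_v^{1/2}|s|}|F(0)|$ for the linear ODE and then integrate once to get $|F(s)-F(0)|\leq (e^{C\Xi e_v^{1/2}|s|}-1)|\nab{\hat\xi}_I|$ — both yield $A b$ on the stated time interval.
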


With this proposition in hand, we will require that
\begin{req}\label{req:reqsForB}
\begin{align}
b &\unlhd b_0
\end{align}
\end{req}
where $b_0 < 1$ is a constant ensuring that
\begin{align}
\co{~ |\nab \xi_I |^{-1}~} &\leq 3 |\nab {\hat \xi}_I|^{-1} \\
\co{~ |\nab(\xi_I + \xi_J)|^{-1}~} &\leq 3|\nab({\hat \xi}_I + {\hat \xi}_J)|^{-1}  \quad \quad J \neq {\bar I}
\end{align}
on $|t - t(I)| \leq b_0 \Xi^{-1} e_v^{-1/2}$, and also ensuring that
\begin{align}
\co{~|\nab \xi_I(t, x)) - \nab {\hat \xi_I}(x)|~} &\leq c 
\end{align}
where $c$ guarantees the conditions in Proposition (\ref{prop:needTauSmall}) are both satisfied.

Proposition (\ref{prop:phaseStability}) is a consequence of the inequality
\begin{prop} \label{prop:gronwallPhase0}  There exist absolute constants $C_1, C_2$ such that for all $|s| \leq b \Xi^{-1} e_v^{-1/2}$ we have
\begin{align} \label{ineq:gronwallPhase0}
|\nab \xi_I(\Phi_s(t_I, x)) - \nab {\hat \xi_I}(x)| &\leq C_1 e^{C_2 \Xi e_v^{1/2} |s|} b \\
&\leq C_1 e^{C_2 b} b
\end{align}
\end{prop}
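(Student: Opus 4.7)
The plan is to derive a linear ODE along the coarse scale flow for the gradient $\nab \xi_I$ evaluated along a trajectory, and then apply Gr\"onwall's inequality. The key input is the $C^0$ bound $\co{\nab v_\ep} \leq C \Xi e_v^{1/2}$ from the coarse scale velocity estimates, together with the fact that the initial datum $\nab {\hat \xi}_I$ has unit size.

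More precisely, fix $x$ and define the vector
\[
Y_a(s) = \pr_a \xi_I(\Phi_s(t(I), x)), \qquad a = 1,2,3,
\]
so that $Y_a(0) = \pr_a {\hat \xi}_I(x)$. Differentiating the transport equation $(\pr_t + v_\ep^c \pr_c)\xi_I = 0$ in $x^a$ yields
\[
(\pr_t + v_\ep^c \pr_c)\pr_a \xi_I = -(\pr_a v_\ep^c)\, \pr_c \xi_I,
\]
and evaluating both sides at $\Phi_s(t(I),x)$ using the definition of the coarse scale flow produces the linear ODE
\[
\frac{dY_a}{ds} = -M_a{}^c(s)\, Y_c(s), \qquad M_a{}^c(s) := (\pr_a v_\ep^c)(\Phi_s(t(I),x)),
\]
with $\sup_s \|M(s)\| \leq C \Xi e_v^{1/2}$ by the estimate (\ref{bound:nabkvep}) (for $k = 1$).

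Writing $\mu = \Xi e_v^{1/2}$, Gr\"onwall applied to this linear ODE gives $|Y(s)| \leq |Y(0)|\, e^{C\mu|s|}$, and then the integral form
\[
Y_a(s) - Y_a(0) = -\int_0^s M_a{}^c(s') Y_c(s')\, ds'
\]
together with $|Y(0)| = |\nab {\hat \xi}_I| = 1$ yields
\[
|Y(s) - Y(0)| \leq C\mu \int_0^{|s|} e^{C\mu s'}\, ds' \leq C\mu|s|\, e^{C\mu|s|}.
\]
For $|s| \leq b\, \mu^{-1}$ one has $\mu|s| \leq b$, so the right-hand side is bounded by $C_1 b\, e^{C_2 \mu|s|}$, which is exactly (\ref{ineq:gronwallPhase0}). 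Proposition (\ref{prop:phaseStability}) then follows by taking the supremum over $x$ and inverting the flow.

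There is no serious obstacle here: the calculation is a routine application of Gr\"onwall to a linear ODE with a uniformly bounded coefficient matrix. The only point requiring care is to differentiate the transport equation once in $x$ before evaluating along $\Phi_s$, so that the cost of a single derivative of $v_\ep$ (a factor $\Xi e_v^{1/2}$) appears in the exponent rather than a higher-order derivative; this is precisely why the natural lifespan $\tau \sim \Xi^{-1} e_v^{-1/2}$ is the correct time scale over which the phase functions remain close to their initial, linear configurations, validating Requirement (\ref{req:reqsForB}).
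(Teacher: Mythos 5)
Your proof is correct, and while it uses the same two inputs as the paper's argument — the differentiated transport equation $(\pr_t + v_\ep^c\pr_c)\pr_a\xi_I = -\pr_a v_\ep^c\,\pr_c\xi_I$ and the bound $\co{\nab v_\ep}\leq C\Xi e_v^{1/2}$ — the execution is genuinely different. The paper runs a Gr\"onwall-type argument directly on the squared deviation $u(s)=|\nab\xi_I(\Phi_s(t(I),x))-\nab{\hat\xi}_I(x)|^2$, which satisfies the nonlinear differential inequality $|u'|\leq C_2\,\Xi e_v^{1/2}\,u + 2C_1\,\Xi e_v^{1/2}\,u^{1/2}$ with $u(0)=0$; handling the $u^{1/2}$ term is what forces the change of variables $v=e^{-C_2\bar t}u$ and the $\ep$-regularization before integrating. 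You instead exploit the linearity of the ODE for $Y=\nab\xi_I$ along the coarse scale flow: writing $\mu=\Xi e_v^{1/2}$, standard linear Gr\"onwall gives $|Y(s)|\leq |Y(0)|e^{C\mu|s|}$ with $|Y(0)|=|\nab{\hat\xi}_I|=1$, and the increment $Y(s)-Y(0)$ is then read off from the integral form of the ODE, producing the factor $\mu|s|\leq b$ directly. Your route is somewhat more elementary — it avoids the square-root differential inequality entirely — at the mild cost of first establishing the a priori bound on $|Y(s)|$ and invoking the normalization of the initial phase gradients (which the paper's difference-based inequality only uses through the coefficient $C_1$). Both arguments deliver exactly (\ref{ineq:gronwallPhase0}), and your deduction of Proposition (\ref{prop:phaseStability}) from it, by taking the supremum over $x$ and inverting the flow, matches the paper.
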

\begin{proof}
We study the evolution of the quantity 
\begin{align}
u(s) &= |\nab \xi_I(\Phi_s(t(I), x)) - \nab {\hat \xi_I}(x)|^2
\end{align}
as the flow parameter $s$ varies.  To prepare to compute $\fr{d u}{ds}$ we first differentiate the transport equation
\begin{align}
(\pr_t + v_\ep^j \pr_j) \xi_I &= 0
\end{align}
to obtain the equation
\begin{align}
(\pr_t + v_\ep^j \pr_j) \pr^l \xi_I &= - \pr^l v_\ep^j \pr_j \xi_I
\end{align}
governing the evolution of $\nab \xi_I$.  Using this law, we compute
\begin{align}
\fr{d}{ds} |\nab \xi_I(\Phi_s(t_I, x)) - &\nab {\hat \xi_I}(x)|^2 = 2(\pr_t + v_\ep^j(\Phi_s) \pr_j) \pr^l \xi_I(\Phi_s)(\pr_l \xi_I(\Phi_s) - \pr_l {\hat \xi}_I) \\
&=  - 2\pr^l v_\ep^j \pr_j \xi_I(\Phi_s)(\pr_l \xi_I(\Phi_s) - \pr_l {\hat \xi}_I) \\
&= - 2\pr^l v_\ep^j (\pr_j \xi_I(\Phi_s) - \pr_j {\hat \xi}_I) (\pr_l \xi_I(\Phi_s) - \pr_l {\hat \xi}_I) - 2 \pr^l v_\ep^j \pr_j {\hat \xi}_I (\pr_l \xi_I(\Phi_s) - \pr_l {\hat \xi}_I)
\end{align}
which gives an estimate of the form
\begin{align}
|\fr{d u}{ds}| &\leq C_2 \Xi e_v^{1/2} u +  2 C_1 \Xi e_v^{1/2} u^{1/2}
\end{align}
In terms of the dimensionless variable ${\bar t} = \Xi e_v^{1/2} s$, the same inequality can be stated
\begin{align} \label{ineq:dudsbar}
|\fr{d u}{d{\bar t}}| &\leq C_2 u + 2 C_1 u^{1/2}
\end{align}
By the time reversal symmetry ${\bar t} \mapsto - {\bar t}$, it suffices to prove (\ref{ineq:gronwallPhase0}) for ${\bar t} > 0$.  In this case, we achieve the bound by first using (\ref{ineq:dudsbar}) to deduce
\begin{align}
\fr{du}{d{\bar t}} - C_2 u &\leq 2 C_1 u^{1/2} \\
e^{- C_2 {\bar t}}\fr{du}{d{\bar t}} - C_2 e^{- C_2 {\bar t}}u &\leq 2 C_1 e^{- C_2 {\bar t}} u^{1/2} \\
\fr{d}{d{\bar t}}(e^{- C_2 {\bar t}}u) &\leq 2 C_1 e^{- C_2{\bar t}} u^{1/2} \label{ineq:dvdspretwist}
\end{align}
In terms of the variable $v = e^{- C_2 {\bar t}}u$, inequality (\ref{ineq:dvdspretwist}) takes the form
\begin{align}
\fr{dv}{d{\bar t}} &\leq 2 C_1  e^{- \fr{C_2{\bar t}}{2}} v^{1/2}
\end{align}
To deduce a bound for $v$, we can integrate the above inequality by observing that for any $\ep > 0$
\begin{align}
\fr{d(v + \ep)}{d{\bar t}} &\leq 2 C_1  e^{- \fr{C_2 {\bar t}}{2}} (v + \ep)^{1/2} \\
\fr{1}{2 (v + \ep)^{1/2} } \fr{d(v + \ep)}{d{\bar t}} = \fr{d(v+\ep)^{1/2}}{d {\bar t}} &\leq  C_1  e^{- \fr{C_2 {\bar t}}{2}}
\end{align}
Integrating from ${\bar t} = 0$ to ${\bar t} = {\bar s}$ for ${\bar s} = s\Xi e_v^{1/2}$ in the range $0 \leq {\bar s} \leq b$ and taking $\ep \to 0$, we obtain
\begin{align}
v^{1/2}({\bar s}) - v^{1/2}(0) &\leq C_1 \int_0^{\bar s} e^{- \fr{C_2 {\bar t}}{2}} d{\bar t} \\
&\leq C_1 {\bar s} \\
&\leq C_1 b
\end{align}
In our case, $v(0) = 0$ because $\nab {\hat \xi}_I(x)$ is the initial value for $\nab {\hat \xi}_I(\Phi_s(t(I),x))$.  Thus, returning to the initial variable $u$,
\begin{align}
e^{-\fr{C_2}{2} {\bar s}} u^{1/2}({\bar s}) &\leq C_1 b \\
u^{1/2}({\bar s}) &\leq C_1 e^{\fr{C_2 {\bar s}}{2}} b
\end{align}
which is an inequality of the form (\ref{ineq:gronwallPhase0}) with a different value for $C_2$ in terms of the original parameter $s = \Xi^{-1} e_v^{-1/2} {\bar s}$.
\end{proof}

\subsection{Relative Velocity Estimates} \label{sec:relVelocity}

We now collect estimates for the derivatives of the phase functions.  In terms of the foliation generated by the level sets of the phase functions $\xi_I$, estimates for derivatives $\nab^k \xi_I$ encode quantitative information about the density and regularity of the foliation.  Each leaf $\xi_I = C$ moves with the coarse scale flow, so changes in the regularity of the foliation arise from variations of the velocity field $v_\ep(t,x)$ in space.  Hence we refer to these estimates collectively as relative velocity estimates.  The intuitive considerations above are captured by the structure of the transport equations in the analysis.  For example, the bounds we obtain never depend on the $C^0$ norm of $v_\ep$, which is consistent with the Galilean invariance of the equation.

We now generalize the argument of Section (\ref{subsec:phaseStability}) to obtain bounds for the higher derivatives $\nab^k \xi_I$.  From the estimates $\co{ \nab^a v_\ep } \leq \Xi^{|a|} N^{(a - L)_+/L} e_v^{1/2}$, we see that each derivative of the velocity field $v_\ep$ costs a factor of $\Xi$, with an extra cost of $N^{1/L}$ for each derivative beyond order $L$. We expect the same cost in the estimates for the derivatives of $\xi$, so our goal is to prove
\begin{prop} \label{bound:nabaxi}  There exist constants $C_m$ depending on $m$ such that for all multi-indices $\ga$ of order $|\ga| = m$ we have
\begin{align}
\co{\nab \xi_I} &\leq C \\
\co{ \nab^\ga \xi_I } &\leq C_{|\ga|} \Xi^{|\ga| - 1} N^{(|\ga| - L)_+/L} \quad \quad \mbox{ for all } |\ga| \geq 1
\end{align}
\end{prop}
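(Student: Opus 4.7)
The plan is to prove the bound by induction on $m = |\ga|$. The base case $m = 1$ is given by Proposition~\ref{prop:phaseStability}: it yields $\co{\nab \xi_I - \nab \hat{\xi}_I} \leq Ab$ on $|t - t(I)| \leq \tau = b\Xi^{-1}e_v^{-1/2}$, which together with $|\nab \hat{\xi}_I| = 1$ gives $\co{\nab \xi_I} \leq 1 + Ab$, matching the claim for $|\ga| = 1$ (where the formula reads $\Xi^0 N^{(1-L)_+/L} = 1$ since $L \geq 2$).

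For the inductive step at order $m \geq 2$, I differentiate the transport equation $(\pr_t + v_\ep^j \pr_j)\xi_I = 0$ and commute $\pr^\ga$ through $v_\ep^j \pr_j$ to obtain
\begin{equation}
(\pr_t + v_\ep^j \pr_j)\pr^\ga \xi_I \;=\; -\sum_{\a + \b = \ga,\; |\a| \geq 1} \binom{\ga}{\a}(\pr^\a v_\ep^j)(\pr^\b \pr_j \xi_I). \label{eq:pf:transportDeriv}
\end{equation}
The right-hand side is split into pieces with $|\a| = 1$, which contain $\nab^m\xi_I$ itself multiplied by $\pr v_\ep$ (bounded by $C\Xi e_v^{1/2}$), and pieces with $|\a| \geq 2$, which involve only derivatives of $\xi_I$ of order at most $m - 1$. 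The first group plays the role of a linear coefficient in a Gronwall argument; the second group is treated by the induction hypothesis together with the bounds (\ref{bound:nabkvep}) on $v_\ep$.

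The inductive estimate for a typical $|\a| \geq 2$ term reads
\begin{equation*}
(\pr^\a v_\ep^j)(\pr^\b \pr_j \xi_I) \;\leq\; C\,\Xi^m e_v^{1/2}\, N^{f(|\a|)/L}, \qquad f(|\a|) := (|\a|-L)_+ + (m-|\a|+1-L)_+.
\end{equation*}
The key combinatorial observation is that for $2 \leq |\a| \leq m$ and $L \geq 2$, this piecewise linear function is maximized at the endpoint $|\a| = m$, where $f(m) = (m-L)_+$. Hence the inhomogeneous forcing is bounded in $C^0$ by $C_m \Xi^m e_v^{1/2} N^{(m-L)_+/L}$. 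Integrating (\ref{eq:pf:transportDeriv}) along the trajectories of the coarse scale flow of Definition~\ref{def:csflow} and applying Gronwall exactly as in Proposition~\ref{prop:gronwallPhase0}, while using that the linearity of the initial data gives $\nab^\ga \hat{\xi}_I = 0$ for $|\ga| \geq 2$, I obtain
\begin{equation*}
\co{\pr^\ga \xi_I} \;\leq\; e^{C_m b}\cdot \tau \cdot C_m \Xi^m e_v^{1/2} N^{(m-L)_+/L} \;\leq\; C_m'\, \Xi^{m-1} N^{(m-L)_+/L},
\end{equation*}
thanks to the crucial cancellation $\tau \cdot \Xi e_v^{1/2} = b \leq 1$. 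This closes the induction.

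The main obstacle is the sharp combinatorial bound identified above: a naive application of the inequality $(|\a|-L)_+ + (|\b|+1-L)_+ \leq (m+1-L)_+$ would lose a factor of $N^{1/L}$, and it is only after isolating the $|\a| = 1$ contribution into the Gronwall coefficient and carefully maximizing $f(|\a|)$ on the restricted range $|\a| \geq 2$ that one recovers the correct exponent $(m-L)_+$. Everything else---the vanishing of higher derivatives of the linear initial data, the boundedness of the Gronwall factor $e^{C_m b}$ over the short lifespan $\tau$, and the dimensional cancellation $\tau \Xi e_v^{1/2} = b$---is automatic from the setup of the construction.
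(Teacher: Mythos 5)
Your proof is correct, and it rests on the same two pillars as the paper's argument: the commuted transport equation for $\pr^\ga \xi_I$ and the bookkeeping of powers of $N^{1/L}$ via the counting inequality of Lemma (\ref{lem:countingIneq}), followed by Gronwall along the coarse scale flow over the short lifespan $\tau = b\,\Xi^{-1}e_v^{-1/2}$. The organization differs, though. The paper runs a single, non-inductive Gronwall argument for the weighted all-orders energy $E_M[\xi]$ of Definition (\ref{defn:transportNorm}) (Proposition (\ref{prop:transportBound1})), in which every term $\pr^\a v_\ep^j\,\pr_j\pr^\b\xi_I$ with $|\a| \geq 1$ --- including $|\a| = 1$ --- is absorbed into the homogeneous inequality $|\fr{d}{ds}E_M| \leq C\,\Xi e_v^{1/2}E_M$, because the counting inequality applied with $x_1 = |\a|-1$, $x_2 = |\b|$, $Y = L-1$ gives $(|\a|-L)_+ + (|\b|+1-L)_+ \leq (m-L)_+$ for \emph{all} $|\a|\geq 1$; the ``naive loss'' of $N^{1/L}$ you describe only arises if one forgets that $|\a|\geq 1$, so the restriction to $|\a|\geq 2$ is not where the sharp exponent comes from. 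What your induction on $m$ buys instead is zero initial data for the higher derivatives (linearity of ${\hat \xi}_I$) and the clean dimensional cancellation $\tau\cdot\Xi e_v^{1/2} = b$, at the cost of running the induction and of one coupling you should make explicit: the $|\a|=1$ terms in your forcing decomposition produce arbitrary order-$m$ derivatives of $\xi_I$, not $\pr^\ga\xi_I$ itself, so the Gronwall quantity at stage $m$ must be taken over all order-$m$ components simultaneously (e.g.\ $u(s)=\sum_{|\ga|=m}|\pr^\ga\xi_I(\Phi_s)|^2$), exactly as in Proposition (\ref{prop:gronwallPhase0}); with that adjustment the argument closes as you state, using the bounds (\ref{bound:nabkvep}) for $v_\ep$. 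The paper's energy formulation has the additional payoff that the same device is reused essentially verbatim for the flow-map estimates of Proposition (\ref{prop:coarseScaleFlowGeometry}).
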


Motivated by the statement of Proposition (\ref{bound:nabaxi}) and the structure of the transport equations to follow, we introduce a weighted energy to measure the pointwise values of the $M$-jet of $\xi_I$.
\begin{defn} \label{defn:transportNorm}
Let $\xi : \T^3 \times \R \to \R$ be a real valued function, and let $\Xi$, $e_v$, $N$ and $L$ be as in (\ref{def:subSolDef}) and (\ref{lem:iterateLem}).  For $1 \leq M$, we define the $M$'th order dimensionless energy of $\xi$ to be
\begin{align}
E_M[\xi](t,x) &= \sum_{m = 1}^M \sum_{|\ga| = m} \Xi^{-2(m-1)} N^{-2(m - L)_+/L} ( \pr^\ga \xi)^2
\end{align} 
where the summation runs over all multi-indices $\ga$ with $1 \leq |\ga| \leq M$.
\end{defn}

In terms of the weighted norm (\ref{defn:transportNorm}), the bound (\ref{bound:nabaxi}) can be deduced from the following more general theorem, which will be used again later on.
\begin{prop} \label{prop:transportBound1}
Suppose that $\xi$ satisfies the transport equation
\begin{align}
(\pr_t + v_\ep^j \pr_j) \xi &= 0
\end{align}
Then there exists a constant $C = C_M$ such that for all $s \in \R$ and all $(t,x) \in \R \times \T^3$
\begin{align} \label{bound:transportBound1}
E_M[\xi](\Phi_s(t, x)) &\leq e^{C \Xi e_v^{1/2}|s|} E_M[\xi](t,x)
\end{align}
In particular, for $|s| \leq \tau = b \Xi^{-1} e_v^{-1/2}$ with $b \leq 1$, we have a bound
\begin{align} \label{bound:transportBound2}
E_M[\xi](\Phi_s(t, x)) &\leq C E_M[\xi](t,x) \quad \quad \mbox{for all } |s| \leq \tau
\end{align}
\end{prop}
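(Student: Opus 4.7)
\textbf{Proof proposal for Proposition \ref{prop:transportBound1}.}

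The plan is to estimate $\fr{d}{ds} E_M[\xi](\Phi_s(t,x))$ pointwise and close the argument by Gr\"onwall. First, I would derive the transport equation satisfied by each derivative $\pr^\ga \xi$ of order $|\ga| = m \leq M$. Applying $\pr^\ga$ to $(\pr_t + v_\ep^j \pr_j)\xi = 0$ and commuting with $v_\ep^j \pr_j$ gives
\begin{align*}
(\pr_t + v_\ep^j \pr_j)\pr^\ga \xi = - \sum_{\substack{\b + \de = \ga \\ |\b| \geq 1}} \binom{\ga}{\b}\, \pr^\b v_\ep^j \, \pr_j \pr^\de \xi.
\end{align*}
Mirroring the computation in Proposition \ref{prop:gronwallPhase0}, one then has $\fr{d}{ds} (\pr^\ga \xi(\Phi_s))^2 = 2 \pr^\ga \xi \cdot (\pr_t + v_\ep^j \pr_j)\pr^\ga \xi$ along the flow, and hence (weighting by $W(\ga)^2 := \Xi^{-2(m-1)} N^{-2(m-L)_+/L}$ and summing over $|\ga| \leq M$)
\begin{align*}
\left| \fr{d}{ds} E_M[\xi](\Phi_s) \right| \leq 2 \sum_{1 \leq |\ga| \leq M}\sum_{\substack{\b + \de = \ga \\ |\b| \geq 1}} C_{\ga,\b}\, W(\ga)^2\, |\pr^\ga \xi|\,|\pr^\b v_\ep|\,|\pr_j \pr^\de \xi|.
\end{align*}

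Next I would dominate each summand by $C\,\Xi e_v^{1/2}\,E_M[\xi]$. The mechanism is to split $W(\ga)^2 = W(\ga)\cdot W(\ga)$ and use AM-GM as $|\pr^\ga \xi||\pr_j \pr^\de \xi| \leq \tfrac12 (A (\pr^\ga\xi)^2 + A^{-1}(\pr_j\pr^\de\xi)^2)$ with the scale $A = W(\ga)/W(\de_+)$, where $\de_+$ has order $|\de|+1 \leq m$. This puts each factor in the canonical form $W(\cdot)^2(\pr^{\cdot}\xi)^2 \leq E_M[\xi]$; what remains is the prefactor
\begin{align*}
W(\ga)/W(\de_+) \cdot \|\pr^\b v_\ep\|_{C^0} \leq C\, \Xi^{|\de_+| - m} \cdot \Xi^{|\b|} e_v^{1/2}\, N^{-(m-L)_+/L + (|\de_+|-L)_+/L + (|\b|-L)_+/L}.
\end{align*}
Since $|\b|+|\de_+| = m+1$, the $\Xi$-exponent is exactly $1$, yielding the desired factor $\Xi e_v^{1/2}$. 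The only genuine check — and in my view the one place where care is required — is the $N$-exponent: writing $E := (|\b|-L)_+ + (|\de_+|-L)_+ - (m-L)_+$ with $|\b|+|\de_+|=m+1$ and $1 \leq |\b| \leq m$, a short case split on whether $|\b|$ and $|\de_+|$ exceed $L$ shows $E \leq 0$ in every case (trivially when both are $\leq L$; using $|\b|\leq m$ or $|\de_+|\leq m$ in the mixed cases; and giving $1-L \leq 0$ when both exceed $L$). Thus the $N$-factor is uniformly bounded.

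Combining these estimates yields $\bigl| \tfrac{d}{ds} E_M[\xi](\Phi_s) \bigr| \leq C_M\, \Xi e_v^{1/2}\, E_M[\xi](\Phi_s)$, and Gr\"onwall's inequality immediately gives (\ref{bound:transportBound1}). The bound (\ref{bound:transportBound2}) follows since $|s| \leq b\Xi^{-1} e_v^{-1/2}$ forces $C \Xi e_v^{1/2}|s| \leq Cb \leq C$. The main obstacle is really just the combinatorial bookkeeping of the $N^{(\cdot-L)_+/L}$ weights — the ``interest payment'' exponents introduced in (\ref{bound:nabkvep}) — but since the bounds for $v_\ep$ match the weights of $E_M$ by construction, the verification reduces to the finite case analysis above.
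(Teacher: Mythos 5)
Your proposal is correct and follows essentially the same route as the paper: differentiate the transport equation, weight by the dimensionless energy $E_M$, distribute the $\Xi$ and $N^{(\cdot - L)_+/L}$ factors so that each term is controlled by $\Xi e_v^{1/2} E_M[\xi]$, and close with Gr\"onwall. Your explicit case split on the exponent $E = (|\b|-L)_+ + (|\de_+|-L)_+ - (m-L)_+$ is exactly the paper's counting inequality (Lemma \ref{lem:countingIneq}), and summing over all orders with AM-GM rather than inducting on $M$ with the factorization $P_{(I)}\cdot P_{(II)}$ is only a cosmetic difference.
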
 
\begin{proof}
For the case $M = 1$, the weighted norm coincides with the usual absolute value $|\nab \xi|^2$.  To study the evolution of $| \nab \xi |$ under the coarse scale flow, we use the equation
\begin{align} \label{eq:dtdgaPsi}
(\pr_t + v_\ep^j) \pr^\ga \xi &= - \pr^\ga v_\ep^j \pr_j \xi
\end{align}
to obtain a formula involving the deformation tensor of $v_\ep$
\begin{align}
\fr{d}{ds} |\nab \xi|^2(\Phi_s) &= 2 \sum_{|\ga| = 1} (\pr_t + v_\ep^j \pr_j) \pr^\ga \xi \pr_\ga \xi \\
&= - 2 \sum_{|\ga| = 1}  \pr^\ga v_\ep^j \pr_j \xi \pr_\ga \xi \\
&= - \sum_{|\ga| = 1} (\pr^\ga v_\ep^j + \pr^j v_\ep^\ga) \pr_j \xi \pr_\ga \xi
\end{align}
All we need to extract from this evolution formula is the estimate 
\begin{align}
| \fr{d}{ds} |\nab \xi|^2(\Phi_s) | &\leq C \Xi e_v^{1/2} |\nab \xi|^2(\Phi_s)
\end{align}
from which (\ref{bound:transportBound1}) follows by Gronwall's inequality.

For $M = 2$, we prepare to prove an estimate
\begin{align} \label{diffineq:preGron2}
| \fr{d}{ds} E_2[\xi](\Phi_s) | &\leq C_2 \Xi e_v^{1/2} E_2[\xi](\Phi_s)
\end{align}
by first differentiating (\ref{eq:dtdgaPsi}) to obtain
\begin{align} \label{eq:dtdga2Psi}
(\pr_t + v_\ep^j \pr_j) &\pr^{\ga_1} \pr^{\ga_2} \xi = - \pr^{\ga_1}[\pr^{\ga_2} v_\ep^j \pr_j \xi] - \pr^{\ga_1} v_\ep^j \pr_j \pr^{\ga_2} \xi \\
&= - ( \pr^{\ga_2} v_\ep^j \pr_j \pr^{\ga_1} \xi + \pr^{\ga_1} v_\ep^j \pr_j \pr^{\ga_2} \xi ) - \pr^{\ga_1}\pr^{\ga_2} v_\ep^j \pr_j \xi 
\end{align}
We can use this equation to obtain (\ref{diffineq:preGron2}) by first observing that
\begin{align}
E_2[\xi] = \sum_{|\ga| = 2} \Xi^{-2}(\sum_{|\ga| = 2} |\pr^\ga \xi|^2) + |\nab \xi|^2 
\end{align}
so that (\ref{diffineq:preGron2}) follows from an inequality 
\begin{align}
\Xi^{-2} |~ \fr{d}{ds}|\pr^\ga \xi|^2(\Phi_s) ~| &\leq C_\ga \Xi e_v^{1/2} E_2[\xi] \quad \quad \mbox{ for all } |\ga| = 2
\end{align}
The above bound on
\begin{align}
\fr{d}{ds}|\pr^\ga \xi|^2(\Phi_s) &= 2 [(\pr_t + v_\ep^j \pr_j)\pr^{\ga_1} \pr_{\ga_2} \xi ] \pr_{\ga_1} \pr_{\ga_2} \xi
\end{align}
follows from multiplying the equation (\ref{eq:dtdga2Psi}) by $\Xi^{-2} \pr_{\ga_1} \pr_{\ga_2} \xi$, and observing that
\begin{align}
\Xi^{-2} \pr_{\ga_1} \pr_{\ga_2} \xi ( \pr^{\ga_1}\pr^{\ga_2} v_\ep^j \pr_j \xi) &= (\Xi^{-1} \pr^{\ga_1}\pr^{\ga_2} v_\ep^j ) (\Xi^{-1} \pr_{\ga_1} \pr_{\ga_2} \xi) (\pr_j \xi ) \\
&\leq \Xi e_v^{1/2}~E_2^{1/2}[\xi](\Phi_s)\cdot E_2^{1/2}[\xi](\Phi_s) \\
&\leq \Xi e_v^{1/2}~E_2[\xi](\Phi_s)
\end{align}
Clearly, the terms arising from the remaining part in (\ref{eq:dtdga2Psi}) obey the same estimate.

In general, we can prove the differential inequality
\begin{align}
|~ \fr{d}{ds}E_M[\xi](\Phi_s) ~| &\leq C_M \Xi e_v^{1/2} E_M[\xi](\Phi_s(t,x))
\end{align}
inductively by proving a bound of the form 
\begin{align} \label{preGronGaM}
\Xi^{-2(M-1)} N^{-2(M-L)_+/L}  |~ \fr{d}{ds}|\pr^\ga \xi|^2(\Phi_s) ~| &\leq C_\ga \Xi e_v^{1/2} E_M[\xi](\Phi_s) \quad \quad \mbox{ for all } |\ga| = M
\end{align}

The transport equation obeyed by a higher order derivative $\pr^\ga \xi = \pr^{\ga_1} \cdots \pr^{\ga_M} \xi$ is of the form
\begin{align} \label{eq:dtdgapsideriv}
(\pr_t + v_\ep^j \pr_j) \pr^\ga \xi&= \sum_{\substack{\a + \b = \ga \\ |\a| \geq 1}} c_{\a,\b}^\ga \pr^\a v_\ep^j \pr_j(\pr^\b \xi) 
\end{align}
After multiplying this equation by $\Xi^{-2(M-1)} N^{-2(M-L)_+/L} \pr_\ga \xi$, each term has the form
\begin{align}
\Xi^{-2(M-1)} N^{-2(M-L)_+/L} \pr^\a v_\ep^j \pr_j(\pr^\b \xi) \pr_\ga \xi &= P_{(I)} \cdot P_{(II)} \label{eq:needTransport Bound} \\
P_{(I)} &= \Xi^{-(M-1)} N^{-(M-L)_+/L}  \pr^\a v_\ep^j \pr_j(\pr^\b \xi) \\
P_{(II)} &= \Xi^{-(M-1)} N^{-(M-L)_+/L} \pr_\ga \xi  \\ 
|\b| &\leq M - 1 \notag
\end{align}
In order to prepare to bound these terms in terms of the dimensionless energy, we use the simple observation that for all $\a \geq 1, L \geq 1$ and $\b \geq 0$, we have
\ali{
 (\a + \b - L)_+ &\geq ( (\a - 1) - (L-1) )_+ + (\b -(L - 1) )_+ = (\a - L)_+ + (\b + 1 - L)_+, \label{ineq:firstCountingN}
}
This inequality follows from the more general inequality (\ref{lem:countingIneq}) stated below.

Using (\ref{ineq:firstCountingN}), we can then distribute the powers of $N$ and $\Xi$
\begin{align}
|P_{(I)}| &= \Xi^{-(|\a| + |\b| - 1)} N^{-(M-L)_+/L} \left|\pr^\a v_\ep^j \pr_j(\pr^\b \xi) \right| \\
&\leq  \left| \Xi^{-|\a| + 1} N^{-(|\a|-L)_+/L} \pr^\a v_\ep^j \right| ~\left| \Xi^{-|\b|} N^{-(|\b| + 1-L)_+/L}\pr_j(\pr^\b \xi) \right| \label{bound:almostTransport}
\end{align}
Using the estimates
\begin{align}
\co{ \pr^\a v_\ep^j } &\leq \Xi^{|a|} N^{(|a|-L)_+/L} e_v^{1/2} \\
\Xi^{-(|\ga| - 1)} N^{-(|\ga|-L)_+/L} | \pr_\ga \xi| &\leq E_M^{1/2}[\xi]
\end{align}
and (\ref{bound:almostTransport}) we bound 
\begin{align}
|P_{(I)}| &\leq ( \Xi e_v^{1/2} ) \cdot  E_M^{1/2}[\xi](\Phi_s) \\
|P_{(II)}| &\leq E_M^{1/2}[\xi](\Phi_s)
\end{align}
giving an estimate for (\ref{eq:needTransport Bound}) of
\begin{align}
\Xi^{-2(M-1)} N^{-2(M-L)_+/L} | \pr^\a v_\ep^j \pr_j(\pr^\b \xi) \pr_\ga \xi | &\leq \Xi e_v^{1/2} E_M[\xi](\Phi_s)
\end{align}
which establishes (\ref{preGronGaM}), and by Gronwall, we conclude (\ref{bound:transportBound1}).
\end{proof}

To deduce the Proposition (\ref{bound:nabaxi}) from Proposition (\ref{prop:transportBound1}), we simply observe that the dimensionless norm of $\xi_I$ is initially bounded by
\begin{align}
E_M[\xi]^{1/2}(t(I), x) &= E_M[{\hat \xi}]^{1/2}(x) \\
&\leq C_M
\end{align}
and that every other point $(t,x)$ on the support of $v_I$ is obtained by flowing for a time less than $\tau$ from some other point $(t(I), x')$.

As a corollary to Proposition (\ref{bound:transportBound2}) and the equations (\ref{eq:dtdgapsideriv}) for $\xi = \xi_I$, we also have the bounds

\begin{prop}[Phase-Velocity Estimates]
For all multi-indices $\ga, k$ of orders $|\ga| \geq 0$, and $|k| \geq 0$
\begin{align} \label{bound:phaseVelocity}
\co{ \nab^k[ (\pr_t + v_\ep \cdot \nab) \nab^\ga (\nab \xi_I) ) ]} &\leq C \Xi^{|k| + |\ga| + 1} e_v^{1/2} N^{(|k| + |\ga| + 1 - L)_+/L}
\end{align}
\end{prop}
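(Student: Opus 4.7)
The plan is to expand the left hand side using the transport equation for $\xi_I$ and then apply the estimates already in hand: the velocity bound (\ref{bound:nabkvep}) and the derivative bound on $\xi_I$ from Proposition (\ref{bound:nabaxi}). Concretely, I first take the identity (\ref{eq:dtdgapsideriv}) with the multi-index $\ga'$ of order $|\ga|+1$ corresponding to $\nab^\ga \nab \xi_I$, giving
\begin{align*}
(\pr_t + v_\ep^j \pr_j) \pr^{\ga'} \xi_I &= - \sum_{\substack{\a + \b = \ga' \\ |\a| \geq 1}} c^{\ga'}_{\a,\b}\, \pr^\a v_\ep^j \, \pr_j \pr^\b \xi_I.
\end{align*}
Applying $\nab^k$ and expanding by Leibniz then reduces the whole estimate to bounding a finite linear combination of terms of the schematic form $\pr^A v_\ep^j \cdot \pr^B \xi_I$, where the multi-index orders satisfy $A + B = |k| + |\ga| + 2$ and $A, B \geq 1$ (the lower bound $B \geq 1$ comes from the extra $\pr_j$ inside the commutator, and $A \geq 1$ from the constraint $|\a| \geq 1$ in (\ref{eq:dtdgapsideriv})).

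Next I substitute the pointwise bounds $\co{\pr^A v_\ep} \leq C_A \Xi^A e_v^{1/2} N^{(A-L)_+/L}$ from (\ref{bound:nabkvep}) and $\co{\pr^B \xi_I} \leq C_B \Xi^{B-1} N^{(B-L)_+/L}$ from Proposition (\ref{bound:nabaxi}). Multiplying them gives, for each such term,
\begin{align*}
\co{\pr^A v_\ep^j \cdot \pr^B \xi_I} &\leq C \,\Xi^{|k|+|\ga|+1}\, e_v^{1/2}\, N^{[(A-L)_+ + (B-L)_+]/L},
\end{align*}
which already exhibits the correct power of $\Xi$ and the factor $e_v^{1/2}$ predicted by the proposition.

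The remaining, and genuinely delicate, step is to verify the $N$-counting inequality
\begin{align*}
(A-L)_+ + (B-L)_+ &\leq \bigl(|k| + |\ga| + 1 - L\bigr)_+
\end{align*}
whenever $A + B = |k| + |\ga| + 2$ with $A, B \geq 1$. I expect this to be the only place where care is needed, because the split produces a total of $|k|+|\ga|+2$ derivatives while the target is $|k|+|\ga|+1$, and so a naive application of (\ref{bound:nabkvep}) and (\ref{bound:nabaxi}) appears to overspend by one derivative. The key observation is that since both $A \geq 1$ and $B \geq 1$, one has $A, B \leq |k|+|\ga|+1$, which handles the case when exactly one of them exceeds $L$; the case when both exceed $L$ produces $A + B - 2L = |k|+|\ga|+2 - 2L$, which is bounded by $|k|+|\ga|+1 - L$ precisely because $L \geq 1$; the case when both are $\leq L$ is trivial. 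This is an instance of exactly the same counting estimate (\ref{ineq:firstCountingN}) that was used in the proof of Proposition (\ref{prop:transportBound1}), and I would simply invoke it rather than redo the case analysis. Combining these ingredients and summing over the (bounded) number of terms produced by Leibniz yields the stated bound (\ref{bound:phaseVelocity}).
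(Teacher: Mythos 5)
Your proposal is correct and follows exactly the route the paper intends: it states the Phase--Velocity Estimates as a corollary of the transport identity (\ref{eq:dtdgapsideriv}) together with the bounds (\ref{bound:nabkvep}) and Proposition (\ref{bound:nabaxi}), and your Leibniz expansion plus the counting inequality (\ref{ineq:firstCountingN}) is precisely the argument left implicit there. Your case analysis of the $N$-exponent bookkeeping is a correct verification of that same counting inequality, so nothing is missing.
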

The above bounds are larger than the bounds in (\ref{bound:transportBound2}) by a factor $\Xi e_v^{1/2}$ coming from the material derivative.

The inequality used to count powers of $N^{1/L}$ in the proof of Proposition (\ref{bound:nabaxi}) follows from the following general inequality, which will be used similarly in further estimates below.
\begin{lem}[The Counting Inequality]\label{lem:countingIneq}
For any non-negative numbers $x_1, x_2, \ldots, x_M \geq 0$ and $Y \geq 0$, we have
\ali{
(x_1 + x_2 + \ldots + x_M - Y)_+ &\geq \sum_i (x_i - Y)_+ 
}
\end{lem}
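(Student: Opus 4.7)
The plan is to give a short, direct case analysis rather than induction, since the inequality reduces to a partition of the indices according to whether $x_i$ exceeds the threshold $Y$.

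First, I would let $S = \{i : x_i > Y\}$ be the set of indices that actually contribute to the right-hand side, so that
\[
\sum_{i=1}^M (x_i - Y)_+ \;=\; \sum_{i \in S} (x_i - Y) \;=\; \sum_{i \in S} x_i \;-\; |S|\,Y.
\]
If $S = \emptyset$, the right-hand side is $0$, and the left-hand side $(x_1+\cdots+x_M - Y)_+$ is automatically non-negative, so the inequality holds trivially. Assume then that $|S| \geq 1$.

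Next I would observe two elementary bounds. On one hand, since $x_i \geq 0$ for $i \notin S$,
\[
\sum_{i \in S} x_i - |S|\,Y \;\leq\; \sum_{i=1}^M x_i - |S|\,Y \;\leq\; \sum_{i=1}^M x_i - Y,
\]
where the last step uses $|S| \geq 1$ and $Y \geq 0$. On the other hand, each summand $x_i - Y$ with $i \in S$ is non-negative by definition of $S$, so $\sum_{i \in S}(x_i - Y) \geq 0$. Combining these two facts,
\[
\sum_{i=1}^M (x_i - Y)_+ \;\leq\; \Bigl(\sum_{i=1}^M x_i - Y\Bigr)_+,
\]
which is exactly the counting inequality. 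There is no real obstacle here; the only subtle point is remembering to use the non-negativity of the $x_i$ outside $S$ so that dropping those terms gives an upper bound on the right-hand side, and the hypothesis $Y \geq 0$ to pass from $|S|Y$ to $Y$ after checking $|S| \geq 1$.
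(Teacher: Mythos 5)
Your proof is correct, and it takes a different route from the paper's. The paper proves the inequality by induction on $M$, reducing everything to the two-term case $(x_1 - Y)_+ + (x_2 - Y)_+ \leq (x_1 + x_2 - Y)_+$, which is then handled by a case analysis on how many of the two positive parts are nonzero (using $x_2 \geq 0$ when only one is positive and $Y \geq 0$ when both are). You instead argue directly for general $M$ by isolating the set $S$ of indices exceeding the threshold, discarding the terms with $i \notin S$ via $x_i \geq 0$, and replacing $|S|\,Y$ by $Y$ via $|S| \geq 1$ and $Y \geq 0$. The same two hypotheses do the same work in both arguments, but your version dispenses with the induction and treats all indices in one step, which is slightly more economical; the paper's version isolates the minimal two-variable statement, which is the form in which the inequality is actually invoked when distributing powers of $N^{1/L}$ across a product. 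One cosmetic remark: once you know $\sum_{i \in S}(x_i - Y) \leq \sum_i x_i - Y$, the conclusion already follows from $(a)_+ \geq a$, so the separate observation that the left side is nonnegative is not strictly needed, though it does no harm.
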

\begin{proof}
The result for general $M$ follows by induction from the case where $M = 2$, which states that
\ali{
(x_1 - Y)_+ + (x_2 - Y)_+ &\leq (x_1 + x_2 - Y)_+ \label{eq:theMis2Case}
}
Inequality (\ref{eq:theMis2Case}) is obvious when the left hand side is zero.  If exactly one of the two terms, say $(x_1 - Y)_+$, is positive, then the inequality follows from $x_2 \geq 0$.  If both $(x_1 - Y)_+$ and $(x_2 - Y)_+$ are positive, then the inequality follows from $Y \geq 0$.
\end{proof}

\subsection{ Relative Acceleration Estimates } \label{sec:relAccel}

In this section, we gather estimates for the quantities
\[ (\pr_t + v_\ep^j\pr_j )^2 \nab^k \xi_I \]
and their spatial derivatives which will be useful in the analysis to follow.

These estimates along with the phase-velocity estimates of section (\ref{sec:relVelocity}) give quantitative information regarding how the regularity of the foliation $\{ \xi_I = C \}$ changes in time from the frame of reference of a particle moving along the coarse scale flow.  Since the acceleration in time of these geometric quantities is due to variations in the acceleration that such particles experience, we refer to the collection of estimates that follow as relative acceleration estimates.

In Section (\ref{coarseScaleFlow}) we derived the following estimates for the acceleration along the coarse scale flow:
\begin{align} \label{bounds:coarseAcceleration2}
\co{ \nab^\a[(\pr_t + v_\ep^j \pr_j) v_\ep^l] } &\leq C_\a N^{( 1 + |\a| - L)_+/L} \Xi^{1 + |\a|} e_v \quad \quad |\a| \geq 0
\end{align}
and as in that section, we will abbreviate $f_\ep^l = (\pr_t + v_\ep^j \pr_j) v_\ep^l$.

Starting with
\begin{align}
(\pr_t + v_\ep^b \pr_b) \pr^l \xi_I &= - \pr^l v_\ep^b \pr_b \xi_I 
\end{align}
we calculate
\begin{align}
(\pr_t + v_\ep^a \pr_a )[ (\pr_t + v_\ep^b \pr_b) \pr^l \xi_I ] &= - (\pr_t + v_\ep^a \pr_a)[ \pr^l v_\ep^b \pr_b \xi_I] \\
&= - \pr^l v_\ep^b (\pr_t + v_\ep^a \pr_a) \pr_b \xi_I - [ (\pr_t + v_\ep^a \pr_a)\pr^l v_\ep^b ] \pr_b \xi_I \\
&= \pr^l v_\ep^b \pr_b v_\ep^a \pr_a \xi_I - [ (\pr_t + v_\ep^a \pr_a)\pr^l v_\ep^b ] \pr_b \xi_I
\end{align}
and substituting
\begin{align}
(\pr_t + v_\ep^a \pr_a)\pr^l v_\ep^b &= \pr^l[ (\pr_t + v_\ep^a \pr_a) v_\ep^b ] - \pr^l v_\ep^a \pr_a v_\ep^b \\
&= \pr^l[ f_\ep^b ] - \pr^l v_\ep^a \pr_a v_\ep^b
\end{align}
leaves
\begin{align} \label{eq:2ndorderTransport}
(\pr_t + v_\ep^a \pr_a )[ (\pr_t + v_\ep^b \pr_b) \pr^l \xi_I ] &= - \pr^l f_\ep^b \pr_b \xi_I + 2 \pr^l v_\ep^b \pr_b v_\ep^a \pr_a \xi_I
\end{align}

Applying the estimates (\ref{bounds:coarseAcceleration2}), (\ref{bound:nabkvep}) and Proposition (\ref{bound:nabaxi}) to equation (\ref{eq:2ndorderTransport}) we obtain
\begin{prop}[Phase-Acceleration Estimates]  For any multi-index $\a$ of order $|\a| \geq 0$
\begin{align} \label{bound:phaseAccel1}
\co{\nab^\a[(\pr_t + v_\ep \cdot \nab)^2 \nab \xi_I] } &\leq C_{|\a|} N^{(2 + |\a| - L)_+/L} \Xi^{(2 + |\a|)} e_v
\end{align}
\end{prop}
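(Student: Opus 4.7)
The plan is to start from the identity already derived in equation (\ref{eq:2ndorderTransport}),
\[
(\pr_t + v_\ep^a \pr_a )^2 \pr^l \xi_I \;=\; - \pr^l f_\ep^b \, \pr_b \xi_I \;+\; 2\, \pr^l v_\ep^b \, \pr_b v_\ep^a \, \pr_a \xi_I,
\]
apply $\nab^\a$ to both sides, and then bound each summand produced by the Leibniz rule using the already established bounds (\ref{bound:lowFreqTransport}) on $f_\ep$, (\ref{bound:nabkvep}) on $v_\ep$, and Proposition (\ref{bound:nabaxi}) on the derivatives of $\xi_I$. The $\Xi$--exponents add correctly by inspection; the only nontrivial bookkeeping is in the exponents of $N^{1/L}$, and that is the main obstacle to finishing the proof.

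For the first summand, Leibniz writes $\nab^\a(\pr^l f_\ep^b \,\pr_b \xi_I)$ as a sum over $\a_1 + \a_2 = \a$ of products for which I can estimate
\[
\coI{\nab^{\a_1}\pr f_\ep} \leq C\, \Xi^{|\a_1|+2} N^{(|\a_1|+2-L)_+/L} e_v, \qquad
\coI{\nab^{\a_2}\pr \xi_I} \leq C\, \Xi^{|\a_2|} N^{(|\a_2|+1-L)_+/L}.
\]
For the second summand, a three-fold Leibniz expansion over $\a_1+\a_2+\a_3=\a$ gives two factors of $\nab^{\a_i}\pr v_\ep$ each contributing $\Xi^{|\a_i|+1}N^{(|\a_i|+1-L)_+/L}e_v^{1/2}$, times a factor $\Xi^{|\a_3|}N^{(|\a_3|+1-L)_+/L}$ from the phase. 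In each term the total $\Xi$--power multiplies to $\Xi^{|\a|+2}$ and the prefactor of $e_v$ emerges.

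The only step requiring care is to show that the exponents of $N^{1/L}$ collapse to $(|\a|+2-L)_+$, since the naive application of Lemma (\ref{lem:countingIneq}) would give $(|\a|+3-L)_+$. The key refinement I will use is the inequality
\[
(a+1-L)_+ + (b-L)_+ \;\leq\; (a+b-L)_+ \qquad (a,b\geq 0,\; L\geq 1),
\]
which is easily checked by splitting into cases (both arguments nonnegative / one nonpositive / both positive, using $L\geq 1$ in the last case). Applied with $a=|\a_1|+1$, $b=|\a_2|+1$ to the first summand, this yields $(|\a_1|+2-L)_+ + (|\a_2|+1-L)_+ \leq (|\a|+2-L)_+$; for the second summand, chaining the same inequality gives $(|\a_1|+1-L)_+ + (|\a_2|+1-L)_+ + (|\a_3|+1-L)_+ \leq (|\a|+1-L)_+ \leq (|\a|+2-L)_+$. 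Putting these together, every term in the Leibniz expansion of $\nab^\a$ applied to the right-hand side of (\ref{eq:2ndorderTransport}) is bounded by $C_{|\a|}\,\Xi^{|\a|+2}\,N^{(|\a|+2-L)_+/L}\, e_v$, which is exactly (\ref{bound:phaseAccel1}).
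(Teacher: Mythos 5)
Your proof is correct and follows essentially the same route as the paper: the paper's own argument is precisely to apply the coarse scale force estimate (\ref{bound:lowFreqTransport}), the bounds (\ref{bound:nabkvep}) on $v_\ep$, and Proposition (\ref{bound:nabaxi}) to the identity (\ref{eq:2ndorderTransport}), and your Leibniz expansion just makes explicit the counting of powers of $N^{1/L}$ that the paper leaves implicit.

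One small correction: the auxiliary inequality you state, $(a+1-L)_+ + (b-L)_+ \leq (a+b-L)_+$ for all $a,b \geq 0$, is false as written (take $b = 0$ and $a \geq L$, e.g.\ $a = L = 1$: the left side is $1$, the right side is $0$); it requires $b \geq 1$, and the case split you sketch silently uses this in the case where the first bracket is positive and the second is not. Since in both of your applications the second argument is of the form $|\a_i| + 1 \geq 1$, the conclusion is unaffected --- and the corrected statement is exactly the paper's inequality (\ref{ineq:firstCountingN}), which is obtained from Lemma (\ref{lem:countingIneq}) applied with $Y = L - 1$, so no new counting lemma is actually needed.
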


We can obtain acceleration estimates for higher order derivatives $\nab^\ga\xi_I$ by repeatedly commuting spatial derivatives with $(\pr_t + v_\ep \cdot \nab)$ and combining the estimates (\ref{bound:phaseAccel1}) and (\ref{bound:nabaxi}).  The resulting bound is of the same form
\begin{prop}[Higher Order Phase-Acceleration Estimates]  For any multi-indices $\a, \ga$ of orders $|\a| \geq 0$ and $|\ga| \geq 0$
\begin{align} \label{bound:phaseAccel2}
\co{\nab^\a[(\pr_t + v_\ep \cdot \nab)^2 \nab^\ga (\nab \xi_I)] } &\leq C_{|\a| + |\ga|} N^{(1 + |\a| + |\ga| - L)_+/L} \Xi^{(2 + |\a| + |\ga|)} e_v
\end{align}
\end{prop}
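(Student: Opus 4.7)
The plan is to derive the bound by commuting spatial derivatives past the two material derivatives and then applying the already-proven estimates as building blocks. Starting from the second-order transport identity
\[
 (\pr_t + v_\ep^a \pr_a )^2 \pr^l \xi_I = - \pr^l f_\ep^b \pr_b \xi_I + 2 \pr^l v_\ep^b \pr_b v_\ep^a \pr_a \xi_I \tag{$\ast$}
\]
from Section~(\ref{sec:relAccel}), I would first write
\[
 \nab^\a\bigl[(\pr_t + v_\ep \cdot \nab)^2 \nab^\ga \nab\xi_I\bigr] = \nab^\a\nab^\ga\bigl[(\pr_t + v_\ep\cdot\nab)^2 \nab\xi_I\bigr] + \nab^\a\bigl[\CC_\ga\bigr],
\]
where $\CC_\ga$ denotes the commutator $[(\pr_t+v_\ep\cdot\nab)^2,\nab^\ga](\nab\xi_I)$. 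The first term is bounded directly by applying the case-$|\ga|=0$ Proposition~(\ref{bound:phaseAccel1}) with $|\a|+|\ga|$ in place of $|\a|$; this is really just $\nab^{|\a|+|\ga|}$ acting on the right-hand side of $(\ast)$, and after Leibniz it is controlled by the phase estimates~(\ref{bound:nabaxi}), the velocity estimates~(\ref{bound:nabkvep}), and the coarse-scale force estimates~(\ref{bounds:coarseAcceleration2}).

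For the commutator $\CC_\ga$, the basic identity $[(\pr_t + v_\ep^a \pr_a),\,\pr_i] = -(\pr_i v_\ep^a)\pr_a$ allows me to inductively push the two material derivatives in $(\pr_t+v_\ep\cdot\nab)^2$ past the $|\ga|$ spatial derivatives one at a time. Each commutation produces an extra factor of the form $\nab^{b}v_\ep$ and reduces one $\pr_i$ on $\xi_I$ to a $\pr_a$ on a material derivative of $\nab^{\ga'}\xi_I$ for some strictly smaller multi-index $\ga'$. Iterating twice yields an expansion of $\CC_\ga$ as a finite linear combination of monomials of the form
\[
 \nab^{b_1}v_\ep \,\cdot\,\nab^{b_2}v_\ep \,\cdot\,(\pr_t + v_\ep\cdot\nab)^{r}\!\nab^{b_3}(\nab\xi_I),\qquad b_1+b_2+b_3 \leq |\ga|,\;\; r\in\{0,1\},
\]
together with similar monomials in which one factor of $\nab^{b}v_\ep$ is replaced by a $\nab^{b}f_\ep$. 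Applying $\nab^\a$ to $\CC_\ga$ and using Leibniz again redistributes $|\a|$ further spatial derivatives across these factors, so that the whole expression becomes a sum of products of the already-controlled quantities $\nab^{k_i}v_\ep$, $\nab^{k_j}f_\ep$, and the phase-velocity objects $\nab^{k}(\nab\xi_I)$, $\nab^{k}(\pr_t+v_\ep\cdot\nab)\nab^{k'}(\nab\xi_I)$ bounded in~(\ref{bound:nabaxi}) and~(\ref{bound:phaseVelocity}).

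The final step is the bookkeeping. For each resulting monomial, the total power of $\Xi$ automatically sums to $|\a|+|\ga|+2$, and the total $e_v$-weight is $e_v$ (one $e_v$ per $f_\ep$-factor, or two $e_v^{1/2}$'s per pair of $v_\ep$-factors, plus the dimensionless $\xi_I$-factor). The only delicate part is the exponent of $N^{1/L}$: each individual factor contributes a loss of $(k_i - L)_+/L$, and by the Counting Inequality (Lemma~(\ref{lem:countingIneq})) the total loss is bounded by $(\sum_i k_i - L)_+/L$, where $\sum_i k_i$ equals $|\a|+|\ga|+2$ in the monomials arising from the $v_\ep\cdot v_\ep\cdot\nab\xi_I$ structure and $|\a|+|\ga|+1$ in those arising from the $f_\ep\cdot\nab\xi_I$ structure (since $\nab^k f_\ep$ has $N$-loss exponent $(k+1-L)_+$, absorbing one derivative into the base cost). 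Collecting all monomials yields the stated estimate. The main obstacle is precisely this counting step: one must verify that in every monomial a single factor absorbs the ``$+1$'' coming from either the $f_\ep$-term in $(\ast)$ or from the very first commutator, and then apply Lemma~(\ref{lem:countingIneq}) to the full monomial at once rather than inequality-by-inequality, which would double-count the losses.
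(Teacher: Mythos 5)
Your overall route is the same one the paper sketches: commute the spatial derivatives past the two material derivatives, expand into monomials built from $\nab^k v_\ep$, $\nab^k f_\ep$ and (material derivatives of) $\nab^k\xi_I$ via the identity (\ref{eq:2ndorderTransport}), and then do the $N^{1/L}$ bookkeeping with Lemma (\ref{lem:countingIneq}). The gap is in the final counting step, which you state backwards, and which, once corrected, does not produce the exponent printed in the statement. By (\ref{bounds:coarseAcceleration2}) the factor $\nab^k f_\ep$ carries the loss $N^{(k+1-L)_+/L}$; that ``$+1$'' is an \emph{extra} unit of loss, not one that is absorbed. Consequently the worst monomial in your expansion is the one with all outer derivatives on the force, $\nab^{|\a|+|\ga|}\pr^l f_\ep^b\,\pr_b\xi_I$, which is only bounded by $C\,N^{(|\a|+|\ga|+2-L)_+/L}\Xi^{|\a|+|\ga|+2}e_v$, and this exceeds the claimed $N^{(1+|\a|+|\ga|-L)_+/L}$ as soon as $|\a|+|\ga|\geq L-1$; there is no hidden cancellation available, since the loss comes from the mollified pressure inside $f_\ep$. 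Conversely, the $\pr v_\ep\,\pr v_\ep\,\nab\xi_I$ monomials, to which you assign the total count $|\a|+|\ga|+2$, in fact total only $(|\a|+|\ga|+1-L)_+$ by the refined counting $\sum_i(k_i+1-L)_+\leq(\sum_i k_i+1-L)_+$. So the two cases in your parenthetical are swapped, and the swap is exactly what makes the stated exponent appear reachable.

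What your argument proves, with the counting repaired, is the bound with $N^{(2+|\a|+|\ga|-L)_+/L}$ in place of $N^{(1+|\a|+|\ga|-L)_+/L}$. That weaker exponent is in fact the one the paper itself carries elsewhere: it agrees with the $\ga=0$ case (\ref{bound:phaseAccel1}), with the concise form $\co{D^{(k,r)}\nab\xi_I}\leq C N^{((r-1)_++k+1-L)_+/L}\Xi^{k+r}e_v^{r/2}$ stated immediately afterwards, and with the downstream applications (e.g. the bound (\ref{eq:matDv2ofAlpha}) for the second material derivative of $\a_I$ carries $N^{(k+2-L)_+/L}$). So the literal exponent in (\ref{bound:phaseAccel2}) appears to be a slip rather than something a sharper argument recovers; but as written your proposal neither proves the literal statement nor identifies the obstruction, and the sentence claiming that the $f_\ep$-structure ``absorbs one derivative into the base cost'' is precisely where it fails.
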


One can summarize the estimates  (\ref{bound:nabaxi}), (\ref{bound:phaseVelocity}), (\ref{bound:phaseAccel2}) succinctly in the form
\begin{align} \label{bound:phaseTransportSummary}
\co{\nab^\a[(\pr_t + v_\ep \cdot \nab)^r \nab^\ga (\nab \xi_I)} &\leq C N^{( (r - 1)_+ + |\ga| + |\a| - L)_+/L} \Xi^{r + |\a| + |\ga|} e_v^{r/2}
\end{align}
and more generally we can commute the spatial and material derivatives to obtain
\begin{prop}[General Phase-Acceleration Estimates 1]
\begin{align}
\co{\nab^\a (\pr_t + v_\ep \cdot \nab) \nab^\b (\pr_t + v_\ep \cdot \nab) \nab^\ga (\nab \xi_I)} &\leq C N^{( 1 + |\a| + |\b| + |\ga| - L)_+/L} \Xi^{2 + |\a| + |\ga|} e_v^{r/2}
\end{align}
\end{prop}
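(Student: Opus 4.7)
The strategy is to reduce the mixed spatial/material derivative expression on the left-hand side to an expression of the form $\nab^{\a'}(\pr_t + v_\ep\cdot\nab)^2\nab^{\ga'}(\nab\xi_I)$ that is already controlled by the Higher Order Phase-Acceleration Estimates (\ref{bound:phaseAccel2}), at the cost of commutator terms which we handle by the earlier Phase-Velocity and Phase-Acceleration Estimates (\ref{bound:phaseVelocity}), (\ref{bound:phaseAccel1}). The basic commutator identity to iterate is
\ali{
 [\nab, \pr_t + v_\ep\cdot\nab] &= (\nab v_\ep)\cdot\nab,
}
so commuting a single spatial derivative past a material derivative produces an extra factor $\nab v_\ep$, which by (\ref{bound:nabkvep}) costs $\Xi e_v^{1/2}$ and behaves well with respect to the counting of $N^{1/L}$-factors via Lemma (\ref{lem:countingIneq}).

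First I would move $\nab^\b$ through the inner material derivative, writing
\ali{
 \nab^\b(\pr_t + v_\ep\cdot\nab)\nab^\ga(\nab\xi_I) &= (\pr_t + v_\ep\cdot\nab)\nab^{\b+\ga}(\nab\xi_I) + \sum c_{\b',\b''} \nab^{\b'} v_\ep^j\, \pr_j \nab^{\b''+\ga}(\nab\xi_I),
}
where the sum runs over decompositions $\b'+\b'' = \b$ with $|\b'|\geq 1$. Then I would commute the outer $\nab^\a$ through $(\pr_t + v_\ep\cdot\nab)$ in the same way. After iterating, the whole expression becomes a linear combination of principal terms $\nab^{\a'}(\pr_t+v_\ep\cdot\nab)^2\nab^{\ga'}(\nab\xi_I)$ with $|\a'| + |\ga'| \leq |\a|+|\b|+|\ga|$, plus mixed terms of the schematic form
\ali{
 \bigl(\prod_{i} \nab^{a_i} v_\ep\bigr)\cdot \nab^{\a'}(\pr_t + v_\ep\cdot\nab)^{r}\nab^{\ga'}(\nab \xi_I), \qquad r\in\{0,1\},
}
with $r + \sum_i 1 \leq 2$ counting the total number of material-derivative-like slots consumed, and $\sum_i |a_i| + |\a'| + |\ga'| \leq |\a|+|\b|+|\ga|$.

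Next I would estimate each summand: derivatives of $v_\ep$ are bounded by (\ref{bound:nabkvep}), the terms with $r=2$ are bounded by (\ref{bound:phaseAccel2}), those with $r=1$ by (\ref{bound:phaseVelocity}), and those with $r=0$ by Proposition (\ref{bound:nabaxi}). Each factor contributes $\Xi^{\text{(derivative count)}} e_v^{1/2}$, and the total exponent of $e_v$ is $\frac{1}{2}\cdot(\text{number of }v_\ep\text{ factors} + r) = \frac{1}{2}\cdot 2 = 1$, matching the claim; the total $\Xi$-power is $2 + |\a|+|\b|+|\ga|$ for the same reason. (This resolves the apparent typos $\Xi^{2+|\a|+|\ga|}e_v^{r/2}$ in the statement, which must read $\Xi^{2 + |\a|+|\b|+|\ga|}e_v$.)

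The main technical step, and essentially the only thing requiring care, is verifying that the exponent of $N^{1/L}$ does not exceed $(1+|\a|+|\b|+|\ga| - L)_+$. Here I would apply the Counting Inequality (Lemma \ref{lem:countingIneq}) to the partition of $|\a|+|\b|+|\ga|$ arising in each summand: for a principal term $\nab^{\a'}(\pr_t+v_\ep\cdot\nab)^2\nab^{\ga'}(\nab\xi_I)$ of multiplier type with $v_\ep$-factors of orders $a_1,\ldots,a_M$, (\ref{bound:phaseTransportSummary}) contributes $((2-1)_+ + |\ga'| + |\a'| - L)_+/L = (1 + |\a'|+|\ga'| - L)_+/L$, while each factor $\nab^{a_i}v_\ep$ contributes $(a_i - L)_+/L$; adding these and using Lemma (\ref{lem:countingIneq}) with $Y = L$ yields at most $(1 + |\a|+|\b|+|\ga| - L)_+/L$, which is exactly the desired exponent. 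The $r=1$ and $r=0$ subcases are strictly easier since they carry fewer material derivatives and thus smaller baseline exponents in (\ref{bound:phaseTransportSummary}). Summing finitely many such terms with combinatorial constants absorbed into $C = C_{|\a|+|\b|+|\ga|}$ completes the proof.
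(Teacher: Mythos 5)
Your proposal is correct and is essentially the paper's own (largely implicit) argument: the paper simply remarks that one commutes the spatial and material derivatives, bounds the commutator factors $\nab^{a}v_\ep$ via (\ref{bound:nabkvep}), applies the already-proven estimates (\ref{bound:nabaxi}), (\ref{bound:phaseVelocity}), (\ref{bound:phaseAccel2}) to the principal terms, and counts the $N^{1/L}$ losses with Lemma (\ref{lem:countingIneq}), exactly as you do — and your reading of $\Xi^{2+|\a|+|\b|+|\ga|}e_v$ as the intended right-hand side is the right interpretation of the statement. The only bookkeeping item worth making explicit is the term where the outer material derivative lands on a commutator-produced factor $\nab^{\b'}v_\ep$, which is handled by the coarse scale force estimates (\ref{bound:lowFreqTransport}) at the same cost of $\Xi e_v^{1/2}$ per material derivative, so the power counting is unchanged.
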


We summarize some important points from the analysis.
\begin{itemize}
\item  The gradients $\nab \xi_I$ are bounded in size
\item  Each spatial derivative of $\nab \xi$ costs a factor of $\Xi$, which is consistent with dimensional analysis.
\item  Each material derivative $(\pr_t + v_\ep \cdot \nab)^r$, $|r| \leq 2$, costs a factor of $\Xi e_v^{1/2}$, which is also consistent with dimensional analysis.
\item  After taking $L$ derivatives of $\xi$, the estimates also lose a factor of $a_v^{-1} N^{1/L}$ for each additional derivative taken (although the first material derivative does not count in this regard).  
\item  {\bf In particular}, since $L \geq 2$, more than two derivatives of $\xi$ must be taken before this extra loss appears.
\end{itemize}

Let us introduce some notation to make the estimates more readable.  Denote by $D^{(k,r)}$ any differential operator of the form
\begin{align}
D^{(k,r)} &= \nab^{a_1} (\pr_t + v_\ep \cdot \nab)^{r_1} \nab^{a_2} (\pr_t + v_\ep \cdot \nab)^{r_2} \nab^{a_3} \\\
k &= a_1 + a_2 + a_3 \\
r &= r_1 + r_2 \\
a_i, r_i &\geq 0 \\
r &\leq 2
\end{align}

Then the general phase-acceleration estimate can be written concisely as
\begin{prop}[General Phase-Acceleration Estimates 2]
\begin{align}
\co{D^{(k,r)} \nab \xi_I)} &\leq C_{k+r} N^{((r - 1)_+ + k + 1 - L)_+/L} \Xi^{k + r} e_v^{r/2}
\end{align}
\end{prop}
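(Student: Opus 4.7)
The plan is to reduce any operator of the mixed form $D^{(k,r)} = \nab^{a_1}(\pr_t + v_\ep\cdot\nab)^{r_1}\nab^{a_2}(\pr_t + v_\ep\cdot\nab)^{r_2}\nab^{a_3}$ to the canonical form $(\pr_t + v_\ep\cdot\nab)^{r}\nab^{k'}\nab\xi_I$ (plus controllable error terms) by commuting all spatial derivatives past all material derivatives. Once in canonical form, the three preceding propositions cover the cases $r=0,1,2$ directly, so the whole task is to track the error terms produced by the commutations and to verify that they obey the same bound, with the $N$-exponent $((r-1)_+ + k + 1 - L)_+/L$ preserved.

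The main tool is the commutator identity
\[
[\nab^a, \pr_t + v_\ep \cdot \nab] f \;=\; -\sum_{\substack{a'+a''=a\\|a'|\geq 1}} c_{a',a''}\,\pr^{a'} v_\ep^j\,\pr_j\pr^{a''}f,
\]
obtained from the Leibniz rule. Iterating this identity (first for the innermost material factor, then the outer one) produces a finite sum of terms each of which has the schematic shape
\[
\Big(\prod_{i} \pr^{a_i'} v_\ep\Big) \cdot D^{(k'', r)} \nab\xi_I,
\]
where $\sum_i |a_i'| + k'' \leq k$ and where $k'' < k$ whenever at least one commutator was used. The leading canonical term (no commutators) is estimated directly by the relevant one of the Phase bound~(\ref{bound:nabaxi}), the Phase–Velocity bound~(\ref{bound:phaseVelocity}), or the Higher-Order Phase–Acceleration bound~(\ref{bound:phaseAccel2}). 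For the error terms I induct on $k$ (with the base case $k=0$ being exactly those three propositions).

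In the inductive step, I bound each factor $\pr^{a_i'} v_\ep$ by~(\ref{bound:nabkvep}), contributing $\Xi^{|a_i'|} e_v^{1/2} N^{(|a_i'|-L)_+/L}$, and I bound the lower-order $D^{(k'',r)}\nab\xi_I$ by the inductive hypothesis. The $\Xi$ and $e_v^{1/2}$ powers combine dimensionally to give $\Xi^{k+r} e_v^{r/2}$ (each commutator trades a derivative on $\xi_I$ for a $\nab v_\ep$ which contributes exactly one $\Xi e_v^{1/2}$). The accumulation of $N$-exponents is governed by the Counting Inequality~(\ref{lem:countingIneq}):
\[
\sum_i (|a_i'| - L)_+ + \big((r-1)_+ + k'' + 1 - L\big)_+ \;\leq\; \big((r-1)_+ + k + 1 - L\big)_+,
\]
which is the same arithmetic that was already used in the proof of Proposition~(\ref{bound:nabaxi}) and which follows by reducing to the $M=2$ case. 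This closes the induction.

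\textbf{Main obstacle.} The calculation is routine but the bookkeeping is the delicate point: one must verify that when a spatial derivative of $v_\ep$ is split off, the Counting Inequality yields a $N^{1/L}$-exponent which is \emph{subadditive} against the remaining operator acting on $\nab\xi_I$, and that the extra $\Xi e_v^{1/2}$ obtained from a factor $\nab v_\ep$ is paid for by the reduction in derivatives falling on $\xi_I$. The constraint $r \leq 2$ is essential: the canonical $r=2$ case relies on~(\ref{bound:phaseAccel2}), which in turn invokes the coarse-scale force estimates~(\ref{bounds:coarseAcceleration2}); no corresponding bound for $(\pr_t + v_\ep\cdot\nab)f_\ep$ has been established, so the method cannot be pushed beyond two material derivatives.
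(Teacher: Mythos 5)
Your overall strategy is the paper's own: the proposition is there presented as a compact restatement of the estimates (\ref{bound:nabaxi}), (\ref{bound:phaseVelocity}), (\ref{bound:phaseAccel1})--(\ref{bound:phaseAccel2}), together with the remark that mixed orderings of derivatives are handled by commuting spatial and material derivatives, paying with factors $\nab^a v_\ep$ estimated by (\ref{bound:nabkvep}) and closing the $N^{1/L}$ count with the counting inequality. So the route is right, and your closing observation that $r \leq 2$ is forced by the reliance on (\ref{bound:phaseAccel2}), hence on the coarse scale force estimates, is also accurate.

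The ``delicate bookkeeping'' you single out is, however, exactly where your write-up is off, in three places. First, the schematic error shape $\bigl(\prod_i \pr^{a_i'}v_\ep\bigr)\cdot D^{(k'',r)}\nab\xi_I$ with $r$ unchanged is dimensionally inconsistent: each commutation removes one material derivative from the operator acting on $\xi_I$ (that is precisely what is traded for the factor $\pr^{a_i'}v_\ep$), so after $m$ commutations the remaining operator is $D^{(k'',r-m)}$; keeping $r$ would produce $e_v^{(r+m)/2}$ rather than the claimed $e_v^{r/2}$. Second, each commutation also adds one contracted spatial derivative, so the budget is $\sum_i|a_i'| + k'' \leq k + m$, not $\leq k$; consequently the counting inequality you display is not the one actually needed. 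The correct arithmetic exploits that every $|a_i'| \geq 1$, i.e.\ the shifted inequality (\ref{ineq:firstCountingN}), giving $\sum_i(|a_i'|-L)_+ + \bigl((r-m-1)_+ + k''+1-L\bigr)_+ \leq \bigl((r-m-1)_+ + k+1-L\bigr)_+ \leq \bigl((r-1)_+ + k+1-L\bigr)_+$, which does close. Third, if you literally commute \emph{all} spatial derivatives inward, the outer material derivatives subsequently land on the $\pr^{a'}v_\ep$ factors already produced, creating factors of the type $\nab^a(\pr_t + v_\ep\cdot\nab)v_\ep$; these are harmless (they are bounded by (\ref{bound:lowFreqTransport})), but they are absent from your posited error shape, so your structural claim is not exhaustive. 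The cleanest repair, and the paper's implicit route, is to commute only where necessary: any $D^{(k,r)}$ with $r \leq 1$, or with the two material derivatives adjacent, is already of the form covered by (\ref{bound:nabaxi}), (\ref{bound:phaseVelocity}) or (\ref{bound:phaseAccel2}), so the only new configuration is $\nab^{a_1}(\pr_t+v_\ep\cdot\nab)\nab^{a_2}(\pr_t+v_\ep\cdot\nab)\nab^{a_3}\nab\xi_I$, which is settled by a single commutation of the middle spatial block (the paper's General Phase-Acceleration Estimates 1); then no material derivative ever hits a $v_\ep$ factor and no induction on $k$ is required.
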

\section{Mollification along the Coarse Scale Flow}
With the transport estimates of Section (\ref{transportEstimates}) in hand, we are ready to discuss how to construct the appropriate mollification the Reynolds stress.

\subsection{The Problem of Mollifying the Stress in Time}

Unlike the velocity field, which was only mollified in the spatial variables and which earned its time-regularity through the Euler-Reynolds equation, the Reynolds stress must be mollified in both space and time.  To see that a mollification is necessary, first observe that according to the construction of $v_I \approx R^{1/2}$ in Section (\ref{stress}), the Transport term
\begin{align} \label{eq:transportRepeat}
\pr_j Q_T^{jl} &\approx \sum_I e^{i \la \xi_I} ( \pr_t + v_\ep^j \pr_j) v_I^l
\end{align}
involves a time derivative of $R$ and we cannot obtain $C^0$ bounds for the stress $Q_T$ arising from this term without having control over $(\pr_t + v^j \pr_j) R$.  On the other hand, in order to proceed to the next stage of the iteration, we must show that $(\pr_t + v_1^a \pr_a) Q_T^{jl}$ obeys good bounds as well, and verifying these bounds will require us to take a second material derivative of $R$ according to the equation (\ref{eq:transportRepeat}).

According to the above considerations, it is necessary to mollify $R$ in time.  It is also necessary to mollify along the flow lines of $v$, rather than in the $t$ direction, because $R$ fluctuates too rapidly in the $t$ direction, whereas the material derivative $(\pr_t + v^j \pr_j ) R$ obeys better bounds.  Unlike mollification along the $t$ direction, mollification along the flow is also consistent with the Galilean invariance of the equations.

\subsection{Mollifying the Stress in Space and Time}

After constructing the mollification $R_\ep$ of $R$, we will need to have estimates for the quantities
\begin{itemize}
\item $\co{R_\ep}$
\item $\co{\nab^k R_\ep}$, $|k| \geq 0$
\item $\co{(\pr_t + v_\ep \cdot \nab)^r R_\ep }$, $r = 1, 2$
\item $\co{\nab^k (\pr_t + v_\ep \cdot \nab)^r R_\ep}$, $r = 1, 2$ and $|k| \geq 0$
\end{itemize}

With these demands in mind, we construct the mollification in two steps.  First we average in space by defining.
\begin{align}
R_{\ep_x}(t,x) &= \eta_{\ep_x} \ast \eta_{\ep_x} \ast R(t,x) \\
&= \int \int R(t, x + y_1 + y_2) \eta_{\ep_x}(y_1) \eta_{\ep_x}(y_2) dy_1 dy_2
\end{align}
The double-mollification here plays the same role as the double mollification in the construction of $v_\ep$ in Section (\ref{coarseScaleVelocity}).

We then use the coarse scale flow $\Phi$ defined in (\ref{def:csflow}) to average in time by defining
\begin{align}
R_\ep(t,x) = R_{\ep_t\ep_x}(t,x) &= \eta_{\ep_t} \ast_{\Phi} R_{ \ep_x } (t,x) \\
&= \int R_{\ep_x}(\Phi_s(t,x)) \eta_{\ep_t}(s) ds
\end{align}

For the purpose of our analysis here, it is not important that we have chosen to mollify first in space and then in time rather than in the opposite order, because we will choose $\ep_t$ and $\ep_x$ small enough so that these operations commute up to acceptable errors.  In general, though, if $\ep_t = T$ is too large, the time $T$ flow of an $\ep_x$-ball can be deformed around $\T^3$ in a complicated way, and will look very different from the $\ep_x$ neighborhood of a single trajectory of time duration $T$.

\subsection{Choosing mollification parameters} \label{sec:chooseRParams}

As a goal, we will require that the error term generated by this mollification constitutes a small fraction of the allowable stress
\begin{align} \label{goal:mollifyStress}
\co{ R - R_\ep} &\unlhd \fr{e_v^{1/2} e_R^{1/2}}{100 N}
\end{align}
During this mollification $R \to R_\ep$, we are also not allowed to enlarge the support of $R$ by an amount larger than $\Xi^{-1} e_v^{-1/2}$, as we have only assumed a lower bound of
\begin{align}
e(t) &\geq K e_R \quad \quad t \in [a - \Xi^{-1} e_v^{-1/2}, b + \Xi e_v^{1/2} ] \\
\mbox{ supp } R &\subseteq [a, b] \times \T^3
\end{align}
and this lower bound is necessary in order to solve the Stress Equation in Section (\ref{sec:lowBounds}).

In pursuit of the goal (\ref{goal:mollifyStress}), we define the time-averaged stress
\begin{align}
R_{\ep_t}(t,x) &= \eta_{\ep_t} \ast_\Phi R(t,x) \\
&= \int R(\Phi_s(t,x)) \eta_{\ep_t}(s) ds
\end{align}
and decompose the error into two parts
\begin{align}
R - R_\ep &= (R - R_{\ep_t}) + (R_{\ep_t} - R_{\ep_t\ep_x}) \\
&= (R - R_{\ep_t}) + \eta_{\ep_t} \ast_\Phi (R - R_{\ep_x})
\end{align}
This particular decomposition of the error is sensible because the tensor $R_\ep(t,x)$ is obtained by averaging $R$ over an $\ep_x$-neighborhood of the time $\ep_t$-flow from the point $(t,x)$. 

We first discuss the second of these error terms.  It is clear that
\begin{align}
\co{ \eta_{\ep_t} \ast_\Phi (R - R_{\ep_x}) } &\leq \co{R - R_{\ep_x}}
\end{align}
so in pursuit of the goal (\ref{goal:mollifyStress}), we will set $\ep_x$ to achieve the goal
\begin{align} \label{goal:mollifyStressSpace}
\co{R - R_{\ep_x}} &\unlhd \fr{e_v^{1/2} e_R^{1/2}}{200 N}
\end{align}

From Section (\ref{prepareToMollify}) we have the estimate
\begin{align}
\co{R - R_{\ep_x}} &\leq \co{R - \eta_{\ep_x} \ast R} + \co{\eta_{\ep_x} \ast (R - \eta_{\ep_x} \ast R)} \\
&\leq C \ep_x^L \Xi^L e_R
\end{align}

Since $e_v \geq e_R$, we can achieve the goal (\ref{goal:mollifyStressSpace}) by choosing
\begin{align} \label{eqn:epxRdef}
\ep_x &= a_R \Xi^{-1} N^{-1/L}
\end{align}
for the appropriately chosen small constant $a_R > 0$.  Note that this choice coincides with the scale for the spatial mollification of $v$ in Section (\ref{coarseScaleVelocity}).

In order to achieve the goal (\ref{goal:mollifyStress}), it now suffices to pick $\ep_t$ small enough so that
\begin{align} \label{goal:mollifyStressTime}
\co{R - R_{\ep_t}} &\unlhd \fr{e_v^{1/2} e_R^{1/2}}{200 N}
\end{align}

In pursuit of (\ref{goal:mollifyStressTime}), we calculate
\begin{align}
R(t,x) - R_{\ep_t}(t,x) &= \int (R(t,x) - R(\Phi_s(t,x))) \eta_{\ep_t}(s) ds \\
&= - \int \left[ \int_0^1 \fr{d}{du} R(\Phi_{us}(t,x)) du \right] \eta_{\ep_t}(s) ds \\
&= - \int_0^1 \left[ \int [ (\pr_t + v_\ep^a \pr_a)R](\Phi_{us}(t,x)) s \eta_{\ep_t}(s) ds \right] ~du
\end{align}
which gives the bound
\begin{align}
\co{R(t,x) - R_{\ep_t}(t,x)} &\leq C \ep_t \co{(\pr_t + v_\ep^j \pr_j)R} \\
&\leq C \ep_t ( \co{(\pr_t + v^j \pr_j)R} + \co{ (v_\ep^j - v^j)\pr_j R } )
\end{align}
Applying the bounds (\ref{bound:dtnabkR}) and (\ref{ineq:mollifyVerror}) , we have an estimate of the form
\begin{align}
\co{R(t,x) - R_{\ep_t}(t,x)} &\leq C \ep_t \Xi e_v^{1/2} e_R
\end{align}
for some absolute constant $C$, so that we can finally achieve the goals (\ref{goal:mollifyStressTime}) and consequently (\ref{goal:mollifyStress}) by setting
\begin{align} \label{eq:eptdef}
\ep_t &\equiv c \Xi^{-1} N^{-1} e_R^{-1/2}
\end{align}
for an appropriately small constant $c > 0$.

At this point, we have to check that
\begin{align} \label{ineq:eptsmallenough}
\ep_t &\unlhd \Xi^{-1} e_v^{-1/2}
\end{align}
for two reasons:
\begin{itemize}
\item The flow $\Phi_s$ and its derivatives only remain under control for times up to $c \Xi^{-1} e_v^{-1/2}$.
\item In order to solve the stress equation (\ref{eq:approxGaScalar}), we must be sure that the tensor 
\[ \varepsilon^{jl} = - \fr{{\mathring R}_\ep^{jl}}{e(t)} \]
encountered in Section (\ref{sec:lowBounds}) is bounded by an appropriate constant specified in (\ref{eq:varepPerturb}).  Since $\co{ R_\ep } \leq e_R$, the bound (\ref{eq:varepPerturb}) is achieved as long as the lower bound
\[ e(t) \geq K e_R \]
is satisfied on the support of $R_\ep$.  In the hypotheses of the Main Lemma (\ref{lem:iterateLem}), the interval on which this lower bound is satisfied is assumed to be an amount $\th = \Xi^{-1} e_v^{-1/2}$ larger than the time interval supporting $R$ itself.
\end{itemize}

Thankfully, the bound (\ref{ineq:eptsmallenough}) follows from (\ref{eq:eptdef}) (choosing $c$ smaller if necessary), and the assumption (\ref{eq:conditionsOnN2}) in Lemma (\ref{lem:iterateLem}) which implies that 
\[ N \geq  \left( \fr{e_v}{e_R} \right)^{1/2} \]

\subsection{Estimates for the Coarse Scale Flow} \label{sec:estForCoarseScaleFlow}

In the following section, we will derive bounds for the mollified stress
\begin{align} \label{formula:timeMollifyStress}
R_\ep(t,x) &= \int R_{\ep_x}(\Phi_s(t,x)) \eta_{\ep_t}(s) ds \\
&= \int R_{\ep_x}(t + s, \Phi^i_s(t,x)) \eta_{\ep_t}(s) ds
\end{align}
and its derivatives.

As the formula (\ref{formula:timeMollifyStress}) suggests, it will be important to estimate the extent to which the flow $\Phi_s$ deforms the geometry of $\R \times \T^3$ during the time $|s| \leq \ep_t \leq c \Xi^{-1} e_v^{-1/2}$.  This deformation estimate can be summarized by the following proposition:

\begin{prop} \label{prop:coarseScaleFlowGeometry}
For every multi-index $a$ with $|a| \geq 1$, there exist constants $C_1$ and $C_2$ such that the spatial derivative \[ \pr_a \Phi_s^i(t,x) : \R \times \R \times \T^3 \to \R^3 \]
obeys the estimate
\begin{align} \label{ineq:expBoundForFlow}
|\pr_a \Phi_s^i(t,x)| &\leq C_1 e^{C_2 \Xi e_v^{1/2} s} N^{(|a| - L)_+/L} \Xi^{|a| - 1} 
\end{align}
In particular, for $|s| \leq \Xi^{-1} e_v^{-1/2}$, we have
\begin{align}\label{ineq:distortionBound}
|\pr_a \Phi_s^i(t,x)| &\leq C N^{(|a| - L)_+/L} \Xi^{|a| - 1} 
\end{align}
\end{prop}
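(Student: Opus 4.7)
The plan is to prove (\ref{ineq:expBoundForFlow}) by strong induction on $|a|$, propagating derivative bounds along the flow parameter $s$ via Gronwall's inequality. The argument closely parallels Proposition \ref{prop:transportBound1}, with Fa\`a di Bruno's formula replacing the simpler product rule because we are now differentiating the composition $v_\ep^i(\Phi_s(t,x))$ rather than applying a transport operator to a scalar.

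For the base case $|a|=1$, I would differentiate the flow ODE from Definition \ref{def:csflow} in the spatial direction to obtain $\tfrac{d}{ds} \pr_a \Phi_s^i = \pr_b v_\ep^i(\Phi_s)\,\pr_a \Phi_s^b$ with initial condition $\pr_a \Phi_0^i = \de_a^i$. Since $\|\nab v_\ep\|_{C^0} \leq \Xi e_v^{1/2}$ by (\ref{bound:nabkvep}), Gronwall immediately gives $|\pr_a \Phi_s^i| \leq C_1 e^{C_2 \Xi e_v^{1/2}|s|}$.

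For the inductive step with $|a| = m \geq 2$, I use Fa\`a di Bruno's formula, phrased via ordered partitions as in Definition \ref{defn:orderedPartition}, to expand $\tfrac{d}{ds}\pr_a \Phi_s^i = \pr_a[v_\ep^i(\Phi_s)]$ as a sum over $K$-partitions $(a^1,\ldots,a^K)$ of $a$ of terms $(\pr^K v_\ep^i)(\Phi_s)\prod_{j=1}^K \pr_{a^j}\Phi_s^{b_j}$. The $K=1$ contribution $\pr_b v_\ep^i(\Phi_s)\,\pr_a \Phi_s^b$ drives the Gronwall loop. For $K \geq 2$ the inductive hypothesis on each $\pr_{a^j}\Phi_s$ (since $|a^j| < m$) together with (\ref{bound:nabkvep}) applied to $\pr^K v_\ep$ bounds the corresponding forcing term by a constant times
\[ e^{C'\Xi e_v^{1/2}|s|}\,\Xi^m e_v^{1/2}\, N^{[(K-L)_+ + \sum_j(|a^j|-L)_+]/L}. \]

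The main obstacle is verifying the counting inequality
\[ (K-L)_+ + \sum_{j=1}^K (|a^j|-L)_+ \;\leq\; (m-L)_+ \]
for every ordered $K$-partition of $a$. When $K \leq L$ the first summand vanishes and this reduces directly to Lemma \ref{lem:countingIneq}. When $K > L$ (so $m \geq K > L$), I split according to $S = \{j : |a^j| > L\}$ and compute
\[ (K-L) + \sum_{j \in S}(|a^j|-L) \;\leq\; K - L + m - (K - |S|) - |S|\, L \;=\; m - L - |S|(L-1) \;\leq\; m - L, \]
using $\sum_{j \notin S} |a^j| \geq K - |S|$ and $L \geq 1$. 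With this in hand the full forcing is bounded by $C e^{C'\Xi e_v^{1/2}|s|} \Xi^m N^{(m-L)_+/L} e_v^{1/2}$, and Gronwall applied to $y(s) = |\pr_a\Phi_s^i|$ with $y(0) = 0$ yields (\ref{ineq:expBoundForFlow}) once the integrated factor $|s|\,\Xi e_v^{1/2}$ is absorbed into a larger exponential constant $C_2$. The specialization (\ref{ineq:distortionBound}) is then immediate, since $C_2 \Xi e_v^{1/2}|s|$ stays bounded for $|s| \leq \Xi^{-1}e_v^{-1/2}$.
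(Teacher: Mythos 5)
Your argument is correct, and it uses the same core ingredients as the paper --- the Fa\`a di Bruno expansion (\ref{eq:highDerivCoarseScaleFlow}) over ordered partitions, the bounds (\ref{bound:nabkvep}) on $\nab^k v_\ep$, and the power-counting of Lemma (\ref{lem:countingIneq}) --- but it is organized along a genuinely different route. The paper avoids induction on $|a|$ entirely: it introduces the dimensionless energy $E_M[\Phi_s]$ of Definition (\ref{defn:dimensionlessQuantity2}), whose exponents $2M/|a|$ are tuned so that every product $\prod_j \pr_{a^j}\Phi_s^{m_j}$ arising in the expansion is dominated by $E_M^{|a|/(2M)}$, which yields the single homogeneous differential inequality $|\frac{d}{ds}E_M[\Phi_s]| \leq C_M \Xi e_v^{1/2} E_M[\Phi_s]$ and one application of Gronwall for all orders simultaneously. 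You instead run strong induction on $|a|$, keep only the $K=1$ term in the Gronwall loop, treat the $K\geq 2$ terms as inhomogeneous forcing controlled by the inductive hypothesis, and absorb the resulting Duhamel factor $|s|\,\Xi e_v^{1/2}$ into a larger exponential rate; this is legitimate since the proposition allows $C_1, C_2$ to depend on the multi-index $a$, though it produces rates that grow with $|a|$, whereas the paper's energy method keeps the rate uniform in $|a| \leq M$. Your direct verification of $(K-L)_+ + \sum_j (|a^j|-L)_+ \leq (m-L)_+$ is correct and is exactly the inequality the paper extracts from Lemma (\ref{lem:countingIneq}) applied with the shifted parameters $K-1$, $|a_i|-1$, $L-1$ in (\ref{ineq:countingAgain}). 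One cosmetic point: since the $K=1$ term couples the components through $\pr_b v_\ep^i(\Phi_s)\,\pr_a\Phi_s^b$, Gronwall should be applied to the norm of the full vector $(\pr_a\Phi_s^1, \pr_a\Phi_s^2, \pr_a\Phi_s^3)$ rather than to a single component $|\pr_a\Phi_s^i|$ (the paper does this by summing over $i$ in its first-order computation); this changes nothing of substance.
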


Note that these estimates are very similar in character to the estimate (\ref{bound:nabaxi}) proven for the phase directions.  We will also choose a very similar method of proof, beginning by introducing a dimensionless energy analogous to Definition (\ref{defn:transportNorm}).

\begin{defn} \label{defn:dimensionlessQuantity2}
We define the $M$'th order dimensionless energy of $\Phi_s$ to be
\begin{align}
E_{M}[\Phi_s] &= \sum_{1 \leq |a| \leq M} \sum_{m = 1, 2, 3} \left| \fr{\pr_a \Phi_s^m}{\Xi^{|a| - 1} N^{(|a| - L)_+/L}} \right|^{\fr{2M}{|a|}}
\end{align} 
where the summation runs over all multi-indices $a$ with $1 \leq |a| \leq M$.
\end{defn}

In terms of the dimensionless energy, the inequality (\ref{ineq:expBoundForFlow}) follows from the Gronwall Lemma once we have established the following differential inequality.
\begin{lem}  There exists a constant $C = C_M$ such that
\begin{align} \label{diffIneq:coarseScaleFlow}
|~\fr{d}{ds} E_{M}[\Phi_s]~| &\leq C \Xi e_v^{1/2} E_{M}[\Phi_s]
\end{align}
\end{lem}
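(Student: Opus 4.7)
The plan is to verify the differential inequality \eqref{diffIneq:coarseScaleFlow} by a term-by-term estimate on $\frac{d}{ds}\partial_a \Phi_s^m$ for each multi-index $a$ with $1\leq|a|\leq M$, and then conclude \eqref{ineq:expBoundForFlow} from Gronwall in the usual way. Starting from $\frac{d}{ds}\Phi_s^m=v_\ep^m(\Phi_s)$, I apply $\partial_a$ to both sides and use the Fa\`a di Bruno formula in the combinatorial form encoded by Definition \ref{defn:orderedPartition}: for each ordered $K$-partition $(a^1,\ldots,a^K)$ of $a$, there is a term of the shape
\[
(\partial_{j_1}\cdots\partial_{j_K} v_\ep^m)(\Phi_s)\,\prod_{i=1}^K \partial_{a^i}\Phi_s^{j_i},
\]
with $K$ ranging from $1$ to $|a|$.

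Next I estimate each such term. The factor of $v_\ep$ contributes $\Xi^{K} N^{(K-L)_+/L} e_v^{1/2}$ by (\ref{bound:nabkvep}), and each $|\partial_{a^i}\Phi_s^{j_i}|$ is bounded by $\Xi^{|a^i|-1} N^{(|a^i|-L)_+/L} E_M^{|a^i|/(2M)}$ directly from Definition \ref{defn:dimensionlessQuantity2}. Since $\sum_i |a^i|=|a|$, the product has the form
\[
C\,\Xi\,e_v^{1/2}\cdot \Xi^{|a|-1}\, N^{[(K-L)_+ +\sum_i(|a^i|-L)_+]/L}\, E_M^{|a|/(2M)}.
\]
The main obstacle, and really the only non-bookkeeping point of the proof, is to show that the $N$-exponent collapses, namely
\begin{equation}\label{eq:Nexp}
(K-L)_+ + \sum_{i=1}^K (|a^i|-L)_+ \;\leq\; (|a|-L)_+ .
\end{equation}
A direct application of the Counting Inequality (Lemma \ref{lem:countingIneq}) with $Y=L$ only yields $(K+|a|-L)_+$ on the right, which is too large, so a two-case split is needed. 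When $K\leq L$ the first term vanishes and Lemma \ref{lem:countingIneq} applied to $\{|a^i|\}$ gives \eqref{eq:Nexp} immediately. When $K>L$, I use that each $|a^i|\geq 1$ together with $L\geq 1$ to write $(|a^i|-L)_+\leq |a^i|-1$, so $\sum_i (|a^i|-L)_+\leq |a|-K$; adding $K-L$ yields the desired bound $|a|-L=(|a|-L)_+$.

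Given \eqref{eq:Nexp}, I conclude $|\tfrac{d}{ds}\partial_a\Phi_s^m|\leq C\,\Xi e_v^{1/2}\,\Xi^{|a|-1} N^{(|a|-L)_+/L}E_M^{|a|/(2M)}$. Plugging this into
\[
\left|\tfrac{d}{ds} E_M[\Phi_s]\right| \leq \sum_{a,m} \tfrac{2M}{|a|}\left|\tfrac{\partial_a\Phi_s^m}{\Xi^{|a|-1}N^{(|a|-L)_+/L}}\right|^{\frac{2M}{|a|}-1}\cdot \frac{|\tfrac{d}{ds}\partial_a\Phi_s^m|}{\Xi^{|a|-1}N^{(|a|-L)_+/L}},
\]
and using the pointwise inequality $\big|\partial_a\Phi_s^m/(\Xi^{|a|-1}N^{(|a|-L)_+/L})\big|\leq E_M^{|a|/(2M)}$, every summand is at most $C\,\Xi e_v^{1/2}\,E_M^{1-|a|/(2M)}\cdot E_M^{|a|/(2M)}=C\,\Xi e_v^{1/2}\,E_M$. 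Summing over the (finitely many) multi-indices yields \eqref{diffIneq:coarseScaleFlow} with a constant $C_M$ depending only on $M$. Gronwall then gives $E_M[\Phi_s]\leq e^{C_M\Xi e_v^{1/2}|s|}E_M[\Phi_0]$; since $\Phi_0(t,x)=(t,x)$ we have $\partial_a\Phi_0^m=\delta_a^m$ for $|a|=1$ and $0$ otherwise, so $E_M[\Phi_0]$ is an absolute constant, and \eqref{ineq:expBoundForFlow} follows after absorbing constants, with \eqref{ineq:distortionBound} the immediate corollary for $|s|\leq\Xi^{-1}e_v^{-1/2}$.
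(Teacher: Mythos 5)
Your proposal is correct and takes essentially the same route as the paper: differentiate each term of the dimensionless energy, expand $\tfrac{d}{ds}\partial_a\Phi_s$ by the chain rule over ordered $K$-partitions, distribute the powers of $\Xi$ and $N^{1/L}$ so that each summand is controlled by $\Xi e_v^{1/2}E_M[\Phi_s]$, and close with Gronwall. The only cosmetic difference is in the bookkeeping of $N$-exponents: you verify the inequality $(K-L)_+ + \sum_i(|a^i|-L)_+ \leq (|a|-L)_+$ by splitting into the cases $K\leq L$ and $K>L$, whereas the paper gets the same inequality in one stroke by applying its counting inequality to the shifted quantities $K-1$ and $|a^i|-1$ with threshold $L-1$.
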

\begin{proof}
Starting from the definition
\begin{align} \label{eq:coarseScaleFlowLem}
\fr{d\Phi_s}{ds} &= v_\ep(\Phi_s) 
\end{align}
of the coarse scale flow, we derive evolution equations for the spatial derivatives of $\Phi_s^i$ by differentiating equation (\ref{eq:coarseScaleFlowLem}).   For example, by applying the chain rule, any first derivative $\pr_a\Phi_s^i$, $|a| = 1$ of a component $\Phi_s^i$ of $\Phi_s$ is coupled to the first derivatives of the other components by the equation
\begin{align} \label{eq:coarseScaleFlowFirstDeriv} 
\fr{d}{ds}\pr_a\Phi_s^i &= \pr_m v_\ep^i(\Phi_s) \pr_a \Phi_s^m
\end{align}
with an implied summation over the components $m$. By multiplying this equation by $2\pr_a\Phi_s^i$, summing over $i$, and applying the bound $\co{\nab v} \leq \Xi e_v^{1/2}$ we obtain
\begin{align}
|~\fr{d}{ds}\sum_{i=1}^3 (\pr_a\Phi_s^i)^2 ~| &\leq C \Xi e_v^{1/2}\left(\sum_{i=1}^3 (\pr_a\Phi_s^i)^2\right)
\end{align}
which implies (\ref{diffIneq:coarseScaleFlow}) and (\ref{ineq:expBoundForFlow}).  For higher order derivatives, it becomes convenient to use the dimensionless energy.

To continue, any second order spatial derivative $\pr_{a} \Phi_s^i$, $a = (a_1, a_2)$ satisfies
\begin{align} \label{eq:coarseScaleFlow2ndDeriv} 
\fr{d}{ds}(\pr_{a_1} \pr_{a_2} \Phi_s) &= \pr_{m_1} \pr_{m_2} v_\ep(\Phi_s) \pr_{a_2} \Phi_s^{m_2} \pr_{a_1} \Phi_s^{m_1} + \pr_{m_2} v_\ep(\Phi_s) \pr_{a_1} \pr_{a_2} \Phi_s^{m_2}
\end{align}
with an implied summation over each index $m_1, m_2$.  From this formula we see that even though $\pr_{a} \Phi_s$ is initially $0$ when $s = 0$ and $\Phi_0(t,x) = (t,x)$, the first of these terms is already of size $\Xi^2 e_v^{1/2}$, which is responsible for the immediate growth of $\pr_{a} \Phi_s$.

A general higher order derivative $\pr_{a} \Phi_s$ of order $|a| = |(a_1, \ldots, a_A)| = A$ evolves according to the equation
\begin{align} \label{eq:highDerivCoarseScaleFlow}
\fr{d}{ds}(\pr_{a} \Phi_s) &= \sum_{K = 1}^A  \sum_{(a^1, \ldots, a^K) \in P_K(a)} \pr_{m_1} \ldots \pr_{m_K} v_\ep \cdot \prod_{i = 1}^K \pr_{a^i} \Phi_s^{m_i} 
\end{align}
where $P_K(a)$ denotes the set of ordered $K$-partitions of $a$, as explained in Definition (\ref{defn:orderedPartition}).

The important feature of the above formula is that the number of $\Phi$ terms in each product (which is denoted by $K$) is equal to the number of times that $v_\ep$ has been differentiated.

Using equation (\ref{eq:highDerivCoarseScaleFlow}) we can now prove the differential inequality (\ref{diffIneq:coarseScaleFlow}).  We first differentiate the individual term in the dimensionless energy
\begin{align}
\fr{d}{ds} \left| \fr{\pr_a \Phi_s^m}{\Xi^{(|a| - 1)} N^{(|a| - L)_+/L}} \right|^{\fr{2M}{|a|}} &= \left( \fr{\pr_a \Phi_s^m}{\Xi^{(|a| - 1)} N^{(|a| - L)_+/L}} \right)^{(\fr{2M}{a} - 1)} \cdot \left( \fr{1}{\Xi^{(|a| - 1)} N^{(|a| - L)_+/L}} \fr{d}{ds}(\pr_a \Phi_s^m) \right)
\end{align}
which already gives a termwise bound of
\begin{align}
|~ \fr{d}{ds} \left| \fr{\pr_a \Phi_s^m}{\Xi^{(|a| - 1)} N^{(|a| - L)_+/L}} \right|^{\fr{2M}{a}} ~| &\leq E_{M}[\Phi_s]^{1 - \fr{|a|}{2M}} \cdot\fr{|\fr{d}{ds}(\pr_{a} \Phi_s)|}{\Xi^{(|a| - 1)} N^{(|a| - L)_+/L}}
\end{align}
Then using the equation (\ref{eq:highDerivCoarseScaleFlow}), we can bound this last factor if we can bound each term in the summation
\begin{align} \label{ineq:eachTermForDimensionlessEnergy}
\fr{|\fr{d}{ds}(\pr_{a} \Phi_s)|}{\Xi^{(|a| - 1)} N^{(|a| - L)_+/L}} &\leq \sum_{K = 1}^A  \sum_{(a^1, \ldots, a^K) \in P_K(a) }  \fr{|\pr_{m_1} \ldots \pr_{m_K} v_\ep \cdot \prod_{i = 1}^K \pr_{a^i} \Phi_s^{m_i}|}{\Xi^{(|a| - 1)} N^{(|a| - L)_+/L}}
\end{align}
by a constant times $E_{M}[\Phi_s]^{\fr{|a|}{2M}}$.

As in the proof of the estimate (\ref{preGronGaM}) we first apply the counting inequality in Lemma (\ref{lem:countingIneq}) with $K - 1$, $|a_i| - 1$ and $L - 1 \geq 0$ to conclude that
\begin{align}
(K - L)_+ + \sum_{i = 1}^K (|a_i| - L)_+ &= \left((K - 1) - (L - 1)\right)_+ + \sum_{i = 1}^K \left((|a_i| - 1) - (L - 1)\right)_+ \\
&\leq (|a| - L)_+ = \left( (K - 1) + \sum_{i = 1}^K (|a_i| - 1) - (L - 1) \right)_+ \label{ineq:countingAgain}
\end{align}
Using inequality (\ref{ineq:countingAgain}), we distribute the powers of $N$ and $\Xi$
\begin{align}
\fr{|\pr_{m_1} \ldots \pr_{m_K} v_\ep \cdot \prod_{i = 1}^K \pr_{a^i} \Phi_s^{m_i}|}{\Xi^{(|a| - 1)} N^{(|a| - L)_+/L}} &\leq \fr{|\pr_{m_1} \ldots \pr_{m_K} v_\ep|}{\Xi^{K - 1}N^{(|K| - L)_+/L}} \prod_{i = 1}^k \fr{|\pr_{a^i} \Phi_s^{m_i}|}{\Xi^{(|a_i| - 1)} N^{(|a_i| - L)_+/L}}
\end{align}
and applying the bound $\co{\pr_m v} \leq \Xi^{|m|} e_v^{1/2} = \Xi^K e_v^{1/2}$, the right hand side can be bounded by
\begin{align}
\fr{|\pr_{m_1} \ldots \pr_{m_K} v_\ep|}{\Xi^{K - 1}N^{(|K| - L)_+/L}} \prod_{i = 1}^k \fr{|\pr_{a^i} \Phi_s^{m_i}|}{\Xi^{(|a_i| - 1)} N^{(|a_i| - L)_+/L}} &\leq \Xi e_v^{1/2}\cdot \prod_{i = 1}^k E_{M}[\Phi_s]^{\fr{|a_i|}{2M}} \\
&= \Xi e_v^{1/2} E_{M}[\Phi_s]^{\fr{|a|}{2M}}
\end{align}
which is the estimate we desired to deduce (\ref{diffIneq:coarseScaleFlow}) from (\ref{ineq:eachTermForDimensionlessEnergy}).
\end{proof}

\subsection{Spatial Variations of the Mollified Stress} \label{sec:spatVarStress}

Here we collect estimates for the mollified stress and its derivatives, starting with the $C^0$ bound
\begin{prop}
\begin{align}
\co{R_\ep} &\leq e_R
\end{align}
\end{prop}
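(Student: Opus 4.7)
The plan is to deduce this bound as an essentially immediate corollary of the hypothesis $\co{R} \leq e_R$ from the definition of frequency and energy levels together with the mollifier error estimates already established in Section \ref{sec:chooseRParams}. The quantities $\ep_x$ and $\ep_t$ were chosen precisely so that the error $R_\ep - R$ is a small fraction of $e_R$, so there is essentially no loss in the $C^0$ norm from mollification.

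Concretely, I would write $R_\ep = R + (R_\ep - R)$, use the triangle inequality, and bound $\co{R_\ep - R}$ by decomposing it as in Section \ref{sec:chooseRParams}:
\[ R_\ep - R = (R_{\ep_t} - R) + \eta_{\ep_t} \ast_\Phi (R_{\ep_x} - R). \]
The first term is bounded by $\co{R_{\ep_t} - R}$, which was estimated by essentially $C \ep_t \Xi e_v^{1/2} e_R$ and absorbed into $\fr{e_v^{1/2} e_R^{1/2}}{200 N}$ via the choice \eqref{eq:eptdef} of $\ep_t$. For the second term, since $\eta_{\ep_t}$ is $L^1$-normalized with integral $1$ and the time-flow convolution takes $C^0$ bounds into $C^0$ bounds, it is controlled by $\co{R_{\ep_x} - R}$, which in turn was made smaller than $\fr{e_v^{1/2} e_R^{1/2}}{200 N}$ via the choice \eqref{eqn:epxRdef} of $\ep_x$. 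Combining these yields
\[ \co{R_\ep} \leq e_R + \fr{e_v^{1/2} e_R^{1/2}}{100 N}. \]

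To close the final inequality $\co{R_\ep} \leq e_R$ as stated, I would exploit the admissibility condition \eqref{eq:conditionsOnN2}, namely $N \geq (e_v/e_R)^{3/2}$. This yields
\[ \fr{e_v^{1/2} e_R^{1/2}}{100 N} \leq \fr{1}{100} \left( \fr{e_R}{e_v} \right) e_R \leq \fr{e_R}{100}, \]
since $e_R \leq e_v$. The remaining gap between $(1 + 1/100) e_R$ and $e_R$ is then absorbed by tightening the constants $a_R$ and $c$ in \eqref{eqn:epxRdef} and \eqref{eq:eptdef}, so that the mollifier error is strictly smaller than a preassigned fraction of $e_R$ rather than the fraction $1/100$ used to establish \eqref{goal:mollifyStress}. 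No new structural ingredient is required beyond what appears in Section \ref{sec:chooseRParams}.

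The main (minor) obstacle is purely bookkeeping: being careful that all the constants absorbed into the goal symbols $\unlhd$ in Section \ref{sec:chooseRParams} are in fact small enough to guarantee the clean bound $\co{R_\ep} \leq e_R$ with implied constant exactly $1$. No transport estimates and no flow-distortion bounds from Section \ref{sec:estForCoarseScaleFlow} are needed for this proposition, which is why it appears before the more delicate estimates for $\nab^k R_\ep$ and for material derivatives of $R_\ep$ that occupy the remainder of Section \ref{sec:spatVarStress}.
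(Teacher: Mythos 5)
There is a genuine gap in your final step. The triangle-inequality route gives $\co{R_\ep} \leq \co{R} + \co{R_\ep - R} \leq e_R + \de' e_R$ where $\de'$ is whatever small fraction the choices of $\ep_x$ and $\ep_t$ buy you, and no amount of ``tightening the constants $a_R$ and $c$'' removes the additive error: the mollification error is small but strictly positive in general, so this argument can only ever yield $\co{R_\ep} \leq (1+\de')e_R$ for some $\de' > 0$, never the stated bound with constant exactly one. (If the proposition had been stated as $\co{R_\ep} \leq 2 e_R$, say, your argument would be fine and would even suffice for the downstream uses, e.g.\ the bound on $\varepsilon^{jl} = -\mathring{R}_\ep^{jl}/e(t)$, since the constant $100/K$ there has slack; but the statement as written asserts the sharp constant.)

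The paper's proof is a one-line structural observation that bypasses the error estimates of Section (\ref{sec:chooseRParams}) entirely: $R_\ep(t,x)$ is by construction an \emph{average} of values of $R$ --- a spatial convolution with kernels of unit mass followed by an average along the coarse scale flow against $\eta_{\ep_t}$, which is a nonnegative kernel of integral one --- so pointwise $|R_\ep(t,x)| \leq \sup |R| \leq e_R$, with no smallness of $\ep_x$ or $\ep_t$ needed and no reference to the choices made in (\ref{eqn:epxRdef}) and (\ref{eq:eptdef}). This is the missing idea in your proposal: the non-expansiveness of the mollification in $C^0$ comes from nonnegativity and unit mass of the kernels, not from the smallness of the mollification scales. (One small caveat the paper glosses over: a spatial kernel with vanishing moments up to order $L > 2$ cannot be nonnegative, but as remarked after Lemma (\ref{lem:mollifierRate}) the case $L = 2$ suffices and permits a nonnegative kernel, which is the setting in which ``average value'' is literally correct.)
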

\noindent which is clear from the definition of $R_\ep$ as an average value of $R$.

Using the distortion estimate (\ref{ineq:distortionBound}), we can also prove bounds on the derivatives of $R_\ep$
\begin{prop}\label{bound:daRep}
\begin{align}
\co{ \pr_a R_\ep } &\leq C N^{(|a| - L)_+/L} \Xi^{|a|} e_R
\end{align}
\end{prop}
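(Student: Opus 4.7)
The plan is to reduce the bound on $\pr_a R_\ep$ to two ingredients: (i) analogous estimates on spatial derivatives of the purely space-mollified stress $R_{\ep_x} = \eta_{\ep_x}\ast\eta_{\ep_x}\ast R$, and (ii) the coarse-scale flow distortion estimates of Proposition \ref{prop:coarseScaleFlowGeometry}. These are tied together via a Fa\`a di Bruno expansion, exactly mirroring the argument already carried out for the flow $\Phi_s$ itself.

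First I would establish, as an intermediate lemma, the bound
\[
\co{\pr^a R_{\ep_x}} \leq C N^{(|a|-L)_+/L}\, \Xi^{|a|}\, e_R
\]
for all multi-indices $a$. For $|a|\leq L$ this is immediate from the hypothesis (\ref{bound:nabkR}) together with Young's inequality (since $\eta_{\ep_x}\ast\eta_{\ep_x}$ is an $L^1$-normalized convolution). For $|a|>L$, each additional derivative is absorbed by one of the two copies of $\eta_{\ep_x}$, contributing a cost of $\ep_x^{-1}\sim \Xi N^{1/L}$ per derivative; the double mollification trick (as in Section \ref{coarseScaleFlow}) lets us freely redistribute derivatives between $R$ and the mollifier so that one factor of $\eta_{\ep_x}$ always remains to absorb the next derivative.

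Next I would differentiate the time-averaged expression $R_\ep(t,x)=\int R_{\ep_x}(\Phi_s(t,x))\,\eta_{\ep_t}(s)\,ds$ in $x$ using the chain rule. Since $\ep_t \leq c\,\Xi^{-1}e_v^{-1/2}$ by (\ref{ineq:eptsmallenough}), the distortion bound (\ref{ineq:distortionBound}) applies uniformly in the integration region. The Fa\`a di Bruno formula gives
\[
\pr_a\bigl[R_{\ep_x}\circ\Phi_s\bigr] \;=\; \sum_{K=1}^{|a|} \sum_{(a^1,\ldots,a^K)\in P_K(a)} \pr_{m_1}\!\cdots\pr_{m_K}R_{\ep_x}(\Phi_s)\;\prod_{i=1}^{K}\pr_{a^i}\Phi_s^{m_i},
\]
and each term is estimated by combining the intermediate bound on $\pr^{K} R_{\ep_x}$ with $K$ applications of (\ref{ineq:distortionBound}). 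Integrating against $\eta_{\ep_t}(s)$ preserves the pointwise bound since $\|\eta_{\ep_t}\|_{L^1}$ is an absolute constant.

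The only nontrivial bookkeeping, which is the ``main obstacle,'' is accounting for the powers of $N^{1/L}$. The product of the individual estimates gives a prefactor $N^{[(K-L)_++\sum_i(|a^i|-L)_+]/L}\,\Xi^{K+\sum_i(|a^i|-1)}e_R=N^{[(K-L)_++\sum_i(|a^i|-L)_+]/L}\Xi^{|a|}e_R$. To collapse this to $N^{(|a|-L)_+/L}$, apply the counting inequality (Lemma \ref{lem:countingIneq}) with the nonnegative quantities $(K-1),(|a^1|-1),\ldots,(|a^K|-1)$ and $Y=L-1\geq 0$:
\[
(K-L)_+ + \sum_{i=1}^{K}(|a^i|-L)_+ \;\leq\; \Bigl((K-1)+\sum_{i=1}^{K}(|a^i|-1)-(L-1)\Bigr)_+ = (|a|-L)_+,
\]
which is exactly the analogue of (\ref{ineq:countingAgain}) used in the flow estimate. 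This yields the claimed bound, and summing over the finitely many partitions gives an overall constant $C_{|a|}$.
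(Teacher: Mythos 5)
Your proposal is correct and follows essentially the same route as the paper: first the spatial-mollification bound for $R_{\ep_x}$, then the chain-rule (Fa\`a di Bruno) expansion of $R_{\ep_x}\circ\Phi_s$ combined with the flow distortion estimate (\ref{ineq:distortionBound}), and finally the counting inequality of Lemma (\ref{lem:countingIneq}) to collect the powers of $N^{1/L}$, exactly as in (\ref{eq:chainRuleRep})--(\ref{ineq:countDerivs}). No gaps to report.
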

\begin{proof}
The bound
\begin{align} \label{bound:spaceMollR}
\co{ \pr_a R_{\ep_x} } &\leq C_{|a|} N^{(|a| - L)_+/L} \Xi^{|a|} e_R
\end{align}
is clear for
\[ R_{\ep_x} = \eta_{\ep_x} \ast \eta_{\ep_x} \ast R \]
since $\co{\nab^{|a|} R} \leq \Xi^{|\a|} e_R$ for all $|a| \leq L$ by (\ref{bound:nabkR}), and from the choice of $\ep_x = a \Xi^{-1} N^{-1/L}$ in (\ref{eqn:epxRdef}).

Taking a worse constant, we can prove the same estimate for the fully mollified version
\begin{align}
R_\ep(t,x) &= \int R_{\ep_x}(\Phi_s(t,x)) \eta_{\ep_t}(s) ds \\
&= \int R_{\ep_x}(t+s, \Phi_s^i(t,x)) \eta_{\ep_t}(s) ds
\end{align}
by applying the chain rule and the bounds (\ref{ineq:distortionBound}) as follows.  We first calculate for any fixed $t$ and $s$ the derivative
\begin{align} \label{eq:chainRuleRep}
\pr_a R_{\ep_x}(t + s, \Phi_s^i(t,x)) &= \sum_{K = 1}^A  \sum_{(a^1, \ldots, a^K) \in P_K(a) } \pr_{m_1} \ldots \pr_{m_K} R_{\ep_x} \cdot \prod_{i = 1}^K \pr_{a^i} \Phi_s^{m_i}
\end{align}
The terms involving derivatives of $\Phi_s^i$ can be estimated using (\ref{ineq:distortionBound}) because the time $\ep_t$ is sufficiently small by (\ref{ineq:eptsmallenough}).  By also applying the estimate (\ref{bound:spaceMollR}), we can bound each term which appears in the sum by
\begin{align}
| \pr_{m_1} \ldots \pr_{m_K} R_{\ep_x} \cdot \prod_{i = 1}^K \pr_{a^i} \Phi_s^{m_i} | &\leq C (N^{(K - L)_+/L}\Xi^K e_R) \cdot \prod_{i=1}^K (N^{(|a_i| - L)_+/L} \Xi^{|a_i| - 1}) \\
&= C (N^{(K - L)_+/L + \sum_i(|a_i| - L)_+/L }\Xi^{K + \sum_{i=1}^K (|a_i| - 1)} e_R \\
&= C (N^{(K - L)_+/L + \sum_i(|a_i| - L)_+/L }\Xi^{|a|} e_R \label{eq:highDerivRofPhi}
\end{align}
for any $K$-partition $(a^1, \ldots, a^K)$ of $a$.

Again we apply the counting inequality
\begin{align}\label{ineq:countDerivs}
 (K - L)_+ + \sum_{i = 1}^K (|a_i| - L)_+ &\leq (|a| - L)_+  \quad \quad \quad |a_i|, L \geq 1
\end{align}
proven in (\ref{ineq:countingAgain}) to see that 
\begin{align}
| \pr_{m_1} \ldots \pr_{m_K} R_{\ep_x} \cdot \prod_{i = 1}^K \pr_{a^i} \Phi_s^{m_i} | &\leq C N^{(|a| - L)_+/L}\Xi^{|a|} e_R
\end{align}
for any ordered $K$-partition $(a^1, \ldots, a^K)$ of $a$, which bounds every term in (\ref{eq:chainRuleRep}) and therefore completes the proof of (\ref{bound:daRep}).
\end{proof}

\subsection{Transport Estimates for the Mollified Stress} \label{sec:DdtofRep}

Here we collect estimates for the material derivative of the mollified stress
\[ \Ddtof{R_\ep} = ( \pr_t + v_\ep^a \pr_a) R_\ep^{jl} \]
as well as its spatial derivatives.

The results of this section are summarized by the following Proposition
\begin{prop}[First material derivative of the mollified stress] \label{prop:firstDdtOfRep}  For $k \geq 0$, there exist constants $C_k$ such that
\begin{align}
\co{ \nab^k \Ddtof{R_\ep} } &\leq C_k N^{(k + 1 - L)_+/L} \Xi^{k + 1} e_v^{1/2} e_R
\end{align}
\end{prop}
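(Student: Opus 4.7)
My plan is to exploit the fact that $R_\ep$ is mollified along the flow of the vector field $\Ddt = \pr_t + v_\ep^a \pr_a$. The key observation is that for any $s$, the chain rule and the definition of the coarse scale flow give
\[
\Ddt\bigl[R_{\ep_x}(\Phi_s(t,x))\bigr] \;=\; \fr{d}{ds}\bigl[R_{\ep_x}(\Phi_s(t,x))\bigr] \;=\; (\Ddt R_{\ep_x})(\Phi_s(t,x)),
\]
since $\Phi_s$ is the one-parameter family generated by $\Ddt$ and enjoys the semigroup property $\Phi_s\circ\Phi_{s'} = \Phi_{s+s'}$. Differentiating under the integral in the formula $R_\ep(t,x) = \int R_{\ep_x}(\Phi_s(t,x))\eta_{\ep_t}(s)\,ds$ then yields
\[
\Ddtof{R_\ep}(t,x) \;=\; \int (\Ddt R_{\ep_x})(\Phi_s(t,x))\,\eta_{\ep_t}(s)\,ds,
\]
so that $\Ddt R_\ep$ is nothing but the flow-average of $\Ddt R_{\ep_x}$. (Integrating by parts in $s$ to move the derivative onto $\eta_{\ep_t}$ instead would give the useless bound $e_R/\ep_t$; passing the material derivative onto $R$ itself is essential in order to see the gain of $e_v^{1/2}$.)

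Next I would decompose $\Ddt R_{\ep_x}$ to expose the hypothesis (\ref{bound:dtnabkR}). Since $\pr_t$ commutes with spatial convolution, I rewrite
\[
\Ddt R_{\ep_x} \;=\; \eta^x_{\ep+\ep}\!\ast\!\Ddtof{R} \;+\; \eta^x_{\ep+\ep}\!\ast\!\bigl[(v_\ep - v)\cdot\nab R\bigr] \;+\; \bigl[v_\ep\cdot\nab,\;\eta^x_{\ep+\ep}\!\ast\bigr] R,
\]
where $\Ddtof{R} = (\pr_t + v\cdot\nab) R$ is controlled by the hypothesis, the second term uses the bound (\ref{ineq:mollifyVerror}) on $v - v_\ep$ together with (\ref{bound:nabkR}), and the last is a standard mollifier commutator treated exactly as in Section \ref{coarseScaleFlow}: rewriting it as $\ep_x \int_0^1\!\!\int \pr_i v_\ep^a(x-sy)\,\pr_a R(x-y)\,\tilde{\eta}^i_{\ep_x}(y)\,dy\,ds$ yields the bound $C\ep_x\,\Xi^2 e_v^{1/2} e_R$. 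Using $\ep_x \sim a_R\,\Xi^{-1}N^{-1/L}$ and the choice of $\ep_v$ in (\ref{eq:epvdef}), each of the three pieces is bounded by $C\,\Xi e_v^{1/2} e_R$, and averaging along $\Phi_s$ preserves this $C^0$ bound. This yields the $k = 0$ case of the proposition.

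For the higher-order estimates, I would proceed exactly as in the proof of Proposition \ref{bound:daRep}. Writing
\[
\nab^a \Ddtof{R_\ep}(t,x) \;=\; \int \nab^a\bigl[(\Ddt R_{\ep_x})(\Phi_s(t,x))\bigr]\,\eta_{\ep_t}(s)\,ds
\]
and applying the Fa\`a di Bruno formula gives a sum over ordered partitions $(a^1,\ldots,a^K)$ of $a$ of terms of the form $\pr_{m_1}\cdots\pr_{m_K}(\Ddt R_{\ep_x})(\Phi_s(t,x))\prod_i \pr_{a^i}\Phi_s^{m_i}$. For each of the three pieces in the decomposition of $\Ddt R_{\ep_x}$, applying $K$ spatial derivatives costs a factor $\Xi$ up to order $L - 1 - 1 = L - K$ uses of the hypothesis, and a factor $N^{1/L}\Xi$ per derivative beyond that (either from hitting the mollifier $\eta^x_{\ep+\ep}$ or from iterating (\ref{bound:nabkv}), (\ref{bound:nabkR}) past order $L$); combined with the distortion estimate (\ref{ineq:distortionBound}) for $\pr_{a^i}\Phi_s^{m_i}$, each term obeys
\[
\Bigl|\pr_{m_1}\cdots\pr_{m_K}(\Ddt R_{\ep_x})\cdot\prod_i\pr_{a^i}\Phi_s^{m_i}\Bigr| \;\leq\; C\,N^{(K-L)_+/L + \sum_i (|a_i|-L)_+/L}\,\Xi^{|a|+1}\,e_v^{1/2}\,e_R,
\]
and the counting inequality (\ref{ineq:countDerivs}) of Lemma \ref{lem:countingIneq} collapses the exponent of $N$ to $(|a|+1-L)_+/L$, giving the claimed bound.

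The only real obstacle is bookkeeping: ensuring that after the decomposition of $\Ddt R_{\ep_x}$ into a main term, a $(v-v_\ep)\cdot\nab R$ correction, and a commutator, each piece still admits estimates with the sharp $\Xi^{k+1}e_v^{1/2}e_R$ growth for $k+1 \leq L$ and the correct $N^{(k+1-L)_+/L}$ interest penalty beyond. This is just a matter of distributing spatial derivatives across the convolution, the difference $v - v_\ep$, and the commutator integrand, and applying the counting inequality in each case, as was already carried out for the spatial estimates in Section \ref{sec:spatVarStress}.
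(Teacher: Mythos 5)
Your proposal is correct and follows essentially the same route as the paper: the commutation identity $\Ddtof{R_\ep} = \int \Ddtof{R_{\ep_x}}(\Phi_s(t,x))\,\eta_{\ep_t}(s)\,ds$ is exactly the paper's Lemma on derivatives and averages along the flow (your group-property justification is an equivalent proof of it), and your three-term decomposition of $\Ddtof{R_{\ep_x}}$ into $\eta\ast[(\pr_t + v\cdot\nab)R]$, $\eta\ast[(v_\ep - v)\cdot\nab R]$, and a mollifier commutator matches the paper's $A_{(I,I)}, A_{(I,II)}, A_{(II)}, A_{(III)}$ splitting up to how the commutator of the double mollification is grouped. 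The higher-derivative bookkeeping via the chain rule along $\Phi_s$, the distortion estimate, and the counting inequality is likewise the paper's argument.
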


We begin by calculating
\begin{align}
(\pr_t + v_\ep^a(t,x) \pr_a) R_\ep^{jl}(t,x) &= \int (\pr_t + v_\ep^a(t,x) \pr_a) R_{\ep_x}^{jl}(\Phi_s(t,x)) \eta_{\ep_t}(s) ds \\
&= \int DR_{\ep_x}^{jl}(\Phi_s(t,x)) D\Phi_s(t,x) (\pr_t + v_\ep^a(t,x) \pr_a) \eta_{\ep_t}(s) ds \label{eq:needToEvaluateDdtRep}
\end{align}
using the chain rule.

Here and in the remainder of the section, $DF(t,x)$ denotes the derivative of $F$ in the $(t,x)$ variables.  Thus
\begin{align}
DF X^\mu \pr_\mu &= DF(t,x) (X^\mu(t,x) \pr_\mu) \\
&= X^\mu(t,x) \pr_\mu F(t,x)
\end{align}
for any $4$-vector $X^\mu(t,x) \pr_\mu$.

Because $\Phi_s$ has been defined to be the flow map of the vector field $\Ddt = (\pr_t + v_\ep^a(t,x) \pr_a)$, we can show that applying the flow $\Phi_s$ to $\Ddt$
\begin{align}
D\Phi_s(t,x)[ \pr_t + v_\ep^a(t,x) \pr_a ] &= \pr_t + v_\ep^a(\Phi_s(t,x)) \pr_a
\end{align}
simply moves the vector field $(\pr_t + v_\ep^a(t,x) \pr_a)$ forward along the flow.

As a consequence, we can show that 
\begin{lem}[Derivatives and Averages Along the Flow Commute]\label{eq:flowDerivAndAverage}  For all $C^1$ functions $F$ on $\R \times \T^3$ we have an equality
\begin{align}
(\pr_t + v_\ep^a(t,x) \pr_a)[F(\Phi_s(t,x))] &= \Ddtof{F}(\Phi_s(t,x))
\end{align}

In particular,
\begin{align}
(\pr_t + v_\ep^a(t,x) \pr_a) R_\ep^{jl}(t,x) &= \int \Ddtof{R_{\ep_x}}(\Phi_s(t,x)) \eta_{\ep_t}(s) ds
\end{align}
\end{lem}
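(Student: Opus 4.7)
The plan is to recognize $\Phi_s(t,x)=(t+s,\Phi_s^i(t,x))$ as the spacetime flow of the vector field $V=\pr_t+v_\ep^a(t,x)\pr_a$ (augmented so that $V$ acts as $\pr_t$ on the time factor), and then to exploit the principle that a vector field is invariant under its own flow. Concretely, applying the chain rule to the left-hand side gives
\begin{align*}
(\pr_t+v_\ep^a(t,x)\pr_a)\bigl[F(\Phi_s(t,x))\bigr]
&= \pr_\mu F(\Phi_s(t,x))\cdot\bigl[\pr_t\Phi_s^\mu(t,x)+v_\ep^a(t,x)\pr_a\Phi_s^\mu(t,x)\bigr],
\end{align*}
where $\mu$ runs over the spacetime indices $0,1,2,3$. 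It therefore suffices to show that the bracketed spacetime vector equals $(1,v_\ep(\Phi_s(t,x)))$, for then the right-hand side becomes $(\pr_tF+v_\ep^i\pr_iF)(\Phi_s(t,x))=\Ddtof{F}(\Phi_s(t,x))$.

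The $\mu=0$ component is immediate since $\Phi_s^0=t+s$, giving $\pr_t\Phi_s^0+v_\ep^a\pr_a\Phi_s^0=1$. For the spatial components I would introduce the discrepancy
\begin{align*}
X^i(s;t,x) &= \pr_t\Phi_s^i(t,x)+v_\ep^a(t,x)\,\pr_a\Phi_s^i(t,x)-v_\ep^i(\Phi_s(t,x)),
\end{align*}
and argue that it vanishes identically by an ODE uniqueness argument in $s$ with $(t,x)$ fixed. At $s=0$ one has $\Phi_0(t,x)=(t,x)$, so $\pr_t\Phi_0^i=0$, $\pr_a\Phi_0^i=\de_a^i$, and $X^i(0)=v_\ep^i(t,x)-v_\ep^i(t,x)=0$. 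Differentiating $X^i$ in $s$ using the defining ODE $\frac{d}{ds}\Phi_s^j=v_\ep^j(\Phi_s)$ together with commutation of $\frac{d}{ds}$ with $\pr_t$ and $\pr_a$, the terms involving $\pr_t v_\ep^i(\Phi_s)$ cancel and a routine calculation collapses what remains into
\begin{align*}
\fr{d}{ds}X^i(s) &= \pr_j v_\ep^i(\Phi_s(t,x))\,X^j(s).
\end{align*}
This is a linear homogeneous ODE with zero initial condition, hence $X\equiv 0$, which proves the commutation identity. The only place any real calculation is required is this cancellation; the potential obstacle is purely bookkeeping, namely keeping track of which derivatives of $v_\ep$ are evaluated at $(t,x)$ versus at $\Phi_s(t,x)$.

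Finally, the ``in particular'' statement follows from the general identity by applying it pointwise in $s$ to $F=R_{\ep_x}^{jl}$ and passing the operator $(\pr_t+v_\ep^a(t,x)\pr_a)$ inside the $s$-integral in formula~(\ref{eq:needToEvaluateDdtRep}). Differentiation under the integral is justified since $\eta_{\ep_t}$ has compact support and $R_{\ep_x}^{jl}\circ\Phi_s$ is smooth jointly in $(s,t,x)$ by the estimates of Proposition~\ref{prop:coarseScaleFlowGeometry}; the $(t,x)$-dependence of the integrand enters only through $\Phi_s(t,x)$, and the commutation identity converts the material derivative into $\Ddtof{R_{\ep_x}^{jl}}(\Phi_s(t,x))$ under the integral sign.
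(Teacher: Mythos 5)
Your proof is correct and follows essentially the same route as the paper: both reduce the identity to the fact that the vector field $\pr_t+v_\ep^a\pr_a$ is invariant under its own flow and establish this by an ODE argument in the flow parameter $s$, then pass the derivative under the $s$-integral for the ``in particular'' statement. The only cosmetic difference is that the paper proves a slightly more general lemma for commuting fields $X$, $Y$ by showing the pulled-back field $(D\Phi_s)^{-1}X(\Phi_s)$ is constant in $s$, whereas you specialize to $X=Y$ and show the pushforward discrepancy $\pr_t\Phi_s^i+v_\ep^a(t,x)\pr_a\Phi_s^i-v_\ep^i(\Phi_s)$ solves a homogeneous linear ODE with zero initial data; your cancellation of the $\pr_t v_\ep^i(\Phi_s)$ terms checks out, so both arguments are sound.
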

where
\begin{align}
\int \Ddtof{R_{\ep_x}}(\Phi_s(t,x)) \eta_{\ep_t}(s) ds &= \int [ \pr_t R_{\ep_x}^{jl}(\Phi_s(t,x)) + v_\ep^a(\Phi_s(t,x)) [\pr_a R_{\ep_x}^{jl}](\Phi_s(t,x)) ] \eta_{\ep_t}(s) ds \\
&= \int DR_{\ep_x}(\Phi_s(t,x))[(\pr_t + v_\ep^a(\Phi_s(t,x)) \pr_a)] \eta_{\ep_t}(s) ds
\end{align}

Since this calculation is crucial, we give a proof in the following Section.  We then apply the Lemma in Section (\ref{sec:firstMatDvofRep}) to establish the estimates in Proposition (\ref{prop:firstDdtOfRep}). 

\subsubsection{Derivatives and Averages along the Flow Commute} 

The purpose of this Section is to establish Lemma (\ref{eq:flowDerivAndAverage}), which is a consequence of the following more general fact, applied to the $4$-vector fields 
\begin{align*}
 X^\mu(t,x) \pr_\mu &= \pr_t + v_\ep^a(t,x) \pr_a \\
 Y^\mu(t,x) \pr_\mu &=  \pr_t + v_\ep^a(t,x) \pr_a 
\end{align*}
which, in our case, are equal to each other.

\begin{lem}
Let $Y^\mu(t,x) \pr_\mu : \R \times \T^3 \to \R \times \R^3$ be any smooth vector field on $\R \times \T^3$, let $\Phi_s(t,x)$ denote the time $s$ flow of $Y$.  That is, $\Phi_s(t,x)$ solves the ODE
\begin{align}
\fr{d\Phi_s^\mu}{ds} &= Y^\mu(\Phi_s(t,x)) \label{eq:theYFlow}\\
\Phi_0(t,x) &= (t,x)
\end{align}
Suppose that $X$ commutes with $Y$ in the sense that the commutator 
\begin{align}
 [X,Y]^\nu \pr_\nu = (X^\mu \pr_\mu Y^\nu - Y^\mu \pr_\mu X^\nu) \pr_\nu 
\end{align}
vanishes.

Then $X$ also commutes with pullback along the flow of $Y$, 
\[ D\Phi_s(t,x)(X^\mu(t,x) \pr_\mu) = X^\mu(\Phi_s(t,x) ) \pr_\mu \]
That is, for any smooth function $F$ on $\R \times \T^3$, we have
\begin{align}
X^\mu(\Phi_s(t,x) ) (\pr_\mu F)(\Phi_s(t,x)) &= X^\mu(t,x) \pr_\mu\left[ F(\Phi_s(t,x) \right] 
\end{align}
\end{lem}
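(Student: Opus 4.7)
The plan is to reduce the statement to an equality of pushforward vector fields: once I show that $D\Phi_s(t,x)\bigl(X^\mu(t,x)\pr_\mu\bigr) = X^\mu(\Phi_s(t,x))\pr_\mu$ as tangent vectors at $\Phi_s(t,x)$, applying both sides to $F$ and using the chain rule $X^\mu(t,x)\pr_\mu\Phi_s^\nu(t,x)(\pr_\nu F)(\Phi_s(t,x)) = X^\mu(t,x)\pr_\mu[F(\Phi_s(t,x))]$ finishes the job. So the entire proof reduces to proving the vector-field identity.

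To prove the vector-field identity, I introduce the error
\[ Z^\nu(s,t,x) = X^\mu(t,x)\,\pr_\mu\Phi_s^\nu(t,x) - X^\nu(\Phi_s(t,x)), \]
and show $Z \equiv 0$ by a Gronwall-type uniqueness argument. At $s = 0$ we have $\Phi_0(t,x) = (t,x)$ and $\pr_\mu\Phi_0^\nu = \de_\mu^\nu$, so $Z(0,t,x) = 0$. Differentiating $Z$ in $s$ using the defining ODE $\fr{d}{ds}\Phi_s^\nu = Y^\nu(\Phi_s)$, and differentiating that ODE in $x^\mu$ to get $\fr{d}{ds}\pr_\mu\Phi_s^\nu = \pr_\rho Y^\nu(\Phi_s)\cdot \pr_\mu\Phi_s^\rho$, I compute
\[ \fr{d}{ds}Z^\nu = \pr_\rho Y^\nu(\Phi_s)\bigl(X^\mu\pr_\mu\Phi_s^\rho\bigr) - Y^\rho(\Phi_s)\pr_\rho X^\nu(\Phi_s). \]
Substituting $X^\mu\pr_\mu\Phi_s^\rho = Z^\rho + X^\rho(\Phi_s)$ yields
\[ \fr{d}{ds}Z^\nu = \pr_\rho Y^\nu(\Phi_s)\,Z^\rho + \bigl[X^\rho\pr_\rho Y^\nu - Y^\rho\pr_\rho X^\nu\bigr](\Phi_s) = \pr_\rho Y^\nu(\Phi_s)\,Z^\rho + [X,Y]^\nu(\Phi_s). \]

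By hypothesis $[X,Y] \equiv 0$, so $Z$ satisfies the linear, homogeneous ODE $\fr{d}{ds}Z = DY(\Phi_s)Z$ with zero initial data; uniqueness of solutions to linear ODEs then forces $Z \equiv 0$, which is the required pushforward identity. Applying both sides to $F$ gives the conclusion stated in the lemma, and specializing to $X = Y = \pr_t + v_\ep^a \pr_a$ (which trivially commutes with itself, $[X,X] = 0$) immediately recovers the equality $(\pr_t + v_\ep^a\pr_a)[F(\Phi_s)] = \Ddtof{F}(\Phi_s)$ used in Section~\ref{sec:DdtofRep}.

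There is no real obstacle; the only care needed is the index bookkeeping to identify the bracket term, and the observation that the time coordinate participates as just another coordinate in the $4$-vector formalism (both $X$ and $Y$ are viewed as vector fields on $\R \times \T^3$ with a $\pr_t$ component equal to $1$). In particular, no assumption on the size of $\ep_t$ or on $v_\ep$ is used; this is a purely geometric identity valid for any smooth vector fields $X$, $Y$ with $[X,Y]=0$.
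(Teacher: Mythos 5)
Your proof is correct and follows essentially the same route as the paper: both arguments differentiate in $s$, use the variational equation $\fr{d}{ds}\pr_\mu\Phi_s^\nu = \pr_\rho Y^\nu(\Phi_s)\,\pr_\mu\Phi_s^\rho$, and identify the Lie bracket as the obstruction. The only cosmetic difference is that the paper tracks the pullback $Z_s^\mu\pr_\mu = [D\Phi_s]^{-1}X^\mu(\Phi_s)\pr_\mu$ and shows $\dot Z_s = 0$ (using invertibility of $D\Phi_s$), whereas you track the pushforward defect $D\Phi_s X - X(\Phi_s)$ and kill it via uniqueness for a homogeneous linear ODE with zero initial data — the same computation organized in the inverse direction.
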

\begin{proof}
In order to compare
\begin{align}
X^\mu(\Phi_s(t,x) ) (\pr_\mu F)(\Phi_s(t,x)) &= DF(\Phi_s(t,x))[ X^\mu(\Phi_s(t,x) )\pr_\mu] \\
&= DF(\Phi_s(t,x)) D\Phi_s(t,x) [ D\Phi_s(t,x)]^{-1} X^\mu(\Phi_s(t,x) )\pr_\mu
\end{align} 
and
\begin{align}
X^\mu(t,x) \pr_\mu[ F(\Phi_s(t,x) ] &= DF(\Phi_s(t,x)) D\Phi_s(t,x)[X^\mu(t,x) \pr_\mu] \\
&=  DF(\Phi_s(t,x)) D\Phi_s(t,x) [ D\Phi_0(t,x)]^{-1} X^\mu(\Phi_0(t,x) )\pr_\mu
\end{align}
it suffices to show that the vector field
\begin{align} 
Z_s^\mu(t,x) \pr_\mu &=  [ D\Phi_s(t,x)]^{-1} X^\mu(\Phi_s(t,x) )\pr_\mu,
\end{align}
which is characterized by the equation
\begin{align}
\pr_\mu \Phi_s^\nu(t,x) Z_s^\mu(t,x) \pr_\nu &= X^\nu(\Phi_s(t,x)) \pr_\nu, \label{eq:defineEqnZs}
\end{align}
satisfies
\begin{align} 
{\dot Z}_s^\mu \pr_\mu = \fr{\pr}{\pr s} Z_s^\mu(t,x) \pr_\mu &= 0. \label{eq:Zdotis0}
\end{align}
To show (\ref{eq:Zdotis0}), we differentiate (\ref{eq:defineEqnZs}) and apply (\ref{eq:theYFlow}) to see that
\begin{align}
\pr_\mu \Phi_s^\nu(t,x) {\dot Z}_s^\mu(t,x) \pr_\nu &= - \pd{}{s}[\pr_\mu \Phi_s^\nu(t,x)] Z_s^\mu(t,x) + \pd{}{s}[X^\nu(\Phi_s(t,x))] \pr_\nu \\
&= - \pd{}{s}[\pr_\mu \Phi_s^\nu(t,x)] Z_s^\mu(t,x) + Y^\mu(\Phi_s(t,x)) [\pr_\mu X^\nu](\Phi_s(t,x)) \pr_\nu \\
\pd{}{s}[\pr_\mu \Phi_s^\nu(t,x)] &= \pr_\a Y^\nu(\Phi_s(t,x)) \pr_\mu \Phi_s^\a(t,x) \label{eq:derivPhidot}
\end{align}
Then comparing (\ref{eq:derivPhidot}), and (\ref{eq:defineEqnZs}) we obtain
\begin{align}
\pr_\mu \Phi_s^\nu(t,x) {\dot Z}_s^\mu(t,x) \pr_\nu &= ( - X^\a(\Phi_s) [\pr_\a Y^\nu](\Phi_s(t,x)) + Y^\mu(\Phi_s(t,x)) [\pr_\mu X^\nu](\Phi_s(t,x)) ) \pr_\nu \\
&= - [X, Y]^\nu(\Phi_s(t,x)) \pr_\nu = 0
\end{align}
\end{proof}

The reader may recognize that the above argument is essentially an ODE proof that $e^{sA}B = B e^{sA}$ for any pair of commuting matrices $A, B$ and any $s \in \R$, except that $e^{sA}$ has been replaced by the pull-back operator $\circ \Phi_s$, and $B$ has been replaced by the differential operator $X^\mu \pr_\mu$.

\subsubsection{First material derivative estimates for the mollified stress} \label{sec:firstMatDvofRep}

In this Section, we prove Proposition (\ref{prop:firstDdtOfRep}) by estimating the spatial derivatives of
\begin{align}
\Ddtof{R_\ep} &= \int \Ddtof{R_{\ep_x}}(\Phi_s(t,x)) \eta_{\ep_t}(s) ds
\end{align}
as well as its spatial derivatives, namely
\begin{prop}[Material derivative bounds for the spatially mollified stress] \label{prop:oneMatDvOfRFlow}  There exist constants depending on $k$ such that
\begin{align}
\co{ \nab^k \Ddtof{R_\ep}} &\leq C_k N^{(k + 1 - L)_+/L} \Xi^{k + 1} e_v^{1/2} e_R
\end{align}
\end{prop}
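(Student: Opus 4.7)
The starting point is Lemma (\ref{eq:flowDerivAndAverage}), which lets us write
\[
\Ddtof{R_\ep}(t,x) = \int [\Ddtof{R_{\ep_x}}](\Phi_s(t,x))\, \eta_{\ep_t}(s)\, ds,
\]
so the task splits into (a) proving estimates for $\Ddtof{R_{\ep_x}}$ and its spatial derivatives, and then (b) transferring them through the coarse scale flow $\Phi_s$ exactly as was done for $R_{\ep_x}$ in Section (\ref{sec:spatVarStress}).

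For step (a), decompose
\[
\Ddtof{R_{\ep_x}} = \eta_{\ep_x}\ast\eta_{\ep_x}\ast[(\pr_t + v^a\pr_a)R] + \bigl(v_\ep^a\pr_a R_{\ep_x} - \eta_{\ep_x}\ast\eta_{\ep_x}\ast(v^a\pr_a R)\bigr).
\]
The first term is a double mollification of $(\pr_t + v\cdot\nab)R$, so hypothesis (\ref{bound:dtnabkR}) and the standard mollifier bounds give $\co{\nab^k(\cdot)} \leq C_k N^{(k+1-L)_+/L}\Xi^{k+1} e_v^{1/2} e_R$. The second term is a commutator of the same form as $Q^j(v,v)$ in Section (\ref{coarseScaleFlow}); the same Littlewood--Paley style manipulation (writing $v_\ep^a\pr_a R_{\ep_x} - \eta_{\ep_x}\ast\eta_{\ep_x}\ast(v^a\pr_a R)$ as a sum of three pieces each containing at least one explicit factor of $\eta_{\ep_x}\ast v - v$ or a commutator $[v^a\pr_a, \eta_{\ep_x}\ast]$) converts the apparent loss of a derivative into an explicit factor of $\ep_x \leq a_R \Xi^{-1} N^{-1/L}$, at the cost of one extra spatial derivative of $v$ and of $R$. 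The double mollification structure again supplies estimates beyond order $L$ at the cost of $N^{1/L}$ per derivative, yielding
\[
\co{\nab^k \Ddtof{R_{\ep_x}}} \leq C_k N^{(k+1-L)_+/L}\, \Xi^{k+1}\, e_v^{1/2}\, e_R.
\]

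For step (b), apply Fa\`a di Bruno to $\pr_a\bigl([\Ddtof{R_{\ep_x}}](\Phi_s(t,x))\bigr)$, obtaining a sum indexed by ordered $K$-partitions $(a^1,\ldots,a^K)$ of $a$ of products $\pr_{m_1}\!\cdots\pr_{m_K}\Ddtof{R_{\ep_x}} \cdot \prod_i \pr_{a^i}\Phi_s^{m_i}$, exactly as in (\ref{eq:chainRuleRep}). Since $|s| \leq \ep_t \lesssim \Xi^{-1}e_v^{-1/2}$ by (\ref{ineq:eptsmallenough}), the distortion bound (\ref{ineq:distortionBound}) and the estimate from step (a) give each term a size $\leq C\,N^{[(K+1-L)_+ + \sum_i (|a_i|-L)_+]/L}\,\Xi^{|a|+1}\,e_v^{1/2}\,e_R$. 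The Counting Inequality (\ref{lem:countingIneq}) applied to $K-1$, the $|a_i|-1$ and $L-1$, just as in (\ref{ineq:countingAgain}), collapses these exponents to the desired $N^{(|a|+1-L)_+/L}$. Integrating in $s$ against $\eta_{\ep_t}$ only costs a harmless absolute constant, completing the proof.

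The main technical point, and where one must be careful, is the commutator estimate in step (a): one must verify that the bounds in (\ref{bound:lowFreqTransport}) for the coarse scale velocity combined with the bounds (\ref{bound:nabkR}) and (\ref{bound:dtnabkR}) for the stress are strong enough to control every piece that arises when the spatial mollifier is commuted past the material derivative. Fortunately, because $L \geq 2$, the first few derivatives of the commutator benefit automatically from the good bounds on $\nab v$ and $\nab R$, and the extra $N^{1/L}$ cost only appears beyond order $L-1$, which is precisely what the double mollification of both $v$ and $R$ is designed to accommodate.
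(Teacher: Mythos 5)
Your proposal is correct and follows essentially the same route as the paper's: the flow-average identity of Lemma (\ref{eq:flowDerivAndAverage}), a mollifier--material-derivative commutator estimate for $\Ddtof{R_{\ep_x}}$ using the hypothesis (\ref{bound:dtnabkR}) for the main term together with the $Q^j(v,v)$-style integral formulas of Section (\ref{coarseScaleFlow}) for the commutator pieces, and then the chain-rule/counting-inequality transfer through $\Phi_s$ exactly as in Section (\ref{sec:spatVarStress}), using $\ep_t \leq \Xi^{-1}e_v^{-1/2}$ to avoid exponential factors. The only deviations are cosmetic bookkeeping (the paper keeps $\eta_{\ep+\ep}\ast[(\pr_t + v_\ep\cdot\nab)R]$ as the main term and peels off $(v_\ep - v)\cdot\nab R$ separately, and the small difference factor is $v_\ep - v$ at the scale $\ep_v$ rather than $\eta_{\ep_x}\ast v - v$), which do not change any of the estimates.
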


As a first step, we estimate the tensor field
\begin{align}
\Ddtof{R_{\ep_x}^{jl}} &= (\pr_t + v_\ep^a(t,x)\pr_a ) R_{\ep_x}^{jl}(t,x)
\end{align}
and its spatial derivatives, which are summarized by the following lemma.

\begin{prop}[Bounds for $\Ddtof{R_{\ep_x}}$]\label{prop:oneMatDvOfR} For every $k \geq 0$ there is a constant $C_k$ such that
\begin{align}
\co{ \nab^k \Ddtof{R_{\ep_x}} } &\leq C_k N^{(k + 1 - L)_+/L} \Xi^{k + 1} e_v^{1/2} e_R 
\end{align}
\end{prop}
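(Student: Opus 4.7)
The plan is to reduce the proposition to: (i) standard mollifier bounds applied to the quantity $(\pr_t + v\cdot\nab) R$ which the frequency-energy hypothesis~(\ref{bound:dtnabkR}) already controls; and (ii) commutator estimates of exactly the same flavor as those in Section~(\ref{coarseScaleFlow}) for $Q^j(v,v)$. The mediating identity is the decomposition
\begin{align*}
\Ddtof{R_{\ep_x}} &= S\pr_t R + v_\ep^a S\pr_a R \\
&= S\bigl((\pr_t + v^a\pr_a)R\bigr) + \bigl(v_\ep^a S\pr_a R - S(v^a\pr_a R)\bigr),
\end{align*}
where $S_1 = \eta_{\ep_x}\ast$ and $S = S_1 S_1$. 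This is crucial because no bound on $\pr_t R$ alone is available: only the combination $(\pr_t + v\cdot\nab) R$ is controlled in the frequency-energy-levels Definition~(\ref{def:subSolDef}).

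For the main term $S((\pr_t + v\cdot\nab)R)$, I would apply the mollifier estimates from Section~(\ref{prepareToMollify}) together with hypothesis~(\ref{bound:dtnabkR}) and the choice $\ep_x = a_R \Xi^{-1} N^{-1/L}$ from~(\ref{eqn:epxRdef}). Spatial derivatives of order $k \leq L - 1$ land on the interior factor $(\pr_t + v\cdot\nab)R$ at cost $\Xi^{k+1} e_v^{1/2} e_R$; extra derivatives (when $k \geq L$) must hit a mollifier factor in $S$ and pay $\ep_x^{-1} = a_R^{-1}\Xi N^{1/L}$ each, producing precisely $C_k N^{(k+1-L)_+/L}\Xi^{k+1} e_v^{1/2} e_R$. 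The double mollification $S = S_1 S_1$ is what permits unbounded derivatives to be absorbed into the mollifier, as in the footnote to Section~(\ref{coarseScaleFlow}).

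For the commutator term, I would apply the same double-mollification trick used there to write
\begin{align*}
v_\ep^a S\pr_a R - S(v^a \pr_a R) &= [v_\ep^a\pr_a,\,S_1](S_1 R) + S_1\bigl((v_\ep^a - v^a)\pr_a S_1 R\bigr) + S_1\bigl([v^a\pr_a,\,S_1]R\bigr).
\end{align*}
Each term is expanded via the integral first-order Taylor remainder as in~(\ref{eq:velCommut1})--(\ref{eq:velCommut3}), which yields an explicit factor of $\ep_x$ and replaces the dangerous factor $v^a$ or $v_\ep^a$ by $\nab v^a$ or $\nab v_\ep^a$ (and, for the middle term, by the mollifier error $v - v_\ep$ bounded using~(\ref{ineq:mollifyVerror})). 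Using the $C^0$ bounds~(\ref{bound:nabkv}), (\ref{bound:nabkvep}), (\ref{bound:nabkR}) gives a leading estimate of size $\ep_x \Xi e_v^{1/2}\cdot\Xi e_R = C N^{-1/L}\Xi e_v^{1/2}\cdot\Xi e_R$, which already has one power of $N^{-1/L}$ better than the target and reduces to the $k=0$ case of the bound.

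The main bookkeeping obstacle is extending this commutator analysis to derivatives of order $k \geq L$: after differentiating, the resulting multilinear expressions involve factors $\nab^{k_1} v,\ \nab^{k_2} v_\ep,\ \nab^{k_3} R$ with $k_1 + k_2 + k_3 + 1 = k + 1$ (the $+1$ coming from the Taylor gain $\ep_x$), together with further derivatives possibly hitting the $S_1$ mollifiers. Each factor individually is estimated by~(\ref{bound:nabkv}), (\ref{bound:nabkvep}), (\ref{bound:nabkR}), and the powers of $N^{1/L}$ must be totaled using the counting inequality Lemma~(\ref{lem:countingIneq}) (with $Y = L-1$, as in the proofs of Propositions~(\ref{bound:nabaxi}) and~(\ref{bound:daRep})) so that the sum of the individual $(k_i - L)_+/L$ contributions and any mollifier interest payments is dominated by $(k+1-L)_+/L$. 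Once this counting is performed, the same target bound holds for every commutator term, completing the proof.
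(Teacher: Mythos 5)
Your proposal is correct and follows essentially the same route as the paper: commute $\Ddt$ with the double spatial mollifier, estimate the main term via the assumed bound (\ref{bound:dtnabkR}) on $(\pr_t + v\cdot\nab)R$ with extra derivatives absorbed by the mollifier at cost $\ep_x^{-1}$, and treat the commutator pieces by the Taylor-integral gain of $\ep_x$ together with the counting inequality of Lemma (\ref{lem:countingIneq}). The only difference is a trivial regrouping (the paper keeps $v_\ep$ in both commutators and separates $\eta_{\ep+\ep}\ast[(v_\ep - v)\cdot\nab R]$ as its own term, while you mirror the three-term split of (\ref{eq:velCommut1})--(\ref{eq:velCommut3})), which changes none of the estimates.
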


Given Proposition (\ref{prop:oneMatDvOfR}), the estimate in Proposition (\ref{prop:oneMatDvOfRFlow}) follows just as in Section (\ref{sec:spatVarStress}).  To summarize the argument of that section, each spatial derivative derivative up to order $L - 1$ costs at most $\Xi$, even if it is taken on the flow $\Phi_s$, and beyond $L - 1$ derivatives, the cost of each spatial derivative increases to $\Xi N^{1/L}$.  No exponential factors appear in the estimates for derivatives of $\Phi_s$ because we have checked that $s \leq \ep_t = N^{-1} \Xi^{-1} e_R^{-1/2} < \Xi^{-1} e_v^{-1/2}$.

Estimating $\Ddtof{R_{\ep_x}}$ requires us to commute the operator \[\Ddt = \pr_t + v_\ep^a(t,x)\pr_a\] with the spatial mollifier \[ \eta_{\ep + \ep}\ast = \eta_{\ep_x} \ast \eta_{\ep_x} \ast \]
This commutation leads to an expansion of $\Ddtof{R_{\ep_x}}$ into three terms \footnote{Note that these indices are Roman numerals, and should not be confused with the indices $I$ for the individual waves $V_I$. }
\begin{align}
\Ddtof{R_{\ep_x}} &= \eta_{\ep + \ep} \ast \Ddtof{R} + [v_\ep^a(t,x)\pr_a, \eta_{\ep_x} \ast](\eta_{\ep_x} \ast R ) + \eta_{\ep_x} \ast( [v_\ep^a(t,x)\pr_a, \eta_{\ep_x} \ast] R ) \\
&= A_{(I)}(t,x) + A_{(II)}(t,x) + A_{(III)}(t,x)
\end{align}

We first express the commutator terms as
\begin{align}
A_{(II)}(t,x) &= \int [ v_\ep^a(t,x) - v_\ep^a(t,x+h) ] \pr_a (\eta_{\ep_x} \ast R )(t,x) \eta_{\ep_x}(h) dh \\
&= \int_0^1 \int \pr_i v_\ep^a(t, x + s h) \pr_a (\eta_{\ep_x} \ast R )(t,x) h^i \eta_{\ep_x}(h) dh \\
&= \ep_x \int_0^1 \int \pr_i v_\ep^a(t, x + s h) \pr_a (\eta_{\ep_x} \ast R )(t,x) {\tilde \eta}^i_{\ep_x}(h) dh
\end{align}
with $\ep_x = \Xi^{-1} N^{-1/L}$.  From this expression, we can conclude that
\begin{align}
\co{ \nab^k A_{(II)} } &\leq C_k \Xi^{-1} N^{-1/L} \sum_{|a_1| + |a_2| = k} \co{ \nab^{|a_1| + 1} v_\ep } \co{ \nab^{|a_2| + 1} (\eta_{\ep_x} \ast R )}  \\
&\leq C \Xi^{-1} N^{-1/L} \sum_{|a_1| + |a_2| = k} \left( N^{(|a_1| + 1 - L)_+/L} \Xi^{|a_1| + 1} e_v^{1/2} \right) \left( N^{(|a_2| + 1 - L)_+/L} \Xi^{|a_2| + 1} e_R \right) \\
&\leq C \Xi^{k + 1}e_v^{1/2} e_R \sum_{|a_1| + |a_2| = k} (N^{[ (|a_1| - (L - 1))_+ + (|a_2| - (L-1))_+ - 1 ]/L }) \\
&\leq C N^{(k + 1 - L)_+/L} \Xi^{k + 1}e_v^{1/2} e_R.
\end{align}
where in the last line we have used the counting inequality in Lemma (\ref{lem:countingIneq}).

We can similarly treat
\begin{align}
A_{(III)}(t,x) &= \eta_{\ep_x} \ast \left( [v_\ep^a(t,x)\pr_a, \eta_{\ep_x} \ast] R  \right) \\
[v_\ep^a(t,x)\pr_a, \eta_{\ep_x} \ast] R &= \int_0^1 \int \pr_i v_\ep^a(t, x + s h) \pr_a R (t,x) h^i \eta_{\ep_x}(h) dh 
\end{align}
The commutator can be differentiated $L - 1$ times,
\begin{align}
\co{ \nab^k [v_\ep^a(t,x)\pr_a, \eta_{\ep_x} \ast] R} &\leq C_k \ep_x \Xi^{2 + k} e_v^{1/2} e_R \\
&\leq C_k N^{-1/L} \Xi^{1 + k} e_v^{1/2} e_R
\end{align}
Taking more than $L - 1$ derivatives of $A_{(III)}$ incurs a loss of $\ep_x^{-1} \leqc \Xi N^{1/L}$, giving us a bound of
\begin{align}
\co{ \nab^k A_{(III)} } &\leq N^{(k - L)_+/L} \Xi^{k + 1}e_v^{1/2} e_R  \\
&\leq N^{(k + 1 - L)_+/L} \Xi^{k + 1}e_v^{1/2} e_R
\end{align}

In order to estimate the term
\[ \eta_{\ep + \ep} \ast \Ddtof{R} = \eta_{\ep + \ep} \ast[ (\pr_t + v_\ep \cdot \nabla) R ] \]
we use the assumed bounds on $(\pr_t + v \cdot \nab) R$ by writing
\begin{align}
\eta_{\ep + \ep} \ast \Ddtof{R} &= \eta_{\ep + \ep} \ast[ (\pr_t + v \cdot \nab) R ] + \eta_{\ep + \ep} \ast[ (v_\ep - v) \cdot \nab R ] \\
&= A_{(I, I)} + A_{(I, II)}
\end{align}

Then
\begin{align}
\co{ \nab^k A_{(I,I)} } &\leq C_k N^{(k + 1 - L)_+/L} \Xi^{k + 1} e_v^{1/2} e_R
\end{align}
since the first $L - 1$ derivatives do not need to fall on the mollifier, and beyond $L - 1$, each derivative taken costs $\ep_x^{-1} \leqc \Xi N^{1/L}$ 

For $A_{(I, II)}$ we have
\begin{align}
(v_\ep - v) \cdot \nab R &= \int [ v^a(t,x + h) - v^a(t,x)] \pr_a R(t,x) \eta_{\ep+\ep}(h) dh \\
&= \int_0^1 \int \pr_i v^a(t, x + s h) \pr_a R(t,x) h^i \eta_{\ep + \ep}(h) dh \\
\end{align}
which can be differentiated up to $L - 1$ times at a cost of $\Xi$ per derivative
\begin{align}
\co{ \nab^k[(v_\ep - v) \cdot \nab R] } &\leq C_k N^{-1/L} \Xi^{k + 1} e_v^{1/2} e_R 
\end{align}
so that each derivative taken beyond $L - 1$ costs $\ep_x^{-1} \leqc \Xi N^{1/L}$ as it falls on the mollifier, giving
\begin{align}
\co{ \nab^k A_{(I,II)} } &\leq C_k N^{(k - L)_+/L} \Xi^{k + 1} e_v^{1/2} e_R \\
&\leq N^{(k + 1 - L)_+/L} \Xi^{k + 1} e_v^{1/2} e_R
\end{align}

This finishes the proof of Proposition (\ref{prop:oneMatDvOfR}).

\subsubsection{Second time derivative of the mollified stress along the coarse scale flow}

In this section, we establish bounds for the second material derivative
\begin{align}
\DDdtof{R_\ep} &= (\pr_t + v_\ep \cdot \nab)^2 R_\ep
\end{align}
of the mollified stress $R_\ep$, along with its spatial derivatives.  These estimates will be necessary to control the material derivative $\Ddtof{Q_T^{jl}}$ where $Q_T$ is the part of the new stress which arises from the transport term
\begin{align}
\pr_j Q_T^{jl} &= (\pr_t + v_\ep^j \pr_j) V^l \\
&= \sum_I e^{i \la \xi_I}[(\pr_t + v_\ep^j \pr_j){\tilde v}_I^l] 
\end{align}

If we recall that $R_\ep$ was constructed by mollifying a time $\ep_t$ along the coarse scale flow
\begin{align} \label{eq:formulaForRepDdt2}
R_\ep(t,x) &= \int R_{\ep_x}(\Phi_s(t,x)) \eta_{\ep_t}(s) ds 
\end{align}
then by analogy with standard mollification we expect the estimates for the second material derivative $\DDdt{R_\ep}$ to be a factor $\ep_t^{-1}$ worse than the bounds on the first material derivative $\Ddtof{R_\ep}$.  

To see that this expectation is correct, we apply Lemma (\ref{eq:flowDerivAndAverage}) to see that
\begin{align}
(\pr_t + v_\ep^a(t,x) \pr_a) R_\ep^{jl}(t,x) &= \int \Ddtof{R_{\ep_x}}(\Phi_s(t,x)) \eta_{\ep_t}(s) ds \\
&= \int \fr{d}{ds}[ R_{\ep_x}(\Phi_s(t,x)) ] \eta_{\ep_t}(s) ds \\
&= - \int R_{\ep_x}(\Phi_s(t,x)) \eta_{\ep_t}'(s) ds
\end{align}

The proof of Lemma (\ref{eq:flowDerivAndAverage}) has nothing to do with the choice of the function $\eta_{\ep_t}$, and we can express the second material derivative of $R_\ep$ as
\begin{align}
\DDdtof{R_\ep} &= - \int \Ddtof{R_{\ep_x}}(\Phi_s(t,x)) \eta_{\ep_t}'(s) ds
\end{align}
which is almost exactly the kind of average estimated in Section (\ref{sec:firstMatDvofRep}), except that the bound on
$\| \eta_{\ep_t}'(s) \|_{L^1_s} $ is worse by a factor \[ \ep_t^{-1} = N \Xi e_R^{1/2} \]  
The same methods from Sections (\ref{sec:firstMatDvofRep}) and (\ref{sec:spatVarStress}) then imply the bounds
\begin{prop}[Second material derivative of the mollified stress]\label{prop:secondDdtOfRep}
For $k \geq 0$, there exist constants $C_k$ such that
\begin{align}
\co{ \nab^k \DDdtof{R_\ep} } &\leq C_k N^{1 + (k + 1 - L)_+/L} \Xi^{k + 2} e_v^{1/2} e_R^{3/2}
\end{align}
\end{prop}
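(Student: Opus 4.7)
The plan is to follow the strategy sketched in the paragraph just preceding the statement: write $\DDdtof{R_\ep}$ as a weighted average (against $-\eta_{\ep_t}'$) of the already-estimated tensor field $\Ddtof{R_{\ep_x}}$ transported along the coarse scale flow, and then reuse the chain-rule arguments of Sections~\ref{sec:estForCoarseScaleFlow}--\ref{sec:firstMatDvofRep} essentially verbatim, paying only the additional factor $\|\eta_{\ep_t}'\|_{L^1} \lesssim \ep_t^{-1}$.

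First, applying Lemma~\ref{eq:flowDerivAndAverage} once gives
\[
\Ddtof{R_\ep}(t,x) = \int \Ddtof{R_{\ep_x}}(\Phi_s(t,x))\, \eta_{\ep_t}(s)\, ds
= -\int R_{\ep_x}(\Phi_s(t,x))\, \eta_{\ep_t}'(s)\, ds,
\]
where for the second equality I use $\tfrac{d}{ds}[R_{\ep_x}(\Phi_s)] = \Ddtof{R_{\ep_x}}(\Phi_s)$ and integrate by parts in $s$. Applying $\Ddt$ once more and invoking Lemma~\ref{eq:flowDerivAndAverage} again (the lemma makes no use of the specific normalization of the kernel), I get the representation
\[
\DDdtof{R_\ep}(t,x) = -\int \Ddtof{R_{\ep_x}}(\Phi_s(t,x))\, \eta_{\ep_t}'(s)\, ds.
\]
This is the key reduction: the second material derivative of $R_\ep$ is an average of the same integrand that appeared in the analysis of $\Ddtof{R_\ep}$, only now against $\eta_{\ep_t}'$ instead of $\eta_{\ep_t}$.

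Next, to estimate $\nab^k$ of this expression, I would differentiate under the integral sign and expand $\nab^k[\Ddtof{R_{\ep_x}}(\Phi_s(t,x))]$ using the chain rule, exactly as in (\ref{eq:chainRuleRep}) of Section~\ref{sec:spatVarStress}: each factor is a product of some higher derivative of $\Ddtof{R_{\ep_x}}$ evaluated at $\Phi_s$ against a product of derivatives of $\Phi_s$. The spatial-derivative bounds on $\Ddtof{R_{\ep_x}}$ are supplied by Proposition~\ref{prop:oneMatDvOfR}, namely $\co{\nab^k \Ddtof{R_{\ep_x}}} \leq C_k N^{(k+1-L)_+/L}\Xi^{k+1}e_v^{1/2}e_R$, and the derivatives of $\Phi_s$ in the range $|s|\le \ep_t \le \Xi^{-1}e_v^{-1/2}$ obey the distortion bound (\ref{ineq:distortionBound}) from Proposition~\ref{prop:coarseScaleFlowGeometry}. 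Applying the counting inequality (Lemma~\ref{lem:countingIneq}) exactly as in the derivation of Proposition~\ref{bound:daRep} lets me distribute the powers of $N^{1/L}$ and $\Xi$ across the factors and conclude
\[
\bigl|\nab^k\bigl[\Ddtof{R_{\ep_x}}(\Phi_s(t,x))\bigr]\bigr| \leq C_k N^{(k+1-L)_+/L}\Xi^{k+1} e_v^{1/2} e_R
\]
uniformly in $s$ with $|s|\le \ep_t$.

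Finally, integrating this against $|\eta_{\ep_t}'(s)|\, ds$ and using the standard bound $\|\eta_{\ep_t}'\|_{L^1_s} \leq C\,\ep_t^{-1}$ together with the choice $\ep_t = c\,\Xi^{-1}N^{-1}e_R^{-1/2}$ from (\ref{eq:eptdef}) yields
\[
\co{\nab^k \DDdtof{R_\ep}} \leq C_k \ep_t^{-1} N^{(k+1-L)_+/L}\Xi^{k+1} e_v^{1/2} e_R
\leq C_k' N^{1+(k+1-L)_+/L}\Xi^{k+2} e_v^{1/2} e_R^{3/2},
\]
which is the claimed estimate. The main technical point that requires care — rather than any deep obstacle — is the bookkeeping of $N^{1/L}$ factors via the counting inequality once the chain rule has been applied; but since this is identical to what was done in Section~\ref{sec:spatVarStress} for $R_\ep$ and in Section~\ref{sec:firstMatDvofRep} for $\Ddtof{R_\ep}$, no new ideas are needed beyond the representation via $\eta_{\ep_t}'$.
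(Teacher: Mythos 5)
Your proposal is correct and follows essentially the same route as the paper: it derives the representation $\DDdtof{R_\ep} = -\int \Ddtof{R_{\ep_x}}(\Phi_s(t,x))\, \eta_{\ep_t}'(s)\, ds$ by reusing Lemma (\ref{eq:flowDerivAndAverage}) with the kernel $\eta_{\ep_t}'$, and then repeats the chain-rule and counting-inequality bookkeeping of Sections (\ref{sec:spatVarStress}) and (\ref{sec:firstMatDvofRep}), paying only the extra factor $\|\eta_{\ep_t}'\|_{L^1_s} \leq C\,\ep_t^{-1} = C N \Xi e_R^{1/2}$. The arithmetic converting $N^{(k+1-L)_+/L}\Xi^{k+1} e_v^{1/2} e_R$ into $N^{1+(k+1-L)_+/L}\Xi^{k+2} e_v^{1/2} e_R^{3/2}$ matches the paper's.
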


Now that we have established the bounds of Proposition (\ref{prop:secondDdtOfRep}), we should check that this estimate is acceptable for the purpose of proving the Main Lemma (\ref{lem:iterateLem}).

\subsubsection{An acceptability check} \label{sec:acceptableCheck}

The purpose of mollifying $R^{jl}$ along the coarse scale flow is to ensure that the material derivative of the transport term $Q_T^{jl}$ obeys good bounds.  Let us now calculate the bounds we expect for $\Ddtof{Q_T^{jl}}$ and check how they compare to the requirements of Conjecture (\ref{conj:idealCase}).

Recall that the first term in the parametrix expansion for the solution to 
\begin{align}
\pr_j Q_{T,I}^{jl} &= e^{i \la \xi_I} u_{T,I,(1)}^l \\
u_{T,I,(1)}^l &= (\pr_t + v_\ep^j \pr_j) {\tilde v}_I^l
\end{align}
is of the form
\begin{align}
Q_{T,I,(1)}^{jl} &= \fr{1}{\la} e^{i \la \xi_I} q_{T,I, (1)}^{jl} \\
 q_{T,I, (1)}^{jl} &= i^{-1} q^{jl}(\nab \xi_I)[u_{T,I,(1)}]
\end{align}
where $q^{jl}(\nab \xi_I)[u]$ is the solution to $\pr_j q^{jl} = u^l$ described in Section (\ref{sec:oscEstimate}) which depends linearly on $u$.

To leading order in $\la$, 
\begin{align}
u_{T,I,(1)}^l &\approx \Ddt v_I^l \\
v_I^l &= a_I^l + i b_I^l \\
b_I^l &= \eta_{k_4}(t) \psi_k(t,x) e^{1/2}(t) \ga_I(\nab \xi_k, \varepsilon) P_I^\perp(\nab \xi_{\si I})^l \\
\varepsilon^{jl} &= \fr{- {\mathring R}^{jl}_\ep}{e(t)} \\
a_I &= - \fr{(\nab \xi_I)}{|\nab \xi_I|}\times b_I 
\end{align}
which means that one of the terms appearing in $u_{T,I,(1)}^l$ involves $\Ddt{R_\ep}$ when the material derivative hits the $\varep$ inside of $\ga_I$.  This term has size $\Xi e_v^{1/2} e_R^{1/2}$ because it costs $\Xi e_v^{1/2}$ to take the first material derivative of $R_\ep$, and therefore gives a contribution to the transport term of size
\begin{align}
|Q_{T,I,(1)}^{jl}| &= \fr{e_v^{1/2} e_R^{1/2}}{N} + |\mbox{ other terms }|
\end{align}
It will ultimately turn out that some of other terms will be larger, but for now we will focus on the term involving $\Ddtof{R_\ep}$.

To establish Lemma (\ref{lem:iterateLem}), we will have to verify a bound on 
\[ (\pr_t + v_1 \cdot \nab) Q_{T,I,(1)}^{jl} = (\pr_t + v\cdot \nab) Q_{T,I,(1)}^{jl} + V\cdot \nab Q_{T,I,(1)}^{jl} \]
and its first $L - 1$ spatial derivatives.  

During this verification, we approximate $(\pr_t + v\cdot \nab) = \Ddt + (v - v_\ep) \cdot \nab$ because $\Ddt$ will not hit the phase functions.  Then, in order to bound $\Ddtof{Q_{T,I,(1)}^{jl}}$ we will take a second material derivative of $R_\ep$.  This second derivative will cost a factor $\ep_t^{-1}$, leading to the bounds
\begin{align}
|\Ddt Q_{T,I,(1)}^{jl}| &\leq \ep_t^{-1} \fr{e_v^{1/2} e_R^{1/2}}{N} + |\mbox{ other terms }| \\
&\leq C \Xi e_v^{1/2} e_R + |\mbox{ other terms }|
\end{align}
In the ideal case where $(\Xi', e_v', e_R') = ( C N \Xi, e_R, \fr{e_v^{1/2} e_R^{1/2}}{N})$, the right hand side is exactly the benchmark necessary to be deemed acceptable.  Namely,
\[ C \Xi e_v^{1/2} e_R = \Xi' (e_v')^{1/2}e_R' \quad \quad \mbox{for Conjecture (\ref{conj:idealCase})} \]
To prove Lemma (\ref{lem:iterateLem}), we actually only need to verify that
\begin{align}
|\Ddt Q_{T,I,(1)}^{jl}| &\unlhd C N \Xi e_R^{1/2} \left( \fr{e_v^{1/2}}{e_R^{1/2} N} \right)^{1/2} e_R \\
&= C \left( \fr{e_v^{1/2}}{e_R^{1/2} N} \right)^{-1/2} (\Xi e_v^{1/2} e_R)
\end{align} 
which is an even more forgiving benchmark.

For the purpose of proving the Main Lemma (\ref{lem:iterateLem}), we can therefore afford at other points in the argument to be slightly wasteful in the estimates.
\section{Accounting for the parameters and the problem with the High-High term} \label{sec:accountForParams}
Although many estimates have been proven so far in the argument, only a few of the parameters have been chosen.

Here we give a list of all the parameters, and summarize their current status
\begin{itemize}
\item The parameter 
\begin{align}
 \ep_x &= a_R \Xi^{-1} N^{-1/L} 
\end{align} 
used to mollify $R$ in space has been chosen in line (\ref{eqn:epxRdef}) of Section (\ref{sec:chooseRParams}).
\item The parameter 
\begin{align}
\ep_t &= c \Xi^{-1} N^{-1} e_R^{-1/2}  
\end{align}
used to mollify $R_{\ep_x}$ along the flow has been chosen in line (\ref{eq:eptdef}) of Section (\ref{sec:chooseRParams}).
\item The parameter $\ep_v$ used to mollify the velocity field $v_\ep$ in space has the form
\begin{align}
\ep_v &= a_v \Xi^{-1} N^{-1/L}
\end{align}
where the constant $a_v \leq 1$ was chosen in Section (\ref{coarseScaleVelocity}) line (\ref{eq:whereWeChooseav}).
\item The lifespan parameter $\tau$ is of the form
\begin{align}
\tau &= b \Xi^{-1} e_v^{-1/2}
\end{align}
for a dimensionless parameter $b$ which has not been chosen.  

Although $b$ has not been chosen, $b$ is required to satisfy an upper bound of the form 
\begin{align}
b &\unlhd b_0 \label{goal:weNeedbb0}
\end{align}
The number $b_0 < 1$ above is an absolute constant which will ensure the conditions specified in Proposition (\ref{req:reqsForB}) are satisfied.  The parameter $\tau$ does not depend in any way on the choice of the mollifying parameter $\ep_v$ for the velocity, which will be apparent shortly as $b$ is chosen.
\item The parameter $\la$ used to make sure the phase functions are high frequency is of the form
\begin{align}
\la &= B_\la \Xi N
\end{align}
where $B_\la \geq 1$ is the largest constant, which will be chosen at the very end of the proof.
\end{itemize}

For the sake of the logical sequence of the argument, we will use this opportunity to choose the parameter $\tau$, and then provide a motivation for the choice.  Namely, we set
\begin{align}
b &= b_0 \left( \fr{e_v^{1/2}}{B_\la e_R^{1/2} N} \right)^{1/2} \\
&= b_0 B_\la^{-1/2} \left( \fr{e_v^{1/2}}{e_R^{1/2} N} \right)^{1/2} \label{eq:iChoseb}
\end{align}
so that
\begin{align}
\tau &= b_0  B_\la^{-1/2} \left( \fr{e_v^{1/2}}{e_R^{1/2} N} \right)^{1/2}  \Xi^{-1} e_v^{-1/2}
\end{align}
Note that $N \geq \left( \fr{e_v}{e_R} \right)^{1/2}$, so that $b \leq b_0$ as required in (\ref{goal:weNeedbb0}).

The motivation for the above choice is the following.  Consider the High-High term and the Transport term, which solve the equations
\begin{align}
\pr_j Q_H^{jl} &= \la \sum_{J \neq {\bar I}} e^{i \la (\xi_I + \xi_J)}[ v_I \times ( |\nab \xi_J| - 1 ) v_J + v_J \times (|\nab \xi_I| - 1) v_I ] + \mbox{ lower order terms } \label{eq:QHisLike}\\
\pr_j Q_T^{jl} &= e^{i \la \xi_I} (\pr_t + v_\ep^j(t,x) \pr_j) v_I^l + \mbox{ lower order terms } \label{eq:QTisLike}
\end{align}
and recall that $v_I$ is given by $a_I + i b_I$, where
\begin{align}
b_I^l &= \eta\left(\fr{t - t(I)}{\tau}\right) \psi_k(t,x) e^{1/2}(t) \ga_I(\nab \xi_k, \fr{R_\ep}{e}) P_I^\perp(\nab \xi_{\si I})^l \label{eq:bIlooksLike1}
\end{align}
and $a_I = - \fr{(\nab \xi_I)\times}{|\nab \xi_I|} b_I $ is obtained by a $\pi/2$ rotation of $b_I$ in $\langle \nab \xi_I \rangle^\perp$.  

From the bounds we have established, it is already clear that
\begin{align}
\co{ b_I } &\leq C e_R^{1/2}
\end{align}
uniformly in $I$, where the smallness comes from the function $e^{1/2}(t)$, and all the other factors are bounded.

The solution $Q_H$ to (\ref{eq:QHisLike}) will not be any smaller than the first term in the parametrix expansion for any of its terms, which gains a factor of $1 / \la$ compared to the right hand side.  At each time, there are only a bounded number (fewer than $(2 \times 2^3 \times 12)^2$) interaction terms which are nonzero.  Therefore, we expect a bound
\begin{align}
|Q_H| &\leq C e_R \max_I \sup_{|t - t(I)| \leq \tau,x \in \T^3}(|\nab \xi_I|(t,x) - 1)
\end{align}
which is only smaller than $e_R$ if the phase gradients are very close to their initial size $1$ in absolute value.

According to Proposition (\ref{prop:phaseStability}), we have a bound
\begin{align}
| |\nab \xi_I| - 1 | &\leq A b
\end{align}
(which we remark is dimensionless), giving  
\begin{align}
|Q_H| &\leq C b e_R \label{ineq:projectedQR}
\end{align}
Therefore $b$ must be chosen small so that the estimate for $\co{ Q_H }$ will guarantee that $Q_H$ is smaller than the stress from the previous stage.

Unfortunately, choosing a short lifespan forces the transport term to be large, since the material derivative hits the time cutoff function $\eta(\fr{t - t(I)}{\tau})$ in (\ref{eq:bIlooksLike1}).  Therefore we expect an estimate
\begin{align}
\co{ (\pr_t + v_\ep^j \pr_j) v_I } &\leq C \tau^{-1} e_R^{1/2}
\end{align}
which leads to an expected estimate for $Q_T$ of
\begin{align}
|Q_T| &\leq C \fr{\tau^{-1} e_R^{1/2}}{B_\la \Xi N} \\
&\leq C b^{-1} \fr{e_v^{1/2} e_R^{1/2}}{B_\la N} \label{ineq:projectedQT}
\end{align}

If we had been able to choose $b$ to be a constant, then the transport term $Q_T$ could be made to have size $\fr{e_v^{1/2} e_R^{1/2}}{N}$ which is what is required for regularity up to $1/3$.  Unfortunately, the best we can do is optimize $b$ in order to balance the terms (\ref{ineq:projectedQR}) and (\ref{ineq:projectedQT}), which leads to the choice of $b$ in (\ref{eq:iChoseb}).

Now the only parameter that remains to be chosen is $B_\la$, which will be chosen at the very end of the argument.
\part{ Construction of Weak Solutions in the H{\" o}lder Class : Estimating the Correction }

In order to estimate the corrections $P$ and $V$ and their spatial and material derivatives for the proof of Lemma (\ref{lem:iterateLem}), and to prepare to estimate each term that composes the new stress, we begin by proving estimates for the vector amplitudes $v_I$.

Let us recall again that $v_I$ is given by $a_I + i b_I$, where
\begin{align}
b_I^l &= \eta\left(\fr{t - t(I)}{\tau}\right) \psi_k(t,x) e^{1/2}(t) \ga_I(\nab \xi_k, \varepsilon) P_I^\perp(\nab \xi_{\si I})^l \label{eq:bIlooksLike2} \\
\varepsilon^{jl} &= \fr{- {\mathring R}^{jl}_\ep}{e(t)}
\end{align}
and the real part
\begin{align}
a_I &= - \fr{(\nab \xi_I)}{|\nab \xi_I|} \times b_I 
\end{align} 
is obtained by a $\pi/2$ rotation of $b_I$ in the plane $\langle \nab \xi_I \rangle^\perp$.

We need to estimate spatial derivatives of $v_I$, and spatial derivatives of its first two material derivatives.  To begin this process, we start with the coefficients $\ga_I$.

\section{ Bounds for coefficients from the stress equation } \label{sec:coeffBounds}

Recall that the coefficients $\ga_I$ are defined implicitly by the equation
\begin{align}
 \sum_{I \in \vec{k} \times \F} A(\nab \xi_k)_J^I \ga_I^2 &= (\fr{\de^{jl}}{2n} + \varepsilon(R_\ep)^{jl}) \pr_j \xi_J \pr_l \xi_J \label{eq:eqnForGaI}
\end{align}
of Section (\ref{sec:lowBounds}).  The fact that this equation can be solved for $\ga_I$ was ensured in the discussion of Section (\ref{sec:chooseRParams}).  In order to estimate the derivatives of $\ga_I$, we can differentiate the Equation (\ref{eq:eqnForGaI}).

As a preliminary measure, we estimate the derivatives for 
\begin{align}
\varepsilon^{jl} &= \fr{- {\mathring R}^{jl}_\ep}{e(t)}
\end{align}
These are summarized by
\begin{prop}[Bounds for $\varepsilon$] \label{prop:boundsForVarep}
\begin{align}
\co{ \nab^k \varepsilon } &\leq C_k N^{(k - L)_+/L} \Xi^k \label{ineq:nabkvarep} \\
\co{ \nab^k \Ddtof{\varepsilon} } &\leq C_k N^{(k + 1 - L)_+/L} \Xi^{k + 1} e_v^{1/2} \label{ineq:nabkDtvarep} \\
\co{\nab^k \DDdtof{\varepsilon} } &\leq C_k N^{1 + (k + 1 - L)_+/L} \Xi^{k + 2} e_v^{1/2} e_R^{1/2} \label{ineq:nabkDDtvarep}
\end{align}
\end{prop}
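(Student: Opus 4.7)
The plan is to treat $\varepsilon^{jl} = -\mathring R^{jl}_\ep/e(t)$ as a product of the already-estimated tensor $\mathring R_\ep$ (whose spatial, first and second material derivative bounds are supplied by Section~\ref{sec:spatVarStress} and Propositions (\ref{prop:firstDdtOfRep}) and (\ref{prop:secondDdtOfRep})) with the purely temporal scalar $f(t) = 1/e(t)$. The key structural observation is that $f$ depends only on $t$, so spatial derivatives never fall on it; moreover $\Ddtof{f} = f'(t)$ and $\DDdtof{f} = f''(t)$ since $v_\ep\cdot\nab f = 0$. This reduces every estimate to a product of a bound on $\nab^k (\Ddt)^r \mathring R_\ep$ with a $C^0$ bound on $f$, $f'$, or $f''$, and the combinatorial work is just the Leibniz rule.

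For inequality (\ref{ineq:nabkvarep}) one writes $\nab^k\varepsilon = -\nab^k \mathring R_\ep/e(t)$ and applies Proposition (\ref{bound:daRep}) together with $e(t)\geq Ke_R$ on the support of $R_\ep$. The factor $e_R$ from the numerator and the $e_R^{-1}$ from $1/e$ cancel, producing the dimensionless bound $C_k N^{(k-L)_+/L}\Xi^k$. The trace part of $R_\ep$ satisfies the same estimates, so passing from $R_\ep$ to $\mathring R_\ep$ is harmless.

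For the material derivative bounds I will use
\ali{
\Ddtof{\varepsilon} &= -\Ddtof{\mathring R_\ep}\cdot f(t) + \mathring R_\ep\cdot(-f'(t)), \\
\DDdtof{\varepsilon} &= -\DDdtof{\mathring R_\ep}\cdot f(t) - 2\Ddtof{\mathring R_\ep}\cdot f'(t) - \mathring R_\ep \cdot f''(t),
}
take $\nab^k$ of each line (noting that $\nab^k$ commutes with multiplication by the $x$-independent factors $f(t)$, $f'(t)$, $f''(t)$), and then estimate termwise. Bounds on $f', f''$ follow from (\ref{ineq:goodEnergy}) via the chain-rule identities $e' = 2e^{1/2}(e^{1/2})'$ and $e'' = 2((e^{1/2})')^2 + 2e^{1/2}(e^{1/2})''$, combined with $K e_R\leq e(t)\leq 10^6 Ke_R$, yielding $|f'|\lesssim \Xi e_v^{1/2}/e_R$ and $|f''|\lesssim \Xi^2 e_v/e_R$. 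Substituting the bounds from Propositions (\ref{prop:firstDdtOfRep}) and (\ref{prop:secondDdtOfRep}) produces the principal terms matching (\ref{ineq:nabkDtvarep}) and (\ref{ineq:nabkDDtvarep}) after the $e_R$ in the numerator cancels the $e_R^{-1}$ from $f$ (or the $e_R^{-1}$ hidden in $f'$, $f''$).

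The only step requiring verification is that the cross terms — namely $\nab^k \mathring R_\ep\cdot f'$ in the first material derivative expansion, and $\nab^k \Ddtof{\mathring R_\ep}\cdot f'$, $\nab^k \mathring R_\ep\cdot f''$ in the second — are dominated by the corresponding principal term. A quick check shows each such cross term is smaller than the principal one by a factor of the form $(e_v/e_R)^{1/2}/N$, which is $\leq 1$ by hypothesis (\ref{eq:conditionsOnN2}). Since the $(k-L)_+/L$ exponents are monotone in $k$, upgrading $N^{(k-L)_+/L}$ to $N^{(k+1-L)_+/L}$ where necessary absorbs the lower-order cross terms into the stated bounds. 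This is the only substantive check; the rest is bookkeeping. No new ideas beyond those already used for $R_\ep$ are required.
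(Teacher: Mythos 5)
Your proposal is correct and follows essentially the paper's own route: the paper differentiates the relation $e(t)\varepsilon^{jl}=-{\mathring R}^{jl}_\ep$ (its identities for $\Ddtof{\varepsilon}$ and $\DDdtof{\varepsilon}$ are exactly the ones you write via $f=1/e$), then estimates term by term using the $R_\ep$ bounds of Propositions (\ref{bound:daRep}), (\ref{prop:firstDdtOfRep}), (\ref{prop:secondDdtOfRep}), the energy bounds (\ref{ineq:goodEnergy}), the lower bound $e\geq Ke_R$, and $N\geq (e_v/e_R)^{1/2}$ to absorb the lower-order terms. The only nit is that the cross term $\nab^k {\mathring R}_\ep\, f'$ in the first material derivative is not smaller than the principal term by a factor $(e_v/e_R)^{1/2}/N$ but only by a harmless power of $N^{1/L}$, which your monotonicity remark about the exponents $(k-L)_+/L$ already covers, so the conclusion is unaffected.
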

Note that these only differ from the bounds (\ref{bound:daRep}), (\ref{prop:firstDdtOfRep}) and (\ref{prop:secondDdtOfRep}) by a factor of $e_R^{-1}$.  
\begin{proof}
The proof of Proposition (\ref{prop:boundsForVarep}) proceeds by studying the equations
\begin{align}
e(t) \varepsilon^{jl} &= - {\mathring R}^{jl}_\ep \label{eq:varep0}\\
e(t) \Ddtof{\varepsilon^{jl}} &= - \Ddtof{ {\mathring R}^{jl}_\ep } - e'(t) \varepsilon^{jl} \label{eq:varep1}\\
e(t) \DDdtof{\varepsilon^{jl}} &= - \DDdtof{{\mathring R}^{jl}_\ep} - 2 e'(t) \Ddtof{\varepsilon^{jl}} - e''(t) \varepsilon^{jl} \label{eq:varep2}
\end{align}
The bound (\ref{ineq:nabkvarep}) follows from differentiating (\ref{eq:varep0}), and applying the bounds (\ref{bound:daRep}) and the lower bound
\[ e(t) \geq K e_R \]
The bound (\ref{ineq:nabkDtvarep}) follows from differentiating (\ref{eq:varep1}) in space, and applying the bounds (\ref{ineq:nabkvarep}), (\ref{ineq:goodEnergy}) and (\ref{prop:firstDdtOfRep}).  The point is that the first transport derivative always costs a factor $\Xi e_v^{1/2}$ in the estimates, and each spatial derivative up to order costs $\Xi$ until the total order of differentiation exceeds $L$, at which point there is an additional cost of $N^{1/L}$ per derivative.

The proof of (\ref{ineq:nabkDDtvarep}) proceeds similarly by differentiating (\ref{eq:varep2}) and applying the bounds from (\ref{ineq:goodEnergy}), (\ref{ineq:nabkvarep}), (\ref{ineq:nabkDtvarep}), and (\ref{prop:secondDdtOfRep}).  The equation for the second material derivative has the form
\begin{align}
\nab^k \DDdtof{\varepsilon^{jl}} &= e^{-1}(t) \left\{ - \nab^k \DDdtof{{\mathring R}^{jl}_\ep} - 2 e'(t) \nab^k \Ddtof{\varepsilon^{jl}} - e''(t) \nab^k \varepsilon^{jl} \right \} \\
\co{ \nab^k \DDdtof{\varepsilon^{jl}} } &\leq [N \Xi e_R^{1/2} ] \cdot [ N^{(k + 1 - L)_+/L} \Xi^{k + 1} e_v^{1/2} ] + [ \Xi e_v^{1/2} ] [ N^{(k + 1 - L)_+/L} \Xi^{k + 1} e_v^{1/2} ] \\
&+ [ \Xi e_v^{1/2} ]^2 [N^{(k - L)_+} \Xi^k] \\
&= A_{(I)} + A_{(II)} + A_{(III)}
\end{align}
Counting powers of $N$, we see that $A_{(III)} < A_{(II)}$, and the fact that $A_{(II)} < A_{(I)}$ follows from $N \geq \left( \fr{e_v}{e_R} \right)^{1/2}$.
\end{proof}

We can now differentiate equation (\ref{eq:eqnForGaI}) in order to prove that
\begin{prop}[Bounds for $\ga_I$] \label{prop:theBoundsForGaI} The coefficients $\ga_I = \ga_I(\nab \xi_k, \varep)$ satisfy the bounds 
\begin{align}
\co{ \nab^k \ga_I } &\leq C_k N^{(k + 1 - L)_+/L} \Xi^k \label{ineq:nabkgaI} \\
\co{ \nab^k \Ddtof{\ga_I} } &\leq C_k N^{(k + 1 - L)_+/L} \Xi^{k + 1} e_v^{1/2} \label{ineq:nabkDtgaI} \\
\co{\nab^k \DDdtof{\ga_I} } &\leq C_k N^{1 + (k + 1 - L)_+/L} \Xi^{k + 2} e_v^{1/2} e_R^{1/2} \label{ineq:nabkDDtgaI}
\end{align}
\end{prop}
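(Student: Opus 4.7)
The plan is to view $\gamma_I = \gamma_{f(I)}(\nabla\xi_k, \varepsilon)$ as a smooth function of its arguments, guaranteed by Lemma \ref{lem:impFuncLem} and Proposition \ref{prop:needTauSmall} (whose hypotheses were verified in Sections \ref{subsec:phaseStability} and \ref{sec:chooseRParams}), and then apply the Fa\`a di Bruno chain rule to convert derivative bounds on $\gamma_I$ into derivative bounds on the inputs $\nabla\xi_k$ and $\varepsilon$, which are already estimated in \eqref{bound:phaseTransportSummary} and Proposition \ref{prop:boundsForVarep}. The zeroth-order $C^0$ bound $|\gamma_I| \leq C$ is immediate since the implicit function lemma puts $\gamma_I$ in a ball of radius $a|\tilde\gamma_I|$ around the constant $\tilde\gamma_I = \sqrt{5/(2^5\cdot 3)}$; moreover all partial derivatives of $\gamma_f$ in the relevant domain are uniformly bounded.

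For \eqref{ineq:nabkgaI}, expanding $\nabla^k\gamma_I$ by Fa\`a di Bruno produces a finite sum of terms of the form
\[
(\partial_A^{j_1}\partial_y^{j_2}\gamma_f)(\nabla\xi_k,\varepsilon) \cdot \prod_{i=1}^{j_1+j_2} \nabla^{b_i}(\nabla\xi_k \text{ or } \varepsilon)
\]
where $(b_1,\dots,b_{j_1+j_2})$ is an ordered partition of the total order $k$ in the sense of Definition \ref{defn:orderedPartition}. Using $\co{\nabla^{b_i+1}\xi_I} \leq C\Xi^{b_i} N^{(b_i+1-L)_+/L}$ and $\co{\nabla^{b_i}\varepsilon} \leq C\Xi^{b_i}N^{(b_i-L)_+/L}$, each such product is bounded by $C\Xi^k \prod_i N^{(b_i+1-L)_+/L}$. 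The counting inequality (Lemma \ref{lem:countingIneq}), applied with $Y=L-1$ to $((b_i+1)-1)$, yields $\sum_i (b_i+1-L)_+ \leq (k+1-L)_+$, giving the desired estimate.

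For the first material derivative, I would start from
\[
\Ddtof{\gamma_I} = (\partial_A\gamma_f)\Ddtof{A} + (\partial_y\gamma_f)\Ddtof{y},
\]
use the bounds $\co{\Ddtof{(\nabla\xi_I)}} \leq C\Xi e_v^{1/2}$ and $\co{\Ddtof{\varepsilon}} \leq C\Xi e_v^{1/2}$ at the $k=0$ level, and then for general $k$ commute $\nabla^k$ past $\Ddt$ and apply Fa\`a di Bruno again. The Phase-Velocity Estimates \eqref{bound:phaseVelocity} and Proposition \ref{prop:boundsForVarep} both deliver the factor $N^{(k+1-L)_+/L}\Xi^{k+1}e_v^{1/2}$ on a single ``hit'' factor, while uninvolved factors of $\nabla^{b_i}(\nabla\xi)$ or $\nabla^{b_i}\varepsilon$ contribute only powers of $\Xi$; the counting inequality again absorbs the $N^{1/L}$ powers into $N^{(k+1-L)_+/L}$.

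For \eqref{ineq:nabkDDtgaI}, applying $\DDdt$ produces three types of terms: (a) $(\partial_A\gamma_f)\DDdt A + (\partial_y\gamma_f)\DDdt y$, (b) second-order derivatives of $\gamma_f$ times $(\Ddt A)^2$, $(\Ddt A)(\Ddt y)$, $(\Ddt y)^2$. Type (a) is bounded by $N^{1+(k+1-L)_+/L}\Xi^{k+2}e_v^{1/2}e_R^{1/2}$ using \eqref{ineq:nabkDDtvarep} and \eqref{bound:phaseAccel2} -- the $\varepsilon$ piece is the larger one because $\DDdt\varepsilon$ carries the extra factor of $\epsilon_t^{-1}\sim N\Xi e_R^{1/2}$. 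Type (b) yields, for $k=0$, a bound of size $\Xi^2 e_v$; since $N \geq (e_v/e_R)^{1/2}$ by \eqref{eq:conditionsOnN2}, $\Xi^2 e_v \leq N\Xi^2 e_v^{1/2}e_R^{1/2}$, so (b) is dominated by (a). The \emph{main bookkeeping obstacle} is handling the interplay of spatial derivatives with these quadratic terms of type (b): one must check that no $N^{1/L}$ power accumulated via Fa\`a di Bruno on two simultaneously differentiated factors exceeds $N^{(k+1-L)_+/L}$. This is handled exactly as in the proof of Proposition \ref{prop:oneMatDvOfR}: one decomposes the spatial order $k$ across the factors, applies Lemma \ref{lem:countingIneq} with $Y = L-1$ to the shifted exponents $(b_i+1-L)_+$, and uses the fact that $(\Ddt)$-derivatives never introduce an $N^{1/L}$ cost of their own, only the uniform factor $\Xi e_v^{1/2}$ per transport derivative.
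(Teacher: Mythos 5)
Your proposal is correct and follows essentially the same route as the paper: treat $\ga_I$ as a smooth implicit function of $(\nab \xi_k, \varepsilon)$ with uniformly bounded derivatives, expand by the chain rule, feed in the transport estimates for $\nab \xi$ and Proposition (\ref{prop:boundsForVarep}) for $\varepsilon$, and absorb the $N^{1/L}$ factors via the counting inequality, with the dominant second-material-derivative contribution coming from $\DDdtof{\varepsilon}$ at cost $\ep_t^{-1} \sim N \Xi e_R^{1/2}$. Your explicit justification that the quadratic terms $(\Ddtof{\nab\xi})^2$, $(\Ddtof{\nab\xi})(\Ddtof{\varepsilon})$, $(\Ddtof{\varepsilon})^2$ are dominated (using $N \geq (e_v/e_R)^{1/2}$) is a detail the paper only asserts schematically as ``lower order.''
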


Since the proof is a routine application of the chain rule, we will only give a schematic outline of which terms appear in order to avoid clutter.

\begin{proof}
The proof proceeds either by differentiating (\ref{eq:eqnForGaI}), or equivalently applying the chain rule to the implicit function
\begin{align} 
\ga_I(p, \varep) &= \ga_I(\nab \xi_k, \varep) \label{eq:thisIsGaIpvarep}
\end{align}
Recall that the implicit function $\ga_I$ appearing in this estimate is one of finitely many different functions $\ga_I = \ga_{f(I)}$ from Section (\ref{sec:lowBounds}) which depend only on the direction coordinate $f(I) \in F$.  For the present proof, we will omit the subscript $k = k(I) \in (\Z / 2\Z)^3 \times \Z$ in the location coordinate for $\nab \xi = \nab \xi_k$, so that $k$ may be used to denote the order of differentiation in the bounds of Proposition (\ref{prop:theBoundsForGaI}) and the calculations below.  It is only important to remember that for each index $I$, only phase gradients $\xi_J$ on the same region $k(J) = k(I)$ enter into (\ref{eq:thisIsGaIpvarep}).

According to the estimates (\ref{bound:nabaxi}), (\ref{bound:phaseVelocity}) and (\ref{bound:phaseAccel2}) in Section (\ref{transportEstimates}), each material derivative on $\nab \xi$ costs $\Xi e_v^{1/2}$ and each spatial derivative on $\nab \xi$, $\Ddtof{\nab \xi}$ or $\DDdtof{\nab \xi}$ costs either $\Xi$ or $N^{1/L} \Xi$ depending on whether or not the total order of differentiation on $\xi$ exceeds $L$.

For spatial derivatives of $\ga_I$, the terms with the highest order derivatives are schematically given by
\begin{align}
\nab^k[\ga_I(\nab \xi, \varep)] &\subseteq [\pr_p \ga ] (\nab^{k + 1} \xi) + [\pr_{\varep}\ga ] (\nab^k \varep) + \mbox{ cross terms} \label{formeq:nabKgaI}
\end{align}
As we have seen already in several cases such as Section (\ref{sec:spatVarStress}), the cross terms are lower order in terms of powers of $N^{1/L}$.

The two main terms in (\ref{formeq:nabKgaI}) obey the same estimates as each other, except that the $k + 1$ derivative of $\xi$ costs an extra factor of $N^{1/L}$
\begin{align}
\co{ \nab^{k +1} \xi } &\leq C_k N^{(k + 1 - L)_+/L} \Xi^k \\
\co{ \nab^k \varep } &\leq C_k N^{(k - L)_+/L} \Xi^k
\end{align}
All the derivatives of the functions $\ga_I$ which appear in the expansion are bounded by universal constants, giving (\ref{ineq:nabkgaI}).

Taking a material derivative of $\ga_I$ and differentiating in space gives terms which are schematically of the form
\begin{align}
\Ddt[\ga_I(\nab \xi, \varep)] &\subseteq [\pr_p \ga ] \Ddtof{\nab \xi} + [\pr_{\varep}\ga] \Ddtof{\varep} \\
\nab^k \Ddt[\ga_I(\nab \xi, \varep)] &\subseteq [\pr_p \ga ] \nab^k \Ddtof{\nab \xi} + [\pr_p^2 \ga ] \nab^{k + 1} \xi  \Ddtof{\nab \xi} \\
&+ [\pr_{\varep}\ga] \nab^k \Ddtof{\varep} +[\pr_p \pr_{\varep}\ga] \nab^{k+1} \xi \Ddtof{\varep}  \\
&+ \mbox{ smaller cross terms}
\end{align}
By Proposition (\ref{bound:nabaxi}), (\ref{bound:phaseVelocity}), and (\ref{ineq:nabkDtvarep}) all of these terms are bounded by 
\[ C_k N^{(k + 1 - L)_+/L} \Xi^{k + 1} e_v^{1/2} \]
In particular, no extra factor of $N^{1/L}$ appears for the first material derivative of $\nab \xi$; see Section (\ref{transportEstimates}) for a review of how this fact follows from the transport equation for $\nab \xi$.

Finally, we consider
\begin{align}
\DDdt[\ga_I(\nab \xi, \varep)] &\subseteq [\pr_p \ga ] \DDdtof{\nab \xi} + [\pr_{\varep}\ga] \DDdtof{\varep} \label{eq:theMainTermsDDdtgaI}\\
&+ [\pr_p^2 \ga ] (\Ddtof{\nab \xi})^2 + [\pr_p \pr_\varep \ga](\Ddtof{\nab \xi})(\Ddtof{\varep})  + [\pr_{\varep}^2 \ga](\Ddtof{\varep})^2 \label{eq:lowerOrderTermsDdtgaI}
\end{align}
The terms in line (\ref{eq:lowerOrderTermsDdtgaI}) are lower order, so schematically the main terms from the estimates come from derivatives of the terms in (\ref{eq:theMainTermsDDdtgaI}), which include
\begin{align}
\nab^k \DDdt[\ga_I(\nab \xi, \varep)] &\subseteq [\pr_p \ga ] \nab^k \DDdtof{\nab \xi} +  [\pr_{\varep}\ga] \nab^k \DDdtof{\varep} \\
&+ [\pr_p^2 \ga ] \nab^{k + 1} \xi \DDdtof{\nab \xi} + [\pr_p \pr_\varep \ga] \nab^{k + 1} \xi \DDdtof{\varep}
\end{align}
At this stage, the second material derivative contributes towards the power of $N^{1/L}$ appearing in the estimates for both the phase gradients and for $\varep$.  The only difference is that while the second material derivative costs $\Xi e_v^{1/2}$ when applied to the phase gradients, the second material derivative of $\varep$ costs a larger factor of $N \Xi e_R^{1/2}$.  Thus, the final bound for (\ref{ineq:nabkDDtgaI}) is exactly the same quality as the corresponding bound for $\varep$.
\end{proof}

\section{ Bounds for the vector amplitudes }\label{sec:vecAmpBounds}

Having estimated the coefficients $\ga_I$, we are now ready to begin estimating the vector amplitudes $v_I$ and $w_I$ of the correction.

Let us recall once more that $v_I$ is given by $a_I + i b_I$, where
\begin{align}
b_I^l &= \eta\left(\fr{t - t(I)}{\tau}\right) \psi_k(t,x) e^{1/2}(t) \ga_I P_I^\perp(\nab \xi_{\si I})^l \label{eq:bIlooksLike3}
\end{align}
and the real part 
\[ a_I = - \fr{(\nab \xi_I)}{|\nab \xi_I|}\times b_I \] is obtained by a $\pi/2$ rotation of $b_I$ in $\langle \nab \xi_I \rangle^\perp$.

Let us compress the notation slightly by writing
\begin{align}
v_I^l &= \eta\left(\fr{t - t(I)}{\tau}\right) e^{1/2}(t) \ga_I \a_I^l \label{eq:compressvIform}\\
\a_I^l &= [ i - \fr{(\nab \xi_I)}{|\nab \xi_I|}\times ] \psi_k P_I^\perp(\nab \xi_{\si I})^l
\end{align}

The vector field $\a_I^l$ takes values in the positive eigenspace of the operator $(i \nab \xi_I) \times$ within the plane $\langle \nab \xi_I \rangle^\perp$, and is composed entirely from solutions to the transport equation ($\psi$ itself being no worse than $\xi$), so it obeys the estimates
\begin{prop}[Estimates for the Transport Part of $v_I$]
\begin{align}
\co{ \nab^k \a_I^l } &\leq C_k N^{(k + 1 - L)_+/L} \Xi^k \label{eq:spaceDvsOfAlpha}\\
\co{ \nab^k \Ddt{\a_I^l} } &\leq C_k N^{(k + 1 - L)_+/L} \Xi^{k+1} e_v^{1/2} \label{eq:matDvOfAlpha} \\
\co{ \nab^k \DDdt{\a_I^l} } &\leq C_k N^{(k + 2 - L)_+/L}\Xi^{k+2} e_v \label{eq:matDv2ofAlpha}
\end{align}
or more concisely, using the notation of Section (\ref{sec:relAccel})
\begin{align}
\co{ D^{(k,r)} \a_I^l } &\leq C_{k + r} N^{( (r - 1)_+ + k + 1 - L)_+/L} \Xi^{k + r} e_v^{r/2}
\end{align}
\end{prop}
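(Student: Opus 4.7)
The plan is to expand $\a_I^l$ as a rational function built from $\psi_k$, $\nab\xi_I$, and $\nab\xi_{\si I}$ and then apply the phase transport estimates from Section \ref{transportEstimates} together with Leibniz together with the counting inequality (Lemma \ref{lem:countingIneq}). Writing out
\[
\a_I^l = \left[ i\,\de^{l}{}_{m} - \fr{\ep^{l}{}_{ab}\pr^a\xi_I}{|\nab\xi_I|} \de^b{}_m \right] \psi_k \left( \pr^m \xi_{\si I} - \fr{\nab\xi_{\si I}\cdot \nab\xi_I}{|\nab\xi_I|^2} \pr^m \xi_I \right),
\]
we see that $\a_I^l$ is a smooth function of $\psi_k$, $\nab\xi_I$, $\nab\xi_{\si I}$, and $|\nab\xi_I|^{-2}$. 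By Requirement \ref{req:reqsForB} and Proposition \ref{prop:phaseStability}, $|\nab\xi_I|$ is bounded above and below by positive constants on the support of $b_I$, so the map
\[
(p, q) \mapsto G(p,q) \equiv \left[ i - \fr{p}{|p|}\times\right]\left( q - \fr{q\cdot p}{|p|^2} p \right)
\]
is a smooth function of $(p,q)$ in a neighborhood of the range of $(\nab\xi_I, \nab\xi_{\si I})$ with all derivatives uniformly bounded. Thus $\a_I^l = \psi_k \cdot G(\nab\xi_I,\nab\xi_{\si I})$.

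For the spatial estimate \eqref{eq:spaceDvsOfAlpha}, apply Leibniz and chain rule: $\nab^k\a_I$ is a finite linear combination of terms of the form
\[
\pr^{a_0}\psi_k \cdot (\pr_{p}^{m_1}\pr_q^{m_2} G)(\nab\xi_I,\nab\xi_{\si I}) \prod_{i=1}^{m_1}\pr^{a_i}\nab\xi_I \prod_{j=1}^{m_2}\pr^{b_j}\nab\xi_{\si I},
\]
where $|a_0| + \sum |a_i| + \sum |b_j| = k$. Proposition \ref{bound:nabaxi} applied to $\xi_I$, $\xi_{\si I}$, and $\psi_k$ (which satisfies the same transport equation with bounded initial data) bounds each factor by $C\,\Xi^{|a_i|} N^{(|a_i|+1-L)_+/L}$ or $C\,\Xi^{|a_0|} N^{(|a_0|-L)_+/L}$, and the counting inequality aggregates the $N^{1/L}$ losses into the single factor $N^{(k+1-L)_+/L}$, giving \eqref{eq:spaceDvsOfAlpha}. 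Note the $+1$ in the exponent arises because at least one factor of $\nab\xi$ is always present in $G$, so the total differential order on the phases is $k+1$.

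For the material derivative estimates \eqref{eq:matDvOfAlpha} and \eqref{eq:matDv2ofAlpha}, the key observation is that $\psi_k$, $\xi_I$, and $\xi_{\si I}$ all satisfy the transport equation $(\pr_t + v_\ep\cdot\nab)f = 0$, so $\Ddt$ applied to $\a_I^l$ only produces terms in which $\Ddt$ has landed on some $\nab\xi$ (via commutator with a spatial derivative) or on $\nab\xi_{\si I}$ inside a chain-rule factor. Each such $\Ddt\nab\xi$ is estimated by the phase-velocity bound \eqref{bound:phaseVelocity}, and $\DDdt\nab\xi$ by the phase-acceleration bound \eqref{bound:phaseAccel2}. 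Combining with \eqref{eq:spaceDvsOfAlpha} via Leibniz and again applying the counting inequality to distribute the powers of $N^{1/L}$ across all factors yields the stated bounds. The main bookkeeping obstacle, and the step most likely to require care, is precisely this counting: one must check that summing $(|a_i|+1-L)_+$ and the analogous phase-velocity/acceleration exponents over all factors in a Leibniz term is dominated by the single exponent $((r-1)_+ + k + 1 - L)_+$ in the concise form of the proposition. This follows from Lemma \ref{lem:countingIneq} in the same way as in Sections \ref{sec:spatVarStress}--\ref{sec:firstMatDvofRep}, with $(r-1)_+$ accounting for the additional derivative cost arising only from a second material derivative landing on a phase gradient.
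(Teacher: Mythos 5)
Your proposal is correct and matches the paper's intended argument: the paper simply notes that $\a_I^l$ is built entirely from transported quantities ($\psi_k$, $\nab \xi_I$, $\nab \xi_{\si I}$, with $|\nab\xi_I|$ nondegenerate on the relevant support by the choice of $\tau$) and lets the estimates follow from the phase-velocity and phase-acceleration bounds of Section (\ref{transportEstimates}), which is exactly the chain rule/Leibniz/counting-inequality bookkeeping you carry out explicitly. Your writeup just fills in the routine details the paper leaves to the reader, in the same way the paper handles the analogous bounds for $\ga_I$.
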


The correction itself is composed of terms
\[ V_I = \nab \times W_I \]
with
\begin{align}
W_I &= \fr{1}{\la} e^{i \la \xi_I} w_I \\
w_I &= \fr{v_I}{|\nab \xi_I|}
\end{align}
so $w_I$ also takes on the form (\ref{eq:compressvIform}), with a slightly different $\a_I^l(w)$ that obeys the same bounds.

Let us now compute estimates for $v_I$, starting with the spatial derivatives.
\begin{prop}[Spatial derivatives of vector amplitudes]
\begin{align} \label{eq:spatDvsVecAmps}
\co{ \nab^k v_I } + \co{\nab^k w_I} &\leq C_k N^{(k + 1 - L)_+/L} \Xi^k e_R^{1/2}
\end{align}
\end{prop}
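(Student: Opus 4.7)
The plan is to apply the Leibniz rule to the product representation (\ref{eq:compressvIform})
\[
 v_I^l = \eta\!\left(\fr{t - t(I)}{\tau}\right) e^{1/2}(t)\, \ga_I\, \a_I^l,
\]
treating the first two factors as spatially constant functions bounded in absolute value by constants depending only on the hypotheses of Lemma (\ref{lem:iterateLem}): the time cutoff satisfies $\|\eta(\cdot/\tau)\|_{C^0}\leq 1$, and the upper bound (\ref{eq:upBdet}) in the main lemma gives $|e^{1/2}(t)|\leq C e_R^{1/2}$. The spatial derivatives then fall only on the factor $\ga_I \a_I^l$, and they can be controlled termwise by the estimates already established for the two pieces:
\begin{align*}
 \co{\nab^{k_1} \ga_I} &\leq C_{k_1} N^{(k_1 + 1 - L)_+/L}\,\Xi^{k_1}, &&\text{by (\ref{ineq:nabkgaI})}, \\
 \co{\nab^{k_2} \a_I^l} &\leq C_{k_2} N^{(k_2 + 1 - L)_+/L}\,\Xi^{k_2}, &&\text{by (\ref{eq:spaceDvsOfAlpha})}.
\end{align*}

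Expanding $\nab^k(\ga_I \a_I^l)$ by the Leibniz rule produces a sum of $O_k(1)$ terms of the form $\nab^{k_1}\ga_I \cdot \nab^{k_2}\a_I^l$ with $k_1 + k_2 = k$. Each such term is bounded in $C^0$ by $C e_R^{1/2}$ times the product
\[
 N^{(k_1 + 1 - L)_+/L + (k_2 + 1 - L)_+/L}\,\Xi^{k}.
\]
To consolidate the exponent of $N$, I rewrite $(k_i + 1 - L)_+ = (k_i - (L-1))_+$ and invoke the counting inequality of Lemma (\ref{lem:countingIneq}) with $Y = L - 1 \geq 0$ (valid because $L \geq 2$) to obtain
\[
 (k_1 - (L-1))_+ + (k_2 - (L-1))_+ \;\leq\; (k_1 + k_2 - (L-1))_+ \;=\; (k + 1 - L)_+.
\]
This collapses the termwise bound into $C_k N^{(k+1-L)_+/L}\Xi^k e_R^{1/2}$, which is the desired estimate for $v_I$.

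For $w_I = v_I / |\nab \xi_I|$, I would write
\[
 w_I^l = \eta\!\left(\fr{t - t(I)}{\tau}\right) e^{1/2}(t)\, \ga_I\, \tilde{\a}_I^l, \qquad \tilde{\a}_I^l = \a_I^l / |\nab \xi_I|,
\]
and observe that $\tilde{\a}_I^l$ obeys the same estimates as $\a_I^l$: Proposition (\ref{req:reqsForB}) gives the lower bound $|\nab\xi_I|\geq c$ on the lifespan, and the Faà di Bruno style expansion of $\nab^k(|\nab\xi_I|^{-1})$ is controlled using the phase-gradient estimates of Proposition (\ref{bound:nabaxi}) combined with the same counting inequality argument. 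Hence repeating the Leibniz computation above with $\tilde{\a}_I$ in place of $\a_I$ yields the same bound for $\nab^k w_I$. The main (and essentially only) subtlety in the whole argument is the correct application of the counting inequality with $Y = L - 1$ so that the exponent of $N$ aggregates to $(k + 1 - L)_+/L$ rather than to the weaker $(k + 2 - L)_+/L$ that a naive application with $Y = L$ would produce; this is where the hypothesis $L \geq 2$ is used.
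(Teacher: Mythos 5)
Your proposal is correct and follows essentially the same route as the paper: expand the product form (\ref{eq:compressvIform}) by the Leibniz rule, bound the factors using (\ref{ineq:nabkgaI}) and (\ref{eq:spaceDvsOfAlpha}) with the counting inequality of Lemma (\ref{lem:countingIneq}) to aggregate the powers of $N^{1/L}$, pull out $e_R^{1/2}$ from the energy bound, and handle $w_I$ by the same computation with amplitude $\a_I/|\nab \xi_I|$, exactly as the paper does. Two small nits: in the H\"older setting the bound $e^{1/2}(t) \leq C e_R^{1/2}$ comes from (\ref{ineq:goodEnergy}) with $r = 0$ rather than from (\ref{eq:upBdet}), and the application of the counting inequality with $Y = L-1$ only needs $L \geq 1$, so this step is not really where $L \geq 2$ enters.
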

\begin{proof}

Let $\nab^k$ be any $k$'th order spatial derivative.  Then, the main terms from differentiating (\ref{eq:compressvIform}) are 
\begin{align}
\nab^k v_I^l &\subseteq \eta_I(t) e^{1/2}(t)[ \nab^k \ga_I \a_I^l + \ga_I \nab^k \a_I^l ]
\end{align}
The estimate (\ref{eq:spatDvsVecAmps}) follows from (\ref{eq:spaceDvsOfAlpha}) and (\ref{ineq:nabkgaI}), where the factor $e_R^{1/2}$ comes from the bound (\ref{ineq:goodEnergy}) on $e^{1/2}(t)$.
\end{proof}

Let us now study the first material derivative $\Ddt{v_I^l}$.  It takes on the form
\begin{align}
\Ddtof{v_I^l} &= \tau^{-1} \eta'\left(\fr{t - t(I)}{\tau} \right) e^{1/2}(t) \ga_I \a_I^l + \eta_I(t) \fr{d e^{1/2}}{dt} \ga_I \a_I^l \\
&+ \eta_I(t) e^{1/2}(t)(\Ddtof{\ga_I} \a_I^l + \ga_I \Ddtof{\a_I^l} )
\end{align}

For every term besides the time cutoff, the first material derivative costs $\Xi e_v^{1/2}$, whereas for the time cutoff, we lose a factor
\[ \tau^{-1} = b^{-1} \Xi e_v^{1/2} = b_0^{-1} B_\la^{1/2} \left( \fr{e_R^{1/2} N}{e_v^{1/2}} \right)^{1/2}  \]
This leads to a bound of 
\begin{prop}[First material derivative of vector amplitudes]
\begin{align}
\co{ \nab^k \Ddtof{v_I^l} } + \co{ \nab^k \Ddtof{w_I^l} } &\leq C_k B_\la^{1/2} \left( \fr{e_R^{1/2} N}{e_v^{1/2}} \right)^{1/2} N^{(k + 1 - L)_+} \Xi^{k+1} e_v^{1/2} e_R^{1/2}
\end{align}
\end{prop}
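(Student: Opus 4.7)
The plan is to differentiate the explicit decomposition
\[
\Ddtof{v_I^l} = \tau^{-1} \eta'\left(\fr{t - t(I)}{\tau}\right) e^{1/2}(t)\,\ga_I\,\a_I^l + \eta_I(t)\,\fr{d e^{1/2}}{dt}\,\ga_I\,\a_I^l + \eta_I(t)\,e^{1/2}(t)\bigl(\Ddtof{\ga_I}\,\a_I^l + \ga_I\,\Ddtof{\a_I^l}\bigr)
\]
term by term, apply the spatial Leibniz rule, and plug in the $C^0$ bounds already established. Since the time cutoffs depend only on $t$, spatial derivatives pass through them and they contribute only harmless $O(1)$ factors. The ingredients I will invoke are the upper bounds on $e^{1/2}$ and its time derivative from (\ref{ineq:goodEnergy}), the coefficient bounds (\ref{ineq:nabkgaI})--(\ref{ineq:nabkDtgaI}) of Proposition (\ref{prop:theBoundsForGaI}), and the transport-part bounds (\ref{eq:spaceDvsOfAlpha})--(\ref{eq:matDvOfAlpha}) for $\a_I^l$.

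First I would handle the cutoff term $\tau^{-1} \eta'(\cdot)\, e^{1/2}(t)\, \ga_I \a_I^l$. Leibniz on $\nab^k(\ga_I \a_I^l)$ together with (\ref{ineq:nabkgaI}), (\ref{eq:spaceDvsOfAlpha}) and the counting inequality of Lemma (\ref{lem:countingIneq}) distributes the $N^{1/L}$ factors to give
\[
\co{\nab^k(\ga_I\,\a_I^l)} \leq C_k\, N^{(k+1-L)_+/L}\,\Xi^k.
\]
Multiplying by $\co{e^{1/2}} \leq C\, e_R^{1/2}$ and by
\[
\tau^{-1} = b_0^{-1}\, B_\la^{1/2}\left(\fr{e_R^{1/2} N}{e_v^{1/2}}\right)^{1/2} \Xi\, e_v^{1/2}
\]
from (\ref{eq:iChoseb}) produces exactly the right-hand side of the claimed bound. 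Next I would check the three remaining terms: each acquires a factor of $\Xi e_v^{1/2}$ in place of $\tau^{-1}$, via (\ref{ineq:goodEnergy}) for $\tfrac{d}{dt} e^{1/2}$, via (\ref{ineq:nabkDtgaI}) for $\Ddtof{\ga_I}$, and via (\ref{eq:matDvOfAlpha}) for $\Ddtof{\a_I^l}$, so that each is bounded by $C_k\, N^{(k+1-L)_+/L}\,\Xi^{k+1}\, e_v^{1/2} e_R^{1/2}$. This is smaller than the cutoff term by the factor $b_0^{-1} B_\la^{1/2}(e_R^{1/2} N / e_v^{1/2})^{1/2} \geq 1$, which holds because $B_\la \geq 1$ and $N \geq (e_v/e_R)^{1/2}$ by hypothesis (\ref{eq:conditionsOnN2}).

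The estimate for $w_I = v_I/|\nab \xi_I|$ is identical: as noted in Section (\ref{sec:vecAmpBounds}), $w_I$ has the representation (\ref{eq:compressvIform}) with an alternate transport-part $\a_I^l(w)$ that satisfies the same bounds (\ref{eq:spaceDvsOfAlpha})--(\ref{eq:matDvOfAlpha}), because the extra factor $|\nab \xi_I|^{-1}$ is bounded above and below and is transported by the coarse scale flow, hence is no worse than $\xi_I$ itself in the norm of Definition (\ref{defn:transportNorm}). The only real work is the bookkeeping of the Leibniz rule and the counting inequality, and I do not expect any serious obstacle here; the heart of the estimate is just the observation that the material derivative striking the time cutoff is by far the dominant contribution, which is precisely the effect that motivated the choice of $\tau$ in (\ref{eq:iChoseb}) and was already budgeted for in the discussion of Section (\ref{sec:accountForParams}).
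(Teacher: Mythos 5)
Your proposal is correct and follows essentially the same route as the paper: the paper proves this proposition by writing out exactly the same four-term decomposition of $\Ddtof{v_I^l}$, observing that every term except the one where the material derivative hits the time cutoff costs $\Xi e_v^{1/2}$ while the cutoff term costs $\tau^{-1} = b_0^{-1}B_\la^{1/2}\left(\fr{e_R^{1/2}N}{e_v^{1/2}}\right)^{1/2}\Xi e_v^{1/2}$, and treating $w_I$ via the same representation with a modified $\a_I^l(w)$. Your bookkeeping even yields the sharper exponent $N^{(k+1-L)_+/L}$, which is the intended power (the stated $N^{(k+1-L)_+}$ lacks the $/L$) and in any case implies the bound as written.
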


For the second material derivative, there are two factors which lose more than a factor of $\Xi e_v^{1/2}$: differentiating the time cutoff again gives another factor of $\tau^{-1}$, whereas the second material derivative of $\ga_I$ gives rise to a factor of $\ep_t^{-1}$.  It turns out that these extra factors are in balance with each other; namely
\ali{
\tau^{-2} &\leq C [ B_\la^{1/2} \left( \fr{e_R^{1/2} N}{e_v^{1/2}} \right)^{1/2} ]^2 (\Xi e_v^{1/2})^2\\
&\leq C B_\la N \Xi e_v^{1/2} e_R^{1/2} \\
\ep_t^{-1} (\Xi e_v^{1/2}) &\leq C (N e_R^{1/2} \Xi )(\Xi e_v^{1/2} ) \\
&\leq C N \Xi e_v^{1/2} e_R^{1/2}
}
This observation leads to the following bounds

\begin{prop}[The second material derivative of the vector amplitude] \label{prop:matDv2ofvI}
\begin{align}
\co{ \nab^k \DDdtof{ v_I} } + \co{ \nab^k \DDdtof{ w_I} } &\leq C_k B_\la N^{1 + ( k + 1 - L)_+/L } \Xi^{k + 2} e_v^{1/2} e_R 
\end{align}

\end{prop}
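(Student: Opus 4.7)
The plan is to expand $\DDdtof{v_I^l}$ and $\DDdtof{w_I^l}$ via the Leibniz rule applied to the product representation
\begin{align*}
v_I^l = \eta\!\left(\tfrac{t-t(I)}{\tau}\right) e^{1/2}(t) \,\ga_I\, \a_I^l,
\end{align*}
and similarly for $w_I^l$ (which has the same form with a slightly different $\a_I$). Each term in the expansion is a product of five factors, one coming from each of the five ``slots'': the time cutoff, the energy $e^{1/2}(t)$, $\ga_I$, $\a_I^l$, and (after applying $\nab^k$) the spatial derivatives distributed across them. The total ``derivative count'' on each term equals $2$ material derivatives plus $k$ spatial derivatives.

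First I would enumerate the schematic terms arising from the two material derivatives. Among these, only two types of factor produce a loss worse than $\Xi e_v^{1/2}$ per material derivative: (i) two material derivatives landing on the time cutoff, which contributes $\tau^{-2}\eta''$, and (ii) two material derivatives landing on $\ga_I$, invoking Proposition (\ref{prop:theBoundsForGaI}) via (\ref{ineq:nabkDDtgaI}). Every other term loses at most $\Xi e_v^{1/2}$ per material derivative. The key numerical observation, recorded just above the proposition, is the balance
\begin{align*}
\tau^{-2} \;\leq\; C B_\la N \Xi e_v^{1/2} e_R^{1/2}, \qquad \ep_t^{-1}\,\Xi e_v^{1/2} \;\leq\; C N \Xi e_v^{1/2} e_R^{1/2},
\end{align*}
so both ``worst'' factors are controlled by the same $B_\la N \Xi e_v^{1/2} e_R^{1/2}$ up to constants. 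The remaining factors $e^{1/2}(t)$, $\ga_I$, $\a_I$ are all of size $O(e_R^{1/2})$, $O(1)$, $O(1)$ respectively, giving each term the prefactor $B_\la N \Xi e_v^{1/2} e_R$, which matches the claimed bound for $k=0$.

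Next I would handle the $k$ spatial derivatives via a second application of Leibniz combined with the counting inequality (Lemma \ref{lem:countingIneq}), exactly as was done in Section \ref{sec:spatVarStress} for $R_\ep$ and in Proposition \ref{prop:theBoundsForGaI} for $\ga_I$. The point is that spatial derivatives distributed across $\ga_I$, $\a_I$, the phase gradients appearing inside $\ga_I$ via (\ref{eq:thisIsGaIpvarep}), and the time cutoff (which is $x$-independent) all obey estimates where each derivative costs $\Xi$, with an additional $N^{1/L}$ for each derivative past order $L$ landing on a phase function or on $\varep$. Using the subadditivity of $(\cdot - L)_+$ to combine the orders of differentiation on different factors, the overall loss from $\nab^k$ is at most $N^{(k+1-L)_+/L}\Xi^k$, matching the form of the bound. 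For $\DDdtof{\a_I^l}$ one uses (\ref{eq:matDv2ofAlpha}), and for the cross terms $\Ddt(\ga_I)\Ddt(\a_I)$, $\Ddt(\text{cutoff})\Ddt(\ga_I)$, etc., one combines the first-derivative estimates of Section \ref{sec:coeffBounds}.

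The main obstacle — which is really the only one, since the individual estimates are already available — is bookkeeping: checking that among the many terms produced by distributing two material derivatives and $k$ spatial derivatives across a product of five factors, no combination exceeds the claimed bound. I would organize this by listing the five terms from the two $\Ddt$'s first (both on cutoff; both on $\ga_I$; both on $\a_I$; one on $e^{1/2}$ and one elsewhere; mixed), verifying each is $\leq C B_\la N \Xi^2 e_v^{1/2} e_R$ in $C^0$ without spatial derivatives, and then verifying that introducing $\nab^k$ only adds the factor $N^{(k+1-L)_+/L}\Xi^k$ through the counting inequality applied as in (\ref{ineq:countingAgain}). The bound for $\DDdtof{w_I}$ is identical since $w_I = v_I/|\nab\xi_I|$ and $|\nab\xi_I|^{-1}$ enjoys the same estimates as the other bounded transport-type factors.
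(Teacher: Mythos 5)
Your proposal is correct and follows essentially the same route as the paper: expand by Leibniz, isolate the two dominant contributions (two material derivatives on the cutoff giving $\tau^{-2}$, and $\DDdtof{\ga_I}$ carrying the $\ep_t^{-1}$ loss), use the balance $\tau^{-2}\sim B_\la N\Xi^2 e_v^{1/2}e_R^{1/2}$, check the $\DDdtof{\a_I}$ and cross terms are dominated (which is where $N\geq e_v/e_R$ enters), and distribute $\nab^k$ via the counting inequality. Only cosmetic caveats: the quoted balance displays should read $\Xi^2$ rather than $\Xi$ (as the $k=0$ case of the stated bound requires), and a single material derivative on the cutoff already costs $\tau^{-1}>\Xi e_v^{1/2}$, though such mixed terms are still dominated by the $\tau^{-2}$ term.
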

\begin{proof}
Let us write the second material derivative of $v_I$ schematically as
\begin{align}
\DDdtof{v_I} &= \fr{d^2}{dt^2}[ \eta\left(\fr{t - t(I)}{\tau} \right) e^{1/2}(t) ] \ga_I \a_I^l + \eta_I(t) e^{1/2}(t) \DDdt[  \ga_I \a_I^l ] + \mbox{ smaller cross terms} \\
&= A_{(I)} + A_{(II)} 
\end{align}

Now, by (\ref{eq:spaceDvsOfAlpha}) and (\ref{ineq:nabkgaI}), the first of these terms is bounded by 
\begin{align}
\co{ \nab^k A_{(I)} } &\leq C \tau^{-2} N^{( k + 1 - L)_+/L} \Xi^k e_R^{1/2} \\
&\leq C b^{-2} N^{( k + 1 - L)_+/L} \Xi^{k + 2} e_v e_R^{1/2} \\
&\leq C B_\la \left( \fr{ e_R^{1/2} N }{e_v^{1/2}} \right) N^{( k + 1 - L)_+/L} \Xi^{k + 2} e_v e_R^{1/2} \\
&\leq C B_\la N^{1 + ( k + 1 - L)_+/L } \Xi^{k + 2} e_v^{1/2} e_R 
\end{align}

For the second of these terms
\begin{align}
A_{(II)} &= \DDdt[  \ga_I \a_I^l ]
\end{align}
we compare
\begin{align}
\nab^k A_{(II)} \subseteq \nab^k \DDdtof{\ga_I} \a_I^l + \ga_I \nab^k \DDdtof{\a_I^l}
\end{align}
All the bounds for $\ga_I$ and $\a_I$ are identical until the second material derivative.  For the second material derivative, we use (\ref{ineq:nabkDDtgaI}) and (\ref{eq:matDv2ofAlpha}) to compare
\begin{align}
\co{ \nab^k \DDdtof{\ga_I} } &\leq C_k N^{1 + (k + 1 - L)_+/L} \Xi^{k + 2} e_v^{1/2} e_R^{1/2} \\
\co{ \nab^k \DDdtof{\a_I} } &\leq C_k N^{(k + 2 - L)_+/L} \Xi^{k + 2} e_v
\end{align}
In fact, the bound for $\ga_I$ is the larger of the two, as can be seen by proving
\begin{align}
N^{(k + 2 - L)_+/L} e_v^{1/2} &\leq N^{1 + ( k + 1 - L)_+/L }e_R^{1/2} \\
\Leftrightarrow \left( \fr{e_v}{e_R} \right)^{1/2} &\leq N^{ 1 + [( k + 1 - L)_+ - (k + 2 - L)_+/L] } \label{ineq:gaVsalpha}
\end{align}
but
\[ N^{ 1 + [( k + 1 - L)_+ - (k + 2 - L)_+/L] } \geq N^{1 - 1/L} \geq N^{1/2} \] 
so (\ref{ineq:gaVsalpha}) follows from the fact that
\[ N \geq \left( \fr{e_v}{e_R} \right)^{3/2} \geq \left( \fr{e_v}{e_R} \right) \]

Also observe that the bound for $\co{ \nab^k \DDdtof{\ga_I} }$ is the same bound as for $\co{ \nab^k A_{(I)} }$ without the large constant $B_\la$, which concludes the proof of (\ref{prop:matDv2ofvI}).

\end{proof}

The results of this section also extend to the corrected amplitude
\[ {\tilde v_I} = v_I + \fr{\nab \times w_I}{B_\la N \Xi} \] 
appearing in the representation
\[ V_I = e^{i \la \xi_I} {\tilde v}_I^l \]

\begin{cor}[Estimates for corrected amplitudes] 
The vector field 
\[ {\tilde v_I} = v_I + \fr{\nab \times w_I}{B_\la N \Xi} \] 
also satisfies all the estimates stated in Section (\ref{sec:vecAmpBounds}) for $v_I$.
\end{cor}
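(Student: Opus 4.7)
The plan is to decompose
\[
\tilde v_I = v_I + \frac{\nabla \times w_I}{B_\lambda N \Xi},
\]
use the triangle inequality, and note that $v_I$ already obeys all of the required bounds by (\ref{eq:spatDvsVecAmps}), by the First Material Derivative proposition, and by Proposition (\ref{prop:matDv2ofvI}). Thus it suffices to show that the correction term $\frac{\nabla \times w_I}{B_\lambda N\Xi}$ satisfies the same estimates; in fact I expect it to be strictly smaller than $v_I$ by at least a factor $N^{-1/2}$, which will be absorbed into the constants.

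For pure spatial derivatives I would apply the $w_I$-estimate (\ref{eq:spatDvsVecAmps}) at order $k+1$ to obtain
\[
\left\|\nabla^k (\nabla \times w_I)\right\|_{C^0} \;\leq\; C_{k+1} N^{(k+2-L)_+/L}\, \Xi^{k+1}\, e_R^{1/2},
\]
then divide by $B_\lambda N \Xi$ and use $B_\lambda \geq 1$ together with the trivial inequality $(k+2-L)_+ - (k+1-L)_+ \leq 1$ and $L \geq 2$, which yields the bound $C_k N^{-1+1/L}\Xi^k e_R^{1/2} \leq C_k N^{-1/L}$ times the $v_I$-bound at order $k$. This is consistent with and stronger than the target.

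For the first material derivative I would commute $\bar D_t$ past the curl: writing $(\nabla \times u)^a = \epsilon^{abc}\partial_b u_c$,
\[
\bar D_t (\nabla \times w_I)^a \;=\; (\nabla \times \bar D_t w_I)^a \;-\; \epsilon^{abc}\partial_b v_\epsilon^m\, \partial_m (w_I)_c.
\]
Taking $k$ spatial derivatives and using Leibniz, the first piece is estimated by applying the first-material-derivative bound for $w_I$ at order $k+1$, then dividing by $B_\lambda N\Xi$; the $N\Xi$ in the denominator absorbs the extra spatial derivative exactly as in the previous paragraph. The commutator term contains no material derivative of $w_I$ and picks up only a factor $\Xi e_v^{1/2}$ from $\nabla v_\epsilon$ via (\ref{bound:nabkvep}); combined with the $N\Xi$ denominator and the counting inequality of Lemma (\ref{lem:countingIneq}) this stays strictly below the target. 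For the second material derivative I would iterate the same commutator identity, producing four species of terms, schematically
\[
\nabla\times \bar D_t^2 w_I,\quad (\bar D_t \nabla v_\epsilon)\cdot\nabla w_I,\quad \nabla v_\epsilon \cdot \nabla (\bar D_t w_I),\quad (\nabla v_\epsilon)\cdot(\nabla v_\epsilon)\cdot\nabla w_I,
\]
then divide by $B_\lambda N \Xi$, differentiate $k$ times in space, and apply the bounds (\ref{bound:nabkvep}), (\ref{bound:lowFreqTransport}), the $w_I$-analogues of (\ref{prop:matDv2ofvI}), and the counting lemma. Each commutator term costs at most $\Xi e_v^{1/2}$, which is much less than the $B_\lambda$-loss already present in the target; no term acquires an additional $B_\lambda$ or extra power of $N$.

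The main (but mild) obstacle is the bookkeeping for the second material derivative, where the commutator scheme produces several cross-terms. The key observation that makes each of them acceptable is that $N \geq (e_v/e_R)^{3/2}$, so that $\left(\frac{e_v}{e_R}\right)^{1/2}$-type losses arising from the $\bar D_t$-on-$v_\epsilon$ estimates are dominated by the factor $N$ sitting in the denominator of the correction. Once this counting is done, every cross-term is pointwise smaller than the $v_I$-bound from (\ref{prop:matDv2ofvI}), and the corollary follows by combining them via the triangle inequality and enlarging the constants $C_k$.
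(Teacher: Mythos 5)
Your proposal is correct and follows essentially the same route as the paper: apply the $w_I$-bounds at one order higher and absorb the gain $(B_\la N \Xi)^{-1}$, and commute $\Ddt$ past the curl so that the only new term is a commutator of the schematic form $\nab v_\ep \cdot \nab w_I$, which costs $\Xi e_v^{1/2}$ and is dominated by the $\tau^{-1}$ cost already present in the material-derivative estimates. The only difference is that you spell out the second-material-derivative bookkeeping explicitly, whereas the paper leaves it implicit after the first commutation; the content is the same.
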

\begin{proof}
The bounds for $\co{ \nab^k v_I}$ are clear since 
\begin{align}
\nab^k {\tilde v}_I &= \nab^k v_I + \fr{1}{ B_\la N \Xi } \nab^k \nab \times w_I \\
\co{ \nab^k {\tilde v}_I} &\leq C_k N^{(k + 1 - L)_+/L} \Xi^k e_R^{1/2} + (B_\la N \Xi)^{-1} \cdot ( N^{(k + 2 - L)_+/L} \Xi^{k+1} e_R^{1/2} ) \\
&\leq  C_k N^{(k + 1 - L)_+/L} \Xi^k e_R^{1/2} 
\end{align}
To estimate $\Ddtof{{\tilde v}_I}$ we write
\begin{align}
\Ddtof{ {\tilde v}_I } &= \Ddtof{v_I} + \fr{1}{ B_\la N \Xi } [ \nab \times ( \Ddtof{ w_I } ) + D[v_\ep]( w ) ] 
\end{align}
where the commutator term is an operator which is schematically of the form
\begin{align}
D[v_\ep]( w ) &=  \nab v_\ep \cdot \nab w
\end{align}
From this expression, it is clear that the commutator gives a cost of $\Xi e_v^{1/2}$, whereas the material derivative itself carries a larger cost of $\tau^{-1} = b^{-1} \Xi e_v^{1/2}$ in the estimates.
\end{proof}

\section{Bounds for the Velocity and Pressure Corrections}

We now give bounds for the correction terms using the estimates of the preceding sections.

\subsection{Bounds for the Velocity Correction }

Since $V$ is of the form
\[ V = \nab \times W \]
estimating $V$ and its derivatives will follow from estimating the derivatives of
\begin{align}
W &= \sum_I W_I \\
W_I &=(B_\la N \Xi)^{-1} e^{i \la \xi_I} w_I
\end{align}

Let us first estimate the spatial derivatives.
\begin{prop}[Spatial Derivatives of $W$]\label{prop:nabkW}
\begin{align}
\co{ \nab^k W } &\leq C_k (B_\la N \Xi)^{k - 1} e_R^{1/2}
\end{align}
\end{prop}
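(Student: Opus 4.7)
The plan is to exploit the fact that, by definition, $\la = B_\la N \Xi$, so that each derivative hitting the phase $e^{i\la\xi_I}$ costs exactly the factor $B_\la N \Xi$ that we want to see in the estimate, while each derivative hitting the amplitude $w_I$ or a phase gradient $\nab\xi_I$ costs at most $N^{1/L}\Xi \le B_\la N\Xi$ by the assumption $B_\la \ge 1$ together with $N \ge \Xi^\eta$. Thus the worst term in $\nab^k W_I$ is obtained by placing all $k$ derivatives on the exponential, which yields exactly the target bound after dividing by the normalizing factor $(B_\la N\Xi)^{-1}$.

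First I would reduce to a single wave $W_I$, using the fact already recorded in Section~\ref{sec:ctsMollV} that at any $(t,x)$ the number of indices $I$ with $W_I(t,x)\neq 0$ is bounded by an absolute constant (at most $2 \cdot 8 \cdot 12$), so the bound on $\nab^k W$ follows from a uniform bound on $\nab^k W_I$. Then I would expand
\[
\nab^k W_I \;=\; (B_\la N\Xi)^{-1}\sum_{k_1+k_2 = k} \binom{k}{k_1} \bigl(\nab^{k_1} e^{i\la\xi_I}\bigr)\,\nab^{k_2} w_I
\]
and apply Fa\`a di Bruno's formula to $\nab^{k_1} e^{i\la\xi_I}$, which gives a sum of terms of the form
\[
e^{i\la\xi_I}\,(i\la)^{K}\prod_{j=1}^{K} \nab^{a_j}\xi_I,\qquad \sum_{j=1}^K a_j = k_1,\ a_j\ge 1.
\]

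Next I would estimate each such term using the phase gradient bounds of Proposition~\ref{bound:nabaxi} together with the amplitude bound \eqref{eq:spatDvsVecAmps}, which give
\[
\bigl|\nab^{a_j}\xi_I\bigr|\le C\, N^{(a_j-L)_+/L}\Xi^{a_j-1},\qquad \bigl|\nab^{k_2}w_I\bigr|\le C\, N^{(k_2+1-L)_+/L}\Xi^{k_2} e_R^{1/2}.
\]
Combining these and applying the counting inequality of Lemma~\ref{lem:countingIneq} to the exponents of $N^{1/L}$ (with $Y = L-1$ applied to the quantities $a_j - 1$ and $k_2$) bounds a generic term by
\[
C\,\la^{K}\,\Xi^{k_1-K}\,\Xi^{k_2}\,N^{((k_1-K)+k_2+1 - L)_+/L}\,e_R^{1/2}\cdot (B_\la N\Xi)^{-1}.
\]
Writing $\la^K\Xi^{k_1-K} = (B_\la N\Xi)^K\Xi^{k_1-K}$ and using $\Xi \le N^{1/L}\Xi \le B_\la N\Xi$, one sees the whole product is at most $C_k (B_\la N\Xi)^{k}\cdot (B_\la N\Xi)^{-1}e_R^{1/2}$ up to a loss that is absorbed because $N^{1/L}/(B_\la N) \le 1$ for $L\ge 2$ and $B_\la\ge 1$. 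This gives exactly $C_k (B_\la N\Xi)^{k-1}e_R^{1/2}$.

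The main obstacle, though largely bookkeeping, is to verify cleanly that all mixed terms (those with $K<k_1$, i.e.\ multiple derivatives bunched on a single $\nab^{a_j}\xi_I$, and those with $k_2\ge 1$) are indeed no larger than the dominant term $K=k_1=k$, $k_2=0$. The two competing costs are a factor $\la = B_\la N\Xi$ for each derivative put on the exponential versus a factor at most $N^{1/L}\Xi$ for each derivative put elsewhere; the ratio $N^{1/L}\Xi/\la = (B_\la N^{1-1/L})^{-1}\le 1$ makes every trade unfavorable, and the counting inequality prevents the $(k_1 - K)_+$ and $(k_2+1-L)_+$ powers of $N^{1/L}$ from adding up beyond what is already absorbed. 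Once this is checked termwise, summing the bounded number of Fa\`a di Bruno terms and the bounded number of active indices $I$ yields the stated estimate with a constant $C_k$ depending only on $k$.
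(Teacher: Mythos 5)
Your proposal is correct and follows essentially the same route as the paper: reduce to a single wave $W_I$ using the bounded number of active indices, apply the product rule, and observe that a derivative hitting the oscillatory factor costs $C\la = C B_\la N\Xi$ while one hitting $w_I$ or a phase gradient costs at most $C N^{1/L}\Xi \le C B_\la N\Xi$, so the worst case is all derivatives on the exponential. Your Fa\`a di Bruno and counting-inequality bookkeeping just makes explicit what the paper's proof sketches.
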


\begin{proof}
Since there are only finitely many $W_I$ supported at any given region of $\R \times \T^3$, it suffices to estimate $W_I$ uniformly in $I$.  For an individual wave, it is easy to see that the estimate will hold.  At the level of $\co{ W }$ we have
\begin{align}
\co{ (B_\la N \Xi)^{-1} e^{i \la \xi_I} w_I } &\leq C (B_\la N \Xi)^{-1} e_R^{1/2}
\end{align}
For the derivatives, we apply the product rule to
\begin{align}
\nab^k e^{i \la \xi_I} w_I &= \sum_{|a| + |b| = k} \nab^a[ e^{i \la \xi_I}] \nab^b[w_I]
\end{align}
During repeated differentiation, the derivative hits either
\begin{itemize}
\item The oscillatory factor $e^{i \la \xi_I}$, which costs $C \la = C B_\la N \Xi$ 
\item The phase direction $\nab \xi_I$ or one of its derivatives, which costs at most $C N^{1/L} \Xi$
\item The amplitude $w_I$ or one of its derivatives, which costs at most $C N^{1/L} \Xi$
\end{itemize} 
In any case, the largest cost happens when differentiating the phase function, leading to the estimate in Proposition (\ref{prop:nabkW}).
\end{proof}

We can similarly give estimates for derivatives of the coarse scale material derivative of $W$
\begin{prop}[Coarse Scale Material Derivative of $W$]\label{prop:matDvofW}
\begin{align}
\co{ \nab^k \Ddtof{W} } &\leq C_k B_\la^{1/2} \left( \fr{e_R^{1/2} N}{e_v^{1/2}} \right)^{1/2}  (B_\la N \Xi)^{k - 1} \Xi e_v^{1/2} e_R^{1/2}
\end{align}
\end{prop}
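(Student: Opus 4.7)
}
The plan is to reduce the estimate on $\nab^k \Ddtof{W}$ to the amplitude bound already established for $\nab^k \Ddtof{w_I}$ in Section \ref{sec:vecAmpBounds}, together with the elementary product rule for differentiating the oscillatory factor $e^{i\la \xi_I}$. The key simplification, which is the whole reason the phase functions were required to be transported by the coarse scale flow, is that
\[
\Ddtof{e^{i\la \xi_I}} = i\la (\pr_t + v_\ep^j \pr_j) \xi_I \, e^{i\la \xi_I} = 0,
\]
by the definition of $\xi_I$ in Section \ref{transport}. Since $\Ddt = \pr_t + v_\ep^j \pr_j$ is a first-order differential operator, the Leibniz rule therefore gives
\[
\Ddtof{W_I} = (B_\la N \Xi)^{-1} e^{i\la \xi_I}\, \Ddtof{w_I},
\]
so that taking a pure spatial derivative $\nab^k$ of this identity produces no commutator between $\Ddt$ and $\nab$.

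The second step is to expand $\nab^k [e^{i\la \xi_I} \Ddtof{w_I}]$ by the product rule and bound each term exactly as in the proof of Proposition \ref{prop:nabkW}. At each differentiation a spatial derivative either hits the exponential (cost $\la = B_\la N \Xi$), or hits $\nab \xi_I$ or one of its derivatives (cost at most $N^{1/L}\Xi$ by the phase-velocity estimates of Section \ref{sec:relVelocity}), or hits the amplitude $\Ddtof{w_I}$ (cost at most $N^{1/L}\Xi$ by the bound for $\co{\nab^k \Ddtof{w_I}}$ proven in the previous subsection). Since $B_\la \geq 1$, the largest cost is always $\la$, so schematically
\[
\co{\nab^k [e^{i\la \xi_I} \Ddtof{w_I}]} \leq C_k (B_\la N \Xi)^k \co{\Ddtof{w_I}} + \text{(lower order terms with extra $N^{1/L}$ factors)}.
\]
Inserting the amplitude estimate
\[
\co{\Ddtof{w_I}} \leq C\, B_\la^{1/2}\Bigl(\fr{e_R^{1/2} N}{e_v^{1/2}}\Bigr)^{\!1/2} \Xi e_v^{1/2} e_R^{1/2}
\]
from Section \ref{sec:vecAmpBounds} and dividing by $B_\la N \Xi$ yields the stated bound for a single $W_I$. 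Finally, because at each space-time point only a bounded absolute number of the indices $I$ contribute to $W = \sum_I W_I$ (the same combinatorial observation used in Section \ref{sec:ctsMollV}), the estimate for $\nab^k \Ddtof{W}$ follows from the estimate for $\nab^k \Ddtof{W_I}$ uniformly in $I$.

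I do not anticipate a real obstacle here: the proof is routine bookkeeping once the identity $\Ddtof{e^{i\la\xi_I}} = 0$ is used. The only mild point to check is that the ``lower order terms'' arising when some derivatives fall on $\nab \xi_I$ or on $\Ddtof{w_I}$ are indeed controlled by the same right-hand side; this is exactly the counting argument from Lemma \ref{lem:countingIneq} and the counting inequality \eqref{ineq:firstCountingN} used throughout Sections \ref{transportEstimates} and \ref{sec:spatVarStress}, applied here with the added factor of $B_\la^{1/2}$ inherited from the material-derivative bound on $w_I$.
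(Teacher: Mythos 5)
Your proposal is correct and is essentially the paper's own argument: the paper likewise uses that the transported phase makes $\Ddtof{W_I} = \la^{-1} e^{i\la\xi_I}\Ddtof{w_I}$, invokes the amplitude bound $\co{\Ddtof{w_I}} \leq C\tau^{-1}e_R^{1/2} \leq C B_\la^{1/2}\left(\fr{e_R^{1/2}N}{e_v^{1/2}}\right)^{1/2}\Xi e_v^{1/2}e_R^{1/2}$, and observes that spatial derivatives cost at most $B_\la N\Xi$ with the worst case being the oscillatory factor, exactly as in Proposition (\ref{prop:nabkW}).
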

\begin{proof}
The proof is by the same method as in Proposition (\ref{prop:nabkW}).  Here we simply observe that because the phase is transported by the coarse scale flow
\begin{align}
\Ddtof{W_I} &= \fr{1}{\la} e^{i \la \xi_I} \Ddtof{w_I} 
\end{align}
where the amplitude of $\Ddtof{w_I}$ is bounded by
\[ \co{ \Ddtof{w_I} } \leq C \tau^{-1} e_R^{1/2} \leq  C B_\la^{1/2} \left( \fr{e_R^{1/2} N}{e_v^{1/2}} \right)^{1/2} \Xi e_v^{1/2} e_R^{1/2} \]
Upon taking spatial derivatives, the largest cost of $B_\la N \Xi$ always occurs when the derivative hits the oscillatory factor.
\end{proof}

The Main Lemma (\ref{lem:iterateLem}) asks for bounds on
\[ (\pr_t + v \cdot \nab) W \]
rather than 
\[ \Ddtof{W} = (\pr_t + v_\ep \cdot \nab) W \]

As the following lemma illustrates, it is always possible to obtain estimates for $(\pr_t + v \cdot \nab)$ of a quantity, once one has appropriate estimates for coarse scale material derivatives and for spatial derivatives

\begin{cor}[Estimates for $(\pr_t + v \cdot \nab) W$]\label{cor:realMatDvOfW} 

For all $k = 0, \ldots, L$, the bounds for $\nab^k \Ddtof{W}$ and $\nab^k (\pr_t + v \cdot \nab) W$ are of the same order.  More precisely,
\begin{align}
\co{ \nab^k (\pr_t + v \cdot \nab) W } &\leq C_k B_\la^{1/2} \left( \fr{e_R^{1/2} N}{e_v^{1/2}} \right)^{1/2}  (B_\la N \Xi)^{k - 1} \Xi e_v^{1/2} e_R^{1/2}
\end{align}
\end{cor}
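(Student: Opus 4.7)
The plan is to exploit the identity
\[
(\pr_t + v \cdot \nab) W \;=\; \Ddtof{W} \;+\; (v - v_\ep) \cdot \nab W,
\]
so that the result reduces, by Proposition (\ref{prop:matDvofW}), to controlling $\nab^k$ of the commutator term $(v - v_\ep) \cdot \nab W$ by the same right-hand side. Expanding via the Leibniz rule,
\[
\nab^k \bigl[(v - v_\ep) \cdot \nab W\bigr] \;=\; \sum_{|a| + |b| = k} c_{ab} \, \nab^a(v - v_\ep) \cdot \nab^{b+1} W,
\]
I would estimate each summand by combining two ingredients: sharp mollification-error bounds on derivatives of $v - v_\ep$, and the spatial-derivative bounds on $W$ from Proposition (\ref{prop:nabkW}).

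For the mollification error, Lemma (\ref{lem:mollifierRate}) applied to $\nab^{|a|} v$ (using the vanishing-moment conditions on $\eta_{\ep_v}$ together with the assumed control $\|\nab^L v\|_{C^0} \leq \Xi^L e_v^{1/2}$) yields
\[
\|\nab^a(v - v_\ep)\|_{C^0} \;\leq\; C\, \Xi^{|a|}\, N^{(|a| - L)/L}\, e_v^{1/2} \qquad (0 \leq |a| \leq L),
\]
with the pivotal $C^0$ gain $\|v - v_\ep\|_{C^0} \leq C a_v^L N^{-1} e_v^{1/2}$ at $|a| = 0$ already observed in (\ref{ineq:mollifyVerror}). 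Pairing these with $\|\nab^{b+1} W\|_{C^0} \leq C (B_\la N \Xi)^b e_R^{1/2}$, I would keep track of powers of $N^{1/L}$ using the counting inequality (Lemma (\ref{lem:countingIneq})), exactly in the spirit of Sections (\ref{sec:relVelocity})--(\ref{sec:DdtofRep}).

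The terms with $|a| \geq 1$ are easier, since differentiating $v - v_\ep$ costs only $\Xi$ (or $\Xi N^{1/L}$ past order $L$) rather than the oscillation-scale $B_\la N \Xi$ that governs $W$; each such term is smaller than the $|a| = 0$ endpoint by at least one factor of $(B_\la N)^{-1}$ and is therefore subsumed immediately by the target bound. The main obstacle is the $|a| = 0$ case, in which all $k$ derivatives land on the oscillatory phase of $W$ and extract the full factor $(B_\la N \Xi)^k$, so the tiny $N^{-1}$ gain from $\|v - v_\ep\|_{C^0}$ must absorb this alone. Here I would invoke the hypothesis $N \geq (e_v/e_R)^{3/2}$ from (\ref{eq:conditionsOnN2}) to convert the excess $e_v^{1/2}/e_R^{1/2}$ that appears when comparing $(v-v_\ep)\cdot\nab W$ with $\Ddtof{W}$ into admissible powers of $N$, and then rely on the saving factor $B_\la^{1/2}(e_R^{1/2} N/e_v^{1/2})^{1/2}$ inherited from $\tau^{-1}$ on the right-hand side to provide the remaining slack (with the constant $C_k$ allowed to swallow a bounded power of $B_\la$, consistent with the usage of the Corollary in verifying (\ref{eq:matWco})). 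Summing the $O(k)$ Leibniz summands then produces the claimed estimate.
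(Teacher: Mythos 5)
Your proposal is correct and follows essentially the same route as the paper: both split $(\pr_t + v \cdot \nab)W = \Ddtof{W} + (v - v_\ep)\cdot \nab W$, bound the commutator using $\co{v - v_\ep} \leq C e_v^{1/2}/N$ together with the spatial-derivative bounds on $W$, and absorb the result into the $\tau^{-1}$-sized right-hand side via $N \geq (e_v/e_R)^{1/2}$. The only differences are cosmetic — the paper handles derivatives landing on $v - v_\ep$ by simply splitting $\nab v$ and $\nab v_\ep$ (costing the allowed $N\Xi$ per derivative) rather than using your finer interpolated mollification bound, and only the $1/2$-power of $e_v/e_R$ is actually needed where you cite the $3/2$-power hypothesis.
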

\begin{proof}
For $k = 0$, we simply write
\[ (\pr_t + v \cdot \nab) = (\pr_t + v_\ep \cdot \nab) + (v - v_\ep) \cdot \nab \]
It then suffices to estimate
\begin{align}
(v - v_\ep) \cdot \nab W &= (v^a - v_\ep^a) \pr_a W
\end{align}
and its derivatives.  At the level of $C^0$, we have
\begin{align}
 \co{v - v_\ep} &\leq C \fr{e_v^{1/2}}{N} \label{ineq:mollifyVerrorAgain}
\end{align}
by (\ref{ineq:mollifyVerror}).

Since a spatial derivative of $W$ costs $B_\la N \Xi$, we see that the cost of $(v - v_\ep) \cdot \nab$ is at most $\Xi e_v^{1/2}$.
\begin{align}
\co{(v - v_\ep) \cdot \nab W} &\leq C [\Xi e_v^{1/2}] ( B_\la N \Xi)^{-1} e_R^{1/2}
\end{align} 
which is better than the cost of $\tau^{-1} = b^{-1} \Xi e_v^{1/2}$ of $\Ddt$.

For the spatial derivatives, we are allowed to lose a factor of $N \Xi$ per spatial derivative.  Comparing the bounds
\begin{align}
 \co{v - v_\ep} &\leq C \fr{e_v^{1/2}}{N} \\
 \co{\nab v} + \co{\nab v_\ep} &\leq \Xi e_v^{1/2}
\end{align}
we see that our estimate worsens by a factor of $N \Xi$ when we treat the terms $\nab v$ and $\nab v_\ep$ separately.  We can now proceed to take up to $L$ derivatives of $v$ altogether, giving the Corollary.
\end{proof}

\begin{cor}[Spatial derivatives of $V$] \label{prop:spaceDvOfV}
\begin{align}
\co{ \nab^k V } &\leq C_k (B_\la N \Xi)^k e_R^{1/2}
\end{align}
\end{cor}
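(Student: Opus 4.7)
The plan is to deduce this corollary directly from Proposition (\ref{prop:nabkW}) by exploiting the representation $V = \nab \times W$. Since the curl is a linear, first-order, constant-coefficient differential operator in the spatial variables, each component of $\nab^k V$ can be expressed as a finite linear combination of components of $\nab^{k+1} W$, with coefficients that are absolute constants coming from the Levi--Civita symbol. Consequently,
\begin{align*}
\co{\nab^k V} \;\leq\; C \co{\nab^{k+1} W}.
\end{align*}

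Next I would apply Proposition (\ref{prop:nabkW}) with the index $k$ replaced by $k+1$, yielding
\begin{align*}
\co{\nab^{k+1} W} \;\leq\; C_{k+1} (B_\la N \Xi)^{k} e_R^{1/2},
\end{align*}
and then combine the two inequalities to obtain the claimed bound with a constant $C_k$ depending only on $k$.

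There is essentially no obstacle here: the corollary is a bookkeeping consequence of the already-proven estimate for $W$, together with the fact that $W$ was constructed precisely so that $\nab \times W = V$. The only thing worth noting, for the sake of completeness, is the $k=0$ case, where the bound $\co{V} \leq C e_R^{1/2}$ follows either by using Proposition (\ref{prop:nabkW}) with $k=1$, or equivalently directly from the $C^0$ bound $\co{v_I} \leq A e_R^{1/2}$ established in Section (\ref{sec:vecAmpBounds}), combined with the uniform bound (at most $192$) on the number of waves $V_I$ nonzero at a given space-time point.
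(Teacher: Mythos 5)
Your proposal is correct and follows exactly the same route as the paper: the Corollary is deduced from Proposition (\ref{prop:nabkW}) applied at order $k+1$ together with the identity $V = \nab \times W$. The additional remark about the $k=0$ case is fine but not needed, since the paper's bound for $\co{\nab W}$ already covers it.
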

\begin{proof}
This follows from (\ref{prop:nabkW}) and the fact that $V = \nab \times W$.
\end{proof}

\begin{cor}[Coarse scale material derivative of $V$]\label{cor:matDvOfV}
\begin{align}
\co{ \nab^k \Ddtof{V} } &\leq C_k B_\la^{1/2} \left( \fr{e_R^{1/2} N}{e_v^{1/2}} \right)^{1/2}  (B_\la N \Xi)^{k} \Xi e_v^{1/2} e_R^{1/2}
\end{align}
\end{cor}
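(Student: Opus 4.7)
The plan is to write $\Ddtof V = \nab \times \Ddtof W + [\Ddt, \nab\times]W$ and estimate each piece separately, using Proposition \ref{prop:matDvofW} for the main term and reducing the commutator to bounds already at hand.

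First I would use the fact that $V^l = \ep^{lab}\pr_a W^b$ together with $\Ddt = \pr_t + v_\ep^c \pr_c$ to compute
\begin{align}
\Ddtof{V^l} &= \ep^{lab}\pr_a(\Ddtof{W^b}) - \ep^{lab}\pr_a v_\ep^c \pr_c W^b,
\end{align}
so that $\nab^k \Ddtof V$ splits into $\nab^k(\nab \times \Ddtof W)$ plus a commutator schematically of the form $\nab^k(\nab v_\ep \cdot \nab W)$. The first piece is immediately controlled by applying Proposition \ref{prop:matDvofW} at order $k+1$: its right-hand side evaluated at $k+1$ is precisely the bound claimed in the corollary (up to replacing $C_k$ with $C_{k+1}$).

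For the commutator, the product rule produces a sum of terms of the form $\nab^{k_1+1} v_\ep \cdot \nab^{k_2+1} W$ with $k_1 + k_2 = k$. Estimating each factor using (\ref{bound:nabkvep}) and Proposition \ref{prop:nabkW}, such a term is bounded by
\begin{align*}
C\, N^{(k_1+1-L)_+/L}\Xi^{k_1+1} e_v^{1/2} \cdot (B_\la N \Xi)^{k_2} e_R^{1/2}.
\end{align*}
The key observation is that this is strictly smaller than the target bound: the commutator costs only $\Xi e_v^{1/2}$ per derivative of $v_\ep$, while the material derivative in the main term costs $\tau^{-1} = b_0^{-1} B_\la^{1/2}(e_R^{1/2}N/e_v^{1/2})^{1/2}\Xi e_v^{1/2}$, which is larger by a factor $B_\la^{1/2}(e_R^{1/2}N/e_v^{1/2})^{1/2}$. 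Using $N \geq (e_v/e_R)^{3/2}$ from (\ref{eq:conditionsOnN2}) and $B_\la \geq 1$, this extra factor is more than enough to absorb the commutator.

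The only delicate point is verifying that the $N^{1/L}$ loss in (\ref{bound:nabkvep}) for $k_1+1 > L$ does not overwhelm the above gain, but since $L \geq 2$, a routine exponent count using the counting inequality (Lemma \ref{lem:countingIneq}) shows $(k_1+1-L)_+/L \leq (k_1+1)/2 \leq k_1 + 1/2$, so the commutator remains dominated by the main term in every case. Combining the two bounds yields the stated estimate.
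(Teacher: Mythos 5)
Your proof is correct and essentially coincides with the paper's own argument: you write $\Ddtof{V} = \nab\times\Ddtof{W} - \nab v_\ep\cdot\nab W$ (schematically), bound the first term by Proposition \ref{prop:matDvofW} at order $k+1$, and note that the commutator costs only $\Xi e_v^{1/2}$ against the main term's $\tau^{-1}$, with each further spatial derivative costing at most $B_\la N \Xi$. The only nitpick is your exponent bound $(k_1+1-L)_+/L \leq k_1+\tfrac{1}{2}$, which taken literally leaves an unabsorbed factor of order $(e_v/e_R)^{1/4}$; use instead the immediate bound $(k_1+1-L)_+/L \leq k_1$ (it equals $0$ for $k_1=0$ since $L\geq 2$), which makes the commutator dominated by the target with constants independent of the energy ratio.
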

\begin{proof}
This estimate follows from (\ref{prop:matDvofW}) and (\ref{prop:nabkW}) after we express
\begin{align}
\Ddt{V} &= \Ddt \nab \times W \\
&= \nab \times\Ddtof{ W } - D[v_\ep][W]
\end{align}
where the commutator $D[v_\ep][W]$ is a spatial derivative operator of the form
\begin{align}
D[v_\ep][W] &\approx \nab v_\ep \nab W
\end{align}
The material derivative costs $\tau^{-1} = b^{-1} \Xi e_v^{1/2}$
\begin{align}
\co{ \nab \times\Ddtof{ W } } &\leq C (B_\la N \Xi) [ (B_\la N \Xi)^{-1} \tau^{-1} e_R^{1/2} ] 
\end{align}
whereas the commutator only costs $(\Xi e_v^{1/2})$
\begin{align}
\co{ \nab v_\ep \nab W } &\leq C (B_\la N \Xi) [ (B_\la N \Xi)^{-1} (\Xi e_v^{1/2}) e_R^{1/2} ]
\end{align} 
Each additional spatial derivative costs a factor $(B_\la N \Xi)$ as it falls on the oscillatory factor in $W$.
\end{proof}

\begin{cor}[Estimates for $\nab^k (\pr_t + v \cdot \nab )V$]\label{cor:realMatDvOfV}

For all $k = 0, \ldots, L$, the bounds for $\nab^k \Ddtof{V}$ and $\nab^k (\pr_t + v \cdot \nab) V$ are of the same order.  More precisely,
\begin{align}
\co{ \nab^k (\pr_t + v \cdot \nab) V } &\leq C_k B_\la^{1/2} \left( \fr{e_R^{1/2} N}{e_v^{1/2}} \right)^{1/2}  (B_\la N \Xi)^{k} \Xi e_v^{1/2} e_R^{1/2}
\end{align}
\end{cor}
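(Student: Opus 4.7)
The proof will mirror Corollary \ref{cor:realMatDvOfW}, with $V$ in place of $W$. I begin by splitting
\begin{align*}
(\pr_t + v \cdot \nab) V &= \Ddtof{V} + (v^a - v_\ep^a) \pr_a V,
\end{align*}
so that Corollary \ref{cor:matDvOfV} handles the first term exactly at the target bound, reducing everything to estimating the commutator $(v - v_\ep)\cdot \nab V$ and its first $L$ spatial derivatives.

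At the level of $C^0$, I would combine the mollification estimate (\ref{ineq:mollifyVerror}), $\co{v - v_\ep} \leq C e_v^{1/2}/N$, with $\co{\nab V} \leq C B_\la N\Xi\, e_R^{1/2}$ from Corollary \ref{prop:spaceDvOfV}, to obtain $\co{(v - v_\ep)\cdot \nab V} \leq C B_\la \Xi e_v^{1/2} e_R^{1/2}$. This must be compared with the $\Ddtof{V}$ bound of order $B_\la^{1/2}(e_R^{1/2}N/e_v^{1/2})^{1/2} \Xi e_v^{1/2} e_R^{1/2}$; the ratio $B_\la^{1/2}(e_v^{1/2}/(e_R^{1/2}N))^{1/2}$ is controlled by a constant depending only on $\eta$ and $L$, thanks to the admissibility condition $N \geq (e_v/e_R)^{3/2}$, which gives $(e_v^{1/2}/(e_R^{1/2}N))^{1/2} \leq (e_R/e_v)^{1/2} \leq 1$. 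For $k \geq 1$, the Leibniz rule expands $\nab^k[(v - v_\ep)\cdot \nab V]$ as a sum over $k_1 + k_2 = k$ of terms $\nab^{k_1}(v - v_\ep) \cdot \nab^{k_2 + 1} V$. When $k_1 = 0$, the argument is identical to the base case and picks up the target factor $(B_\la N\Xi)^k$ entirely from the $k$ extra derivatives on $V$ provided by Corollary \ref{prop:spaceDvOfV}. When $k_1 \geq 1$, the mollification gain of $1/N$ is traded for $\co{\nab^{k_1}(v - v_\ep)} \leq C\Xi^{k_1} e_v^{1/2}$ from (\ref{bound:nabkv}) and (\ref{bound:nabkvep}); this trade costs exactly one power of $N\Xi \leq B_\la N\Xi$ per derivative redistributed from $v - v_\ep$ to $V$, which matches the spatial derivative budget.

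The main obstacle in carrying this out is the bookkeeping of constants. In the $k_1 = 0$ case one must actually invoke $N \geq (e_v/e_R)^{3/2}$ to absorb the stray factor $B_\la^{1/2}(e_v^{1/2}/(e_R^{1/2}N))^{1/2}$ into the final constant $C_k$, as discussed above; for $k_1 \geq 1$ the commutator is strictly smaller than the main $\Ddtof{V}$ bound, so no such delicacy arises. Because the corollary is only asserted for $k \leq L$, no $N^{1/L}$ interest payments ever appear when differentiating $v$ or $v_\ep$, so the counting inequality (Lemma \ref{lem:countingIneq}) is not even needed and the powers of $\Xi$ and $B_\la N\Xi$ assemble straightforwardly into the target bound $C_k B_\la^{1/2}(e_R^{1/2}N/e_v^{1/2})^{1/2}(B_\la N\Xi)^k \Xi e_v^{1/2} e_R^{1/2}$.
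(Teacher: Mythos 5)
Your proposal is correct and follows essentially the same route as the paper: the paper's proof of this corollary simply invokes the argument of Corollary (\ref{cor:realMatDvOfW}), which uses exactly your decomposition $(\pr_t + v\cdot\nab) = \Ddt + (v - v_\ep)\cdot\nab$, the mollification bound (\ref{ineq:mollifyVerror}) against the spatial derivative bounds for $V$, and the observation that derivatives of $v - v_\ep$ cost at most $N\Xi$ each up to order $L$. Your bookkeeping of the residual $B_\la^{1/2}\left(\fr{e_v^{1/2}}{e_R^{1/2}N}\right)^{1/2}$ factor is in fact slightly more explicit than the paper's, which absorbs it by treating $B_\la$ as a fixed constant at this stage.
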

\begin{proof}
The proof is identical to the proof of Corollary (\ref{cor:realMatDvOfW}).
\end{proof}

\subsection{Bounds for the Pressure Correction}

The correction $P$ to the pressure consists of two parts
\begin{align}
P &= P_0 + \sum_{J \neq {\bar I} } P_{I, J} \\
P_0 &= - \fr{e(t)}{3} - \fr{R_\ep^{jl}\de_{jl}}{3} \\
P_{I,J} &= - \fr{V_I \cdot V_J}{2}\label{eq:formForPIJ} \\
&= - \fr{1}{2} e^{i \la (\xi_I + \xi_J) } {\tilde v}_I \cdot {\tilde v}_J 
\end{align}

From the preceding section, we can see that the estimates for the high frequency part $V_I \cdot V_J$ are the dominant ones, since we have seen that the primary cost of spatial derivatives comes from the oscillatory factor, and the primary cost of material derivatives comes from the time cutoff.

We also see from the quadratic nature of formula (\ref{eq:formForPIJ}) that the estimates for $P_{I,J}$ will gain a factor $e_R^{1/2}$ compared to those of $V_I$.  To summarize:

\begin{prop}[Estimates for Pressure Correction] \label{prop:pressureCorrectEstimates}
\begin{align}
\co{ \nab^k P } &\leq C_k (B_\la N \Xi)^k e_R \\
\co{ \nab^k \Ddtof{P} } &\leq C_k B_\la^{1/2} \left( \fr{e_R^{1/2} N}{e_v^{1/2}} \right)^{1/2}  (B_\la N \Xi)^{k} \Xi e_v^{1/2} e_R 
\end{align}
Also, for $k = 0, \ldots, L$, we have
\begin{align}
\co{ \nab^k (\pr_t + v \cdot \nab) P } &\leq C_k B_\la^{1/2} \left( \fr{e_R^{1/2} N}{e_v^{1/2}} \right)^{1/2}  (B_\la N \Xi)^{k} \Xi e_v^{1/2} e_R \label{bounds:nabKDdtofPcheck}
\end{align}
\end{prop}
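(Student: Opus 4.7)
The plan is to estimate $P_0$ and each high-frequency term $P_{I,J}$ separately, and then sum using local finiteness of the family $\{P_{I,J}\}$ on $\R \times \T^3$. First, for $P_0 = -e(t)/3 - R_\ep^{jl}\delta_{jl}/3$, the spatial derivative bound follows immediately from $\co{e(t)} \leq C e_R$ (which follows from the hypothesis \eqref{ineq:goodEnergy} on $e^{1/2}$) and the stress mollification bound $\co{\nabla^k R_\ep} \leq C_k N^{(k-L)_+/L}\Xi^k e_R$ of Proposition \ref{bound:daRep}. The material derivative bound for $P_0$ reduces to controlling $\co{\nabla^k \Ddtof{R_\ep}}$ via Proposition \ref{prop:firstDdtOfRep}, together with $\co{(d/dt) e(t)} \leq C \Xi e_v^{1/2} e_R$ (again from \eqref{ineq:goodEnergy}); both give precisely the desired factor of $\Xi e_v^{1/2}e_R$, which is in fact better than the target since they are missing the factor ${\bf b}^{-1}=B_\lambda^{1/2}(e_R^{1/2}N/e_v^{1/2})^{1/2}$.

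The main work is with $P_{I,J} = -\tfrac12 e^{i\lambda(\xi_I + \xi_J)}\,\tilde v_I\cdot\tilde v_J$. For spatial derivatives, I apply the product rule
\[ \nabla^k P_{I,J} = -\tfrac12 \sum_{|a|+|b|=k}\tbinom{k}{a}\,\nabla^a\!\left[e^{i\lambda(\xi_I+\xi_J)}\right]\,\nabla^b\!\left[\tilde v_I\cdot\tilde v_J\right]. \]
Each derivative landing on the oscillatory factor $e^{i\lambda(\xi_I+\xi_J)}$ produces (to leading order) a factor $\lambda\nabla(\xi_I+\xi_J) = B_\lambda N\Xi\cdot O(1)$, with the remaining terms controlled by the phase-gradient estimates of Proposition \ref{bound:nabaxi}; each derivative landing on $\tilde v_I\cdot\tilde v_J$ costs only $C_k N^{(|b|+1-L)_+/L}\Xi^{|b|}$ by the corrected-amplitude estimate \eqref{eq:spatDvsVecAmps} and its corollary. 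Since $L\geq 2$ and $B_\lambda\geq 1$, the dominant configuration is $|a|=k$, $|b|=0$, giving $C_k(B_\lambda N\Xi)^k e_R$. Summing over the $O(1)$ indices $(I,J)$ with overlapping support yields the first bound.

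For the material derivative $\Ddtof{P_{I,J}}$, the crucial observation is that the phase functions satisfy $(\partial_t + v_\ep\cdot\nabla)(\xi_I+\xi_J)=0$, so $\Ddt$ annihilates the oscillatory factor. Hence
\[ \Ddtof{P_{I,J}} = -\tfrac12\,e^{i\lambda(\xi_I+\xi_J)}\,\Ddtof{\tilde v_I\cdot\tilde v_J}, \]
and commuting $\nabla^k$ with $\Ddt$ produces commutator terms $[\nabla^k,\Ddt]=[\nabla^k,v_\ep\cdot\nabla]$ that are controlled using the coarse-scale velocity bounds \eqref{bound:nabkvep}. Using the extension to $\tilde v_I$ of the bound for $\Ddtof{v_I}$ in Section \ref{sec:vecAmpBounds}, the dominant contribution of $\Ddtof{\tilde v_I\cdot\tilde v_J}$ comes from the time cutoff $\eta((t-t(I))/\tau)$ and has size $\tau^{-1}e_R = b_0^{-1}B_\lambda^{1/2}(e_R^{1/2}N/e_v^{1/2})^{1/2}\Xi e_v^{1/2} e_R$. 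Taking $|a|=k$ derivatives on the exponential yields the second bound.

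Finally, the estimate \eqref{bounds:nabKDdtofPcheck} for $(\partial_t + v\cdot\nabla)P$ follows from the decomposition
\[ (\partial_t + v\cdot\nabla)P = \Ddtof{P} + (v-v_\ep)\cdot\nabla P \]
and the mollification estimate $\co{v-v_\ep}\leq C e_v^{1/2}/N$ from \eqref{ineq:mollifyVerror}. The commutator term $(v-v_\ep)\cdot\nabla P$ is estimated in the same manner as in the proof of Corollary \ref{cor:realMatDvOfW}: at the $C^0$ level, its cost of $(e_v^{1/2}/N)(B_\lambda N\Xi) = B_\lambda \Xi e_v^{1/2}$ is acceptable compared to the $\Ddt$ cost of $b^{-1}\Xi e_v^{1/2}$, since $b^{-1}$ dominates $B_\lambda$ in all regimes under consideration. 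Each additional spatial derivative of $(v-v_\ep)\cdot\nabla P$ up to order $L-1$ lands on $v$ (or $v_\ep$) with cost $\Xi$ or on the oscillatory factor in $P$ with the larger cost $B_\lambda N\Xi$, and starting at order $L$ we pay the extra factor $N^{1/L}$, which is still absorbed by the dominant $(B_\lambda N\Xi)^k$ factor in the target bound. The main (and only nontrivial) bookkeeping obstacle is ensuring that this commutator estimate remains dominated by the $\Ddt P$ estimate for all $0 \leq k \leq L$; this is a straightforward comparison using $N\geq (e_v/e_R)^{3/2}$, handled identically to Corollary \ref{cor:realMatDvOfV}.
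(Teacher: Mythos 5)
Your proposal is correct and takes essentially the same route as the paper: split $P = P_0 + \sum_{J\neq \bar I}P_{I,J}$, bound $P_0$ via $e(t)$ and the mollified-stress estimates, bound $P_{I,J}$ using the quadratic structure with the oscillatory factor costing $B_\la N\Xi$ per spatial derivative and the time cutoff costing $\tau^{-1}$ per material derivative, and then pass from $\Ddt$ to $(\pr_t + v\cdot\nab)$ via $(v-v_\ep)\cdot\nab P$ with $\co{v-v_\ep}\leq C e_v^{1/2}/N$. The only quibble is your justification that ``$b^{-1}$ dominates $B_\la$ in all regimes,'' which is not literally guaranteed (it would require $e_R^{1/2}N/e_v^{1/2}\gtrsim B_\la$); this is immaterial, however, since the constants $C_k$ in these correction estimates are permitted to depend on $B_\la$ (the paper itself absorbs such factors), so the commutator term is acceptable regardless.
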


For example, by Propositions (\ref{prop:spaceDvOfV}) and (\ref{bound:daRep})
\ali{
\co{ P_0 } &\leq C( \| e(t) \|_{C^0(\R)} + \co{ R_\ep} ) \leq C e_R \\
\co{ P_{I,J} } &\leq \fr{1}{2} ( \co{ V_I } \co{V_J} ) \leq C e_R \\
\co{ \nab P_0 } &\leq C \co{ \nab R_\ep } \leq C \Xi e_R \\
\co{ \nab P_{I,J} } &\leq \fr{1}{2} (  \co{ \nab V_I } \co{V_J} + \co{ V_I } \co{ \nab V_J} ) \\
&\leq C (N \Xi e_R^{1/2} )e_R^{1/2} = C N \Xi e_R
}
For the first material derivative of the low frequency term, we can apply (\ref{ineq:mollifyVerrorAgain}) and Proposition (\ref{prop:firstDdtOfRep})
\ali{
\co{ (\pr_t + v \cdot \nab) P_0 } &= \co{ \Ddtof{P_0} + ( v - v_\ep ) \cdot \nab P_0 } \\
&\leq \co{ \Ddtof{P_0} } + \co{ v - v_\ep } \co{ \nab P_0 } \\
\co{ v - v_\ep } \co{ \nab P_0 } &\leq ( \fr{e_v^{1/2} }{N} ) (\Xi e_R ) \\
 \co{ \Ddtof{P_0} } &\leq C( \| e'(t) \|_{C^0(\R)} + \co{ \Ddtof{R_\ep} } ) \\
 &\leq C \Xi e_v^{1/2} e_R \\
\co{ (\pr_t + v \cdot \nab) P_0 } &\leq C \Xi e_v^{1/2} e_R
}
For the first material derivative of the high frequency terms, we have
\ali{
\co{ (\pr_t + v \cdot \nab) P_{I,J} } &\leq \co{ \Ddtof{P_{I,J}} } + \co{ v - v_\ep } \co{ \nab P_{I,J} } \\
\co{ v - v_\ep } \co{ \nab P_{I,J} } &\leq C (\fr{e_v^{1/2}}{N})(N \Xi e_R) \\
&\leq C \Xi e_v^{1/2} e_R \\
\co{ \Ddtof{P_{I,J}} } &\leq ( \co{ \Ddtof{V_I} } \co{ V_J } + \co{ V_I } \co{ \Ddtof{V_J} } ) \\
&\leq C B_\la^{1/2} \left( \fr{e_R^{1/2} N}{e_v^{1/2}} \right)^{1/2}  \Xi e_v^{1/2} e_R^{1/2}
}
The bounds stated for $\co{\Ddtof{V_I}}$ in the preceding inequality do not follow directly from Corollary (\ref{cor:realMatDvOfV}), but follow from the same proof.

\section{Energy Approximation}

In this section, we will establish the estimates (\ref{eq:energyPrescribed}) and (\ref{eq:DtenergyPrescribed}) of the Main Lemma (\ref{lem:iterateLem}), which state that we are able to accurately prescribe the energy
\begin{align}
\int |V|^2(t,x) dx &\approx \int_{\T^3} e(t) dx  
\end{align}
that the correction adds to the solution, and also bound the difference between the time derivatives of these two quantities.

Quantitatively, we have a bound
\begin{prop}[Prescribing energy; proof of (\ref{eq:energyPrescribed})] 
\begin{align}
\| \int |V|^2(t,x) dx - \int_{\T^3} e(t) dx  \|_{C_t^0} &\leq  C \fr{e_R}{N}
\end{align}
\end{prop}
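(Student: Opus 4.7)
The plan is to decompose $|V|^2$ into diagonal and off-diagonal pieces under the double sum $V = \sum_I V_I$, handle the diagonal via the trace of the Stress Equation, and kill the off-diagonal pieces by one integration by parts exploiting the separation $|\nabla(\xi_I + \xi_J)| \geq c > 0$ guaranteed for $J \neq \bar I$ by the setup of Section~\ref{sec:thePhaseFunctions}. Writing
\begin{align*}
\int |V|^2(t,x)\,dx \;=\; \sum_I \int V_I \cdot \bar V_I\,dx \;+\; \sum_{J\neq \bar I} \int V_I\cdot V_J\,dx,
\end{align*}
the first sum has no oscillation and will approximate $\int e(t)\,dx$, while the second sum is oscillatory at frequency $\lambda \sim B_\lambda N\Xi$.

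For the diagonal, I would write $V_I = e^{i\lambda \xi_I}\tilde v_I$ with $\tilde v_I = v_I + \lambda^{-1}\nabla\times w_I$, so that $V_I\cdot \bar V_I = |\tilde v_I|^2$ and
\begin{align*}
|\tilde v_I|^2 \;=\; v_I\cdot \bar v_I \;+\; \lambda^{-1}\bigl(v_I\cdot \overline{\nabla\times w_I} + \bar v_I\cdot (\nabla\times w_I)\bigr) \;+\; \lambda^{-2}|\nabla\times w_I|^2.
\end{align*}
Using the bounds $\co{v_I}\lesssim e_R^{1/2}$ and $\co{\nabla\times w_I}\lesssim \Xi e_R^{1/2}$ from Section~\ref{sec:vecAmpBounds}, the last two terms contribute at most $C\Xi e_R/\lambda = Ce_R/(B_\lambda N)$ after integration. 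The main term, summed over $I$, is handled by taking the trace $\delta_{jl}$ of the pointwise Stress Equation $\sum_I v_I^j \bar v_I^l = e(t)\tfrac{\delta^{jl}}{n} - \mathring R_\epsilon^{jl}$: the trace-free $\mathring R_\epsilon$ drops out and the trace of $e(t)\delta^{jl}/n$ equals $e(t)$, giving $\sum_I v_I\cdot\bar v_I = e(t)$ pointwise; integration in $x$ yields exactly $\int e(t)\,dx$.

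For the off-diagonal terms $J\neq \bar I$, I would perform a single integration by parts exactly as in Step~6 of the continuous proof:
\begin{align*}
\int e^{i\lambda(\xi_I+\xi_J)}\,\tilde v_I\cdot\tilde v_J\,dx \;=\; -\frac{1}{i\lambda}\int e^{i\lambda(\xi_I+\xi_J)}\,\partial_a\!\left[\frac{\partial^a(\xi_I+\xi_J)}{|\nabla(\xi_I+\xi_J)|^2}\,\tilde v_I\cdot\tilde v_J\right]dx.
\end{align*}
The transport estimates of Section~\ref{transportEstimates} give $\co{\nabla^2(\xi_I+\xi_J)}\lesssim \Xi$ and $\co{|\nabla(\xi_I+\xi_J)|^{-1}}\leq C$ by Proposition~\ref{req:reqsForB}, and $\co{\nabla(\tilde v_I\cdot\tilde v_J)}\lesssim \Xi e_R$ from the spatial-derivative bounds on $\tilde v_I$. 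Hence each off-diagonal integrand is bounded by $C\Xi e_R/\lambda = Ce_R/(B_\lambda N)$. Since only a universally bounded number of indices $(I,J)$ contribute nonzero terms at any given $t$ (cf.\ the locally-finite property recalled in Section~\ref{sec:ctsMollV}), summing over all pairs costs only an absolute constant.

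Combining the diagonal main term with the $O(e_R/N)$ diagonal corrections and the $O(e_R/N)$ off-diagonal contributions gives the claimed bound. The only point requiring minor care (not a genuine obstacle) is making sure that when the derivative $\partial_a$ lands on $\tilde v_I\cdot \tilde v_J$ one invokes the correct spatial-derivative bounds for $\tilde v_I$ uniformly, and that the cutoffs $\eta((t-t(I))/\tau)\psi_k$ hidden inside $\tilde v_I$ do not blow up under a single spatial derivative — both of which follow directly from the estimates already tabulated in Section~\ref{sec:vecAmpBounds}. No time derivative is touched here, so the lifespan factor $\tau^{-1}$ plays no role in this bound; that will only enter in estimate~(\ref{eq:DtenergyPrescribed}).
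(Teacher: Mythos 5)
Your proof is correct and follows essentially the same route as the paper: the same diagonal/off-diagonal split, the trace of the Stress Equation $\sum_I v_I^j \bar v_I^l = e(t)\tfrac{\de^{jl}}{n} - \mathring R_\ep^{jl}$ for the main term, the $O(\la^{-1})$ bounds on the $\nab\times w_I$ corrections, and a single integration by parts against $\fr{\pr^a(\xi_I+\xi_J)}{\la|\nab(\xi_I+\xi_J)|^2}$ for the non-colliding interactions. Nothing to add.
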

\begin{proof}
We calculate
\begin{align}
\int |V|^2(t,x) dx &= \int V^j V^l \de_{jl} dx \notag \\
&= \int \sum_{I, J} V_I^j V_J^l \de_{jl} dx \notag \\
&= \sum_I \int V_I^j {\bar V}_I^l \de_{jl} dx + \sum_{J \neq {\bar I} } \int V_I \cdot V_J dx \notag \\
&=  \sum_I \int {\tilde v}_I^j {\overline {\tilde v}}_I^l \de_{jl} dx + \sum_{J \neq {\bar I} } \int e^{i \la (\xi_I + \xi_J)} {\tilde v}_I \cdot {\tilde v}_J dx \notag \\
&= \int \left( \sum_I v_I^j {\bar v}_I^l \right) \de_{jl} dx + \sum_I \int \fr{(\nab \times w_I)}{\la} \cdot{\bar v}_I dx  \notag \\
&+ \sum_I \int \fr{(\nab \times {\bar w}_I)}{\la} \cdot v_I dx + \sum_I \int \fr{(\nab \times w_I)}{\la} \cdot \fr{(\nab \times w_I)}{\la} dx \notag  \\
&+ \sum_{J \neq {\bar I} } \int e^{i \la (\xi_I + \xi_J)} {\tilde v}_I \cdot {\tilde v}_J dx \notag \\
&= \int e(t) dx + E_{(A)}(t) + \sum_{J \neq {\bar I}}E_{(B, IJ)}(t)
\end{align}
where it is clear that we have a bound for
\begin{align}
|E_{(A)}(t)| &\leq 2 \sum_I \int \left|\fr{(\nab \times w_I)}{\la} \cdot{\bar v}_I \right| dx + \sum_I \int \left|\fr{(\nab \times w_I)}{\la} \cdot \fr{(\nab \times w_I)}{\la} \right| dx \\
&\leq C ( \fr{ \Xi e_R^{1/2} \cdot e_R^{1/2} }{B_\la \Xi N} +  (\fr{ \Xi e_R^{1/2} }{B_\la \Xi N})^2 ) \\
&\leq C \fr{e_R}{N}
\end{align}
and each integral contributing to the other term can be estimated by integrating by parts
\begin{align}
E_{(B, IJ)}(t) &= \int e^{i \la (\xi_I + \xi_J)} {\tilde v}_I \cdot {\tilde v}_J dx \\
&= \int \left[ \fr{\pr^a(\xi_I + \xi_J)}{\la |\nab(\xi_I + \xi_J)|^2} \pr_a[e^{i \la (\xi_I + \xi_J)}] \right] {\tilde v}_I \cdot {\tilde v}_J dx \\
&= \fr{-1}{\la} \int e^{i \la (\xi_I + \xi_J)} \pr_a \left[ \fr{\pr^a(\xi_I + \xi_J)}{|\nab(\xi_I + \xi_J)|^2} {\tilde v}_I \cdot {\tilde v}_J \right] dx \label{eq:intByParts} \\
| E_{(B, IJ)}(t) | &\leq C \fr{\Xi e_R}{B_\la N \Xi}
\end{align}
\end{proof}

Similarly, we can estimate our control over the rate of energy variation
\begin{prop}[Rate of energy variation estimate; proof of (\ref{eq:DtenergyPrescribed})]
\begin{align}
\| \fr{d}{dt} [ \int |V|^2(t,x) dx - \int_{\T^3} e(t) dx ] \|_{C_t^0} &\leq C \badFactor ( \Xi e_v^{1/2} ) \fr{e_R}{N}
\end{align}
\end{prop}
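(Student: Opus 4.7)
My plan is to build directly on the decomposition established in the previous proposition, namely
\[
\int |V|^2(t,x)\,dx \;-\; \int e(t)\,dx \;=\; E_{(A)}(t) \;+\; \sum_{J\neq\bar I} E_{(B,IJ)}(t),
\]
and to differentiate it in $t$. The key first observation is that, since $\T^3$ has no boundary and $v_\ep$ is divergence-free,
\[
\fr{d}{dt}\int F(t,x)\,dx \;=\; \int \pr_t F\,dx \;=\; \int (\pr_t + v_\ep^j\pr_j)F\,dx \;=\; \int \Ddtof{F}\,dx
\]
for every $C^1$ function $F$ on $\R\times\T^3$. This converts the ordinary time derivative of each integral into an average of the coarse-scale material derivative of its integrand, which is enormously advantageous: the bounds of Section \ref{sec:vecAmpBounds} show that $\Ddt$ of an amplitude costs at most $\tau^{-1}$, and $\Ddt$ annihilates the oscillatory phases $e^{i\la\xi_I}$ entirely, whereas a raw $\pr_t$ of $V$ could be of order $\la e_R^{1/2}$.

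For $E_{(A)}(t)$, whose integrands are bilinear combinations of $v_I,\bar v_I,\nab\times w_I,\nab\times\bar w_I$ divided by $\la$ or $\la^2$, I would apply $\Ddt$ under the integral sign and use Leibniz. Each resulting term pairs one amplitude (bounded by $e_R^{1/2}$) with one material derivative of an amplitude (bounded by $\tau^{-1}e_R^{1/2}$ via the estimates for $\Ddtof{v_I}$ and $\Ddtof{w_I}$), possibly with one spatial derivative (produced by the $\nab\times$). The commutator $[\Ddt,\nab\times]$ is a zeroth-order operator with coefficients bounded by $\co{\nab v_\ep}\lesssim \Xi e_v^{1/2}$, which is much smaller than $\tau^{-1}$ and so contributes only lower-order terms. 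The total bound is therefore of order $\tau^{-1}e_R/\la$, which is already acceptable.

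For the high-high cross terms $E_{(B,IJ)}(t)$ I would first integrate by parts as in (\ref{eq:intByParts}) to expose the factor $1/\la$, writing
\[
E_{(B,IJ)}(t) \;=\; \fr{-1}{\la}\int e^{i\la(\xi_I+\xi_J)}\,G_{IJ}\,dx,\qquad G_{IJ}=\pr_a\!\left[\fr{\pr^a(\xi_I+\xi_J)}{|\nab(\xi_I+\xi_J)|^2}\,\tilde v_I\cdot\tilde v_J\right].
\]
Then, because $\Ddtof{(\xi_I+\xi_J)}=0$ and the prefactor $1/\la$ is constant in $(t,x)$, one has
\[
\fr{d}{dt}E_{(B,IJ)}(t) \;=\; \fr{-1}{\la}\int e^{i\la(\xi_I+\xi_J)}\,\Ddtof{G_{IJ}}\,dx.
\]
The point is that \emph{no} new factor of $\la$ is produced by differentiating the oscillation in $t$. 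To estimate $\co{\Ddtof{G_{IJ}}}$ one writes $\Ddtof{\pr_a H}=\pr_a\Ddtof{H}-(\pr_a v_\ep^j)\pr_j H$ and uses Leibniz inside: $\Ddt$ applied to $\nab\xi/|\nab\xi|^2$ costs only $\Xi e_v^{1/2}$ by the phase-velocity estimates of Section \ref{transportEstimates}, $\Ddt$ applied to $\tilde v_I\cdot\tilde v_J$ costs $\tau^{-1}e_R$ (the dominant contribution, coming from $\Ddt$ striking the time cutoff $\eta((t-t(I))/\tau)$), and every commutator $[\Ddt,\pr_a]$ costs only $\Xi e_v^{1/2}$. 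One spatial derivative on the amplitude in $G_{IJ}$ costs a further $\Xi$. Putting everything together gives $\co{\Ddtof{G_{IJ}}}\lesssim \Xi\tau^{-1}e_R$ and hence
\[
|\,\Ddtof{E_{(B,IJ)}}\,| \;\lesssim\; \fr{\Xi\tau^{-1}e_R}{\la} \;=\; \fr{\tau^{-1}e_R}{B_\la N},
\]
which upon substituting $\tau^{-1}=b_0^{-1}B_\la^{1/2}({e_R^{1/2}N}/{e_v^{1/2}})^{1/2}\Xi e_v^{1/2}$ matches the claimed bound (and in fact saves an additional factor of $B_\la^{-1/2}$).

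The main obstacle, such as it is, is purely organisational: tracking the large number of terms produced by $\Ddt$ acting on the product $G_{IJ}$ and verifying that every commutator $[\Ddt,\pr_a]$ or $[\Ddt,\nab\times]$ remains lower-order. The conceptual heart of the argument is the double cancellation that (i) $\Ddt$ annihilates the high-frequency phases, so no factor of $\la$ is lost, and (ii) derivatives of $v_\ep$ cost only $\Xi e_v^{1/2}\ll\tau^{-1}$, so the unique dominant loss when differentiating in time comes from the time cutoff in a single factor of $\tilde v_I$, exactly the factor already accounted for in $\tau^{-1}$.
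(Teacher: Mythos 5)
Your proposal is correct and follows essentially the same route as the paper: the same decomposition $E_{(A)}+\sum E_{(B,IJ)}$, the replacement of $\fr{d}{dt}$ by $\int \Ddt(\cdot)\,dx$ using that $v_\ep$ is divergence free, the fact that $\Ddt$ annihilates the transported phases, the integration by parts along $\nab(\xi_I+\xi_J)$ to gain $\la^{-1}$, and the identification of $\tau^{-1}$ (from the time cutoff) as the dominant cost of a material derivative. The only difference is the order of operations on the cross terms (you integrate by parts before differentiating in time, the paper after), which is immaterial.
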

\begin{proof}
We again use the expression
\begin{align}
\int |V|^2(t,x) dx - &\int e(t) dx = E_{(A)}(t) + E_{(B)}(t) \\
E_{(A)}(t) &= \sum_I \int \fr{(\nab \times w_I)}{\la} \cdot{\bar v}_I dx + \sum_I \int \fr{(\nab \times {\bar w}_I)}{\la} \cdot v_I dx \\
&+ \sum_I \int \fr{(\nab \times w_I)}{\la} \cdot \fr{(\nab \times w_I)}{\la} dx \\
E_{(B)}(t) &= \sum_{J \neq {\bar I} } \int e^{i \la (\xi_I + \xi_J)} {\tilde v}_I \cdot {\tilde v}_J dx
\end{align}
And differentiate in time, starting with
\begin{align}
\fr{d}{dt} E_{(A)}(t) &= \sum_I \int \Ddt \left[ \fr{(\nab \times w_I)}{\la}\cdot {\bar v}_I + \fr{(\nab \times {\bar w}_I)}{\la} \cdot v_I + \fr{(\nab \times w_I)}{\la} \cdot \fr{(\nab \times w_I)}{\la} \right] dx
\end{align}
As we know, the coarse scale material derivative $\Ddt$ costs a factor 
\[ | \Ddt | \leq C \badFactor \Xi e_v^{1/2} \]
giving 
\begin{align}
| \fr{d}{dt} E_{(A)}(t) | &\leq C \badFactor \Xi e_v^{1/2} \fr{e_R}{N}
\end{align}
and similarly, we can write
\begin{align}
\fr{d}{dt} E_{(B)}(t) &= \sum_{J \neq {\bar I}} \int \Ddt \left[ e^{i \la (\xi_I + \xi_J)} {\tilde v}_I \cdot {\tilde v}_J \right] dx \\
&= \sum_{J \neq {\bar I}} \int e^{i \la (\xi_I + \xi_J)} \Ddt \left[  {\tilde v}_I \cdot {\tilde v}_J \right] dx 
\end{align}
which can be estimated by
\begin{align}
| \fr{d}{dt} E_{(B)}(t) | &\leq C \badFactor \Xi e_v^{1/2} \fr{e_R}{N}
\end{align}
using another integration by parts, just as in (\ref{eq:intByParts}).
\end{proof}

\section{Checking Frequency and Energy Levels for the Velocity and Pressure}

Since we have established many estimates for the corrections to the velocity and the pressure, we are in a position to compare with the Main Lemma.

We first remark that the estimates (\ref{eq:Wco})-(\ref{eq:matWco}) for $W$ were established in Proposition (\ref{prop:nabkW}) and Corollary (\ref{cor:realMatDvOfW}).  Similarly, the estimates (\ref{eq:Vco}) - (\ref{eq:matVco}) were special cases of the Proposition (\ref{prop:spaceDvOfV}) and Corollary (\ref{cor:realMatDvOfV}).  Also, the estimates (\ref{eq:Pco}) - (\ref{eq:matPco}) were established by Proposition (\ref{prop:pressureCorrectEstimates}).  At least, all these bounds will be established for a particular constant $C$ once the constant $B_\la$ has been chosen.

We are also in a position to check that the frequency and energy levels of the new velocity and pressure
\begin{align}
v_1 &= v + V \\
p_1 &= p + P
\end{align}
are consistent with the claims of the Main Lemma (\ref{lem:iterateLem}).

For the velocity, at the level of the first derivative, we have
\begin{align}
\co{ \nab v_1 } &\leq \co{\nab v} + \co{\nab V} \\
&\leq C( \Xi e_v^{1/2} + N B_\la \Xi e_R^{1/2} ) \\
&\leq C N \Xi e_R^{1/2}
\end{align}
since $N \geq \left( \fr{e_v}{e_R} \right)^{1/2}$ and since $B_\la$ will be chosen to be some constant which has been absorbed into the $C$.  

We also have
\begin{align}
\co{ \nab^k v_1 } &\leq C_k (B_\la N \Xi)^k e_R^{1/2} 
\end{align}
for $k = 1, \ldots, L$, which is exactly consistent with the Definition (\ref{bound:nabkv}) for the new frequency energy levels $\Xi' = C N \Xi$ and $e_v' = e_R$.

For the pressure, we have
\begin{align}
\co{ \nab p_1 } &\leq \co{\nab p} + \co{\nab P} \\
&\leq C ( \Xi e_v + N B_\la \Xi e_R ) \\
&\leq C B_\la N \Xi e_R
\end{align}
since $N \geq \left( \fr{e_v}{e_R} \right)$.  This bound is also exactly consistent with the Definition (\ref{bound:nabkp}) for the new frequency energy levels $\Xi' = C N \Xi$ and $e_v' = e_R$.

To complete the proof of the Main Lemma (\ref{lem:iterateLem}), it now only remains to choose a constant $B_\la$ so that (\ref{bound:nabkR}) and (\ref{bound:dtnabkR}) can be verified for the new energy levels.  This choice of $B_\la$ is the last step of the proof.

\part{ Construction of Weak Solutions in the H{\" o}lder Class : Estimating the New Stress } \label{part:thirdHardPart}
To conclude the argument, we must calculate the new stress $R_1$ and verify that this stress obeys all the estimates required in Lemma (\ref{lem:iterateLem}).

A complete list of the terms contributing to the new stress $R_1$ includes the following:
\begin{enumerate}
\item {\bf The Mollification Terms} 
\begin{align}
Q_{M,v}^{jl} &= (v^j - v_\ep^j)V^l + V^j (v^l - v_\ep^l) \\ 
Q_{M,R}^{jl} &= R^{jl} - R_\ep^{jl}
\end{align}
\item {\bf The Stress Term}
\begin{align}
\begin{split}
Q_S^{jl} &= \sum_I \fr{(\nab \times w_I)^j{\bar v}_I^l }{\la} + \sum_I \fr{v_I^j(\nab \times {\bar w}_I)^l }{\la} \\
&+  \sum_I \fr{(\nab \times w_I)^j(\nab \times {\bar w}_I)^l}{\la^2}
\end{split}
\end{align}
\item {\bf The High-Low Interaction Term }
\begin{align}
\pr_j Q_L^{jl} &= \sum_I e^{i \la \xi_I}( {\tilde v}_I^j \pr_j v_\ep^l )
\end{align}
\item {\bf The High-High Interference Terms }
\begin{align}
\pr_j Q_H^{jl} &= - \sum_{J \neq {\bar I}}  \la e^{i \la(\xi_I + \xi_J) }[ v_I \times ( |\nab \xi_J| - 1 ) v_J + v_J \times (|\nab \xi_I| - 1) v_I ] + \mbox{ lower order terms }
\end{align}
\item {\bf The Transport Term}
\begin{align}
\pr_j Q_T^{jl} &= \sum_I e^{i \la \xi_I}[ (\pr_t + v_\ep^j \pr_j){\tilde v}_I^l ]
\end{align}
\end{enumerate}

For all of these terms, we must verify\footnote{Recall that the notation $\unlhd$ refers inequalities which are goals but which generally have not yet been proven.} the following estimates
\begin{align}
\label{ineq:theC0Goal5th}
\begin{split}
\co{ R_1 } &\unlhd e_R' \\
\Leftrightarrow \quad \quad \quad \co{ R_1 } &\unlhd \badFactor \fr{e_v^{1/2} e_R^{1/2}}{N} 
\end{split} \\
\label{eq:theSpatDvBdsWeNeed} 
\begin{split}
\co{\nab^k R_1 } &\unlhd (\Xi')^k e_R'  \quad \quad \quad \quad k = 1, \ldots, L \\
\Leftrightarrow  \quad \quad \quad  \co{\nab^k R_1 } &\unlhd C (N \Xi)^k \left[ \badFactor \fr{e_v^{1/2} e_R^{1/2}}{N} \right] \quad \quad \quad  k = 1, \ldots, L 
\end{split} \\
\label{ineq:matDvNeed5th} 
\begin{split}
\co{(\pr_t + v_1\cdot \nab )R_1 } &\unlhd (\Xi' (e_v')^{1/2} ) e_R'  \\
\Leftrightarrow \quad \quad \co{(\pr_t + v_1\cdot \nab )R_1 } &\unlhd C \badFactor \Xi e_v^{1/2} e_R 
\end{split} \\
\co{\nab^k (\pr_t + v_1\cdot \nab) R_1 } &\unlhd C ( N \Xi )^k\left[ \badFactor \Xi e_v^{1/2} e_R \right] \quad \quad k = 0, \ldots, L - 1 \label{ineq:matDvNeed5th2} \\
v_1 &= v + V \notag
\end{align}

Note that, in contrast to the goals (\ref{eq:theSpatDvBdsWeNeed}) - (\ref{ineq:matDvNeed5th2}), the goal (\ref{ineq:theC0Goal5th}) does not allow for an implied constant $C$.

The goals (\ref{ineq:matDvNeed5th}) and (\ref{ineq:matDvNeed5th2}) have a slightly inconvenient feature in that while most of our bounds are stated for the coarse scale material derivative
\[ \Ddt = (\pr_t + v_\ep \cdot \nab) \]
the goals (\ref{ineq:matDvNeed5th}) and (\ref{ineq:matDvNeed5th2}) require estimates for the derivative
\ali{
(\pr_t + v_1 \cdot \nab) &= (\pr_t + v \cdot \nab) +  V \cdot \nab  \\
&= (\pr_t + v_\ep \cdot \nab) + [ (v - v_\ep) + V ] \cdot \nab 
}

Therefore, before we proceed, it is useful to remark that once the spatial derivative bounds (\ref{eq:theSpatDvBdsWeNeed}) have been verified, then the bounds (\ref{ineq:matDvNeed5th}) and (\ref{ineq:matDvNeed5th2}) need only be checked for the derivative $\Ddt$ in place of $(\pr_t + v_1 \cdot \nab)$.  We summarize this observation through the following Proposition.
\begin{prop}[Material Derivative Criterion 1] \label{prop:canJustCheckCoarseScale}
There exists a constant $C$ such that for all symmetric $(2,0)$ tensor fields $Q^{jl}$ satisfying the bounds
\ali{
\co{\nab^k Q } &\leq A (N \Xi)^k \left[ \badFactor \fr{e_v^{1/2} e_R^{1/2}}{N} \right] \quad \quad k = 1, \ldots, L \label{bound:alreadyHaveNabkQ}
}
and
\ali{
\co{\nab^k (\pr_t + v_\ep \cdot \nab) Q } &\leq A ( N \Xi )^k \left[ \badFactor \Xi e_v^{1/2} e_R \right] \quad \quad k = 0, \ldots, L - 1 \label{bound:alreadyHaveDdtQ}
}
then we also have
\ali{
\co{\nab^k (\pr_t + v_1 \cdot \nab) Q } &\leq C A ( N \Xi )^k \left[ \badFactor \Xi e_v^{1/2} e_R \right] \quad \quad k = 0, \ldots, L - 1  \label{bound:wishWeHadDdtv1Q}
}
\end{prop}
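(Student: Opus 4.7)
The plan is to reduce the proposition to a straightforward application of the product rule once the two material derivative operators are compared. Writing
\[
(\pr_t + v_1 \cdot \nab) - (\pr_t + v_\ep \cdot \nab) = \bigl( (v - v_\ep) + V \bigr) \cdot \nab,
\]
I would use the hypothesis (\ref{bound:alreadyHaveDdtQ}) on $\co{\nab^k \Ddtof{Q}}$ as the main term and then bound the commutator error
\[
\nab^k\bigl[ (v - v_\ep) \cdot \nab Q \bigr] + \nab^k\bigl[ V \cdot \nab Q \bigr]
\]
by the right hand side of (\ref{bound:wishWeHadDdtv1Q}). The input estimates are already in hand: from (\ref{ineq:mollifyVerror}) we have $\co{v - v_\ep} \leq C e_v^{1/2}/N$ and from (\ref{bound:nabkv}), (\ref{bound:nabkvep}) the higher derivatives satisfy $\co{\nab^{k_1}(v - v_\ep)} \leq C \Xi^{k_1} e_v^{1/2}$ for $1 \leq k_1 \leq L$; the correction $V$ satisfies $\co{\nab^{k_1} V} \leq C (N\Xi)^{k_1} e_R^{1/2}$ for $k_1 \leq L$ by Corollary (\ref{prop:spaceDvOfV}); and $\co{\nab^{k_2 + 1} Q}$ is controlled by (\ref{bound:alreadyHaveNabkQ}).

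The calculation I would carry out is the following distribution. For the $V \cdot \nab Q$ term the product rule gives a sum over $k_1 + k_2 = k$ of
\[
\co{\nab^{k_1} V}\,\co{\nab^{k_2+1} Q} \leq CA\,(N\Xi)^{k_1}e_R^{1/2}\cdot (N\Xi)^{k_2+1}\,\badFactor\,\frac{e_v^{1/2}e_R^{1/2}}{N},
\]
which collapses to $CA\,(N\Xi)^k \cdot \badFactor\,\Xi e_v^{1/2} e_R$ — exactly the target on the right hand side of (\ref{bound:wishWeHadDdtv1Q}). For the $(v - v_\ep) \cdot \nab Q$ contribution, the $k_1 = 0$ piece uses the gain $\co{v - v_\ep} \leq C e_v^{1/2}/N$ to produce an extra factor of $e_v^{1/2}/(N e_R^{1/2})$ compared to the target, which is $\leq 1$ precisely because of the hypothesis $N \geq (e_v/e_R)^{1/2}$ from (\ref{eq:conditionsOnN2}). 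The $k_1 \geq 1$ pieces lose the $N^{-1}$ smallness but gain a $N^{k_2 - k} \leq N^{-1}$ from the fact that fewer than $k$ derivatives land on $Q$; combined with $(e_v/e_R)^{1/2}/N \leq 1$ this again stays below the target. Summing the $O(k)$ terms with $k \leq L - 1$ and absorbing combinatorial constants into $C$ finishes the estimate.

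There is no real obstacle here — this is a bookkeeping lemma whose only content is comparing the sizes of $\Xi e_v^{1/2}$ (the natural cost of the coarse scale material derivative) with the costs of $V \cdot \nab$ and $(v - v_\ep) \cdot \nab$. The only step requiring attention is verifying that the highest-derivative pieces remain within the range $k_1, k_2 \leq L$ so that the input estimates (\ref{bound:nabkv}), (\ref{bound:nabkvep}), Corollary (\ref{prop:spaceDvOfV}), and (\ref{bound:alreadyHaveNabkQ}) are all applicable; this holds because we only require $k \leq L - 1$ in (\ref{bound:wishWeHadDdtv1Q}). Once this is checked, the stated bound (\ref{bound:wishWeHadDdtv1Q}) follows with a constant $C$ depending only on $L$ and the constants in the hypotheses.
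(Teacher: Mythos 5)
Your proposal is correct and follows essentially the same route as the paper: the same decomposition $(\pr_t + v_1\cdot\nab)Q = (\pr_t + v_\ep\cdot\nab)Q + [(v-v_\ep)+V]\cdot\nab Q$, the same input bounds (\ref{ineq:mollifyVerror}), (\ref{prop:spaceDvOfV}), (\ref{bound:alreadyHaveNabkQ}), and the same use of $N \geq (e_v/e_R)^{1/2}$ to absorb the $(v-v_\ep)$ contribution, with the product rule distributing at most $CN\Xi$ per spatial derivative. Your exponent bookkeeping for the $k_1 \geq 1$ pieces is in fact slightly more explicit than the paper's, but the argument is the same.
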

\begin{proof}
By writing
\ali{
(\pr_t + v_1 \cdot \nab)Q &= (\pr_t + v_\ep \cdot \nab)Q + [ (v - v_\ep) + V ] \cdot \nab Q
}
and using (\ref{bound:alreadyHaveDdtQ}) it suffices to prove the bound
\ali{
\co{ \nab^k [ [ (v - v_\ep) + V ] \cdot \nab Q ] } &\leq C A ( N \Xi )^k\left[ \badFactor \Xi e_v^{1/2} e_R \right] \quad \quad k = 0, \ldots, L - 1 
}
For $k = 0$, using (\ref{ineq:mollifyVerror}), Proposition (\ref{prop:spaceDvOfV}), and the assumption (\ref{bound:alreadyHaveNabkQ}) we have
\ali{
\co{ [ (v - v_\ep) + V ] \cdot \nab Q } &\leq ( \co{ v - v_\ep} + \co{ V } ) \co{\nab Q } \\
&\leq C \left( \fr{e_v^{1/2}}{N} + e_R^{1/2} \right) \left( A (N \Xi) \badFactor \fr{e_v^{1/2} e_R^{1/2}}{N} \right) \\
&\leq C A \badFactor ( \Xi e_v^{1/2} e_R )
}
since $N \geq \left( \fr{e_v}{e_R} \right)^{1/2}$.  Taking a spatial derivative will costs a factor $C N \Xi$ in the estimate.  For example, if we estimate $\co{ \nab ( v - v_\ep ) }$ slightly suboptimally and we compare
\ali{
\co{ ( v - v_\ep) } &\leq \fr{e_v^{1/2}}{N} \\
\co{ \nab( v - v_\ep) } \leq \co{ \nab v} + \co{ \nab v_\ep } &\leq C \Xi e_v^{1/2}
}
we see that we lose a factor $C N \Xi$ by taking the derivative.  For the whole product we have the bound
\ali{
\co{ \nab[ [ (v - v_\ep) + V ] \cdot \nab Q ] } &\leq ( \co{ \nab v} + \co{\nab v_\ep} + \co{ \nab V } ) \co{\nab Q } \\
&+ \co{[ (v - v_\ep) + V ] } \co{ \nab^2 Q } \\
&\leq C \left( \Xi e_v^{1/2} + N \Xi e_R^{1/2} \right) \left( A \badFactor \Xi e_v^{1/2} e_R^{1/2} \right) \\
&+ C e_R^{1/2} \left( A (N \Xi) \badFactor \Xi e_v^{1/2} e_R^{1/2} \right)  \\
&\leq C A \badFactor (N \Xi) ( \Xi e_v^{1/2} e_R )
}
using (\ref{bound:alreadyHaveNabkQ}), (\ref{prop:spaceDvOfV}) and the fact that $N \geq \left( \fr{e_v}{e_R} \right)^{1/2}$.

The assumption (\ref{bound:alreadyHaveNabkQ}) and the bounds for $v$, $v_\ep$ and $V$ also ensure that each higher spatial derivative costs at most $C N \Xi$ per derivative up to order $L - 1$, which is what is required for the bound (\ref{bound:wishWeHadDdtv1Q}).
\end{proof}
The same proof also shows that it also suffices to verify estimates for the derivative $(\pr_t + v \cdot \nab) Q$.
\begin{prop}[Material Derivative Criterion 2] \label{prop:canJustCheckOldMat}
Proposition (\ref{prop:canJustCheckCoarseScale}) also holds with the condition (\ref{bound:alreadyHaveDdtQ}) replaced by
\ali{
\co{\nab^k (\pr_t + v \cdot \nab) Q } &\leq A ( N \Xi )^k\left[ \badFactor \Xi e_v^{1/2} e_R \right] \quad \quad k = 0, \ldots, L - 1 \label{bound:alreadyHaveOldDdtQ}
}
\end{prop}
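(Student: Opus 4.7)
The plan is to mimic the proof of Proposition (\ref{prop:canJustCheckCoarseScale}), but to replace the role of $v_\ep$ by $v$ directly, so that the error we must control becomes $V \cdot \nab Q$ rather than $[(v - v_\ep) + V] \cdot \nab Q$. Write
\ali{
(\pr_t + v_1 \cdot \nab) Q &= (\pr_t + v \cdot \nab) Q + V \cdot \nab Q,
}
so that by the assumption (\ref{bound:alreadyHaveOldDdtQ}) it suffices to prove
\ali{
\co{ \nab^k [ V \cdot \nab Q ] } &\leq C A (N \Xi)^k \left[ \badFactor \Xi e_v^{1/2} e_R \right] \quad \quad k = 0, \ldots, L - 1.
}

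For $k = 0$, I would combine the $C^0$ bound $\co{ V } \leq C e_R^{1/2}$ from Corollary (\ref{prop:spaceDvOfV}) with the first-derivative case of (\ref{bound:alreadyHaveNabkQ}) to estimate
\ali{
\co{ V \cdot \nab Q } &\leq \co{ V } \co{ \nab Q } \leq C e_R^{1/2} \cdot A (N \Xi) \badFactor \fr{e_v^{1/2} e_R^{1/2}}{N} = C A \badFactor ( \Xi e_v^{1/2} e_R ).
}
For general $0 \leq k \leq L - 1$, the Leibniz rule gives $\nab^k[ V \cdot \nab Q ] = \sum_{a + b = k} c_{a,b} \nab^a V \cdot \nab^{b + 1} Q$, and I would bound each term using Corollary (\ref{prop:spaceDvOfV}), which states $\co{ \nab^a V } \leq C_a (N\Xi)^a e_R^{1/2}$ (for $a \leq L$), together with (\ref{bound:alreadyHaveNabkQ}), which gives $\co{\nab^{b+1} Q} \leq A (N\Xi)^{b+1} \badFactor \fr{e_v^{1/2} e_R^{1/2}}{N}$. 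Multiplying these and using $a + b = k$ and $N\Xi \cdot e_R^{1/2} \cdot e_R^{1/2}/N \cdot e_v^{1/2} = \Xi e_v^{1/2} e_R$ yields the desired estimate with each derivative costing exactly the allowed factor $C N \Xi$.

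The argument is straightforward and essentially no different in structure from Proposition (\ref{prop:canJustCheckCoarseScale}) — in fact it is strictly easier, since we no longer need to separately control the mollification error $v - v_\ep$; the only input from that previous proof which survives is the bookkeeping of how powers of $N\Xi$ are distributed between $V$ and $\nab Q$. Consequently there is no real obstacle: the main point to verify is simply that the restriction $k \leq L - 1$ keeps the required derivative counts for $\nab^a V$ (with $a \leq k \leq L - 1 < L$) and for $\nab^{b+1} Q$ (with $b + 1 \leq k + 1 \leq L$) within the range covered by (\ref{prop:spaceDvOfV}) and by the hypothesis (\ref{bound:alreadyHaveNabkQ}), respectively.
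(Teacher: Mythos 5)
Your proposal is correct and is essentially the paper's argument: the paper simply remarks that "the same proof" as Proposition (\ref{prop:canJustCheckCoarseScale}) works, which amounts to exactly your decomposition $(\pr_t + v_1\cdot\nab)Q = (\pr_t + v\cdot\nab)Q + V\cdot\nab Q$ and the estimate of $V\cdot\nab Q$, a term already handled (as part of $[(v-v_\ep)+V]\cdot\nab Q$) in that earlier proof. Your bookkeeping of derivative counts ($a\leq L-1$ for $V$, $b+1\leq L$ for $Q$) matches the hypotheses, so nothing is missing.
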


We begin verifying the bounds (\ref{ineq:theC0Goal5th}) - (\ref{ineq:matDvNeed5th2}) by estimating the terms which do not require solving the divergence equation.

\section{Estimates for the Stress Terms not involving the Divergence Equation}
In this section, we estimate the terms in the new stress which do not involve oscillations.  These terms include

\begin{enumerate}
\item {\bf The Mollification Terms} 
\begin{align}
Q_{M,v}^{jl} &= (v^j - v_\ep^j)V^l + V^j (v^l - v_\ep^l) \\ 
Q_{M,R}^{jl} &= R^{jl} - R_\ep^{jl}
\end{align}
\item {\bf The Stress Term}
\begin{align}
\begin{split}
Q_S^{jl} &= \sum_I \fr{(\nab \times w_I)^j{\bar v}_I^l }{\la} + \sum_I \fr{v_I^j(\nab \times {\bar w}_I)^l }{\la} \\
&+  \sum_I \fr{(\nab \times w_I)^j(\nab \times {\bar w}_I)^l}{\la^2}
\end{split}
\end{align}\end{enumerate}

Throughout the estimates, we will assume that $B_\la$ has been chosen to be some constant.  We start with the Mollification Terms.

\subsection{The mollification term from the velocity} \label{sec:mollVOK}

The part of the new stress $R_1$ which arises from mollifying the velocity is given by
\begin{align}
Q_{M,v}^{jl} &= (v^j - v_\ep^j)V^l + V^j (v^l - v_\ep^l)
\end{align}

The $C^0$ estimate for $Q_{M,v}$ has already been verified by the choice of $\ep_v$ in Section (\ref{sec:accountForParams}).  In fact, by verifying (\ref{target:R1v}) we have that
for the main term
\begin{align}
Q_{v,(i)}^{jl} &= \sum_I 2[(v - v_\ep)(e^{i \la \xi_I} v_I )]^{jl}
\end{align}
we have
\begin{align}
\co{ Q_{v, (i)}^{jl}} &\leq \fr{1}{50} \fr{e_v^{1/2} e_R^{1/2}}{N} 
\end{align}
since
\begin{align}
\co{ (v^j - v_\ep^j) } &\leq c \fr{e_v^{1/2}}{N}
\end{align}

The other term for $Q_{M,v}$ is of the form
\begin{align}
Q_{v, (ii)} &= \sum_I 2 e^{i \la \xi_I} \fr{[(v - v_\ep)\nab\times w_I]^{jl}}{\la} \label{eq:waitingForLaAgain} \\
\co{ Q_{v, (ii)} } &\leq C \left(\fr{e_v^{1/2}}{N} \right) \left( \fr{\Xi e_R^{1/2}}{B_\la N \Xi} \right) 
\end{align}
which satisfies the goal 
\begin{align}
\co{ Q_{v, (ii)}^{jl}} &\leq \fr{1}{50} \fr{e_v^{1/2} e_R^{1/2}}{N} 
\end{align}
once the constant $B_\la$ is chosen large enough.

We need to check the spatial derivatives of $Q_{M,v}$.  By Definition (\ref{def:subSolDef}), the estimates for spatial derivatives are allowed to lose a factor 
\[ |\nab| \leq \Xi' = C N \Xi \]
for each spatial derivative $\nab^k$, $k \leq L$.  This freedom allows us to separate the factors $v$ and $v_\ep$ to bound
\begin{align}
\nab Q_{M,v}^{jl} &= (\nab v V)^{jl} - (\nab v_\ep V)^{jl} + [( v - v_\ep) \nab V]^{jl} \label{eq:QMvnab} \\
\co{\nab Q_{M,v}} &\leq C ( \co{ \nab v} + \co{\nab v_\ep} ) e_R^{1/2} + \co{ v - v_\ep} \co{\nab V} \\
&\leq C ( \Xi e_v^{1/2} e_R^{1/2} + \fr{e_v^{1/2}}{N} (B_\la N \Xi e_v^{1/2}) \\
&\leq C B_\la (\Xi e_v^{1/2}) e_R^{1/2}
\end{align}

By comparing 
\begin{align}
\co{ (v^j - v_\ep^j) } &\leq c \fr{e_v^{1/2}}{N} \\
\co{ \nab v} + \co{\nab v_\ep} &\leq 2 \Xi e_v^{1/2}
\end{align}
we can see that the cost of separating $v$ and $v_\ep$ in the estimate is exactly the factor $\Xi' = N \Xi$ that we can afford.  This principle will be useful many times in the checking.

By taking up to $L - 1$ derivatives of (\ref{eq:QMvnab}), we see that for all $k = 1, \ldots, L$ we have
\begin{prop}  For $k = 0, \ldots, L$, there exists constants $C_k$ depending on $B_\la$ such that
\begin{align}\label{eq:weHaveTheNabDvQMv}
\co{ \nab^k Q_{M,v} } &\leq C_k (B_\la N \Xi)^k \fr{ e_v^{1/2} e_R^{1/2} }{N}
\end{align}
\end{prop}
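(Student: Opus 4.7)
The plan is to prove the estimate by expanding $\nab^k Q_{M,v}$ with the Leibniz rule and handling each resulting term via the already-established individual bounds on $v$, $v_\ep$, $V$, and the mollification error $v - v_\ep$. Since $Q_{M,v}^{jl} = 2[(v - v_\ep) V]^{jl}$ (as a symmetric product), we have
\[
\nab^k Q_{M,v}^{jl} = 2 \sum_{a + b = k} c_{a,b}\, [\nab^a(v - v_\ep)\, \nab^b V]^{jl},
\]
so it suffices to bound each summand in $C^0$ by $C_k (B_\la N \Xi)^k e_v^{1/2} e_R^{1/2}/N$.

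First I would split into the cases $a = 0$ and $a \geq 1$. For $a = 0$, the pointwise bound follows directly from combining the mollification estimate $\co{v - v_\ep} \leq C e_v^{1/2}/N$ from (\ref{ineq:mollifyVerror}) with the spatial derivative bound $\co{\nab^b V} \leq C_b (B_\la N \Xi)^b e_R^{1/2}$ from Corollary (\ref{prop:spaceDvOfV}); multiplying gives exactly the target bound. For $a \geq 1$, I would give up the cancellation in $v - v_\ep$ and estimate $v$ and $v_\ep$ separately using (\ref{bound:nabkv}) and (\ref{bound:nabkvep}):
\[
\co{\nab^a(v - v_\ep)} \leq \co{\nab^a v} + \co{\nab^a v_\ep} \leq C_a \Xi^a e_v^{1/2}
\]
for $a \leq L$ (no factor of $N^{1/L}$ appears since $a \leq L$). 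Combined with the $\nab^b V$ bound, each term is controlled by $C_{a,b} \Xi^a (B_\la N \Xi)^b e_v^{1/2} e_R^{1/2}$, and the inequality $\Xi^a (B_\la N \Xi)^b \leq C_{a,b,B_\la} (B_\la N \Xi)^{a+b}/N$ (which reduces to $N \leq C_{a,B_\la}(B_\la N)^a$ and holds for all $a \geq 1$ and $N \geq 1$) finishes this case.

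The one small subtlety to track is that the constants $C_k$ depend on $B_\la$ because in the $a \geq 1$ case we trade a factor of $N$ against a factor of $(B_\la N)^a$, which produces a power of $B_\la^{-a}$ that is absorbed into $C_k$; this is consistent with the convention in the statement. I do not anticipate a main obstacle here: the analogous $k = 0$ and $k = 1$ computations are already carried out in (\ref{target:R1v})--(\ref{eq:waitingForLaAgain}) and immediately preceding (\ref{eq:weHaveTheNabDvQMv}), and the general $k \leq L$ case is a straightforward Leibniz expansion using exactly the same two mechanisms. In particular, because we restrict to $k \leq L$, we never need to take more than $L$ derivatives on either $v$ or $v_\ep$, so the bounds (\ref{bound:nabkv}) and (\ref{bound:nabkvep}) apply without incurring the extra $N^{1/L}$ penalty that would appear beyond order $L$.
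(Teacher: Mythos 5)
Your proposal is correct and follows essentially the same route as the paper: the $k=0$ case uses $\co{v-v_\ep}\leq C e_v^{1/2}/N$ against $\co{V}\leq C e_R^{1/2}$, and for higher derivatives one gives up the cancellation in $v-v_\ep$ and estimates $\nab^a v$, $\nab^a v_\ep$ separately, observing that the cost of this separation (comparing $e_v^{1/2}/N$ to $\Xi e_v^{1/2}$) is exactly the allowed factor $N\Xi$ per derivative, while derivatives on $V$ cost $B_\la N\Xi$. The paper simply carries out the $k=1$ computation explicitly and invokes the same principle for up to $L-1$ further derivatives, which is what your Leibniz expansion makes explicit.
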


This estimate is actually sufficient for the ideal case discussed in Section (\ref{sec:onOnsag}).

We now turn to estimating the material derivative
\begin{prop}
Let $v_1 = v+ V$.  Then, for $k = 0, \ldots, L - 1$, we have
\begin{align}\label{eq:weHaveTheMatDvQMv}
\co{ \nab^k (\pr_t + v_1 \cdot \nab) Q_{M,v} } &\leq C \badFact (N \Xi)^k \fr{e_v^{1/2} e_R^{1/2}}{N}
\end{align}
\end{prop}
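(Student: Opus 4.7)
The plan is to expand $\Ddt Q_{M,v}$ by Leibniz and bound each piece using the mollification and Euler–Reynolds machinery developed in Sections~\ref{coarseScaleVelocity}--\ref{coarseScaleFlow} together with the correction estimates of Section~\ref{sec:vecAmpBounds}. First I would invoke Proposition~\ref{prop:canJustCheckCoarseScale}: because the spatial-derivative bound \eqref{eq:weHaveTheNabDvQMv} has already been verified, it is enough to prove the analogous bound for the coarse-scale material derivative $\Ddt = (\pr_t + v_\ep \cdot \nab)$; the upgrade to $(\pr_t + v_1 \cdot \nab) = \Ddt + [(v - v_\ep) + V] \cdot \nab$ then costs at most a factor $N\Xi$ per spatial derivative already absorbed in $(N\Xi)^k$.

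Writing $Q_{M,v}^{jl} = 2[(v - v_\ep) V]^{jl}$, the product rule gives
\[
\Ddt Q_{M,v}^{jl} = \bigl[\Ddt(v^j - v_\ep^j)\bigr] V^l + (v^j - v_\ep^j)\, \Ddt V^l + (\text{symmetric in }j\leftrightarrow l).
\]
For the first factor I would combine the Euler--Reynolds equation $(\pr_t + v \cdot \nab)v = -\nab p + \pr_j R^{\cdot j}$ with the identity $f_\ep = \eta_{\ep+\ep} \ast(-\nab p + \nab \cdot R) + Q(v,v)$ of Section~\ref{coarseScaleFlow}, rewriting
\[
\Ddt(v - v_\ep) \;=\; \bigl[\nab p - \eta_{\ep+\ep}\ast \nab p\bigr] - \bigl[\nab\cdot R - \eta_{\ep+\ep}\ast \nab \cdot R\bigr] - Q(v,v) + (v_\ep - v)\cdot \nab v.
\]
Each of these four pieces is controlled by Lemma~\ref{lem:mollifierRate} and \eqref{eq:boundForQderivsHere}: the two mollifier differences are of size $C\,\ep_v^{L-1}\Xi^L e_v \leq C\, N^{-(L-1)/L}\Xi e_v$, the commutator $Q(v,v)$ is of size $C\,N^{-1/L}\Xi e_v$, and the drift term is bounded by $(e_v^{1/2}/N)(\Xi e_v^{1/2}) = \Xi e_v/N$. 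Multiplying by $\co{V}\leq C e_R^{1/2}$ gives the contribution of the first term in Leibniz. For the second factor, I would combine Corollary~\ref{cor:matDvOfV} with $\co{v - v_\ep} \leq C e_v^{1/2}/N$ from \eqref{ineq:mollifyVerror}, obtaining an estimate for $(v - v_\ep)\Ddt V$ that, after using $N \geq (e_v/e_R)^{3/2}$, matches the stated size.

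To handle higher spatial derivatives $\nab^k$ for $k = 1,\dots,L-1$, I would differentiate the Leibniz expansion and distribute derivatives by the counting argument already used in Section~\ref{sec:spatVarStress}: derivatives hitting $V$ cost $N\Xi$ each because of the oscillatory phase, while derivatives hitting $v - v_\ep$ or $\Ddt(v - v_\ep)$ cost at most $\Xi N^{1/L} \leq N\Xi$, and the corresponding mollifier and commutator estimates propagate to higher order exactly as in Section~\ref{sec:DdtofRep}. The same reasoning applies to the additional $V\cdot \nab Q_{M,v}$ and $(v - v_\ep)\cdot \nab Q_{M,v}$ contributions that appear when passing from $\Ddt$ to $(\pr_t + v_1 \cdot \nab)$, since one already has the spatial bound \eqref{eq:weHaveTheNabDvQMv}.

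The main obstacle will be bookkeeping: the term $(v - v_\ep)\,\Ddt V$ is the most delicate, because $\Ddt V$ carries the large factor $\badFact\,\Xi e_v^{1/2} e_R^{1/2}$ from Corollary~\ref{cor:matDvOfV} and one must verify that the smallness $e_v^{1/2}/N$ of $v - v_\ep$ together with the standing hypothesis $N \geq (e_v/e_R)^{3/2}$ is just sufficient to absorb it into the stated right-hand side, and likewise for the commutator piece $V\cdot\nab(v - v_\ep)\,V$ whose pointwise bound uses the Beltrami-flow structure of $V$ only through its a priori size. The estimate is tight but not sharp, which is why we may afford to have $B_\la$ eventually chosen as a fixed constant without disturbing the overall scheme.
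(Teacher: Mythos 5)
Your proposal is correct, and after the two steps it shares with the paper (the reduction to the coarse-scale material derivative via Proposition (\ref{prop:canJustCheckCoarseScale}) and the Leibniz splitting of $\Ddtof{Q_{M,v}}$) it diverges from the paper's proof at the decisive term. The paper estimates $\Ddtof{v}$ and $\Ddtof{v_\ep}$ separately, each by $C\Xi e_v$ (Euler--Reynolds for $v$, the coarse scale force estimate (\ref{bound:fepc0}) for $v_\ep$), so those contributions are of size $\Xi e_v e_R^{1/2}$, and their acceptability against the benchmark $C\badFact\,\Xi e_v^{1/2} e_R$ of line (\ref{eq:theGoalForDdtQ}) is exactly what forces $(e_v/e_R)^{3/4}\leq N^{1/2}$, i.e.\ the hypothesis (\ref{eq:conditionsOnN2}); the paper notes this is the only place that hypothesis enters. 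You instead keep $\Ddtof{(v-v_\ep)}$ together and expand it through the Euler--Reynolds equation and the mollified equation (\ref{eq:dtvep}), so the large parts cancel and what survives --- mollification errors of $\nab p$ and $\nab\cdot R$ of size $\ep_v^{L-1}\Xi^L e_v$, the commutator $Q(v,v)$ of size $N^{-1/L}\Xi e_v$, and the drift $(v_\ep - v)\cdot\nab v$ of size $\Xi e_v/N$ --- is smaller by a positive power of $N$; this is precisely the ``straightforward but long commutator argument'' toward (\ref{ineq:complicatedCommutator}) that the paper's remark on wastefulness sketches but deliberately avoids. What each route buys: the paper's is shorter and uses only bounds already in hand, at the price of the lower bound (\ref{eq:theNReqIsBig}); yours costs the extra identity and some higher-derivative bookkeeping (for $k\geq 1$ you should bound $\nab^k$ of the mollification errors crudely by $2\Xi^{k+1}e_v$ rather than retain the $\ep_v^{L-1}$ gain, since only $\nab^L p$ and $\nab^L R$ are controlled --- this sits comfortably inside the allowed $(N\Xi)^k$ growth), but for $L=2$ it needs only $N\geq e_v/e_R$ for this term (and only $N\geq (e_v/e_R)^{1/2}$ for $(v-v_\ep)\Ddtof{V}$), which substantiates the paper's own claim that its $3/2$ hypothesis could be relaxed by such an argument. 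Two harmless remarks: the signs in your identity for $\Ddtof{(v-v_\ep)}$ should be matched to the convention of (\ref{eq:euReyn}) (irrelevant for the estimates), and what both your argument and the paper's actually establish is, up to constants, the dimensionally consistent benchmark $C(N\Xi)^k\badFact\,\Xi e_v^{1/2}e_R$ demanded by (\ref{ineq:matDvNeed5th2}), i.e.\ the displayed right-hand side of the proposition should be read with the additional factor $\Xi'(e_v')^{1/2}$.
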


To prove this proposition, it actually suffices to estimate $\Ddtof{Q_{M,v}}$
\begin{prop}  For $k = 0, \ldots, L - 1$, we have
\begin{align} \label{eq:weCanCheckDdt}
\co{ \nab^k (\pr_t + v_\ep \cdot \nab) Q_{M,v} } &\leq C \badFact (N \Xi)^k \fr{e_v^{1/2} e_R^{1/2}}{N}
\end{align}
\end{prop}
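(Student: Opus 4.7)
The plan is to expand $\Ddt Q_{M,v}^{jl}$ by the product rule and bound each factor separately, exploiting in particular a key cancellation available for $\Ddt(v - v_\ep)$. Writing $Q_{M,v}^{jl} = 2[(v - v_\ep) V]^{jl}$, the product rule gives
\begin{align*}
\Ddt Q_{M,v}^{jl} &= 2[\Ddt(v - v_\ep)\, V]^{jl} + 2[(v - v_\ep)\, \Ddt V]^{jl}.
\end{align*}
The factors $\co{v - v_\ep}$, $\co{V}$, and $\co{\Ddt V}$ have already been controlled in Section (\ref{coarseScaleVelocity}) and Corollary (\ref{cor:matDvOfV}). The remaining task is to estimate $\Ddt(v - v_\ep)$; the naive bound $\co{\Ddt v} + \co{\Ddt v_\ep} \leq C\Xi e_v$ is too lossy, so a finer commutator-type argument is needed.

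To accomplish this, I use the Euler-Reynolds equation $(\pr_t + v \cdot \nab) v = -\nab p + \nab \cdot R$ to write $\Ddt v = -\nab p + \nab \cdot R + (v_\ep - v)\cdot \nab v$. Combining this with the identity $\Ddt v_\ep = f_\ep = \eta_{\ep + \ep} \ast (-\nab p + \nab \cdot R) + Q(v,v)$ derived in Section (\ref{coarseScaleFlow}), subtraction yields the clean decomposition
\begin{align*}
\Ddt(v - v_\ep) &= (I - \eta_{\ep + \ep})\ast(-\nab p + \nab \cdot R) + (v_\ep - v)\cdot \nab v - Q(v,v).
\end{align*}
Each of the three contributions is then controlled: the mollification error via Lemma (\ref{lem:mollifierRate}) together with $\co{\nab^{L+1} p} \leq \Xi^{L+1} e_v$ and $\co{\nab^{L+1} R} \leq \Xi^{L+1} e_R$; the middle term by $\co{v - v_\ep} \leq C e_v^{1/2}/N$ from (\ref{ineq:mollifyVerror}) and $\co{\nab v} \leq \Xi e_v^{1/2}$; and $Q(v,v)$ directly by the commutator estimate (\ref{bound:lowFreqTransport}). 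Collecting everything produces an estimate like $\co{\Ddt(v - v_\ep)} \leq C N^{-1/L} \Xi e_v$, far better than the naive $\Xi e_v$.

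For the spatial derivatives $\nab^k$ with $1 \leq k \leq L - 1$, I would apply the product rule to $\Ddt Q_{M,v}$, distribute $\nab^k$ across the decomposition of $\Ddt(v - v_\ep)$ above, and apply the higher-order versions of each ingredient already developed in the paper: the bounds (\ref{bound:nabkvep}) on $\nab^\a v_\ep$, the higher-order mollification bounds with an additional $N^{1/L}$ per derivative beyond order $L$, the spatial bounds on $\nab p$ and $\nab R$ from the definition of frequency-energy levels, the higher-order commutator estimate (\ref{bound:lowFreqTransport}), and the bounds for $\nab^k V$ and $\nab^k \Ddt V$ from Proposition (\ref{prop:spaceDvOfV}) and Corollary (\ref{cor:matDvOfV}). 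Each spatial derivative landing on $V$ costs $(B_\la N \Xi)$, which is absorbed into the target factor $(N\Xi)^k$, while derivatives falling on $v$, $v_\ep$, $p$, or $R$ cost only $\Xi$ (or at most $N^{1/L}\Xi$ past order $L$). The main obstacle is purely bookkeeping: tracking all cross terms from $\nab^k$ acting on the commutator decomposition and verifying via the counting inequality of Lemma (\ref{lem:countingIneq}) that the accumulated powers of $N^{1/L}$ stay within the $(N\Xi)^k$ budget. Once (\ref{eq:weCanCheckDdt}) is established, the estimate (\ref{eq:weHaveTheMatDvQMv}) for $(\pr_t + v_1 \cdot \nab) Q_{M,v}$ follows at once from Proposition (\ref{prop:canJustCheckCoarseScale}) together with the spatial bounds (\ref{eq:weHaveTheNabDvQMv}) already proven.
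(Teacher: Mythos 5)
Your decomposition treats the key term by a genuinely different route than the paper does. The paper splits $\Ddtof{Q_{M,v}}$ into the three pieces $(\Ddtof{v})V$, $-(\Ddtof{v_\ep})V$ and $(v - v_\ep)\Ddtof{V}$ and estimates $\Ddtof{v}$ and $\Ddtof{v_\ep}$ \emph{separately}, each costing $C\Xi e_v$ (the former via the Euler--Reynolds equation, the latter via (\ref{bound:fepc0})); the resulting terms of size $C\Xi e_v e_R^{1/2}$ are then acceptable only because of the hypothesis $N \geq \left(\fr{e_v}{e_R}\right)^{3/2}$ from line (\ref{eq:conditionsOnN2}), and the paper states in the ``Wastefulness in the estimate'' paragraph that this is the unique place that hypothesis is used. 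You instead keep $\Ddt[v - v_\ep]$ together and extract the cancellation by subtracting (\ref{eq:dtvep}) from the Euler--Reynolds equation; this is exactly the ``straightforward but long commutator argument'' giving (\ref{ineq:complicatedCommutator}) that the paper mentions and deliberately avoids. Your route is sharper: a bound of the type $\co{\Ddt[v - v_\ep]} \leq C N^{-1/L}\Xi e_v$ closes the estimate under the weaker condition $N \geq \fr{e_v}{e_R}$ (as the paper remarks (\ref{eq:theNReqIsBig}) could then be relaxed) and is what the ideal-case Conjecture (\ref{conj:idealCase}) would require, while the paper's route is shorter at the price of the stronger lower bound on $N$. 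One calibration point: what both arguments actually establish, and what Proposition (\ref{prop:canJustCheckCoarseScale}) needs, is the benchmark $C \badFact (N\Xi)^k \Xi e_v^{1/2} e_R$ of line (\ref{eq:theGoalForDdtQ}); already your second term $(v - v_\ep)\Ddtof{V}$ is of size $C\badFact \fr{\Xi e_v e_R^{1/2}}{N}$, which exceeds the literal displayed right-hand side of the proposition by a factor $\Xi e_v^{1/2}$, so you should measure your terms against (\ref{eq:theGoalForDdtQ}) rather than the display as printed (as the paper's own proof does).

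One step as written would fail: you invoke $\co{\nab^{L+1} p} \leq \Xi^{L+1} e_v$ and $\co{\nab^{L+1} R} \leq \Xi^{L+1} e_R$, but Definition (\ref{def:subSolDef}) controls $p$ and $R$ only up to order $L$, so Lemma (\ref{lem:mollifierRate}) cannot be applied at order $L$ to $\nab p + \pr_j R^{jl}$. The repair is harmless: since the moment conditions hold at every lower order as well, apply the lemma at order $L-1$, which uses only $\nab^L p$ and $\nab^L R$ and gives $\co{(1 - \eta_{\ep + \ep}\ast)(\nab p + \pr_j R^{jl})} \leq C \ep_v^{L-1}\Xi^L e_v \leq C N^{-(L-1)/L}\Xi e_v$. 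Since $L \geq 2$, this is no larger than the $C N^{-1/L}\Xi e_v$ contribution of $Q(v,v)$, so your stated bound $\co{\Ddt[v - v_\ep]} \leq C N^{-1/L}\Xi e_v$ and the subsequent bookkeeping survive; for $1 \leq k \leq L-1$ one may simply bound $\nab^k$ of the mollification error by $C\Xi^{k+1}(e_v + e_R)$, an admissible loss of at most $N\Xi$ per derivative against the $(N\Xi)^k$ budget.
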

\noindent by the criterion in Proposition (\ref{prop:canJustCheckCoarseScale}).  Now let us check the bounds (\ref{eq:weCanCheckDdt}).  
\begin{proof}
Here we take
\begin{align}
\Ddt [ (v - v_\ep) V ]^{jl} &= (\Ddtof{v}V)^{jl} - (\Ddtof{v_\ep}V)^{jl} + [(v - v_\ep) \Ddtof{V} ]^{jl} \\
&= A_{(I)} + A_{(II)} + A_{(III)}
\end{align}

First, we compare
\begin{align}
\co{ A_{(III)} } &\leq C \co{ v - v_\ep } \co{ \Ddtof{V} } \\
&\leq C \fr{e_v^{1/2}}{N}\left(\badFact \Xi e_v^{1/2} e_R^{1/2} \right) \\
&\leq C \badFact \fr{ \Xi e_v e_R^{1/2} }{N}
\end{align}
to our goal of
\begin{align}
\co{ A_{(III)} } &\unlhd \Xi' (e_v')^{1/2} e_R' \\ 
&=C ( N \Xi) e_R^{1/2} \left(\badFact \fr{e_v^{1/2} e_R^{1/2}}{N}\right) \\
&= C \badFact \Xi e_v^{1/2} e_R \label{eq:theGoalForDdtQ}
\end{align}
to see that $\co{ A_{(III)} }$ has a correct bound since $N \geq \left( \fr{e_v}{e_R} \right)^{1/2}$.  As we discussed in verifying, (\ref{eq:weHaveTheMatDvQMv}), the estimates for
\[ \co{ \nab^k A_{(III)}} \leq C_k (N\Xi)^k \badFact \Xi e_v^{1/2} e_R  \] 
also hold for $k = 1, \ldots, L - 1$, since each spatial derivative costs at most $N \Xi$, and because at most $L$ derivatives fall on the given $v$.

Next we check how the bound
\begin{align}
\co{A_{(II)}} &\leq \co{ \Ddtof{v_\ep} } \co{V} \\
&\leq (\Xi e_v ) e_R^{1/2} 
\end{align}
compares to the goal
\begin{align}
\co{A_{(II)}} &\unlhd C \badFact \Xi e_v^{1/2} e_R \label{ineq:theGoalWeBarelyHit}
\end{align}
from (\ref{eq:theGoalForDdtQ}).

Checking this bound reduces to verifying that
\begin{align}
\left( \fr{e_v}{e_R} \right)^{3/4} &\leq N^{1/2} \label{ineq:whatWeNeedForcoAII}
\end{align}
which follows from the hypothesis
\begin{align}
\left( \fr{e_v}{e_R} \right)^{3/2} &\leq N \label{eq:theNReqIsBig}
\end{align}
in line (\ref{eq:conditionsOnN2}) of the Main Lemma.

Now that the estimate for $A_{(II)}$ has been satisfied at the level of $C^0$, it worsens by a factor of $N \Xi$ for each spatial derivative, the largest cost arising when the derivative hits the oscillatory factor in the correction $V$.  Therefore we have the bound
\begin{align}
\co{\nab^k A_{(II)}} &\leq C (N \Xi)^k (\Xi e_v ) e_R^{1/2}  \label{eq:boundOnNabKAIImatdvMollv}
\end{align}
The goal 
\begin{align}
\co{\nab^k A_{(II)}} &\unlhd C (N \Xi)^k  \badFact \Xi e_v^{1/2} e_R \quad \quad k = 1, \ldots, L
\end{align}
permits a loss of $C N \Xi$ per derivative, so that the bound (\ref{eq:boundOnNabKAIImatdvMollv}) is acceptable.

We proceed similarly to estimate
\begin{align}
A_{(I)} &= \Ddtof{v}V^l 
\end{align}
by writing
\begin{align}
\Ddtof{v^l} &= (\pr_t + v\cdot \nab) v^l + (v_\ep - v) \cdot \nab v^l \\
&= \pr^l p + \pr_j R^{jl} + (v_\ep - v) \cdot \nab v^l
\end{align}
giving
\begin{align}
\co{ A_{(I)} } &\leq C \co{\Ddtof{v}} e_R^{1/2} \\
&\leq C (\Xi e_v + \left( \fr{e_v^{1/2}}{N} \right)(\Xi e_v^{1/2} ) ) e_R^{1/2} \\
&\leq C \Xi e_v e_R^{1/2}
\end{align}
which we have just verified is acceptable for the term $A_{(II)}$.  We can now take spatial derivatives of this term up to order $L - 1$, and each spatial derivative costs at most $C N \Xi$ per derivative; for example, comparing
\ali{
\co{v - v_\ep} &\leq C \fr{e_v^{1/2}}{N} \\
\co{ \nab( v - v_\ep) } \leq C \co{ \nab v} + \co{\nab v_\ep } &\leq 2 \Xi e_v^{1/2}
}
costs a factor $|\nab| \leq C N \Xi$.  
\end{proof}

\paragraph{Wastefulness in the estimate}  

To prove the ideal case Conjecture (\ref{conj:idealCase}), the bound (\ref{eq:weCanCheckDdt}) must be replaced by the more stringent goal
\begin{align}
\co{\Ddt [ (v - v_\ep) V ]^{jl}} &\unlhd C \Xi e_v^{1/2} e_R \label{ineq:theGoalWeShouldBeMeeting}
\end{align}
In order to verify inequality (\ref{ineq:theGoalWeShouldBeMeeting}), it is necessary to establish a bound
\ali{
 \co{\Ddt[ v - v_\ep] } &\leq C \Xi e_v^{1/2} e_R^{1/2} \label{ineq:complicatedCommutator}
}
which is stronger than the bound of $\Xi e_v$ that can be proved for $\co{ \Ddtof{v}}$ and $\co{ \Ddtof{v_\ep} }$ individually.

Inequality (\ref{ineq:complicatedCommutator}) can be proven through a straightforward but long commutator argument similar to those in Sections (\ref{coarseScaleFlow}) and (\ref{sec:firstMatDvofRep}).  Here we only note that this commutator estimate requires control over $\nab[ (\pr_t + v \cdot \nab) v ]$, and suggests that it may be important to work with Frequency Energy levels of order $L \geq 2$.

Here we have taken the slightly easier route of to establishing the bound (\ref{ineq:theGoalWeBarelyHit}), at the cost of imposing the lower bound (\ref{eq:theNReqIsBig}) on $N$.  The previous bounds are actually the only place in the argument in the requirement (\ref{eq:theNReqIsBig}) is used, and the Lemma (\ref{lem:iterateLem}) can be proven with (\ref{eq:theNReqIsBig}) replaced by 
\[ N \geq \left( \fr{e_v}{e_R} \right) \]
We also remark that in the ideal case construction, inequality (\ref{eq:theNReqIsBig}) is satisfied during the iteration of Section (\ref{sec:onOnsag}), but in that case the power $3/2$ cannot be replaced by any larger value.  During the iteration of Section (\ref{sec:lemImpThm}), the value of $N$ is essentially $\left(\fr{e_v}{e_R} \right)^{5/2}$.

\subsection{The mollification term from the Stress}

We now verify the necessary estimates for the term
\begin{align}
Q_{M,R}^{jl} &= R^{jl} - R_\ep^{jl}
\end{align}  

We have already verified in Section (\ref{sec:chooseRParams}) that
\begin{align}
\co{ Q_{M,R}^{jl} } &\leq \fr{e_v^{1/2} e_R^{1/2}}{100 N}
\end{align}

And regarding the spatial derivatives, we have for $k = 1, \ldots, L$
\begin{align}
\co{ \nab^k Q_{M,R} } &\leq \co{ \nab^k R^{jl} } + \co{\nab^k R_\ep^{jl} } \\
&\leq C ( \Xi^k e_R + \Xi^k e_R ) \\
&\leq C \Xi^k e_R
\end{align}
from Proposition (\ref{bound:daRep}).  Here, no powers of $N^{1/L}$ arise since we have taken no more than $L$ derivatives.  Comparing to our goal of
\begin{align}
\co{ \nab^k Q_{M,R} } &\unlhd (\Xi')^k e_R' \\
&= \badFact (N \Xi)^k \fr{e_v^{1/2} e_R^{1/2}}{N}
\end{align}
we see that this goal has been achieved by a large margin.

We now check the bounds for the material derivative, which are of the form
\begin{prop} For $v_1 = v + V$, $k = 0, \ldots, L - 1$ we have
\begin{align} \label{eq:nextDtvQmR}
\co{ \nab^k( \pr_t + v_1 \cdot \nab) Q_{M,R} } &\leq C (N \Xi)^k \Xi e_v^{1/2} e_R
\end{align}
\end{prop}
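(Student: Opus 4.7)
The plan is to decompose
\begin{align*}
(\pr_t + v_1 \cdot \nab)(R - R_\ep) &= \Big[(\pr_t + v \cdot \nab) R\Big] - \Big[\Ddtof{R_\ep} + (v - v_\ep) \cdot \nab R_\ep\Big] + V \cdot \nab(R - R_\ep)
\end{align*}
and estimate each of the four bracketed terms separately. Each of them is of a type for which we already have a $C^0$ bound of the desired order $\Xi e_v^{1/2} e_R$, and each of them is built only out of $v$, $v_\ep$, $R$, $R_\ep$, $\Ddtof{R_\ep}$ and the correction $V$; every spatial derivative falling on one of the first five objects costs at most $\Xi N^{(\cdot - L)_+/L}$ (never more than $N\Xi$ up to order $L$ by (\ref{bound:nabkv}), (\ref{bound:nabkvep}), (\ref{bound:nabkR}), (\ref{bound:daRep}), and Proposition (\ref{prop:oneMatDvOfRFlow})), and a derivative falling on $V$ costs $N\Xi$ because the dominant factor is the oscillatory phase $e^{i\la \xi_I}$ (cf.\ Corollary (\ref{prop:spaceDvOfV})). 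So once the $k=0$ bound is in place, the higher-$k$ estimates will follow by applying Leibniz's rule and absorbing the product of these individual per-derivative costs into $(N\Xi)^k$.

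For the $k=0$ check, the first bracket is bounded by $\Xi e_v^{1/2} e_R$ directly by (\ref{bound:dtnabkR}). The term $\Ddtof{R_\ep}$ is bounded by $C\Xi e_v^{1/2} e_R$ by Proposition (\ref{prop:firstDdtOfRep}) (this is precisely the purpose of mollifying $R$ along the coarse-scale flow, as explained in Section (\ref{sec:acceptableCheck})). For the commutator term we use (\ref{ineq:mollifyVerror}) and Proposition (\ref{bound:daRep}):
\begin{align*}
\co{(v - v_\ep) \cdot \nab R_\ep} &\leq C\, \tfrac{e_v^{1/2}}{N}\cdot \Xi e_R \;\leq\; C\, \tfrac{\Xi e_v^{1/2} e_R}{N}
\end{align*}
which is smaller than the goal by a factor of $N^{-1}$. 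Finally, $\co{V\cdot \nab(R-R_\ep)} \leq \co{V}(\co{\nab R} + \co{\nab R_\ep}) \leq C e_R^{1/2}\cdot \Xi e_R \leq C \Xi e_v^{1/2} e_R$, where the last step uses $e_R \leq e_v$.

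For $k \geq 1$, I distribute derivatives by Leibniz. The non-oscillatory terms $\nab^k(\pr_t+v\cdot\nab)R$, $\nab^k \Ddtof{R_\ep}$, and $\nab^k[(v-v_\ep)\cdot\nab R_\ep]$ gain at most a factor $\Xi^k N^{(k-L)_+/L}$, which for $k\leq L-1$ is at most $C_k \Xi^k$ and hence well within $C_k (N\Xi)^k$ times the base bound. For the remaining term $\nab^k[V\cdot \nab(R-R_\ep)] = \sum_{|a|+|b|=k}(\nab^a V)\cdot(\nab^{b+1}(R-R_\ep))$, each derivative on $V$ costs $N\Xi$ and each derivative on $R$ or $R_\ep$ costs $\Xi$, so the worst term $(\nab^k V)\cdot \nab(R-R_\ep)$ is of size $C_k (N\Xi)^k e_R^{1/2}\cdot \Xi e_R \leq C_k (N\Xi)^k \Xi e_v^{1/2}e_R$, as required. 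Finally, since we have now verified the stronger spatial-derivative bound $\co{\nab^k Q_{M,R}} \leq C \Xi^k e_R$ (shown just above the proposition) and the coarse-scale material-derivative analogue of the bound, the replacement of $v_\ep$ by $v$ and the addition of the $V\cdot\nab$ contribution are absorbed in the same way as in Proposition (\ref{prop:canJustCheckCoarseScale}); no step above relies on the loss factor $\badFact$, so the stronger bound without $\badFact$ holds. The main obstacle, if any, is purely bookkeeping: confirming that at every derivative order $k \leq L-1$ the counting inequality of Lemma (\ref{lem:countingIneq}) keeps the $N^{1/L}$ losses from higher-derivative estimates of $R_\ep$ and $v_\ep$ from exceeding the budget $(N\Xi)^k$; this is automatic since $k \leq L-1$ leaves one full derivative's worth of margin.
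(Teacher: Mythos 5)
Your proof is correct and is essentially the paper's argument: the paper simply cites the bounds (\ref{bound:dtnabkR}) and Proposition (\ref{prop:firstDdtOfRep}) together with the Material Derivative Criteria, Propositions (\ref{prop:canJustCheckCoarseScale}) and (\ref{prop:canJustCheckOldMat}), and your decomposition into $(\pr_t + v\cdot\nab)R$, $\Ddtof{R_\ep}$, $(v-v_\ep)\cdot\nab R_\ep$ and $V\cdot\nab(R-R_\ep)$ is exactly the computation those criteria encapsulate. The term-by-term $k=0$ and Leibniz bookkeeping for $k\leq L-1$ matches the paper's intended estimates, so nothing further is needed.
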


For our purposes, it is enough to recall from (\ref{bound:dtnabkR}) and Proposition (\ref{prop:firstDdtOfRep}) that
\begin{prop} For $0 \leq k \leq L - 1$,
\begin{align}
\co{ \nab^k ( \pr_t + v \cdot \nab) R } &\leq C \Xi^{k + 1} e_v^{1/2} e_R \\
\co{ \nab^k ( \pr_t + v_\ep \cdot \nab) R_\ep }&\leq  C \Xi^{k + 1} e_v^{1/2} e_R
\end{align}
\end{prop}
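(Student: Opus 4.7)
The plan is to deduce both inequalities directly from results already in place; essentially no new analysis is required. The first inequality is literally one of the standing hypotheses on $(v,p,R)$: estimate (\ref{bound:dtnabkR}) in Definition \ref{def:subSolDef} asserts exactly that $\co{\nab^k(\pr_t + v\cdot\nab) R} \leq \Xi^{k+1} e_v^{1/2} e_R$ for $k = 0,\ldots, L-1$. I would simply cite this as part of the input to the Main Lemma.

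For the second inequality, I would invoke Proposition \ref{prop:firstDdtOfRep}, which supplies
\[
\co{\nab^k \Ddtof{R_\ep}} \leq C_k\, N^{(k+1-L)_+/L}\, \Xi^{k+1} e_v^{1/2} e_R
\]
for all $k \geq 0$. The only thing worth recording is the elementary arithmetic observation that, under the restriction $0 \leq k \leq L-1$, one has $k+1-L \leq 0$, so $(k+1-L)_+ = 0$ and the factor $N^{(k+1-L)_+/L}$ collapses to $1$. This yields the $N$-independent bound stated in the proposition. In other words, $k \leq L-1$ is exactly the range in which no ``interest payment'' in powers of $N^{1/L}$ has to be paid, in the sense described after the bounds (\ref{bound:nabkvep}) on $v_\ep$.

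There is no substantive obstacle at this step: the proposition is a convenient repackaging of already-established estimates, isolating precisely the range of $k$ for which the prefactor is clean. The real work — the commutator decomposition $\Ddtof{R_{\ep_x}} = A_{(I)} + A_{(II)} + A_{(III)}$ together with the flow-commutation identity of Lemma \ref{eq:flowDerivAndAverage} — was carried out in Sections \ref{sec:estForCoarseScaleFlow}--\ref{sec:firstMatDvofRep}. The reason to extract the two bounds separately is that the subsequent argument, verifying (\ref{eq:nextDtvQmR}) for $Q_{M,R}^{jl} = R^{jl} - R_\ep^{jl}$, will apply them to the two summands termwise, and then upgrade from the two coarse-scale material derivatives $(\pr_t + v\cdot\nab)$ and $(\pr_t + v_\ep\cdot\nab)$ to the fine-scale material derivative $(\pr_t + v_1\cdot\nab)$ via Material Derivative Criterion 2 (Proposition \ref{prop:canJustCheckOldMat}), absorbing the discrepancies $(v-v_\ep)\cdot\nab R$ and $V\cdot\nab R_\ep$ using the $C^0$ bound (\ref{ineq:mollifyVerror}) on $v-v_\ep$ and Proposition \ref{prop:spaceDvOfV} on $V$.
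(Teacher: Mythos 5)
Your proposal is correct and matches the paper's own treatment exactly: the first bound is just hypothesis (\ref{bound:dtnabkR}) from the definition of frequency and energy levels, and the second is Proposition (\ref{prop:firstDdtOfRep}) with the observation that $(k+1-L)_+ = 0$ for $k \leq L-1$, so the factor $N^{(k+1-L)_+/L}$ is $1$. Nothing further is needed.
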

according to the criteria stated in Propositions (\ref{prop:canJustCheckCoarseScale}) and (\ref{prop:canJustCheckOldMat}).

We need to check, at least for $k = 0$, that we have achieved the goal
\begin{align}
\co{ ( \pr_t + v_1 \cdot \nab) Q_{M,R} } &\unlhd C \left( \Xi' (e_v')^{1/2} e_R' \right) \\
&\unlhd C \badFact \left( N \Xi e_R^{1/2} \fr{e_v^{1/2} e_R^{1/2}}{N} \right) \\
&= C \badFact \Xi e_v^{1/2} e_R
\end{align}

In fact, this goal has been achieved even for the ideal case without the factor $\badFact$.  For spatial derivatives the estimates (\ref{eq:nextDtvQmR}) show that each spatial derivative costs at most $N \Xi$ as desired, the largest losses coming from $V \cdot \nab (R - R_\ep)$.

\subsection{Estimates for the Stress Term} \label{sec:estimatesForStressTerm}
We now verify the estimates (\ref{ineq:theC0Goal5th})-(\ref{ineq:matDvNeed5th2}) for the Stress Term, which turns out to be the most straightforward term to estimate
\begin{align}
\label{eq:recallStressTerm}
\begin{split}
Q_S^{jl} &= \sum_I \fr{(\nab \times w_I)^j{\bar v}_I^l }{\la} + \sum_I \fr{v_I^j(\nab \times {\bar w}_I)^l }{\la} \\
&+  \sum_I \fr{(\nab \times w_I)^j(\nab \times {\bar w}_I)^l}{\la^2}
\end{split}
\end{align}
First, we have the bound
\begin{align}
\co{ Q_S } &\leq C \fr{\Xi e_R^{1/2} \cdot e_R^{1/2}}{B_\la N \Xi} = C \fr{e_R}{B_\la N}  
\end{align}
which implies that the goal for the ideal case
\begin{align}
\co{ Q_S } &\leq \fr{e_v^{1/2} e_R^{1/2}}{500 N} \label{eq:goalForStressTerm}
\end{align}
is satisfied for $B_\la$ sufficiently large.  From  (\ref{eq:recallStressTerm}), we also have the estimates
\begin{align}
\co{ \nab^k Q_S } &\leq C_k (N \Xi)^k \fr{e_R}{N}
\end{align}
for $k = 1, \ldots, L$ (in fact, the bounds from Section (\ref{sec:vecAmpBounds}) are much better), which implies the desired cost of $|\nab| \leq C N \Xi$ demanded by the goal (\ref{eq:theSpatDvBdsWeNeed}).

Next, we have an estimate
\begin{align}
\co{ (\pr_t + v_\ep \cdot \nab) Q_S } &\leq C \badFact \Xi e_v^{1/2} \fr{e_R}{N} \label{eq:matDvBoundStressTerm}
\end{align}
coming from the estimates in Section (\ref{sec:vecAmpBounds}), which already implies the bound 
\begin{align}
\co{ (\pr_t + v_\ep \cdot \nab) Q_S } &\leq C \Xi e_v^{1/2} e_R
\end{align}
which is desired for the ideal case as in (\ref{ineq:theGoalWeShouldBeMeeting}).  In particular, the bound (\ref{eq:matDvBoundStressTerm}) is enough to verify the required bound (\ref{ineq:matDvNeed5th}).  It also follows from the estimates of Section (\ref{sec:vecAmpBounds}) that the coarse scale material derivative $\Ddt Q_S$ can be differentiated in space arbitrarily many times at a cost no larger than $|\nab| \leq C N \Xi$ per derivative.  By Proposition (\ref{prop:canJustCheckCoarseScale}), it follows that all of the required estimates (\ref{ineq:theC0Goal5th})-(\ref{ineq:matDvNeed5th2}) are satisfied by $Q_S$ with room to spare.
\section{Estimates for the Stress Terms involving the Divergence Equation} \label{sec:oscStressTerms}
In this section, we will estimate the terms in the stress which involve solving a divergence equation of the form
\begin{align}
\pr_j Q^{jl} &= U^l = e^{i \la \xi} u^l
\end{align}
These terms include
\begin{enumerate}
\item {\bf The High-Low Term }
\begin{align}
\pr_j Q_L^{jl} &= \sum_I e^{i \la \xi_I}( {\tilde v}_I^j \pr_j v_\ep^l ) \\
&= \sum_I e^{i \la \xi_I} u_L^l
\end{align}
\item {\bf The Main High-High Terms }
\begin{align}
\pr_j Q_H^{jl} &= -\sum_{J \neq {\bar I}}  \la e^{i \la(\xi_I + \xi_J) } [ v_I \times ( |\nab \xi_J| - 1 ) v_J^l + v_J \times (|\nab \xi_I| - 1) v_I^l ] \\
&=  \sum_{J \neq {\bar I}} u_{H, (IJ)}^l
\end{align}
\item {\bf The Remainder of the High-High terms}
\begin{align}
\begin{split}
\pr_j Q_{H'}^{jl} &= - e^{i \la (\xi_I + \xi_J) }\left( (\nab \times w_I) \times [ (i \nab \xi_J) \times {\tilde v}_J ] + (\nab \times w_J) \times [ (i \nab \xi_I) \times {\tilde v}_I ] \right) \\
&- e^{i \la (\xi_I + \xi_J) } \left( v_I \times [ (i \nab \xi_J) \times  (\nab \times w_J) ] + v_J \times [ (i \nab \xi_I) \times (\nab \times w_I) ]\right) \\
&- e^{i \la(\xi_I + \xi_J)} \left( \tilde{v}_I \times(\nab \times \tilde{v}_J) + \tilde{v}_J \times(\nab \times \tilde{v}_I) \right)
\end{split}
\end{align}
\item {\bf The Transport Term}
\begin{align}
\pr_j Q_T^{jl} &= \sum_I e^{i \la \xi_I}[ (\pr_t + v_\ep^j \pr_j){\tilde v}_I^l ] \\
&= \sum_I e^{i \la \xi_I} u_{T,(I)}^l
\end{align}
\end{enumerate}

For each of these factors, we will use the parametrix expansion for the divergence equation, whose first term has the form
\begin{align}
\pr_j Q^{jl} &= e^{i \la \xi_I} u_1^l \\
Q^{jl} &= {\tilde Q}_{(1)}^{jl} + Q_{(1)}^{jl} \\
{\tilde Q}_{(1)}^{jl} &= e^{i \la \xi_I} \fr{q^{jl}(\nab \xi)[u^l] }{\la} \\
\pr_j Q_{(1)}^{jl} &= - e^{i \la \xi_I} \fr{1}{\la} \pr_j [ q^{jl}(\nab \xi)[u^l] ] \label{eq:theNextEllipticTermAgain}
\end{align}
and whose $D$'th term, in general, has the form
\begin{align}
Q^{jl} &= {\tilde Q}_{(D)}^{jl} + Q_{(D)}^{jl} \\
{\tilde Q}_{(D)}^{jl} &= \sum_{(k) = 1}^D e^{i \la \xi_I} \fr{q_{(k)}^{jl}}{\la^{k}}
\end{align}
where each $q_{(k)}$ solves a linear equation
\begin{align}
i \pr_j \xi q_{(1)}^{jl} &= u^l \\
i \pr_j \xi q_{(k)}^{jl} &= u_{(k)}^l \\
u_{(k+1)}^l &= - \pr_j q_{(k)}^{jl} \quad \quad 1 \leq k \leq D
\end{align}
using the linear map 
\begin{align}
q^{jl} &= q^{jl}(\nab \xi)[u]
\end{align}
defined in line (\ref{eq:theLinMapq}).

The error of the expansion is then eliminated by solving the divergence equation
\begin{align} \label{eq:getRidOfErr}
\pr_j Q_{(D)}^{jl} &= e^{i \la \xi} \fr{u_{(D+1)}^l}{\la^D}
\end{align}

To eliminate the error, we need to solve the divergence equation (\ref{eq:getRidOfErr}) in such a way that we can have bounds for both $\co{ \nab^k Q}$ and $\co{ \nab^k \Ddtof{Q}}$.  This task is accomplished in Section (\ref{sec:transEllipt}).

Let us first study the bounds which are obeyed for the parametrices of these oscillatory terms.

\subsection{Expanding the Parametrix }

For concreteness, consider the High-Low term
\begin{align}
\pr_j Q_L^{jl} &= \sum_I e^{i \la \xi_I} u_{L,I, (1)}^l   \\
u_{L,I, (1)}^l &= {\tilde v}_I^j \pr_j v_\ep^l
\end{align}
Observe that 
\begin{align}
\co{ u_{L,I, (1)}} &\leq \Xi e_v^{1/2} e_R^{1/2} \label{eq:boundForuLI} \\
\co{ \nab u_{L,I, (1)} } &\leq ( \co{ \nab {\tilde v}_I } \co{\nab v_\ep} + \co{{\tilde v}_I} \co{ \nab^2 v_\ep } ) \\
&\leq ( (\Xi e_R^{1/2}) (\Xi e_v^{1/2}) + (e_R^{1/2})(\Xi^2 e_v^{1/2} ) ) \\
&\leq C \Xi^2 e_v^{1/2} e_R^{1/2} \\
\co{ \nab^k u_{L,I,(1)} } &\leq ( \co{ \nab^k {\tilde v}_I } \co{ \nab v_\ep} + \ldots + \co{ {\tilde v}_I} \co{ \nab^{k + 1} v_\ep} \\
&\leq C (N^{1/L} \Xi)^k (\Xi e_v^{1/2} e_R^{1/2} )
\end{align}
from Section (\ref{sec:vecAmpBounds}) and (\ref{bound:nabkvep}).

The first term in the parametrix has the form
\begin{align}
{\tilde Q}_{L, I,(1)}^{jl} &= e^{i \la \xi_I} \fr{q_{L, I,(1)}^{jl}}{\la} \\
q_{L, I,(1)}^{jl} &= q^{jl}(\nab \xi_I)[u_{L,I, (1)}^l] \label{eq:theHighLowq}
\end{align}
In size, the absolute value of the first term is 
\begin{align}
\co{ {\tilde Q}_{L, I,(1)}^{jl} } &\leq C \fr{ \co{u_{L,I, (1)}} }{\la} \\
&\leq C \fr{ \Xi e_v^{1/2} e_R^{1/2} }{ B_\la N \Xi }  \\
\co{ {\tilde Q}_{L, I,(1)}^{jl} } &\leq C B_\la^{-1} \fr{e_v^{1/2} e_R^{1/2} }{N}
\end{align}
which is actually good enough for the ideal theorem.  Note that this is a gain of $\fr{1}{\la}$ compared to (\ref{eq:boundForuLI}).

Having bounded the first term in the expansion, we now show that the later terms in the parametrix obey stronger bounds.  Observe that the error term for the first expansion of the parametrix has the form
\begin{align}
e^{i \la \xi_I } u_{L,I,(2)}^l &= -e^{i \la \xi_I } \fr{\pr_j q_{L, I,(1)}^{jl}}{\la} \label{eq:theSecondLowTerm}
\end{align}
Applying the chain rule to the function (\ref{eq:theHighLowq}) defined in Line (\ref{eq:theLinMapq}), we see that (\ref{eq:theSecondLowTerm}) can be bounded by
\begin{align}
\fr{1}{\la} \co{ \nab q_{L, I,(1)}^{jl} } &\leq \fr{1}{\la} ( \co{\nab^2 \xi_I} \co{u_{L,I, (1)}} + \co{ \nab u_{L,I, (1)} } \\
&\leq \fr{C}{B_\la N \Xi} ( \Xi (\Xi e_v^{1/2} e_R^{1/2}) + \Xi^2 e_v^{1/2} e_R^{1/2} ) \\
&\leq \fr{C}{B_\la N \Xi} (N^{1/L} \Xi^2) e_v^{1/2} e_R^{1/2}
\end{align}

Compared to $u_{L,I,(1)}$, the derivative $\pr_j$ costs at most
\[ |\pr_j| \leq C \Xi N^{1/L} \]
whereas we gain a factor of
\[ \la^{-1} = B_\la^{-1} \Xi^{-1} N^{-1} \]
Giving us a gain of a factor
\begin{align} 
\fr{|\pr_j|}{\la^{-1}} &\leq \fr{C}{B_\la N^{(1 - 1/L)}}
\end{align}
compared to the estimate for $u_{L,I, (1)}$.  More precisely,
\begin{align}
\la^{-1} \co{ u_{L,I,(2)} } &\leq C B_\la^{-1} \Xi^{-1} N^{-1}  (\co{ \nab^2 \xi_I} \co{u_{L,I}} + \co{ \nab u_{L,I, (1)} } ) \\ 
&\leq C B_\la^{-1} N^{-(1 - 1/L)} \left( \Xi e_v^{1/2} e_R^{1/2} \right)
\end{align}

This bound on the cost of a derivative continues to hold for higher derivatives, so that each iteration of the parametrix gains a smallness factor
\ali{
\fr{|\nab|}{\la} &\leq C \fr{N^{1/L} \Xi}{B_\la N \Xi} = C B_\la^{-1} N^{-(1-1/L)}
}

For example,
\begin{align}
\fr{1}{\la} \co{ \nab^k q_{L, I,(1)}^{jl} } &\leq \fr{C}{B_\la N \Xi} (\co{ \nab^{k +1} \xi_I} \co{u_{L,I, (1)} } + \ldots + \co{ \nab^k \nab u_{L,I, (1)} }) \\
&\leq \fr{C}{B_\la N \Xi} (N^{1/L} \Xi )^k (\Xi e_v^{1/2} e_R^{1/2} )
\end{align}

After iterating the parametrix $D$ times, we have
\begin{align}
\fr{1}{\la^D}\co{ u_{L,I,(D+1)} } &\leq C_D B_\la^{- D} N^{-D(1 - 1/L)} \left( \Xi e_v^{1/2} e_R^{1/2} \right) \label{eq:afterDIterations}
\end{align}
Thus, the estimate on the error improves by a factor $N^{-D(1 - 1/L)}$ after $D$ iterations of the parametrix.

Because we have $N \geq \Xi^\eta$ for some $\eta > 0$, the large power $B_\la^{-D} N^{-D(1 - 1/L)}$ will eventually overtake the factor of $\Xi$, and we will be able to gain the factor $\fr{1}{B_\la N \Xi} $ that solving the elliptic equation is intended to gain.  That is,
\begin{align}
\fr{1}{\la^D}\co{ u_{L,I,(D+1)} } &\leq C_D  B_\la^{- D} \fr{e_v^{1/2} e_R^{1/2} }{N}
\end{align}
once $D$ is large enough depending on the given $\eta$ and $L$.

If we were only concerned about the $C^0$ norms and spatial derivatives, we could apply the operator $\RR$ from Section (\ref{sec:oscEstimate}) to obtain a solution to (\ref{eq:getRidOfErr}) of size
\begin{align}
Q_{L,I,(D)} &= - \fr{1}{\la^D} \RR[ e^{i \la \xi_I} u_{L,I,(D+1)}] \\
\co{ Q_{L,I,(D)} } &\leq C \fr{\co{ u_{L,I,(D+1)} }}{\la^D} \\
&\leq C_D  B_\la^{- D} \fr{e_v^{1/2} e_R^{1/2} }{N}
\end{align}
which can be bounded by
\begin{align}
\co{ Q_{L,I,(D)} } &\leq \fr{1}{1000} \fr{e_v^{1/2} e_R^{1/2} }{N}
\end{align}
once $B_\la$ is chosen large enough.  Furthermore its derivatives would grow by a factor $(N \Xi)^k$ from
\begin{align}
\co{\nab^k Q_{L,I,(D)}} &\leq C_k \co{ \nab^k \left[ e^{i \la \xi_I} u_{L,I,(D) } \right] }  \label{eq:C0forRR} \\
&\leq C_k \fr{1}{1000} (N \Xi)^k \fr{e_v^{1/2} e_R^{1/2} }{N}
\end{align}

However, we must also establish estimates for $\Ddtof{Q_{L,I,(D)}}$, and for this purpose we have constructed a different operator to solve equation (\ref{eq:getRidOfErr}).

Rather than the estimates for (\ref{eq:C0forRR}), the solution which we construct in Section (\ref{sec:transEllipt}) to solve the equation
\begin{align}
\pr_j Q_{L,I,(D)}^{jl} &= U^l = \fr{-1}{\la^D} e^{i \la \xi_I} u_{L,I,(D+1)}
\end{align}
is bounded by
\begin{align}
\co{ Q_{L,I,(D)}}  &\leq C \co{ U } + \tau \co{ \Ddtof{U} } \\
&\leq \la^{-D} (\co{ u_{L,I,(D+1)} } + \tau \co{ \Ddtof{u_{L,I,(D+1)} }})
\end{align}
and its support in time may grow to 
\begin{align}
 \mbox{ supp } Q_{L,I,(D)} &\subseteq \{ |t - t(I)| \leq \fr{3 \tau}{2} \} \times \T^3 \label{subset:suppGrowth}
\end{align}
when the support of $U$ is contained in $| t -t(I) | \leq \tau$.  Even if the support does grow as in (\ref{subset:suppGrowth}), at each time $t$ there will still be a bounded number of stress terms $Q_{L,I,(D)}$ which are nonzero at that time.

Furthermore, this solution has the property that if the bounds on 
\[ \co{ \nab^k U} \leq A (B_\la N \Xi)^k  \]
\[ \tau \co{ \nab^k \Ddtof{U} } \leq A (B_\la N \Xi)^k \]
are satisfied, then $Q_{L,I,(D)}$ obeys similar bounds
\begin{align}
\co{ \nab^k Q_{L,I,(D)}} &\leq C A (B_\la N \Xi)^k 
\co{ \nab^k \Ddtof{Q_{L,I,(D)}}}  &\leq C A\tau^{-1}(B_\la N \Xi)^k 
\end{align}
Namely, a spatial derivative of $Q_{L,I,(D)}$ costs $(B_\la N \Xi)$ and a material derivative of the stress costs 
$\tau^{-1}$ as long as these costs hold true for the data.  

For all of the examples 
\[ U^l = e^{i \la \xi} u^l \]
we will encounter, they share the feature that material derivatives cost $\tau^{-1}$.  Thus, even at the level of the material derivative, we will be able to conclude that each iteration of the parametrix gains a factor of 
\[ \fr{ |\pr_j| }{\la} \leq N^{-(1- 1/L)} \]

Let us show now that this operator will be suitable for the conclusion of the proof.  

\subsection{Applying the Parametrix}

First consider the transport term, since it is the largest
\begin{align}
\pr_j Q_{T,I}^{jl} &= \sum_I e^{i \la \xi_I} u_{T,(I), 1}^l \\
u_{T,I, (1)}^l &= (\pr_t + v_\ep^j \pr_j){\tilde v}_I^l
\end{align}
and its parametrix expansion up to order $D$
\begin{align}
Q^{jl}_{T,I} &= {\tilde Q}_{T, I, (D)}^{jl} + Q_{T, I, (D)}^{jl} \\
{\tilde Q}_{T, I, (D)}^{jl} &= \sum_{k = 1}^D e^{i \la \xi_I} \fr{q_{(k)}^{jl}}{\la^{k}} \\
\pr_j Q_{T, I, (D)}^{jl} &= \fr{-1}{\la^D} e^{i \la \xi_I} u_{T, I, (D+1)}^l
\end{align}
From Section (\ref{sec:vecAmpBounds}), we have seen that each material derivative costs essentially $\tau^{-1}$, so we know that 
\begin{align}
\co{ u_{T,I, (1)}^l } + \tau \co{ \Ddtof{u_{T,I, (1)}^l} } &\leq C B_\la^{1/2} \badFact \Xi e_v^{1/2} e_R^{1/2} \\
\co{ \nab^k u_{T,I, (1)}^l } + \tau \co{ \nab^k \Ddtof{u_{T,I, (1)}^l} } &\leq C B_\la^{1/2} (B_\la N \Xi)^k \badFact \Xi e_v^{1/2} e_R^{1/2} 
\end{align}
The $C^0$ estimate was already checked in Section (\ref{sec:accountForParams}) where $\tau$ was chosen.

Using the estimates for spatial derivatives in (\ref{sec:vecAmpBounds}) to commute transport and spatial derivatives, we also have 
\begin{align}
\la^{-D}\Big(\co{ u_{T,I, (D+1)}^l } + &\tau \co{ \Ddtof{u_{T,I, (D+1)}^l} }\Big) \leq  \notag \\
&\leq C_D B_\la^{- D} N^{-D(1- 1/L)} B_\la^{1/2} \badFact \Xi e_v^{1/2} e_R^{1/2}  \label{eq:DorderPmtrx} \\
\la^{-D}\Big(\co{ \nab^k u_{T,I, (D+1)}^l } + &\tau \co{ \nab^k \Ddtof{u_{T,I, (D+1)}^l} }\Big) \leq \notag \\
&\leq C_D B_\la^{- D} N^{-D(1- 1/L)} B_\la^{1/2} (B_\la N \Xi)^k \badFact \Xi e_v^{1/2} e_R^{1/2}  \label{eq:DorderPmtrx2}
\end{align}
Using (\ref{eq:DorderPmtrx}), choose $D$ large enough depending on the $\eta$ in the lower bound $N \geq \Xi^\eta$ such that 
\begin{align}
N^{-D(1- 1/L)} &\leq N^{-1} \Xi^{-1}
\end{align}
Namely, we expand the parametrix until we have gained the factor $N^{-1} \Xi^{-1}$ which we expect to gain from solving the divergence equation.

With this choice, the bounds (\ref{eq:DorderPmtrx}), (\ref{eq:DorderPmtrx2}) read
\begin{align}
\la^{-D}\Big(\co{ u_{T,I, (D+1)}^l } + \tau \co{ \Ddtof{u_{T,I, (D+1)}^l} }\Big) &\leq C_D B_\la^{- D} B_\la^{1/2} \badFact \fr{ e_v^{1/2} e_R^{1/2} }{B_\la N} \label{eq:DorderPmtrxGood} \\
\la^{-D}\Big(\co{ \nab^k u_{T,I, (D+1)}^l } + \tau \co{ \nab^k \Ddtof{u_{T,I, (D+1)}^l} }\Big) &\leq C_D B_\la^{- D} B_\la^{1/2} (B_\la N \Xi)^k \badFact \fr{ e_v^{1/2} e_R^{1/2} }{B_\la N} \label{eq:DorderPmtrx2Good}
\end{align}

Applying Theorem (\ref{thm:transportElliptic}), we have a solution $Q_{D,T}^{jl}$ to
\begin{align}
\pr_j Q_{T,I, (D)}^{jl} &= \sum_I \fr{-1}{\la^D} e^{i \la \xi_I} u_{T, I, (D+1)}^l
\end{align}
supported in $| t- t(I)|\leq 3\tau/2$ and obeying the bounds
\begin{align}
\co{ \nab^k Q_{T,I, (D)} } + \tau \co{\nab^k \Ddtof{Q_{T,I, (D)}} } &\leq C_D B_\la^{1/2} (B_\la N \Xi)^k \badFact \fr{ e_v^{1/2} e_R^{1/2} }{B_\la N}
\end{align}

The parametrix itself 
\begin{align}
{\tilde Q}_{T,I, (D)}^{jl} &= \sum_{(k) = 1}^D e^{i \la \xi_I} \fr{q_{(k)}^{jl}}{\la^{k}}
\end{align}
gains a factor of $\la^{-1}$ compared to the first $u_{T,I, (1)}^l$, so it satisfies the bounds
\begin{align}
\co{{\tilde Q}_{T,I, (D)}^{jl}} &= C_D B_\la^{1/2} (B_\la N \Xi)^k \badFact \fr{ e_v^{1/2} e_R^{1/2} }{B_\la N }
\end{align}
where the powers of $(B_\la N \Xi)$ come from differentiating the oscillatory factor.

Let us check that these are the estimates which are required in Lemma (\ref{lem:iterateLem}).

We essentially checked the $C^0$ bound for 
\begin{align}
\co{ {\tilde Q}_{T,I, (1)}^{jl} } &\leq C B_\la^{-1/2} \badFact \fr{e_v^{1/2} e_R^{1/2}}{N}
\end{align}
in Section (\ref{sec:accountForParams}).  This estimate clearly worsens by a factor $N \Xi$ upon taking spatial derivatives, and worsens by a factor
\begin{align}
|\Ddt| &\leq \tau^{-1} = b^{-1} \Xi e_v^{1/2} \label{eq:costOfDdt}\\
&= B_\la^{1/2}  N^{1/2} \Xi e_R^{1/4} e_v^{1/4}
\end{align}
upon taking a material derivative.  Our desired cost for a material derivative is
\begin{align}
|\Ddt| &\unlhd \Xi' (e_v')^{1/2} \\
&= C N \Xi e_R^{1/2}
\end{align} 
and checking this inequality is equivalent to verifying that
\begin{align}
\left( \fr{e_v}{e_R} \right)^{1/4} &\leq N^{1/2} \\
\Leftrightarrow \left( \fr{e_v}{e_R} \right)^{1/2} &\leq N
\end{align}
which has been guaranteed in Lemma (\ref{lem:iterateLem}).

Every other stress term can be treated similarly with even better bounds, except for the main term in the High-High term, namely

\begin{align}
\pr_j Q_H^{jl} &= \sum_{J \neq {\bar I}}  \la e^{i \la(\xi_I + \xi_J) } u_{H, IJ}^l \\
u_{H, IJ}^l &= -[ v_I \times ( |\nab \xi_J| - 1 ) v_J^l + v_J \times (|\nab \xi_I| - 1) v_I^l ]
\end{align}

The first term in the parametrix expansion for this term takes the form
\begin{align}
{\tilde Q}_{H, IJ, (1)}^{jl} &= e^{i \la (\xi_I + \xi_J) } q^{jl}(\nab(\xi_I + \xi_J))[u_{H, IJ}] \label{eq:mainHighTerm}
\end{align}

For this term, we observed in Section (\ref{sec:accountForParams}) that
\begin{align}
\co{ {\tilde Q}_{H, IJ, (1)}^{jl} } &\leq C e_R b \\
&\leq C B_\la^{-1/2} \left( \fr{e_v^{1/2}}{e_R^{1/2} N} \right)^{1/2} e_R \\
&\leq C B_\la^{-1/2} \badFact \fr{e_v^{1/2} e_R^{1/2}}{N}
\end{align}
thanks to our choice of $\tau$.

For this term one must also check that the cost of a spatial derivative is bounded below $N \Xi$.  To check this bound, we compare
\ali{
\co{ |\nab \xi_J| - 1 } &\leq C B_\la^{-1/2} \left( \fr{e_v^{1/2}}{e_R^{1/2} N} \right)^{1/2} \\
\co{ \nab (|\nab \xi_J| - 1 ) } \leq C \co{\nab^2 \xi_J } &\leq C \Xi
}
to reveal that differentiating worsens the bounds by at most a factor
\ali{
|\nab| &\leq C  B_\la^{1/2} \left(\fr{e_R^{1/2} N}{e_v^{1/2}} \right)^{1/2} \Xi \\
|\nab| &\leq C B_\la^{1/2} N^{1/2} \Xi
}
so that each iteration of the parametrix gains at least
\ali{
\fr{|\nab|}{\la} &\leq C \fr{ B_\la^{1/2} N^{1/2} \Xi }{B_\la N \Xi} \\
&\leq C B_\la^{-1/2} N^{-1/2}
}
which will also gain the factor of $ N^{-1} \Xi^{-1}$ which we require once sufficiently many terms in the parametrix expansion have been taken.  Likewise, we must check that each material derivative of this term costs at most $\tau^{-1}$, which follows from comparing
\ali{
\co{ |\nab \xi_J| - 1 } &\leq  C b =  C B_\la^{-1/2} \left( \fr{e_v^{1/2}}{e_R^{1/2} N} \right)^{1/2} \\
\co{ \Ddt (|\nab \xi_J| - 1 ) } \leq C \co{\Ddt \nab \xi_J } &\leq C \Xi e_v^{1/2} \\
\Rightarrow \left|\Ddt\right| &\leq C B_\la^{1/2} \left( \fr{e_v^{1/2}}{e_R^{1/2} N} \right)^{-1/2} (\Xi e_v^{1/2} ) \\
&\leq C b^{-1} \Xi e_v^{1/2} \leq C \tau^{-1}
}
Thus, the High-High term can be treated in the same way as the Transport term, taking more terms in the parametrix expansion if necessary.  In each case, the terms $Q_T$, $Q_H$ and $Q_L$ enjoy a bound of the same type as the bound on the first term of their parametrix expansion but with a worse constant.  Among all the stress terms, the terms $Q_T$ and $Q_H$ obey the worst bound of
\ali{
\co{ Q_T } + \co{Q_H } &\leq C B_\la^{-1/2} \badFact \fr{e_v^{1/2} e_R^{1/2}}{N}
}

In particular, we have a bound
\ali{
\co{ R_1 } &\leq  C B_\la^{-1/2} \badFact \fr{e_v^{1/2} e_R^{1/2}}{N}
}
for the entirety of the new stress.

At this point, we can finally choose the constant $B_\la$ so that the goal
\begin{align}
\co{ R_1 } &\leq  C B_\la^{-1/2} \badFact \fr{e_v^{1/2} e_R^{1/2}}{N} \\
&\unlhd \fr{1}{20} \badFact \fr{e_v^{1/2} e_R^{1/2}}{N}
\end{align}
is satisfied.

Once $B_\la$ has been chosen for this purpose, we know that each spatial derivative of $R_1$ costs at most
\[ |\nab| \leq C N \Xi \]
which is exactly the target cost for a spatial derivative.  Each material derivative costs
\ali{
 \left|\Ddt \right| &\leq C \tau^{-1} = C \badFact  \Xi e_v^{1/2}
}
which must be below the target cost of
\ali{
\left| \Ddt \right| &\unlhd \Xi' (e_v')^{1/2} \\
&\unlhd C N \Xi e_R^{1/2}
}
and this target cost has been satisfied as long as
\ali{
\left( \fr{e_v}{e_R} \right)^{1/4} &\leq N^{1/2}
}
This inequality follows from the condition
\[ N \geq \left( \fr{e_v}{e_R} \right) \]
and is enough to conclude the proof of Lemma (\ref{lem:iterateLem}).

\section{Transport-Elliptic Estimates} \label{sec:transEllipt}

In order to eliminate the error term in the parametrix it is necessary to solve the underdetermined, elliptic equation
\begin{align} \label{eq:theElliptic}
\begin{split}
\pr_j Q^{jl} &= U^l \\
Q^{jl} &= Q^{lj}
\end{split}
\end{align}

For the proof of the Main Lemma, we need to have estimates for $Q$, spatial derivatives $\nab^k Q$ of $Q$, and also the material derivative $\Ddtof{Q}$ and its spatial derivatives.

First we recall from Lemma (\ref{eq:smoothingOp}) that the divergence operator in equation (\ref{eq:theElliptic}) can be inverted by an operator $\RR$ which has order $-1$.

Rather than commuting $\Ddt$ with the nonlocal operator $\RR$, we find a solution to (\ref{eq:theElliptic}) with good transport properties by solving (\ref{eq:theElliptic}) via a transport equation obtained by commuting the divergence operator with the material derivative.  In this way, we directly obtain estimates for $\Ddtof{Q}$ and its derivatives.

\begin{thm}\label{thm:transportElliptic}  Let $U^l$ be a vector field such that
\begin{align}
\int U^l(t,x) dx &= 0 
\end{align}
for all $t$, and such that 
\begin{align}
\mbox{ supp } U &\subseteq [t(I) - \tau, t(I) + \tau] \times \T^3
\end{align}
for some time $t(I) \in \R$ and some $\tau \leq \Xi^{-1} e_v^{-1/2}$.

Assume also that, for 
\[ \La = B_\la N \Xi \]
the velocity field $U$ and its material derivative $\Ddtof{U}$ obey the estimates
\begin{align}
\label{eq:boundsForU}
\begin{aligned}
\| \nab^k U \|_{C^0_tL_x^4} &\leq A \La^k \quad \quad |k| = 0, \ldots, L \\
\| \nab^k \Ddtof{U} \|_{C^0_t L_x^4} &\leq A \tau^{-1} \La^k \quad \quad k = 0, \ldots, L - 1
\end{aligned}
\end{align}

Then there exists a solution $Q$ to the equation (\ref{eq:theElliptic}) depending linearly on $U$ such that
\begin{enumerate}
\item For all $t$, 
\begin{align}
 \int Q(t,x) dx &= 0
 \end{align}
\item 
\begin{align}
 \mbox{ supp } Q \subseteq [t(I) - 3\tau/2, t(I) + 3\tau/2] \times \T^3 
\end{align}
\item For $k = 0, \ldots, L$ 
\begin{align}
\| \nab^{k + 1} Q \|_{C^0_tL_x^4} &\leq C A \La^k
\end{align}
\item For $k = 0, \ldots, L - 1$ 
\begin{align}
\| \nab^{k + 1} \Ddtof{Q} \|_{C^0_tL_x^4} &\leq C A \tau^{-1} \La^k
\end{align}
\end{enumerate}

\end{thm}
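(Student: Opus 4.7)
\medskip

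\noindent \textbf{Proof proposal.} The plan is to couple an elliptic inversion with a transport equation and close a fixed-point argument. The naive candidate $Q_0^{jl} = \RR[U]^{jl}$ from Proposition \ref{eq:smoothingOp} automatically solves $\pr_j Q_0^{jl} = U^l$ and, by the $L^4$ elliptic estimate, immediately yields the spatial derivative bound $\| \nab^{k+1} Q_0 \|_{C^0_t L^4_x} \leq C A \La^k$ for $k = 0, \ldots, L$. The issue is the material derivative: since $\RR$ is defined through $\De^{-1} \PP$ (hence commutes with $\pr_t$ and with spatial derivatives on mean-zero data but not with $v_\ep \cdot \nab$), one obtains
\begin{align*}
\Ddtof{Q_0} &= \RR[\Ddtof{U}] + [v_\ep^a \pr_a, \RR][U],
\end{align*}
and the commutator term has no a priori reason to satisfy the required estimates. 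To correct this, I will introduce a time cutoff $\chi(t)$ equal to $1$ on $[t(I) - \tau, t(I) + \tau]$ and supported in $[t(I) - 3\tau/2, t(I) + 3\tau/2]$, and define $Q$ as the solution of the transport equation $\Ddt Q^{jl} = G^{jl}$ with zero initial condition at some $t_0 < t(I) - 3\tau/2$, where the forcing $G^{jl}$ is chosen so that the constraint $\pr_j Q^{jl} = \chi(t) U^l$ is propagated. Differentiating in time and using $\Ddtof{(\chi U)} = \chi' U + \chi \Ddtof{U}$ together with $[\pr_j, \Ddt] = (\pr_j v_\ep^a) \pr_a$ shows the consistency condition
\begin{align*}
\pr_j G^{jl} &= \chi'(t) U^l + \chi(t) \Ddtof{U}^l + (\pr_j v_\ep^a)\pr_a Q^{jl}.
\end{align*}

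The construction of $G$ will then be closed by applying $\RR$ and iterating: start with $G_{(0)}^{jl} = \RR[\chi' U + \chi \Ddtof{U}]^{jl}$, define $Q_{(0)}$ by transporting $G_{(0)}$ along the coarse scale flow using Definition \ref{def:csflow}, and at each subsequent stage set $G_{(n+1)}^{jl} = \RR[\chi' U + \chi \Ddtof{U} + (\pr_j v_\ep^a)\pr_a Q_{(n)}]^{jl}$ with $Q_{(n+1)}$ the corresponding transported solution. The map $Q \mapsto Q_{\text{next}}$ is contracting provided the relative gain per iteration $\ep_v \La^{-1} \lesssim \Xi e_v^{1/2} (B_\la N \Xi)^{-1}$ is small, which is automatic from the choice of $B_\la$ at the end of Part \ref{hardPart}. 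The bound $\co{\chi'} \lesssim \tau^{-1}$ matches exactly the required $\tau^{-1}$ loss on the material derivative side, and the elliptic estimate of Proposition \ref{eq:smoothingOp} upgrades the $L^4$ bounds on the forcing to $L^4$ bounds on one derivative of $Q$. The support statement and the mean-zero statement follow automatically from the transport structure together with the fact that $U$ has mean zero.

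The main obstacle will be controlling the commutator term $(\pr_j v_\ep^a)\pr_a Q$ at all derivative orders up to $L-1$ simultaneously with both the spatial and material derivative bounds, without losing factors of $\ep_v^{-1}$ beyond what we can absorb. The key ingredients for closing this are the commutator estimates of Section \ref{coarseScaleFlow} applied to $v_\ep$ (giving $\co{\nab^k v_\ep}$ with only the acceptable $N^{(k-L)_+/L}$ loss), the flow distortion bound (\ref{ineq:distortionBound}) controlling how $\Phi_s$ spreads derivatives on the time scale $\tau \leq \Xi^{-1} e_v^{-1/2}$, and the sharp form of Lemma \ref{eq:flowDerivAndAverage} that derivatives and averages along the flow commute, which keeps the transport step from degrading the $L^4$ estimates. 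Finally I will verify that the inductive hypothesis ``$\nab^k Q$ obeys the claimed $\La^k$ bound and $\nab^k \Ddtof{Q}$ obeys the claimed $\tau^{-1} \La^k$ bound'' propagates through the fixed-point iteration, noting that the contractive gain is uniform across all $k \leq L - 1$.
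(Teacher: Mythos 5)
Your scheme is, at its core, the same as the paper's: you propagate the constraint $\pr_j Q^{jl}=U^l$ by solving a transport equation whose forcing is $\RR$ applied to $\Ddtof{U}$ plus the commutator term $\pr_j v_\ep^a\pr_a Q^{jl}$, and you close by iteration; this is exactly the paper's ``transport--elliptic equation.'' However, two steps fail as written. First, the support statement does \emph{not} ``follow automatically from the transport structure,'' and your cutoff $\chi$ is placed where it does nothing: since $\chi\equiv 1$ on $[t(I)-\tau,t(I)+\tau]\supseteq \mbox{supp}_t\,U$, the term $\chi' U$ vanishes identically (the support of $\chi'$ is disjoint from that of $U$) and $\chi\,\Ddtof{U}=\Ddtof{U}$, so your forcing is just $\RR[\Ddtof{U}+\pr_i v_\ep^b\pr_b Q]$. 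Starting from zero data at $t_0<t(I)-3\tau/2$, the solution indeed vanishes for $t<t(I)-\tau$, but for $t>t(I)+\tau$ it continues to evolve under $\Ddtof{Q}=\RR[\pr_i v_\ep^b\pr_b Q]$ with nonzero data at $t(I)+\tau$, and there is no mechanism forcing it to vanish by $t(I)+3\tau/2$; the constraint merely becomes $\pr_j Q^{jl}=0$, which does not kill $Q$. The cutoff has to be applied to the \emph{solution} at the end: set $Q=\bar\eta(t)Q_*$ with $\bar\eta\equiv 1$ on $\mbox{supp}_t\,U$ and $\mbox{supp }\bar\eta\subseteq[t(I)-3\tau/2,t(I)+3\tau/2]$; then $\pr_j(\bar\eta Q_*)=\bar\eta\,U=U$ still holds, the extra term $\bar\eta'Q_*$ in $\Ddtof{Q}$ costs only $|\bar\eta'|\leq C\tau^{-1}$, which is exactly the allowed loss, and the Gronwall-type exponential factors $e^{C\tau^{-1}|t-t(I)|}$ in the a priori estimates are bounded on the support of $\bar\eta$. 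This is precisely how the paper concludes.

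Second, your contraction criterion is the wrong quantity. The feedback map sends $Q$ to the time integral (along the coarse scale flow, over a window of length $O(\tau)$) of $\RR[\pr_i v_\ep^b\pr_b Q]$, and by the $L^4$ elliptic estimate its gain per iteration is controlled by $\tau\,\co{\nab v_\ep}\leq \tau\,\Xi e_v^{1/2}\leq 1$ -- it has nothing to do with $\ep_v\La^{-1}$, with $B_\la$, or with the parametrix gain $\ep_v^{-1}/\la$ (note also that $\Xi e_v^{1/2}(B_\la N\Xi)^{-1}$ is dimensionful, so it cannot be a ``relative gain''). Since the hypothesis only gives $\tau\Xi e_v^{1/2}\leq 1$, not smallness, you need an extra device to make the map a genuine contraction: either restrict to the regime $\tau=b\,\Xi^{-1}e_v^{-1/2}$ with $b$ small (which is how the lemma is applied, but is not assumed in the statement), or, as the paper does, work in a norm weighted by $e^{-B\tau^{-1}|t-t(I)|}$ for $B$ large, which makes the contraction constant as small as desired and simultaneously encodes the admissible exponential growth of the a priori bounds. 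With those two repairs -- cutting off $Q_*$ in time afterwards and running the fixed point in the weighted $C^0_tW^{1,4}_x$ space -- your argument coincides with the paper's proof.
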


The proof of Theorem (\ref{thm:transportElliptic}) relies on a transport equation which we now derive.  To set up the initial data, we set
\begin{align}
Q^{jl}(t(I), x) &= \RR^{jl}[U(t(I), \cdot)](x) 
\end{align}
With this choice of data, the equation (\ref{eq:theElliptic}) is satisfied at the initial time $t(I)$.  The divergence equation will remain satisfied at future times if we ensure that
\begin{align}
\Ddt \pr_j Q^{jl} = (\pr_t + v_\ep^i \pr_i)\pr_jQ^{jl} &= \Ddtof{U^l}
\end{align}
Commuting $\Ddt$ with the divergence operator leads to an underdetermined elliptic equation
\begin{align} \label{eq:unDetEllipTrans}
\Ddt \pr_j Q^{jl} = \pr_j[ \Ddt Q^{jl}] - \pr_j v_\ep^i \pr_iQ^{jl}  &= \Ddtof{U^l}
\end{align}
We solve the above equation by inverting the divergence operator, which leads to an equation which we call the Transport-Elliptic equation.

\begin{lem}  Suppose that $U^l$ is a smooth vector field of integral $0$, then if $Q^{jl}$ solves the transport equation
\begin{align}
\Ddtof{Q^{jl}} &= \RR^{jl}[ \pr_i v_\ep^b \pr_b Q^{ik} + \Ddtof{U^k} ] \label{eq:transEllipt1}
\end{align}
with initial data
\begin{align}
Q^{jl}(t(I), x) &= \RR^{jl}[U(t(I), \cdot)](x) 
\end{align}
then $Q$ also solves the equation
\begin{align} \label{eq:theElliptic2}
\pr_j Q^{jl} &= U^l
\end{align}
with 
\begin{align}
\int Q dx &= 0
\end{align}
for all $t$.
\end{lem}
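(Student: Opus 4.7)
The plan is to introduce the error vector field $E^l(t,x) \equiv \pr_j Q^{jl}(t,x) - U^l(t,x)$ and to show that it satisfies the homogeneous transport equation $\Ddt E^l = 0$ with vanishing initial data; uniqueness for the transport equation then forces $E^l \equiv 0$, which is precisely the desired identity (\ref{eq:theElliptic2}). At the initial time, the identity $\pr_j \RR^{jl}[\cdot] = \PP[\cdot]$ from Proposition (\ref{eq:smoothingOp}) together with the integral-zero hypothesis on $U$ gives $E^l(t(I),x) = \PP U^l(t(I),\cdot) - U^l(t(I),\cdot) = 0$.

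For the evolution, I would combine the elementary commutator identity $[\Ddt,\pr_j] = -\pr_j v_\ep^i \pr_i$ with the transport equation (\ref{eq:transEllipt1}) and $\pr_j \RR^{jl} = \PP$ to obtain
\[
\Ddt \pr_j Q^{jl} \;=\; \pr_j \Ddtof{Q^{jl}} - \pr_j v_\ep^i \pr_i Q^{jl} \;=\; \PP\bigl[\pr_i v_\ep^b \pr_b Q^{il} + \Ddtof{U^l}\bigr] - \pr_i v_\ep^b \pr_b Q^{il},
\]
after relabelling the dummy indices in the commutator term. The main (and only) delicate step will be to verify that the vector field inside $\PP$ already has integral zero, so that $\PP$ acts as the identity on it and the two copies of $\pr_i v_\ep^b \pr_b Q^{il}$ cancel, leaving $\Ddt \pr_j Q^{jl} = \Ddtof{U^l}$ and hence $\Ddt E^l = 0$. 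For the $\Ddtof{U^l}$ piece, the hypothesis $\int U^l(t,\cdot)\,dx = 0$ together with $\pr_i v_\ep^i = 0$ gives $\int \Ddtof{U^l}\,dx = \pr_t \int U^l\,dx + \int v_\ep^i \pr_i U^l\,dx = 0 - \int (\pr_i v_\ep^i) U^l\,dx = 0$; for the $\pr_i v_\ep^b \pr_b Q^{il}$ piece, a single integration by parts in the $\pr_b$ slot plus the identity $\pr_b \pr_i v_\ep^b = \pr_i \pr_b v_\ep^b = 0$ again yields integral zero.

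The remaining claim $\int Q(t,\cdot)\,dx = 0$ will follow by the parallel scalar argument applied to $\bar Q^{jl}(t) \equiv \int Q^{jl}(t,x)\,dx$: the initial value vanishes by the integral-zero property of $\RR$ stated in Proposition (\ref{eq:smoothingOp}), and the time derivative $\pr_t \bar Q^{jl} = \int \Ddtof{Q^{jl}}\,dx = \int \RR^{jl}[\,\cdots\,]\,dx = 0$ by the same property of $\RR$, where the conversion between $\pr_t$ and $\Ddt$ under the integral sign once again uses incompressibility of $v_\ep$. I expect no further obstacle: once the two integral-zero checks that license removing $\PP$ are in place, the lemma reduces to a standard uniqueness statement for a transport equation with incompressible coefficient.
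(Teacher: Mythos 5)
Your proposal is correct and follows essentially the same route as the paper: check the divergence identity at $t = t(I)$ via $\pr_j \RR^{jl} = \PP$ and the mean-zero hypothesis, commute $\Ddt$ with $\pr_j$, use the integral-zero facts (from incompressibility of $v_\ep$) to drop $\PP$ so the quadratic terms cancel, and conclude $\pr_j Q^{jl} - U^l \equiv 0$ by uniqueness for the transport equation, with $\int Q\,dx = 0$ following from the mean-zero property of $\RR$. Your direct integration by parts showing $\int \pr_i v_\ep^b \pr_b Q^{il}\,dx = 0$ is the same cancellation the paper packages as a double divergence, so no substantive difference.
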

\begin{proof}
At the initial time $t = t(I)$, we have
\begin{align}
\pr_j Q^{jl}(t(I), x) &= \pr_j \RR^{jl}[ U(t(I), \cdot) ] \\
&= \PP U^l(t(I), x)
\end{align}
where $\PP$ is the operator which projects to integral $0$ vector fields.  Because $U$ has integral $0$, we have
\[ \PP U^l(t, x) = U^l(t,x) \]
so it is clear that equation (\ref{eq:theElliptic2}) is satisfied at time $t(I)$. 

It now suffices to verify that (\ref{eq:unDetEllipTrans}) is satisfied by the uniqueness of solutions to the initial value problem for transport equations.  This calculation will rely crucially on the fact that $v_\ep$ is divergence free, which implies, for example, that the term
\begin{align}
\pr_j v_\ep^b \pr_b Q^{jl} &= \pr_j \pr_b [ v_\ep^b Q^{jl} ]
\end{align}
has integral $0$.

By taking the divergence of (\ref{eq:transEllipt1}), we compute
\begin{align}
 \pr_j[ \Ddt Q^{jl}] &= \PP[ \pr_j v_\ep^b \pr_b Q^{jl} ] + \PP \Ddtof{U^l} \\
 &= \PP\left[ \pr_j \pr_b [ v_\ep^b Q^{jl} ] \right] + \PP \Ddtof{U^l} \\
 &= \pr_j \pr_b [ v_\ep^b Q^{jl} ] + \PP \Ddtof{U^l} \label{eq:almostOKelTr}
\end{align}

Since $U^l$ has integral $0$ and $v_\ep$ is divergence free, we have that 
\begin{align}
\int \Ddtof{U} dx &= \fr{d}{dt} \int U dx = 0
\end{align}
implying that $\PP \Ddtof{U^l} = \Ddtof{U^l}$.

From (\ref{eq:almostOKelTr}), we have 
\begin{align}
 \pr_j[ \Ddt Q^{jl}] &= \pr_j[ v_\ep^b \pr_b Q^{jl} ] + \Ddtof{U^l} 
\end{align}
which is the equation (\ref{eq:unDetEllipTrans}) that we had to verify.

To see that $\int Q dx = 0$ for all $t$, first observe that 
\begin{align}
\int Q(t(I), x) dx &= 0 
\end{align}
by the property that the operator $\RR$ maps to integral $0$ tensors, and for the same reason,
\begin{align}
\fr{d}{dt} \int Q(t(I), x) dx &= \int \Ddtof{Q} dx = 0
\end{align}
as well.
\end{proof}

Our next goal for the section is to establish existence and a-priori estimates for the solution to the PDE (\ref{eq:transEllipt1}).

\subsection{Existence of Solutions for the Transport-Elliptic Equation} \label{sec:ellipTransExistence}

Global solutions to the linear equation
\begin{align}
\label{eq:transEllipt2}
\begin{aligned}
\Ddtof{Q^{jl}} &= \RR^{jl}[ \pr_i v_\ep^b \pr_b Q^{ik} + \Ddtof{U^k} ] \\
Q^{jl}(t(I), x) &= \RR^{jl}[ U(t(I), \cdot)](x)
\end{aligned}
\end{align}
can be easily constructed by the method of Picard iteration, once the appropriate spaces have been identified.  

To begin the Picard iteration we define an operator $T$ acting on symmetric $(2,0)$-tensor fields which solves the transport equation
\begin{align}
\label{eq:defineIteration}
\begin{aligned}
\Ddt[TQ]^{jl} &= \RR^{jl}[ \pr_i v_\ep^b \pr_b Q^{ik} + \Ddtof{U^k} ] \\
TQ^{jl}(t(I), x) &= \RR^{jl}[ U(t(I), \cdot)](x)
\end{aligned}
\end{align}
so that a fixed point of $T$ is a solution to the initial value problem (\ref{eq:transEllipt2}).

The operator $T$ must be defined on an appropriately defined complete metric space, and the key to identifying this metric space is the following, a-priori estimate
\begin{prop}[A priori estimate] \label{prop:transEllAPriori}
If $Q$ solves (\ref{eq:transEllipt2}) and $U$ satisfies the bounds (\ref{eq:boundsForU}), then there are constants $C_1, C_2$ such that
\begin{enumerate}
\item
\begin{align}
\int Q(t,x) dx &= 0 
\end{align}
\item
\begin{align}
\sum_{|a| = 1} \int |\pr_a Q|^4(t,x) dx &\leq C_1 A^4 e^{C_2 \tau^{-1} |t - t(I)|} \label{ineq:firstOrderEllTr}
\end{align}
\end{enumerate}

\end{prop}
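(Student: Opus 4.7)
\medskip

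The plan is to establish both statements by direct energy methods applied to the transport equation (\ref{eq:transEllipt2}) itself, exploiting the fact that $\RR^{jl}$ produces integral-zero tensors and satisfies the $L^4$ elliptic estimate (\ref{bound:stdElliptic}). For the first claim, I would integrate the equation $\Ddtof{Q^{jl}} = \RR^{jl}[\pr_i v_\ep^b \pr_b Q^{ik} + \Ddtof{U^k}]$ over $\T^3$; since $v_\ep$ is divergence-free, $\int \Ddtof{Q^{jl}} \, dx = \fr{d}{dt} \int Q^{jl} \, dx$, and the right-hand side vanishes because $\RR^{jl}$ lands in the space of integral-zero tensors. Combined with the fact that the initial datum $\RR^{jl}[U(t(I),\cdot)]$ also has integral zero, this gives $\int Q(t,x) \, dx = 0$ for all $t$.

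For the main $L^4$ estimate, I would differentiate (\ref{eq:transEllipt2}) in space and commute the spatial derivative with the material derivative to obtain
\begin{align*}
\Ddtof{\pr_a Q^{jl}} &= \pr_a\RR^{jl}[\pr_i v_\ep^b \pr_b Q^{ik} + \Ddtof{U^k}] - \pr_a v_\ep^c \pr_c Q^{jl}.
\end{align*}
Then I would compute $\fr{d}{dt} \int |\pr_a Q|^4\, dx = 4 \int |\pr_a Q|^2 \pr_a Q \cdot \Ddtof{\pr_a Q}\, dx$, using that the spatial flux term generated by $v_\ep \cdot \nab$ integrates to zero by incompressibility. Applying H\"older to bound this by $4 \|\pr_a Q\|_{L^4}^3 \|\Ddtof{\pr_a Q}\|_{L^4}$, I would control the right-hand side using Proposition (\ref{eq:smoothingOp}) to get $\|\pr_a \RR^{jl}[\cdot]\|_{L^4} \leq C \|\cdot\|_{L^4}$, giving
\begin{align*}
\|\Ddtof{\pr_a Q}\|_{L^4} &\leq C\|\nab v_\ep\|_{C^0}\|\nab Q\|_{L^4} + C\|\Ddtof{U}\|_{L^4} \leq C\tau^{-1}(\|\nab Q\|_{L^4} + A),
\end{align*}
where I use $\|\nab v_\ep\|_{C^0} \leq \Xi e_v^{1/2} \leq \tau^{-1}$ from the hypothesis $\tau \leq \Xi^{-1} e_v^{-1/2}$ and the bound $\|\Ddtof{U}\|_{L^4} \leq A\tau^{-1}$.

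Setting $f(t) = \left(\sum_{|a|=1} \|\pr_a Q\|_{L^4}^4\right)^{1/4}$ and summing over the multi-indices $a$, the above yields a differential inequality of the form $f'(t) \leq C\tau^{-1}(f(t) + A)$. At the initial time, the elliptic estimate gives $f(t(I)) \leq C\|U(t(I),\cdot)\|_{L^4} \leq CA$. Gronwall's inequality applied to $f + A$ then produces $f(t) + A \leq CA\, e^{C\tau^{-1}|t - t(I)|}$, which rearranges to the desired bound (\ref{ineq:firstOrderEllTr}) upon raising to the fourth power.

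The main technical point to verify carefully is the commutator identity and the fact that the term $\pr_a v_\ep^c \pr_c Q^{jl}$ arising from commuting $\pr_a$ with $\Ddt$ obeys the same $L^4$ bound $\tau^{-1}\|\nab Q\|_{L^4}$ as the principal term $\pr_a\RR^{jl}[\pr_i v_\ep^b \pr_b Q^{ik}]$, so that both contribute the same factor of $\tau^{-1}$ that drives the Gronwall exponent. No obstacle beyond this bookkeeping is expected, since the elliptic estimate is used only at the level of boundedness on $L^4$ and no high-order commutators need to be controlled for this first-order estimate.
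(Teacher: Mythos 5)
Your proposal is correct and follows essentially the same route as the paper: integrate the equation for the mean-zero claim, then differentiate (\ref{eq:transEllipt2}) in space, commute $\pr_a$ with $\Ddt$, bound the resulting terms in $L^4$ using the boundedness of $\nab\RR$ from (\ref{bound:stdElliptic}) together with $\co{\nab v_\ep}\leq \Xi e_v^{1/2}\leq \tau^{-1}$, and close with Gronwall from the initial bound $\|\nab\RR[U(t(I),\cdot)]\|_{L^4}\leq C\|U(t(I),\cdot)\|_{L^4}\leq CA$. The only cosmetic difference is that the paper runs Gronwall directly on $\sum_{|a|=1}\int|\pr_a Q|^4\,dx$ via Young's inequality with exponents $\tfrac34+\tfrac14$, whereas you pass to the fourth root $f(t)$ and a linear inequality $f'\leq C\tau^{-1}(f+A)$, which requires only the standard $\ep$-regularization at points where $f$ vanishes.
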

\begin{proof}

At time $t = t(I)$, the bound (\ref{ineq:firstOrderEllTr}) is a consequence of the bound
\[ \| \nab \RR[U] \|_{L^4} \leq C \| U \|_{L^4} \]
so by Gronwall it suffices to prove that
\begin{align}
| \sum_{|a| = 1} \fr{d}{dt} \int |\pr_a Q|^4(t,x) dx | &\leq C_2 \tau^{-1} ( \sum_{|b| = 1} \sum_{j,l = 1}^3 \int |\pr_b Q^{jl}|^4(t,x) dx + A^4 )  \label{ineq:wantGron}
\end{align}
To establish this inequality, we let $\pr_a$ be any derivative of order $|a| = 1$, and for any component $\pr_a Q^{jl}$ of $\pr_a Q$ we compute the time derivative
\begin{align}
\fr{d}{dt} \int |\pr_a Q^{jl}|^4(t,x) dx  &= \int \Ddt  |\pr_a Q^{jl}|^4 dx \\
&= \int 4 (\pr_a Q^{jl})^3[ \Ddt \pr_a Q^{jl} ] dx \\
&= \int 4 (\pr_a Q^{jl})^3[\pr_a(\Ddt Q^{jl}) - \pr_a v_\ep^i \pr_i Q^{jl} ] dx \label{eq:theTwoEllTrTerms}
\end{align}
Since $\co{ \nab v_\ep } \leq \Xi e_v^{1/2} < \tau^{-1}$, the second of these terms can be bounded by 
\begin{align}
| 4 \int (\pr_a Q^{jl})^3 (\pr_a v_\ep^i \pr_i Q^{jl} ) dx | \leq C \tau^{-1} \int |\pr_a Q^{jl}|^4(t,x) dx
\end{align}

For the first term in (\ref{eq:theTwoEllTrTerms}), we use the equation (\ref{eq:transEllipt2}) to estimate
\begin{align}
| \int (\pr_a Q^{jl})^3[ \pr_a(\Ddt Q^{jl}) ] dx | &= | \int (\pr_a Q^{jl})^3\left(\pr_a \RR^{jl}[ \pr_i v_\ep^b \pr_b Q^{ik} + \Ddtof{U^k} ] \right) dx | \\
&\leq \int \left( |\pr_a Q^{jl}|^3 \left|\pr_a \RR^{jl}[ \pr_i v_\ep^b \pr_b Q^{ik}]\right| + |\pr_a Q|^3 |\pr_a \RR^{jl}[\Ddtof{U^k}]| \right) dx \label{ineq:twoTermUsedEqn}
\end{align}

The first of these terms is similar to what we have already encountered, and can be estimated using H{\" o}lder's inequality, 
\begin{align}
\int |\pr_a Q^{jl}|^3 \left|\pr_a \RR^{jl}[ \pr_i v_\ep^b \pr_b Q^{ik}] \right| dx &\leq \| \nab Q \|_{L^4}^{3} \| \nab \RR^{jl}[ \pr_i v_\ep^b \pr_b Q^{ik}] \|_{L^4} \\
&\leq  \| \nab Q^{jl} \|_{L^4}^{3} \| \pr_i v_\ep^b \pr_b Q^{ik} \|_{L^4} \\
&\leq \Xi e_v^{1/2} \| \nab Q \|_{L^4}^4 \\
&\leq C \tau^{-1}  \sum_{|b| = 1} \sum_{j,l = 1}^3 \int |\pr_b Q^{jl}|^4(t,x) dx
\end{align}

The latter term in (\ref{ineq:twoTermUsedEqn}) can be estimated equivalently by applying Young's inequality with exponents $\fr{3}{4} + \fr{1}{4} = 1$ to the pointwise product
\begin{align}
|\pr_a Q^{jl}|^3 |\RR^{jl}[\Ddtof{U}]| &= ( \tau^{-4/3} |\pr_a Q^{jl}|^3 )(\tau^{3/4} |\pr_a\RR^{jl}[\Ddtof{U}]| ) \\
&\leq \fr{3}{4} \tau^{-1} |\pr_a Q^{jl}|^4 + \fr{1}{4} \tau^3 \left|\pr_a\RR^{jl}[\Ddtof{U}] \right|^4
\end{align}
This calculation gives the bounds
\begin{align}
\int |\pr_a Q^{jl}|^3 |\RR^{jl}[\Ddtof{U}]| dx &\leq C \int (\tau^{-1} |\pr_a Q^{jl}|^4 + \tau^3 \left|\pr_a \RR^{jl}[\Ddtof{U}] \right|^4 ) dx \\
&\leq C \tau^{-1} \left( \int |\pr_a Q^{jl}|^4 dx +  \int \tau^4 \left| \Ddtof{U} \right|^4 dx \right) \\
&\leq C \tau^{-1} \left(\int |\pr_a Q^{jl}|^4 dx +  A^4 \right)
\end{align}
which is the estimate required.
\end{proof}

The proof of the above estimate can also be used to establish that on the space 
\[ X \subseteq C_t^0 W_x^{1,4}(\R \times \T^3 ; \SS ) \subseteq C^0_{t,x}(\R \times \T^3 ; \SS )
\] 
defined by the conditions
\begin{align*} 
X = \left\{ Q^{jl} : \R \times \T^3 \to \SS \right.&~|~ \int Q(t,x) dx = 0 ~\forall~ t \in \R, Q^{jl}(t(I), x) = \RR^{jl}[ U(t(I), \cdot)](x) \\
&\left. \sum_{|a| = 1} \sum_{i, j = 1}^3 \int |\pr_a Q^{ij}|^4(t,x) dx \leq C_1 A^4 e^{C_2 \tau^{-1} |t - t(I)|} \right\}
\end{align*}
the map $T$ defined in (\ref{eq:defineIteration}) maps $X$ to itself when $C_1$ and $C_2$ are chosen appropriately.  Furthermore, when the space $X$ is endowed with the metric deriving from the norm 
\begin{align}
|| Q||_X &= \sup_{t \in \R} e^{-B \tau^{-1} |t - t(I)| } \left( \sum_{|a| = 1} \sum_{j,l = 1}^3 \int |\pr_a Q^{jl}|^4(t,x) dx \right)^{1/4}
\end{align}
then $X$ is a complete metric space and essentially the same proof as that of Proposition (\ref{prop:transEllAPriori}) also shows that $T$ is a contraction on $X$ whenever $B$ is a sufficiently large constant.  Therefore a unique, global solution to the initial value problem (\ref{eq:transEllipt2}) exists.

\subsection{Spatial derivative estimates for the solution to the Transport-Elliptic equation}

Following the methods of Section (\ref{sec:ellipTransExistence}), we can also give bounds on the higher derivatives of solutions to the equation (\ref{eq:transEllipt2}).

\begin{prop}  The solution $Q$ to the equation (\ref{eq:transEllipt2}) obeys the bounds
\begin{align} \label{bd:spatialDerivsEllTr}
\| \nab^{k + 1} Q \|_{L_x^4} &\leq C_1 \La^k e^{C_2 \tau^{-1} | t - t(I)| } A 
\end{align}
for $k = 0, \ldots, L$
\end{prop}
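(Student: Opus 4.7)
The plan is to prove (\ref{bd:spatialDerivsEllTr}) by induction on $k$, following the $L^4$ energy argument of Proposition (\ref{prop:transEllAPriori}). The base case $k = 0$ is exactly the a priori estimate already established, so I assume the bound holds for all orders through $k$ and then establish it for $k+1$.

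The initial data bound at $t = t(I)$ follows directly from the elliptic estimate (\ref{bound:stdElliptic}) and the hypothesis (\ref{eq:boundsForU}):
\[ \| \nab^{k+1} Q(t(I), \cdot) \|_{L^4} = \| \nab^{k+1} \RR[U(t(I), \cdot)] \|_{L^4} \leq C \| \nab^k U(t(I), \cdot) \|_{L^4} \leq C A \La^k. \]
For the evolution, I would apply $\nab^{k+1}$ to (\ref{eq:transEllipt2}) and commute through $\Ddt$ to obtain
\[ \Ddt \nab^{k+1} Q^{jl} = \nab^{k+1} \RR^{jl}\bigl[\pr_i v_\ep^b \pr_b Q^{ik} + \Ddtof{U^k}\bigr] - [\nab^{k+1}, v_\ep \cdot \nab] Q^{jl}, \]
then compute
\[ \fr{d}{dt} \int |\nab^{k+1} Q^{jl}|^4 dx = 4 \int (\nab^{k+1} Q^{jl})^3 \, \Ddt \nab^{k+1} Q^{jl} \, dx, \]
with the goal of establishing the differential inequality
\[ \left| \fr{d}{dt} \sum_{|a| = k+1} \sum_{j,l} \int |\pr_a Q^{jl}|^4 dx \right| \leq C \tau^{-1} \Bigl( \sum_{|b| = k+1} \sum_{j,l} \int |\pr_b Q^{jl}|^4 dx + (A \La^k)^4 \Bigr), \]
from which Gronwall yields the claimed estimate.

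The heart of the proof is to expand the right hand side into three types of contributions. First, the top-order commutator piece $(\nab v_\ep) \cdot \nab^{k+1} Q$ comes with coefficient $\co{\nab v_\ep} \leq \Xi e_v^{1/2} \leq \tau^{-1}$ and thus feeds directly into the Gronwall multiplier. Second, the lower-order commutator pieces $(\nab^a v_\ep) \cdot \nab^{k+2-a} Q$ for $2 \leq a \leq k+1$, together with the analogous Leibniz terms produced by $\nab^{k+1} \RR[\pr_i v_\ep^b \pr_b Q^{ik}]$ (using that $\RR$ commutes with spatial derivatives and satisfies (\ref{bound:stdElliptic})), are bounded using the key estimate $\co{\nab^a v_\ep} \leq C_a \La^{a-1} \Xi e_v^{1/2}$ deduced from (\ref{bound:nabkvep}), together with the inductive hypothesis $\| \nab^{k+2-a} Q \|_{L^4} \leq C A \La^{k+1-a}$; their product has size $C \Xi e_v^{1/2} A \La^k \leq C \tau^{-1} A \La^k$, exactly the forcing size required. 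Third, the source $\nab^{k+1} \RR[\Ddtof{U^k}]$ is estimated by $C \| \nab^k \Ddtof{U} \|_{L^4} \leq C A \tau^{-1} \La^k$ via (\ref{bound:stdElliptic}) and (\ref{eq:boundsForU}), and then absorbed into the Gronwall and forcing terms by the same Young's inequality with weights $3/4 + 1/4 = 1$ used in Proposition (\ref{prop:transEllAPriori}).

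The main obstacle will be the bookkeeping in the middle category: confirming that every product $\co{\nab^a v_\ep} \| \nab^{k+2-a} Q \|_{L^4}$ arising from the full Leibniz expansions of both $[\nab^{k+1}, v_\ep \cdot \nab] Q$ and $\nab^{k+1} \RR[\pr_i v_\ep^b \pr_b Q^{ik}]$ fits uniformly inside $C \tau^{-1} A \La^k$. This ultimately reduces to verifying $\co{\nab^a v_\ep} \leq C_a \La^{a-1} \Xi e_v^{1/2}$ for all $a \geq 1$, which follows from (\ref{bound:nabkvep}) and the elementary inequality $N^{(a-L)_+/L} \leq N^{a-1}$ that holds because $L \geq 1$ and $N \geq 1$.
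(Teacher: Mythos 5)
Your proposal is correct and rests on the same core mechanism as the paper's proof: an $L^4$ energy estimate obtained by commuting $\Ddt$ with spatial derivatives (the commutator costing $\co{\nab v_\ep}\leq \Xi e_v^{1/2}\leq \tau^{-1}$), the $L^4$ boundedness (\ref{bound:stdElliptic}) of $\RR$ to control the term $\RR^{jl}[\pr_i v_\ep^b \pr_b Q^{ik}]$, Young's inequality with exponents $3/4+1/4$ to absorb the $\RR^{jl}[\Ddtof{U}]$ source, and Gronwall. The organizational difference is that you induct on the derivative order, whereas the paper aggregates all orders $1,\ldots,M$ into the single weighted energy $E_M[Q](t)=\sum_{K}\sum_{|a|=K}\int |\pr_a Q^{jl}/\La^{K-1}|^4\,dx$ and runs one Gronwall argument. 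The aggregated energy buys a small simplification that your write-up glosses over: the bound you may invoke for the lower-order derivatives is not $\|\nab^{k+2-a}Q\|_{L^4}\leq CA\La^{k+1-a}$ but carries the factor $e^{C_2\tau^{-1}|t-t(I)|}$, so the forcing in your level-$(k+1)$ differential inequality grows exponentially in time rather than being the constant $(A\La^k)^4$ you wrote. This does not break the induction --- Gronwall with exponentially growing forcing still yields a bound of the stated form with a larger $C_2$, and since there are only $L+1$ levels the constants stay finite --- but it is exactly the wrinkle the single energy avoids, since there every order sits inside the one Gronwall quantity. Your counting of the remaining terms matches the paper's, including the key inequality $\co{\nab^a v_\ep}\leq C_a\La^{a-1}\Xi e_v^{1/2}$ from (\ref{bound:nabkvep}); just be sure the top-order Leibniz term produced by $\nab^{k+1}\RR^{jl}[\pr_i v_\ep^b\pr_b Q^{ik}]$, the one proportional to $\nab^{k+1}Q$, is routed into the Gronwall term alongside your first category rather than to the inductive hypothesis, which your estimates already accommodate.
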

\begin{proof}
At the time $t(I)$, the above inequality follows from the bounds (\ref{bound:stdElliptic}) for $\RR$ and the assumed bounds (\ref{eq:boundsForU}) on $U$ and its spatial derivatives.  

To establish the estimates (\ref{bd:spatialDerivsEllTr}), we follow the strategy of Section (\ref{sec:ellipTransExistence}) and define a weighted energy
\begin{defn}
\begin{align}
E_M[Q](t) &\equiv \sum_{K = 1}^M \sum_{j,l = 1}^3 \sum_{|a| = K} \int \left| \fr{\pr_a Q^{jl}}{\La^{K - 1}} \right|^4 dx
\end{align}
\end{defn}

Since \[ E_M[Q](t(I)) \leq C_1 A^4, \]
the exponential bound (\ref{bd:spatialDerivsEllTr}) follows from Gronwall after we establish a differential inequality
\begin{align} \label{eq:preGronEllTr}
|\fr{d E_M[Q]}{dt}| &\leq C_2 \tau^{-1} ( E_M[Q] + A^4 )
\end{align}

To establish the differential inequality, we fix a spatial derivative $\pr_a$ of order $K$ and components $jl$ of $\pr_a Q^{jl}$ and calculate
\begin{align}
\fr{d}{dt}  \int \left| \fr{\pr_a Q^{jl}}{\La^{K - 1}} \right|^4 dx &=  \int \Ddt \left| \fr{\pr_a Q^{jl}}{\La^{K - 1}} \right|^4 dx \\
&= 4 \int \left( \fr{\pr_a Q^{jl}}{\La^{K - 1}} \right)^3 \cdot \La^{-(K - 1) } [ \Ddt \pr_a Q^{jl} ] \label{eq:beforeCommuteEllTr}
\end{align}

\paragraph{Commutator Terms}

As in Section (\ref{sec:ellipTransExistence}), we now commute $\Ddt = (\pr_t + v_\ep^b \pr_b)$ with $\pr_a = \pr_{a_1} \cdots \pr_{a_K}$ to see that
\begin{align}
\Ddt \pr_a Q^{jl} &= \pr_a[ \Ddtof{Q^{jl}} ] - \sum_{|a_1| + |a_2| = K, |a_1| \geq 1} C_{a_1, a_2}^a \pr_{a_1} v_\ep^b \pr_{a_2} \pr_b Q^{jl}
\end{align}

The commutator terms are acceptable in (\ref{eq:beforeCommuteEllTr}), because they give rise to terms bounded by
\begin{align}
 \int \left| \fr{\pr_a Q^{jl}}{\La^{K - 1}} \right|^3 \cdot( \La^{-(K - 1) } | \pr_{a_1} v_\ep^b \pr_{a_2} \pr_b Q^{jl} |) dx
 &\leq \int \left| \fr{\pr_a Q^{jl}}{\La^{K - 1}} \right|^3 \left| \fr{\pr_{a_1} v_\ep^b}{\La^{|a_1| - 1}} \right| \left| \fr{ \pr_{a_2} \pr_b Q^{jl} }{\La^{|a_2|}} \right|
\end{align}
Because $|a_1| \geq 1$, we have $|a_2| \leq K - 1$, and we can estimate the above by  
\begin{align}
 \int \left| \fr{\pr_a Q^{jl}}{\La^{K - 1}} \right|^3 \cdot( \La^{-(K - 1) } | \pr_{a_1} v_\ep^b \pr_{a_2} \pr_b Q^{jl} |) dx &\leq \co{\fr{\pr_{a_1} v_\ep^b}{\La^{|a_1| - 1}}} E_M[Q](t) \\
 &\leq C \fr{\Xi^{|a_1|}}{\La^{|a_1| - 1} } e_v^{1/2} E_M[Q](t) \\
 &\leq C \Xi e_v^{1/2} E_M[Q](t) \\
 &\leq C \tau^{-1} E_M[Q](t)
\end{align}

\paragraph{Concluding the Estimate for (\ref{eq:beforeCommuteEllTr}) }

To conclude the proof of (\ref{eq:preGronEllTr}), it remains to show that for all multi-indices of order $|a| = K - 1$, we have
\begin{align}
| \int \left( \fr{\pr_a Q^{jl}}{\La^{K - 1}} \right)^3 \cdot \La^{-(K - 1) } [ \pr_a \Ddtof{Q^{jl}} ] dx | &\leq C \tau^{-1} ( E_M[Q](t) + A^4 )
\end{align}

Using the equation (\ref{eq:transEllipt2}), the integral inside the absolute values can be written as a sum
\begin{align}
\int \left( \fr{\pr_a Q^{jl}}{\La^{K - 1}} \right)^3 \cdot \La^{-(K - 1) } [ \pr_a \Ddtof{Q^{jl}} ] dx &= \int \left( \fr{\pr_a Q^{jl}}{\La^{K - 1}} \right)^3 \cdot \La^{-(K - 1) } \left( \pr_a \RR^{jl}[ \pr_i v_\ep^b \pr_b Q^{ik} ] \right) dx \\
&+  \int \left( \fr{\pr_a Q^{jl}}{\La^{K - 1}} \right)^3 \cdot \La^{-(K - 1) } \pr_a \RR^{jl}[\Ddtof{U}] dx \label{eq:termIIellTr}\\
&= I + II
\end{align}

Let us first estimate the term $I$.  By H{\" o}lder's inequality, $I$ is bounded by
\begin{align} \label{ineq:ellDerivsI}
|I| &\leq \| \fr{\pr_a Q^{jl}}{\La^{K - 1}} \|_{L_x^4}^{3} \cdot \La^{-(K - 1)} \| \pr_a \RR^{jl}[ \pr_i v_\ep^b \pr_b Q^{ik}  \|_{L_x^4}
\end{align}
By the boundedness properties (\ref{bound:stdElliptic}) of $\RR$, we have that 
\begin{align}
\La^{-(K - 1)} \| \pr_a \RR^{jl}[ \pr_i v_\ep^b \pr_b Q^{ik} ] \|_{L_x^4} &\leq C \La^{-(K - 1)}  \| \nab^{K - 1} [ \pr_i v_\ep^b \pr_b Q^{ik} ] \|_{L_x^4} \\
&\leq C \La^{-(K - 1)}  \sum_{|a_1| + |a_2| = K - 1} \| \pr_{a_1} \nab v_\ep^b \|_{C^0} \| \pr_{a_2} \pr_b Q \|_{L^4} \\
&\leq C \sum_{|a_1| + |a_2| = K - 1} \left\| \fr{\pr_{a_1} \nab v_\ep^b}{\La^{|a_1|}} \right\|_{C^0} \left\| \fr{\pr_{a_2} \pr_b Q}{\La^{|a_2|}} \right \|_{L^4} \\
&\leq C \Xi e_v^{1/2} E_M[Q]^{1/4}(t) \\
&\leq C \tau^{-1} E_M[Q]^{1/4}(t)
\end{align}

Therefore, from (\ref{ineq:ellDerivsI}) $I$ is bounded by
\begin{align}
I &\leq C \tau^{-1} E_M[Q](t)
\end{align}

Now it remains to bound $II$ in line (\ref{eq:termIIellTr}).  We estimate this term by
\begin{align}
\int \left( \fr{\pr_a Q^{jl}}{\La^{K - 1}} \right)^3 \cdot \left( \La^{-(K - 1) } \right.&\left.\pr_a \RR^{jl}[\Ddtof{U}] \right) dx \leq C \tau^{-1} \int \left| \fr{\pr_a Q^{jl}}{\La^{K - 1}} \right|^4 dx \\
&+ \int \tau^3 \La^{-4(K - 1)} \left| \pr_a \RR^{jl}[\Ddtof{U}] \right|^4 dx \\
&\leq C \tau^{-1}\left( E_M[Q](t) + \tau^4 \La^{-4(K - 1)} \int \left| \nab^{K - 1} [\Ddtof{U}] \right|^4 dx \right) \\
&\leq C \tau^{-1}( E_M[Q](t) + A^4)
\end{align}
which establishes (\ref{eq:preGronEllTr}) and concludes the proof of (\ref{bd:spatialDerivsEllTr}).

\subsection{Material derivatives of the solution to the Transport-Elliptic equation}
With the estimates on spatial derivatives of the solution to (\ref{eq:transEllipt2}) in hand, we can now give bounds for the coarse-scale material derivative $\Ddtof{Q}$ of the solution to (\ref{eq:transEllipt2}) as well as its spatial derivatives.

We start by recalling the Transport-Elliptic equation
\begin{align} 
\Ddtof{Q^{jl}} &= \RR^{jl}[ \pr_i v_\ep^b \pr_b Q^{ik} + \Ddtof{U^k} ] 
\end{align}
and estimating each term by
\begin{align}
\| \nab^k \RR^{jl}[ \pr_i v_\ep^b \pr_b Q^{ik} ] \|_{L^4} &\leq C \| \nab^{|k| - 1} [ \pr_i v_\ep^b \pr_b Q^{ik} ] \|_{L^4} \\
&\leq C \sum_{|a_1| + |a_2| = |k| - 1 } \| \nab^{|a_1| + 1} v_\ep^b \|_{C^0} \| \nab^{|a_2| + 1} Q^{ik} \|_{L^4} \\
&\leq C\sum_{|a_1| + |a_2| = |k| - 1 } \Xi e_v^{1/2} \La^{|a_1|} ( \La^{|a_2|} A e^{C \tau^{-1} |t - t(I)|} ) \\
&\leq C \tau^{-1} \La^{|k| - 1} A e^{C \tau^{-1} |t - t(I)|}
\end{align}
and
\begin{align}
\| \nab^k \RR^{jl}[ \Ddtof{U} ] \|_{L^4} &\leq C \| \nab^{|k| - 1} \Ddtof{U} \|_{L^4} \\
&\leq C \tau^{-1} \La^{|k| - 1} A
\end{align}

\subsection{Cutting off the solution to the Transport-Elliptic equation}

We now use the preceding estimates on the solution to the Transport-Elliptic equation in order to establish (\ref{thm:transportElliptic}).

Let $Q_*$ be the solution to the transport-elliptic equation (\ref{eq:transEllipt2}).  Then $Q_*$ satisfies the elliptic equation
\begin{align*}
\pr_j Q_*^{jl} &= U^l,
\end{align*} 
but $Q_*$ may will not satisfy the properties desired by Theorem (\ref{thm:transportElliptic}) because $Q_*$ will not be compactly supported in time, and the estimates on its derivatives grow exponentially in time.

In order to fix this problem, we can simply cutoff $Q_*$.  Namely, let ${\bar \eta}(t)$ be a cutoff function which is equal to ${\bar \eta}(t) = 1$ on the time interval $[t(I) - \tau, t(I) + \tau]$ containing the support of $U^l$, and such that ${\bar \eta}(t)$ is supported in $[t(I) - 3\tau/2, t(I) + 3\tau/2]$.  Now define 
\begin{align}
Q^{jl} &= {\bar \eta}(t) Q_*
\end{align}

Then $Q$ solves $\pr_j Q^{jl} = U^l$, $Q$ has integral $0$, and $Q$ satisfies all of the estimates stated in the Theorem (\ref{thm:transportElliptic}) because the cutoff ensures that the exponential factors of $e^{C \tau^{-1} | t - t(I)|} $ in the estimates for $Q_*$ are all bounded.  This concludes the proof of Theorem (\ref{thm:transportElliptic}).
\end{proof}

\part{Acknowledgements}

The author is endebted to his advisor Sergiu Klainerman for his guidance, for introducing him to the work of DeLellis and Sz{\' e}kelyhidi and for introducing him to many great opportunities during his graduate studies.  The author is thankful to professors C. Fefferman, A. Ionescu, V. Vicol and P. Constantin, and to his colleagues S-J. Oh and J. Luk for all of their help.  The author also thanks S-J. Oh for suggestions regarding the proofs in Section (\ref{sec:transEllipt}), and thanks Vlad Vicol and Ting Zhang for corrections to earlier drafts of the paper.  The author is deeply grateful to C. DeLellis and L. Sz\'{e}kelyhidi for discussions in 2011 about basic principles of convex integration and difficulties related to producing continuous Euler flows which helped to start the author's investigations in the subject.  The author also thanks M. Gromov and C. Villani for insightful conversations about convex integration. 

This work was supported by the NSF Graduate Research Fellowship Grant DGE-1148900.

\newpage

\part{References}
\bibliographystyle{plain}
\bibliography{maybeThisTime}

\begin{thebibliography}{10}

\bibitem{mulMic}
F.~Bethuel, G.~Huisken, K.~Steffen, and S.~M{\" u}ller.
\newblock Variational models for microstructure and phase transitions.
\newblock In {\em Calculus of Variations and Geometric Evolution Problems},
  Lecture Notes in Mathematics. Springer Berlin / Heidelberg.

\bibitem{choff}
A.~Choffrut.
\newblock h-{P}rinciples for the incompressible {E}uler equations. {P}reprint.
\newblock 2012.

\bibitem{deLSzeCts2d}
A.~Choffrut, C.~De~Lellis, and L.~Sz{\' e}kelyhidi, Jr.
\newblock Dissipative continuous {E}uler flows in two and three dimensions.
  {P}reprint.
\newblock 2012.

\bibitem{CET}
P.~Constantin, W.~E, and E.~Titi.
\newblock Onsager's conjecture on the energy conservation of {E}uler's
  equation.
\newblock {\em Comm. Math. Phys.}, 165(1):207--209, 1994.

\bibitem{deLSzeC1iso}
S.~Conti, C.~De~Lellis, and L.~Sz{\' e}kelyhidi, Jr.
\newblock h-principle and rigidity for ${C}^{1,\alpha}$ isometric embeddings.
  {T}o appear in.
\newblock {\em Proceedings of the Abel Symposium}, 2010.

\bibitem{deLSzeCtsSurv}
C.~De~Lellis and L.~Sz{\' e}kelyhidi, Jr.
\newblock Continuous dissipative {E}uler flows and a conjecture of {O}nsager.
  {T}o appear in.
\newblock {\em Proceedings of the 6th European Congress of Mathematics}.

\bibitem{deLSzeHFluid}
C.~De~Lellis and L.~Sz{\' e}kelyhidi, Jr.
\newblock The $h$-principle and the equations of fluid dynamics. {T}o appear
  in.
\newblock {\em Bull. of Amer. Math. Soc.}

\bibitem{deLSzeIncl}
C.~De~Lellis and L.~Sz{\' e}kelyhidi, Jr.
\newblock The {E}uler equations as a differential inclusion.
\newblock {\em Ann. Math.}, 170(3):1417--1436, 2009.

\bibitem{deLSzeAdmiss}
C.~De~Lellis and L.~Sz{\' e}kelyhidi, Jr.
\newblock On admissibility criteria for weak solutions of the {E}uler
  equations.
\newblock {\em Arch. Ration. Mech. Anal.}, 195(1):225--260, 2010.

\bibitem{deLSzeCts}
C.~De~Lellis and L.~Sz{\' e}kelyhidi, Jr.
\newblock Dissipative continuous {E}uler flows. preprint.
\newblock 2012.

\bibitem{deLSzeHoldCts}
C.~De~Lellis and L.~Sz{\' e}kelyhidi, Jr.
\newblock Dissipative {E}uler flows and {O}nsager's conjecture. preprint.
\newblock 2012.

\bibitem{eyink}
{G. L.} Eyink.
\newblock Energy dissipation without viscosity in ideal hydrodynamics. {I}.
  {F}ourier analysis and local energy transfer.
\newblock {\em Phys. D.}, 78(3-4):222--240, 1994.

\bibitem{gromPdr}
M.~Gromov.
\newblock {\em Partial Differential Relations}.
\newblock Springer-Verlag, 1986.

\bibitem{kirch}
B.~Kirchheim.
\newblock Rigidity and geometry of microstructures. {H}abilitation thesis,
  {U}niversity of {L}eipzig,.
\newblock 2003.

\bibitem{kMuSve}
B.~Kirchheim, S.~M{\" u}ller, and V.~{\v S}ver{\' a}k.
\newblock {\em Studying Nonlinear {PDE} by geometry in matrix space}, pages
  347--395.
\newblock Springer-Verlag, 2003.

\bibitem{K41}
A.~N. Kolmogorov.
\newblock The local structure of turbulence in an incompressible viscous fluid.
\newblock {\em C. R. (Doklady) Acad. Sci. URSS (N.S.)}, 30:301--305, 1941.

\bibitem{muSve}
S.~M{\" u}ller and V.~{\v S}ver{\' a}k.
\newblock Convex integration for {L}ipschitz mappings and counterexamples to
  regularity.
\newblock {\em Ann. Math.}, 63:20--63, 1956.

\bibitem{nashC1}
J.~Nash.
\newblock ${C}^1$ isometric embeddings i, ii.
\newblock {\em Ann. Math.}, 60:383--396, 1954.

\bibitem{onsag}
L.~Onsager.
\newblock Statistical hydrodynamics.
\newblock {\em Nuovo Cimento (9)}, 6 Supplemento(2 (Convegno Internazionale di
  Meccanica Statistica)):279--287, 1949.

\bibitem{scheff}
V.~Scheffer.
\newblock An inviscid flow with compact support in space-time.
\newblock {\em J. Geom. Anal.}, 3(4):343 -- 401, 1993.

\bibitem{shnNonUnq}
A.~Shnirelman.
\newblock On the nonuniqueness of weak solution of the {E}uler equation.
\newblock {\em Comm. Pure Appl. Math.}, 50(12):1261 -- 1286, 1997.

\bibitem{shnDiss}
A.~Shnirelman.
\newblock Weak solutions with decreasing energy of incompressible {E}uler
  equations.
\newblock {\em Comm. Math. Phys.}, 210(3):541--603, 2000.

\bibitem{shvOns}
R.~Shvydkoy.
\newblock Lectures on the {O}nsager conjecture.
\newblock {\em Discrete Contin. Dyn. Syst. Ser. S 3}, 3:473--496, 2010.

\end{thebibliography}

\newpage 
\part{Appendix} \label{part:Appendix}
Here we prove some facts about regular dodecahedra which have been used in the proof.

To prove the fact stated in Lemma (\ref{lem:someRotations}), we establish the following
\begin{lem}  Let $u \in \R^3$ be such that
\ali{
u \times f &\neq 0  \label{eq:sureWeRotate}
}
for all $f \in F$.

For every integer $n \geq 1$ there exists a constant $c > 0$ and a set of rotations $O_m$ of the form
\ali{
O_m &= e^{\th_m u \times} \quad \quad m = 1, \ldots, n
}
with the property that
\ali{
\label{eq:nonColliding}
\begin{aligned}
 |f \circ O_m  + f' \circ O_{m'}| &\geq c  \\
 f, f' &\in F \\
  m, m' &= 1, \ldots, n 
  \end{aligned}
}
holds unless $f' = -f$ and $m' = m$.
\end{lem}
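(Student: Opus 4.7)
The plan is to reduce the stated inequality to avoiding a finite set of ``bad'' rotation angles and then choose the $\theta_m$ generically (for existence) together with a compactness/continuity argument (for the quantitative bound $c > 0$).

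First, I would reformulate the condition. Using that $F = -F$, set $h = -f' \in F$. Then $f \circ O_m + f' \circ O_{m'} = 0$ is equivalent to $O_{\theta_m - \theta_{m'}}(f) = h$, and the excepted case $(f' = -f,\, m' = m)$ corresponds exactly to the trivial identity $h = f$, $\theta_m = \theta_{m'}$ (which is automatic since $O_0 = \tx{Id}$). Hence the lemma is equivalent to the following: choose $\theta_1, \ldots, \theta_n \in [0, 2\pi)$ so that for every pair $(f, h) \in F \times F$ and every $m \neq m'$, the angle $\theta_m - \theta_{m'}$ does not realize the rotation $f \mapsto h$ about the axis $u$.

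Second, I would show the set of ``bad'' angles
\[
B \;=\; \{\alpha \in (-2\pi, 2\pi)\setminus\{0\} \,:\, \exists\, f, h \in F \tx{ with } e^{\alpha u\times} f = h\}
\]
is \emph{finite}. Because $u \times f \neq 0$ for every $f \in F$, the orbit of $f$ under $\{e^{\alpha u\times}\}_{\alpha \in \R}$ is a genuine circle (the one-parameter group acts freely on $f$). Two elements $f, h \in F$ can lie on a common orbit only if $f \cdot u = h \cdot u$ and $|f - (f\cdot u)u/|u|^2| = |h - (h\cdot u)u/|u|^2|$; when this happens, there is exactly one $\alpha \in [0, 2\pi)$ realizing $e^{\alpha u\times} f = h$. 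Since $F$ is finite, $B$ is finite.

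Third, I would choose the $\theta_m$ inductively. Set $\theta_1 = 0$, and having chosen $\theta_1, \ldots, \theta_{k-1}$ pairwise distinct mod $2\pi$ with all nonzero differences outside $B$, pick $\theta_k$ avoiding the finite forbidden set $\{\theta_1, \ldots, \theta_{k-1}\} \cup \bigcup_{j < k}\bigl( (\theta_j + B)\cup(\theta_j - B) \bigr) \pmod{2\pi}$. Since this forbidden set is finite in $[0, 2\pi)$, such a $\theta_k$ exists for each $k \leq n$. By construction, $\theta_m - \theta_{m'} \notin B \cup \{0\} \pmod{2\pi}$ whenever $m \neq m'$.

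Finally, for the quantitative bound I would argue by continuity and compactness. The map $(\alpha, f, h) \mapsto |e^{\alpha u\times}f - h|$ is continuous on the compact set $S^1 \times F \times F$ and vanishes precisely on the ``trivial'' locus $\{\alpha = 0,\, h = f\}$ together with the isolated points coming from $B$. Since each pairwise difference $\theta_m - \theta_{m'}$ stays a positive distance from $B\cup\{0\}$, and $F$ is finite, we obtain a uniform $c > 0$ such that
\[
|e^{(\theta_m - \theta_{m'}) u\times}(f) - h|\; \geq\; c
\]
for all non-excepted configurations, which translates back to (\ref{eq:nonColliding}).

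The only subtle point, and the ``main obstacle'' such as it is, is the careful orbit analysis in Step 2 that ensures $B$ is finite: this rests essentially on the free action of the one-parameter rotation group on each $f \in F$, guaranteed by the hypothesis (\ref{eq:sureWeRotate}). Everything else is elementary (inductive avoidance of a finite set and continuity on a compact domain).
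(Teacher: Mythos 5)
Your proposal is correct, but it proves the lemma by a genuinely different (and softer) route than the paper. The paper fixes the previously chosen angles and studies, for each triple $(m,f,f')$, the function $\psi(\th) = \langle e^{(\th - \th_m)u\times}f, f'\rangle$; it observes that a collision forces $\psi = -1$, derives the ODE $\psi''' + |u|^2\psi' = 0$, and uses the hypothesis $u\times f \neq 0$ (via the basis $\{u, f', u\times f'\}$) to show the conserved quantity $(\psi'')^2 + |u|^2(\psi')^2$ is strictly positive; this yields a sublevel-set measure bound $|\{\th : |\psi(\th)+1|\le c\}| \le A c^{1/2}$, so a new angle with an \emph{explicit} quantitative margin $c'$ can be found by avoiding finitely many sets of small measure. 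You instead reformulate collisions as $e^{\alpha u\times}f = h$ with $h = -f'$ and $\alpha = \th_m - \th_{m'}$, show the set of bad angles is finite (because $u\times f\neq 0$ makes each orbit a nondegenerate circle, so the coincidence angles for a fixed pair $(f,h)$ form a discrete coset), choose the $\th_m$ by inductively avoiding a finite set, and then get $c>0$ as a minimum over the finitely many remaining configurations — at that last step compactness is not even needed, since a minimum of finitely many positive numbers is positive. Both arguments use the hypothesis (\ref{eq:sureWeRotate}) in the same essential role (ruling out degenerate orbits/constant $\psi$), but yours is shorter and purely qualitative, while the paper's conservation-law argument additionally quantifies how the constant degrades as angles are added. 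One small imprecision in your Step 2: $e^{\alpha u\times}$ is rotation by angle $|u|\alpha$, so when $f$ and $h$ lie on a common orbit the realizing angles form a coset of $\fr{2\pi}{|u|}\Z$ rather than a single point of $[0,2\pi)$; this does not affect your conclusion, since the coincidence set is still discrete and hence finite on the bounded range of differences $\th_m - \th_{m'}$, which is all the avoidance argument requires.
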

\begin{proof}
We proceed by induction on $n$.

For $n = 1$, we take $\th_1 = 0$, so that 
\[ O_1 = e^{0 u \times} = \mbox{ Id } \]
is the identity.  Then the property (\ref{eq:nonColliding}) holds with $c = \min \{ |f + f'| ~|~ f, f' \in F, f \neq - f' \}$.

The rest of our rotations will all be part of a one parameter group of rotations
\ali{
 O v &= e^{\th u \times} v.
}  
Such maps are rotations, since for any $v, w \in \R^3$ we can see that for the auxiliary function $\phi(\th)$ defined by
\[\phi(\th) = {<} e^{\th u \times} v, e^{\th u \times} w {>} \]
we have
\ali{
\phi(0) &= {<} v, w {>} \\
\phi'(\th) &= {<} u \times (e^{\th u \times} v), e^{\th u \times} w {>} + {<} e^{\th u \times} v, u \times (e^{\th u \times} w) {>} \\
&= 0
}
by a basic property of the cross product.

Now assume by induction that we have rotations $e^{\th_m u \times}$, $m = 1, \ldots, n$ for which the condition (\ref{eq:nonColliding}) holds.  To find another rotation with the property (\ref{eq:nonColliding}), we observe that for $O_\th = e^{\th u \times}$, we have
\ali{
|f \circ O_\th + f' \circ O_{m}|^2 &= |f \circ O_\th|^2 + |f' \circ O_m|^2 + 2 {<} e^{\th u \times} f, e^{\th_m u \times} f' {>} \\
&= 2( 1 + {<} e^{\th u \times} f, e^{\th_m u \times} f' {>} ) \\
&= 2 ( 1 + {<} e^{(\th - \th_m) u \times} f, f' {>} )
}
By Cauchy Schwartz, this quantity can be zero only when the function
\ali{
\psi_{m, f, f'}(\th) = \psi(\th) &= {<} e^{(\th - \th_m) u \times} f, f' {>}
}
reaches a minimum value equal to $-1$.

Now observe that $\psi(\th)$ obeys the differential equation
\ali{
\psi'''(\th) + |u|^2 \psi'(\th) &= {<} (u \times)^3 e^{(\th - \th_m) u \times} f, f' {>} + |u|^2 {<} (u \times) e^{(\th - \th_m) u \times} f, f' {>} \\
\psi'''(\th) + |u|^2 \psi'(\th) &= 0 \label{eq:theDiffEqRotate}
}
As a consequence, $\psi$ has the form
\ali{
\psi(\th) &= A + B \cos( |u| \th) + C \sin( |u| \th) 
}
although at this point it is possible that $B = 0$ and $C = 0$.

Using the differential equation (\ref{eq:theDiffEqRotate}), we can give a bound on the measure of the set
\ali{
 | \{ \th \in \R / (\fr{2 \pi}{|u|} \Z) ~|~ | \psi(\th) + 1 | \leq c \} | &\leq A c^{1/2} \label{ineq:measureBound}
}
for some constant $A$.  This bound will follow from the conservation law
\ali{
 (\psi''(\th))^2 + |u|^2 (\psi'(\th))^2 &=  (\psi''(\th_m))^2 + |u|^2 (\psi'(\th_m))^2 \label{eq:conservationLaw}
}
once the conserved quantity has been shown to be nonzero at the initial $\th = \th_m$.

Let us assume by contradiction that both terms in (\ref{eq:conservationLaw}) are $0$ at $\th = \th_m$.  Namely,
\ali{
\psi'(\th_m) = {<} u \times f, f' {>} &= 0 \label{eq:lin1uf} \\
\psi''(\th_m) = {<} (u \times)^2 f, f' {>} &= 0 \\
- {<} u \times f, u \times f' {>} &= 0 \label{eq:lin2uf}
}
Then by the basic properties of the cross product we also know that
\ali{
{<} u \times f, u {>} &= 0 \label{eq:lin3uf}
}
The condition (\ref{eq:sureWeRotate}) implies that the set
\[ \{ u, f', u \times f' \} \]
form a basis of $\R^3$, so from (\ref{eq:lin1uf}), (\ref{eq:lin2uf}), and (\ref{eq:lin3uf}) we conclude that
\ali{
 u \times f &= 0
}
which contradicts (\ref{eq:sureWeRotate}).  This argument confirms that the conserved quantity
\ali{
 (\psi''(\th))^2 + |u|^2 (\psi'(\th))^2 &=  E^2 > 0 \label{eq:theConservLaw}
}
is strictly positive.  This fact can now be used to conclude the bound (\ref{ineq:measureBound}), since it implies, for example, that one of
\begin{itemize}
\item $|\psi'(\th)| \geq \fr{E}{\sqrt{2} |u|}$
\item or $|\psi''(\th)| \geq \fr{E}{\sqrt{2}}$
\end{itemize}
holds at every point $\th \in \R / ( \fr{2 \pi}{|u|} \Z )$.  Using (\ref{ineq:measureBound}), we can find a new constant $c'$ and a point $\th \in \R / ( \fr{2 \pi}{|u|} \Z )$ such that
\[  |\psi_{m, f, f'}(\th) + 1| \geq c' \quad \quad \mbox{ for all } f, f' \in F, m = 1, \ldots, n \]
which is enough to verify (\ref{eq:nonColliding}).
\end{proof}

We now give a proof of the identity (\ref{eq:icoMetric}) which we restate here in the form
\begin{align}
\de^{jl} &= \fr{1}{2} \sum_{f \in \F} f^j f^l \label{eq:newIcoMetric}  \\
&= \fr{1}{4} \sum_{f \in F} f^j f^l
\end{align}
\begin{proof}[Proof of identity (\ref{eq:icoMetric})]
To begin the proof, first observe that the bilinear form $G^{jl} = \sum_{f \in \F} f^j f^l = \fr{1}{2} \sum_{f \in F} f^j f^l$ is invariant under the action of the icosahedral group in the sense that for every symmetry $g : \R^3 \to \R^3$ of the dodecahedron, we have
\[ \tx{Sym}^2 g ( G ) (u, w) = G( u \circ g, w \circ g ) = G(u, w) ~\quad ~\tx{ for all } u, w \in (\R^3)^* \]
In particular, $G$ is invariant under the subgroup $A_5$ of orientation-preserving symmetries of the dodecahedron.  We claim that every $G^{jl} \in \SS$ that is invariant under $A_5$ has the form
\begin{align} \label{eq:icoInvt}
G^{jl} &= C \de^{jl}. 
\end{align}
Taking the trace we must have $C = \fr{G^{jl}\de_{jl}}{3}$, thus establishing (\ref{eq:icoMetric}) for $G^{jl} = \fr{1}{2} \sum_{f \in \F} f^j f^l$, which we have restated here in identity (\ref{eq:newIcoMetric}).  In the language of representation theory, our claim is that the dimension of the space 
\[ \SS^{A_5} = \{ G^{jl} ~|~ \tx{Sym}^2 g ( G^{jl} ) = G^{jl} ~\forall~ g \in A_5 \} \]
of $A_5$-invariant elements of $\SS$ is $1$, or that the trivial representation of $A_5$ has multiplicity $1$ in the representation $\SS = \tx{Sym}^2(\R^3)$.  This is a nice exercise in character theory.  To complete it, one should recall the conjugacy classes in $A_5$, namely:
\begin{itemize}
 \item The identity class $c = \langle \tx{Id} \rangle$ has $1$ element
 \item The class $c = \langle (12)(34) \rangle$ has $15$ elements of order $2$, which rotate the dodecahedron by an angle $\pi$ about an axis drawn between the midpoints of opposite edges
 \item The class $c = \langle (123) \rangle$ has $20$ three-cycles, which rotate the dodecahedron by an angle $2 \pi / 3$ about an axis drawn between two opposite vertices
 \item The class $c = \langle (12345) \rangle$ has $12$ five-cycles, which rotate the dodecahedron by an angle $2\pi/5$ about an axis drawn between the centers of opposite pentagonal faces
 \item The class $c = \langle (13524) \rangle = \langle (12345)^2 \rangle$ has $12$ five-cycles, which rotate the dodecahedron by an angle $4\pi/5$ about an axis drawn between the centers of opposite pentagonal faces
\end{itemize}

Having a geometric description of how each element in $A_5$ acts on $\R^3$ allows us to compute the eigenvalues, and in particular the traces, of the corresponding matrices.  These are useful quantities because the dimension of $\SS^{A_5}$ is equal to the trace of the operator
\[ \fr{1}{|A_5|} \sum_{g \in A_5} \tx{Sym}^2 g : \SS \to \SS \]
which projects to the invariant subspace $\SS^{A_5}$ of $\SS$.  That is,
\begin{align} \label{eq:dimensionFormula}
 \tx{dim } \SS^{A_5} &= \fr{1}{|A_5|} \left( \sum_{g \in A_5} \tx{tr Sym}^2 g \right) 
\end{align}

We can compute this trace using the plethysm formula
\[ \tx{tr Sym}^2(g) = \fr{1}{2} \left( \tx{ tr }(g^2) + \tx{ tr}^2(g) \right) \]
All these operators satisfy some polynomial $g^k - 1 = 0$, so they can be diagonalized on the complex vector space $\C^3 = \C \otimes \R^3$ and the pairwise symmetric products of their eigenvectors form a basis for $\C \otimes \SS$; then, in terms of the eigenvalues of the operators, the above identity simply expresses the equality
\begin{align} 
\sum_{1 \leq i \leq j \leq 3} \la_i \la_j = \fr{1}{2} \sum_{i = 1}^3 \la_i^2 + \fr{1}{2} \left( \sum_{i = 1}^3 \la_i \right)^2 \quad \la_i \in \C 
\end{align}
For example, when $g$ is the identity, all the eigenvalues are $1$ and we confirm that the dimension of $\SS$ is $\fr{3 + 9}{2} = 6$, and the above numerical identity is a very familiar formula for triangular numbers.


All of our operators $g$ are rotations in $\R^3$ by some angle $\th$; the trace of such an operator is given by $1 + e^{i \th} + e^{-i\th}$.  For example, letting $\om_k = e^{2 \pi i / k}$, the three cycle acts with trace $\tx{tr }(123) = 1 + \om_3 + \om_3^{-1} = 1 + \om_3 + \om_3^2 = (1 - \om_3^3)/(1-\om_3)= 0$.  
Using these observations, we can build a table
\begin{table}[h]
\begin{tabular}{| l | l | l | l | l | l |}
    \hline
      & Id & $\langle (12)(34) \rangle$ & $\langle (123) \rangle$ & $\langle (12345) \rangle$ & $\langle (13524) \rangle$ \\ \hline
    $\#$ & $1$ & $15$ & $20$ & $12$ & $12$ \\ \hline
    $\tx{tr }$ & $3$ & $-1$ & $0$ & $1 + \om_5 + \om_5^4$ & $1 + \om_5^2 + \om_5^3$ \\ \hline
 $\tx{tr Sym}^2$ & $6$ & $2$ & $0$ & $\fr{(1 + \om_5 + \om_5^4)^2 + (1 + \om_5^2 + \om_5^3)}{2}$ & $\fr{(1 + \om_5^2 + \om_5^3)^2 + (1 + \om_5 + \om_5^4)}{2}$ \\
\hline
  \end{tabular}
\end{table}

When we sum the terms in the formula (\ref{eq:dimensionFormula}) which come from the $5$-cycles, there is a cancellation from $1 + \om_5 + \om_5^2 + \om_5^3 + \om_5^4 = 0$.  Using this observation and some observations about the geometry of a regular pentagon, we can now see that
\begin{align}
 \tx{dim } \SS^{A_5} &= \fr{1}{60} \left( 6 + 2 \cdot 15 + 12 \cdot \fr{1 + (1 + \om_5^2 + \om_5^3)^2 + (1 + \om_5 + \om_5^4)^2}{2} \right) \\
&< \fr{1}{10} \left(1 + 5 + ( 1 + 2^2 + 3^2 ) \right) = 2
\end{align}
Of course, the left hand side is an integer, and it is at least $1$ because $\de^{jl} \in \SS^{A_5}$ is nonzero, so this bound is enough to conclude the proof.
\end{proof}

As a first application of the identity (\ref{eq:newIcoMetric}), we calculate the angle between projectively distinct faces of the dodecahedron.
\begin{lem}
Suppose that $x, y \in \F$ and $y \in  \F \setminus \{ x \}$, then
\begin{align} \label{eq:theInnerProd}
 ( x \cdot y )^2 &= \fr{1}{5}
\end{align}
Hence,
\begin{align}
 | x \wed y |^2 = |x|^2 |y|^2 - (x \cdot y)^2 &= \fr{4}{5} 
\end{align}
\end{lem}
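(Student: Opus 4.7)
My plan is to deduce the lemma from the identity $\delta^{jl} = \tfrac{1}{2} \sum_{f \in \F} f^j f^l$ proved just above in equation (\ref{eq:newIcoMetric}), together with the transitivity of the stabilizer of a face on the remaining projective faces. No direct computation with the explicit coordinates of $\F$ will be needed.

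First I would contract the identity (\ref{eq:newIcoMetric}) against $x_j x_l$ for an arbitrary $x \in \F$. Since every $f \in \F$ is a unit vector, this yields
\[
 1 \;=\; |x|^2 \;=\; \tfrac{1}{2} \sum_{f \in \F} (f \cdot x)^2,
\]
so $\sum_{f \in \F} (f \cdot x)^2 = 2$. Isolating the term $f = x$, which contributes $(x\cdot x)^2 = 1$, I get the key relation
\[
 \sum_{f \in \F \setminus \{x\}} (f \cdot x)^2 \;=\; 1.
\]
This is a sum of $|\F| - 1 = 5$ nonnegative terms.

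Next I would invoke the icosahedral symmetry to show that all $5$ of these terms are equal. Concretely, the rotation of order $5$ about the axis spanned by $x$ belongs to $A_5$ and fixes $x \in \F$; geometrically it cyclically permutes the $5$ pentagonal faces adjacent to the face with normal $x$, and these $5$ faces account for exactly the $5$ elements of $\F \setminus \{x\}$ (the other $5$ faces of the dodecahedron being antipodal and hence identified in $\F$). Thus this order-$5$ rotation acts transitively on $\F \setminus \{x\}$, and since it preserves inner products, $(f \cdot x)^2$ is constant for $f \in \F \setminus \{x\}$. Combined with the sum relation above, each term equals $\tfrac{1}{5}$, giving (\ref{eq:theInnerProd}).

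The second assertion is then immediate from the Gram/Cauchy--Binet formula
\[
 |x \wedge y|^2 \;=\; |x|^2\,|y|^2 - (x \cdot y)^2 \;=\; 1 - \tfrac{1}{5} \;=\; \tfrac{4}{5},
\]
using that $x, y \in \F$ are unit vectors. There is no real obstacle here: the only nontrivial ingredient is the transitivity of the cyclic stabilizer, which follows from the standard description of the face-adjacency of the dodecahedron.
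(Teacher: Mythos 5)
Your proof is correct and is essentially the paper's own argument: both contract the invariant-form identity (\ref{eq:newIcoMetric}) against $x$ and use the order-$5$ rotation fixing $x$ to see that $(f\cdot x)^2$ is constant over the five faces $f \in \F\setminus\{x\}$, forcing each term to be $\fr{1}{5}$. The only difference is the order of the two steps, which is immaterial.
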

\begin{proof}
Let $x$ be a fixed face of the projective dodecahedron.  Let $\si_x$ be a nontrivial rotation which fixes the pentagonal face $x$ and permutes the $5$ faces adjacent to $x$.  Then $\si_x$ has order $5$, and cannot fix any of the five adjacent projective faces because it does not obtain the eigenvalue $-1$ while acting on $\R^3$, and the eigenvalue $1$ is obtained only in the $x$ direction itself since the rotation is nontrivial.  

Hence, $\si_x$ acts transitively on the other five faces in $\F \setminus \{ x \}$, and therefore the number
\begin{align}
( x \cdot y )^2 &= ( \si_x^k x \cdot \si_x^k y )^2 \\
&= (x \cdot \si_x^k y)^2
\end{align}
does not depend on the other face $y \in \F \setminus \{ x \}$.

To calculate this number, we apply the identity (\ref{eq:newIcoMetric}) to give
\begin{align}
\de^{jl} x_j x_l &= \fr{1}{2} \sum_{f \in \F} f^j f^l x_j x_l \\
|x|^2 &= \fr{1}{2}( |x|^2 + 5(x \cdot y)^2 ) \\
(x \cdot y)^2 &= \fr{|x|^2}{5} = \fr{1}{5}
\end{align}
\end{proof}

As a final application of the identity (\ref{eq:newIcoMetric}), we establish the linear independence Claim (\ref{en:basis1}) of Lemma (\ref{lem:basisLem}).  Claim (\ref{en:basis2}) of Lemma (\ref{lem:basisLem}) can be proven using the same argument below.
\begin{proof}[Proof of Lemma (\ref{lem:canInvert})]
Suppose we have a linear relation
\begin{align} \label{eq:fakeCombo}
\sum_{f \in \F} \a_f f^jf^l &= 0 
\end{align}
As we discussed in the proof of (\ref{eq:theInnerProd}), every face $f_\ast \in \F$, has a set of five adjacent faces $\NN(f_\ast)$.  They can be obtained by starting with a single adjacent side $x$, and then rotating around the center of the face $f_\ast$ using the five-cycle $\si_{f_\ast}$.  These faces represent all the other projective faces in $\F$ 
\[ \NN(f_\ast) = \{ \si_{f_\ast}^k x ~|~ k = 0, \ldots, 4 \} = \F \setminus \{ f_\ast \} \] 
since $\si_{f_\ast}$ only obtains an eigenvalue of $\pm 1$ on $f_\ast$ itself, and $\si_{f_\ast}$ therefore must act transitively on the other $5$ faces since $\si_{f_\ast}$ has order $5$ and acts faithfully.

We can now act on the linear combination (\ref{eq:fakeCombo}) with the group $\langle \si_{f_\ast} \rangle$ to produce potentially new relations, and by averaging these relations we see that
\begin{align}
\a_{f_\ast} f_\ast^jf_\ast^l + \hat{\a}(f_\ast) \sum_{f \in \NN(f_\ast)} f^j f^l = 0,
\end{align}
where ${\hat \a}(f_\ast) = \fr{1}{5} \sum_{f \in \NN(f_\ast)} \a_f$.  We first subtract both the summation over $\NN(f_\ast)$ and the term ${\hat \a}(f_\ast) f_\ast^j f_\ast^l$ from the equation, then use the identity (\ref{eq:newIcoMetric}) to see that
 \begin{align}
( \a_{f_\ast} - {\hat \a} ) f_\ast^jf_\ast^l &= - \hat{\a} \sum_{f \in \F} f^j f^l \\
&= - 2 {\hat \a} \de^{jl}.
\end{align}
Both sides must be zero; otherwise they would differ in rank.  In particular, ${\hat \a}(f_\ast) = 0 = \a_{f_\ast}$ for all $f_\ast \in \F$.
\end{proof}



\end{document}